\documentclass[12pt,a4paper,oneside]{article}
\usepackage{amsfonts}
\usepackage[style=alphabetic, backend=bibtex, doi=false, url=false]{biblatex}

\let\digamma\relax

\usepackage{amsmath,amssymb,amsthm}
\usepackage{mathrsfs}
\usepackage{geometry}
\usepackage{graphicx}
\usepackage{titlesec}
\usepackage{tensor}
\usepackage{rotating}
\usepackage{mathtools}
\usepackage{subcaption}
\usepackage{multicol}
\usepackage{imakeidx}
\usepackage[nottoc]{tocbibind}
\usepackage[all]{xy}
\usepackage{color}
\usepackage[colorlinks=true, citecolor=red, linkcolor=blue]{hyperref}
\usepackage{enumerate}
\usepackage{enumitem}
\usepackage{upgreek}
\usepackage{bm}
\usepackage[boxsize=3em]{ytableau}

\setlist[enumerate]{align=left}

\def\CC{\mathbb{C}}

\def\RR{\mathbb{R}}

\def\ZZ{\mathbb{Z}}

\def\mA{\mathcal{A}}
\def\mB{\mathcal{B}}
\def\mC{\mathcal{C}}

\def\mE{\mathcal{E}}

\def\mI{\mathcal{I}}

\def\mP{\mathcal{P}}
\def\mR{\mathcal{R}}
\def\mS{\mathcal{S}}
\def\mT{\mathcal{T}}

\def\mZ{\mathcal{Z}}

\def\GL{\mathrm{GL}}
\def\Mp{\mathrm{Mp}}
\def\Or{\mathrm{O}}

\def\SL{\mathrm{SL}}
\def\Sp{\mathrm{Sp}}

\def\SO{\mathrm{SO}}
\def\SL{\mathrm{SL}}

\def\Ato{\mathrm{Ato}}

\def\bdd{\mathrm{bdd}}

\def\cusp{\mathrm{cusp}}

\def\Frob{\mathrm{Frob}}

\def\gp{\mathrm{gp}}

\def\Id{\mathrm{Id}}

\def\Jac{\mathrm{Jac}}
\def\JH{\mathrm{JH}}
\def\Jord{\mathrm{Jord}}

\def\rL{\mathrm{L}}

\def\M{\mathrm{M}}

\def\MW{\mathrm{MW}}

\def\np{\mathrm{np}}

\def\op{\mathrm{op}}

\def\Supp{\mathrm{Supp}}

\def\spec{\mathrm{spec}}

\def\soc{\mathrm{soc}}

\def\Sgn{\mathrm{Sgn}}

\def\triv{\mathrm{triv}}
\def\temp{\mathrm{temp}}

\def\unit{\mathrm{unit}}

\def\W{\mathrm{W}}

\def\Z{\mathrm{Z}}

\def\para{/\kern -0.3em /}

\numberwithin{equation}{section}

\theoremstyle{plain}
\newtheorem{theorem}{Theorem}[section]
\newtheorem{proposition}[theorem]{Proposition}
\newtheorem{corollary}[theorem]{Corollary}
\newtheorem{lemma}[theorem]{Lemma}

\newtheorem{conjecture}[theorem]{Conjecture}

\theoremstyle{definition}
\newtheorem{definition}[theorem]{Definition}
\newtheorem{remark}[theorem]{Remark}

\makeindex[title={Index}]

\title{Construction of Local Arthur Packets for Metaplectic Groups and the Adams Conjecture}
\author{Jiahe Chen}
\date{}

\begin{document}
	
	\maketitle
	
	\begin{abstract}
		In this article, we explicitly construct local Arthur packets for metaplectic groups over non-Archimedean local fields of characteristic zero. Our construction is a generalization of Atobe's construction of local Arthur packets for classical groups. As a result, we prove that the local Arthur packets are multiplicity free. Moreover, we generalize M\oe glin's earlier work about the Adams conjecture to metaplectic groups. 
	\end{abstract}
	
	{\scriptsize
		\begin{tabular}{ll}
			\textbf{MSC (2020)} & Primary 22E50; Secondary 11F27, 11F70\\
			\textbf{Keywords} & metaplectic group, local Arthur packet, endoscopy, theta correspondence, Adams conjecture
	\end{tabular}}
	
	\setcounter{tocdepth}{2}
	\tableofcontents
	
	\section{Introduction}
	
	\subsection*{Overview}
	
	\par Let $F$ be a non-Archimedean local field of characteristic zero, and let $W$ be a symplectic space over $F$. The metaplectic group $\Mp(W)$ is a central extension of $\Sp(W)$. 
	
	\par In \cite{Li2024_arthurpacketsmetaplecticgroups}, W-W. Li defined Arthur packets for metaplectic groups. In his definition, the Arthur packet $\Pi_\psi$ attached to an Arthur parameter $\psi$ is a multi-set of unitary genuine irreducible representations, which can be characterized by endoscopic character relations. However, his proof does not provide concrete information about local Arthur packets. In particular, it is still unknown whether the packets are multiplicity free in general. The goal of this article is to provide a more explicit description of Arthur packets for metaplectic groups.
	
	\par For classical groups, M\oe glin constructed Arthur packets explicitly in a series of works, e.g., \cite{Mœglin2006_arthur_packets, Mœglin2006_Aubert, Mœeglin2009_paquets} and deduced that Arthur packets are multiplicity free in \cite{Mœglin2011_mult_free}. Later, Atobe gave a reformulation of M\oe glin's construction in \cite{Atobe2022_A-packet}, which simplified M\oe glin's construction.
	
	\par M\oe glin's construction can be described by the following diagram (see \cite{Xu2017_Moeglin} for details):
	$$\left\{\text{discrete series}\right\}\Rightarrow\left\{\text{elementary}\right\}\Rightarrow\left\{\text{DDR}\right\}\Rightarrow\left\{\text{of good parity}\right\}\Rightarrow\left\{\text{general}\right\}.$$
	Atobe observed that, by restricting to non-negative DDR packets, it is possible to avoid the concept of elementary packets. That is, Arthur packets can be constructed by the following method:
	$$\left\{\text{discrete series}\right\}\Rightarrow\left\{\text{non-negative DDR}\right\}\Rightarrow\left\{\text{of good parity}\right\}\Rightarrow\left\{\text{general}\right\}.$$
	
	\par Unfortunately, M\oe glin's construction of elementary packets can not be applied directly to the metaplectic groups. For example, let $r(a)$ be the unique $a$-dimensional irreducible algebraic representation of $\SL(2,\CC)$. We consider $\psi_1=\triv\otimes r(2)\otimes r(1)$ and $\psi_2=\triv\otimes r(1)\otimes r(2)$, with $\varepsilon_{i,\pm}\in\mS_{\psi_i}^\vee$ such that $\varepsilon_{1,\pm}(\rho,2,1)=\varepsilon_{2,\pm}(\rho,1,2)=\pm1$. It can be proved that $\pi(\psi_1,\varepsilon_{1,+})$ is cuspidal (see Theorem \ref{theorem: parameter of cuspidal representation} below). If M\oe glin's construction still holds in the metaplectic case (i.e., if \cite[Theorem 6.18]{Xu2017_Moeglin} is true for metaplectic groups), we would have $\pi(\psi_1,\varepsilon_{1,+})=\pi(\psi_2,\varepsilon_{2,+})$. However, it can be deduced from Lemma \ref{lemma: partial Jacquet module of elementary A parameter} that $\pi(\psi_2,\varepsilon_{2,+})$ is not cuspidal. This leads to a contradiction (according to our construction of Arthur packets in Theorem \ref{theorem: A-packets can be constructed via extended multi-segments}, it can be deduced that $\pi(\psi_1,\varepsilon_{1,+})=\pi(\psi_2,\varepsilon_{2,-})$. Thus, to resolve the problem, one possible way is to modify the character $\varepsilon_\psi^{\M/\MW}$ in \cite[Theorem 6.18]{Xu2017_Moeglin}).
	
	\par To avoid the difficulties in elementary packets, we will follow Atobe's method to construct Arthur packets for metaplectic groups. Our main results are the following theorems:
	\begin{theorem}(Theorem \ref{theorem: A-packets can be constructed via extended multi-segments})
		Let $\psi$ be an Arthur parameter of good parity. For $\varepsilon\in\mS_\psi^\vee$, we have $\pi_\Ato(\psi,\varepsilon)=\bigoplus\pi(\mE)$, where $\mE$ runs over all equivalence classes of extended multi-segments with $(\psi,\varepsilon)=(\psi_\mE,\varepsilon_\mE)$ (see \S \ref{subsection: extended multi-segments} for the definition of extended multi-segments for metaplectic groups). Here, the subscript ``$\Ato$" means Atobe's normalization (see \S \ref{section:Atobe's normalization}).
	\end{theorem}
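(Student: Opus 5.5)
We follow Atobe's strategy for classical groups \cite{Atobe2022_A-packet}, which builds packets along the chain
$$\{\text{discrete series}\}\Rightarrow\{\text{non-negative DDR}\}\Rightarrow\{\text{good parity}\}.$$
The inputs are Li's characterization of $\Pi_\psi$ by endoscopic character relations, Atobe's normalization of characters from \S\ref{section:Atobe's normalization}, and the metaplectic Jacquet module and theta correspondence tools developed earlier in the paper.

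\textbf{Base case: non-negative DDR parameters.} Suppose $\psi$ has good parity and satisfies the non-negative DDR condition: the Jordan blocks $(\rho,a_i,b_i)$, ordered so that $A_i=(a_i+b_i)/2-1$ and $B_i=(a_i-b_i)/2$ increase, have $B_i\ge 0$ and $B_{i+1}>A_i$. In this case we construct $\pi(\mE)$ directly.

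1. Shift each segment far out by adding large integers $T_i$. This gives a discrete parameter $\psi_T$.
2. For each compatible choice of $(\eta,l_i)$ in $\mE$, write $\pi(\mE)$ as a socle, or a specified irreducible subquotient, of an induced representation. This is $\times$ of Speh-type representations $\langle\rho|\cdot|^{B_i},\dots,\rho|\cdot|^{-A_i}\rangle$ applied to $\pi(\psi_T,\varepsilon_T)$, followed by the Jacquet functors $\Jac_{\rho|\cdot|^{x}}$ that shift the segments back.
3. Prove that $\Pi_\psi$ consists of exactly these representations. Li's endoscopic character relations are compatible with parabolic induction and with Jacquet modules, via the twisted endoscopic transfer to $\GL$ and the elliptic endoscopy $\SO(2n+1)$-type groups. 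So $\eta_\psi$-twisted character identities for $\psi_T$ transport to $\psi$.
4. Here the metaplectic sign matters. The $\varepsilon$-bookkeeping uses Atobe's normalization together with the dependence on the additive character $\psi_F$, because Li's transfer is normalized through the Weil representation. We will check it on the two-block example of the introduction, where $\pi(\psi_1,\varepsilon_{1,+})=\pi(\psi_2,\varepsilon_{2,-})$.

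\textbf{General good-parity parameters.} Choose $T$ so that $\psi_T$ (all $B_i+T_i\ge0$, separated) is non-negative DDR, and define $\pi(\mE)=\circ\Jac\,\pi(\mE_T)$. We will prove three statements, following Atobe's arguments for classical groups.

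(a) The packet $\Pi_\psi$ is obtained from $\Pi_{\psi_T}$ by the corresponding Jacquet functors. This is M\oe glin's argument, which uses only the character relations and the Jacquet module computations of Lemma \ref{lemma: partial Jacquet module of elementary A parameter}.

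(b) $\pi(\mE)$ is either zero or irreducible, and it is independent of the choice of $T$.

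(c) The nonvanishing criterion and the equivalence relations on extended multi-segments hold: the row exchange, union–intersection and duality operations preserve $\pi(\mE)$.

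Given (a)–(c), each $\pi\in\Pi_\psi$ arises from some $\mE$, and the $\mE$ with character $\varepsilon_\mE$ give $\pi_\Ato(\psi,\varepsilon)$. Multiplicity freeness and the direct sum decomposition then follow from the injectivity statement of (c): two inequivalent $\mE$ give non-isomorphic representations.

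\textbf{Main obstacle.} The hardest step is the sign $\varepsilon$. Showing that the character attached to $\pi(\mE)$ via Li's endoscopic relations equals $\varepsilon_\mE$ under Atobe's normalization requires tracking the metaplectic twist in the transfer factors. This twist is exactly the discrepancy that makes M\oe glin's elementary-packet formula fail. We will first try to reduce it to the discrete series case via the Jacquet-module compatibility of the character identities, since the latter case is known through the Gan–Savin theta correspondence with $\SO(2n+1)$. We then control the change of sign under each shift $\Jac_{\rho|\cdot|^x}$ by a direct computation of the $\varepsilon$-factor ratio $\varepsilon(1/2,\rho\times\rho')$ type contributions.
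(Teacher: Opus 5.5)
Your outline has the right architecture (discrete series $\Rightarrow$ non-negative DDR $\Rightarrow$ good parity). However, the non-negative DDR step, where essentially all of the work lies, has no working mechanism.

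Shifting $[A_i,B_i]\mapsto[A_i+T_i,B_i+T_i]$ leaves $b_i=A_i-B_i+1$ unchanged. So $\psi_T$ is again a non-negative DDR parameter whose packet is exactly as unknown as that of $\psi$. You never reach a tempered $L$-parameter, and that is the only place where the Gan--Savin correspondence and Luo's character relations (Theorem~\ref{theorem: ECR for tempered representation}) give control. Moreover, pushing character identities through Jacquet functors only yields identities between virtual representations. It cannot tell you which irreducible representations occur in $\pi(\psi,\varepsilon)$, with what multiplicity, or that they are the socles you call $\pi(\mE)$. Your steps 2--3 assume the conclusion.

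What the paper actually uses has two parts.
\begin{enumerate}
\item[(i)] An inductive formula in $\sum_i(A_i-B_i)$, Theorem~\ref{theorem: Mp version of Theorem 7.5 in Xu_moeglin}. It comes from transferring Xu's Proposition 5.9 to $\widetilde{G}_{2n}$ (Proposition~\ref{prop: Mp version of proposition 5.9 in Xu_moeglin}), using compatibility of transfer with induction and Jacquet modules. It writes $\pi(\psi,\varepsilon)$ as an alternating sum of terms $[B,-C]_\rho\rtimes(\text{Jacquet modules of }\pi(\psi',\varepsilon',(\rho,A,B+2;\eta_0)))$ and packets with $(\rho,A,B)$ replaced by $(\rho,A,B+1),(\rho,B,B)$.
\item[(ii)] M\oe glin's analysis of the irreducible constituents of that alternating sum (Theorem~\ref{theorem: construction of A-packet non-negative DDR case}, \S\ref{section: proof of the theorem}). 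It shows that exactly $\soc([B,-A]_\rho\rtimes\pi(\psi',\varepsilon',(\rho,A-1,B+1;\eta_0)))$ and the complementary discrete terms survive, each with multiplicity one.
\end{enumerate}
Step (ii) rests on fine properties of discrete series: their Jacquet modules (Proposition~\ref{prop: partial Jacquet module of discrete series}), the key Proposition~\ref{proposition: key proposition}, and non-unitary irreducibility (Proposition~\ref{prop: non-unitary irreducibility for discrete series}). The last of these is obtained only by theta-lifting to $\SO(2m+1)$ with $\alpha\gg0$ and importing M\oe glin's classical results. Iterating (ii) is what produces the extended multi-segment description for non-negative DDR parameters. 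None of this appears in your proposal.

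Your good-parity step is closer to the paper, but it cites the wrong input and misplaces the difficulty.
\begin{itemize}
\item Lemma~\ref{lemma: partial Jacquet module of elementary A parameter} plays no role. The reduction (Lemma~\ref{lemma: one step in reduction to non-negative DDR}) combines Chen's commutation of spectral transfer with Jacquet modules (Lemma~\ref{lemma: commutation of spectral transfer and partial Jacquet module}), Atobe's identity $D_{\rho,i}S\Theta_{\Ato,\psi_i^!}^{G_i^!}=S\Theta_{\Ato,\psi_{i+1}^!}^{G_{i+1}^!}$ on the $\SO$ side, and the single ratio $\epsilon(\rho\otimes r(a_i+2t))^{b_i}/\epsilon(\rho\otimes r(a_i))^{b_i}=\omega_\rho(-1)^{tb_i}$. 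This yields $D_{\rho,i}T^{\Ato}_{\psi_i,\underline{s}}=T^{\Ato}_{\psi_{i+1},\underline{s}_-}$.
\item The sign is therefore a few lines, not the main obstacle. The relevant quantities are root numbers of self-dual $\rho\otimes r(a)$, not Rankin--Selberg factors.
\item That $D_{\textbf{t}}(\pi)$ is zero or irreducible needs an argument, which you omit. The point is that $D_{\textbf{t}}$ computes highest derivatives, and the derivative theory of \S\ref{section: derivative and socles} makes these irreducible. This includes the $[0,\zeta]_\rho$-derivatives needed when $B_i+t\le0$.
\item Your item (c) is unnecessary. With the admissible order fixed, the theorem is just $D_{\textbf{t}}$ applied to the non-negative DDR decomposition (Proposition~\ref{prop: reduction to non-negative DDR}), reindexing $\mE_{\textbf{t}}\leftrightarrow\mE$ and allowing zero terms. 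Independence of $\textbf{t}$, injectivity and multiplicity freeness are later results proved via theta correspondence.
\end{itemize}
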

	
	\begin{theorem}(Proposition \ref{prop: reduction to good parity})
		Let $\psi$ be a general Arthur parameter with decomposition $\psi=\psi_{\np}^\vee\oplus\psi_{\gp}\oplus\psi_{\np}^\vee$, where $\psi_{\gp}$ is the good parity part of $\psi$. Then, for $\pi\in\Pi_\psi$, the parabolic induction $\tau_{\psi_\np}\rtimes\pi$ is irreducible, and we have $\Pi_\psi=\left\{\tau_{\psi_\np}\rtimes\pi:\pi\in\Pi_{\psi_\gp}\right\}$.
	\end{theorem}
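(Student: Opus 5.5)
We will deduce the statement from W-W. Li's characterization of $\Pi_\psi$ by endoscopic character relations, together with the corresponding reduction for odd special orthogonal groups. The decomposition $\psi=\psi_{\np}\oplus\psi_{\gp}\oplus\psi_{\np}^\vee$ means that $\psi$ factors through the Levi $M=\GL(d)\times\Mp(W_0)$ of a parabolic $P$ of $\Mp(W)$. Here $\tau_{\psi_\np}$ is the irreducible unitary representation of $\GL(d)$ attached to $\psi_\np$, a product of Speh representations, so $\tau_{\psi_\np}\rtimes\pi$ is unitary induction from a genuine representation of the metaplectic Levi.

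\textbf{Character relations.} First, I will show that the (multi-set) virtual representation $\sum_{\pi\in\Pi_{\psi_\gp}}\langle s,\pi\rangle\,\tau_{\psi_\np}\rtimes\pi$ satisfies the endoscopic character identities that characterize $\Pi_\psi$. For $s\in\mS_\psi$, the endoscopic datum of $\Mp(W)$ attached to $s$ is a product $\SO(2n'+1)\times\SO(2n''+1)$. The parameter $\psi$ restricted to it is $(\psi',\psi'')$, with $\psi'=\psi_\np\oplus\psi'_\gp\oplus\psi_\np^\vee$ or the analogous splitting on the other factor. The key point is that $\mS_\psi=\mS_{\psi_\gp}$, because non-good-parity summands contribute no component-group elements, and that the character $\langle\cdot,\pi\rangle$ is transported unchanged.

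\textbf{Transfer and parabolic induction.} Next, I will use the compatibility of Li's transfer factors with parabolic induction: transfer commutes with induction from the Levi $\GL(d)\times\Mp(W_0)$ to the corresponding endoscopic Levi $\GL(d)\times\SO\times\SO$. This is the descent/Jacquet-module compatibility of metaplectic endoscopy, which I expect Li established or which follows from his local trace-formula setup. I will also use the known result for $\SO(2n+1)$ (Arthur, Mœglin) that the packet of $\psi'$ is obtained by inducing $\tau_{\psi_\np}$ against the good-parity packet of $\psi'_\gp$. Together these give that the stable distribution and the $s$-twisted sums match. Since Li's packet is \emph{uniquely} determined by these identities as a multi-set with characters (by linear independence of characters), we get $\Pi_\psi=\{\tau_{\psi_\np}\rtimes\pi\}$ as multi-sets, provided each induced representation is irreducible.

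\textbf{Irreducibility (main obstacle).} This is the step I expect to be hardest. I plan to follow Mœglin's argument for classical groups. Writing $\tau_{\psi_\np}$ as a product of Speh representations $u_\rho(a,b)$ with $\rho\not\cong\rho^\vee$ twisted appropriately, or of bad parity, I will reduce by commuting factors to a single $u_\rho(a,b)\rtimes\pi$. Here the cuspidal support of $u_\rho$ is disjoint from the good-parity line where $\pi$'s cuspidal support lies, so the relevant reducibility points of $\rho|\cdot|^x\rtimes\sigma$ are never met. The first approach I will try uses the Mackey-theory/Tadić-type Jacquet module computation (geometric lemma), which is available for metaplectic groups by the work of Hanzer–Muić. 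With it, I will show that the induced representation has a unique irreducible subrepresentation that also occurs with multiplicity one in the semisimplified Jacquet module, and that the induced representation is self-dual up to the MVW involution. Together these force irreducibility. Alternatively, I will deduce irreducibility from the theta correspondence with odd orthogonal groups, via Gan–Savin and Kudla's induction principle, transporting the known irreducibility on $\SO(2n+1)$.
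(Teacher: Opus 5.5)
Your character-identity step is essentially the paper's Proposition~\ref{prop: proposition 4.5.3 in ArthurMp}: spectral transfer commutes with parabolic induction, combined with M\oe glin's \S 6 result for $\SO(2n+1)$. So the equality $\pi(\psi,\varepsilon)=\tau_{\psi_\np}\rtimes\pi(\psi_\gp,\varepsilon)$ is fine. The gap is the irreducibility of $\tau_{\psi_\np}\rtimes\pi$, which is the real content of the statement. Neither of your two routes proves it.

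Route (a) has three problems. First, the reduction ``by commuting factors'' to a single $u_\rho(a,b)\rtimes\pi$ does not iterate: after one induction, $u_2\rtimes\pi$ is no longer in a good-parity packet. Second, avoiding the reducibility points of $\rho|\cdot|^x\rtimes\sigma$ does not give irreducibility. Take $\rho=\triv$ and $\sigma$ a discrete series of $\widetilde{G}$. Every $|\cdot|^x\rtimes\sigma$ with $x\in\ZZ$ is irreducible, by Proposition~\ref{prop: non-unitary irreducibility for discrete series}, the MVW involution, and the fact that $\triv\otimes r(1)$ has bad parity. Yet the irreducible $|\cdot|^2\times|\cdot|^{-1}$, which is supported on that coset, gives $(|\cdot|^2\times|\cdot|^{-1})\rtimes\sigma\cong(|\cdot|^2\times|\cdot|)\rtimes\sigma$, and this is reducible. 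Third, for self-dual $\rho$ you have $u\cong u^\vee$, so $u\otimes\pi$ occurs at least twice in $\mu^*(u\rtimes\pi)$: once from $u\otimes 1$ and once from the contragredient term of $M^*(u)$ (compare $\Jac_\rho(\rho\rtimes\sigma)=2\sigma$). Hence a multiplicity-one socle argument cannot be run on the unitary induced representation itself.

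Route (b) is only a slogan. The equal-rank Gan--Savin correspondence is not known to carry non-tempered packets into A-packets. Theta lifting with Howe duality also only compares isomorphism classes of socle and cosocle, with no control of multiplicities.

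The paper supplies exactly these missing mechanisms, in the following steps.
\begin{enumerate}
\item[(i)] It reduces to $\psi_\gp$ a non-negative DDR via $D_{\textbf t}$. It then reduces to discrete $\pi$ by induction on $\sum l_i$, using $\pi(\mE)=\soc([B,-A]_\rho\rtimes\pi(\mE^-))$ and an SI check by derivatives.
\item[(ii)] It shifts the blocks of $\psi_\np$, again by derivatives, to the position $0\le B_1\le A_1<B_2\le\cdots$.
\item[(iii)] It proves Lemma~\ref{lemma: reduction to good parity - lemma}: $[A,-B]_\rho\rtimes\pi$ is irreducible for discrete $\pi$. Multiplicity one comes from the SI property of $[B,-A]_{\rho^\vee}\rtimes\pi$, shown by derivatives. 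Socle $=$ cosocle is shown through $\theta_{-\alpha}$ with $\alpha\gg0$, using Lemma~\ref{lemma: compatibility of theta lifts and socle}, the stable-range Adams conjecture of Proposition~\ref{prop: adams conjecture - discrete case}, M\oe glin's irreducibility on the orthogonal side, and Howe duality.
\item[(iv)] With this it rewrites $[A,-B]_\rho\rtimes\pi\cong[B,-A]_{\rho^\vee}\rtimes\pi$. It then embeds $\tau_{\psi_\np}\rtimes\pi$ into $[B,-A]_{\rho^\vee}\times[B,-A]_\rho\rtimes\pi'$, where $\pi'$ is irreducible by induction on $\sum(A_i-B_i)$.
\item[(v)] A derivative computation shows this is SI, and unitarity gives irreducibility.
\end{enumerate}
It is the flip to the non-self-dual segments $[B,-A]$ that restores multiplicity one, and this is what your proposal lacks.
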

	
	\begin{theorem}(Corollary \ref{coro: A-packet is multiplicity free})
		Let $\psi$ be an Arthur parameter, then $\Pi_\psi$ is multiplicity free.
	\end{theorem}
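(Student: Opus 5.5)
The plan is to reduce to good parity with the second theorem above (Proposition \ref{prop: reduction to good parity}), and then to read multiplicity freeness off the explicit description of good-parity packets in the first theorem (Theorem \ref{theorem: A-packets can be constructed via extended multi-segments}).

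\textbf{Reduction to good parity.} Write $\psi=\psi_{\np}^\vee\oplus\psi_{\gp}\oplus\psi_{\np}$. By Proposition \ref{prop: reduction to good parity}, each $\tau_{\psi_\np}\rtimes\pi$ with $\pi\in\Pi_{\psi_\gp}$ is irreducible, and $\Pi_\psi$ is the multi-set of these inductions. The multiplicity of a given irreducible $\sigma$ in $\Pi_\psi$ is therefore the sum, over $\pi\in\Pi_{\psi_\gp}$ with $\tau_{\psi_\np}\rtimes\pi\cong\sigma$, of the multiplicity of $\pi$. Two facts suffice:
\begin{itemize}
\item the map $\pi\mapsto\tau_{\psi_\np}\rtimes\pi$ is injective;
\item $\Pi_{\psi_\gp}$ is multiplicity free.
\end{itemize}
For injectivity, $\pi$ is the unique irreducible subquotient of the appropriate Jacquet module of $\tau_{\psi_\np}\rtimes\pi$ with the correct cuspidal support along the Levi $\GL\times\Mp$. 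Here we use that $\psi_\np$ is not of good parity, so its cuspidal support does not interfere with that of $\pi$. One can either compute this with the geometric lemma (Tadi\'c's formula) or cite the injectivity statement that presumably accompanies Proposition \ref{prop: reduction to good parity}.

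\textbf{Good parity.} Now assume $\psi$ has good parity. By the endoscopic character relations, $\Pi_\psi$ decomposes as a multi-set into the pieces $\pi_\Ato(\psi,\varepsilon)$, $\varepsilon\in\mS_\psi^\vee$, where $\pi_\Ato(\psi,\varepsilon)$ is the sum of those $\pi\in\Pi_\psi$ whose character $\langle\cdot,\pi\rangle_\Ato$ equals $\varepsilon$. By the first theorem, $\pi_\Ato(\psi,\varepsilon)=\bigoplus\pi(\mE)$, where $\mE$ runs over equivalence classes of extended multi-segments with $(\psi_\mE,\varepsilon_\mE)=(\psi,\varepsilon)$. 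Each $\pi(\mE)$ is irreducible or zero. So multiplicity freeness amounts to two claims:
\begin{itemize}
\item \textbf{(a)} if $\pi(\mE)\cong\pi(\mE')\neq0$ with $\psi_\mE=\psi_{\mE'}$, then $\mE$ and $\mE'$ are equivalent;
\item \textbf{(b)} different $\varepsilon$ yield disjoint sets of representations.
\end{itemize}
Claim (b) should follow from (a), since equivalent extended multi-segments have the same $\varepsilon_\mE$.

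\textbf{Proof of (a), the main obstacle.} This is the metaplectic analogue of Atobe's result that the equivalence classes of extended multi-segments attached to a fixed parameter give pairwise non-isomorphic representations. I would induct on the size of $\psi$.
\begin{enumerate}
\item Using the explicit derivative (Jacquet module) formulas for $\pi(\mE)$ established in the construction, choose the extremal $\rho|\cdot|^x$ attached to the largest endpoint $B+l$, and apply the highest derivative. This recovers the data of the first block of $\mE$ up to the allowed operations.
\item Remove that block, obtaining smaller extended multi-segments $\mE_1$ and $\mE'_1$ with $\pi(\mE_1)\cong\pi(\mE'_1)$ and the same parameter.
\item Apply the induction hypothesis to $\mE_1$ and $\mE'_1$.
\item Lift back: show the equivalence of $\mE_1$ and $\mE'_1$ extends to $\mE$ and $\mE'$, using the explicit list of equivalence operations (changes of admissible order, and the $\mathrm{ui}$/dual moves). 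Pulling back these operations is where most of the bookkeeping will go.
\end{enumerate}
If this combinatorial route proves too heavy, I would try a more direct argument. Recover $\mE$ up to equivalence from the Langlands data of $\pi(\mE)$ via the Atobe-type algorithm, transported through the theta correspondence with odd orthogonal groups, where multiplicity freeness is already known (M\oe glin). Since $\Mp(W)$ and $\SO(V)$ share Langlands and Arthur parameters under the Gan--Savin bijection, injectivity on the odd orthogonal side should transfer.
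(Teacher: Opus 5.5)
Your reduction to good parity is sound. You also usefully spell out the injectivity of $\pi\mapsto\tau_{\psi_\np}\rtimes\pi$, which the paper uses only implicitly. Once the admissible orders $\succ_\rho$ are fixed, your claim (a) is exactly Proposition \ref{prop: associated representation of extended multi-segment is isomorphic iff equivalent}, and (b) and the corollary follow from it as you say.

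The gap is that (a) is never proved. Your primary route is only an outline. Nothing shows that a highest derivative of $\pi(\mE)$ at the largest endpoint recovers the first block of $\mE$; the paper establishes no derivative formula for a general good-parity $\pi(\mE)$ with overlapping or negative segments. The inductive lifting in step 4 is where the whole content of M\oe glin's multiplicity-one theorem would sit.

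Step 4 also conflates notions of equivalence. The paper's equivalence keeps $\succ_\rho$ fixed and only lets $\eta_i$ vary when $l_i=b_i/2$. Row exchanges preserve $\pi(\mE)$ but in general change $\varepsilon_\mE$, because Atobe's normalization $\varepsilon_\psi^{\Ato/\W}$ depends on the order. The union-intersection and dual operations change $\psi_\mE$ itself. So a chain of such moves neither yields (a) nor lets you deduce (b) from it.

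Your fallback is closer to the paper but uses the wrong correspondence. The Gan--Savin equal-rank correspondence matches $L$-parameters. Nothing relates $\theta(\pi(\mE))$ to an extended multi-segment on the orthogonal side, so injectivity does not simply transfer.

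The paper instead lifts to $\Or(V_{2m+1})$ with $\alpha=2m-2n\gg0$ and proves the explicit formula $\theta_{-\alpha}(\pi(\mE))|_{\SO(V_{2m+1})}=\pi(\mE_\alpha)$ (Proposition \ref{prop: theta lifts of extended multi-segment}). Here $\mE_\alpha$ adjoins the block $([\frac{\alpha-1}{2},-\frac{\alpha-1}{2}]_\triv,l_\alpha,\eta_\alpha)$ at the bottom of $\succ_\triv$, with $(l_\alpha,\eta_\alpha)$ determined by $\varepsilon_\mE(s_0)$. The argument then runs:
\begin{enumerate}
\item[(i)] $\pi(\mE)\cong\pi(\mE')$ gives $\pi(\mE_\alpha)\cong\pi(\mE'_\alpha)$;
\item[(ii)] the classical result of M\oe glin and Atobe gives $\mE_\alpha\sim\mE'_\alpha$;
\item[(iii)] deleting the extra block gives $\mE\sim\mE'$.
\end{enumerate}

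Alternatively, your derivative idea can be made rigorous without theta, using results already in the paper. With $\textbf{t}$ fixed, $D_{\textbf{t}}$ is injective on nonzero values (Proposition \ref{prop: reduction to non-negative DDR}), so $\pi(\mE_{\textbf{t}})\cong\pi(\mE'_{\textbf{t}})$. For non-negative DDR, $\pi(\mE)$ is the Langlands subrepresentation of the standard module built from $\pi(\phi,\varepsilon)$ and the segments $[B_i+j,-A_i+j]_\rho$, $0\le j<l_i$. These segments have exponents $\le-\frac12$, and their top points lie in the disjoint intervals $[B_i,A_i]$. Uniqueness of Langlands data then recovers every $l_i$. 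The local Langlands correspondence for $\Mp$ then recovers $\eta_i$ whenever $l_i<b_i/2$.
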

	
	\par Let $V$ be an odd-dimensional split vector space of discriminant $1$ over $F$. There exists a theta correspondence map $\theta_{V,W}$ from the set of equivalence classes of irreducible genuine representations of $\Mp(W)$ to the set of irreducible representations of $\Or(V)$. Adams conjectured the following:
	
	\begin{conjecture}\label{conjecture: the adams conjecture}
		Let $\pi\in\Pi_\psi$ with $\psi$ an Arthur parameter. If $\theta_{V,W}(\pi)\neq0$, then $\theta_{V,W}(\pi)\in\Pi_{\psi_\alpha}$, where $\alpha=\dim(V)-\dim(W)-1$ and $$\psi_\alpha=\psi\oplus\triv\otimes r(1)\otimes r(\alpha).$$
	\end{conjecture}
	
	\par As a consequence of M\oe glin's construction of Arthur packets, in \cite{Moeglin2011_adams_conjecture}, she proved this conjecture for classical groups in the case where $\dim(V)-\dim(W)\gg0$. In our work, we generalize her result to the metaplectic groups and obtain the following theorem:
	
	\begin{theorem}(Theorem \ref{theorem: adams conjecture})
		If $\alpha\gg0$, then Conjecture \ref{conjecture: the adams conjecture} is true.
	\end{theorem}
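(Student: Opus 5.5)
The plan is to follow M\oe glin's strategy in \cite{Moeglin2011_adams_conjecture}, using the explicit construction of Arthur packets via extended multi-segments (Theorem \ref{theorem: A-packets can be constructed via extended multi-segments}) in place of her elementary-packet construction. There are three stages.

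\textbf{Reduction to good parity.} First I reduce to $\psi$ of good parity. By Proposition \ref{prop: reduction to good parity}, every $\pi\in\Pi_\psi$ has the form $\tau_{\psi_\np}\rtimes\pi_0$ with $\pi_0\in\Pi_{\psi_\gp}$. Kudla's induction principle (compatibility of theta lifts with parabolic induction) should then give
$$\theta_{V,W}(\pi)=\tau_{\psi_\np}\rtimes\theta_{V',W'}(\pi_0)$$
for the appropriate smaller towers, at least when $\alpha\gg0$ so that the induced lift is irreducible. The analogous irreducibility statement for $\Or(V)$ then reduces membership in $\Pi_{\psi_\alpha}$ to the good parity case. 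Note that $\triv\otimes r(1)\otimes r(\alpha)$ has good parity for $\Or(V)$ precisely because $\alpha$ has the right parity; this is where the convention ``$\alpha=\dim V-\dim W-1$'' enters.

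\textbf{The key identity for good parity.} For $\psi$ of good parity, write $\pi=\pi(\mE)$ for an extended multi-segment $\mE$. Let $\mE_\alpha$ be the extended multi-segment for $\Or(V)$ obtained by adjoining one new row: the block $(\triv,\,[(\alpha-1)/2,-(\alpha-1)/2])$, placed last in the admissible order (it is the largest since $\alpha\gg0$), with sign chosen so that the sign condition is compatible with the character twist of the theta correspondence. The claim to prove is
$$\theta_{V,W}(\pi(\mE))=\pi(\mE_\alpha),$$
which immediately gives $\theta_{V,W}(\pi)\in\Pi_{\psi_\alpha}$.

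I would prove this identity by induction along Atobe's construction, in three steps.
\begin{enumerate}
\item \emph{Discrete series.} When $\mE$ corresponds to a discrete series, the identity is the known description of theta lifts in the stable range for $\Mp$--$\Or$ dual pairs (Gan--Savin, Atobe--Gan), expressed in M\oe glin's Langlands parameters and $\varepsilon$-data.
\item \emph{Non-negative DDR.} In this case $\pi(\mE)$ is defined as the socle of an induced representation $\times_i\Delta_i\rtimes\pi_{\text{ds}}$, or equivalently through successive Jacquet functors. Theta lifts for $\alpha\gg0$ commute with the relevant Jacquet operations $\Jac_x$ for $x$ far from $(\alpha-1)/2$, by Kudla's filtration. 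Hence the socle description transfers from $\pi(\mE)$ to $\pi(\mE_\alpha)$.
\item \emph{General good parity.} Here $\pi(\mE)$ is obtained from a shifted non-negative $\mE'$ by applying inverse derivatives (partial Jacquet module inverses). These again commute with $\theta$ because the new block sits at the extreme end.
\end{enumerate}

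\textbf{Main obstacle.} The hardest step is the sign bookkeeping in step 1 and its propagation through steps 2 and 3. Genuine representations of $\Mp$ bring in the additive character and the central character twist. Moreover, as the introduction's example with $\psi_1,\psi_2$ shows, signs for metaplectic groups behave differently from the naive analogue of M\oe glin's, so the sign on the added row of $\mE_\alpha$ must be chosen carefully. I would first determine it by testing cuspidal $\pi$ against Theorem \ref{theorem: parameter of cuspidal representation} and the known first-occurrence/conservation results, and then verify that the choice is independent of the equivalence class of $\mE$ (via compatibility with the equivalence operations on extended multi-segments). A secondary difficulty is fixing how large $\alpha$ must be: it must exceed twice the largest endpoint occurring in $\mE$, so that all Jacquet computations avoid the new segment.
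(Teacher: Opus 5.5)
Your plan is essentially the paper's proof. The paper deduces Theorem \ref{theorem: adams conjecture} from Proposition \ref{prop: reduction to good parity} together with Lemma \ref{lemma: compatibility of theta lifts and socle}, and from the good-parity identity $\theta_{-\alpha}(\pi(\mE))=\pi(\mE_\alpha)$ of Proposition \ref{prop: theta lifts of extended multi-segment}, proved by exactly your three steps: discrete series via Atobe--Gan's first-occurrence results for $\triv$-cuspidal $\pi$ plus an induction on $a_{\triv,\phi,\varepsilon}$ using M\oe glin's elementary packets (Proposition \ref{prop: adams conjecture - discrete case}); non-negative DDR via socles and Lemma \ref{lemma: compatibility of theta lifts and socle}; and general good parity via Lemma \ref{lemma: compatibility of theta lifts and highest derivative} applied to $D_{\textbf{t}}$.

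Some small corrections. The new block has $B=-\frac{\alpha-1}{2}<0$, so admissibility $(\mP')$ forces it to be placed first ($\prec_{\triv}$ every other block), not last. Its data are simply $(l_\alpha,\eta_\alpha)=(\frac{\alpha}{2},1)$ or $(\frac{\alpha}{2}-1,1)$ according as $\varepsilon_\mE(s_0)=1$ or $-1$, where $s_0(\rho,a,b)=(-1)^{m(\rho,a,b)}$. Finally, $\pi(\mE)$ is obtained from $\pi(\mE_{\textbf{t}})$ by applying the derivatives $D_{\textbf{t}}$, not their inverses.
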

	
	\subsection*{Idea of the proofs}
	
	\par The strategy in the article can be summarized as follows:
	
	\begin{enumerate}
		\item[$(1)$] Let $\psi$ be a non-negative DDR. To construct representations in $\Pi_\psi$, we first generalize \cite[Theorem 7.5]{Xu2017_Moeglin} to the metaplectic groups via endoscopy theory (Theorem \ref{theorem: Mp version of Theorem 7.5 in Xu_moeglin}). This result provides an inductive description for the representation $\pi(\psi,\varepsilon)$ with $\varepsilon\in\mS_\psi$.
		\item[$(2)$] Using the inductive description in Theorem \ref{theorem: Mp version of Theorem 7.5 in Xu_moeglin}, we follow M\oe glin's method in \cite{Mœeglin2009_paquets} to construct representations in $\Pi_\psi$. Specifically, these representations are the socles (i.e., maximal semisimple subrepresentations) of certain parabolic inductions (Theorem \ref{theorem: A-packets can be constructed via extended multi-segments - NDDR case}). This step involves only representation-theoretic techniques and does not involve endoscopy theory. Hence, it is parallel to \cite{Mœeglin2009_paquets}.
		\item[$(3)$] To construct representations in $\Pi_\psi$ for $\psi$ of good parity, we follow Atobe's method in \cite{Atobe2022_A-packet}. By the compatibility of spectral transfer and partial Jacquet module, which was established by Fei Chen in \cite{Chen2024_commutationtransferaubertzelevinskiinvolution}, we prove that every representation in $\Pi_\psi$ is a derivative of a representation in $\Pi_{\psi_\textbf{t}}$, where $\psi_{\textbf{t}}$ is a non-negative DDR (Proposition \ref{prop: reduction to non-negative DDR}). This enables us to describe the representations in $\Pi_\psi$ by Atobe's extended multi-segments (Theorem \ref{theorem: A-packets can be constructed via extended multi-segments}).
		\item[$(4)$] For general $\psi$, the representations in $\Pi_\psi$ are irreducible parabolic inductions of the form $\tau\rtimes\pi$. Here $\tau$ is a generalized segment, $\pi\in\Pi_{\psi_\gp}$, and $\psi_\gp$ is the good parity part of $\psi$ (Proposition \ref{prop: reduction to good parity}). We will prove this fact by following \cite[\S 6]{Mœglin2006_arthur_packets}. Since the proof does not involve endoscopy theory, our proof is parallel to \cite[\S 6]{Mœglin2006_arthur_packets}.
		\item[$(5)$] The arguments used in M\oe glin's works are highly technical. Therefore, to avoid repetitions, we use theta correspondence to transfer the classical results to metaplectic groups wherever possible. For example, we obtain the theorem of multiplicity one by theta correspondence. To make the theory of theta correspondence applicable, we study the compatibility of theta correspondence with Arthur parameters and prove the Adams conjecture for $\alpha\gg0$ in \S \ref{section: Adams conjecture} based on Atobe and Gan's earlier work in \cite{Atobe2017_local}.
		\item[$(6)$] While the proofs of many propositions can be simplified by theta correspondence, there are still some that can not, such as the proofs of Proposition \ref{proposition: key proposition} and Theorem \ref{theorem: construction of A-packet non-negative DDR case}. For these, we must repeat M\oe glin's work. These proofs are lengthy and technical, but they differ only slightly from M\oe glin's work.
	\end{enumerate}

	\subsection*{Organization}
	
	\par We now outline the structure of this article. In \S \ref{section: notation and preliminaries}, we recall the necessary background and results needed in our work. In \S \ref{section: derivative and socles}, we generalize Atobe's theory of derivatives and socles to metaplectic groups. In \S \ref{section: discrete series}, we study the properties of discrete series, which are crucial for the construction of Arthur packets. In \S \ref{section: non-negative DDR}, we construct Arthur packets for non-negative DDR parameters. The proof of Theorem \ref{theorem: construction of A-packet non-negative DDR case} is deferred to \S \ref{section: proof of the theorem}, since it is lengthy and technical. In \S \ref{section: construction of Arthur packets}, we construct Arthur packets for general parameters. In \S \ref{section: Adams conjecture}, we study the compatibility of theta lifts and Arthur parameters and prove the Adams conjecture in the $\alpha\gg0$ case. Finally, in \S \ref{section: Consequences of the Adams conjecture}, we derive some consequences of the Adams conjecture, including the multiplicity free property of Arthur packets.
	
	\subsection*{Acknowledgement}
	
	\par The author would like to thank his advisor Prof. Wen-Wei Li for his detailed
	instruction and helpful advice. The author is grateful to Marcela Hanzer for
	useful comments.
	
	\section{Notations and preliminaries}\label{section: notation and preliminaries}
	
	\par In this section, we introduce the notations used throughout the article.
	
	\subsection{Generalized segments}\label{subsection: generalized segments}
	
	\par Let $\rho$ be an irreducible cuspidal representation of $\GL(d_\rho)$ and let $x, y$ be real numbers such that $x-y\in\ZZ$. A tuple of representations of the form $$(\rho|\cdot|^x,\rho|\cdot|^{x+\zeta},\dots,\rho|\cdot|^{y-\zeta},\rho|\cdot|^y)$$ is called a segment, where $\zeta=\Sgn(y-x)$. We denote such a segment by $[x,y]_\rho$.
	\index{segment}
	
	\par It is well known that the parabolic induction representation $$\rho|\cdot|^x\times\rho|\cdot|^{x+\zeta}\times\cdots\times\rho|\cdot|^y$$ has a unique irreducible subrepresentation and a unique irreducible quotient. We denote the subrepresentation by $\Z([x,y]_\rho)$ and the quotient by $\rL([x,y]_\rho)$.
	
	\par For simplicity, we abuse notation by using $[x,y]_\rho$ to also denote the representation $\Z([x,y]_\rho)$.
	\index{x-y segment @$[x,y]_\rho$}
	
	\par Fix a $\zeta\in\left\{\pm 1\right\}$. A generalized segment is a matrix $$\begin{bmatrix}
	    x_{11}&\dots&x_{1n}\\
	    \vdots&&\vdots\\
	    x_{m1}&\dots&x_{mn}\\
	\end{bmatrix}_\rho$$
	with $x_{i+1,j}=x_{ij}-\zeta$, $x_{i,j+1}=x_{ij}+\zeta$ for all $i,j$. The representation attached to the above generalized segment is defined to be the unique irreducible subrepresentation of $$[x_{11},x_{1n}]_\rho\times[x_{21},x_{2n}]_\rho\times\cdots\times[x_{m1},x_{mn}]_\rho.$$ We also abuse notation and simply use the above matrix to denote the corresponding representation.
	\index{generalized segment}
	
	\par It should be noted that, by M\oe glin-Waldspurger algorithm in \cite{Moeglin1986_involution}, the generalized segment representation above is also the unique irreducible subrepresentation of $$[x_{11},x_{m1}]_\rho\times[x_{12},x_{m2}]_\rho\times\cdots\times[x_{1n},x_{mn}]_\rho.$$
	
	\subsection{Metaplectic groups}
	
	\par Let $(W_{2n},\left<\cdot,\cdot\right>)$ be a symplectic $F$-vector space of dimension $2n$. We fix a symplectic basis $$p_1,\dots,p_n,q_1,\dots,q_n$$ of $W$. Then, there is a standard Borel pair $(B,T)$ of $\Sp(W)$ corresponds to the flag $\left\{0\right\}\subset Fp_1\subset\cdots\subset\bigoplus_{i=1}^nFp_i$. The standard parabolic and Levi subgroups are thus defined.
	
	\par Given an additive character $\uppsi$ of $F$, the metaplectic group associated to $(W_{2n},\left<\cdot,\cdot\right>)$ and $\uppsi$ in this work is the central extension of locally compact groups $$1\rightarrow\bm\mu_8\rightarrow\widetilde{\Sp}(W)\rightarrow\Sp(W)\rightarrow1.$$ The extension is constructed using the Schr\"{o}dinger model for the irreducible representations of Heisenberg group of $(W_{2n},\left<\cdot,\cdot\right>)$ with central character $\uppsi$.
	\index{metaplectic group}
	
	\par We will denote $\widetilde{\Sp}(W_{2n})$ by $\widetilde{\Sp}(2n)$ for simplicity. For any subgroup $H\subset\Sp(2n)$, we denote the inverse image of $H$ in $\widetilde{\Sp}(2n)$ by $\widetilde{H}$. One of the advantages of working with the $\bm\mu_8$ extension instead of working with the usual $\bm\mu_2$ extension is that, for any standard parabolic subgroup $P=MN$, where $M=\prod_{i=1}^k\GL(n_i)\times\Sp(2m)\subset\Sp(2n)$ is the Levi factor of $P$ and $N$ is the unipotent radical, we have canonical isomorphisms (depending on the additive character $\uppsi$)
	$$\begin{aligned}
		\widetilde{M}&\cong\prod_{i=1}^k\GL(n_i)\times\widetilde{\Sp}(2m),\\
		\widetilde{N}&\cong N.
	\end{aligned}$$ We say $\widetilde{P}$ is a standard parabolic subgroup of $\widetilde{\Sp}(2n)$ and say $\widetilde{P}=\widetilde{M}\widetilde{N}=\widetilde{M}N$ is the Levi decomposition of $\widetilde{P}$.
	\index{Sp tilde@$\widetilde{\Sp}(2n)$}
	
	\par Write $G_{2n}=\Sp(2n)$, $\widetilde{G}_{2n}=\widetilde{\Sp}(2n)$. We say a representation $(\pi, V)$ of $\widetilde{G}_{2n}$ is genuine if $\bm\mu_8$ acts by $z\mapsto z\cdot\Id$. We define the sets $$\Pi_-(\widetilde{G}_{2n})\supset\Pi_{\unit,-}(\widetilde{G}_{2n})\supset\Pi_{\temp,-}(\widetilde{G}_{2n})\supset\Pi_{2,-}(\widetilde{G}_{2n})$$ to be the sets of isomorphism classes of genuine representations of $\widetilde{G}_{2n}$, which are respectively all such representations, the unitary ones, the tempered ones, and the essentially square-integrable ones.
	\index{G tilde@$\widetilde{G}_{2n}$}
	\index{genuine! representation}
	\index{Pi genuine@$\Pi_-(\widetilde{G}_{2n})$, $\Pi_{\unit,-}(\widetilde{G}_{2n})$}
	\index{Pi genuine2@$\Pi_{\temp,-}(\widetilde{G}_{2n})$, $\Pi_{2,-}(\widetilde{G}_{2n})$}
	
	\par Let $\mR_-(\widetilde{G}_{2n})$ be the Grothendieck group of the category of genuine smooth representations of $\widetilde{G}_{2n}$ of finite length and write $\mR_-(\widetilde{G})=\bigoplus_{n\ge0}\mR_-(\widetilde{G}_{2n})$. Now, the normalized parabolic induction defines a map $\mR(\GL)\otimes\mR_-(\widetilde{G}), \pi\otimes\sigma\mapsto\pi\rtimes\sigma$ and the normalized Jacquet module gives a map $\mu^*:\mR_-(\widetilde{G})\rightarrow\mR(\GL)\otimes\mR_-(\widetilde{G})$ (for the definition of $\mu^*$, see \cite[\S 4.2]{Hanzer2010}).
	\index{R grothendieck group@$\mR_-(\widetilde{G}_{2n})$}
	
	\par The metaplectic version of the Tadi\'{c} formula is proved in \cite{Hanzer2010}:
	
	\begin{proposition}(\cite[Proposition 4.5]{Hanzer2010})\label{prop: Tadic formula}
		For $\pi\in\mR(\GL)$ and $\sigma\in\mR_-(\widetilde{G}_{2n})$, we have the following Tadi\'{c} formula $$\mu^*(\pi\rtimes\sigma)=M^*(\pi)\rtimes\mu^*(\sigma).$$ The $M^*$ above is defined by $M^*=(m\otimes\Id)\circ(\vee\otimes m^*)\circ\kappa\circ m^*$, where $m$ and $m^*$ are the multiplication and comultiplication of $\mR(\GL)$, $\kappa$ is defined by $\pi_1\otimes\pi_2\mapsto\pi_2\otimes\pi_1$, and $\vee$ is the contragredient.
	\end{proposition}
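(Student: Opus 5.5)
This is the Tadić formula for metaplectic groups, quoted from Hanzer. To prove it I would follow Tadić's original argument for $\Sp(2n)$, which rests on the geometric lemma of Bernstein–Zelevinsky. I would check that each ingredient carries over to $\widetilde{G}_{2n}$ using the canonical splittings $\widetilde{M}\cong\prod\GL(n_i)\times\widetilde{\Sp}(2m)$ and $\widetilde{N}\cong N$.

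\textbf{Step 1: reduce to Levi components.} Since $\mu^*$ is additive, it suffices to treat $\pi=\tau$ an irreducible representation of $\GL(a)$ and $\sigma$ a genuine representation of $\widetilde{G}_{2n}$. Then $\tau\rtimes\sigma$ is induced from the maximal parabolic $\widetilde{P}_a$ with Levi $\GL(a)\times\widetilde{G}_{2n}$. The $(k)$-component of $\mu^*(\tau\rtimes\sigma)$ is the semisimplified Jacquet module $r_{\widetilde{P}_k}\circ \mathrm{Ind}_{\widetilde{P}_a}$.

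\textbf{Step 2: apply the geometric lemma.} The double cosets $P_k\backslash \Sp(2n+2a)/P_a$ are indexed by Weyl elements $w\in W^{M_k,M_a}$. Each $w$ is determined by integers $i,j\ge0$ with $i+j\le\min(a,k)$: $i$ coordinates of $\GL(a)$ go to the front $\GL(k)$ block unchanged, $j$ go there after a sign change (inversion), and the rest stay. The lemma expresses the Jacquet module as a filtration whose pieces are
$$\mathrm{Ind}\circ w\circ r_{w^{-1}}.$$

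For the metaplectic cover the lemma still holds. $N$ splits canonically, and Bernstein–Zelevinsky's argument (the Mackey theory of $l$-sheaves) is purely about $\widetilde{P}_k$- and $\widetilde{P}_a$-orbits. What needs care is how a Weyl representative $\tilde w\in\widetilde{\Sp}$ conjugates the $\GL$ factors. On the $\GL$ blocks, $\tilde w$ acts as $g\mapsto {}^tg^{-1}$ twisted by a genuine character. Through the splitting over $\GL(n_i)$ (defined via the Schrödinger model and $\uppsi$), this character should be $\chi_\uppsi(\det)$-type, i.e.\ a quadratic-like correction.

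\textbf{Step 3: the main obstacle — these cocycle twists.} The key point is that the twisted contragredient $g\mapsto \tau({}^tg^{-1})\cdot(\text{character})$ applied to the $j$ inverted coordinates yields exactly $\tau^\vee$ on that block, with no leftover character. I would try to verify this from Kudla's explicit Rao cocycle computation for the Siegel-type Levi. The $\bm\mu_8$ normalization is chosen so that $\widetilde{\GL(m)}$ splits with a character which, when combined with the Weyl action, gives $w$-equivariance. Concretely, I would compute the cocycle $c(\tilde w m\tilde w^{-1}, \cdot)$ and show that the resulting genuine characters cancel in the induced/Jacquet pair. If an unwanted twist by $\chi_\uppsi(\det)$ does appear, it should cancel because the same character enters both the induction and the Jacquet functor normalizations.

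\textbf{Step 4: bookkeeping.} With the twist settled, summing over $(i,j)$ and writing the Jacquet modules of $\tau$ via $m^*$ (twice) and of $\sigma$ via $\mu^*$ gives the terms
$$\tau_1^\vee\times\tau_2\otimes\tau_3\rtimes\sigma',$$
indexed by $m^*$ applied to $\tau$ twice and by $\mu^*(\sigma)$. This is exactly $M^*(\tau)\rtimes\mu^*(\sigma)$ with $M^*=(m\otimes\Id)\circ(\vee\otimes m^*)\circ\kappa\circ m^*$, matching Tadić's linear case verbatim.
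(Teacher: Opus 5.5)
Your route, Tadić's argument through the Bernstein--Zelevinsky geometric lemma lifted to the cover, is the right one; it is how the cited result of Hanzer--Mui\'c is proved. The paper itself gives no argument beyond that citation and the remark that the character $\alpha$ in Hanzer's formula disappears for the $\bm\mu_8$ cover. But there is a gap in your Step 3, which is the only place where the metaplectic statement can differ from Tadić's: it is not carried out, and what you sketch there is wrong.

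You predict a twist of $\chi_\uppsi(\det)$-type and argue that, if one appears, it ``should cancel because the same character enters both the induction and the Jacquet functor normalizations.'' It would not cancel. The normalizations involve only modulus characters, and the geometric lemma already matches those up. A character produced by conjugating a $\GL$-block through $\tilde w$ sits only on the $j$ inverted coordinates and survives into the final formula. This is exactly what happens for the $\bm\mu_2$ cover: there $\GL$ must be split using $\chi_\uppsi$, and the resulting discrepancy is the character $\alpha$ in Hanzer's Proposition 4.5. Also, since the $\GL$ factors are canonically split here, any discrepancy would be an ordinary character of $\GL$, not a genuine one. So as written you have not established the twist-free formula.

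The missing idea is a one-line computation in the Schr\"odinger model. The splitting is $s(m(a))=\omega_\uppsi(m(a))$, which acts on $S(F^j)$ (row vectors) by $f(x)\mapsto|\det a|^{1/2}f(xa)$. A lift of the Weyl element inverting those $j$ coordinates acts, up to a scalar in $\bm\mu_8$ that is irrelevant for conjugation, by the partial Fourier transform $\mathcal{F}f(y)=\int f(x)\uppsi(x\,{}^ty)\,dx$. Substituting $u=xa$ gives $\mathcal{F}(f(\cdot\,a))(y)=|\det a|^{-1}(\mathcal{F}f)(y\,{}^ta^{-1})$, hence
\[ \mathcal{F}\circ\omega_\uppsi(m(a))\circ\mathcal{F}^{-1}=\omega_\uppsi(m({}^ta^{-1})) \]
on the nose. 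The permutation-type Weyl representatives lie in the Siegel Levi, on which $s$ is a homomorphism. The splitting over unipotent radicals is unique. So nothing else intervenes, and $\tilde w$ transports $\tau$ on the inverted block to $g\mapsto\tau({}^tg^{-1})\cong\tau^\vee$ with no character. The rest is Tadić's bookkeeping.

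There is also a minor slip in Step 4: the displayed term omits the $\GL$-part of $\mu^*(\sigma)$. It should read $\tau_1^\vee\times\tau_2\times\lambda\otimes\tau_3\rtimes\sigma'$ with $\lambda\otimes\sigma'\le\mu^*(\sigma)$.
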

	
	\par Note that the Tadi\'{c} formula here differs slightly from that in \cite{Hanzer2010}. That is, we do not need the character $\alpha$ that occurs in \cite[Proposition 4.5]{Hanzer2010}. This difference is due to the fact that \cite{Hanzer2010} deals with the $\bm\mu_2$ extension, whereas we consider the $\bm\mu_8$ extension.
	
	\par The MVW-involution is a powerful tool to study the irreducible representations of classical groups. For metaplectic groups, the existence of the MVW-involution was established in \cite{Mœglin2006_correspondances}, from which we can deduce the following proposition:
	
	\begin{proposition}\label{prop: consequence of MVW-involution}
		Let $\pi\in\Pi(\GL)$, $\sigma\in\Pi_-(\GL)$, then we have
		\begin{enumerate}
			\item[(1)] $\pi\rtimes\sigma=\pi^\vee\rtimes\sigma$ in $\mR_-(\GL)$.
			\item[(2)] $\tau$ is an irreducible subrepresentation of $\pi\rtimes\sigma$ if and only if $\tau$ is an irreducible quotient representation of $\pi^\vee\rtimes\sigma$.
		\end{enumerate}
	\end{proposition}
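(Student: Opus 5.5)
The plan is to derive both statements from the MVW-involution of \cite{Mœglin2006_correspondances}, combined with the contragredient and the standard behaviour of parabolic induction under automorphisms. (The statement writes $\sigma\in\Pi_-(\GL)$ and $\mR_-(\GL)$; I read these as $\sigma\in\Pi_-(\widetilde{G}_{2m})$ and the Grothendieck group $\mR_-(\widetilde{G})$.)

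\textbf{The MVW-type automorphism.} Choose $\delta\in\GL(W)$ with $\langle\delta w,\delta w'\rangle=-\langle w,w'\rangle$ that preserves the standard flag. Concretely, take $\delta p_i=p_i$ and $\delta q_i=-q_i$. Conjugation by $\delta$ is an automorphism of $\Sp(W)$, and it lifts to an automorphism of $\widetilde{\Sp}(W)$ that acts on $\bm\mu_8$ by $z\mapsto z^{-1}$, because $\uppsi$ is replaced by $\overline{\uppsi}$.

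The result of \cite{Mœglin2006_correspondances} says that for every irreducible genuine $\sigma$ we have $\sigma^\delta\cong\sigma^\vee$. Here $\sigma^\vee$ is anti-genuine, so the twist by $\delta$ is exactly what restores genuineness. Since $\delta$ stabilises the standard parabolic $\widetilde{P}=\widetilde{M}N$, it induces an automorphism of $\widetilde{M}\cong\prod\GL(n_i)\times\widetilde{\Sp}(2m)$. On each $\GL(n_i)$ factor this automorphism is $g\mapsto JgJ^{-1}$ for a diagonal sign matrix $J$, so on $\GL$-representations it acts trivially up to isomorphism.

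\textbf{Steps.} From the above we get
$$(\pi\rtimes\sigma)^\delta\cong\pi\rtimes\sigma^\delta\cong\pi\rtimes\sigma^\vee.$$
The canonical splitting $\widetilde{M}\cong\prod\GL\times\widetilde{\Sp}$ depends on $\uppsi$, and $\delta$ interchanges the splittings for $\uppsi$ and $\overline{\uppsi}$. Checking that the $\GL$-part is unchanged is where the care is needed; this is the main obstacle. I expect it to reduce to the explicit cocycle formulas for the $\bm\mu_8$ cover, for which the Levi splitting is compatible with $\delta$.

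On the other hand, the contragredient of an induced representation satisfies
$$(\pi\rtimes\sigma)^\vee\cong\pi^\vee\rtimes\sigma^\vee.$$
Apply the MVW involution to every irreducible constituent $\tau$ of $\pi\rtimes\sigma$; this gives $\tau^\delta\cong\tau^\vee$. Combining the two displays, in the Grothendieck group
$$\pi\rtimes\sigma=\bigl((\pi\rtimes\sigma)^\delta\bigr)^{\vee}\ \text{up to semisimplification},$$
which equals $(\pi\rtimes\sigma^\vee)^\vee=\pi^\vee\rtimes\sigma$. This proves (1).

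For (2), the functor $V\mapsto (V^\delta)^\vee$ is exact and contravariant on admissible finite-length representations. It sends an injection $\tau\hookrightarrow\pi\rtimes\sigma$ to a surjection
$$\bigl((\pi\rtimes\sigma)^\delta\bigr)^\vee\twoheadrightarrow(\tau^\delta)^\vee\cong\tau.$$
By the computation above, $\bigl((\pi\rtimes\sigma)^\delta\bigr)^\vee\cong\pi^\vee\rtimes\sigma$, so $\tau$ is a quotient of $\pi^\vee\rtimes\sigma$. The converse follows by applying the same argument to $\pi^\vee$ in place of $\pi$ and using $(\pi^\vee)^\vee=\pi$.
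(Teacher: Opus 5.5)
Your proof is correct and is the standard MVW argument that the paper invokes by citing \cite[Theorem 2.1]{Hanzer2008_algebraic}: the exact contravariant functor $V\mapsto (V^\delta)^\vee$ fixes every irreducible representation and carries $\pi\rtimes\sigma$ to $\pi^\vee\rtimes\sigma$. The step you flag as the main obstacle does hold: the lift of $\mathrm{Ad}\,\delta$ that inverts $\bm\mu_8$ is unique because $\Sp(W)$ is perfect; in the Schr\"odinger model on $S(X)$ with $X=\mathrm{span}(p_i)$ it is $(g,M)\mapsto(\delta g\delta^{-1},\overline{M})$, where $\overline{M}$ is $M$ conjugated by complex conjugation; and $\mathrm{Ad}\,\delta$ is the identity on the $\GL$-factors (your $J$ is $1$), which act by dilation operators commuting with complex conjugation, so the Levi splitting is fixed pointwise.
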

	\begin{proof}
		This is a standard consequence of the MVW-involution. See, e.g., \cite[Theorem 2.1]{Hanzer2008_algebraic}.
	\end{proof}
	
	\subsection{Endoscopy}

	\par Following \cite{Li2024_arthurpacketsmetaplecticgroups}, we define $\mE_{\mathrm{ell}}(\widetilde{G}_{2n})$ to be the set of pairs $\textbf{G}^!=(n',n'')$, where 
	$$\begin{aligned}
		(n',n'')\in\ZZ_{\ge0}^2,\\
		n'+n''=n.
	\end{aligned}$$ We call $\textbf{G}^!\in\mE_{\mathrm{ell}}(\widetilde{G}_{2n})$ an elliptic endoscopic datum of $\widetilde{G}_{2n}$ and $G^!:=\SO(2n'+1)\times\SO(2n''+1)$ the endoscopic group attached to the datum $\textbf{G}^!$. Here $\SO(2n+1)$ means the split form.
	\index{E endoscopic datum@$\mE_{\mathrm{ell}}(\widetilde{G}_{2n})$}
	\index{G endoscopic datum@$\textbf{G}^{"!}$, $G^{"!}$}
	
	\par It should be noted that $(n',n'')$ and $(n'',n')$ will give rise to inequivalent endoscopic data. This is different from the endoscopy for $\SO(2n+1)$.

	\par A function $f:\widetilde{G}_{2n}\rightarrow\CC$ is said to be genuine (resp. anti-genuine) if $f(z\widetilde{x})=zf(\widetilde{x})$ (resp. $f(z\widetilde{x})=z^{-1}f(\widetilde{x})$) for all $\widetilde{x}\in\widetilde{G}_{2n}$ and $z\in\bm\mu_8$. For $\textbf{G}^!\in\mE_{\mathrm{ell}}(\widetilde{G}_{2n})$, we define $\mI_{--}(\widetilde{G}_{2n})$ and $S\mI(G^!)$ to be the quotient of the space of anti-genuine $C_c^\infty$-functions on $\widetilde{G}_{2n}$ (resp. $C_c^\infty$-function on $G^!(F)$) modulo those with zero orbital integrals (resp. stable orbital integrals) along all strongly regular semisimple orbits as in \cite[Definition 2.3.4, 2.3.5]{Li2024_stabilizationtraceformulametaplectic}
	\index{genuine! function}
	\index{anti-genuine function}
	
	\par The geometric	transfer in \cite[Theorem 3.8.1]{Li2024_stabilizationtraceformulametaplectic} is a linear map
	$$\mT_{\textbf{G}^!,\widetilde{G}_{2n}}:\mI_{--}(\widetilde{G}_{2n})\otimes\mathrm{mes}(G_{2n})\rightarrow S\mI(G^!)\otimes\mathrm{mes}(G^!)$$ characterized by matching orbital integrals (see \cite{Li2024_stabilizationtraceformulametaplectic} for more details).
	
	\par Denote by $D_-(\widetilde{G}_{2n})$ (resp. $SD(G^!)$) the dual space of $\mI_{--}(\widetilde{G}_{2n})$ (resp. $S\mI(G^!)$). They are the spaces of genuine invariant (resp. stably invariant) distribution on $\widetilde{G}_{2n}$ (resp. $G^!(F)$). By dualizing $\mT_{\textbf{G}^!,\widetilde{G}_{2n}}$, we obtain the spectral transfer
	$$\mT_{\textbf{G}^!,\widetilde{G}_{2n}}^\vee:SD(G^!)\otimes\mathrm{mes}(G^!)^\vee\rightarrow D_-(\widetilde{G}_{2n})\otimes\mathrm{mes}(G_{2n})^\vee.$$
	\index{T transfer@$\mT_{\textbf{G}^{"!},\widetilde{G}_{2n}}$, $\mT_{\textbf{G}^{"!},\widetilde{G}_{2n}}^\vee$}
	
	\par For every genuine admissible representation $\pi$ of $\widetilde{G}_{2n}$ of finite length, its character $\Theta_\pi$ belongs to $D_-(\widetilde{G}_{2n})$. We denote by $D_{\spec,-}(\widetilde{G}_{2n})$ the linear subspace spanned by these characters, and let $SD_{\spec}(G^!)=D_{\spec}(G^!)\cap SD(G^!)$.
	
	\begin{theorem}(\cite[Theorem 2.4.4]{Li2024_arthurpacketsmetaplecticgroups})
		The spectral transfer $\mT_{\textbf{G}^!,\widetilde{G}_{2n}}^\vee$ restricts to a linear map
		$$SD_{\spec}(G^!)\otimes\mathrm{mes}(G^!)^\vee\rightarrow D_{\spec,-}(\widetilde{G}_{2n})\otimes\mathrm{mes}(G_{2n})^\vee.$$
	\end{theorem}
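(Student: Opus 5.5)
The plan is to show that the transfer of a stable finite-length character is again a finite linear combination of genuine irreducible characters. I would use the trace Paley--Wiener characterization of $D_{\spec,-}$, together with the compatibility of geometric transfer with parabolic descent and the Bernstein centre.

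\textbf{Step 1: characterize $D_{\spec,-}$ by a Paley--Wiener condition.} An element $\Theta\in D_-(\widetilde{G}_{2n})$ lies in $D_{\spec,-}(\widetilde{G}_{2n})$ if and only if two things hold. First, $\Theta$ is supported on finitely many Bernstein components. Second, on each component $\Theta$ factors through the trace Paley--Wiener space, i.e. through the map $f\mapsto(\pi\mapsto\operatorname{tr}\pi(f))$. Equivalently, $\Theta$ is annihilated by an ideal of finite codimension in the genuine Bernstein centre $\mZ_-(\widetilde{G}_{2n})$, acting on $\mI_{--}$ by convolution. This is the metaplectic version of the trace Paley--Wiener theorem of Bernstein--Deligne--Kazhdan, which holds for covering groups with the same proof. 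The same characterization holds for $SD_{\spec}(G^!)$, using the stable Bernstein centre of $G^!$.

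\textbf{Step 2: prove that the geometric transfer is compatible with the Bernstein centre.} I need an algebra homomorphism $\mZ(G^!)^{\mathrm{st}}\to\mZ_-(\widetilde{G}_{2n})$ such that
$$\mT_{\textbf{G}^!,\widetilde{G}_{2n}}(z\ast f)=\eta(z)\ast\mT_{\textbf{G}^!,\widetilde{G}_{2n}}(f)$$
with the roles of $z$ and $\eta(z)$ arranged appropriately. Here $\eta$ should be induced by the natural map on infinitesimal characters, which sends the L-parameter data of $G^!$ into $\mathrm{Sp}(2n,\CC)$ via $\mathrm{Sp}(2n',\CC)\times\mathrm{Sp}(2n'',\CC)\hookrightarrow\mathrm{Sp}(2n,\CC)$. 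I would argue by induction on $n$, using descent to Levi subgroups: the transfer commutes with constant terms (Jacquet restriction of orbital integrals), and the Levis of $\widetilde{G}_{2n}$ are products of $\GL$'s with a smaller metaplectic group. The main obstacle is the elliptic (cuspidal) part, where parabolic descent gives no information. There I would use the local trace formula, or Li's character identities for elliptic tempered representations, to reduce to cuspidal components. On those components the statement is a finiteness statement that can be checked on supercuspidal characters.

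\textbf{Step 3: conclude.} Take a finite-length character $\Theta$ in $SD_{\spec}(G^!)$, annihilated by an ideal $J$ of finite codimension. Then $\mT^\vee(\Theta)$ is annihilated by the ideal generated by $\eta(J)$, which again has finite codimension because $\eta$ is finite on spectra. It is also supported on finitely many genuine Bernstein components. By Step 1, $\mT^\vee(\Theta)$ therefore lies in $D_{\spec,-}(\widetilde{G}_{2n})$, as claimed. Alternatively, since this is exactly \cite[Theorem 2.4.4]{Li2024_arthurpacketsmetaplecticgroups}, the result may simply be invoked as stated there.
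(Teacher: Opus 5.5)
Your closing sentence is exactly what the paper does: it quotes the statement from Li and gives no argument of its own, so as a citation your proposal is fine. The self-contained argument you sketch, however, has a genuine gap, and it is already in Step~1.

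\textbf{Step 1 is false as stated.} Being killed by a finite-codimensional ideal of the Bernstein centre does \emph{not} characterize $D_{\spec,-}(\widetilde{G}_{2n})$. Your added Paley--Wiener condition imposes nothing, since by the trace Paley--Wiener theorem every invariant distribution factors through the trace map. For a counterexample, take $\rho$ unitary cuspidal, $\sigma$ genuine supercuspidal, and a generic point $s_0$. The functional $f\mapsto\frac{d}{ds}\big|_{s=s_0}\operatorname{tr}(\rho|\cdot|^{s}\rtimes\sigma)(f)$ is a genuine invariant distribution supported on a single Bernstein component. It is killed by $\mathfrak{m}^2$, where $\mathfrak{m}$ is the maximal ideal at the cuspidal support of $\rho|\cdot|^{s_0}\rtimes\sigma$, but it is not killed by $\mathfrak{m}$. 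On the other hand, a finite combination of irreducible characters killed by $\mathfrak{m}^2$ can only involve irreducibles with that cuspidal support, i.e.\ only the irreducible $\rho|\cdot|^{s_0}\rtimes\sigma$, so it is killed by $\mathfrak{m}$. Hence this distribution is not in $D_{\spec,-}$. Annihilator conditions only say where a distribution lives; they cannot separate characters, which see only semisimplifications, from derivatives along families.

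Restricting to radical ideals does not rescue Step~3 either. The ideal generated in $\mZ_-(\widetilde{G}_{2n})$ by the preimage of a radical ideal of a smaller centre need not be radical when the map from the Bernstein variety to infinitesimal characters ramifies.

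\textbf{Step 2 is not usable as stated.} The inclusion $\mathrm{Sp}(2n',\CC)\times\mathrm{Sp}(2n'',\CC)\hookrightarrow\mathrm{Sp}(2n,\CC)$ sends infinitesimal characters of $G^!$ to those of $\widetilde{G}_{2n}$. Pulling back functions therefore gives a homomorphism from the stable centre of $\widetilde{G}_{2n}$ to that of $G^!$, not the reverse; your displayed identity and your Step~3 in fact use opposite directions.

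More seriously, you need two facts: that this stable centre acts through the genuine Bernstein centre of $\widetilde{G}_{2n}$, and that geometric transfer intertwines the two actions. These are at least as strong as the theorem. Where they are known, they are deduced from endoscopic character relations, i.e.\ from already knowing $\mT^\vee$ on stable characters. The elliptic part you defer to ``the local trace formula or character identities'' is where all the content lies.

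A workable self-contained route would use a spanning set instead of annihilators. Reduce, via the Langlands classification and compatibility of transfer with parabolic induction, to stable tempered characters $S\Theta_{\phi^!}$. For those, show directly that $\mT^\vee(S\Theta_{\phi^!})$ is a finite combination of genuine tempered characters, e.g.\ by the tempered character relations of Luo recalled in the paper, or by an Arthur-style local trace formula argument.
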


	\subsection{Arthur parameters}\label{subsection: Arthur parameters}
	
	\par The dual group of $\widetilde{G}_{2n}$ is defined to be $\widetilde{G}_{2n}^\vee:=\Sp(2n,\CC)$. It is the same as the dual group of $\SO(2n+1)$. An A-parameter of $\widetilde{G}_{2n}$ is a $\widetilde{G}_{2n}^\vee$-conjugacy class of admissible homomorphisms $$\psi:W_F\times\SL(2,\CC)\times\SL(2,\CC)\rightarrow\widetilde{G}_{2n}^\vee$$ such that the image of the Weil group $W_F$ is bounded.
	\index{G complex dual@$\widetilde{G}_{2n}^\vee$}
	\index{A-parameter}
	
	\par We can regard $\psi$ as a representation of $W_F\times\SL(2,\CC)\times\SL(2,\CC)$. It decomposes as 
	\begin{equation}\label{equation: decompose of an A-parameter}
	    \psi=\bigoplus_{i\in I}m_i\psi_i
	\end{equation}
	where $I$ is a finite set, $\psi_i$ is irreducible, $m_i\in\ZZ_{\ge 1}$, such that $i\neq j\Rightarrow\psi_i\not\cong\psi_j$. We further write $$\psi_i=\rho_i\otimes r(a_i)\otimes r(b_i),$$ where $\rho_i$ can be identified with unitary irreducible cuspidal representations of $\GL(d_\rho,F)$ by local Langlands correspondence for $\GL(d_\rho,F)$ and $r(a_i)$ is the unique irreducible algebraic representation of $\SL(2,\CC)$ of dimension $a$.
	
	\par Following \cite[\S 3.1]{Li2024_arthurpacketsmetaplecticgroups}, we can decompose $I$ into 
	\begin{equation}\label{equation: decompose of the index set of an A-parameter}
		I=I^+\sqcup I^-\sqcup J\sqcup J'
	\end{equation}
	where $J$ and $J'$ are related by a bijection $j\leftrightarrow j'$, such that
	
	\begin{itemize}
		\item if $i\in I^+$ then $\rho_i\otimes r(a_i)\otimes r(b_i)$ is of symplectic type, i.e.
		\begin{itemize}
			\item[-] either $\rho_i$ is symplectic and $a_i+b_i$ is even,
			\item[-] or $\rho_i$ is orthogonal and $a_i+b_i$ is odd;
		\end{itemize}
		\item if $i\in I^-$ then $\rho_i\otimes r(a_i)\otimes r(b_i)$ is of orthogonal type, i.e.
		\begin{itemize}
			\item[-] either $\rho_i$ is orthogonal and $a_i+b_i$ is even,
			\item[-] or $\rho_i$ is symplectic and $a_i+b_i$ is odd;
		\end{itemize}
		\item if $j\in J$, then $\rho_j\otimes r(b_j)$ is not self dual, and $$\phi_{j'}=\phi_j^\vee,\; m_{j'}=m_j.$$
	\end{itemize}
	
	We say that $\psi$ is of good parity if $I=I^+$. Further, if $I=I^+$ and $m_i=1$ for all $i\in I$, we say $\psi$ is discrete.
	\index{A-parameter!discrete}
	\index{A-parameter!of good parity}
	
	\par Let $\Psi(\widetilde{G}_{2n})\supset\Psi_\gp(\widetilde{G}_{2n})\supset\Psi_2(\widetilde{G}_{2n})$ be the sets of equivalence classes of A-parameters, A-parameters of good parity and discrete A-parameters, respectively. Also, we let $\Phi_\bdd(\widetilde{G}_{2n})$ be the subset of $\Psi(\widetilde{G}_{2n})$ consisting of A-parameters $\phi$ which are trivial on the second $\SL(2,\CC)$ factor. Finally, we set $\Phi_{\bdd,\gp}(\widetilde{G}_{2n})=\Phi_\bdd(\widetilde{G}_{2n})\cap\Psi_\gp(\widetilde{G}_{2n})$ and $\Phi_{\bdd,2}(\widetilde{G}_{2n})=\Phi_\bdd(\widetilde{G}_{2n})\cap\Psi_2(\widetilde{G}_{2n})$. Elements of $\Phi_\bdd(\widetilde{G}_{2n})$ are called bounded L-parameters of $\widetilde{G}_{2n}$.
	\index{Psi A-parameter@$\Psi(\widetilde{G}_{2n})$, $\Psi_{\gp}(\widetilde{G}_{2n})$, $\Psi_2(\widetilde{G}_{2n})$}
	\index{Phi L-parameter bounded@$\Phi_{\bdd}(\widetilde{G}_{2n})$, $\Phi_{\bdd,\gp}(\widetilde{G}_{2n})$, $\Phi_{\bdd,2}(\widetilde{G}_{2n})$}
	\index{bounded L-parameter}
	
	\par Let $\psi\in\Psi(\widetilde{G}_{2n})$. We define its centralizer group and the corresponding component group as
	$$\begin{aligned}
		S_\psi&:=Z_{\widetilde{G}_{2n}^\vee}(\mathrm{Im}(\psi)),\\
		\mS_\psi&:=\pi_0(S_\psi).
	\end{aligned}$$
	\index{S centralizer@$S_\psi$, $\mS_\psi$}
	
	\par By \cite[\S1.4]{Arthur2013_book} we have canonical isomorphisms
	\begin{equation}\label{equation: centralizer group of A-parameter}
		\begin{aligned}
			S_\psi&\cong\prod_{i\in I^+}\Or(m_i,\CC)\times\prod_{i\in I^-}\Sp(m_i,\CC)\times\prod_{j\in J}\GL(m_j,\CC),\\
			\mS_\psi&\cong\bm\mu_2^{I^+};
		\end{aligned}
	\end{equation}
	the quotient map $S_\psi\rightarrow\mS_\psi$ is given by taking determinants in $\Or(m_i,\CC)$.
	
	\par By the isomorphism $\mS_\psi\cong\bm\mu_2^{I^+}$, we view elements of $\mS_\psi$ as $\bm\mu_2$-valued functions on $I^+$. Let $\varepsilon=(\varepsilon_i)$ and $s=(s_i)$ be two elements in $\bm\mu_2^{I^+}$. There is a non-degenerate pairing on $\bm\mu_2^{I^+}$ defined by $(\varepsilon,s)=\prod_i\varepsilon_i*s_i$, where $$\varepsilon_i*s_i=\begin{cases}
	    -1&\text{if }\varepsilon_i=s_i=-1,\\
	    1&\text{otherwise.}
	\end{cases}$$ Thus we also view characters $\varepsilon$ in $\mS_\psi^\vee$ as functions on $I^+$.
	
	\par We define the multi-set of Jordan blocks for $\psi$ as follows, $$\Jord(\psi):=\left\{(\rho_i,a_i,b_i)\text{ with multiplicity }m_i:i\in I\right\}.$$ For any $\rho$, we define $$\Jord_\rho(\psi):=\left\{(a_i,b_i):(\rho,a_i,b_i)\in\Jord(\psi)\right\}.$$ In particular, when $\phi$ is a L-parameter, we view $\Jord_\rho(\phi)$ as a multi-subset of $\ZZ$.
	\index{Jordan block}
	\index{Jordan block of a parameter@$\Jord(\psi)$, $\Jord_\rho(\psi)$}
	
	\par When $\psi$ is discrete, there exists a natural bijection between $\Jord(\psi)$ and $I^+$, thus we can view elements of $\mS_\psi$ and $\mS_\psi^\vee$ as functions on $\Jord(\psi)$. In general, we will also view elements of $\mS_\psi$ and $\mS_\psi^\vee$ as functions on $\Jord(\psi)$ by abuse of notation.
	
	\par Set $s_\psi\in S_\psi$ to be $$s_\psi:=\psi\left(1,\begin{pmatrix}-1&\\&-1\end{pmatrix}\right).$$ Then, for $i\in I^{\pm}$ (resp. $j\in J$), the projection of $s_\psi$ to the corresponding direct factors of $S_\psi$ equals $1$ if $b_i$ (resp. $b_j$) is odd, $-1$ if $b_i$ (resp. $b_j$) is even.
	\index{s psi element@$s_\psi$}
	
	\subsection{Arthur packets}
	
	\par For $\psi\in\Psi(\widetilde{G}_{2n})$, we define 
	$$S_{\psi,2}:=\left\{s\in S_\psi:s^2=1\right\}.$$ \index{S centralizer of order 2@$S_{\psi,2}$}
	By \cite[Proposition 4.2.1]{Li2024_arthurpacketsmetaplecticgroups}, we have the following basic bijection
	\begin{equation}\label{equation: basic bijection}
		\left\{(\textbf{G}^!,\psi^!):\textbf{G}^!\in\mE_{\mathrm{ell}}(\widetilde{G}_{2n}),  \psi^!\in\Psi(G^!)\right\}\leftrightarrow\left\{(\psi,s):\psi\in\Psi(\widetilde{G}), s\in S_{\psi,2}/\text{conj}\right\}.
	\end{equation}
	To be more precise, for $s\in S_{\psi,2}$, the underlying $\CC$-vector space decomposes into $$V_{\psi}=V_{\psi}^+\oplus V_{\psi}^-,$$ where $V_{\psi}^{\pm}$ are the $\pm1$-eigenspaces of $s$. Denote by $\psi^{s=\pm1}$ the action of $W_F\times\SL(2,\CC)\times\SL(2,\CC)$ on $V_\psi^{\pm}$. Let $\textbf{G}^!=(\dim V_\psi^+,\dim V_\psi^-)$, $\psi^!=\psi^{s=1}\times\psi^{s=-1}$. Then the pair $(\textbf{G}^!,\psi^!)$ corresponds to $(\psi,s)$ in (\ref{equation: basic bijection}).
	\index{psi endoscopic@$\psi^{"!}$, $\psi^{s=\pm 1}$}
	
	\par Given a $\psi\in\Psi(\widetilde{G}_{2n})$ and an $s\in\mS_{\psi,2}$. We choose $(\psi,s)\leftrightarrow(\textbf{G}^!,\psi^!)$ as in (\ref{equation: basic bijection}). Then we define a genuine distribution $T_{\psi,s}$ by
	\begin{equation}\label{equation: distribution T_psi s}
		T_{\psi,s}:=\epsilon(\psi^{s=-1})\cdot\mT_{\textbf{G}^!,\widetilde{G}_{2n}}^\vee(S\Theta_{\psi^!}^{G^!}).
	\end{equation} Here $\epsilon(\psi^{s=-1}):=\epsilon(\psi^{s=-1}|_{W_F\times\SL(2,\CC)})$ is the local root number (for definition of local root number, see \cite[\S 4.1]{Li2024_arthurpacketsmetaplecticgroups}), and $S\Theta_{\psi^!}^{G^!}$ is the stable distribution on $G^!(F)$ in \cite[Theorem 2.2.1]{Arthur2013_book}, Whittaker-normalized as in \cite[\S 2.1]{Arthur2013_book}.
	\index{T psi s genuine distribution@$T_{\psi,s}$}
	\index{ep local root number@$\epsilon(\psi^{s=-1})$}
	\index{local root number}
	\index{STheta stable distribution@$S\Theta_{\psi^{"!}}^{G^{"!}}$}
	
	\par By \cite[Lemma 4.3.3]{Li2024_arthurpacketsmetaplecticgroups}, $T_{\psi,s}$ depends only on the image $\underline{s}$ of $s$ under the natural projection $S_\psi\rightarrow\mS_\psi$. Conversely, by (\ref{equation: centralizer group of A-parameter}), it is not hard to see that $S_{\psi,2}\rightarrow\mS_\psi$ is surjective. Thus, the symbol $T_{\psi,\underline{s}}$ for $\underline{s}\in\mS_\psi$ is well-defined.
	
	\par For $\varepsilon\in\mS_\psi^\vee$, we put $\pi(\psi,\varepsilon):=|\mS_\psi|^{-1}\sum_{\underline{s}\in\mS_\psi}\varepsilon(s_\psi\underline{s})T_{\psi,\underline{s}}.$ The main local result of \cite{Li2024_arthurpacketsmetaplecticgroups} is the following theorem:
	\index{pi representation of A-type@$\pi(\psi,\varepsilon)$}
	
	\begin{theorem}(\cite[Theorem 4.5.2]{Li2024_arthurpacketsmetaplecticgroups})\label{theorem: pi(psi,varepsilon) is a finite length representation}
		Let $\psi\in\Psi(\widetilde{G}_{2n})$. Then $\pi(\psi,\varepsilon)$ is a linear combination (possibly zero) of unitary genuine irreducible representations of $\widetilde{G}_{2n}$ with coefficients in $\ZZ_{\ge0}$ for all $\varepsilon\in\mS_\psi^\vee$.
	\end{theorem}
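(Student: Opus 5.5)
Theorem \ref{theorem: pi(psi,varepsilon) is a finite length representation} is cited from \cite[Theorem 4.5.2]{Li2024_arthurpacketsmetaplecticgroups}, and here I only indicate how I would prove it. The plan is to reduce to the tempered case and then pass to general $\psi$ by the Aubert involution and parabolic induction, in the way Arthur does for classical groups.

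\textbf{Step 1: bounded $\phi$.} For $\phi\in\Phi_\bdd(\widetilde{G}_{2n})$, the theta correspondence with $\SO(2n+1)$ (Gan--Savin) together with the local Langlands correspondence for $\SO(2n+1)$ produces tempered L-packets. For each $\underline{s}$, the distributions $T_{\phi,\underline{s}}$ are then finite linear combinations of tempered irreducible characters, given by
\begin{equation*}
T_{\phi,\underline{s}}=\sum_{\pi}\langle\underline{s},\pi\rangle\Theta_\pi .
\end{equation*}
This is the endoscopic character identity, proved by Luo and by Li. Fourier inversion over the finite abelian group $\mS_\phi$ gives
\begin{equation*}
\pi(\phi,\varepsilon)=|\mS_\phi|^{-1}\sum_{\underline{s}}\varepsilon(s_\phi\underline{s})T_{\phi,\underline{s}}.
\end{equation*}
Since $s_\phi$ is trivial here, this is the multiplicity-one indicator of the $\pi$ with character $\varepsilon$. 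So in this case the coefficients lie in $\{0,1\}$.

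\textbf{Step 2: general $\psi$.} Let $\phi_{\hat\psi}$ be the bounded parameter obtained by swapping the two $\SL(2,\CC)$ factors. The Aubert--Zelevinsky involution is compatible with spectral transfer, which is Chen's commutation result \cite{Chen2024_commutationtransferaubertzelevinskiinvolution}. Using it, one shows that $T_{\psi,\underline{s}}$ equals, up to a sign, the Aubert dual of a linear combination of standard-module characters attached to the Levi $M_\psi$. More precisely, write $\psi$ as the parabolic induction of a parameter that is elliptic in a Levi $\prod\GL\times\widetilde{G}_{2m}$. The $\GL$ parts give Speh representations. The metaplectic part requires the elliptic case.

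\textbf{Step 3: elliptic case (main obstacle).} I would use Arthur's global method, adapted to $\Mp$ via Li's stabilized trace formula. The steps are as follows.
\begin{enumerate}
\item Globalize $\psi$ to a discrete global parameter $\dot\psi$.
\item Use the stable multiplicity formula to show that the discrete spectrum contributes $\sum_\varepsilon m(\dot\psi,\varepsilon)\,\pi(\dot\psi_v,\varepsilon_v)\otimes(\text{rest})$.
\item Isolate the local component at $v$ by linear independence of characters.
\end{enumerate}
This shows that $\pi(\psi,\varepsilon)$ is a non-negative integral combination of irreducible characters. Unitarity comes from the fact that these characters occur in $L^2_{\mathrm{disc}}$.

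The hard point is controlling the signs $\epsilon(\psi^{s=-1})$ and $\varepsilon(s_\psi\underline{s})$. These must match the global multiplicity formula, namely the metaplectic analogue of Arthur's $\epsilon_\psi$ character, which involves the root-number normalization in \eqref{equation: distribution T_psi s}. Only if they match do the resulting coefficients come out non-negative integers rather than signed ones. I would handle this by comparing with the $\SO(2n+1)$ side via the global theta lift, in the style of Gan--Ichino, before taking local components.
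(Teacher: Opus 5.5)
The paper does not prove this statement; it is quoted from Li, where it comes out of a global argument. That argument feeds Li's stabilization of the trace formula for $\widetilde{G}_{2n}$ with Arthur's results for the endoscopic groups $\SO(2n'+1)\times\SO(2n''+1)$. Your Step 3 is an argument of that type, and your Step 1 is just Theorem \ref{theorem: ECR for tempered representation}. However, the reductions in Step 2 are partly wrong, and the decisive point in Step 3 is not supplied.

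\textbf{Step 2.} Swapping the two $\SL(2,\CC)$ factors gives a bounded L-parameter only when $\psi$ is trivial on the Deligne $\SL(2,\CC)$. For general $\psi$ neither $\psi$ nor $\hat\psi$ is tempered, so the Aubert involution and Chen's commutation result reduce nothing to Step 1. Even in the anti-tempered case, $\pi(\psi,\varepsilon)$ would only be $\pm$ the Aubert dual of some $\pi(\hat\psi,\varepsilon')$, with the signs and the unitarity still to be proved. The parabolic-induction reduction is valid only for a Levi with $\mS_{\psi_M}\to\mS_\psi$ bijective. This means good-parity blocks of even multiplicity must stay, with multiplicity $2$, in the metaplectic factor. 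If you induce from a discrete $\psi_M$, the image of $S_{\psi_M}$ misses the determinant of the $\Or(2)$-factor of such a block. The $T_{\psi,\underline{s}}$ for $\underline{s}$ outside that image are not parabolically induced, so induction only computes the fibre sums $\sum_{\varepsilon\mapsto\varepsilon_M}\pi(\psi,\varepsilon)$. This is exactly the computation in the proof of Corollary \ref{coro: strengthening of proposition 4.5.3 in ArthurMp - tempered case}, where the summands can be separated only because Luo's theorem is available. In general, separating them would require a local intertwining relation.

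\textbf{Step 3.} Showing that the coefficient of $\pi(\dot\psi_v,\varepsilon_v)\otimes(\cdots)$ is a non-negative integer is exactly the sign question, and you defer it to a global theta comparison in the style of Gan--Ichino. For non-tempered $\psi$ that tool is not available. Theta lifts of non-tempered representations need not respect A-parameters, and the Adams conjecture is proved in this paper only locally, for $\alpha\gg0$, on top of the very theorem in question.

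The tool is also unnecessary. The factor $\epsilon(\psi^{s=-1})$ in \eqref{equation: distribution T_psi s} is there so that, for a global $\dot\psi$, the product of the local normalizations is the global root number $\epsilon(\frac12,\dot\psi^{x=-1})$. This is the metaplectic twist of Arthur's sign character that appears when Arthur's stable multiplicity formula for $\SO(2n'+1)\times\SO(2n''+1)$ is inserted into the stabilized trace formula. Arthur's combinatorics then give coefficients in $\{0,1\}$ for the products $\bigotimes_v\pi(\dot\psi_v,\varepsilon_v)$.

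You should globalize every good-parity $\psi$, multiplicities allowed, to a \emph{globally discrete} $\dot\psi$, realizing repeated blocks by distinct global constituents with the same component at $v$. Then the proper-Levi (Eisenstein) terms of $I_{\mathrm{disc},\dot\psi}$ vanish and no intertwining relation is needed. Step 2 is then required only for the non-good-parity part, which is Proposition \ref{prop: proposition 4.5.3 in ArthurMp}.

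What remains is the globalization with control at the auxiliary places. The other local factors must contribute a known, nonzero, non-negative term. Every $\varepsilon\in\mS_\psi^\vee$ must be reached through $\mS_{\dot\psi}\to\mS_\psi$ compatibly with the global sign. Your ``linear independence of characters'' step hides this.
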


	\par By the theorem above, we may view $\pi(\psi,\varepsilon)$ as a semisimple genuine representation of $\widetilde{G}_{2n}$ of finite length. Now, the A-packet $\Pi_\psi$ associated to an A-parameter $\psi$ is the multi-set consisting of all irreducible constituents of $\bigoplus_{\varepsilon\in\mS_\psi^\vee}\pi(\psi,\varepsilon)$.
	\index{Pi A-packet@$\Pi_\psi$} 
	\index{A-packet}
	
	\par Fix a $\psi\in\Psi(\widetilde{G}_{2n})$. We write $\psi=\bigoplus_{i\in I}m_i\psi_i$ as in (\ref{equation: decompose of an A-parameter}) and decompose $I$ into $I^+\sqcup I^-\sqcup J\sqcup J'$ as in (\ref{equation: decompose of the index set of an A-parameter}). Then $\psi_\gp=\bigoplus_{i\in I^+}m_i\psi_i$ and $\psi_\np=\bigoplus_{i\in I^-}\frac{m_i}{2}\psi_i\bigoplus_{j\in J}m_j\psi_j$ are well-defined A-parameters and we have the following decomposition:
	\begin{equation}\label{equation: reduction to good parity}
        \psi=\psi_\np^\vee\oplus\psi_\gp\oplus\psi_\np.
	\end{equation} By (\ref{equation: centralizer group of A-parameter}), there exists a canonical isomorphism $\mS_\psi\cong\mS_{\psi_\gp}$, hence we may identify $\mS_{\psi_\gp}^\vee$ with $\mS_{\psi}^\vee$. Then, we have the following proposition, which can be viewed as a more explicit version of \cite[Proposition 4.5.3]{Li2024_arthurpacketsmetaplecticgroups}:
	\index{psi good parity@$\psi_\gp$, $\psi_\np$}
	
	\begin{proposition}\label{prop: proposition 4.5.3 in ArthurMp}
		For $\varepsilon\in\mS_{\psi_\gp}^\vee$, we have $\pi(\psi,\varepsilon)=\tau_{\psi_\np}\rtimes\pi(\psi_\gp,\varepsilon)$, where $$\tau_{\psi_\np}=\bigtimes_{(\rho,a,b)\in\Jord(\psi_\np)}\begin{bmatrix}\frac{a_i-b_i}{2}&\dots&\frac{a_i+b_i}{2}-1\\\vdots&&\vdots\\-\frac{a_i+b_i}{2}+1&\dots&-\frac{a_i-b_i}{2}\end{bmatrix}_\rho.$$
	\end{proposition}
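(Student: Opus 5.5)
We prove the identity $\pi(\psi,\varepsilon)=\tau_{\psi_\np}\rtimes\pi(\psi_\gp,\varepsilon)$ at the level of the distributions $T_{\psi,\underline{s}}$, and then average over $\mS_\psi$. Since $\mS_\psi\cong\mS_{\psi_\gp}$ canonically, and $s_\psi$ maps to $s_{\psi_\gp}$ under this identification (the components of $s_\psi$ on the $I^+$ factors depend only on the $b_i$), it suffices to prove, for every $\underline{s}\in\mS_{\psi_\gp}$,
\[
T_{\psi,\underline{s}}=\tau_{\psi_\np}\rtimes T_{\psi_\gp,\underline{s}},
\]
where $\tau_{\psi_\np}\rtimes(\cdot)$ is extended linearly to $D_{\spec,-}$.

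First we choose good representatives. Given $s_\gp\in S_{\psi_\gp,2}$ lifting $\underline{s}$, we take $s=(1,s_\gp,1)\in S_{\psi,2}$ with respect to the decomposition $\psi=\psi_\np^\vee\oplus\psi_\gp\oplus\psi_\np$. This $s$ acts trivially on the non-good-parity part, so it lies in $S_{\psi,2}$ and lifts $\underline{s}$. Then $\psi^{s=1}=\psi_\np^\vee\oplus\psi_\gp^{s_\gp=1}\oplus\psi_\np$ and $\psi^{s=-1}=\psi_\gp^{s_\gp=-1}$. Consequently the root number factor $\epsilon(\psi^{s=-1})$ is unchanged. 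The endoscopic datum is $\textbf{G}^!=(n'+m,n'')$, where $2m=\dim\psi_\np^\vee\oplus\psi_\np$.

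Next we use Arthur's theory for the split group $\SO(2(n'+m)+1)$. The factor $\psi^{s=1}$ factors through the Levi subgroup $\GL(m)\times\SO(2n'+1)$. Arthur's results \cite[\S 2.2]{Arthur2013_book} (compatibility of $S\Theta$ with parabolic induction, Whittaker normalized) give
\[
S\Theta^{\SO}_{\psi^{s=1}}=\tau_{\psi_\np}\rtimes S\Theta_{\psi_\gp^{s=1}},
\]
where $\tau_{\psi_\np}$ is the Speh-type representation of $\GL$ attached to $\psi_\np$. We check that the generalized segment in the statement is exactly this Speh representation, namely the unique irreducible subrepresentation. This is a routine matching of exponents, and by the Mœglin–Waldspurger remark on generalized segments either reading of the matrix gives the same representation.

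The remaining step, which we expect to be the main obstacle, is the compatibility of the spectral transfer $\mT^\vee_{\textbf{G}^!,\widetilde{G}_{2n}}$ with parabolic induction from the Levi $\GL(m)\times\widetilde{G}_{2(n-m)}$ on the metaplectic side and $\GL(m)\times G^{!}_{M}$ on the endoscopic side. This is the metaplectic analogue of the standard descent of transfer factors to Levi subgroups. We plan to invoke the corresponding statement in Li's work, \cite{Li2024_stabilizationtraceformulametaplectic,Li2024_arthurpacketsmetaplecticgroups}, where parabolic descent of the transfer is established; indeed \cite[Proposition 4.5.3]{Li2024_arthurpacketsmetaplecticgroups} is proved exactly this way. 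The extra care needed is to confirm that there are no twisting characters on the $\GL$ factor. This holds because we use the $\bm\mu_8$ cover with its canonical Levi splitting, the same reason the Tadić formula carries no character $\alpha$ here (Proposition~\ref{prop: Tadic formula}). Combining the three steps gives the identity for each $\underline{s}$. Summing with weights $\varepsilon(s_\psi\underline{s})$ and using the exactness of parabolic induction then yields the proposition.
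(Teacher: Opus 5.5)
Your proposal is correct and follows the paper's proof. You choose $s$ acting trivially on $\psi_\np^\vee\oplus\psi_\np$, so the $\GL$-block sits in the first $\SO$-factor and $\epsilon(\psi^{s=-1})$ is unchanged. You then combine Li's compatibility of spectral transfer with parabolic induction with the $\SO$-side identity $S\Theta_{\psi^!}=\tau_{\psi_\np}\rtimes S\Theta_{\psi_\gp^!}$ and average over $\mS_\psi\cong\mS_{\psi_\gp}$. The paper takes that identity from M\oe glin's \S 6 on packets, while you take it from Arthur's Levi compatibility of stable distributions, which is all that is needed.

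One small correction concerns why no sign appears on the $\GL$ factor. The real reason is that the block lies in the $s=+1$ eigenspace, not only the $\bm\mu_8$ splitting. In Li's formula $\mT^\vee_{\textbf{G}^!,\widetilde{G}}\circ i^{G^!}_{M^!_s}\circ[z_s]=i^{\widetilde{G}}_{\widetilde{M}}\circ\mT^\vee_{\textbf{M}^!_s,\widetilde{M}}$, a block placed in the $s=-1$ factor would pick up its central character at $-1$. Compare the factor $\omega_\rho(-1)^{B+C+1}$ in the paper's metaplectic version of Xu's Proposition 5.9.
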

	\begin{proof}
		As in the proof of \cite[Proposition 4.4.1]{Li2024_arthurpacketsmetaplecticgroups}, by a suitable choice of endoscopic data \textbf{s}, spectral transfer commutes with parabolic induction (see also \cite[\S 3.8]{Li2024_stabilizationtraceformulametaplectic}). Note that by \cite[\S 6]{Mœglin2006_arthur_packets}, for $s\in S_{\psi_\gp,2}$, we have $\Pi_{\psi^!}=\tau_{\psi_\np}\rtimes\Pi_{\psi_\gp^!}$, where the parabolic induction on the right-hand side takes place in the first $\SO$-factor. Thus, we have $T_{\psi,\underline{s}}=\tau_{\psi_\np}\rtimes T_{\psi,\underline{s}}$ for all $\underline{s}\in\mS_\psi$, from which we deduce the proposition.
	\end{proof}
	\index{tau generalized segment@$\tau_{\psi_\np}$}
	
	\subsection{Langlands correspondence}
	
	\par The local Langlands correspondence for $\widetilde{G}_{2n}$ has been established in \cite{GanSavin2012_LLC} via theta correspondence, and it can be phrased as the following theorem:
	
	\begin{theorem}(\cite{GanSavin2012_LLC})\label{theorem: LLC for Mp}
		There is a canonical decomposition
		\begin{equation}\label{euqation: LLC for Mp}
			\begin{aligned}
				\Pi_-(\widetilde{G}_{2n})&=\bigsqcup_{\phi\in\Phi(\widetilde{G}_{2n})}\Pi_\phi\\
				\mS_\phi^\vee&\stackrel{1:1}{\longleftrightarrow}\Pi_\phi\\
				\varepsilon&\mapsto\pi(\phi,\varepsilon).
			\end{aligned}
		\end{equation}
		Moreover, the decomposition in (\ref{euqation: LLC for Mp}) restricts to
		$$\begin{aligned}
			\Pi_{\temp,-}(\widetilde{G}_{2n})&=\bigsqcup_{\phi\in\Phi_{\bdd}(\widetilde{G}_{2n})}\Pi_\phi,\\
			\Pi_{2,-}(\widetilde{G}_{2n})&=\bigsqcup_{\phi\in\Phi_{\bdd,2}(\widetilde{G}_{2n})}\Pi_\phi.
		\end{aligned}$$
	\end{theorem}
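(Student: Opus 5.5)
The plan is to deduce the correspondence from the local Langlands correspondence for odd special orthogonal groups, transported through the theta correspondence. Fix the additive character $\uppsi$ used to define $\widetilde{G}_{2n}$. Let $V^+$ be the split quadratic space of dimension $2n+1$ and discriminant $1$, and let $V^-$ be the non-split quadratic space with the same dimension and discriminant. Then $\SO(V^\pm)$ are the two pure inner forms of $\SO(2n+1)$, and their common dual group is $\Sp(2n,\CC)=\widetilde{G}_{2n}^\vee$. For each $\epsilon\in\{\pm\}$ the pair $(\Mp(W_{2n}),\Or(V^\epsilon))$ is a dual pair, and I restrict the Weil representation $\omega_{\uppsi}$ to $\widetilde{G}_{2n}\times\SO(V^\epsilon)$. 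Because $\dim V^\epsilon$ is odd, $\Or(V^\epsilon)=\SO(V^\epsilon)\times\{\pm1\}$, so nothing is lost in this restriction.

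\textbf{Step 1: a bijection on representations.} Howe duality (Waldspurger for odd residual characteristic, Gan--Takeda in general) gives that every genuine $\pi$ has at most one theta lift $\theta_{V^\epsilon,W}(\pi)$. The conservation relation of Sun--Zhu, together with the dichotomy in this equal-rank odd orthogonal setting (Gan--Savin), gives that for each $\pi$ exactly one of $\theta_{V^+,W}(\pi)$, $\theta_{V^-,W}(\pi)$ is nonzero. Combining these yields a bijection
$$\Pi_-(\widetilde{G}_{2n})\;\longleftrightarrow\;\Pi(\SO(V^+))\sqcup\Pi(\SO(V^-)).$$
This is the main obstacle: the dichotomy statement, including the surjectivity that follows from running the argument symmetrically from the orthogonal side, is the deep input. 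I would take it directly from the theta-correspondence literature rather than reprove it.

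\textbf{Step 2: transport the Langlands correspondence.} For the pure inner forms of $\SO(2n+1)$, the local Langlands correspondence (Arthur for the split form; Mœglin and the inner-form results for the non-split form) gives
$$\Pi(\SO(V^+))\sqcup\Pi(\SO(V^-))=\bigsqcup_{\phi}\Pi_\phi,\qquad \mS_\phi^\vee\leftrightarrow\Pi_\phi.$$
Here the form $\SO(V^\epsilon)$ carrying $\pi(\phi,\eta)$ is determined by $\eta(z_\phi)=\epsilon$, where $z_\phi$ is the central element. I define $\pi(\phi,\eta)$ for $\widetilde{G}_{2n}$ to be the genuine representation whose theta lift is the $\SO$-representation with the same label $(\phi,\eta)$. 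Composing with the bijection of Step 1 gives the disjoint decomposition and the bijection $\mS_\phi^\vee\leftrightarrow\Pi_\phi$. Since $\mS_\phi$ for $\widetilde{G}_{2n}$ coincides with that of $\SO(2n+1)$ (Subsection \ref{subsection: Arthur parameters}), no extra component-group adjustment is needed. Canonicity refers to the fixed choices of $\uppsi$ and of Whittaker normalization on the $\SO$ side.

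\textbf{Step 3: tempered and discrete series.} It remains to check that theta preserves temperedness and square-integrability in this equal-rank case. I would prove this by induction on $n$ via Kudla's filtration of Jacquet modules of $\omega_\uppsi$. The Tadi\'{c} formula (Proposition \ref{prop: Tadic formula}) lets me compare exponents of Jacquet modules on the two sides and apply Casselman's criterion. The outcome is that $\pi$ and $\theta(\pi)$ have matching cuspidal exponents, up to the standard shift, which is absent in this equal-rank setting. Hence $\pi$ is tempered, respectively discrete, exactly when $\theta(\pi)$ is. On the $\SO$ side these classes correspond to $\phi\in\Phi_\bdd$, respectively $\Phi_{\bdd,2}$, which gives the two restricted decompositions.
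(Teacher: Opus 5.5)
The paper does not prove this statement; it quotes it from Gan--Savin. Your Steps 1 and 2 reproduce their argument. Dichotomy from the conservation relation gives $\Pi_-(\widetilde{G}_{2n})\leftrightarrow\Pi(\SO(V^+))\sqcup\Pi(\SO(V^-))$, and $\pi(\phi,\eta)$ is defined as the representation whose theta lift carries the label $(\phi,\eta)$ in the Vogan-packet correspondence for $\SO(2n+1)$.

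One small correction to Step 1: passing from $\Or(V^\epsilon)$ to $\SO(V^\epsilon)$ is not automatic from $\Or(V^\epsilon)=\SO(V^\epsilon)\times\{\pm1\}$. Injectivity at the $\SO$ level uses the orthogonal-side dichotomy, namely that exactly one of $\sigma$, $\sigma\otimes\det$ lifts to $\widetilde{G}_{2n}$. This is the same input you already invoke for surjectivity.

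The gap is in Step 3, whose key claim is false: in this equal-rank setting $\pi$ and $\theta(\pi)$ do \emph{not} have matching cuspidal exponents.
\begin{itemize}
\item Already for $n=1$, the odd Weil representation $\pi(\triv\otimes r(2),1)$ is supercuspidal by Theorem \ref{theorem: parameter of cuspidal representation}. Yet its lift to $\SO(V_3^+)\cong\mathrm{PGL}_2$ is the Steinberg representation, which embeds in $|\cdot|^{1/2}\rtimes 1$.
\item Conversely, $\pi(\triv\otimes r(2),-1)$ has $\Jac_{\triv,1/2}\neq0$ by Proposition \ref{prop: partial Jacquet module of discrete series}(5). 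But it lifts to the anisotropic group $\SO(V_3^-)$, where every representation is cuspidal.
\end{itemize}
This is the trivial-block phenomenon the paper notes after Theorem \ref{theorem: parameter of cuspidal representation}.

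Its source is Kudla's filtration. Besides the bottom piece, which transfers an embedding into $\delta\rtimes(\cdot)$ to one with the same $\delta$ on the other side, there are pieces on which $\GL_{k-a}$ acts by the character $|\det|^{(k-a)/2}$. These contribute extra exponents $\tfrac12,\tfrac32,\dots$ on the trivial cuspidal line on either side, so Casselman's criterion cannot be checked by simply comparing exponents.

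What must be shown instead is that a non-bottom piece never detects an embedding $\theta(\pi)\hookrightarrow\delta|\det|^s\rtimes\sigma_0$ (or $\pi\hookrightarrow\delta|\det|^s\rtimes\pi_0$) with $\delta$ unitary square-integrable and $s\le 0$. Since the character sits on the block $\GL_{k-a}$ of $P_{k-a,a}$, such a piece can only see a $\delta$ on the trivial line whose cuspidal support has smallest exponent $\tfrac12$, and this forces $s>0$. Hence every embedding violating Casselman's criterion is detected by the bottom piece and transfers to the other side with the same $\delta|\det|^s$. That gives preservation of temperedness (the case $s<0$) and of square-integrability (the case $s\le0$) in both directions, which is what Step 3 needs in place of ``matching exponents''.
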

	
	\par One natural question is whether the $\pi(\phi,\varepsilon)$ in Theorem \ref{theorem: LLC for Mp} is the same as the $\pi(\psi,\varepsilon)$ in Theorem \ref{theorem: pi(psi,varepsilon) is a finite length representation} when $\psi\in\Phi_{\bdd}(\widetilde{G}_{2n})$. This issue was addressed in \cite{Luo2020_ECR}. Specifically, it was proved that the $\pi(\phi,\varepsilon)$ in Theorem \ref{theorem: LLC for Mp} satisfies the endoscopic character relation in the following theorem:
	
	\begin{theorem}\cite[Theorem 1.1]{Luo2020_ECR}\label{theorem: ECR for tempered representation}
		For $\phi\in\Phi_{\bdd}(\widetilde{G})$ and $\underline{s}\in\mS_\phi$, we have $$T_{\phi,\underline{s}}=\sum_{\varepsilon\in\mS_\phi^\vee}\varepsilon(\underline{s})\pi(\phi,\varepsilon).$$Or equivalently, for $\varepsilon\in\mS_\phi^\vee$, we have $$\pi(\phi,\varepsilon)=|\mS_\phi|^{-1}\sum_{\underline{s}\in\mS_\phi}\varepsilon(\underline{s})T_{\phi,\underline{s}}.$$
	\end{theorem}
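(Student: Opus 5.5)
The equivalence of the two formulations is elementary, so I would handle it first. For $\phi\in\Phi_\bdd(\widetilde{G}_{2n})$ every Jordan block has $b=1$, so $s_\phi=1$ and the definition of $\pi(\phi,\varepsilon)$ via $\varepsilon(s_\phi\underline{s})$ reduces to $\varepsilon(\underline{s})$. Since $\mS_\phi\cong\bm\mu_2^{I^+}$ is a finite abelian group with the perfect pairing described in \S\ref{subsection: Arthur parameters}, the two displayed identities are Fourier transforms of each other. Indeed, substituting the first into $|\mS_\phi|^{-1}\sum_{\underline{s}}\varepsilon(\underline{s})T_{\phi,\underline{s}}$ and using orthogonality of characters, $\sum_{\underline{s}}\varepsilon(\underline{s})\varepsilon'(\underline{s})=|\mS_\phi|\delta_{\varepsilon,\varepsilon'}$, gives the second; the converse is the same computation. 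So the real content is the identity $T_{\phi,\underline{s}}=\sum_\varepsilon\varepsilon(\underline{s})\pi(\phi,\varepsilon)$, where the left side is the endoscopic transfer of Arthur's stable character and the right side uses the Gan--Savin labelling from Theorem \ref{theorem: LLC for Mp}.

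For the main identity I would first reduce to discrete parameters. Write $\phi=\phi_0^\vee\oplus\phi_2\oplus\phi_0$, where $\phi_2$ is discrete and $\phi_0$ collects the remaining blocks, so that $\Pi_\phi$ consists of the constituents of $\tau_{\phi_0}\rtimes\pi(\phi_2,\varepsilon)$. This is the Gan--Savin description, which is compatible with the corresponding statement for $\SO(2n+1)$ through theta. As in the proof of Proposition \ref{prop: proposition 4.5.3 in ArthurMp}, a suitable choice of endoscopic data makes spectral transfer commute with parabolic induction. On the $\SO$ side, Arthur's stable characters are compatible with induction. Hence $T_{\phi,\underline{s}}$ is the induced distribution of the corresponding distribution for a Levi $\prod\GL\times\widetilde{G}_{2m}$ and the parameter $\phi_2$, with $\underline{s}$ restricted. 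Matching this with the decomposition of the induced representation into constituents indexed by $\mS_\phi^\vee$ reduces the problem to $\phi$ discrete.

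For discrete $\phi$ I would argue via theta correspondence. Gan--Savin define $\pi(\phi,\varepsilon)$ as the theta lift, relative to $\uppsi$, of a representation of $\SO(V)$ with $\dim V=2n+1$ on one of the two odd-dimensional orthogonal towers. The labels are transported through the Vogan-packet description, where $\varepsilon(-1)$ determines the pure inner form. I would then use a character identity relating the geometric transfer $\mT_{\textbf{G}^!,\widetilde{G}_{2n}}$ to transfer for $\SO(V)$ (Adams' identity for theta lifts of characters, established in this $p$-adic setting via Li's stabilization). This identity expresses $T_{\phi,\underline{s}}$ as a $\underline{s}$-twisted combination of characters of $\SO(V)$ and $\SO(V')$ with parameter $\phi$. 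Arthur's, and for the non-split form Mok's or Kaletha's, endoscopic character relations for odd orthogonal groups then give the sum $\sum\varepsilon(\underline{s})\pi$, after tracking the root number $\epsilon(\phi^{s=-1})$ in (\ref{equation: distribution T_psi s}).

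The main obstacle is exactly this normalization. The factor $\epsilon(\phi^{s=-1})$ must match the Gan--Savin normalization, which is through a Whittaker datum on $\SO$ and Prasad's conjecture on the sign of the pure inner form. Otherwise one only obtains $T_{\phi,\underline{s}}=c(\underline{s})\sum\varepsilon(\underline{s})\pi(\phi,\varepsilon)$ with an unknown sign character $c$. To pin down $c$, I would compare generic members, for which $\varepsilon=1$ on both sides: the theta lift of the generic representation is $\uppsi$-generic, and the coefficient of a $\uppsi$-generic constituent in $T_{\phi,\underline{s}}$ can be computed by a Whittaker-coefficient or germ argument. This forces $c\equiv1$.
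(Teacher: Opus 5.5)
The paper gives no proof of this statement; it imports it from \cite{Luo2020_ECR}, so your outline has to stand on its own. Your first step is fine. For $\phi\in\Phi_\bdd(\widetilde{G}_{2n})$ every block has $b=1$, so $s_\phi=1$, and the two formulas are Fourier transforms of each other on $\mS_\phi\cong\bm\mu_2^{I^+}$.

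\textbf{The reduction to discrete parameters fails.} Compatibility of transfer with parabolic induction only computes $T_{\phi,\underline{s}}$ for $\underline{s}$ in the image of $\mS_{\phi_2}\to\mS_\phi$. Suppose $\phi$ contains a good-parity block $\rho\otimes r(a)$ with multiplicity $2$, so that it lies in $\phi_0$ but not in $\phi_2$. Then $\mS_\phi$ acquires an extra factor $\bm\mu_2$. An element $s\in S_{\phi,2}$ in the nontrivial class acts on the multiplicity space by a reflection in $\Or(2,\CC)$. Hence $\phi^{s=1}$ and $\phi^{s=-1}$ each contain exactly one copy of $\rho\otimes r(a)$, and $\phi^!$ factors through no Levi subgroup of $G^!$ with a $\GL(ad_\rho)$ factor in either $\SO$-factor. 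So $T_{\phi,\underline{s}}$ is the transfer of an elliptic stable character, not an induced distribution.

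Writing $\underline{s}_0$ for the nontrivial element of the new factor and $\varepsilon_\pm$ for the two extensions of $\varepsilon\in\mS_{\phi_2}^\vee$, induction gives only $\pi(\phi,\varepsilon_+)\oplus\pi(\phi,\varepsilon_-)=\tau_{\phi_0}\rtimes\pi(\phi_2,\varepsilon)$. This is exactly what the proof of Corollary~\ref{coro: strengthening of proposition 4.5.3 in ArthurMp - tempered case} extracts: there the $s_0$-terms cancel, and Luo's theorem is used as input. The theorem, however, also asserts $T_{\phi,\underline{s}\,\underline{s}_0}=\sum_{\varepsilon}\varepsilon(\underline{s})\bigl(\pi(\phi,\varepsilon_+)-\pi(\phi,\varepsilon_-)\bigr)$, i.e.\ which constituent carries which Gan--Savin label. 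For these elliptic, non-discrete parameters that is the content of a local intertwining relation (normalized intertwining operators and $R$-groups for the cover). ``Matching with the decomposition of the induced representation'' does not supply it. You must either provide such an argument, or drop the reduction and run your theta argument directly on all tempered $\phi$. Gan--Savin's labels for non-discrete $\phi$ are also defined through $\theta$ from $\SO(V^\pm)$, so this is possible.

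\textbf{The theta step assumes what is to be proved.} Li's stabilization is not a source for a local character identity for $\theta$; it concerns the trace formula and geometric transfer. Moreover, the identity you invoke already contains the case $\underline{s}=1$ of the theorem. There $\epsilon(\phi^{s=-1})=1$, and the identity says that the $(n,0)$-transfer of $S\Theta^{\SO(2n+1)}_\phi$ equals the sum of the $\theta_{\uppsi}$-lifts of the $\phi$-packets of $\SO(V^+)$ and $\SO(V^-)$. That is Adams' conjecture in the $p$-adic equal-rank case, which is precisely what has to be shown, not an available input.

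For $\underline{s}\neq1$, even an individual character relation between $\Mp(2n)$ and $\SO(V^\pm)$ would not suffice. $G^!=\SO(2n'+1)\times\SO(2n''+1)$ is an endoscopic group of both. You would have to prove that Li's transfer for the datum $(n',n'')$ agrees, through the bijection between regular semisimple classes of $\Sp(2n)$ and those of $\SO(V^+)\sqcup\SO(V^-)$, with Kottwitz--Shelstad transfer to $\SO(V^\pm)$ followed by $\theta$. All signs must match, including $\epsilon(\phi^{s=-1})$ and the Kottwitz sign of $\SO(V^-)$. That comparison of transfer factors is where the proof lives, and your sketch does not address it.

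Likewise, fixing $c\equiv1$ via the generic member presupposes that the $\uppsi$-generic constituent has coefficient $1$ in $T_{\phi,\underline{s}}$. Li's transfer factors are normalized by the Weil representation, not by a Whittaker datum, so this is a nontrivial statement, not a routine germ computation.
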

	
	\par By Theorem \ref{theorem: ECR for tempered representation}, the endoscopy theory can be applied to the study of tempered representations of $\widetilde{G}_{2n}$. For example, the following corollary can be proved by the endoscopy theory:
	
	\begin{corollary}\label{coro: strengthening of proposition 4.5.3 in ArthurMp - tempered case}
		Suppose that $\phi=\phi_1^\vee\oplus\phi_0\oplus\phi_1$ with $\phi\in\Phi_{\bdd}(\widetilde{G}_{2n})$ and $\phi_0\in\Phi_{\bdd,\gp}(\widetilde{G}_{2n})$. By (\ref{equation: centralizer group of A-parameter}), there exists an embedding $\mS_{\phi_0}\xhookrightarrow{}\mS_{\phi}$, $s\mapsto s_>$ by setting $s_>(\rho,a)=1$ for all $(\rho,a)\notin\Jord(\phi_0)$. Taking the dual, we have a projection $\mS_\phi^\vee\twoheadrightarrow\mS_{\phi_0}^\vee$. Then, for any $\varepsilon\in\mS_{\phi_0}^\vee$, we have
		$$\tau_{\phi_1}\rtimes\pi(\phi_0,\varepsilon)=\bigoplus_{\varepsilon_>\rightarrow\varepsilon}\pi(\phi,\varepsilon_>),$$ where $\varepsilon_>$ runs through all characters in $\mS_\phi^\vee$ whose image in $\mS_{\phi_0}^\vee$ is $\varepsilon$.
	\end{corollary}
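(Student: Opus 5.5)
We derive the corollary from the endoscopic character relations of Theorem \ref{theorem: ECR for tempered representation}, together with the compatibility of spectral transfer with parabolic induction already used in Proposition \ref{prop: proposition 4.5.3 in ArthurMp}.

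\emph{Step 1: the parameters.} Since $\phi$ is bounded, $\phi_1$ is bounded. Hence $\tau_{\phi_1}$, the product of $\rho|\cdot|^{0}$-type representations $\rho$ over $(\rho,a)\in\Jord(\phi_1)$ (essentially the discrete series $\delta(\rho,a)$ of $\GL$), is unitary. Consequently $\tau_{\phi_1}\rtimes\pi(\phi_0,\varepsilon)$ is a unitary, hence semisimple, tempered representation. We may view $\phi$ as a parameter in $\Phi_\bdd$ with $\psi$ trivial on the second $\SL(2)$. Take $s\in S_{\phi_0,2}$ and its image $s_>\in S_{\phi,2}$, defined to act by $+1$ on $\phi_1\oplus\phi_1^\vee$. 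Then the associated endoscopic datum is obtained from that of $(\phi_0,s)$ by enlarging the first factor. Concretely, $\phi^{s_>=1}=\phi_1^\vee\oplus\phi_0^{s=1}\oplus\phi_1$ and $\phi^{s_>=-1}=\phi_0^{s=-1}$. In particular the root number $\epsilon(\phi^{s_>=-1})$ equals $\epsilon(\phi_0^{s=-1})$.

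\emph{Step 2: transfer and induction.} On the $\SO(2n'+1)$ side, Arthur's stable distribution satisfies $S\Theta_{\phi^{s_>=1}}=\tau_{\phi_1}\rtimes S\Theta_{\phi_0^{s=1}}$, since stable tempered characters are compatible with induction. Spectral transfer commutes with parabolic induction from the Levi $\GL\times\widetilde{G}$, as in the proof of Proposition \ref{prop: proposition 4.5.3 in ArthurMp} (via \cite[\S 3.8]{Li2024_stabilizationtraceformulametaplectic}, with a suitable choice of endoscopic data). Together these give
$$T_{\phi,\underline{s}_>}=\tau_{\phi_1}\rtimes T_{\phi_0,\underline{s}}.$$
This step is the main technical point. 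It requires checking that the transfer factors and normalizations (including the $\bm\mu_8$ Levi identification and the root number) match exactly, and not merely up to sign. I would cite the descent property of Li's transfer for Levi subgroups.

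\emph{Step 3: character computation.} Apply Theorem \ref{theorem: ECR for tempered representation} to both sides:
$$\sum_{\varepsilon'\in\mS_\phi^\vee}\varepsilon'(\underline{s}_>)\pi(\phi,\varepsilon')=\sum_{\varepsilon\in\mS_{\phi_0}^\vee}\varepsilon(\underline{s})\,\tau_{\phi_1}\rtimes\pi(\phi_0,\varepsilon).$$
Since $\varepsilon'(\underline{s}_>)$ depends only on the image of $\varepsilon'$ in $\mS_{\phi_0}^\vee$, the left side regroups as
$$\sum_\varepsilon\varepsilon(\underline{s})\sum_{\varepsilon_>\to\varepsilon}\pi(\phi,\varepsilon_>).$$
This identity holds for all $\underline{s}\in\mS_{\phi_0}$, so orthogonality of characters of the finite abelian group $\mS_{\phi_0}$ gives
$$\tau_{\phi_1}\rtimes\pi(\phi_0,\varepsilon)=\sum_{\varepsilon_>\to\varepsilon}\pi(\phi,\varepsilon_>)$$
in the Grothendieck group.

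\emph{Step 4: conclusion.} By Step 1, $\tau_{\phi_1}\rtimes\pi(\phi_0,\varepsilon)$ is unitary, hence semisimple. The $\pi(\phi,\varepsilon_>)$ are distinct irreducibles by Theorem \ref{theorem: LLC for Mp}. Therefore the Grothendieck-group equality upgrades to the stated direct sum decomposition.
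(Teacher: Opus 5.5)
Your proposal is correct and follows essentially the paper's argument: the key identity $T_{\phi,\underline{s}_>}=\tau_{\phi_1}\rtimes T_{\phi_0,\underline{s}}$ (from \cite[(7.6)]{Xu2017_cuspidal} plus compatibility of spectral transfer with induction in the first $\SO$ factor, with the root number unchanged), followed by Fourier inversion on $\mS_{\phi_0}$. The paper organizes this differently, by inducting on the number of components of $\phi_1$ and splitting into three cases: non-good-parity (via Proposition \ref{prop: proposition 4.5.3 in ArthurMp}), $(\rho,a)\in\Jord(\phi_0)$, and $(\rho,a)\notin\Jord(\phi_0)$; you instead treat all components at once and make explicit the semisimplicity needed to upgrade the Grothendieck-group identity to a direct sum.
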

	\begin{proof}
		By making induction on the number of irreducible components of $\phi_1$, it is not hard to see that we only need to consider the case when $\phi_1=\rho\otimes r(a)$ is irreducible. If $\phi_1$ is not of good parity, the corollary is just a special case of Proposition \ref{prop: proposition 4.5.3 in ArthurMp}. Thus, we may assume that $\phi_1$ is of good parity. If $(\rho,a)\in\Jord(\phi_0)$, then $\mS_{\phi_0}\xhookrightarrow{}\mS_\phi$ will be a bijection. By \cite[(7.6)]{Xu2017_cuspidal} and the compatibility of spectral transfer with parabolic induction, we have $T_{\phi,s}=\tau_{\phi_1}\rtimes T_{\phi_0,s}$ for all $s\in\mS_{\phi_0}\cong\mS_{\phi}$, which completes the proof in the case that $(\rho,a)\in\Jord(\phi_0)$.
		
		\par When $(\rho,a)\notin\Jord(\phi_0)$, we define $s_0\in\mS_\phi$ by $$\underline{s}_0(\rho',a')=\begin{cases}
			-1&\text{if } (\rho',a')=(\rho,a)\\
			1&\text{otherwise}.
		\end{cases}$$ For $\varepsilon\in\mS_{\phi_0}^\vee$, we define $\varepsilon_{\pm}$ by
		$$\varepsilon_{\pm}(\rho',a')=\begin{cases}
			\pm1&\text{if } (\rho',a')=(\rho,a)\\
			\varepsilon(\rho',a')&\text{otherwise}.
		\end{cases}$$
		Then we only need to prove that $$\tau_{\phi_1}\rtimes\pi(\phi_0,\varepsilon)=\pi(\phi,\varepsilon_+)\oplus\pi(\phi,\varepsilon_-).$$
				
		\par By definition, we have 
		$$\begin{aligned}
			\pi(\phi,\varepsilon_+)\oplus\pi(\phi,\varepsilon_-)=&|\mS_\phi|^{-1}\sum_{s\in\mS_\phi}(\varepsilon_+(s)+\varepsilon_-(s))T_{\phi,s}\\
			=&|\mS_\phi|^{-1}\sum_{s\in\mS_{\phi_0}}((1+1)\varepsilon(s)T_{\phi,s_>}+(1-1)\varepsilon(s)T_{\phi,s_>s_0})\\
			=&|\mS_{\phi_0}|^{-1}\sum_{s\in\mS_{\phi_0}}\varepsilon(s)T_{\phi,s_>}\\
		\end{aligned}$$ Now, since $s_>(\rho,a)=1$, the compatibility of spectral transfer with parabolic induction implies that $T_{\phi,s_>}=\tau_{\phi_1}\rtimes T_{\phi_0,s}$. This completes the proof.
	\end{proof}
	
	\subsection{Theta correspondence}
	
	\par Let $V_{2m+1}$ be a $2m+1$ dimensional split orthogonal space of discriminant $1$ over $F$. Write $H_{2m+1}=\Or(V_{2m+1})$. The pair $(\widetilde{G}_{2n},H_{2m+1})$ is a reductive dual pair in a certain metaplectic group.
	\index{orthogonal group@$\Or(V_{2m+1})$}
	\index{H orthogonal group@$H_{2m+1}$}
	
	\par Let $\omega_{V_{2m+1},W_{2n}}$ be the Weil representation of $\widetilde{G}_{2n}\times H_{2m+1}$. For $\pi\in\Pi_-(\widetilde{G}_{2n})$, the maximal $\pi$-isotypic quotient of $\omega_{V_{2m+1},W_{2n}}$ is of the form $$\pi\otimes\Theta_{V_{2m+1},W_{2n}}(\pi),$$ where $\Theta_{V_{2m+1},W_{2n}}(\pi)$ is a smooth representation of $H_{2m+1}$.
	
	\par We denote by $\theta_{V_{2m+1},W_{2n}}(\pi)$ the maximal semisimple quotient of $\Theta_{V_{2m+1},W_{2n}}(\pi)$. The following theorem is well-known:
	\index{theta lifts@$\Theta_{V_{2m+1},W_{2n}}$, $\theta_{V_{2m+1},W_{2n}}$}
	
	\begin{theorem}(Howe's duality)\label{theorem: Howe's duality}
		Let $\pi_1,\pi_2\in\Pi_-(\widetilde{G}_{2n})$. Then 
		\begin{enumerate}
			\item[(1)] if $\theta_{V_{2m+1},W_{2n}}(\pi_1)\neq0$, then $\theta_{V_{2m+1},W_{2n}}(\pi_1)$ is irreducible;
			\item[(2)] if $\theta_{V_{2m+1},W_{2n}}(\pi_1)\cong\theta_{V_{2m+1},W_{2n}}(\pi_2)\neq0$, then $\pi_1\cong\pi_2$.
		\end{enumerate}
	\end{theorem}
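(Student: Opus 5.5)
Since this statement is classical, I would not reprove it from scratch. I would deduce it from the general Howe duality theorem for type I dual pairs over non-Archimedean fields: Waldspurger when the residue characteristic is odd, and Gan--Takeda and Gan--Sun in general. The only work is to check that the dual pair $(\widetilde{G}_{2n},H_{2m+1})$ with our $\bm\mu_8$-cover fits the framework of those papers. They work with the $\bm\mu_2$ (or $\CC^1$) cover determined by $\uppsi$, and genuine representations of the $\bm\mu_8$-extension correspond bijectively to genuine representations of the $\bm\mu_2$-extension, compatibly with $\omega_{V_{2m+1},W_{2n}}$. Since $\Theta$ and $\theta$ are defined intrinsically from the Weil representation, both (1) and (2) transfer verbatim.

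If a sketch of the argument itself is wanted, I would follow Gan--Takeda. Part (1) is equivalent to $\dim\mathrm{Hom}_{\widetilde{G}_{2n}\times H_{2m+1}}(\omega_{V_{2m+1},W_{2n}},\pi\otimes\sigma)\le1$ for all irreducible $\pi$ and $\sigma$, together with the statement that $\Theta(\pi)$ has a unique irreducible quotient. I would get these from the doubling see-saw $\widetilde{G}_{2n}\times\widetilde{G}_{2n}\subset\widetilde{\Sp}(W\oplus W^-)$ against $H_{2m+1}$. This identifies $\mathrm{Hom}_{H}(\Theta(\pi_1)\otimes\Theta(\pi_2^{\vee}),\CC)$ with a Hom space into the big theta lift of the trivial representation of $H_{2m+1}$. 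That lift is a subquotient of a degenerate principal series of the doubled metaplectic group. Its restriction to $\widetilde{G}_{2n}\times\widetilde{G}_{2n}$ has a Kudla--Rallis orbit filtration. The open orbit contributes only $\mathrm{Hom}(\pi_1,\pi_2)$. The closed orbits contribute terms governed by Jacquet modules of $\pi_i$, which Kudla's filtration lets me control. The MVW involution (Proposition \ref{prop: consequence of MVW-involution}) is used to pass between $\pi^\vee$ and the MVW-twist of $\pi$.

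For part (2), I would run the same argument with the roles of the two groups exchanged. Here $H_{2m+1}=\SO(V_{2m+1})\times\{\pm1\}$, which makes this side slightly simpler than for even orthogonal groups. The two parts together show that $\Theta$ yields a bijection between irreducible quotients on either side.

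The main obstacle is the closed-orbit terms in the doubling filtration. One must show they cannot contribute extra homomorphisms, except in boundary situations where the exponents of $\pi$ match the degenerate principal series. Gan--Takeda handle this by an induction on Witt towers combined with a careful comparison of cuspidal supports. I would simply cite that analysis: it is insensitive to the choice of covering and applies directly to our setting.
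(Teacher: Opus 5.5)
Your proposal is correct and agrees with the paper. The paper gives no argument and simply records Howe duality as well known (Waldspurger for odd residue characteristic, Gan--Takeda in general), and your citation, together with the check that genuine representations of the $\bm\mu_8$-cover correspond bijectively to those of the $\bm\mu_2$-cover compatibly with $\omega_{V_{2m+1},W_{2n}}$, is all that is needed, so the optional doubling see-saw sketch can be dropped.
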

	
	\par We set $\alpha=2m-2n$, $\Theta_{-\alpha}=\Theta_{V_{2m+1},W_{2n}}$ and $\theta_{-\alpha}=\theta_{V_{2m+1},W_{2n}}$. When $\alpha\gg0$, Kudla's filtration provides the following lemma:
	\index{theta lifts alpha@$\Theta_{-\alpha}$, $\theta_{-\alpha}$}
	
	\begin{lemma}\label{lemma: compatibility of theta lifts and socle}
		Let $\pi\in\Pi_-(\widetilde{G}_{2n})$, $\pi_0\in\Pi_-(\widetilde{G}_{2n-2k})$ and $\sigma\in\Pi(\GL(k))$. Then for sufficiently large $\alpha$ (depending only on $\sigma$), we have:
		\begin{enumerate}
			\item[(1)] $\pi\xhookrightarrow{}\sigma\rtimes\pi_0$ implies that $\Theta_{-\alpha}(\pi)\xhookrightarrow{}\sigma\rtimes\Theta_{-\alpha}(\pi_0)$.
			\item[(2)] If the Jacquet module of $\pi$ has only one irreducible subquotient on which $\GL(k)$ acts by $\sigma$ (i.e., $\Jac_{\sigma}(\pi)$ is irreducible; for the definition of $\Jac_{\sigma}$, see \S \ref{subsection: partial Jacquet modules and socles} below), then $\pi\xhookrightarrow{}\sigma\rtimes\pi_0$ implies that $\theta_{-\alpha}(\pi)\xhookrightarrow{}\sigma\rtimes\theta_{-\alpha}(\pi_0)$.
		\end{enumerate}
	\end{lemma}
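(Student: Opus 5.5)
We argue through Kudla's filtration of the Jacquet module of the Weil representation, together with Frobenius reciprocity. Write $P_k=M_kN_k$ for the standard maximal parabolic of $\widetilde{G}_{2n}$ with Levi $\GL(k)\times\widetilde{G}_{2n-2k}$, and $Q_k$ for the analogous parabolic of $H_{2m+1}$ with Levi $\GL(k)\times H_{2m-2k+1}$. By Kudla's filtration, the Jacquet module $R_{P_k}(\omega_{V_{2m+1},W_{2n}})$ has a filtration with pieces $J_t$, $0\le t\le k$. The bottom piece ($t=k$) is
$$J_k\cong \mathrm{Ind}\bigl(\chi_{V}|\det|^{\lambda_k}\,C_c^\infty(\GL(k))\otimes\omega_{V_{2m-2k+1},W_{2n-2k}}\bigr),$$
where $\GL(k)\times\GL(k)$ acts on $C_c^\infty(\GL(k))$ by left and right translation and $\lambda_k$ is an explicit exponent depending linearly on $\alpha$. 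The other pieces $J_t$ with $t<k$ are induced from representations in which a $\GL(k-t)$-block acts by the character $\chi_V|\det|^{s_t}$, with $s_t$ again depending linearly on $\alpha$.

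For (1), we use Frobenius reciprocity. From $\pi\hookrightarrow\sigma\rtimes\pi_0$, dualizing via Proposition \ref{prop: consequence of MVW-involution}, we get a surjection $\sigma^\vee\rtimes\pi_0\twoheadrightarrow\pi$, up to the appropriate twist. Composing with $\omega\twoheadrightarrow\pi\otimes\Theta_{-\alpha}(\pi)$ and applying second-adjointness/Frobenius, we obtain a nonzero $\GL(k)\times\widetilde{G}_{2n-2k}\times H_{2m+1}$-map relating $R_{P_k}(\omega)$ to $\sigma\otimes\pi_0\otimes\Theta_{-\alpha}(\pi)$. We then show that for $\alpha\gg0$ (depending on the cuspidal support of $\sigma$, a finite set of exponents) all pieces $J_t$ with $t<k$ contribute nothing. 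The reason is that they carry a $\GL$-character $\chi_V|\cdot|^{s_t}$ whose exponent leaves any fixed finite set once $\alpha$ is large, so central character and cuspidal support considerations kill $\mathrm{Hom}$ and $\mathrm{Ext}^1$ with $\sigma$. Hence the map factors through $J_k$. Unwinding $J_k$, the $\sigma$-isotypic part of $C_c^\infty(\GL(k))$ gives $\sigma\otimes\sigma^\vee$, so we get a nonzero $H_{2m+1}$-map $\Theta_{-\alpha}(\pi)\to\sigma\rtimes\Theta_{-\alpha}(\pi_0)$ (after Frobenius on the $H$-side). Injectivity comes from the standard argument that $\Theta_{-\alpha}(\pi)$ is generated by the image of $\omega$, and the map is compatible with the natural map $\omega_{V,W}\to \mathrm{Ind}_{Q_k}(\cdots\omega_{V',W'})$. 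This is the approach in Mœglin's/Atobe–Gan's treatment, which carries over to the metaplectic case since the $\bm\mu_8$ splitting over Levis is canonical.

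For (2), we pass from $\Theta$ to $\theta$. Since $\theta_{-\alpha}(\pi)$ is an irreducible quotient of $\Theta_{-\alpha}(\pi)$, we need the embedding from (1) to induce $\theta_{-\alpha}(\pi)\hookrightarrow\sigma\rtimes\theta_{-\alpha}(\pi_0)$. The hypothesis that $\Jac_\sigma(\pi)$ is irreducible (so equal to $\sigma\otimes\pi_0$) is used to make the $\sigma$-part of the Jacquet module of $\Theta_{-\alpha}(\pi)$ controlled. By the Kudla filtration computation above, $\Jac_\sigma\Theta_{-\alpha}(\pi)$ is a quotient of $\Theta_{-\alpha}(\Jac_{\sigma^\vee}\text{-dual part of }\pi)$, hence a quotient of $\Theta_{-\alpha}(\pi_0)$. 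Frobenius reciprocity then gives the map $\theta_{-\alpha}(\pi)\to\sigma\rtimes\theta_{-\alpha}(\pi_0)$, provided the composite $\Jac_\sigma\theta_{-\alpha}(\pi)\to\sigma\otimes\Theta_{-\alpha}(\pi_0)$ factors through the cosocle $\theta_{-\alpha}(\pi_0)$. This factorization is the main obstacle. We plan to handle it by noting that $\Jac_\sigma\theta_{-\alpha}(\pi)$ is a nonzero quotient of $\sigma\otimes\Theta_{-\alpha}(\pi_0)$. Howe duality (Theorem \ref{theorem: Howe's duality}) makes $\Theta_{-\alpha}(\pi_0)$ have unique irreducible quotient $\theta_{-\alpha}(\pi_0)$, and for large $\alpha$ it is irreducible; indeed, in the stable range $\Theta=\theta$ is known. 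Choosing $\alpha$ larger still to land in this stable range for $\pi_0$ removes the obstacle, and irreducibility of $\theta_{-\alpha}(\pi)$ makes the nonzero map injective.
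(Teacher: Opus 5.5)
Your overall strategy is the method the paper invokes by citing Baki\'c (Lemmas 4.9 and 4.11): Kudla's filtration, with $\alpha$ large in terms of the cuspidal support of $\sigma$, so that only the piece built on $C_c^\infty(\GL(k))$ survives. The execution of (1), however, produces a map in the wrong direction. You filter the Jacquet module along $P_k\subset\widetilde{G}_{2n}$. Its surviving piece $J_k$ is induced from $Q_k$ on the orthogonal side. So a nonzero map $J_k\to\sigma\otimes\pi_0\otimes\Theta_{-\alpha}(\pi)$ unwinds to a nonzero map $\sigma^{c}\rtimes\Theta_{-\alpha}(\pi_0)\to\Theta_{-\alpha}(\pi)$, with $\sigma^c$ a twist of $\sigma^\vee$. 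That is a map out of an induced representation into $\Theta_{-\alpha}(\pi)$. Frobenius reciprocity cannot turn it into a map $\Theta_{-\alpha}(\pi)\to\sigma\rtimes\Theta_{-\alpha}(\pi_0)$, and dualizing would require controlling $\Theta_{-\alpha}(\pi)^\vee$ for a possibly reducible big theta lift, which you have no handle on.

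Maps out of $\Theta_{-\alpha}(\pi)$ are governed by $\mathrm{Hom}_{H_{2m+1}}(\Theta_{-\alpha}(\pi),\sigma\rtimes Y)=\mathrm{Hom}(R_{Q_k}(\omega_{V_{2m+1},W_{2n}}),\pi\otimes\sigma\otimes Y)$. So you need Kudla's filtration of the Jacquet module along $Q_k\subset H_{2m+1}$. There the surviving piece is induced from $P_k$ on the metaplectic side, and second adjointness brings in $\bar R_{\bar P_k}(\pi)$. That module contains $\sigma^\vee\otimes\pi_0$ precisely because of the MVW form $\sigma^\vee\rtimes\pi_0\twoheadrightarrow\pi$ of the hypothesis, which you wrote down but never used.

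Injectivity is also unproved. That $\Theta_{-\alpha}(\pi)$ is generated by the image of $\omega$ says nothing about the kernel of a map out of it. Since $\Theta_{-\alpha}(\pi)$ can be reducible, you must show the map is injective on its socle. A smaller point: the surviving piece carries no $\alpha$-dependent exponent, only unitary twists of the two translation actions on $C_c^\infty(\GL(k))$. Otherwise it would be killed along with the other pieces, as your own argument implies.

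For (2), the point that actually needs proof is missing: you never show that $\sigma$ occurs in the Jacquet module of $\theta_{-\alpha}(\pi)$. Part (1) concerns $\Theta_{-\alpha}(\pi)$, of which $\theta_{-\alpha}(\pi)$ is only a quotient, so it gives nothing here. You need a separate argument, for example the $\widetilde{G}_{2n}$-side filtration applied to $\mathrm{Hom}(\omega,\pi\otimes\theta_{-\alpha}(\pi))$. That exhibits $\theta_{-\alpha}(\pi)$ as a quotient of $\sigma^{c}\rtimes\Theta_{-\alpha}(\pi_0)$, and MVW then finishes it.

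Conversely, the step you call the main obstacle is not one, once $\Jac_\sigma$ is replaced by an honest functor, namely the maximal $\sigma$-isotypic quotient of $R_{Q_k}$. This replacement matters: in the paper $\Jac_\sigma$ is only defined in the Grothendieck group, and the block of the Jacquet module with the cuspidal support of $\sigma$ may contain other constituents $\sigma'\otimes\pi'$ with $\sigma'\neq\sigma$. With that functor, any nonzero quotient of $\Theta_{-\alpha}(\pi_0)$ surjects onto $\theta_{-\alpha}(\pi_0)$ by Howe duality. What must be shown is that, under the hypothesis on $\Jac_\sigma(\pi)$, the maximal $\sigma$-isotypic quotient of $R_{Q_k}(\Theta_{-\alpha}(\pi))$ is $\sigma\otimes\Theta_{-\alpha}(\pi_0)$. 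This is where the multiplicity-one hypothesis enters, and it is again an orthogonal-side computation, not ``the computation above''.

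The stable-range fix should be dropped, for three reasons. Irreducibility of the big theta lift of an arbitrary irreducible (non-tempered, non-unitary) $\pi_0$ is not available in this generality, and you give no reference. The stable range depends on $n$, whereas the lemma requires $\alpha$ to depend only on $\sigma$. And if $\Theta_{-\alpha}=\theta_{-\alpha}$ held for both $\pi$ and $\pi_0$, then (2) would follow from (1) with no multiplicity hypothesis at all.
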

	\begin{proof}
		This follows by the same method as in the proof of \cite[Lemma 4.9, 4.11]{Bakic2024_thetacorrespondencearthurpackets}.
	\end{proof}
	
	\section{Derivatives and socles}\label{section: derivative and socles}
	
	\par The theory of $\rho$-derivatives and socles plays an important role in Atobe's construction of A-packets. Therefore, we will develop the theory of $\rho$-derivatives and socles for metaplectic groups in this section. The main results of this section are Propositions \ref{prop: highest rho-derivative is irreducible in non-self-dual case}, \ref{prop: parabolic induction is SI in non-self-dual case}, \ref{prop: highest [0,zeta]_rho-derivative is irreducible}, and \ref{prop: [0,zeta]_rho rtimes sigma is SI}, which are basically the metaplectic version of \cite[Theorem 2.2, Theorem 2.3]{Atobe2022_A-packet}.
	
	\par Since the definitions and proofs in this section are parallel to \cite[\S 3]{AtobeMínguez_2023}, readers familiar with the theory may skim this section.
	
	\subsection{Partial Jacquet modules and socles}\label{subsection: partial Jacquet modules and socles}
	
	\begin{definition}
		For $\pi\in\Pi(\GL(d_\pi))$ and $\sigma\in\Pi(\GL(d_\sigma))$, we can write $$m^*(\sigma)=\pi\otimes\Jac_\pi(\sigma)+\sum_i\pi_i\otimes\sigma_i,$$ where $\pi_i$ are irreducible representations of $\GL(d_i)$ which are not isomorphic to $\pi$. We call $\Jac_\pi(\sigma)$ the (left) partial Jacquet module of $\sigma$. We say that $\sigma$ is left $\pi$-reduced if $\Jac_\pi(\sigma)=0$. We can also define the right partial Jacquet module similarly, and we will denote the right partial Jacquet module by $\Jac_\pi^\op$. In particular, when $\rho\in\Pi_{\unit,\cusp}(\GL(d_\rho))$ and $x\in\RR$, we denote $\Jac_{\rho|\cdot|^x}$ by $\Jac_{\rho,x}$.
	\end{definition}

	\begin{definition}\
		For $\pi\in\Pi(\GL(d_\pi))$ and $\sigma\in\Pi_-(\widetilde{G}_{2n})$, we can write $$\mu^*(\sigma)=\pi\otimes\Jac_\pi(\sigma)+\sum_i\pi_i\otimes\sigma_i,$$ where $\pi_i$ are irreducible representations of $\GL(d_i)$ which are not isomorphic to $\pi$. We call $\Jac_\pi(\sigma)$ the partial Jacquet module of $\sigma$. We say that $\sigma$ is $\pi$-reduced if $\Jac_\pi(\sigma)=0$. In particular, when $\rho\in\Pi_{\unit,\cusp}(\GL(d_\rho))$ and $x\in\RR$, we denote $\Jac_{\rho|\cdot|^x}$ by $\Jac_{\rho,x}$.
	\end{definition}
	\index{partial Jacquet module}
	\index{Jac partial Jacquet module@$\Jac_\pi$, $\Jac_\pi^\op$, $\Jac_{\rho,x}$}
	
	\par The following are two elementary but useful lemmas about partial Jacquet modules:
	
	\begin{lemma}\label{lemma: partial Jacquet module and embedding}
		If $\pi\in\Pi_-(\widetilde{G}_{2n})$, $\rho\in\Pi_{\unit,\cusp}(\GL(d_\rho))$, $x_1,\dots,x_n\in\RR$ and $\Jac_{\rho,x_n}\circ\Jac_{\rho,x_{n-1}}\circ\cdots\circ\Jac_{\rho,x_1}(\pi)=\sigma$. Then there exists an irreducible constituent $\sigma'$ of $\sigma$ such that there exists an inclusion $$\pi\xhookrightarrow{}\rho|\cdot|^{x_1}\times\rho|\cdot|^{x_2}\times\cdots\times\rho|\cdot|^{x_n}\rtimes\sigma'.$$
	\end{lemma}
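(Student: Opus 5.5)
We argue by induction on $n$, the number of successive partial Jacquet modules. The engine is Frobenius reciprocity combined with the exactness of the Jacquet functor, applied one exponent at a time. Throughout, $\sigma$ is understood to be nonzero; otherwise there is nothing to prove.

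For $n=1$, suppose $\Jac_{\rho,x_1}(\pi)=\sigma\neq0$. Let $P_1=M_1N_1$ be the standard parabolic with Levi $\GL(d_\rho)\times\widetilde{G}_{2n-2d_\rho}$, and let $r_{P_1}(\pi)$ be the normalized Jacquet module. It is a finite-length admissible representation of $\GL(d_\rho)\times\widetilde{G}_{2n-2d_\rho}$, and by definition its semisimplification contributes $\rho|\cdot|^{x_1}\otimes\sigma$ to $\mu^*(\pi)$. Since $\rho|\cdot|^{x_1}$ is cuspidal, we may decompose $r_{P_1}(\pi)$ according to the cuspidal support of the $\GL(d_\rho)$-factor, i.e. the central character of $\GL(d_\rho)$, or Bernstein decomposition. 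The summand on which $\GL(d_\rho)$ has support $\rho|\cdot|^{x_1}$ is exactly $\rho|\cdot|^{x_1}\otimes\Sigma$ for some finite-length genuine representation $\Sigma$ of $\widetilde{G}_{2n-2d_\rho}$ with semisimplification $\sigma$. Here we use that an irreducible representation of $\GL(d_\rho)$ with cuspidal support $\{\rho|\cdot|^{x_1}\}$ is $\rho|\cdot|^{x_1}$ itself, and that there are no self-extensions issues on the $\GL$-factor after passing to the generalized eigenspace. Indeed, any representation of $\GL(d_\rho)\times H$ that is $\rho|\cdot|^{x_1}$-isotypic on the first factor is of the form $\rho|\cdot|^{x_1}\otimes\Sigma$, where $\Sigma=\mathrm{Hom}_{\GL(d_\rho)}(\rho|\cdot|^{x_1},-)$; this holds because cuspidal representations are projective and injective in their Bernstein block modulo the center. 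If the unramified twist causes generalized eigenvector issues, we instead take the largest quotient on which the center acts semisimply, which is still nonzero.

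Now take an irreducible quotient $\sigma'$ of $\Sigma$. Every irreducible quotient of $\Sigma$ is a constituent of $\sigma$, so this $\sigma'$ lies in $\sigma$. We get a nonzero map $r_{P_1}(\pi)\twoheadrightarrow\rho|\cdot|^{x_1}\otimes\sigma'$, and Frobenius reciprocity gives a nonzero map $\pi\to\rho|\cdot|^{x_1}\rtimes\sigma'$. This map is injective because $\pi$ is irreducible. All of this is valid for $\widetilde{G}$, since the Levi $\widetilde{M}$ splits canonically as in \S 2.2 and the Jacquet functor and Frobenius reciprocity hold for covering groups.

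For the induction step, apply the same argument to the last exponent $x_n$. We have $\Jac_{\rho,x_n}(\sigma_{n-1})=\sigma$, where $\sigma_{n-1}=\Jac_{\rho,x_{n-1}}\circ\cdots\circ\Jac_{\rho,x_1}(\pi)$. It is cleaner to work directly with the Jacquet module along the parabolic with Levi $\GL(d_\rho)^n\times\widetilde{G}$, using transitivity of Jacquet modules. The iterated partial Jacquet module is the $\rho|\cdot|^{x_1}\otimes\cdots\otimes\rho|\cdot|^{x_n}$-isotypic part of $r_P(\pi)$, which by transitivity equals the corresponding cuspidal-support component. As in the base case, this component has a nonzero quotient $\rho|\cdot|^{x_1}\otimes\cdots\otimes\rho|\cdot|^{x_n}\otimes\sigma'$ with $\sigma'$ an irreducible constituent of $\sigma$. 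Frobenius reciprocity for $P$ then gives the embedding $\pi\hookrightarrow\rho|\cdot|^{x_1}\times\cdots\times\rho|\cdot|^{x_n}\rtimes\sigma'$, with the ordering fixed by the induction-in-stages convention. Thus the induction on $n$ collapses into a single application of transitivity.

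The main obstacle is justifying that the $\rho|\cdot|^{x_1}\otimes\cdots\otimes\rho|\cdot|^{x_n}$-component of $r_P(\pi)$ admits such an irreducible quotient of the stated form, rather than only a subquotient. In particular, when some $x_i$ coincide, the $\GL(d_\rho)^n$-action may be non-semisimple. We handle this by passing to a generalized eigenspace for the center of $\GL(d_\rho)^n$. Any nonzero finite-length module there has an irreducible quotient, which is necessarily of the form $\rho|\cdot|^{x_1}\otimes\cdots\otimes\rho|\cdot|^{x_n}\otimes\sigma'$, and its $\widetilde{G}$-part $\sigma'$ occurs in the semisimplification, namely $\sigma$.
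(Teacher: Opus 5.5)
Your proof is correct and is the standard argument the paper invokes via Xu's Lemma 5.3. You isolate the direct summand of the Jacquet module along the parabolic with Levi $\GL(d_\rho)^n\times\widetilde{G}$ on which the $\GL$-factors have cuspidal support $(\rho|\cdot|^{x_1},\dots,\rho|\cdot|^{x_n})$, take an irreducible quotient $\rho|\cdot|^{x_1}\otimes\cdots\otimes\rho|\cdot|^{x_n}\otimes\sigma'$ (transitivity and exactness make $\sigma'$ a constituent of $\sigma$), and apply Frobenius reciprocity. One remark: you do not need the claim that this summand is literally of the form $\rho|\cdot|^{x_1}\otimes\Sigma$, which fails when the center acts non-semisimply; your fallback, that any irreducible quotient of the nonzero finite-length summand already has the required form, is all the argument uses.
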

	\begin{proof}
		This is a standard fact about Jacquet modules, which can be proved by the same argument as in \cite[Lemma 5.3]{Xu2017_cuspidal}.
	\end{proof}
	
	\begin{lemma}\label{lemma: computation of partial Jacquet module}
		If $\pi\in\mR(\GL)$, $\sigma\in\mR_-(\widetilde{G})$, $\rho\in\Pi_{\unit,\cusp}(\GL(d_\rho))$, $x\in\RR$, then $$\Jac_{\rho,x}(\pi\rtimes\sigma)=\Jac_{\rho,x}(\pi)\rtimes\sigma+\Jac_{\rho^\vee,-x}^\op(\pi)\rtimes\sigma+\pi\rtimes\Jac_{\rho,x}(\sigma).$$
	\end{lemma}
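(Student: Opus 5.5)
The plan is to apply the metaplectic Tadić formula (Proposition \ref{prop: Tadic formula}) to $\pi\rtimes\sigma$ and read off the terms whose $\GL$-part is $\rho|\cdot|^x$ (a representation of $\GL(d_\rho)$). By additivity of $\mu^*$ and $\Jac_{\rho,x}$ we may assume $\pi$ and $\sigma$ are irreducible. We write
$$m^*(\pi)=\sum_j\pi_j^{(1)}\otimes\pi_j^{(2)},\qquad \mu^*(\sigma)=\sum_k\tau_k\otimes\sigma_k .$$
Unwinding $M^*=(m\otimes\Id)\circ(\vee\otimes m^*)\circ\kappa\circ m^*$ gives
$$M^*(\pi)=\sum (\pi_j^{(1)})^\vee\times\pi_{j,1}^{(2)}\otimes\pi_{j,2}^{(2)},\qquad\text{where } m^*(\pi_j^{(2)})=\sum\pi_{j,1}^{(2)}\otimes\pi_{j,2}^{(2)}.$$
Hence
$$\mu^*(\pi\rtimes\sigma)=\sum (\pi_j^{(1)})^\vee\times\pi_{j,1}^{(2)}\times\tau_k\otimes\pi_{j,2}^{(2)}\rtimes\sigma_k .$$

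Next we isolate the terms whose first factor is $\rho|\cdot|^x$. That first factor is a product of three representations, and an irreducible subquotient of such a product has cuspidal support equal to the union of the three supports. For the product to be the cuspidal representation $\rho|\cdot|^x$ of $\GL(d_\rho)$, exactly one factor must have degree $d_\rho$ and the other two must be trivial representations of $\GL(0)$. This leaves three cases.

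\begin{enumerate}
\item[(a)] Only $(\pi_j^{(1)})^\vee$ is nontrivial. This happens for the term $\pi_j^{(1)}\otimes\pi_j^{(2)}$ of $m^*(\pi)$ with $\pi_j^{(1)}\cong\rho^\vee|\cdot|^{-x}$, together with $\pi_{j,1}^{(2)}=1$ and $\tau_k=1$. The second factor is then $\pi_j^{(2)}\rtimes\sigma$. Since these $\pi_j^{(2)}$ sum to $\Jac_{\rho^\vee,-x}(\pi)$, this case contributes $\Jac_{\rho^\vee,-x}(\pi)\rtimes\sigma$.
\item[(b)] Only $\pi_{j,1}^{(2)}$ is nontrivial. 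Then $\pi_j^{(1)}=1$, so $\pi_j^{(2)}=\pi$, and $\pi_{j,1}^{(2)}=\rho|\cdot|^x$. This case contributes $\Jac_{\rho,x}(\pi)\rtimes\sigma$.
\item[(c)] Only $\tau_k$ is nontrivial. Then $\pi_j^{(1)}=\pi_{j,1}^{(2)}=1$ and $\tau_k=\rho|\cdot|^x$. This case contributes $\pi\rtimes\Jac_{\rho,x}(\sigma)$.
\end{enumerate}

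The only real obstacle is matching case (a) with the term written as $\Jac^\op_{\rho^\vee,-x}(\pi)$ in the statement. What case (a) literally produces is the partial Jacquet module that strips $\rho^\vee|\cdot|^{-x}$ from the side of $m^*$ that gets dualized by $\vee$ in $M^*$. The notation $\Jac^\op$ is meant to record exactly that side convention. Depending on how $m^*$ is ordered, one may instead need the identity $\Jac_{\rho^\vee,-x}(\pi)\rtimes\sigma=\Jac_{\rho,x}(\pi^\vee)^\vee\rtimes\sigma$. This identity follows from the fact that contragredient interchanges the two factors of $m^*$ up to $\vee$, combined with Proposition \ref{prop: consequence of MVW-involution}(1), which lets us replace the $\GL$-factor by its contragredient inside $\mR_-$. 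I would check this bookkeeping on the example $\pi=\rho|\cdot|^{y}$, where $M^*(\pi)=1\otimes\pi+\pi\otimes 1+\pi^\vee\otimes 1$, to confirm that the three terms of the formula come out exactly as stated.
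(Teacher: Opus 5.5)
Your strategy is the paper's own: the paper simply states that the lemma is a direct consequence of the Tadi\'c formula, citing Xu. Your cases (b) and (c) are fine. Case (a), however, rests on a mis-unwinding of $M^*$: you dropped the flip $\kappa$. Since $\kappa\circ m^*(\pi)=\sum_j\pi_j^{(2)}\otimes\pi_j^{(1)}$, it is the \emph{second} factor of $m^*(\pi)$ that gets dualized and the \emph{first} that gets comultiplied:
\[
M^*(\pi)=\sum_j(\pi_j^{(2)})^\vee\times\pi_{j,1}^{(1)}\otimes\pi_{j,2}^{(1)},\qquad m^*(\pi_j^{(1)})=\sum\pi_{j,1}^{(1)}\otimes\pi_{j,2}^{(1)}.
\]
With this formula, case (a) forces $\pi_j^{(2)}\cong\rho^\vee|\cdot|^{-x}$, $\pi_{j,1}^{(1)}=1$ and $\tau_k=1$. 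Its contribution is then exactly $\sum_j\pi_j^{(1)}\rtimes\sigma=\Jac^{\op}_{\rho^\vee,-x}(\pi)\rtimes\sigma$. Cases (b) and (c) are unchanged up to relabeling. That completes the proof, and no additional identity is needed.

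The repair you suggest instead is false. From $m^*(\pi^\vee)=(\vee\otimes\vee)\circ\kappa\circ m^*(\pi)$ one gets $\Jac_{\rho,x}(\pi^\vee)^\vee=\Jac^{\op}_{\rho^\vee,-x}(\pi)$, not $\Jac_{\rho^\vee,-x}(\pi)$. Left and right partial Jacquet modules genuinely differ, even after inducing to $\widetilde{G}$.

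For a counterexample, take $\rho$ self-dual, $x=-1$ and $\pi=\Z([0,1]_\rho)$, so that $m^*(\pi)=1\otimes\pi+\rho\otimes\rho|\cdot|^1+\pi\otimes 1$. Then $\Jac_{\rho,1}(\pi)=0$ but $\Jac^{\op}_{\rho,1}(\pi)=\rho$. Your left-module version of the formula gives $\Jac_{\rho,-1}(\pi\rtimes\sigma)=\pi\rtimes\Jac_{\rho,-1}(\sigma)$. Applied to $\pi^\vee=\Z([-1,0]_\rho)$, the same version gives $\rho\rtimes\sigma+\pi^\vee\rtimes\Jac_{\rho,-1}(\sigma)$. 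These two answers contradict $\pi\rtimes\sigma=\pi^\vee\rtimes\sigma$ from Proposition \ref{prop: consequence of MVW-involution}(1), so MVW cannot rescue the left-module version.

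Finally, your planned check on a cuspidal $\pi=\rho|\cdot|^y$ cannot detect the error. For cuspidal $\pi$ the left and right partial Jacquet modules coincide, so both unwindings of $M^*$ agree there. A segment of length two already separates them.
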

	\begin{proof}
		This is a direct consequence of the Tadi\'{c} formula (Proposition \ref{prop: Tadic formula}). For a reference, see \cite[\S 5]{Xu2017_cuspidal}
	\end{proof}
	
	\begin{definition}\label{def: socle}
		Let $\pi$ be a finite length genuine representation of $\widetilde{G}_{2n}$. We denote by $\soc(\pi)$ the largest semisimple subrepresentation of $\pi$ and call it the socle of $\pi$. We also define the cosocle $\cos(\pi)$ of $\pi$ to be the largest semisimple quotient of $\pi$. We say that $\pi$ is SI (socle irreducible) if $\soc(\pi)$ is irreducible and occurs with multiplicity one in $\JH(\pi)$.
	\end{definition}
	\index{socle, $\soc(\pi)$}
	\index{cosocle, $\cos(\pi)$}
	\index{SI}
	
	\begin{lemma}\label{lemma: criterion for SI}
		Let $\pi$ be a finite length representation of $\GL(d_\pi)$ and $\sigma$ be a finite length genuine representation of $\widetilde{G}_{2n}$. Suppose that $\pi$ and $\sigma$ are SI, and that $\soc(\pi)\otimes\soc(\sigma)$ occurs with multiplicity one in $\mu^*(\pi\rtimes\sigma)$. Then $\pi\rtimes\sigma$ is SI and $\soc(\pi\rtimes\sigma)=\soc(\soc(\pi)\rtimes\soc(\sigma))$.
	\end{lemma}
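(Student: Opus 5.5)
The plan is the standard argument from the general linear and classical group settings (e.g. the criterion used by Atobe--M\'inguez and Lapid--M\'inguez), transported to $\widetilde{G}_{2n}$ via Frobenius reciprocity. Write $\pi_0=\soc(\pi)$ and $\sigma_0=\soc(\sigma)$; both are irreducible because $\pi,\sigma$ are SI. Let $\widetilde{P}=\widetilde{M}N$ be the standard parabolic with $\widetilde{M}\cong\GL(d_\pi)\times\widetilde{G}_{2n}$, so that $\pi\rtimes\sigma$ lives on $\widetilde{G}_{2n+2d_\pi}$. The key tool is Frobenius reciprocity: for an irreducible genuine $\tau$,
\[
\mathrm{Hom}(\tau,\pi\rtimes\sigma)\cong\mathrm{Hom}_{\widetilde{M}}(r_{\widetilde{P}}(\tau),\pi\otimes\sigma),
\]
where $r_{\widetilde{P}}$ is the normalized Jacquet module. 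The semisimplification of $r_{\widetilde{P}}$ is the relevant component of $\mu^*$.

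First, I show that every irreducible subrepresentation $\tau\hookrightarrow\pi\rtimes\sigma$ satisfies $\pi_0\otimes\sigma_0\le\mu^*(\tau)$. Frobenius gives a nonzero $\widetilde{M}$-map $r_{\widetilde{P}}(\tau)\to\pi\otimes\sigma$. Its image is a nonzero subrepresentation of $\pi\otimes\sigma$, which has finite length. Any nonzero subrepresentation of $\pi\otimes\sigma$ contains an irreducible subrepresentation. The irreducible subrepresentations of $\pi\otimes\sigma$ are exactly $\soc(\pi)\otimes\soc(\sigma)=\pi_0\otimes\sigma_0$: the socle of an external tensor product of finite length smooth representations of a product group is the tensor product of the socles. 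So $\pi_0\otimes\sigma_0$ is a subquotient of $r_{\widetilde{P}}(\tau)$, hence occurs in $\mu^*(\tau)$.

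Second, I use multiplicity one. Suppose $\soc(\pi\rtimes\sigma)$ contained $\tau_1\oplus\tau_2$, or some irreducible $\tau_1$ in the socle also occurred elsewhere in $\JH(\pi\rtimes\sigma)$. By the first step, each such occurrence contributes a copy of $\pi_0\otimes\sigma_0$ to $\mu^*(\pi\rtimes\sigma)$, by exactness of Jacquet modules on Jordan--H\"older series. This contradicts the hypothesis that $\pi_0\otimes\sigma_0$ has multiplicity one. Since $\pi\rtimes\sigma\ne0$, its socle is nonzero, so it is irreducible and occurs with multiplicity one. Hence $\pi\rtimes\sigma$ is SI.

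Third, I identify the socle. Since $\pi_0\rtimes\sigma_0\hookrightarrow\pi\rtimes\sigma$ by exactness of induction, $\soc(\pi_0\rtimes\sigma_0)\subset\soc(\pi\rtimes\sigma)$. The latter is irreducible and the former is nonzero, so they are equal. Moreover $\pi_0\rtimes\sigma_0$ is itself SI by the same multiplicity argument.

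The only step needing care is the claim that the socle of $\pi\otimes\sigma$ as a $\GL(d_\pi)\times\widetilde{G}_{2n}$-module is $\soc(\pi)\otimes\soc(\sigma)$. I would justify it by noting that $\mathrm{Hom}(\alpha\otimes\beta,\pi\otimes\sigma)=\mathrm{Hom}(\alpha,\pi)\otimes\mathrm{Hom}(\beta,\sigma)$ for irreducible admissible $\alpha,\beta$, which holds by admissibility. One also needs that genuine irreducible representations of $\widetilde{M}$ are of the form $\alpha\otimes\beta$, which uses the canonical isomorphism $\widetilde{M}\cong\GL(d_\pi)\times\widetilde{G}_{2n}$.
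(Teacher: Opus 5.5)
Your proposal is correct and follows the paper's argument. Frobenius reciprocity gives $\soc(\pi)\otimes\soc(\sigma)\le\mu^*(\tau)$ for every irreducible subrepresentation $\tau$, and the multiplicity-one hypothesis then forces the socle to be irreducible and to occur only once. Your version is more careful than the paper's in two places: the paper writes the unjustified intermediate inequality $\pi\otimes\sigma\le\mu^*(\tau)$, and it leaves $\soc(\pi\rtimes\sigma)=\soc(\soc(\pi)\rtimes\soc(\sigma))$ implicit. Your image-of-the-Frobenius-map argument and the inclusion $\soc(\pi)\rtimes\soc(\sigma)\hookrightarrow\pi\rtimes\sigma$ supply both.
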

	\begin{proof}
		Suppose that $\tau$ is an irreducible subrepresentation of $\pi\rtimes\sigma$. The Frobenius reciprocity implies that $\soc(\pi)\otimes\soc(\sigma)\le\pi\otimes\sigma\le\mu^*(\tau)$ (note that $\mR_-(\widetilde{G})$ is the free abelian group generated by $\Pi_-(\widetilde{G})$, the symbol $\pi\le\tau$ means that $\tau-\pi\in\ZZ_{\ge0}\Pi_-(\widetilde{G})$). Thus $\pi\rtimes\sigma$ contains at most one irreducible subrepresentation, and the multiplicity of $\tau$ in $\JH(\pi\rtimes\sigma)$ is at most one.
	\end{proof}
	
	\begin{lemma}\label{lemma: reduced decomposition of representation}
		For $\sigma\in\Pi_-(\widetilde{G}_{2n})$, $\rho\in\Pi_{\unit,\cusp}(\GL(d_\rho))$ and $X\subset\left\{\rho|\cdot|^x:x\in\RR\right\}$, there exist $\pi\in\Pi(\GL(d))$ and $\tau\in\Pi_-(\widetilde{G}_{2n-2d})$ such that
		\begin{enumerate}
			\item[(1)] $\sigma\xhookrightarrow{}\pi\rtimes\tau$.
			\item[(2)] $\Supp(\pi)\subset X$.
			\item[(3)] $\tau$ is $\rho|\cdot|^x$-reduced for all $\rho|\cdot|^x\in X$.
		\end{enumerate}
		Moreover, write $X^\vee=\left\{\rho^\vee|\cdot|^{-x}:\rho|\cdot|^x\in X\right\}$. If $X\cap X^\vee=\emptyset$, then the pair $(\pi,\tau)$ is unique, and the parabolic induction $\pi\rtimes\tau$ is SI.
	\end{lemma}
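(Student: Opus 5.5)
Existence comes first, by induction on $n$ (or on the size of the Jacquet modules). If $\sigma$ is already $\rho|\cdot|^x$-reduced for every $\rho|\cdot|^x\in X$, take $\pi$ trivial and $\tau=\sigma$. Otherwise pick $\rho|\cdot|^{x}\in X$ with $\Jac_{\rho,x}(\sigma)\neq0$. By Lemma \ref{lemma: partial Jacquet module and embedding} there is an irreducible $\sigma_1$ with $\sigma\xhookrightarrow{}\rho|\cdot|^{x}\rtimes\sigma_1$. The induction hypothesis applied to $\sigma_1$ gives $\sigma_1\xhookrightarrow{}\pi_1\rtimes\tau$ with $\Supp(\pi_1)\subset X$ and $\tau$ reduced for $X$. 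Hence $\sigma\xhookrightarrow{}(\rho|\cdot|^x\times\pi_1)\rtimes\tau$. Since $\rho|\cdot|^x\times\pi_1$ has finite length, exactness of induction together with Frobenius reciprocity lets us pick an irreducible subquotient $\pi$ of it (its support lies in $X$) with $\sigma\xhookrightarrow{}\pi\rtimes\tau$. More simply, take the irreducible subrepresentation of the image of the map. Concretely: $\sigma\hookrightarrow\Pi\rtimes\tau$ with $\Pi$ finite length gives a nonzero map $\mathrm{Jac}(\sigma)\to\Pi\otimes\tau$, hence an irreducible quotient $\pi\otimes\tau$ of the Jacquet module, which in turn gives $\sigma\hookrightarrow\pi\rtimes\tau$.

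For uniqueness and SI, assume $X\cap X^\vee=\emptyset$. Let $(\pi,\tau)$ be any such pair and compute the part of $\mu^*(\pi\rtimes\tau)$ whose $\GL$-component has support in $X$, using the Tadi\'c formula (Proposition \ref{prop: Tadic formula}). A term of $M^*(\pi)\rtimes\mu^*(\tau)$ has the form $\pi_1\times\pi_2^\vee\times\pi_3\otimes\pi_2'\rtimes\tau'$, where $\pi_1\otimes\pi_2\otimes\pi_2'$ runs over the pieces of $\pi$ under the (double) comultiplication, and $\pi_3\otimes\tau'$ runs over the pieces of $\mu^*(\tau)$. For the $\GL$-support to lie in the same multiset as $\Supp(\pi)\subset X$ we need three things. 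First, $\pi_2$ must be trivial, because $\Supp(\pi_2^\vee)\subset X^\vee$ is disjoint from $X$. Second, $\pi_3$ must be trivial, because $\tau$ is $X$-reduced: any nontrivial $\pi_3$ with support in $X$ would make some $\Jac_{\rho,x}(\tau)\ne0$, since the cuspidal support of the Jacquet module forces a first factor from $X$. Third, $\pi_2'$ must be trivial by counting degrees. Hence the only term with $\GL$-part of degree $d$ and support in $X$ is $\pi\otimes\tau$, with multiplicity one.

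Lemma \ref{lemma: criterion for SI} then gives that $\pi\rtimes\tau$ is SI. Uniqueness follows from the same computation. If also $\sigma\hookrightarrow\pi'\rtimes\tau'$, then by Frobenius reciprocity $\pi'\otimes\tau'\le\mu^*(\sigma)\le\mu^*(\pi\rtimes\tau)$, and $\Supp(\pi')\subset X$. The computation above, done at arbitrary degree, shows that any term with $\GL$-support in $X$ is $\pi_1\otimes(\pi_2'\rtimes\tau)$ with $\pi_1\otimes\pi_2'\le m^*(\pi)$. Symmetrically, $\pi\otimes\tau\le\mu^*(\pi'\rtimes\tau')$ forces $\pi$ to be a left piece of $\pi'$. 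Comparing degrees gives $d=d'$, so $\pi'=\pi$ and $\tau'=\tau$.

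The main obstacle is justifying that $\pi_3$ must be trivial, i.e. that a nontrivial $\pi_3$ with support in $X$ contradicts reducedness. I would argue this by taking a Jacquet module of $\pi_3$ further along $\GL$: by transitivity of Jacquet modules, some $\rho|\cdot|^{x}\otimes(\cdots)$ with $\rho|\cdot|^x\in X$ appears in $\mu^*(\tau)$.
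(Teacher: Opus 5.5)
Your proposal is correct and follows essentially the paper's argument. The uniqueness and SI part is the same Tadi\'c-formula computation: reducedness of $\tau$ kills the $\mu^*(\tau)$ contribution (your coassociativity argument supplies a justification the paper only asserts), and $X\cap X^\vee=\emptyset$ kills the contragredient piece. Mutual ``left piece'' comparison then forces $\pi'=\pi$ and $\tau'=\tau$, and Lemma~\ref{lemma: criterion for SI} gives SI. For existence, your standard induction via Lemma~\ref{lemma: partial Jacquet module and embedding} and Frobenius reciprocity is what the paper delegates to Jantzen's Lemma 2.1.2.
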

	\begin{proof}
		The existence part can be proved using the same argument as in \cite[Lemma 2.1.2]{Jantzen2007_jacquet}. We only prove the uniqueness part of the proposition. Suppose that $(\pi',\tau')$ is another pair satisfying the given conditions, then $\Jac_\pi(\pi'\rtimes\tau')\neq0$. By Proposition \ref{prop: Tadic formula}, there exist $\lambda_1\otimes\tau_1\le\mu^*(\tau')$, $\pi_1\otimes\pi_2\le m^*(\pi)$, $\pi_3\otimes\pi_4\le m^*(\pi_1)$ such that $\pi\le\pi_2^\vee\times\pi_3\times\lambda_1$. Since $\tau'$ is $X$-reduced, we have $\lambda_1=1$ (the ``$1$" here means the multiplicative identity of $\mR(\GL)$). Also, since $X\cap X^\vee=\emptyset$, we have $\pi_2=1$. In conclusion, we have $\pi=\pi_3$, and thus $\Jac_\pi(\pi')\neq0$. But we also have $\Jac_{\pi'}(\pi)\neq0$ by symmetry, hence $\pi=\pi'$. Note that the above computation also implies that $\Jac_{\pi}(\pi\rtimes\tau)=\tau$. Hence $\sigma=\Jac_{\pi}(\sigma)=\Jac_{\pi'}(\sigma)=\sigma'$. Finally, it follows from Lemma \ref{lemma: criterion for SI} that $\pi\rtimes\tau$ is SI.
	\end{proof}
	
	\subsection{The non-self-dual case}
	
	\begin{definition}\label{def: rho-derivative}
		For $\sigma\in\Pi_-(\widetilde{G}_{2n})$, $\rho\in\Pi_{\unit,\cusp}(\GL(d_\rho))$ and $x\in\RR$, the $k$-th $\rho|\cdot|^x$-derivative of $\sigma$ is defined to be $$D_{\rho,x}^{(k)}(\sigma):=\frac{1}{k!}\underbrace{\Jac_{\rho,x}\circ\cdots\circ\Jac_{\rho,x}}_{k\text{ times}}(\sigma).$$ When $D_{\rho,x}^{(k)}(\sigma)\neq 0$ but $D_{\rho,x}^{(k+1)}(\sigma)=0$, we call $D_{\rho,x}^{(k)}(\sigma)$ the highest $\rho|\cdot|^x$-derivative of $\sigma$. Notice that $\Jac_{\min}((\rho|\cdot|^x)^k)=k!\underbrace{\rho|\cdot|^x\otimes\cdots\otimes\rho|\cdot|^x}_{k\text{ times}}$. Thus, it is not hard to see that $D_{\rho,x}^{(k)}(\sigma)=\Jac_{(\rho|\cdot|^x)^k}(\sigma)$. In particular, $D_{\rho,x}^{(k)}(\sigma)$ is always a representation.
	\end{definition}
	\index{$\rho$-derivative}
	\index{derivative rho@$D_{\rho,x}^{(k)}$}
	
	\begin{proposition}\label{prop: highest rho-derivative is irreducible in non-self-dual case}
		Let $\sigma\in\Pi_-(\widetilde{G}_{2n})$, $\rho\in\Pi_{\unit,\cusp}(\GL(d_\rho))$ and $x\in\RR$. Suppose that $D_{\rho,x}^{(k)}(\sigma)$ is the highest derivative of $\sigma$. Then the following hold:
		\begin{enumerate}
			\item[(1)] If $\rho|\cdot|^x$ is not self-dual, then $D_{\rho,x}^{(k)}(\sigma)$ is irreducible, $\rho^k\rtimes D_{\rho,x}^{(k)}(\sigma)$ is SI and $\sigma=\soc(\rho^k\rtimes D_{\rho,x}^{(k)}(\sigma))$.
			\item[(2)] If $\rho|\cdot|^x$ is self-dual, then $D_{\rho,x}^{(k)}(\sigma)=m\sigma_0$, where $\sigma_0$ is irreducible and $m\in\ZZ_{\ge1}$. In this case, $\sigma$ is still a subrepresentation of $\rho^k\rtimes\sigma_0$.
		\end{enumerate}
	\end{proposition}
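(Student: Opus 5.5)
We follow the argument of \cite[Proposition 3.3]{AtobeMínguez_2023} (which goes back to Jantzen and M\'inguez), replacing the classical Tadi\'c formula by Proposition \ref{prop: Tadic formula}. The first step is to produce the embedding. We apply Lemma \ref{lemma: reduced decomposition of representation} with $X=\{\rho|\cdot|^x\}$. This gives $\sigma\hookrightarrow\pi\rtimes\tau$ with $\Supp(\pi)\subset X$ and $\tau$ being $\rho|\cdot|^x$-reduced. Since the only irreducible representation of $\GL$ with cuspidal support in $X$ is $(\rho|\cdot|^x)^{k'}$, we have $\pi=(\rho|\cdot|^x)^{k'}$; below we write $\rho^k$ for $(\rho|\cdot|^x)^k$ as in the statement. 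Frobenius reciprocity then gives $\tau\le D^{(k')}_{\rho,x}(\sigma)$, so $k'\le k$.

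Next we compute $D^{(j)}_{\rho,x}(\rho^{k'}\rtimes\tau)$ via Lemma \ref{lemma: computation of partial Jacquet module}, iterated. In case (1), $\rho|\cdot|^x\ne\rho^\vee|\cdot|^{-x}$, so the term $\Jac^{\op}_{\rho^\vee,-x}(\rho^{k'})$ vanishes. Because $\tau$ is reduced, we get $D^{(k')}_{\rho,x}(\rho^{k'}\rtimes\tau)=\tau$ and $D^{(k'+1)}=0$. By exactness of Jacquet modules, $D^{(k'+1)}_{\rho,x}(\sigma)=0$, so $k'=k$. Moreover $0\ne D^{(k)}_{\rho,x}(\sigma)\le\tau$ forces $D^{(k)}_{\rho,x}(\sigma)=\tau$, which is irreducible.

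The same computation shows that $\rho^k\otimes\tau$ occurs with multiplicity one in $\mu^*(\rho^k\rtimes\tau)$. Since $\rho^k$ and $\tau$ are irreducible, hence SI, Lemma \ref{lemma: criterion for SI} gives that $\rho^k\rtimes\tau$ is SI. As $\sigma$ embeds into it, $\sigma=\soc(\rho^k\rtimes D^{(k)}_{\rho,x}(\sigma))$. Alternatively, the uniqueness part of Lemma \ref{lemma: reduced decomposition of representation} applies directly, since $X\cap X^\vee=\emptyset$. This settles (1).

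In case (2), $\rho|\cdot|^x$ is self-dual, and the dual term no longer vanishes. Iterating Lemma \ref{lemma: computation of partial Jacquet module}, each application of $\Jac_{\rho,x}$ to $\rho^{j}\rtimes\tau$ with $\tau$ reduced gives $2j\,\rho^{j-1}\rtimes\tau$. Hence the maximal derivative order of $\rho^{k'}\rtimes\tau$ is $k'$, and $D^{(k')}_{\rho,x}(\rho^{k'}\rtimes\tau)=2^{k'}\tau$. Exactness again gives $k'=k$ and $D^{(k)}_{\rho,x}(\sigma)\le 2^k\tau$. The left side is nonzero, so it equals $m\tau$ with $1\le m\le 2^k$. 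Setting $\sigma_0=\tau$, the embedding $\sigma\hookrightarrow\rho^k\rtimes\sigma_0$ from the first step gives the last claim.

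The main obstacle I expect is the bookkeeping in the Tadi\'c-formula computation. One has to check that no other terms of $M^*(\rho^{k})\rtimes\mu^*(\tau)$ contribute $(\rho|\cdot|^x)^{j}\otimes\ast$. The contributions with a nontrivial $\GL$-part coming from $\mu^*(\tau)$ are excluded by reducedness. The genuinely new point for metaplectic groups is that no extra character $\alpha$ appears in the $\mu_8$ version of the Tadi\'c formula, as noted after Proposition \ref{prop: Tadic formula}; so self-duality of $\rho|\cdot|^x$ is tested exactly as in the classical case.
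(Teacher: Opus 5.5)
Your proposal is correct and follows the same route as the paper. You apply Lemma \ref{lemma: reduced decomposition of representation} with $X=\{\rho|\cdot|^x\}$ to get $\sigma\hookrightarrow(\rho|\cdot|^x)^{k'}\rtimes\tau$ with $\tau$ reduced, compute $\Jac_{\rho,x}$ of this induced representation via Lemma \ref{lemma: computation of partial Jacquet module} (factor $r$ versus $2r$ according to self-duality), and conclude with the uniqueness/SI part of Lemma \ref{lemma: reduced decomposition of representation}. You also spell out steps the paper leaves implicit: $k'=k$ by exactness, and the bound $D^{(k)}_{\rho,x}(\sigma)\le 2^k\tau$ in the self-dual case.
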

	\begin{proof}
		By Lemma \ref{lemma: reduced decomposition of representation}, there exists an irreducible genuine representation $\sigma_0$ such that $\sigma\xhookrightarrow{}(\rho|\cdot|^x)^r\rtimes\sigma_0$ and $\Jac_{\rho,x}(\sigma_0)=0$. By Lemma \ref{lemma: computation of partial Jacquet module}, we have 
		$$\Jac_{\rho,x}((\rho|\cdot|^x)^r\rtimes\sigma_0)=\begin{cases}
			r(\rho|\cdot|^x)^{r-1}\rtimes\sigma_0&\text{if }\rho|\cdot|^x\text{ is not self-dual,}\\
			2r(\rho|\cdot|^x)^{r-1}\rtimes\sigma_0&\text{if }\rho|\cdot|^x\text{ is self-dual.}\\
		\end{cases}.$$ Now the proposition follows from the above calculation and Lemma \ref{lemma: reduced decomposition of representation}.
	\end{proof}
	
	\begin{proposition}\label{prop: parabolic induction is SI in non-self-dual case}
		Let $\sigma\in\Pi_-(\widetilde{G}_{2n})$, $\rho\in\Pi_{\unit,\cusp}(\GL(d_\rho))$ and $x\in\RR$. Suppose that $\rho|\cdot|^x$ is not self-dual. Then $(\rho|\cdot|^x)^r\rtimes\sigma$ is SI for all $r\in\ZZ_{\ge0}$.
	\end{proposition}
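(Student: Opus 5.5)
We reduce to the case where $\sigma$ is $\rho|\cdot|^x$-reduced, and then apply Lemma \ref{lemma: criterion for SI}. Set $X=\{\rho|\cdot|^x\}$. Since $\rho|\cdot|^x$ is not self-dual, $X\cap X^\vee=\emptyset$. We also write $(\rho|\cdot|^x)^r$ for the irreducible representation $\rho|\cdot|^x\times\cdots\times\rho|\cdot|^x$; it is irreducible because all factors are equal cuspidals.

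First, let $D_{\rho,x}^{(k)}(\sigma)$ be the highest derivative of $\sigma$. By Proposition \ref{prop: highest rho-derivative is irreducible in non-self-dual case}(1), $\sigma_0:=D_{\rho,x}^{(k)}(\sigma)$ is irreducible and $\rho|\cdot|^x$-reduced, and $\sigma\hookrightarrow(\rho|\cdot|^x)^k\rtimes\sigma_0$. Exactness of parabolic induction then gives
$$(\rho|\cdot|^x)^r\rtimes\sigma\hookrightarrow(\rho|\cdot|^x)^{r+k}\rtimes\sigma_0.$$
Any subrepresentation of an SI representation has the same irreducible socle. Moreover, the multiplicity of that socle in the smaller representation is at most its multiplicity in the larger one, which is $1$. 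It therefore suffices to show that $(\rho|\cdot|^x)^{N}\rtimes\sigma_0$ is SI for every $N$ whenever $\sigma_0$ is irreducible and $\rho|\cdot|^x$-reduced.

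For this reduced case we apply Lemma \ref{lemma: criterion for SI} with $\pi=(\rho|\cdot|^x)^N$ and the irreducible $\sigma_0$; both are trivially SI. We must check that $(\rho|\cdot|^x)^N\otimes\sigma_0$ occurs in $\mu^*((\rho|\cdot|^x)^N\rtimes\sigma_0)$ with multiplicity exactly one. Expanding the Tadi\'c formula (Proposition \ref{prop: Tadic formula}), a term with $\GL$-part $(\rho|\cdot|^x)^N$ has the form $\pi_2^\vee\times\pi_3\times\lambda_1\otimes(\pi_4\rtimes\tau_1)$. Here $\pi_2\otimes\pi_1\le m^*((\rho|\cdot|^x)^N)$, $\pi_3\otimes\pi_4\le m^*(\pi_1)$, and $\lambda_1\otimes\tau_1\le\mu^*(\sigma_0)$. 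Cuspidal supports force the conditions below.

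\begin{enumerate}
\item[(a)] $\lambda_1$ must have support in $\{\rho|\cdot|^x\}$, so $\lambda_1=1$ because $\sigma_0$ is reduced. This is the argument in Lemma \ref{lemma: reduced decomposition of representation}: if $\lambda_1\neq1$, then $\Jac_{\rho,x}(\sigma_0)\neq0$.
\item[(b)] $\pi_2^\vee$ has support in $\{\rho^\vee|\cdot|^{-x}\}$, which is disjoint from $\{\rho|\cdot|^x\}$, so $\pi_2=1$.
\item[(c)] Hence $\pi_3=(\rho|\cdot|^x)^N$, $\pi_4=1$ and $\tau_1=\sigma_0$.
\end{enumerate}

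The coefficient of $(\rho|\cdot|^x)^N\otimes1$ in $m^*((\rho|\cdot|^x)^N)$ is $1$, since $m^*$ of an irreducible representation contains the trivial split $\pi\otimes1$ once. So the term occurs with multiplicity one.

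This is the same computation already used in the uniqueness part of Lemma \ref{lemma: reduced decomposition of representation}, which gives $\Jac_\pi(\pi\rtimes\tau)=\tau$. Lemma \ref{lemma: criterion for SI} then gives SI.

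The only delicate point is bookkeeping in the Tadi\'c formula, namely ensuring that no $\pi_2^\vee$ contribution can produce $\rho|\cdot|^x$. This is exactly where non-self-duality is used: if $\rho|\cdot|^x$ were self-dual, the multiplicity would be $2^N$, as seen in the proof of Proposition \ref{prop: highest rho-derivative is irreducible in non-self-dual case}.
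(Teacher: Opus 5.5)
Your proof is correct and is essentially the paper's argument. The paper also reduces via the highest derivative to the SI representation $(\rho|\cdot|^x)^{k+r}\rtimes\sigma_0$ and passes to the subrepresentation. The only difference is that you re-derive SI there from Lemma \ref{lemma: criterion for SI} and the Tadi\'c formula, whereas the paper cites the final assertion of Lemma \ref{lemma: reduced decomposition of representation}, which is proved by exactly that computation.

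One harmless slip in your Tadi\'c bookkeeping: the factor dualized in $M^*$ is the second tensor factor of $m^*$, not the first. This changes nothing here because $m^*((\rho|\cdot|^x)^N)=\sum_j\binom{N}{j}(\rho|\cdot|^x)^j\otimes(\rho|\cdot|^x)^{N-j}$ is symmetric.
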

	\begin{proof}
		Let $\sigma_0=D_{\rho,x}^{(k)}(\sigma)$ be the highest derivative. Then Proposition \ref{prop: highest rho-derivative is irreducible in non-self-dual case} implies that $\sigma_0$ is irreducible and $(\rho|\cdot|^x)^r\rtimes\sigma\xhookrightarrow{}(\rho|\cdot|^x)^{k+r}\rtimes\sigma_0$. By Lemma \ref{lemma: reduced decomposition of representation}, we know that $(\rho|\cdot|^x)^{k+r}\rtimes\sigma_0$ is SI. Thus $(\rho|\cdot|^x)^r\rtimes\sigma$ is also SI.
	\end{proof}
	
	\begin{definition}\label{def: socle non-self-dual case}
		Let $\sigma\in\Pi_-(\widetilde{G}_{2n})$, $\rho\in\Pi_{\unit,\cusp}(\GL(d_\rho))$ and $x\in\RR$. Suppose that $\rho|\cdot|^x$ is not self-dual. We define $$S_{\rho,x}^{(r)}(\sigma):=\soc((\rho|\cdot|^x)^r\rtimes\sigma).$$ By Proposition \ref{prop: parabolic induction is SI in non-self-dual case}, we know that $S_{\rho,x}^{(r)}(\sigma)$ is irreducible and $S_{\rho,x}^{(r)}(\sigma)=\underbrace{S_{\rho,x}^{(1)}\circ\cdots S_{\rho,x}^{(1)}}_{k\text{ times}}(\sigma)$. It follows from Proposition \ref{prop: highest rho-derivative is irreducible in non-self-dual case} that, if $D_{\rho,x}^{(k)}(\sigma)$ is the highest derivative, then we have $\sigma=S_{\rho,x}^{(k)}\circ D_{\rho,x}^{(k)}(\sigma)$.
	\end{definition}
	\index{socle functor rho@$S_{\rho,x}^{(r)}$}
	
	\subsection{Self-dual case}
	
	\par When $\rho$ is self-dual, the highest $\rho$-derivative of a genuine irreducible representation may not be irreducible and $\rho^r\rtimes\sigma$ may not be SI. For example, suppose that $\rho$ is of symplectic type, $\phi\in\Phi_\bdd(\widetilde{G})$ and $\sigma\in\Pi_\phi$. Then Corollary \ref{coro: strengthening of proposition 4.5.3 in ArthurMp - tempered case} implies that $\rho\rtimes\sigma$ is irreducible if $(\rho,1)\in\Jord(\phi)$, in which case the highest $\rho$-derivative of $\rho\rtimes\sigma$ is not irreducible; or semisimple of length two if $(\rho,1)\notin\Jord(\phi)$, in which case $\rho\rtimes\sigma$ is not SI.
	
	\par To avoid the problems mentioned above, we introduce the concept of the $[0,\zeta]_\rho$-derivative.
	
	\begin{definition}\label{def: [0,zeta]_rho-derivative}
		For $\sigma\in\Pi_-(\widetilde{G}_{2n})$, $\rho\in\Pi_{\unit,\cusp}(\GL(d_\rho))$, $\zeta\in\left\{\pm1\right\}$. The $k$-th $[0,\zeta]_\rho$-derivative of $\sigma$ is defined to be $$D_{[0,\zeta]_\rho}^{(k)}(\sigma):=\Jac_{[0,\zeta]_\rho^k}(\sigma).$$ When $D_{[0,\zeta]_\rho}^{(k)}(\sigma)\neq 0$ but $D_{[0,\zeta]_\rho}^{(k+1)}(\sigma)=0$, we call $D_{[0,\zeta]_\rho}^{(k)}(\sigma)$ the highest $[0,\zeta]_\rho$-derivative of $\sigma$. 
	\end{definition}
	\index{zero zeta derivative@$[0,\zeta]$-derivative}
	\index{derivative zero-zeta@$D_{[0,\zeta]_\rho}^{(k)}$}
	
	\begin{lemma}\label{lemma: computation of partial Jacquet module of [0,zeta]_rho rtimes sigma}
		For $\sigma\in\Pi_-(\widetilde{G}_{2n})$, $\rho\in\Pi_{\unit,\cusp}(\GL(d_\rho))$, $\zeta\in\left\{\pm1\right\}$. Suppose that $\Jac_{\rho,\zeta}(\sigma)=0$ and $\Jac_{[0,\zeta]_\rho}(\sigma)=0$. Then we have $$\Jac_{[0,\zeta]_\rho^r}([0,\zeta]_\rho^k\rtimes\sigma)=\binom{k}{r}[0,\zeta]_\rho^{k-r}\rtimes\sigma.$$
	\end{lemma}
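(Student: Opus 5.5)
We compute with the Tadić formula (Proposition \ref{prop: Tadic formula}) and keep only the terms whose $\GL$-part is $[0,\zeta]_\rho^r$, which lives on $\GL(2rd_\rho)$.

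\textbf{Expansion.} Write $\Delta=[0,\zeta]_\rho=\Z(\rho,\rho|\cdot|^\zeta)$. Then $m^*(\Delta)=1\otimes\Delta+\rho\otimes\rho|\cdot|^\zeta+\Delta\otimes1$, and
$$M^*(\Delta)=\sum_{\Delta_1\otimes\Delta_2\otimes\Delta_3}\Delta_1^\vee\times\Delta_3\otimes\Delta_2,$$
where $\Delta_1\otimes\Delta_2\otimes\Delta_3$ runs over the terms of $(\Id\otimes m^*)\circ m^*(\Delta)$. These terms are the six possible splittings of the segment into three consecutive pieces. Hence the $\GL$-parts of $M^*(\Delta)$ are exactly
$$1,\quad \rho,\quad \rho|\cdot|^{\zeta},\quad \rho^\vee|\cdot|^{-\zeta},\quad \Delta,\quad \rho^\vee|\cdot|^{-\zeta}\times\rho,\quad \Delta^\vee.$$
Since $\rho$ is self-dual here (otherwise the symbol $[0,\zeta]_\rho$-derivative in this section is not the relevant one; if $\rho\not\cong\rho^\vee$ the same argument works even more easily), we have $\Delta^\vee=[0,-\zeta]_\rho$ and $\rho^\vee|\cdot|^{-\zeta}=\rho|\cdot|^{-\zeta}$.

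By multiplicativity, $M^*(\Delta^k)=M^*(\Delta)^k$. The Tadić formula then gives
$$\mu^*(\Delta^k\rtimes\sigma)=\sum \Big(\prod_{i=1}^k g_i\Big)\times\lambda\otimes\Big(\prod_{i=1}^k h_i\Big)\rtimes\sigma',$$
where each $g_i\otimes h_i$ is one of the terms of $M^*(\Delta)$ and $\lambda\otimes\sigma'\le\mu^*(\sigma)$.

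\textbf{Cuspidal support.} We need every term whose $\GL$-part contains $\Delta^r$ as an irreducible constituent. The cuspidal support of $\Delta^r$ is $r$ copies of $\rho$ and $r$ copies of $\rho|\cdot|^\zeta$. So in a contributing term the total $\GL$-part $\prod_i g_i\times\lambda$ has support of size $2r$. The exponent $-\zeta$ cannot appear, so no factor $g_i$ can be $\rho|\cdot|^{-\zeta}$, $\rho|\cdot|^{-\zeta}\times\rho$, or $\Delta^\vee$.

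We count $\zeta$-exponents against $0$-exponents. The allowed $g_i$ are $1$, $\rho$, $\rho|\cdot|^\zeta$ and $\Delta$. The support of $\lambda$ consists of cuspidals $\rho|\cdot|^0$ and $\rho|\cdot|^\zeta$, with $\lambda$ lying in the Jacquet module of $\sigma$.

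\textbf{Main obstacle.} The hard step is excluding terms in which $\lambda\neq1$ or some $g_i\in\{\rho,\rho|\cdot|^\zeta\}$. For this we use that $\sigma$ is $\rho|\cdot|^\zeta$-reduced and $\Delta$-reduced. A standard argument (via Lemma \ref{lemma: partial Jacquet module and embedding} and the Geometric Lemma) shows that every irreducible $\lambda$ with $\lambda\otimes\sigma'\le\mu^*(\sigma)$ and support in $\{\rho,\rho|\cdot|^\zeta\}$ is of the form $\rho^a$. Indeed, if $\rho|\cdot|^\zeta$ occurs in the support of $\lambda$, either some Jacquet module of $\lambda$ starts with $\rho|\cdot|^\zeta$, contradicting $\Jac_{\rho,\zeta}(\sigma)=0$, or $\lambda$ contains a $\Delta$-type factor at the start, contradicting $\Jac_{\Delta}(\sigma)=0$.

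\textbf{Counting.} Given this, the constituent $\Delta^r$ occurs in
$$\prod g_i\times\rho^a,$$
where $\rho$-factors appear unlinked. Now $\Delta^r$ is irreducible, and its Jacquet module contains no term starting with $\rho$ beyond those forced by $\Delta$. Comparing multiplicities of $\rho|\cdot|^\zeta$ forces exactly $r$ factors to equal $\Delta$, with all other $g_i=1$ and $a=0$.

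A residual case needs care. Here some $g_i=\rho$ and some $g_j=\rho|\cdot|^\zeta$, and their product $\rho\times\rho|\cdot|^\zeta$ contains $\Delta$. We exclude it by tracking the $\SL$-side: in this case the corresponding $h_i$ and $h_j$ are $\rho|\cdot|^\zeta$ and $\rho$. The resulting $\Jac$ term is then a different, leftover piece, and it must be shown to vanish. We handle it by instead computing $\Jac_{\Delta^r}$ as $\Jac_\Delta$ applied $r$ times and dividing by $r!$, in the style of Definition \ref{def: rho-derivative}. It then suffices to show that
$$\Jac_\Delta(\Delta^k\rtimes\sigma)=k\,\Delta^{k-1}\rtimes\sigma.$$
For $\Jac_\Delta$ alone, the terms $g=\rho\times\rho|\cdot|^\zeta$ arise only from pairs $(g_i,g_j)=(\rho,\rho|\cdot|^\zeta)$. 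Using the first-step Jacquet analysis with $\Jac_{\rho,\zeta}$ and Lemma \ref{lemma: computation of partial Jacquet module}, such terms produce $\Jac_{\rho,\zeta}$ of something involving $\rho|\cdot|^{\zeta}$ only in the position that is reduced. I expect that to need a careful but routine check.

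\textbf{Conclusion.} With the single-step formula $\Jac_\Delta(\Delta^k\rtimes\sigma)=k\,\Delta^{k-1}\rtimes\sigma$ in hand, iterating gives $k(k-1)\cdots(k-r+1)\,\Delta^{k-r}\rtimes\sigma$. The $r!$ normalization, from $\Jac_{\min}$ of $\Delta^r$ as in Definition \ref{def: rho-derivative}, then yields $\binom{k}{r}\Delta^{k-r}\rtimes\sigma$.
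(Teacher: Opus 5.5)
Your strategy is the paper's: expand $\mu^*(\Delta^k\rtimes\sigma)$ by the Tadić formula, discard terms by cuspidal support, and use the two reducedness hypotheses on $\sigma$. Your claim that every irreducible $\lambda$ with $\lambda\otimes\sigma'\le\mu^*(\sigma)$ and support in $\{\rho,\rho|\cdot|^\zeta\}$ is a power of $\rho$ is correct. It is the same kind of argument as in the proof of Lemma \ref{lemma: description of highest [0,zeta]_rho-derivative}, and it makes explicit a step the paper leaves implicit.

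However, your list of $\GL$-parts of $M^*(\Delta)$ is wrong, and this manufactures a difficulty that does not exist. The six splittings give
\[
M^*(\Delta)=\Delta\otimes1+\rho\otimes\rho|\cdot|^{\zeta}+1\otimes\Delta+\rho^\vee|\cdot|^{-\zeta}\times\rho\otimes1+\rho^\vee|\cdot|^{-\zeta}\otimes\rho+\Delta^\vee\otimes1 .
\]
So $\rho|\cdot|^\zeta$ never occurs on the $\GL$-side; it is the $\widetilde{G}$-side of the term $\rho\otimes\rho|\cdot|^\zeta$. The only term with $\widetilde{G}$-side $\rho$ has $\GL$-part $\rho^\vee|\cdot|^{-\zeta}$, which you already excluded by support. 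Hence after the support step $g_i\in\{1,\rho,\Delta\}$ and $\lambda=\rho^a$. Counting copies of $\rho|\cdot|^\zeta$ gives $\#\{i:g_i=\Delta\}=r$, and then counting copies of $\rho$ gives no $g_i=\rho$ and $a=0$. The surviving terms are exactly $\binom{k}{r}\,\Delta^r\otimes\Delta^{k-r}\rtimes\sigma$, which finishes the proof; this is the paper's argument. The ``residual case'' $g_i=\rho$, $g_j=\rho|\cdot|^\zeta$ is vacuous.

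As written, though, the proposal does not close. The residual case is left as a ``careful but routine check'', and the proposed detour through $\Jac_\Delta$ applied $r$ times and divided by $r!$ is not valid as stated. The identity $D^{(r)}_{\rho,x}=\frac{1}{r!}\Jac_{\rho,x}^{\circ r}=\Jac_{(\rho|\cdot|^x)^r}$ of Definition \ref{def: rho-derivative} uses that $\rho|\cdot|^x$ is cuspidal, so that $(\rho|\cdot|^x)^r$ is the only irreducible representation with that cuspidal support. For the non-cuspidal $\Delta$ this fails. For instance, the irreducible representation $\Delta\times[\zeta,0]_\rho$ has $\Delta\otimes\Delta$ in its Jacquet module, coming from $(\rho\times\rho|\cdot|^\zeta)\otimes(\rho|\cdot|^\zeta\times\rho)$. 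So in general one only has $r!\,\Jac_{\Delta^r}\le\Jac_\Delta^{\circ r}$. You could salvage the detour by pairing that inequality with the lower bound coming from the explicit terms $g_i\in\{1,\Delta\}$, $\lambda=1$. But the single-step formula $\Jac_\Delta(\Delta^k\rtimes\sigma)=k\,\Delta^{k-1}\rtimes\sigma$ is just the case $r=1$ of the lemma and needs exactly the same analysis, so the detour buys nothing. Correcting $M^*(\Delta)$ is the real fix.
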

	\begin{proof}
		Without loss of generality, we consider only the case where $\zeta=-1$. Since $[0,-1]_\rho$ commutes with $\rho$ and $\rho|\cdot|^{-1}$, we have 
		$$\begin{aligned}
			m^*([0,-1]_\rho^k)&=([0,-1]_\rho\otimes 1+\rho\otimes\rho|\cdot|^{-1}+1\otimes[0,-1]_\rho)^k\\
			&=\sum_{k_1+k_2+k_3=k}\frac{k!}{k_1!k_2!k_3!}[0,-1]_\rho^{k_1}\times\rho^{k_2}\otimes(\rho|\cdot|^{-1})^{k_2}\times[0,-1]_\rho^{k_3}.
		\end{aligned}$$
		Note that $(\rho|\cdot|^{-1})^\vee=\rho^\vee|\cdot|^1$, $([0,-1]_\rho)^\vee=[1,0]_{\rho^\vee}$, which will never be a part of $[0,-1]_\rho^r\otimes\Jac_{[0,-1]_\rho^r}(\sigma)$. Thus, we can write $$M^*([0,-1]_\rho^k)=\sum_{k_1+k_2+k_3=k}\frac{k!}{k_1!k_2!k_3!}[0,-1]_\rho^{k_1}\times\rho^{k_2}\otimes(\rho|\cdot|^{-1})^{k_2}\times[0,-1]_\rho^{k_3}+(\cdots),$$ where none of the terms in $(\cdots)$ will appear in $[0,-1]_\rho^r\otimes\Jac_{[0,-1]_\rho^r}(\sigma)$. If a term $[0,-1]_\rho^{k_1}\times\rho^{k_2}\otimes(\rho|\cdot|^{-1})^{k_2}\times[0,-1]_\rho^{k_3}$ appears in $[0,-1]_\rho^r\otimes\Jac_{[0,-1]_\rho^r}(\sigma)$, we must have $k_1=r$, $k_2=0$, $k_3=k-r$, since $\Jac_{\rho,\zeta}(\sigma)=0$ and $\Jac_{[0,\zeta]_\rho}(\sigma)=0$. This completes the proof.
	\end{proof}
	
	\begin{lemma}\label{lemma: description of highest [0,zeta]_rho-derivative}
		For $\sigma\in\Pi_-(\widetilde{G}_{2n})$, $\rho\in\Pi_{\unit,\cusp}(\GL(d_\rho))$, $\zeta\in\left\{\pm1\right\}$. Suppose that $\Jac_{\rho,\zeta}(\sigma)=0$. Let $D_\rho^{(k_0)}(\sigma)=m\sigma_0$, $D_{\rho,\zeta}^{(k_1)}(\sigma_0)=\sigma_1$ be the highest derivatives. Then we have
		\begin{enumerate}
			\item[(1)] $k_0\ge k_1$.
			\item[(2)] $D_{[0,\zeta]_\rho}^{(k_1)}(\sigma)$ is the highest derivative.
			\item[(3)] $D_{[0,\zeta]_\rho}^{(k_1)}(\sigma)\le\rho^{k_0-k_1}\rtimes\sigma_1$.
		\end{enumerate}
		In particular, when $k_0=k_1$, we have $D_{[0,\zeta]_\rho}^{(k_1)}(\sigma)=\sigma_1$. 
	\end{lemma}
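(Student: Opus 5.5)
Take $\zeta=-1$; the case $\zeta=1$ is symmetric. The strategy is to embed $\sigma$ into a suitable induced representation and then compute Jacquet modules of that induced representation using Lemma \ref{lemma: computation of partial Jacquet module} and the Tadi\'c formula (Proposition \ref{prop: Tadic formula}).

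\textbf{Step 1: the embedding.} By Proposition \ref{prop: highest rho-derivative is irreducible in non-self-dual case}(2), $\sigma\hookrightarrow\rho^{k_0}\rtimes\sigma_0$, and $\sigma_0$ is $\rho$-reduced. Since $\rho|\cdot|^{-1}$ is not self-dual, part (1) of the same proposition gives $\sigma_0\hookrightarrow(\rho|\cdot|^{-1})^{k_1}\rtimes\sigma_1$, with $\sigma_1$ irreducible and $\rho|\cdot|^{-1}$-reduced. Hence
$$\sigma\hookrightarrow\rho^{k_0}\times(\rho|\cdot|^{-1})^{k_1}\rtimes\sigma_1 .$$

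\textbf{Step 2: $\sigma_1$ is $\rho$-reduced.} Suppose $\Jac_{\rho}(\sigma_1)\neq0$. Then $\sigma_1\hookrightarrow\rho\rtimes\sigma_2$. Since $\rho$ and $\rho|\cdot|^{-1}$ commute in the GL part (the product $\rho\times\rho|\cdot|^{-1}$ is reducible, so commutation is not automatic), I will instead compute $\Jac_\rho$ of $(\rho|\cdot|^{-1})^{k_1}\rtimes\sigma_1$ via Lemma \ref{lemma: computation of partial Jacquet module}. This yields a term $(\rho|\cdot|^{-1})^{k_1}\rtimes\Jac_\rho\sigma_1$. I will then argue, by Frobenius reciprocity and the fact that $\sigma_0$ is the socle of this SI induced representation, that $\Jac_\rho\sigma_0$ picks up this contribution, contradicting $\rho$-reducedness of $\sigma_0$.

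This step is delicate. Rather than relying on it directly, I will route around it: in all later Jacquet computations I only need that terms using $\Jac_{\rho}(\sigma_1)$ cannot contribute to the $\sigma_0$-part. I will track this through $\sigma_0$'s reducedness at the level of the Jacquet module of $\sigma$ itself.

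\textbf{Step 3: computing $\Jac_{[0,-1]_\rho^{r}}$.} Use $m^*$ of $\rho^{k_0}\times(\rho|\cdot|^{-1})^{k_1}$ and the Tadi\'c formula. A term $[0,-1]_\rho^{r}\otimes(\ast)$ can arise only if each copy of $[0,-1]_\rho$ is assembled from one $\rho$ and one $\rho|\cdot|^{-1}$ taken from the left factors. The dual terms $\rho^\vee|\cdot|^{1}$ have the wrong exponent, $\Jac_{\rho,-1}(\sigma_1)=0$, and $\rho$-contributions from $\sigma_1$ are excluded as in Step 2. Consequently $\Jac_{[0,-1]_\rho^r}$ of the induced representation vanishes for $r>k_1$. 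Since $\Jac$ is exact and $\sigma$ is a subrepresentation, $D^{(k_1+1)}_{[0,-1]_\rho}(\sigma)=0$. For $r=k_1$, every nonzero contribution is at most $\rho^{k_0-k_1}\rtimes\sigma_1$ (up to multiplicity), which gives (3).

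\textbf{Step 4: nonvanishing and (1).} For nonvanishing at $r=k_1$: $D_{\rho,-1}^{(k_1)}\circ D_\rho^{(k_0)}(\sigma)\neq0$, and $\Jac_{\rho^{k_0}\otimes(\rho|\cdot|^{-1})^{k_1}}$ factors through $\Jac$ with respect to the GL-representation $\rho^{k_0-k_1}\times[0,-1]_\rho^{k_1}$ and its relatives. The only other constituent, namely the one involving $\rL([0,-1])$, cannot account for everything: if $\sigma$ embedded via $\rL([0,-1]_\rho)=[-1,0]$-type factors, then $\Jac_{\rho,-1}(\sigma)\neq0$, contradicting the hypothesis. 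This shows $\Jac_{[0,-1]_\rho^{k_1}}(\sigma)\neq0$, giving (2). The same argument gives (1): $k_1>k_0$ would force some $\rho|\cdot|^{-1}$ to lie in front of all copies of $\rho$, so that $\Jac_{\rho,-1}(\sigma)\neq0$.

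The final claim is the case $k_0=k_1$ of (3) combined with nonvanishing. Since $\sigma_1$ is irreducible, equality follows once the multiplicity is shown to be $1$. I would derive this from Lemma \ref{lemma: computation of partial Jacquet module of [0,zeta]_rho rtimes sigma} with $k=r$, applied after embedding $\sigma\hookrightarrow[0,-1]_\rho^{k_1}\rtimes\sigma_1$.

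\textbf{Main obstacle.} I expect Step 4 to be the main obstacle: making precise the claim that the $\rL$-constituent forces $\Jac_{\rho,-1}(\sigma)\neq0$, and controlling multiplicities.
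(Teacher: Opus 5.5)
There is a genuine gap, and it is the point you flagged yourself in Step 2.

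\textbf{Step 2 does not work as proposed.} You cannot get $\Jac_\rho(\sigma_1)=0$ from the $\rho$-reducedness of $\sigma_0$. The Jacquet module of $(\rho|\cdot|^{-1})^{k_1}\rtimes\sigma_1$ is not inherited by its socle, and the analogous implication already fails for $\GL$: $[-1,0]_\rho$ is left $\rho$-reduced, yet its highest $\rho|\cdot|^{-1}$-derivative is $\rho$. Your ``route around it'' is not an argument, and the fact is genuinely needed. If $\Jac_\rho(\sigma_1)\neq0$, the $\GL$-parts coming from $\mu^*(\sigma_1)$ in the Tadi\'c formula can supply further copies of $\rho$ and $\rho|\cdot|^{-1}$ (for instance through $\Jac_{[0,-1]_\rho}(\sigma_1)$). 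Then your vanishing of $\Jac_{[0,-1]_\rho^{r}}$ for $r>k_1$ on $\rho^{k_0}\times(\rho|\cdot|^{-1})^{k_1}\rtimes\sigma_1$, and hence the ``highest'' half of (2), is unproved.

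\textbf{Step 3 loses the multiplicity.} Even granting Step 2, computing on this unrefined induced representation only bounds $D^{(k_1)}_{[0,-1]_\rho}(\sigma)$ by $N\cdot(\rho^{k_0-k_1}\rtimes\sigma_1)$ with $N>1$ in general. Already $N=2$ for $k_0=k_1=1$, since $\rho\cong\rho^\vee$ contributes twice to $M^*(\rho)$. This is weaker than (3), and it does not give $D^{(k_1)}_{[0,-1]_\rho}(\sigma)=\sigma_1$ when $k_0=k_1$.

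\textbf{The missing idea} is to replace the reducible $\GL$-factor by an irreducible one before computing anything, as the paper does.
\begin{enumerate}
\item By \cite[Lemma 3.2]{Mœglin2002_construction}, your Step 1 embedding yields an irreducible subquotient $\pi$ of $\rho^{k_0}\times(\rho|\cdot|^{-1})^{k_1}$ with $\sigma\hookrightarrow\pi\rtimes\sigma_1$.
\item If $k_1>k_0$, or if $\pi$ involves a factor $[-1,0]_\rho$, then $\pi$ has nonzero left $\rho|\cdot|^{-1}$-Jacquet module. So $\Jac_{\rho,-1}(\sigma)=0$ forces $k_0\ge k_1$ and $\pi=[0,-1]_\rho^{k_1}\times\rho^{k_0-k_1}$. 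This is the precise form of your Step 4 and of (1).
\item Only now does $\Jac_\rho(\sigma_1)=0$ follow, and it comes from the maximality of $k_0$ for $\sigma$, not from $\sigma_0$. If $\sigma_1\hookrightarrow\rho\rtimes\sigma_2$, then $\sigma\hookrightarrow[0,-1]_\rho^{k_1}\times\rho^{k_0-k_1+1}\rtimes\sigma_2$. The $\GL$-factor here is irreducible, and its $m^*$ contains $\rho^{k_0+1}\otimes(\rho|\cdot|^{-1})^{k_1}$. By Frobenius reciprocity this forces $D^{(k_0+1)}_\rho(\sigma)\neq0$, a contradiction.
\item By the Tadi\'c formula, $\rho^{k_0-k_1}\rtimes\sigma_1$ is then both $\rho|\cdot|^{-1}$-reduced and $[0,-1]_\rho$-reduced.
\item Apply Lemma~\ref{lemma: computation of partial Jacquet module of [0,zeta]_rho rtimes sigma} to $\sigma\hookrightarrow[0,-1]_\rho^{k_1}\rtimes(\rho^{k_0-k_1}\rtimes\sigma_1)$. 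This gives the vanishing at $k_1+1$ and the bound (3) with coefficient $\binom{k_1}{k_1}=1$. Nonvanishing at $k_1$ follows from Frobenius reciprocity.
\end{enumerate}
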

	\begin{proof}
		Without loss of generality, we consider only the case where $\zeta=-1$. By Lemma \ref{lemma: partial Jacquet module and embedding}, we know that $\sigma$ is a subrepresentation of $\rho^{k_0}\times(\rho|\cdot|^{-1})^{k_1}\rtimes\sigma_1$. Thus, by \cite[Lemma 3.2]{Mœglin2002_construction}, there exists an irreducible subquotient $\pi$ of $\rho^{k_0}\times(\rho|\cdot|^{-1})^{k_1}$ such that $\sigma$ is a subrepresentation of $\pi\rtimes\sigma_1$. Since $\rho\times\rho|\cdot|^{-1}=[0,-1]_\rho+[-1,0]_\rho$ in $\mR(\GL)$ and $\Jac_{\rho,-1}(\sigma)=0$, we must have $k_0\ge k_1$ and $\pi=[0,-1]_\rho^{k_1}\times\rho^{k_0-k_1}$. Note that $\Jac_\rho(\sigma_1)=0$, because $D_\rho^{(k_0)}(\sigma)$ is the highest derivative and $\sigma$ can be embedded into $[0,-1]_\rho^{k_1}\times\rho^{k_0-k_1}\rtimes\sigma_1$. Thus we have $\Jac_{[0,-1]_\rho}(\sigma_1)=0$. Therefore it follows from Proposition \ref{prop: Tadic formula} that $\Jac_{\rho,-1}(\rho^{k_0-k_1}\rtimes\sigma_1)=0$ and $\Jac_{[0,-1]_\rho}(\rho^{k_0-k_1}\rtimes\sigma_1)=0$. Now, Lemma \ref{lemma: computation of partial Jacquet module of [0,zeta]_rho rtimes sigma} implies that $D_{[0,\zeta]_\rho}^{(k_1)}(\sigma)$ is the highest derivative and $D_{[0,\zeta]_\rho}^{(k_1)}(\sigma)\le\rho^{k_0-k_1}\rtimes\sigma_1$.
	\end{proof}
	
	\begin{proposition}\label{prop: highest [0,zeta]_rho-derivative is irreducible}
		For $\sigma\in\Pi_-(\widetilde{G}_{2n})$, $\rho\in\Pi_{\unit,\cusp}(\GL(d_\rho))$, $\zeta\in\left\{\pm1\right\}$. Suppose that $\Jac_{\rho,\zeta}(\sigma)=0$. Then the highest $[0,\zeta]_\rho$-derivative $D_{[0,\zeta]_\rho}^{(k)}(\sigma)$ is irreducible. Furthermore, we have $\Jac_{\rho,\zeta}(D_{[0,\zeta]_\rho}^{(k)}(\sigma))=0$ and $\sigma\xhookrightarrow{}[0,\zeta]_\rho^k\rtimes D_{[0,\zeta]_\rho}^{(k)}(\sigma)$.
	\end{proposition}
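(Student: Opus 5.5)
Without loss of generality I take $\zeta=-1$, as in the proofs of Lemmas \ref{lemma: computation of partial Jacquet module of [0,zeta]_rho rtimes sigma} and \ref{lemma: description of highest [0,zeta]_rho-derivative}. The idea is to find an irreducible $\tau$ with $\sigma\xhookrightarrow{}[0,-1]_\rho^k\rtimes\tau$, where $\tau$ is both $\rho|\cdot|^{-1}$-reduced and $[0,-1]_\rho$-reduced. Lemma \ref{lemma: computation of partial Jacquet module of [0,zeta]_rho rtimes sigma} then computes the $[0,-1]_\rho$-Jacquet modules of the induced representation exactly, and irreducibility follows from exactness and multiplicity counting.

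\textbf{Step 1: constructing $\tau$.} Let $D_\rho^{(k_0)}(\sigma)=m\sigma_0$ and $\sigma_1=D_{\rho,-1}^{(k_1)}(\sigma_0)$ be the highest derivatives, as in Lemma \ref{lemma: description of highest [0,zeta]_rho-derivative}, so $k=k_1$. The proof of that lemma shows
$$\sigma\xhookrightarrow{}[0,-1]_\rho^{k}\times\rho^{k_0-k}\rtimes\sigma_1,$$
with $\sigma_1$ $\rho$-reduced and $\rho|\cdot|^{-1}$-reduced. Since $[0,-1]_\rho\times\rho\cong\rho\times[0,-1]_\rho$ is irreducible, I apply \cite[Lemma 3.2]{Mœglin2002_construction} to the inclusion $\sigma\xhookrightarrow{}[0,-1]_\rho^{k}\rtimes(\rho^{k_0-k}\rtimes\sigma_1)$. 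This gives an irreducible subquotient $\tau$ of $\rho^{k_0-k}\rtimes\sigma_1$ with $\sigma\xhookrightarrow{}[0,-1]_\rho^k\rtimes\tau$.

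\textbf{Step 2: $\tau$ is reduced.} The Tadi\'c formula (Proposition \ref{prop: Tadic formula}) computation in the proof of Lemma \ref{lemma: description of highest [0,zeta]_rho-derivative} shows that $\rho^{k_0-k}\rtimes\sigma_1$ is both $\rho|\cdot|^{-1}$-reduced and $[0,-1]_\rho$-reduced. Jacquet functors are exact, so every subquotient is too; in particular $\Jac_{\rho,-1}(\tau)=0$ and $\Jac_{[0,-1]_\rho}(\tau)=0$.

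\textbf{Step 3: irreducibility.} By exactness, $D^{(k)}_{[0,-1]_\rho}(\sigma)\le D^{(k)}_{[0,-1]_\rho}([0,-1]_\rho^k\rtimes\tau)$. By Lemma \ref{lemma: computation of partial Jacquet module of [0,zeta]_rho rtimes sigma} with $r=k$, the right-hand side equals $\binom{k}{k}\tau=\tau$. The left-hand side is nonzero, since Frobenius reciprocity turns the inclusion of Step 1 into a nonzero map $\Jac_{[0,-1]_\rho^k}(\sigma)\to\tau$. Therefore $D^{(k)}_{[0,-1]_\rho}(\sigma)=\tau$, which is irreducible. The remaining two assertions then follow directly: $\Jac_{\rho,-1}(\tau)=0$ is Step 2, and $\sigma\xhookrightarrow{}[0,-1]_\rho^k\rtimes\tau$ is Step 1.

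The main obstacle is Step 1. The cited lemma requires the GL part to be reorganized as a single representation inducing onto a genuine representation, and one must check that \cite[Lemma 3.2]{Mœglin2002_construction} applies in the metaplectic setting in the form ``$\sigma\xhookrightarrow{}\pi\rtimes\Sigma$ with $\Sigma$ of finite length implies $\sigma\xhookrightarrow{}\pi\rtimes\tau$ for some irreducible subquotient $\tau$ of $\Sigma$.'' If this is problematic, I would instead apply Lemma \ref{lemma: partial Jacquet module and embedding} directly to the nonzero module $D^{(k)}_{[0,-1]_\rho}(\sigma)$. This yields some constituent $\tau'$ of it with $\sigma\xhookrightarrow{}[0,-1]_\rho^k\rtimes\tau'$, and that $\tau'$ is reduced by Lemma \ref{lemma: description of highest [0,zeta]_rho-derivative}(3) and Step 2. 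The Step 3 argument then runs unchanged.
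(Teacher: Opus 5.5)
Your argument is correct and is essentially the paper's proof: you embed $\sigma$ into $[0,-1]_\rho^k\times\rho^{k_0-k}\rtimes\sigma_1$ via the proof of Lemma \ref{lemma: description of highest [0,zeta]_rho-derivative}, use M\oe glin's lemma to replace $\rho^{k_0-k}\rtimes\sigma_1$ by an irreducible subquotient $\tau$, and conclude with Lemma \ref{lemma: computation of partial Jacquet module of [0,zeta]_rho rtimes sigma}. The only difference is that you make explicit two points the paper leaves implicit (the reducedness of $\tau$, by exactness, and the nonvanishing of $D^{(k)}_{[0,-1]_\rho}(\sigma)$, by Frobenius reciprocity), so the fallback in your last paragraph is unnecessary.
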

	\begin{proof}
		Without loss of generality, we only consider the case where $\zeta=-1$. From the proof of Lemma \ref{lemma: description of highest [0,zeta]_rho-derivative}, we see that $\sigma$ can be embedded into $[0,-1]_\rho^{k}\times\rho^{k_0-k}\rtimes\sigma_1$. Thus, by \cite[Lemma 3.2]{Mœglin2002_construction}, there exists an irreducible subquotient $\sigma'$ of $\rho^{k_0-k}\rtimes\sigma_1$ such that $\sigma\xhookrightarrow{}[0,-1]_\rho^{k}\rtimes\sigma'$. Since $\Jac_{\rho,-1}(\sigma')=0$ and $\Jac_{[0,-1]_\rho}(\sigma')=0$, Lemma \ref{lemma: computation of partial Jacquet module of [0,zeta]_rho rtimes sigma} implies that $D_{[0,\zeta]_\rho}^{(k)}([0,-1]_\rho^{k}\rtimes\sigma')=\sigma'$, hence $D_{[0,\zeta]_\rho}^{(k)}(\sigma)=\sigma'$ is irreducible and $\sigma\xhookrightarrow{}[0,\zeta]_\rho^k\rtimes D_{[0,\zeta]_\rho}^{(k)}(\sigma)$.
	\end{proof}
	
	\begin{proposition}\label{prop: [0,zeta]_rho rtimes sigma is SI}
		For $\sigma\in\Pi_-(\widetilde{G}_{2n})$, $\rho\in\Pi_{\unit,\cusp}(\GL(d_\rho))$, $\zeta\in\left\{\pm1\right\}$. Suppose that $\Jac_{\rho,\zeta}(\sigma)=0$. Then $[0,\zeta]_\rho^r\rtimes\sigma$ is SI for all $r\ge0$.
	\end{proposition}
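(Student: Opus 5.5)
We follow the proof of Proposition \ref{prop: parabolic induction is SI in non-self-dual case}: embed $[0,\zeta]_\rho^r\rtimes\sigma$ into a bigger induced representation that is already known to be SI. As before, take $\zeta=-1$.

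\textbf{Step 1: reduce to a $\rho|\cdot|^{-1}$- and $[0,-1]_\rho$-reduced base.} Let $D_{[0,-1]_\rho}^{(k)}(\sigma)=\sigma'$ be the highest $[0,-1]_\rho$-derivative. By Proposition \ref{prop: highest [0,zeta]_rho-derivative is irreducible}, $\sigma'$ is irreducible, $\Jac_{\rho,-1}(\sigma')=0$, and $\sigma\xhookrightarrow{}[0,-1]_\rho^k\rtimes\sigma'$. Since $k$ is the highest index, $\Jac_{[0,-1]_\rho}(\sigma')=0$. Because $[0,-1]_\rho^r\times[0,-1]_\rho^k=[0,-1]_\rho^{r+k}$ (the segments are identical, so the product is irreducible), exactness of induction gives
$$[0,-1]_\rho^r\rtimes\sigma\xhookrightarrow{}[0,-1]_\rho^{r+k}\rtimes\sigma'.$$
Any subrepresentation of an SI representation is SI, provided its socle is nonzero, which is automatic here. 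So it suffices to show that $\Pi:=[0,-1]_\rho^{N}\rtimes\sigma'$ is SI for all $N\ge0$, where $\sigma'$ satisfies both vanishing conditions.

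\textbf{Step 2: SI for the reduced base.} We apply Lemma \ref{lemma: criterion for SI} with $\pi=[0,-1]_\rho^N$, which is irreducible and hence SI, and with the irreducible $\sigma'$. We need the term $[0,-1]_\rho^N\otimes\sigma'$ to occur with multiplicity one in $\mu^*(\Pi)$. Lemma \ref{lemma: computation of partial Jacquet module of [0,zeta]_rho rtimes sigma} with $r=k=N$ gives $\Jac_{[0,-1]_\rho^N}(\Pi)=\binom{N}{N}\sigma'=\sigma'$, which is exactly this multiplicity-one statement. Lemma \ref{lemma: criterion for SI} then shows that $\Pi$ is SI, with socle $\soc([0,-1]_\rho^N\rtimes\sigma')$.

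\textbf{Main obstacle.} The delicate point is the multiplicity-one count. It requires that the other terms of $M^*([0,-1]_\rho^N)$, namely those involving $\rho\otimes\rho|\cdot|^{-1}$ pieces and the contragredients $\rho^\vee|\cdot|^{1}$ and $[1,0]_{\rho^\vee}$, contribute nothing to $[0,-1]_\rho^N\otimes(\cdot)$. This is exactly what the proof of Lemma \ref{lemma: computation of partial Jacquet module of [0,zeta]_rho rtimes sigma} checks using both vanishing hypotheses on $\sigma'$. Step 1 is needed precisely so that these hypotheses hold: $\sigma$ itself may have $\Jac_{[0,-1]_\rho}(\sigma)\neq0$, and the lemma cannot be applied to it directly. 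A secondary check is that $\Jac_{[0,-1]_\rho^N}$ really does extract the full multiplicity of $[0,-1]_\rho^N\otimes\sigma'$ in $\mu^*$. This holds because $[0,-1]_\rho^N$ is irreducible, so the partial Jacquet module is defined by isolating exactly that irreducible GL-factor.
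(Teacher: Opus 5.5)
Your proposal is correct and follows the paper's proof. You pass to the highest $[0,\zeta]_\rho$-derivative $\sigma'$ via Proposition \ref{prop: highest [0,zeta]_rho-derivative is irreducible}, so that both vanishing hypotheses hold, and then use Lemma \ref{lemma: computation of partial Jacquet module of [0,zeta]_rho rtimes sigma} to see that $\Jac_{[0,\zeta]_\rho^N}([0,\zeta]_\rho^N\rtimes\sigma')=\sigma'$ occurs with multiplicity one. The only difference is that you spell out two steps the paper leaves implicit: the embedding $[0,\zeta]_\rho^r\rtimes\sigma\hookrightarrow[0,\zeta]_\rho^{r+k}\rtimes\sigma'$, and the final appeal to Lemma \ref{lemma: criterion for SI}.
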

	\begin{proof}
		By Proposition \ref{prop: highest [0,zeta]_rho-derivative is irreducible}, we may replace $\sigma$ by its highest derivative $D_{[0,\zeta]_\rho}^{(k)}(\sigma)$ and assume that $\Jac_{[0,\zeta]_\rho}(\sigma)=0$. Now, the proposition follows from Lemma \ref{lemma: computation of partial Jacquet module of [0,zeta]_rho rtimes sigma} by computing the highest derivative.
	\end{proof}
	
	\begin{definition}\label{def: socle self-dual case}
		Let $\sigma\in\Pi_-(\widetilde{G}_{2n})$, $\rho\in\Pi_{\unit,\cusp}(\GL(d_\rho))$ and $\zeta\in\left\{\pm1\right\}$. Suppose that $\Jac_{\rho,\zeta}(\sigma)=0$. We define $$S_{[0,\zeta]_\rho}^{(r)}(\sigma):=\soc([0,\zeta]_\rho^r\rtimes\sigma).$$ By Proposition \ref{prop: [0,zeta]_rho rtimes sigma is SI}, we know that $S_{[0,\zeta]_\rho}^{(r)}(\sigma)$ is irreducible and $$S_{[0,\zeta]_\rho}^{(r)}(\sigma)=\underbrace{S_{[0,\zeta]_\rho}^{(1)}\circ\cdots S_{[0,\zeta]_\rho}^{(1)}}_{k\text{ times}}(\sigma).$$ It follows from Proposition \ref{prop: highest [0,zeta]_rho-derivative is irreducible} that, if $D_{[0,\zeta]_\rho}^{(k)}(\sigma)$ is the highest derivative, then $\sigma=S_{[0,\zeta]_\rho}^{(k)}\circ D_{[0,\zeta]_\rho}^{(k)}(\sigma)$.
	\end{definition}
	\index{socle functor zero-zeta@$S_{[0,\zeta]_\rho}^{(r)}$}
	
	\section{Discrete series}\label{section: discrete series}
	
	\par Discrete series are fundamental building blocks of Atobe's construction of A-packets in \cite{Atobe2022_A-packet}. Therefore, before we start our construction, we need to study the properties of discrete series.
	
	\par In the first two subsections of this section, we will generalize the results of \cite{Xu2017_cuspidal} on discrete series to metaplectic groups. In the last subsection, we will prove the key proposition about discrete series in \cite{Mœglin2006_Aubert} for metaplectic groups. The main results of this section are Proposition \ref{prop: partial Jacquet module of discrete series}, \ref{prop: cuspidal support of discrete series}, and \ref{proposition: key proposition}.
	
	\subsection{Computation of partial Jacquet module}
	
	\par In this subsection, we compute the partial module for discrete series and parameterize the cuspidal representations of metaplectic groups.
	
	\par The main tool used in this subsection is the following lemma from \cite{Chen2024_commutationtransferaubertzelevinskiinvolution}, which describes the commutation of spectral transfer and partial Jacquet module.
	
	\begin{lemma}(\cite[Corollary 3.3]{Chen2024_commutationtransferaubertzelevinskiinvolution})\label{lemma: commutation of spectral transfer and partial Jacquet module}
		Let $\rho\in\Pi_{\unit,\cusp}(\GL(d_\rho))$, $x\in\RR$ and $\textbf{G}^!\in\mE_{\mathrm{ell}}(\widetilde{G}_{2n})$ with $G^!=\SO(2n'+1)\times\SO(2n''+1)$. Denote by $M^!_{(d_1,d_2)}$ the Levi subgroup $(\GL(d_1)\times\SO(2(n'-d_\rho)+1))\times(\GL(d_2)\times\SO(2(n'-d_2)+1))$ of $G^!$, and let $G^!_{(d_1,d_2)}$ be the $\SO$ part of $M^!_{(d_1,d_2)}$. Then $\textbf{G}^!_{(d_1,d_2)}$ is an elliptic endoscopic datum of $\widetilde{G}_{2(n-d_1-d_2)}$, and we have $$\Jac_{\rho,x}\circ\mT_{\textbf{G}^!,\widetilde{G}_{2n}}^\vee=\mT_{\textbf{G}_{(d_\rho,0)}^!,\widetilde{G}_{2n-2d_\rho}}\Jac_{\rho,x}^1+\omega_\rho(-1)\mT_{\textbf{G}_{(0,d_\rho)}^!,\widetilde{G}_{2n-2d_\rho}}\Jac_{\rho,x}^{-1}.$$ Here $\Jac_{\rho,x}^1$ and $\Jac_{\rho,x}^{-1}$ mean taking the partial Jacquet module of $G^!$ in the first and second $\SO$ factor respectively, and $\omega_\rho$ is the central character of $\rho$.
	\end{lemma}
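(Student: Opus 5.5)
The plan is to dualize a geometric statement: compatibility of the transfer $\mT_{\textbf{G}^!,\widetilde{G}_{2n}}$ with constant terms along the maximal parabolic $\widetilde{P}=\widetilde{M}N$ with $\widetilde{M}\cong\GL(d_\rho)\times\widetilde{G}_{2n-2d_\rho}$. By Frobenius reciprocity and van Dijk's formula, the character of the full Jacquet module $r_{\widetilde{P}}(\pi)$ at a regular element $(g,\widetilde{h})$ of $\widetilde{M}$ (contracted by the split centre) is the character of $\pi$ there. So $\Jac_{\rho,x}$ is obtained in two steps. First apply $r_{\widetilde{P}}$. Then project onto the $\rho|\cdot|^x$-isotypic part of the $\GL(d_\rho)$ factor, using the Bernstein centre of $\GL(d_\rho)$. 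The second step is a linear operation on the $\GL$ variable only, and it commutes with any transfer acting in the metaplectic variable. The problem is therefore reduced to proving
$$r_{\widetilde{P}}\circ\mT_{\textbf{G}^!,\widetilde{G}_{2n}}^\vee=\sum_{M^!}\bigl(\Id_{\GL}\otimes\mT_{\textbf{G}^!_{M^!},\widetilde{G}_{2n-2d_\rho}}^\vee\bigr)\circ r_{M^!}$$
on stable distributions, with explicit characters on the $\GL$ factor. Here $M^!$ runs over the Levis of $G^!$ of the form $M^!_{(d_\rho,0)}$ or $M^!_{(0,d_\rho)}$.

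The geometric ingredient I would establish (or quote from \cite[\S 3.8]{Li2024_stabilizationtraceformulametaplectic}) is the following. For anti-genuine $f$, the constant term $f_{\widetilde{P}}$ transfers to $\sum_{M^!}(\mT f)_{M^!}$. This is a descent of the matching of orbital integrals to Levi subgroups. A strongly regular semisimple element of $\widetilde{M}$ has the form $(g,\widetilde{h})$, and $g$ has characteristic polynomial of degree $d_\rho$. Its stable conjugacy classes in $G^!$ that meet $M^!$ are exactly those where the $\GL(d_\rho)$-block sits in the first or in the second $\SO$ factor. This is why only the two Levis $M^!_{(d_\rho,0)}$ and $M^!_{(0,d_\rho)}$ appear. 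Dualizing and inserting van Dijk's formula on both sides gives the displayed identity. In that identity the GL-component is taken up to a character coming from the transfer factors.

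The key step, and the main obstacle, is computing the ratio of Li's transfer factors for $\widetilde{G}_{2n}$ and for $\widetilde{G}_{2n-2d_\rho}$ on the two kinds of Levi. In the metaplectic endoscopy, the second factor $\SO(2n''+1)$ is matched via the twist $\widetilde{h}\mapsto -\widetilde{h}$; the transfer factor involves $\Delta(\gamma)=\prod$ of Weil-index type terms evaluated at $\pm$ eigenvalues. For $\gamma=(g,\gamma_0)$ with the $g$-part placed in the first factor, the extra factor is trivial, up to the normalizations already absorbed in the $\bm\mu_8$ splitting, which is also why no character $\alpha$ appears in Proposition \ref{prop: Tadic formula}. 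When the $g$-part is placed in the second factor, the twist multiplies $g$ by $-1$. The ratio of transfer factors then becomes a character of $\GL(d_\rho)$ evaluated through $g\mapsto -g$. I would check via the explicit Weil-index formulas that this ratio is $1$ on regular elements. The only effect of the twist is then that the distribution on the $\GL$ side is pulled back by $g\mapsto -g$. On the $\rho|\cdot|^x$-isotypic part, translation by the central element $-1$ acts by $\omega_\rho(-1)$. Equivalently, one may work with the variant where the second factor is untwisted and the character of $\rho$ is twisted by $\omega_\rho(-1)$.

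Assembling these pieces, I would apply $\Jac_{\rho,x}$ to $\mT_{\textbf{G}^!,\widetilde{G}_{2n}}^\vee(\Theta)$. The first-factor term gives $\mT_{\textbf{G}^!_{(d_\rho,0)},\widetilde{G}_{2n-2d_\rho}}^\vee\Jac^1_{\rho,x}(\Theta)$. The second-factor term gives $\omega_\rho(-1)\mT_{\textbf{G}^!_{(0,d_\rho)},\widetilde{G}_{2n-2d_\rho}}^\vee\Jac^{-1}_{\rho,x}(\Theta)$. I would also verify two further points. First, $\Jac^{\pm1}_{\rho,x}$ preserves stability, since the Jacquet module of a stable distribution along a Levi is stable. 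Second, the measure normalizations in $\mathrm{mes}(\cdot)^\vee$ are compatible.
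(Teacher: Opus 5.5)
Your strategy is a reasonable way to reconstruct this result, which the paper imports from Chen without proof: Jacquet-module characters on contracting regular elements, parabolic descent of Li's transfer factors with the central twist $z_s$, then projection onto the $\rho|\cdot|^x$-component in the $\GL$ variable. But the geometric claim behind your intermediate identity is false.

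For strongly regular $\delta=(g,\widetilde h)\in\widetilde M$, the stable classes of $G^!$ matching $\delta$ correspond to $\delta$-stable decompositions $W=W'\oplus W''$ into nondegenerate subspaces. Equivalently, they correspond to ways of distributing the self-dual pairs $\{P,P^*\}$ of irreducible factors of the characteristic polynomial of $\delta$ between $W'$ and $W''$. The whole $\GL(d_\rho)$-block is forced to one side only when $g$ is elliptic in $\GL(d_\rho)$. For $g$ in, say, the diagonal torus, some eigenvalues of $g$ can go to $W'$ and the rest to $W''$. This gives matching classes in $M^!_{(d_1,d_2)}$ with $d_1+d_2=d_\rho$ and $0<d_1<d_\rho$, which is why the statement introduces $M^!_{(d_1,d_2)}$ for general $(d_1,d_2)$.

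So your displayed formula for $r_{\widetilde P}\circ\mT^\vee_{\textbf{G}^!,\widetilde G_{2n}}$, with only $M^!_{(d_\rho,0)}$ and $M^!_{(0,d_\rho)}$, is wrong. The correct version has a term for each $(d_1,d_2)$. In that term the $\GL(d_\rho)$-part is parabolically induced from $\GL(d_1)\times\GL(d_2)$, with translation by $-1$ on the $\GL(d_2)$-factor. The remaining part is $\mT^\vee_{\textbf{G}^!_{(d_1,d_2)},\widetilde G_{2n-2d_\rho}}\circ r_{M^!_{(d_1,d_2)}}$.

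The fix is to carry these mixed terms and kill them in your second step. Since $\rho|\cdot|^x$ is cuspidal, no irreducible constituent of a representation of $\GL(d_\rho)$ induced from a proper Levi $\GL(d_1)\times\GL(d_2)$ lies in its Bernstein component. Hence the $\rho|\cdot|^x$-projection annihilates every mixed term. It is cuspidality of $\rho$, not the geometry of stable classes, that leaves exactly two terms. If you instead restrict to elliptic $g$ so that your geometric claim holds, you need a separate input (Kazhdan's density theorem): a virtual character of $\GL(d_\rho)$ vanishing on the elliptic regular set has zero $\rho|\cdot|^x$-component.

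Two smaller corrections:
\begin{enumerate}
\item The equality of the character of $r_{\widetilde P}(\pi)$ with that of $\pi$ on contracting regular elements is Casselman's theorem. It does not follow from van Dijk's formula and Frobenius reciprocity.
\item For normalized Jacquet modules this equality holds only up to a modulus factor $\delta_{\widetilde P}^{\pm1/2}$. Since $\delta_{\widetilde P}$ and $\delta_{P^!}$ differ on the $\GL(d_\rho)$-factor, phrase everything with $|D|^{1/2}$-normalized characters. There the normalized Jacquet module becomes plain restriction on both sides, and the modulus characters are absorbed by the discriminant ratio in the character form of the transfer.
\end{enumerate}
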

	\index{Jac pm1@$\Jac_{\rho,x}^{\pm1}$}
	\index{omega rho central character@$\omega_\rho$}
	
	\par Using Lemma \ref{lemma: commutation of spectral transfer and partial Jacquet module}, we can compute the partial Jacquet module of the distribution $T_{\phi,s}$ (see (\ref{equation: distribution T_psi s}) for the definition of $T_{\phi,s}$).
	
	\begin{lemma}\label{lemma: partial Jacquet module of T phi s}
		Let $\phi\in\Phi_{\bdd,2}(\widetilde{G}_{2n})$, $s\in S_{\phi,2}$ (in the discrete case, we have $S_{\phi,2}=\mS_\phi$), $\rho\in\Pi_{\unit,\cusp}(\GL(d_\rho))$, and $x\in\RR$. Then the following holds:
		\begin{enumerate}
			\item[(1)] If $x=0$ or $(\rho,2x+1)\notin\Jord(\phi)$, we have $\Jac_{\rho,x}(T_{\phi,s})=0$.
			\item[(2)] If $x>0$ and $(\rho,2x+1)\in\Jord(\phi)$, let $\phi_-=\phi\oplus r(2x-1)\ominus\rho\otimes r(2x+1)$ and let $s_-\in S_{\phi_-,2}$ is defined by $s_-(\rho,2x-1)=s(\rho,2x+1)$. Then we have 
			$$\Jac_{\rho,x}(T_{\phi,s})=\begin{cases}
				T_{\phi_-,s_-}&\text{if } (\rho,2x+1)\neq (\triv,2)\text{ or }s(\rho,2x+1)=1\\
				-T_{\phi_-,s_-}&\text{if } (\rho,2x+1)= (\triv,2)\text{ and }s(\rho,2x+1)=-1
			\end{cases}$$
		\end{enumerate}
	\end{lemma}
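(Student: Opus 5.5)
Apply Lemma \ref{lemma: commutation of spectral transfer and partial Jacquet module} directly to the definition $T_{\phi,s}=\epsilon(\phi^{s=-1})\,\mT^\vee_{\textbf{G}^!,\widetilde{G}_{2n}}(S\Theta_{\phi^!})$, where $\phi^!=\phi^{s=1}\times\phi^{s=-1}$ and $G^!=\SO(2n'+1)\times\SO(2n''+1)$. The lemma gives
$$\Jac_{\rho,x}(T_{\phi,s})=\epsilon(\phi^{s=-1})\Big(\mT^\vee_{\textbf{G}^!_{(d_\rho,0)}}\big(\Jac^1_{\rho,x}S\Theta_{\phi^!}\big)+\omega_\rho(-1)\,\mT^\vee_{\textbf{G}^!_{(0,d_\rho)}}\big(\Jac^{-1}_{\rho,x}S\Theta_{\phi^!}\big)\Big).$$
Since $\phi$ is discrete, $S\Theta_{\phi^!}=S\Theta_{\phi^{s=1}}\otimes S\Theta_{\phi^{s=-1}}$ is the sum over the (discrete series) L-packets of the two $\SO$ factors. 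So I need the partial Jacquet module of a stable tempered discrete packet of split $\SO(2m+1)$. For this I will use the classical result, namely the computation of Jacquet modules of discrete series via Jordan blocks and M\oe glin's results as recorded in \cite{Xu2017_cuspidal}. For a discrete L-parameter $\phi'$ of $\SO(2m+1)$, it gives $\Jac_{\rho,x}\big(\sum_{\pi\in\Pi_{\phi'}}\pi\big)=0$ unless $x>0$ and $(\rho,2x+1)\in\Jord(\phi')$. In that case it equals $\sum_{\pi\in\Pi_{\phi'_-}}\pi$ with $\phi'_-=\phi'\oplus\rho\otimes r(2x-1)\ominus\rho\otimes r(2x+1)$. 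The case $x=1/2$, where $r(0)=0$ and the Jordan block simply disappears, is included. The point is that the characters restrict compatibly, and summing over the whole packet removes any dependence on characters.

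Part (1) follows at once: if $x=0$ or $(\rho,2x+1)\notin\Jord(\phi)$, then neither $\phi^{s=\pm1}$ contains the block, so both terms vanish. For $x=0$ I should check that a discrete parameter never yields nonzero $\Jac_{\rho,0}$ on the stable sum. This is the classical statement that discrete series are $\rho$-reduced, because $(\rho,1)$ contributes no exponent-$0$ Jacquet module.

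For part (2), since $\phi$ is discrete the block $(\rho,2x+1)$ lies in exactly one of $\phi^{s=\pm1}$, according to $s(\rho,2x+1)$.
\begin{itemize}
\item If $s(\rho,2x+1)=1$, only the first term survives. It equals $\epsilon(\phi^{s=-1})\,\mT^\vee_{\textbf{G}^!_{(d_\rho,0)}}(S\Theta_{\phi_-^{!}})$, and $\phi_-^!$ is exactly the endoscopic pair attached to $(\phi_-,s_-)$. Since $\phi_-^{s_-=-1}=\phi^{s=-1}$, the root numbers agree and we get $T_{\phi_-,s_-}$.
\item If $s(\rho,2x+1)=-1$, only the second term survives. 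We obtain $\omega_\rho(-1)\,\epsilon(\phi^{s=-1})\,\epsilon(\phi_-^{s_-=-1})^{-1}\,T_{\phi_-,s_-}$.
\end{itemize}

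The main obstacle is to show that the sign
$$\omega_\rho(-1)\,\epsilon(\rho\otimes r(2x+1))\,\epsilon(\rho\otimes r(2x-1))^{-1}$$
equals $1$, except for $(\rho,2x+1)=(\triv,2)$ where it equals $-1$. I plan to use the standard formula for root numbers of twisted Steinberg-type representations. For $a\ge 2$ one has $\epsilon(\rho\otimes r(a))/\epsilon(\rho\otimes r(a-2))=\epsilon(\rho|\cdot|^{(a-1)/2})\,\epsilon(\rho|\cdot|^{-(a-1)/2})$, corrected by an $L$-factor ratio term when $\rho=\triv$ (the extra factor coming from $-L(-s)/L(s)$-type terms, which for $\rho$ trivial and unramified contributes a $-1$ only in the lowest case). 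Self-duality of $\rho$ together with $\epsilon(\rho,s)\epsilon(\rho^\vee,1-s)=\omega_\rho(-1)$ then gives $\omega_\rho(-1)$, which cancels the prefactor. The exceptional $-1$ appears precisely when $a=2$ and $\rho=\triv$, since $\epsilon(\triv\otimes r(2))=-1$ while $r(0)=0$ has root number $1$. I would verify this identity carefully, since it determines the case distinction in (2).
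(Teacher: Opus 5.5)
Your proposal is correct and is essentially the paper's own proof: apply Chen's commutation lemma to $T_{\phi,s}$, use Xu's Lemma 7.3 \cite{Xu2017_cuspidal} for the stable discrete packets on the two $\SO$ factors, and reduce to the sign $\omega_\rho(-1)\,\epsilon(\rho\otimes r(2x+1))/\epsilon(\rho\otimes r(2x-1))$. One point to tighten: Deligne's correction factor $\det(-\Frob\, q^{-s}\mid \sigma^{I}/(\ker N)^{I})$ appears for every unramified character $\rho$, not only for $\rho=\triv$, and contributes $(-\rho(\Frob))^{2}=1$ when $2x+1>2$ and $-\rho(\Frob)$ when $2x+1=2$; for the unramified quadratic character this gives $+1$, so $(\triv,2)$ is indeed the only exception.
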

	\begin{proof}
		By \cite[Lemma 7.3]{Xu2017_cuspidal}, when $x=0$ or $(\rho,2x+1)\notin\Jord(\phi)$, we have $\Jac_{\rho,x}^{\pm1}(S\Theta_{\phi^!}^{G^!})=0$; and when $(\rho,2x+1)\in\Jord(\phi)$, choose $(\textbf{G}^!_-,\phi_-^!)\leftrightarrow(\phi_-,s_-)$ as in (\ref{equation: basic bijection}), we have $\Jac_{\rho,x}^{s(\rho,2x+1)}(S\Theta_{\phi^!}^{G^!})=S\Theta_{\phi_-^!}^{G_-^!}$ and $\Jac_{\rho,x}^{-s(\rho,2x+1)}(S\Theta_{\phi^!}^{G^!})=0$. In conclusion, we have $$\Jac_{\rho,x}(T_{\phi,s})=\begin{cases}
		    0&\text{if }x=0\text{ or }(\rho,2x+1)\notin\Jord(\phi)\\
		    \frac{\epsilon(\phi_-^{s_-=-1})}{\epsilon(\phi^{s=-1})}T_{\phi_-,s_-}&\text{if }x>0, (\rho,2x+1)\in\Jord(\phi) \text{ and } s(\rho,2x+1)=1\\
		    \omega_\rho(-1)\frac{\epsilon(\phi_-^{s_-=-1})}{\epsilon(\phi^{s=-1})}T_{\phi_-,s_-}&\text{if }x>0, (\rho,2x+1)\in\Jord(\phi) \text{ and } s(\rho,2x+1)=-1.
		\end{cases}$$
		\par When $s(\rho,2x+1)=1$, we have $\phi_-^{s_-=-1}=\phi^{s=-1}$. Hence $\Jac_{\rho,x}(T_{\phi,s})=T_{\phi_-,s_-}$ in this case; when $s(\rho,2x+1)=-1$, then $\frac{\epsilon(\phi_-^{s_-=-1})}{\epsilon(\phi^{s=-1})}=\frac{\epsilon(\rho\otimes r(2x+1))}{\epsilon(\rho\otimes r(2x-1))}$. Now \cite[\S 4.1]{Li2024_arthurpacketsmetaplecticgroups} implies that $$\frac{\epsilon(\rho\otimes r(2x+1))}{\epsilon(\rho\otimes r(2x-1))}=\begin{cases}
			\omega_\rho(-1)(-\rho(\Frob))^2&\text{if }\rho\text{ is an unramified character and }2x+1>1\\
			\omega_\rho(-1)(-\rho(\Frob))^1&\text{if }\rho\text{ is an unramified character and }2x+1=1\\
			\omega_\rho(-1)&\text{otherwise}.
		\end{cases}$$ Note that $(\rho,2x+1)\in\Jord(\phi)$ implies that $\rho$ is self-dual. Thus if $\rho$ is an unramified character, we must have $\rho(\Frob)=\pm1$, and $\rho(\Frob)=1$ if and only if $\rho=\triv$. This completes the proof.
	\end{proof}
	
	\par Now, we can prove the metaplectic version of \cite[Lemma 7.3]{Xu2017_cuspidal}.
	
	\begin{proposition}\label{prop: partial Jacquet module of discrete series}
		Let $\phi\in\Phi_{\bdd,2}(\widetilde{G}_{2n})$, $\varepsilon\in\mS_\phi^\vee$, $\rho\in\Pi_{\unit,\cusp}(\GL(d_\rho))$, and $x\in\RR$. Then the following holds:
		\begin{enumerate}
			\item[(1)] If $x=0$ or $(\rho,2x+1)\notin\Jord(\phi)$, we have $\Jac_{\rho,x}\pi(\phi,\varepsilon)=0$.
			\item[(2)] If $x>\frac{1}{2}$, $(\rho,2x+1)\in\Jord(\phi)$, and $(\rho,2x-1)\notin\Jord(\phi)$, we have $\Jac_{\rho,x}\pi(\phi,\varepsilon)=\pi(\phi_-,\varepsilon_-)$, where $\phi_-=\phi\oplus \rho\otimes r(2x-1)\ominus \rho\otimes r(2x+1)$ (the symbol $\ominus$ means delete the $\rho\otimes r(2x+1)$ from the parameter) and $\varepsilon_-$ is defined by $\varepsilon_-(\rho,2x-1)=\varepsilon(\rho,2x+1)$.
			\item[(3)] If $x>\frac{1}{2}$, $(\rho,2x+1)\in\Jord(\phi)$, and $(\rho,2x-1)\in\Jord(\phi)$, we have 
			$$\Jac_{\rho,x}\pi(\phi,\varepsilon)=\begin{cases}
				\pi(\phi_-,\varepsilon_-)&\text{if }\varepsilon(\rho,2x-1)\varepsilon(\rho,2x+1)=1\\
				0&\text{otherwise}.
			\end{cases}$$ Here $\phi_-=\phi\oplus \rho\otimes r(2x-1)\ominus \rho\otimes r(2x+1)$ and $\varepsilon_-$ is the restriction of $\varepsilon$ on $\Jord(\phi_-)$.
			\item[(4)] If $x=\frac{1}{2}$, $(\rho,2x+1)\in\Jord(\phi)$ and $\rho\neq\triv$, then we have
			$$\Jac_{\rho,x}(\pi(\phi,\varepsilon))=\begin{cases}
				\pi(\phi_-,\varepsilon_-)&\text{if }\varepsilon(\rho,2)=1\\
				0&\text{otherwise}.
			\end{cases}$$
			\item[(5)] If $x=\frac{1}{2}$, $(\rho,2x+1)\in\Jord(\phi)$ and $\rho=\triv$, then we have
			$$\Jac_{\rho,x}(\pi(\phi,\varepsilon))=\begin{cases}
			\pi(\phi_-,\varepsilon_-)&\text{if }\varepsilon(\triv,2)=-1\\
			0&\text{otherwise}.
			\end{cases}$$
		\end{enumerate}
	\end{proposition}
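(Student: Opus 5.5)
The plan is to compute $\Jac_{\rho,x}$ on the endoscopic expansion of $\pi(\phi,\varepsilon)$ and then reorganize the resulting sum by Fourier inversion on the component group. By Theorem \ref{theorem: ECR for tempered representation} we have
$$\pi(\phi,\varepsilon)=|\mS_\phi|^{-1}\sum_{s\in\mS_\phi}\varepsilon(s)T_{\phi,s}.$$
Since $\phi$ is discrete, $s_\phi$ is trivial on the relevant part, so no extra twist appears. Because $\Jac_{\rho,x}$ is linear on distributions, we apply Lemma \ref{lemma: partial Jacquet module of T phi s} term by term.

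Case (1) is then immediate: every $\Jac_{\rho,x}(T_{\phi,s})$ vanishes.

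In the remaining cases we have $x>0$ and $(\rho,2x+1)\in\Jord(\phi)$. Lemma \ref{lemma: partial Jacquet module of T phi s} gives
$$\Jac_{\rho,x}\pi(\phi,\varepsilon)=|\mS_\phi|^{-1}\sum_{s}\varepsilon(s)\,c(s)\,T_{\phi_-,s_-},$$
where $c(s)=-1$ exactly when $(\rho,2x+1)=(\triv,2)$ and $s(\triv,2)=-1$, and $c(s)=1$ otherwise. The rest of the argument studies the map $s\mapsto s_-$ from $\mS_\phi$ to $\mS_{\phi_-}$ and re-expresses the sum through the expansion of $\pi(\phi_-,\cdot)$, again via Theorem \ref{theorem: ECR for tempered representation}. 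Note that $\phi_-$ is a bounded parameter of good parity; it is discrete in case (2) and non-discrete in case (3).

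\textbf{Case (2)}, $x>\frac12$ and $(\rho,2x-1)\notin\Jord(\phi)$. Here $\phi_-$ is discrete and $s\mapsto s_-$ is a bijection $\mS_\phi\cong\mS_{\phi_-}$ compatible with $\varepsilon\mapsto\varepsilon_-$. Also $c\equiv1$, because $2x+1>2$. The sum is therefore exactly $\pi(\phi_-,\varepsilon_-)$.

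\textbf{Case (3)}, $x>\frac12$ and $(\rho,2x-1)\in\Jord(\phi)$. Now $\phi_-$ contains $\rho\otimes r(2x-1)$ with multiplicity $2$. By (\ref{equation: centralizer group of A-parameter}), $\mS_{\phi_-}$ is the quotient of $\mS_\phi$ obtained by identifying the coordinates at $(\rho,2x-1)$ and $(\rho,2x+1)$, and $T_{\phi_-,s_-}$ depends only on the image $\underline{s_-}$. To be careful with the convention defining $s_-$, I would realize $s$ in $S_{\phi,2}$ and observe that $s_-$ in $S_{\phi_-,2}$ has determinant $s(\rho,2x-1)s(\rho,2x+1)$ on the $\Or(2)$ factor. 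Summing over the fibres of $\mS_\phi\twoheadrightarrow\mS_{\phi_-}$, the character sum $\sum\varepsilon(s)$ over a fibre is nonzero exactly when $\varepsilon(\rho,2x-1)\varepsilon(\rho,2x+1)=1$, in which case $\varepsilon$ descends to $\varepsilon_-$. The fibre has size $2=|\mS_\phi|/|\mS_{\phi_-}|$, so the normalization matches and we get $\pi(\phi_-,\varepsilon_-)$ or $0$.

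\textbf{Cases (4) and (5)}, $x=\frac12$. Here $\phi_-=\phi\oplus\rho\otimes r(0)\ominus\rho\otimes r(2)$ simply removes $(\rho,2)$, so $\mS_{\phi_-}$ is the quotient of $\mS_\phi$ forgetting the coordinate at $(\rho,2)$. The fibre sum is
$$\sum_{t=\pm1}\varepsilon_{(\rho,2)}(t)\,c(t),$$
which is nonzero iff $\varepsilon(\rho,2)=1$ when $c\equiv1$ (case (4), $\rho\neq\triv$). When $c(t)=t$ (case (5), $\rho=\triv$), it is nonzero iff $\varepsilon(\triv,2)=-1$.

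The main obstacle I expect is bookkeeping with conventions. Specifically, I need to check that $T_{\phi_-,s_-}$ really only depends on the image of $s_-$ in $\mS_{\phi_-}$ with the determinant convention; this is \cite[Lemma 4.3.3]{Li2024_arthurpacketsmetaplecticgroups}. I also need to check that the pairing $\varepsilon(s)$ with $\varepsilon_i*s_i$ factors as claimed over fibres, and that the normalizing factors $|\mS_\phi|^{-1}$ versus $|\mS_{\phi_-}|^{-1}$ agree. If the naive coordinate description of $s_-$ in case (3) is inconsistent, I would instead lift to $S_{\phi,2}$ explicitly and recompute the eigenspace decomposition in (\ref{equation: basic bijection}).
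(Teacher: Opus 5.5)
Your proposal is correct and follows essentially the paper's own argument. The paper also expands $\pi(\phi,\varepsilon)$ via Theorem \ref{theorem: ECR for tempered representation} and applies Lemma \ref{lemma: partial Jacquet module of T phi s} term by term. It then sums over the fibres of $\mS_\phi\twoheadrightarrow\mS_{\phi_-}$, whose kernel is generated by $s_0$. This yields the factor $1+\varepsilon(s_0)$ in cases (3) and (4), and $1-\varepsilon(s_0)$ in case (5) because of the sign for $(\triv,2)$; case (2) comes from the bijection $\mS_\phi\cong\mS_{\phi_-}$.

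One small quibble: $s_\phi$ is trivial because $\phi$ is an L-parameter (trivial on the Arthur $\SL(2,\CC)$), not because it is discrete. This does not affect your argument.
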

	\begin{proof}
		$(1)$ is a direct consequence of Lemma \ref{lemma: partial Jacquet module of T phi s}. For $(2)$, the map $s\mapsto s_-$ in Lemma \ref{lemma: partial Jacquet module of T phi s} gives a bijection $\mS_\phi\rightarrow\mS_{\phi_-}$ with $\varepsilon(s)=\varepsilon_-(s_-)$. Thus we have $$\begin{aligned}
			\Jac_{\rho,x}(\pi(\phi,\varepsilon))&=|\mS_\phi|^{-1}\sum_{s\in\mS_\phi}\varepsilon(s)T_{\phi_-,s_-}\\
			&=|\mS_{\phi_-}|^{-1}\sum_{s_-\in\mS_{\phi_-}}\varepsilon(s_-)T_{\phi_-,s_-}\\
			&=\pi(\phi_-,\varepsilon_-).
		\end{aligned}$$
		\par For $(3)$, the map $s\mapsto s_-$ Lemma \ref{lemma: partial Jacquet module of T phi s} induces a map $\mS_\phi\rightarrow\mS_{\phi_-},\underline{s}\mapsto\underline{s}_-$, where $\underline{s}_-(\rho,2x-1)=\underline{s}(\rho,2x-1)\underline{s}(\rho,2x+1)$. Let $\underline{s}_0\in\mS_\phi$ be defined by $\underline{s}_0(\rho',a)=-1$ if and only if $(\rho',a)=(\rho,2x\pm1)$. Then we have an exact sequence
		$$\xymatrix{
			0\ar[r]&\left<\underline{s}_0\right>\ar[r]&\mS_\phi\ar[r]^-{\underline{s}\mapsto\underline{s}_-}&\mS_{\phi_-}\ar[r]&0.
		}$$
		
		Now, for $\varepsilon\in\mS_\phi^\vee$, we have
		
		$$\begin{aligned}
			\Jac_{\rho,x}(\pi(\phi,\varepsilon))&=|\mS_\phi|^{-1}\sum_{s\in\mS_\phi}\varepsilon(s)T_{\phi_-,\underline{s}_-}\\
			&=|\mS_\phi|^{-1}\sum_{\underline{s}_-\in\mS_{\phi_-}}\varepsilon_-(\underline{s}_-)(1+\varepsilon(s_0))T_{\phi_-,\underline{s}_-}\\
			&=\begin{cases}
				\pi(\phi_-,\varepsilon_-)&\text{if }\varepsilon(s_0)=1\\
				0&\text{if }\varepsilon(s_0)=-1.
			\end{cases}
		\end{aligned}$$ It follows directly from the definition that $\varepsilon(s_0)=\varepsilon(\rho,2x-1)\varepsilon(\rho,2x+1)$.
		\par For $(4)$ and $(5)$, let $s_0\in\mS_\phi$ be defined by $s_0(\rho',a)=-1$ if and only if $(\rho',a)=(\rho,2)$. Then, we have an exact sequence 
		$$\xymatrix{
			0\ar[r]&\left<s_0\right>\ar[r]&\mS_\phi\ar[r]^{s\mapsto s_-}&\mS_{\phi_-}\ar[r]&0.
		}$$
		
		When $\rho\neq\triv$, we can deduce $(4)$ similarly to $(3)$; when $\rho=\triv$, then Lemma \ref{lemma: partial Jacquet module of T phi s} implies that $\Jac_{\rho,x}(T_{\phi,s_-s_0})=-T_{\phi_-,s_-}$ for $s_-\in\mS_{\phi_-}$. Thus we have 
		$$\begin{aligned}
			\Jac_{\rho,x}(\pi(\phi,\varepsilon))&=|\mS_\phi|^{-1}\sum_{s_-\in\mS_{\phi_-}}\varepsilon_-(s_-)(\Jac_{\rho,x}(T_{\phi,s_-})+\varepsilon(\rho,2)\Jac_{\rho,x}(T_{\rho,s_-s_0}))\\
			&=|\mS_\phi|^{-1}\sum_{s_-\in\mS_{\phi_-}}\varepsilon_-(s_-)(1-\varepsilon(s_0))T_{\phi_-,s_-}.
		\end{aligned}$$ This completes the proof.
	\end{proof}
	
	\par As a consequence of Proposition \ref{prop: partial Jacquet module of discrete series}, we can parameterize the cuspidal representations of metaplectic groups.
	
	\begin{theorem}\label{theorem: parameter of cuspidal representation}
		Let $\phi\in\Phi_{\bdd,2}(\widetilde{G}_{2n})$ and $\varepsilon\in\mS_\phi^\vee$. Then $\pi(\phi,\varepsilon)$ is a cuspidal representation if and only if it satisfies the following conditions:
		\begin{enumerate}
			\item[(1)] If $(\rho,a)\in\Jord(\phi)$ and $a-2>0$, then $(\rho,a-2)\in\Jord(\phi)$.
			\item[(2)] If $(\rho,a),(\rho,a-2)\in\Jord(\phi)$, then $\varepsilon(\rho,a-2)\varepsilon(\rho,a)=-1$.
			\item[(3)] If $(\rho,2)\in\Jord(\phi)$, then 
			$$\varepsilon(\rho,2)=\begin{cases}
				-1&\text{if }\rho\neq\triv\\
				1&\text{if }\rho=\triv.\\
			\end{cases}$$
		\end{enumerate}
	\end{theorem}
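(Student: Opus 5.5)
The plan is to reduce cuspidality to the vanishing of the partial Jacquet modules $\Jac_{\rho,x}$, and then read those off from Proposition \ref{prop: partial Jacquet module of discrete series}.

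\textbf{Reduction.} An irreducible genuine representation $\pi$ of $\widetilde{G}_{2n}$ is cuspidal if and only if all of its proper Jacquet modules vanish. If some proper Jacquet module is nonzero, then by transitivity of Jacquet modules we can pass to the minimal Levi in its cuspidal support. There the first $\GL$-block is some $\rho|\cdot|^x$ with $\rho$ unitary cuspidal and $x\in\RR$, so $\Jac_{\rho,x}(\pi)\neq0$. Hence $\pi(\phi,\varepsilon)$ is cuspidal if and only if $\Jac_{\rho,x}\pi(\phi,\varepsilon)=0$ for every $\rho\in\Pi_{\unit,\cusp}(\GL(d_\rho))$ and every $x\in\RR$. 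This is the metaplectic analogue of the standard statement for classical groups, and it follows from Lemma \ref{lemma: partial Jacquet module and embedding}. We are also using that $\pi(\phi,\varepsilon)$ is a single irreducible representation by Theorem \ref{theorem: LLC for Mp}, and in particular is nonzero.

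\textbf{Case analysis.} Fix $(\rho,x)$. By part (1) of Proposition \ref{prop: partial Jacquet module of discrete series}, nothing needs checking unless $x>0$ and $(\rho,2x+1)\in\Jord(\phi)$. Since $\phi$ is discrete, $2x+1$ is a positive integer, so $x\ge\frac12$. Set $a=2x+1$ and treat the cases as follows.
\begin{enumerate}
\item[(i)] If $a>2$ (i.e.\ $x>\frac12$) and $(\rho,a-2)\notin\Jord(\phi)$, part (2) gives $\Jac_{\rho,x}=\pi(\phi_-,\varepsilon_-)$. This is an irreducible representation, hence nonzero. So vanishing in this case is equivalent to condition (1) of the theorem for this $(\rho,a)$.
\item[(ii)] If $a>2$ and $(\rho,a-2)\in\Jord(\phi)$, part (3) shows that $\Jac_{\rho,x}$ vanishes exactly when $\varepsilon(\rho,a-2)\varepsilon(\rho,a)=-1$, which is condition (2). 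Here one must check that $\pi(\phi_-,\varepsilon_-)\neq0$. The parameter $\phi_-$ has $\rho\otimes r(a-2)$ with multiplicity two, so it is not discrete, but it is still a bounded L-parameter. By Theorem \ref{theorem: LLC for Mp} together with Theorem \ref{theorem: ECR for tempered representation}, $\pi(\phi_-,\varepsilon_-)$ is an irreducible tempered representation, in particular nonzero.
\item[(iii)] If $a=2$, parts (4) and (5) say that $\Jac_{\rho,1/2}$ vanishes exactly when $\varepsilon(\rho,2)=-1$ for $\rho\neq\triv$, and exactly when $\varepsilon(\triv,2)=1$ for $\rho=\triv$. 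This is condition (3). Again $\pi(\phi_-,\varepsilon_-)\neq0$, since $\phi_-$ is bounded.
\end{enumerate}
The case $a=1$ corresponds to $x=0$ and is covered by part (1). For condition (1) with $a=3$, note that $a-2=1$, so $(\rho,1)$ must lie in $\Jord(\phi)$; this case falls under (i)/(ii) with $x=1$.

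\textbf{Assembling.} Running over all $(\rho,a)\in\Jord(\phi)$ with $a\ge2$, the vanishing conditions in (i)–(iii) are exactly conditions (1)–(3) of the theorem, which proves both directions.

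The only delicate points are the following.
\begin{enumerate}
\item[$\bullet$] The reduction to $\Jac_{\rho,x}$ in the metaplectic setting. This rests on the fact that the cuspidal support of $\widetilde{G}_{2n}$-representations lives on Levis of the form $\GL\times\cdots\times\widetilde{G}$ (via the canonical splitting of the $\bm\mu_8$ cover), so it is routine.
\item[$\bullet$] The nonvanishing of the representations $\pi(\phi_-,\varepsilon_-)$ attached to the non-discrete bounded parameters $\phi_-$, which I would justify by Luo's identification (Theorem \ref{theorem: ECR for tempered representation}) of the endoscopically defined $\pi(\phi_-,\varepsilon_-)$ with the Gan–Savin member of the L-packet.
\end{enumerate}
I expect the second point to be the main thing requiring care.
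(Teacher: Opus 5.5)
Your proposal is correct and follows the paper's route. The paper's entire proof is the remark that the theorem ``follows directly from Proposition \ref{prop: partial Jacquet module of discrete series}'', and your argument spells that out: you reduce cuspidality to the vanishing of every $\Jac_{\rho,x}$, then run the case analysis over $a=2x+1$. You also justify that $\pi(\phi_-,\varepsilon_-)\neq0$, including for the non-discrete bounded $\phi_-$ in case (3), using Theorems \ref{theorem: LLC for Mp} and \ref{theorem: ECR for tempered representation}. That justification is exactly what the ``only if'' direction needs, and the paper leaves it implicit.
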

	\begin{proof}
		Follows directly from Proposition \ref{prop: partial Jacquet module of discrete series}.
	\end{proof}
	
	\par Note that in the metaplectic case, we must treat the trivial block separately. This is a metaplectic feature that does not appear in the classical group case. By Theorem \ref{theorem: parameter of cuspidal representation}, the condition $(\mT)$ in \cite[Theorem 1.2]{Atobe2017_local} can be interpreted as a cuspidal condition.
	
	\par When $\psi$ is an elementary A-parameter, the computations involving local root numbers will be different. We do not discuss this situation in detail, but rather use the following lemma to illustrate a consequence of the differences.
	
	\begin{lemma}\label{lemma: partial Jacquet module of elementary A parameter}
		Let $\phi\in\Phi_{\bdd,2}(\widetilde{G}_{2n})$, $\varepsilon\in\mS_\phi^\vee$, $\rho\in\Pi_{\unit,\cusp}(\widetilde{G})$. Suppose that $\rho$ is of orthogonal type and $(\rho,2)\notin\Jord(\phi)$. Define $\psi=\phi\oplus\rho\otimes r(1)\otimes r(2)$ and $$\varepsilon_{\pm}(\rho',a',b')=\begin{cases}
			\varepsilon(\rho',a',b')&\text{if }(\rho',a',b')\in\Jord(\phi)\\
			\pm1&\text{otherwise.}
		\end{cases}$$
		Then $\Jac_{\rho,-\frac{1}{2}}\pi(\psi,\varepsilon_+)=\pi(\phi,\varepsilon)$ and $\Jac_{\rho,-\frac{1}{2}}\pi(\psi,\varepsilon_-)=0$.
	\end{lemma}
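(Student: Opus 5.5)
We follow the pattern of Lemma \ref{lemma: partial Jacquet module of T phi s} and Proposition \ref{prop: partial Jacquet module of discrete series}: compute $\Jac_{\rho,-\frac12}$ of each distribution $T_{\psi,\underline{s}}$ via Lemma \ref{lemma: commutation of spectral transfer and partial Jacquet module}, and then recombine over $\mS_\psi$.

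\emph{Setup.} Since $\rho$ is orthogonal, $\rho\otimes r(1)\otimes r(2)$ is symplectic, so $\psi$ is discrete with $\mS_\psi=\mS_\phi\times\langle s_0\rangle$. Here $s_0$ is $-1$ exactly at $(\rho,1,2)$. For $s\in\mS_\phi$ we have $s_\psi s=s$ on $\Jord(\phi)$ and $s_\psi(\rho,1,2)=-1$ because $b=2$ is even. Hence $\varepsilon_\pm(s_\psi\cdot s)=\varepsilon(s)\varepsilon_\pm(s_\psi|_{(\rho,1,2)})$, where we must also track the $s_\psi$-contribution on $\Jord(\phi)$, which is trivial since $b=1$ there. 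Thus
$$\pi(\psi,\varepsilon_\pm)=\tfrac{1}{2|\mS_\phi|}\sum_{s\in\mS_\phi}\varepsilon(s)\bigl(\varepsilon_\pm(s_0)T_{\psi,s}+T_{\psi,ss_0}\bigr),$$
with $\varepsilon_\pm(s_0)=\pm1$. Checking this bookkeeping carefully is part of the work.

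\emph{Endoscopic side.} Take $(\textbf{G}^!,\psi^!)\leftrightarrow(\psi,s)$. The block $\rho\otimes r(1)\otimes r(2)$ lies in the first $\SO$ factor if $s$ is trivial there, and in the second otherwise. For an $\SO(2n+1)$ packet $\Pi_{\psi'}$ with $\psi'=\phi'\oplus\rho\otimes r(1)\otimes r(2)$ and $(\rho,2)\notin\Jord(\phi')$, the stable distribution $S\Theta_{\psi'}$ is the Aubert dual of the tempered one for $\hat\psi'$. Alternatively, from the Moeglin/Xu computation of Jacquet modules of elementary packets (cf. \cite{Xu2017_cuspidal}), one gets $\Jac_{\rho,-\frac12}S\Theta_{\psi'}=S\Theta_{\phi'}$, and this vanishes on the factor not containing the block. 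So by Lemma \ref{lemma: commutation of spectral transfer and partial Jacquet module}:
\begin{itemize}
\item if $s$ is trivial at the block, $\Jac_{\rho,-\frac12}T_{\psi,s}=T_{\phi,s|_\phi}$, since the root numbers are unchanged;
\item if $s$ is nontrivial at the block, $\Jac_{\rho,-\frac12}T_{\psi,ss_0}=c\,T_{\phi,s}$ with $c=\omega_\rho(-1)\epsilon(\psi^{ss_0=-1})/\epsilon(\phi^{s=-1})=\omega_\rho(-1)\epsilon(\rho\otimes r(1)\otimes r(2))$, where the root number of $\rho\otimes r(1)\otimes r(2)$ is that of its restriction $\rho\otimes r(1)$ to $W_F\times\SL(2)$.
\end{itemize}

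\emph{Sign computation.} This is the key difference from the discrete-series case, and I expect it to be the main obstacle. We must show $c=-1$. For orthogonal $\rho$, $\epsilon(\rho)\,\omega_\rho(-1)$ should be computed using the normalization of \cite[\S 4.1]{Li2024_arthurpacketsmetaplecticgroups}, in which these root numbers pick up the factor $\omega_\rho(-1)$ and are paired with the fixed additive character $\uppsi$. The formula quoted in the proof of Lemma \ref{lemma: partial Jacquet module of T phi s} gives $\epsilon(\rho\otimes r(a+2))/\epsilon(\rho\otimes r(a))$. Extrapolating it to $a=-1$ in the sense of the convention $\epsilon(\rho\otimes r(1))=\dots$, I would verify directly from Li's definitions that the relevant normalized quantity equals $-\omega_\rho(-1)$, so that $c=-1$. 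The comparison of the two normalizations of the stable distribution (Whittaker versus Arthur) for non-tempered $\psi'$ may also introduce the $\epsilon$-character $\varepsilon_\psi^{\M/\MW}$; this is where the metaplectic discrepancy mentioned in the introduction enters.

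\emph{Conclusion.} Given $c=-1$,
$$\Jac_{\rho,-\frac12}\pi(\psi,\varepsilon_\pm)=\tfrac{1}{2|\mS_\phi|}\sum_{s}\varepsilon(s)(\pm1+1)T_{\phi,s}.$$
This equals $\pi(\phi,\varepsilon)$ for the $+$ sign and $0$ for the $-$ sign, exactly as in the case analysis of Proposition \ref{prop: partial Jacquet module of discrete series}(5).
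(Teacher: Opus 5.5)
Your strategy is the paper's: compute $\Jac_{\rho,-\frac12}T_{\psi,\underline{s}}$ via Lemma \ref{lemma: commutation of spectral transfer and partial Jacquet module}, then recombine over $\mS_\psi=\mS_\phi\times\langle s_0\rangle$. Your recombination formula $\pi(\psi,\varepsilon_\pm)=\frac{1}{2|\mS_\phi|}\sum_{s\in\mS_\phi}\varepsilon(s)\bigl(\pm T_{\psi,s}+T_{\psi,ss_0}\bigr)$ is correct. The gap is the sign step: you aim at the wrong value and leave it unproved. Your own formulas give
\[
\Jac_{\rho,-\frac12}\pi(\psi,\varepsilon_\pm)=\tfrac{1}{2|\mS_\phi|}\sum_{s\in\mS_\phi}\varepsilon(s)(\pm1+c)\,T_{\phi,s}.
\]
So $c=-1$ would give $\Jac_{\rho,-\frac12}\pi(\psi,\varepsilon_+)=0$ and $\Jac_{\rho,-\frac12}\pi(\psi,\varepsilon_-)=-\pi(\phi,\varepsilon)$. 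That is the opposite assignment, and the negative coefficient is impossible for a Jacquet module. Your final display with $(\pm1+1)$ tacitly uses $c=+1$, contradicting the sign you set out to prove. The analogy with Proposition \ref{prop: partial Jacquet module of discrete series}(5), where an extra $-1$ genuinely occurs for $(\triv,2)$, is misleading here.

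What you need is $c=+1$, and it is immediate once the root number is computed correctly. The restriction of $\rho\otimes r(1)\otimes r(2)$ to $W_F\times\SL(2,\CC)$ is $\rho\oplus\rho$, not $\rho\otimes r(1)$. This is the convention behind the factors $\epsilon(\rho\otimes r(a))^b$ in the proof of Proposition \ref{prop: Mp version of proposition 5.9 in Xu_moeglin}. Hence, for $\underline{s}$ nontrivial at the block,
\[
\frac{\epsilon(\psi^{s=-1})}{\epsilon(\phi^{s_-=-1})}=\epsilon(\rho)^2=\omega_\rho(-1),
\]
using $\epsilon(\tfrac12,\rho,\uppsi)\epsilon(\tfrac12,\rho^\vee,\uppsi)=\omega_\rho(-1)$ and $\rho\cong\rho^\vee$. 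Combined with the factor $\omega_\rho(-1)$ from Lemma \ref{lemma: commutation of spectral transfer and partial Jacquet module}, this gives $c=\omega_\rho(-1)^2=1$.

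So $\Jac_{\rho,-\frac12}T_{\psi,\underline{s}}=T_{\phi,\underline{s}_-}$ for every $\underline{s}\in\mS_\psi$, uniformly in orthogonal $\rho$ (including $\rho=\triv$). The lemma then follows from your recombination, and this is exactly the paper's argument. No detour through Li's normalizations or $\varepsilon^{\M/\MW}$ is needed. With your restriction, $c=\omega_\rho(-1)\epsilon(\rho)$ is not a fixed sign independent of $\rho$, so the step as you framed it could not have been completed.
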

	\begin{proof}
		Using the same method as in Lemma \ref{lemma: partial Jacquet module of T phi s}, for $s\in S_{\psi,2}$ we have
		$$\Jac_{\rho,-\frac{1}{2}}(T_{\psi,s})=\begin{cases}
			\frac{\epsilon(\psi^{s=-1})}{\epsilon(\phi^{s=-1})}T_{\phi,s_-}&\text{if }s(\rho,1,2)=1\\
			\omega_\rho(-1)\frac{\epsilon(\psi^{s=-1})}{\epsilon(\phi^{s=-1})}T_{\phi,s_-}&\text{if }s(\rho,1,2)=-1,\\
		\end{cases}$$ where $\frac{\epsilon(\psi^{s=-1})}{\epsilon(\phi^{s=-1})}=\varepsilon(\rho\otimes r(1))^2=\omega_\rho(-1)$ when $s(\rho,1,2)=-1$. Thus, we conclude that $\Jac_{\rho,-\frac{1}{2}}(T_{\psi,s})=T_{\phi,s_-}$, which directly yields $\Jac_{\rho,-\frac{1}{2}}\pi(\psi,\varepsilon_+)=\pi(\phi,\varepsilon)$ as in Proposition \ref{prop: partial Jacquet module of discrete series}.
	\end{proof}
	
	\subsection{Cuspidal support of discrete series}
	
	\par In this subsection, we generalize \cite[Theorem 8.1]{Xu2017_cuspidal} to metaplectic groups, which is a direct consequence of Corollary \ref{coro: strengthening of proposition 4.5.3 in ArthurMp - tempered case} and Proposition \ref{prop: partial Jacquet module of discrete series}.
	
	\begin{proposition}\label{prop: cuspidal support of discrete series}
		Let $\phi\in\Phi_{\bdd,2}(\widetilde{G}_{2n})$ and $\varepsilon\in\mS_\phi^\vee$. Fix an $(\rho,a)\in\Jord(\phi)$ and denote by $a_-$ the biggest positive integer smaller than $a$ in $\Jord_\rho(\phi)$. If such an integer does not exist, we will always assume $\varepsilon(\rho,a)\varepsilon(\rho,a_-)=-1$ and we write $a_-=0$ if $a$ is even, and $a_-=-1$ if $a$ is odd. Then, the following holds:
		\begin{enumerate}
			\item[(1)] If $\varepsilon(\rho,a)\varepsilon(\rho,a_-)=-1$ and $a>a_-+2$, then $$\pi(\phi,\varepsilon)=\soc([\frac{a-1}{2},\frac{a_-+3}{2}]_\rho\rtimes\pi(\phi_-,\varepsilon_-)),$$ where $\phi_-=\phi\oplus\rho\otimes r(a_-+2)\ominus\rho\otimes r(a)$, and $\varepsilon_-$ is defined by $\varepsilon_-(\rho,a_-+2)=\varepsilon(\rho,a)$.
			\item[(2)] If $\varepsilon(\rho,a)\varepsilon(\rho,a_-)=1$, then $\pi(\phi,\varepsilon)$ is a subrepresentation of $[\frac{a-1}{2},-\frac{a_--1}{2}]_\rho\rtimes\pi(\phi_-,\varepsilon_-)$, where $\phi_-=\phi\ominus\rho\otimes r(a_-)\ominus\rho\otimes r(a)$ and $\varepsilon_-$ is the restriction of $\varepsilon$ on $\Jord(\phi_-)$. Furthermore, let $\widetilde{\varepsilon}\in\mS_\phi^\vee$ be defined as
			$$\widetilde{\varepsilon}(\rho',a')=\begin{cases}
				\varepsilon(\rho',a')&\text{if }(\rho',a')\neq(\rho,a),(\rho,a_-)\\
				-\varepsilon(\rho',a')&\text{if }(\rho',a')=(\rho,a)\text{ or }(\rho,a_-).\\
			\end{cases}$$ Then we have $$\pi(\phi,\varepsilon)\oplus\pi(\phi,\widetilde{\varepsilon})=\soc([\frac{a-1}{2},-\frac{a_--1}{2}]_\rho\rtimes\pi(\phi_-,\varepsilon_-)).$$ In particular, when $a$ is even or $a_-\neq\min\Jord_\rho(\phi)$, then $\pi(\phi,\varepsilon)$ and $\pi(\phi,\widetilde{\varepsilon})$ can be distinguished by partial Jacquet module.
			\item[(3)] If $a=\min\Jord_\rho(\phi)$ is even and $\varepsilon(\rho,a)=\begin{cases}
				1&\text{if }\rho\neq\triv\\
				-1&\text{if }\rho=\triv\\
			\end{cases}$, then $$\pi(\phi,\varepsilon)=\soc([\frac{a-1}{2},\frac{1}{2}]_\rho\rtimes\pi(\phi_-,\varepsilon_-)),$$ where $\phi_-=\phi\ominus\rho\otimes r(a)$ and $\varepsilon_-$ is the restriction of $\varepsilon$ on $\Jord(\phi_-)$.
		\end{enumerate}
	\end{proposition}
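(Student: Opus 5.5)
Each part will be proved by iterating the Jacquet module computations of Proposition \ref{prop: partial Jacquet module of discrete series}. From the resulting non-vanishing Jacquet module we get an embedding via Lemma \ref{lemma: partial Jacquet module and embedding}. We then identify the socle of the induced representation by computing its partial Jacquet modules with the Tadi\'c formula (Proposition \ref{prop: Tadic formula}, Lemma \ref{lemma: computation of partial Jacquet module}). Parts (1) and (3) are the "segment shortening" situations. Part (2) is where $(\rho,a)$ and $(\rho,a_-)$ combine, and there Corollary \ref{coro: strengthening of proposition 4.5.3 in ArthurMp - tempered case} enters.

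For (1), write $x=\frac{a-1}{2}$. Since $a_-<a-2$, the block $(\rho,a-2)$ is not in $\Jord(\phi)$. Part (2) of Proposition \ref{prop: partial Jacquet module of discrete series} gives $\Jac_{\rho,x}\pi(\phi,\varepsilon)=\pi(\phi',\varepsilon')$, with $a$ replaced by $a-2$. We repeat this, lowering $a$ by $2$ each time, until we reach $a_-+2$. Each step uses case (2), which needs $x>\frac12$; when $a_-=0$ the last step is at $x=\frac12$, and case (4)/(5) applies. There the hypothesis $\varepsilon(\rho,a)\varepsilon(\rho,a_-)=-1$, interpreted by the convention in the statement, is exactly what makes the Jacquet module nonzero. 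This is where the sign normalisation for $\rho=\triv$ must be checked carefully.

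This gives $\Jac_{\rho,\frac{a_-+3}{2}}\circ\cdots\circ\Jac_{\rho,x}\pi(\phi,\varepsilon)=\pi(\phi_-,\varepsilon_-)$, which is irreducible. Lemma \ref{lemma: partial Jacquet module and embedding} then embeds $\pi(\phi,\varepsilon)$ into $\rho|\cdot|^{x}\times\cdots\times\rho|\cdot|^{\frac{a_-+3}{2}}\rtimes\pi(\phi_-,\varepsilon_-)$. Because $\pi(\phi,\varepsilon)$ is $\rho|\cdot|^{x-1}$-reduced, the embedding factors through the segment $[x,\frac{a_-+3}{2}]_\rho$, by the usual argument with \cite[Lemma 3.2]{Mœglin2002_construction}. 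For the socle statement we apply Lemma \ref{lemma: criterion for SI}. We compute $\Jac_{[x,\frac{a_-+3}{2}]_\rho}$ of the induced representation via the Tadi\'c formula. The only contributions come from the GL part, because $\pi(\phi_-,\varepsilon_-)$ has no $\rho|\cdot|^{y}$-Jacquet module for $y\ge\frac{a_-+3}{2}$, by part (1) of Proposition \ref{prop: partial Jacquet module of discrete series} and $\Jord_\rho(\phi_-)$. The dual terms are excluded because all exponents are positive. Hence the multiplicity is one and the induction is SI.

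Part (3) is the same argument with $a_-=0$, stopping at $\frac12$. The sign condition is exactly the nonvanishing condition in cases (4) and (5) of Proposition \ref{prop: partial Jacquet module of discrete series}.

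For (2), consider $\phi''=\phi\ominus\rho\otimes r(a)\ominus\rho\otimes r(a_-)$. By Corollary \ref{coro: strengthening of proposition 4.5.3 in ArthurMp - tempered case}, $[\frac{a-1}{2},-\frac{a_--1}{2}]_\rho\rtimes\pi(\phi_-,\varepsilon_-)$ contains the tempered representations $\pi(\phi_1,\cdot)$ with $\phi_1=\phi_-\oplus\rho\otimes r(\frac{a+a_-}{2})^{\oplus2}$. Rather than using that directly, I will argue through Jacquet modules. Applying $\Jac$ at $\frac{a-1}{2},\dots,\frac{a_-+1}{2}$ using case (3) of Proposition \ref{prop: partial Jacquet module of discrete series}, the nonvanishing condition $\varepsilon(\rho,a)\varepsilon(\rho,a_-)=1$ lets us lower $a$ down to $a_-$. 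Then the remaining exponents down to $-\frac{a_--1}{2}$ are handled by the Tadi\'c formula from the two equal blocks. This yields the embedding. The embedding of $\pi(\phi,\widetilde\varepsilon)$ follows by the same computation, since $\widetilde\varepsilon$ satisfies the same product condition.

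For the equality of socles, the Jacquet module $\Jac_{[\frac{a-1}{2},-\frac{a_--1}{2}]_\rho}$ of the induced representation equals $2\pi(\phi_-,\varepsilon_-)$. This comes from the two terms in $M^*$: the segment itself and its dual-reversed contribution. So the socle has length at most two, and we already have two distinct irreducible subrepresentations, giving equality. The distinguishability claim follows by comparing Jacquet modules at $\frac{a_--1}{2}$ (or $\frac{a-1}{2}$) via cases (3)–(5). I expect the main obstacle to be this multiplicity-two count, and in particular keeping track of the $\triv$ sign twist when $a_-=2$.
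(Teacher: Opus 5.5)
Your parts (1) and (3) follow the paper's argument, but one remark in (1) is wrong. The segment $[\frac{a-1}{2},\frac{a_-+3}{2}]_\rho$ stops at $\frac{a_-+3}{2}\ge 1$, so there is never a step at $x=\frac12$, even when $a_-=0$. Every step is case (2) of Proposition~\ref{prop: partial Jacquet module of discrete series}, which carries no sign condition, so the hypothesis $\varepsilon(\rho,a)\varepsilon(\rho,a_-)=-1$ plays no role in that computation. Cases (4)/(5), and with them the $\triv$-twist, enter only in (3) and in the distinguishing claim of (2), never in the multiplicity count you worry about.

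That count, $\Jac_{\delta}(\delta\rtimes\pi(\phi_-,\varepsilon_-))=2\pi(\phi_-,\varepsilon_-)$ with $\delta=[\frac{a-1}{2},-\frac{a_--1}{2}]_\rho$, is correct. It is a cleaner way to pin down the socle than the paper's, which reads it off $\delta_1\rtimes(\pi(\phi',\varepsilon_+)\oplus\pi(\phi',\varepsilon_-))$ (notation below). However, it only finishes (2) once both $\pi(\phi,\varepsilon)$ and $\pi(\phi,\widetilde{\varepsilon})$ are known to embed into $\delta\rtimes\pi(\phi_-,\varepsilon_-)$, and your argument for that step has a gap.

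Put $\delta_1=[\frac{a-1}{2},\frac{a_-+1}{2}]_\rho$, $\delta_2=[\frac{a_--1}{2},-\frac{a_--1}{2}]_\rho$ and $\phi'=\phi_-\oplus\rho\otimes r(a_-)\oplus\rho\otimes r(a_-)$.

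First, Corollary~\ref{coro: strengthening of proposition 4.5.3 in ArthurMp - tempered case} does not say what you state. It concerns unitary segments only, and $\delta$ is not unitary when $a\neq a_-$. Moreover, $\pi(\phi_1,\cdot)$ with $\phi_1=\phi_-\oplus\rho\otimes r(\frac{a+a_-}{2})^{\oplus 2}$ has a different cuspidal support, so it is not a constituent of $\delta\rtimes\pi(\phi_-,\varepsilon_-)$.

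Second, you cannot bypass the Corollary. Lowering $a$ to $a_-$ lands you on the non-discrete tempered $\pi(\phi',\varepsilon_\pm)$, about which Proposition~\ref{prop: partial Jacquet module of discrete series} says nothing. The only way to apply Tadi\'c's formula to ``the two equal blocks'' is through $\pi(\phi',\varepsilon_+)\oplus\pi(\phi',\varepsilon_-)=\delta_2\rtimes\pi(\phi_-,\varepsilon_-)$. That is the Corollary for $\phi'$, which is exactly the paper's ingredient.

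Third, this gives only $\pi(\phi,\varepsilon)\hookrightarrow\delta_1\times\delta_2\rtimes\pi(\phi_-,\varepsilon_-)$. Unlike in (1), you cannot factor through $\delta=\soc(\delta_1\times\delta_2)$ by reducedness, because $\Jac_{\rho,\frac{a_--1}{2}}\pi(\phi,\varepsilon)$ is often nonzero. The missing step is as follows. The other constituent $L$ of $\delta_1\times\delta_2$ embeds into $\delta_2\times\delta_1$. So if the image missed $\delta\rtimes\pi(\phi_-,\varepsilon_-)$, you would get $\pi(\phi,\varepsilon)\hookrightarrow\delta_2\times\delta_1\rtimes\pi(\phi_-,\varepsilon_-)$. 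Then the unitary $\delta_2$ would be the $\GL$-part of a Jacquet quotient of a square-integrable representation, contradicting Casselman's criterion. The paper's ``by combining these two facts'' leaves this implicit as well.

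Finally, for the distinguishing claim, $\Jac_{\rho,\frac{a-1}{2}}$ is nonzero on both representations, so it does not help. Use instead the iterated Jacquet module at $\frac{a_--1}{2},\frac{a_--3}{2},\dots$ down to $\frac{a'+1}{2}$, where $a'$ is the next element of $\Jord_\rho(\phi)$ below $a_-$; if $a_-$ is the even minimum, go down to $\frac12$. Its last step is case (3) or (4)/(5), and it detects the sign at $(\rho,a_-)$, which is exactly where $\varepsilon$ and $\widetilde{\varepsilon}$ differ.
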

	\begin{proof}
		For $(1)$, Proposition \ref{prop: partial Jacquet module of discrete series} implies that $\Jac_{\rho,\frac{a_-+3}{2}}\circ\cdots\circ\Jac_{\rho,\frac{a-1}{2}}(\pi(\phi,\varepsilon))=\pi(\phi_-,\varepsilon_-)$. Then, by Lemma \ref{lemma: partial Jacquet module and embedding}, $\pi(\phi,\varepsilon)$ can be embedded into $\rho|\cdot|^{\frac{a_-+3}{2}}\times\cdots\times\rho|\cdot|^{\frac{a-1}{2}}\rtimes\pi(\phi_-,\varepsilon_-)$. It follows from Lemma \ref{lemma: reduced decomposition of representation} that $\rho|\cdot|^{\frac{a_-+3}{2}}\times\cdots\times\rho|\cdot|^{\frac{a-1}{2}}\rtimes\pi(\phi_-,\varepsilon_-)$ is SI. Hence we have $\pi(\phi,\varepsilon)=\soc([\frac{a-1}{2},\frac{a_-+3}{2}]_\rho\rtimes\pi(\phi_-,\varepsilon_-))$. It is not hard to see that $(3)$ can be proved by the same method.
		\par For $(2)$, let $\phi'=\phi\oplus\rho\otimes r(a_-)\ominus\rho\otimes r(a)$ and let $\varepsilon_{\pm}$ be defined as
		$$\varepsilon_{\pm}(\rho',a')=\begin{cases}
		    \varepsilon(\rho',a')&\text{if }(\rho',a')\neq(\rho,a_-)\\
		    \pm1&\text{if }(\rho',a')=(\rho,a_-).\\
		\end{cases}$$ 
		Then, by Proposition \ref{prop: partial Jacquet module of discrete series}, there exists an $\zeta\in\left\{\pm 1\right\}$ such that $\pi(\phi,\varepsilon)=\soc([\frac{a-1}{2},\frac{a_-+1}{2}]_\rho\rtimes\pi(\phi',\varepsilon_\zeta))$. On the other hand, Corollary \ref{coro: strengthening of proposition 4.5.3 in ArthurMp - tempered case} implies that $[\frac{a_--1}{2},-\frac{a_--1}{2}]_\rho\rtimes\pi(\phi_-,\varepsilon_-)=\pi(\phi',\varepsilon_+)\oplus\pi(\phi',\varepsilon_-)$. By combining these two facts, we conclude $(2)$.
	\end{proof}
	
	\subsection{The key proposition}\label{section: the key proposition}
	
	\par We start this subsection with some definitions following \cite{Mœglin2006_Aubert}.
	
	\begin{definition}\label{def: a_rho phi epsilon}
		Let $\phi\in\Phi_{\bdd,2}(\widetilde{G}_{2n})$, $\varepsilon\in\mS_\phi^\vee$ and $\rho\in\Pi_{\unit,\cusp}(\GL(d_\rho))$. We define $b_{\rho,\phi,\varepsilon}$ to be the biggest integer $b$ in $\Jord_\rho(\phi)$ satisfying the following conditions:
		\begin{enumerate}
			\item[(1)] If $a\in\Jord_\rho(\phi)$ with $a\le b$, then $a-2\in\Jord_\rho(\phi)$ whenever $a-2>0$.
			\item[(2)] If $a, a-2\in\Jord_\rho(\phi)$ and $a\le b$, then $\varepsilon(\rho,a-2)\varepsilon(\rho,a)=-1$.
			\item[(3)] If $2\in\Jord_\rho(\phi)$ and $2\le b$, then $$\varepsilon(\rho,2)=\begin{cases}
				-1&\text{if }\rho\neq\triv\\
				1&\text{if }\rho=\triv.
			\end{cases}$$
		\end{enumerate}
		When such an integer $b$ does not exist, we set $b_{\rho,\phi,\varepsilon}=0$ (resp. $b_{\rho,\phi,\varepsilon}=-1$) if $\rho$ is of orthogonal type (resp. of symplectic type). Further, we denote by $a_{\rho,\phi,\varepsilon}$ the smallest integer $a\in\Jord_\rho(\phi)$ such that $a>b_{\rho,\phi,\varepsilon}$. If such $a$ does not exist, we set $a_{\rho,\phi,\varepsilon}=\infty$. By Proposition \ref{prop: partial Jacquet module of discrete series}, we know that $\pi(\phi,\varepsilon)$ is $\rho$-cuspidal, i.e., $\Jac_{\rho,x}(\pi(\phi,\varepsilon))=0$ for all $x\in\RR$, if and only if $a_{\rho,\phi,\varepsilon}=\infty$.
	\end{definition}
	\index{a@$a_{\rho,\phi,\varepsilon}$}
	\index{b@$b_{\rho,\phi,\varepsilon}$}
	
	\par In \cite{Mœglin2006_Aubert}, M\oe glin constructs elementary A-packets. Though we will not use the concept of elementary A-packets in this article, it will be beneficial to know M\oe glin's construction in the discrete case. To be precise, M\oe glin's construction in the discrete case can be formulated by the following proposition:
	
	\begin{proposition}\label{prop: Moeglin's construction in discrete case}
		Let $\phi\in\Phi_{\bdd,2}(\widetilde{G}_{2n})$, $\varepsilon\in\mS_\phi^\vee$, $\rho\in\Pi_{\unit,\cusp}(\widetilde{G})$. Suppose that $a_{\rho,\phi,\varepsilon}<\infty$. Write $x=\frac{a_{\rho,\phi,\varepsilon}-1}{2}$ and $y=\frac{b_{\rho,\phi,\varepsilon}-1}{2}$. Then we have:
		\begin{enumerate}
			\item[(1)] If $a_{\rho,\phi,\varepsilon}>b_{\rho,\phi,\varepsilon}+2$, then $\pi(\phi,\varepsilon)=\soc(\rho|\cdot|^x\rtimes\pi(\phi',\varepsilon'))$, where $\phi'=\phi\oplus\rho\otimes r(a_{\rho,\phi,\varepsilon}-2)\ominus r(a_{\rho,\phi,\varepsilon})$ and $\varepsilon'$ is defined by $\varepsilon'(\rho,a_{\rho,\phi,\varepsilon}-2)=\varepsilon(\rho,a_{\rho,\phi,\varepsilon})$.
			\item[(2)] If $a_{\rho,\phi,\varepsilon}=b_{\rho,\phi,\varepsilon}+2$ and $b_{\rho,\phi,\varepsilon}\neq 1$, then $\pi(\phi,\varepsilon)$ is the unique subrepresentation of $[x,-y]_\rho\rtimes\pi(\phi',\varepsilon')$ satisfying $\Jac_{\rho,y}(\pi(\phi,\varepsilon))=0$. Here $\phi'=\phi\ominus\rho\otimes r(a_{\rho,\phi,\varepsilon})\ominus\rho\otimes r(b_{\rho,\phi,\varepsilon})$, and $\varepsilon'$ is the restriction of $\varepsilon$ on $\Jord(\phi')$.
			\item[(3)] If $a_{\rho,\phi,\varepsilon}=b_{\rho,\phi,\varepsilon}+2$ and $b_{\rho,\phi,\varepsilon}=1$, let $\phi_-=\phi\oplus\rho\otimes r(1)\ominus\rho\otimes r(3)$, $\phi'=\phi\ominus\rho\otimes r(1)\ominus\rho\otimes r(3)$, $\varepsilon'$ is the restriction of $\varepsilon$ on $\Jord(\phi')$, $\varepsilon_{\pm}$ are the characters in $\mS_{\phi_-}^\vee$ defined in the proof of Proposition \ref{prop: cuspidal support of discrete series}. By Corollary \ref{coro: strengthening of proposition 4.5.3 in ArthurMp - tempered case}, we know that $\rho\rtimes\pi(\phi',\varepsilon')=\pi(\phi_-,\varepsilon_+)\oplus\pi(\phi_-,\varepsilon_-)$. Then, the following holds:
			\begin{enumerate}
				\item[(3.1)] $\pi(\phi,\varepsilon)=\soc(\rho|\cdot|\rtimes\pi(\phi_-,\varepsilon_{\varepsilon(\rho,1)}))$.
				\item[(3.2)] when $|\Jord_\rho(\phi)|>2$, let $\phi''=\phi'\oplus\rho\otimes r(1)\ominus\rho\otimes r(a_{\rho,\phi',\varepsilon'})$, $x'=\frac{a_{\rho,\phi',\varepsilon'}-1}{2}$, $\zeta'=\varepsilon'(\rho,a_{\rho,\phi',\varepsilon'})$, and let $\varepsilon''$ be defined by $\varepsilon''(\rho,1)=\varepsilon'(\rho,a_{\rho,\phi',\varepsilon'})$. Define $\sigma_q=[x',0]_\rho\rtimes\pi(\phi'',\varepsilon'')$ and $\sigma_s=\soc(\rho\times[x',1]_\rho)\rtimes\pi(\phi'',\varepsilon'')$. Then $\pi(\phi_-,\varepsilon_{\zeta'})=[\sigma_q]\cap\rho\rtimes\pi(\phi',\varepsilon')$ and $\pi(\phi_-,\varepsilon_{-\zeta'})=[\sigma_s]\cap\rho\rtimes\pi(\phi',\varepsilon')$.
			\end{enumerate}
		\end{enumerate}
	\end{proposition}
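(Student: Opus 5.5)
The plan is to derive each part from Proposition \ref{prop: partial Jacquet module of discrete series}, Proposition \ref{prop: cuspidal support of discrete series}, Corollary \ref{coro: strengthening of proposition 4.5.3 in ArthurMp - tempered case} and the socle machinery of \S\ref{section: derivative and socles}. Put $a=a_{\rho,\phi,\varepsilon}$ and $b=b_{\rho,\phi,\varepsilon}$. By definition of $b$, the Jordan blocks of $\rho$ up to $b$ form a chain $\dots,b-2,b$ with alternating signs and the prescribed sign at $2$. Since $a>b$ is the next block, either $a>b+2$, or $a=b+2$ and $\varepsilon(\rho,a)\varepsilon(\rho,b)=1$; in the latter case $b\ge1$, because the sign condition at $2$ cannot fail when $a=2$.

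\emph{Part (1).} Here $a>b+2\ge 1$, so $x=\frac{a-1}{2}>\frac12$ and $(\rho,a-2)\notin\Jord(\phi)$. Proposition \ref{prop: partial Jacquet module of discrete series}(2) gives $\Jac_{\rho,x}\pi(\phi,\varepsilon)=\pi(\phi',\varepsilon')$. In the boundary case $x=\frac12$ (that is, $a=2$), we use part (4) or (5) instead; these apply because the condition on $\varepsilon(\rho,2)$ fails, which is exactly why $b<2$. The result is irreducible, and since $\rho|\cdot|^x$ is not self-dual, Proposition \ref{prop: highest rho-derivative is irreducible in non-self-dual case} applies: the highest derivative is $D^{(1)}_{\rho,x}$, because applying $\Jac_{\rho,x}$ again vanishes by Proposition \ref{prop: partial Jacquet module of discrete series}(1), as $(\rho,a)\notin\Jord(\phi')$. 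Hence $\pi(\phi,\varepsilon)=S^{(1)}_{\rho,x}(\pi(\phi',\varepsilon'))$.

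\emph{Part (2).} Here $a=b+2$ with equal signs and $b\ge2$. Apply Proposition \ref{prop: cuspidal support of discrete series}(2) with $a_-=b$. It says that $\soc([x,-y]_\rho\rtimes\pi(\phi',\varepsilon'))=\pi(\phi,\varepsilon)\oplus\pi(\phi,\widetilde\varepsilon)$, and that the two summands are distinguished by a partial Jacquet module; this uses $b\ne1$, so either $b$ is even or $b\ne\min\Jord_\rho$. The task is to identify the distinguishing module as $\Jac_{\rho,y}$. Since $\widetilde\varepsilon$ flips the signs at $a$ and $b$, the chain condition at $b$ changes for $\widetilde\varepsilon$: now $\varepsilon(\rho,b-2)\widetilde\varepsilon(\rho,b)=1$, or, when $b=2$, the sign condition at $2$ flips. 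Proposition \ref{prop: partial Jacquet module of discrete series}(3), (4) or (5) then shows that $\Jac_{\rho,y}\pi(\phi,\widetilde\varepsilon)\ne0$, while $\Jac_{\rho,y}\pi(\phi,\varepsilon)=0$ by the alternation. The case $b=0$, i.e.\ $y=-\frac12$ with $\rho$ orthogonal, is different: there $b$ is not a genuine block, so we are really in Proposition \ref{prop: cuspidal support of discrete series}(3) or (1), and this degenerate case must be checked separately, possibly using Lemma \ref{lemma: partial Jacquet module of elementary A parameter}.

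\emph{Part (3).} Here $b=1$ and $a=3$, so $\phi_-$ has the block $(\rho,1)$ doubled. For (3.1), apply $\Jac_{\rho,1}$ via Proposition \ref{prop: partial Jacquet module of discrete series}(3): the signs at $1$ and $3$ agree, so this gives $\pi(\phi\oplus\rho\otimes r(1)\ominus\rho\otimes r(3),\,\varepsilon|)$. Viewed as a tempered, non-discrete parameter, this is the summand $\pi(\phi_-,\varepsilon_{\varepsilon(\rho,1)})$ of $\rho\rtimes\pi(\phi',\varepsilon')$ under the identification of Corollary \ref{coro: strengthening of proposition 4.5.3 in ArthurMp - tempered case}. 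Justifying this step needs the endoscopic computation of Lemma \ref{lemma: partial Jacquet module of T phi s}, run for $\phi_-$ with multiplicity. Irreducibility and the socle statement then follow as in part (1), since $\rho|\cdot|$ is not self-dual.

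For (3.2), use Proposition \ref{prop: cuspidal support of discrete series} applied to $\pi(\phi',\varepsilon')$ at $a'=a_{\rho,\phi',\varepsilon'}$. This embeds $\pi(\phi',\varepsilon')$ into $[x',1]_\rho\rtimes\pi(\phi'',\varepsilon'')$, so $\rho\rtimes\pi(\phi',\varepsilon')$ sits inside $\rho\times[x',1]_\rho\rtimes\pi(\phi'',\varepsilon'')$. In the Grothendieck group, $\rho\times[x',1]_\rho=[x',0]_\rho+\soc(\rho\times[x',1]_\rho)$. The two summands $\pi(\phi_-,\varepsilon_\pm)$ are separated by computing their Jacquet modules $\Jac_{\rho,x'}\circ\cdots\circ\Jac_{\rho,1}$ via Proposition \ref{prop: partial Jacquet module of discrete series}, i.e.\ by checking which one has $\Jac_{\rho,0}$ of the correct shape. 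Tracking the sign $\zeta'$ through this bookkeeping is the main obstacle; I would mimic M\oe glin's argument in \cite{Mœglin2006_Aubert}.
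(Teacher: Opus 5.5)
Your arguments for (1), (2) and (3.1) follow the paper's route; the paper simply calls these special cases of Proposition \ref{prop: cuspidal support of discrete series}, whose proof is the Jacquet-module and derivative argument you give. The problem is (3.2), the only part the paper actually proves: you stop at its decisive step. The whole content of (3.2) is deciding which of $\pi(\phi_-,\varepsilon_\pm)$ is the $\sigma_q$-piece, and you defer exactly this to ``tracking $\zeta'$''.

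The tools you name also cannot do it. Proposition \ref{prop: partial Jacquet module of discrete series} treats only discrete parameters, whereas $\phi_-$ contains $\rho\otimes r(1)$ twice. And the composite $\Jac_{\rho,x'}\circ\cdots\circ\Jac_{\rho,1}$ you write applies $\Jac_{\rho,1}$ first, which kills all of $\rho\rtimes\pi(\phi',\varepsilon')$ because $(\rho,3)\notin\Jord(\phi')$.

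What is needed is the following. Put $J=\Jac_{\rho,1}\circ\cdots\circ\Jac_{\rho,x'}$ and $a'=a_{\rho,\phi',\varepsilon'}$.
\begin{itemize}
\item Lemma \ref{lemma: computation of partial Jacquet module} gives $J(\sigma)=J(\sigma_q)=\rho\rtimes\pi(\phi'',\varepsilon'')$. This uses that all exponents of $\rho\times[x',1]_\rho$ are $\ge0$ and that $\Jac_{\rho,z}\pi(\phi'',\varepsilon'')=0$ for $1\le z\le x'$. Hence $J(\sigma_s)=0$.
\item $\rho\rtimes\pi(\phi'',\varepsilon'')$ is irreducible by Corollary \ref{coro: strengthening of proposition 4.5.3 in ArthurMp - tempered case}, since $(\rho,1)\in\Jord(\phi'')$. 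So exactly one $\pi(\phi_-,\varepsilon_\delta)$ has $J\neq0$.
\item That one cannot lie in $\sigma_s$, so it is the image in $\sigma_q$. The other cannot inject into $\sigma_q$, because Frobenius reciprocity would force $J\neq0$; so it sits in $\sigma_s$.
\item To find $\delta$, push $T_{\phi_-,s}$ through $J$ as in Lemma \ref{lemma: partial Jacquet module of T phi s}. If $\underline{s}(\rho,1)=1$, use $T_{\phi_-,s}=\rho\rtimes T_{\phi',s}$. If $\underline{s}(\rho,1)=-1$, the two copies of $\rho\otimes r(1)$ go to different $\SO$ factors, which are then discrete.
\item This gives $J(T_{\phi_-,s})=T_{\phi_-^\flat,s^\flat}$, where $\phi_-^\flat=\phi_-\oplus\rho\otimes r(1)\ominus\rho\otimes r(a')$ and $\underline{s}^\flat(\rho,1)=\underline{s}(\rho,1)\underline{s}(\rho,a')$. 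No root-number sign appears, since no step involves $(\triv,2)$.
\item The kernel of $\underline{s}\mapsto\underline{s}^\flat$ is generated by the element equal to $-1$ exactly at $(\rho,1)$ and $(\rho,a')$. Averaging as in part (3) of Proposition \ref{prop: partial Jacquet module of discrete series} then shows $J\pi(\phi_-,\varepsilon_\delta)\neq0$ if and only if $\delta=\zeta'$, which is the claimed assignment.
\end{itemize}

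There are also two smaller slips.

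First, $a_{\rho,\phi,\varepsilon}=2$ never occurs in (1). If $(\rho,2)\in\Jord(\phi)$ violates condition (3) of Definition \ref{def: a_rho phi epsilon}, then $b_{\rho,\phi,\varepsilon}=0=a-2$. So your opening claim that $b\ge1$ whenever $a=b+2$ is false. This degenerate instance of (2) is Proposition \ref{prop: cuspidal support of discrete series}(3), and it is immediate:
\begin{itemize}
\item $\rho|\cdot|^{1/2}\rtimes\pi(\phi\ominus\rho\otimes r(2),\cdot)$ is SI by Proposition \ref{prop: parabolic induction is SI in non-self-dual case}, where the character is the restriction of $\varepsilon$;
\item $\Jac_{\rho,-1/2}\pi(\phi,\varepsilon)=0$ by Proposition \ref{prop: partial Jacquet module of discrete series}(1).
\end{itemize}
Lemma \ref{lemma: partial Jacquet module of elementary A parameter} plays no role here.

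Second, in (3.1) the endoscopic computation you ask for is already part (3) of Proposition \ref{prop: partial Jacquet module of discrete series}, whose proof allows the doubled block. What you do need to add is that a second $\Jac_{\rho,1}$ vanishes. This follows from $\pi(\phi_-,\varepsilon_\pm)\subset\rho\rtimes\pi(\phi',\varepsilon')$ and $(\rho,3)\notin\Jord(\phi')$, not from the discrete-series vanishing you used in (1).
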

	\begin{proof}
		The $(1)$, $(2)$, and $(3.1)$ are just special cases of Proposition \ref{prop: cuspidal support of discrete series}. Thus we only need to prove $(3.2)$. Let $\sigma=\rho\times[x',1]_\rho\rtimes\pi(\phi'',\varepsilon'')$, then Zelevinsky's classification for $\GL$ gives the following exact sequence
		$$\xymatrix{
			0\ar[r]&\sigma_s\ar[r]&\sigma\ar[r]&\sigma_q\ar[r]&0.
		}$$
		
		By $(3)$ of Proposition \ref{prop: cuspidal support of discrete series}, we know that $\pi(\phi',\varepsilon')=\soc([x',1]_\rho\rtimes\pi(\phi'',\varepsilon''))$. Thus $\pi(\phi_-,\varepsilon_{\pm})$ are subrepresentations of $\sigma$, because we have $\pi(\phi_-,\varepsilon_+)\oplus\pi(\phi_-,\varepsilon_-)=\rho\rtimes\pi(\phi',\varepsilon')$. By Lemma \ref{lemma: computation of partial Jacquet module}, it can be shown that $\Jac_{\rho,x}(\sigma_s)=\Jac_{\rho,x}(\sigma_q)=[x',1]_\rho\rtimes\pi(\phi'',\varepsilon'')$ and $\Jac_{\rho,x}(\pi(\phi_-,\varepsilon_{\pm}))=\pi(\phi',\varepsilon')$. Since $\pi(\phi',\varepsilon')=\soc([x',1]_\rho\rtimes\pi(\phi'',\varepsilon''))$ and $[x',1]_\rho\rtimes\pi(\phi'',\varepsilon'')$ is SI, there exist a unique $\zeta\in\left\{\pm1\right\}$, such that $\pi(\phi_-,\varepsilon_{\zeta})=[\sigma_q]\cap\rho\rtimes\pi(\phi',\varepsilon')$ and $\pi(\phi_-,\varepsilon_{-\zeta})=[\sigma_s]\cap\rho\rtimes\pi(\phi',\varepsilon')$. Hence, we only need to prove that $\zeta=\varepsilon(\rho,a_{\rho,\phi',\varepsilon'})$.
		\par When $\varepsilon(\rho,a_{\rho,\phi',\varepsilon'})=1$, it follows from Lemma \ref{lemma: computation of partial Jacquet module} that $\Jac_{\rho,1}\circ\dots\circ\Jac_{\rho,x'}(\pi(\phi_-,\varepsilon_-))=0$, hence $\pi(\phi_-,\varepsilon_-)$ is not a subrepresentation of $[x',0]_\rho\rtimes\pi(\phi'',\varepsilon'')$, which implies that $\pi(\phi_-,\varepsilon_-)\le\sigma_s$ and hence $\zeta=1$. The case of $\varepsilon(\rho,a_{\rho,\phi',\varepsilon'})=-1$ can be proved similarly.
	\end{proof}
	
	\par Before we start the proof of the key proposition, we first state and prove the three basic properties for discrete series:
	
	\begin{enumerate}
		\item[$\bullet$] (Jacquet module): If $\Jac_{\rho,x}\pi(\phi,\varepsilon)\neq0$, then $b_{\rho,\phi,\varepsilon}<2x+1\in\Jord_\rho(\phi)$.
		\item[$\bullet$] (Non-unitary irreducibility): When $x\ge\frac{1}{2}$, if $2x-1\notin\Jord_\rho(\phi)\cup\left\{0\right\}$ or $x\le\frac{b_{\rho,\phi,\varepsilon}-1}{2}$, then $\rho|\cdot|^x\rtimes\pi(\phi,\varepsilon)$ is irreducible.
		\item[$\bullet$] (Unitary reducibility): When $\rho$ is of symplectic type, if $1\in\Jord_\rho(\phi)$, then $\rho\rtimes\pi(\phi,\varepsilon)$ is irreducible; otherwise $\rho\rtimes\pi(\phi,\varepsilon)$ is a semisimple, multiplicity free, length two representation. Furthermore, let $\tau$ be an irreducible subrepresentation of $\rho\rtimes\pi(\phi,\varepsilon)$, then $\rho^r\rtimes\pi(\phi,\varepsilon)$ is irreducible for all $r\in\ZZ_{\ge1}$. 
	\end{enumerate}
	
	\par The Jacquet module property and unitary reducibility property for discrete series are consequences of Corollary \ref{coro: strengthening of proposition 4.5.3 in ArthurMp - tempered case} and Proposition \ref{prop: partial Jacquet module of discrete series}. However, the proof of the non-unitary irreducibility property is not easy even in the discrete case. Fortunately, since M\oe glin has already established the property for representations of classical groups in elementary A-packets, we can avoid the difficulties by using theta correspondence.
	
	\begin{proposition}\label{prop: non-unitary irreducibility for discrete series}
		Let $\phi\in\Phi_{\bdd,2}(\widetilde{G}_{2n})$, $\varepsilon\in\mS_\phi^\vee$, $\rho\in\Pi_{\unit,\cusp}(\widetilde{G})$, $x\in\RR$. When $x\ge\frac{1}{2}$, if $2x-1\notin\Jord_\rho(\phi)\cup\left\{0\right\}$ or $x\le\frac{b_{\rho,\phi,\varepsilon}-1}{2}$, then $\rho|\cdot|^x\rtimes\pi(\phi,\varepsilon)$ is irreducible.
	\end{proposition}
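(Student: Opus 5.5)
We transfer the statement from the metaplectic group to a large odd orthogonal group via theta correspondence. There the analogous non-unitary irreducibility is known from M\oe glin's work on elementary packets. We then pull irreducibility back. Fix $\phi,\varepsilon,\rho,x$ as in the statement and put $\pi=\pi(\phi,\varepsilon)$. Choose $\alpha\gg0$, depending on $\rho|\cdot|^x$ and $\phi$, so that Lemma \ref{lemma: compatibility of theta lifts and socle} applies to $\sigma=\rho|\cdot|^x$ and to all the $\GL$-pieces that will appear. By Atobe--Gan \cite{Atobe2017_local}, which computes the theta lift of discrete series in the stable range, $\theta_{-\alpha}(\pi)$ is a discrete series of $\Or(V_{2n+\alpha+1})$ with parameter $\phi\oplus\triv\otimes r(\alpha)$ (restricted to $\SO$). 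Its character agrees with $\varepsilon$ on $\Jord(\phi)$, possibly up to an explicit twist which we will track. Since $\alpha$ is huge, the block $(\triv,\alpha)$ does not interact with the $\rho$-blocks of size at most about $2x+3$. Hence the conditions ``$2x-1\notin\Jord_\rho(\phi)\cup\{0\}$'' and ``$x\le\frac{b_{\rho,\phi,\varepsilon}-1}{2}$'' are transported to the corresponding conditions for the lifted parameter and character.

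The key check is the case $\rho=\triv$ with the block $(\triv,2)$. Here Theorem \ref{theorem: parameter of cuspidal representation} shows the metaplectic sign convention differs from the classical one: condition (3) in Definition \ref{def: a_rho phi epsilon} is reversed for $\triv$. We must verify that the character of the theta lift exactly absorbs this reversal, so that $b_{\rho,\phi,\varepsilon}$ on the metaplectic side equals the classical $b$ of the lift. I would do this by comparing Jacquet modules. Proposition \ref{prop: partial Jacquet module of discrete series} gives the metaplectic $\Jac_{\rho,y}$. By Lemma \ref{lemma: compatibility of theta lifts and socle}(2), embeddings $\pi\hookrightarrow\rho|\cdot|^y\rtimes\pi_0$ (with irreducible $\Jac$, which holds here for discrete series) lift to $\theta_{-\alpha}(\pi)\hookrightarrow\rho|\cdot|^y\rtimes\theta_{-\alpha}(\pi_0)$. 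Consequently the $\rho$-Jacquet data, and hence $b_{\rho}$, agree on both sides; this is the cleanest way to avoid tracking the character normalization directly.

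Now suppose $\rho|\cdot|^x\rtimes\pi$ is reducible. Since $x\ge\frac12$, $\rho|\cdot|^x$ is not self-dual. By Proposition \ref{prop: parabolic induction is SI in non-self-dual case} the induced representation is SI, and by Proposition \ref{prop: consequence of MVW-involution} it is also cosocle-irreducible with the same Jordan--H\"older content as $\rho^\vee|\cdot|^{-x}\rtimes\pi$. So reducibility means that $\tau:=\soc(\rho|\cdot|^x\rtimes\pi)$ is a proper subrepresentation. Equivalently, $\tau$ differs from $\tau':=\soc(\rho^\vee|\cdot|^{-x}\rtimes\pi)$, the Langlands quotient. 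Apply Lemma \ref{lemma: compatibility of theta lifts and socle}(2) to both embeddings. For $\tau$, $\Jac_{\rho,x}(\tau)$ is irreducible by Proposition \ref{prop: highest rho-derivative is irreducible in non-self-dual case}, after adjusting by the highest derivative. We get $\theta_{-\alpha}(\tau)\hookrightarrow\rho|\cdot|^x\rtimes\theta_{-\alpha}(\pi)$ and $\theta_{-\alpha}(\tau')\hookrightarrow\rho^\vee|\cdot|^{-x}\rtimes\theta_{-\alpha}(\pi)$. Nonvanishing of these lifts holds in the stable range. By M\oe glin's classical result, $\rho|\cdot|^x\rtimes\theta_{-\alpha}(\pi)$ is irreducible, so both lifts equal this same irreducible representation. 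Howe duality (Theorem \ref{theorem: Howe's duality}) then forces $\tau\cong\tau'$. Since the induced representation is SI with $\tau$ occurring once, $\tau=\tau'$ being both socle and cosocle gives irreducibility.

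The main obstacle is the second step: rigorously identifying the Arthur/Langlands data of $\theta_{-\alpha}(\pi)$, including the $\triv$-twist, so that the hypotheses match M\oe glin's classical irreducibility criterion. A secondary point is making sure Lemma \ref{lemma: compatibility of theta lifts and socle}(2) applies to $\tau'$. For this I would first pass to highest $\rho^\vee|\cdot|^{-x}$-derivatives to obtain irreducible Jacquet pieces.
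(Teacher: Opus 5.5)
Your skeleton is the paper's. You reduce to $\soc(\rho|\cdot|^{x}\rtimes\pi)=\soc(\rho^\vee|\cdot|^{-x}\rtimes\pi)$ via Propositions \ref{prop: parabolic induction is SI in non-self-dual case} and \ref{prop: consequence of MVW-involution}. You transport both socles by $\theta_{-\alpha}$ using Lemma \ref{lemma: compatibility of theta lifts and socle}, invoke a classical irreducibility result, and finish with Howe duality.

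\textbf{The gap.} It lies in the identification of $\theta_{-\alpha}(\pi)$, on which everything else rests. For $\alpha\gg0$ the lift is \emph{not} a discrete series with L-parameter $\phi\oplus\triv\otimes r(\alpha)$. A discrete lift of that shape is what one gets at the first occurrence $\alpha=b_{\triv,\phi,\varepsilon}+2$ (Proposition \ref{prop: first occurrence of triv-cuspidal discrete series}, case $\varepsilon(s_0)=-1$), not in the stable range. In the stable range the lift is non-tempered. For example, the proof of Proposition \ref{prop: adams conjecture - discrete case} gives $\theta_{-\alpha}(\pi)\hookrightarrow[-\frac{\alpha-1}{2},-\frac12]_{\triv}\rtimes\theta_0(\pi)$ when $\pi$ is $\triv$-cuspidal with $\varepsilon(s_0)=1$. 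This makes the lift a Langlands quotient with a nonzero exponent.

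The correct statement is Proposition \ref{prop: adams conjecture - discrete case}: $\theta_{-\alpha}(\pi)|_{\SO}=\pi_\M(\phi_\alpha,\varepsilon^\M_\alpha)$. It lies in M\oe glin's \emph{elementary A-packet} for the Arthur parameter $\phi_\alpha=\phi\oplus\triv\otimes r(1)\otimes r(\alpha)$. Here $\varepsilon^\M_\alpha$ is $\varepsilon$ multiplied by $-1$ on the $\triv$-blocks of $\phi$, with a prescribed value on the new block. This twist is exactly what absorbs the reversed condition (3) for $\triv$ that you correctly flagged.

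This is not a direct citation of Atobe--Gan. The paper proves it by induction on $a_{\triv,\phi,\varepsilon}$. The base case combines Atobe--Gan's first-occurrence result for $\triv$-cuspidal discrete series with Kudla's filtration, M\oe glin's socle description of elementary packets, and Xu's comparison of normalizations. The induction step propagates along Proposition \ref{prop: Moeglin's construction in discrete case}, using Lemma \ref{lemma: compatibility of theta lifts and socle} and Howe duality to separate $\varepsilon$ from $\widetilde{\varepsilon}$.

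\textbf{What this means for the rest of your argument.} The classical input you need is M\oe glin's non-unitary irreducibility for representations in \emph{elementary} packets. You announce this in your first paragraph but then apply a discrete-series criterion to a misidentified representation. The hypotheses ``$2x-1\notin\Jord_\rho(\phi)\cup\{0\}$'' or ``$x\le\frac{b_{\rho,\phi,\varepsilon}-1}{2}$'' must be transported to $(\phi_\alpha,\varepsilon^\M_\alpha)$.

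Your plan to bypass the character bookkeeping by matching $\rho$-Jacquet modules of $\pi$ and $\theta_{-\alpha}(\pi)$ does not close this gap. M\oe glin's criterion is stated in terms of Jordan blocks and the character. Agreement of Jacquet data alone does not tell you which member of which packet the lift is, so you cannot verify the criterion's hypotheses without the identification.

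Once Proposition \ref{prop: adams conjecture - discrete case} is available, the rest of your argument goes through as in the paper. This includes passing to highest derivatives so that part (2) of Lemma \ref{lemma: compatibility of theta lifts and socle} applies to both socles.
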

	\begin{proof}
		Write $\sigma_{\pm}=\soc(\rho|\cdot|^{\pm x}\rtimes\pi(\phi,\varepsilon))$. By Proposition \ref{prop: consequence of MVW-involution} and \ref{prop: parabolic induction is SI in non-self-dual case}, we only need to prove that $\sigma_+=\sigma_-$. For $\alpha\gg0$, Lemma \ref{lemma: compatibility of theta lifts and socle} implies that $\theta_{-\alpha}(\sigma_{\pm})=\soc(\rho|\cdot|^{\pm x}\rtimes\theta_{-\alpha}(\pi(\phi,\varepsilon)))$. By Proposition \ref{prop: adams conjecture - discrete case} below, we have $\theta_{-\alpha}(\pi(\phi,\varepsilon))=\pi_\M(\phi_\alpha,\varepsilon^\M_\alpha)$, where $\phi_\alpha$ is an elementary A-parameter. Now, the non-unitary irreducibility of $\pi(\phi_\alpha,\varepsilon_\alpha)$ implies that $\theta_{-\alpha}(\sigma_+)=\theta_{-\alpha}(\sigma_-)$. Thus, the proposition follows from Howe's duality (Theorem \ref{theorem: Howe's duality}).
	\end{proof}
	
	\par Now, we can state and prove the key proposition for discrete series.
	
	\begin{proposition}(key proposition)\label{proposition: key proposition}
		Let $\phi\in\Phi_{\bdd,2}(\widetilde{G}_{2n})$, $\varepsilon\in\mS_\phi^\vee$, and a self-dual $\rho\in\Pi_{\unit,\cusp}(\GL(d_\rho))$. Suppose that $\mE$ is a multi-set of real numbers such that $|x|<\frac{a_{\rho,\phi,\varepsilon}-1}{2}$ for all $x\in\mE$. Then, for all irreducible subquotient $\tau$ of $\times_{x\in\mE}\rho|\cdot|^x\rtimes\pi(\phi,\varepsilon)$, there exists a totally ordered multi-set $\mE'$ of real numbers, such that $$\mE\cup-\mE=\mE'\cup-\mE'$$ and there exists an embedding $\tau\xhookrightarrow{}\times_{x\in\mE'}\rho|\cdot|^x\rtimes\pi(\phi,\varepsilon)$.
	\end{proposition}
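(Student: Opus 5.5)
Since $\tau$ is irreducible, I will argue by induction on $|\mE|$, following Mœglin's argument in \cite{Mœglin2006_Aubert}, using only the three basic properties for discrete series (Jacquet module, non-unitary irreducibility, unitary reducibility) stated above.

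\emph{Setting up the induction.} For $|\mE|=0$ there is nothing to prove. In general, choose an irreducible $\tau_0$ and $x\in\RR$ with $\tau\xhookrightarrow{}\rho|\cdot|^{x}\rtimes\tau_0$, where either
\begin{enumerate}
\item[(a)] $\tau_0$ is a subquotient of $\times_{y\in\mE\setminus\{x\}}\rho|\cdot|^y\rtimes\pi(\phi,\varepsilon)$, or
\item[(b)] $\tau_0$ is a subquotient of $\times_{y\in\mE\setminus\{\pm x\}}\rho|\cdot|^y\rtimes\Jac_{\rho,\cdot}\pi(\phi,\varepsilon)$.
\end{enumerate}
To find such a pair, take any nonzero $\Jac_{\rho,x}(\tau)$ with $x$ arising from the $\rho$-part of the cuspidal support, apply Lemma \ref{lemma: partial Jacquet module and embedding}, and then compute $\Jac_{\rho,x}$ of the full induced representation with Lemma \ref{lemma: computation of partial Jacquet module}. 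That computation gives three kinds of terms:
\begin{enumerate}
\item[(i)] removing $\rho|\cdot|^{x}$ with $x\in\mE$;
\item[(ii)] removing $\rho|\cdot|^{-x}$ with $-x\in\mE$, via the $\Jac^\op_{\rho,-x}$ term (recall $\rho$ is self-dual);
\item[(iii)] a term $\Jac_{\rho,x}\pi(\phi,\varepsilon)$.
\end{enumerate}

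\emph{Ruling out case (iii).} By the Jacquet module property, (iii) forces $2x+1\in\Jord_\rho(\phi)$ with $2x+1>b_{\rho,\phi,\varepsilon}$, i.e. $x\ge\frac{a_{\rho,\phi,\varepsilon}-1}{2}$. Since every element of the cuspidal support coming from $\mE$ has absolute value $<\frac{a_{\rho,\phi,\varepsilon}-1}{2}$, I will choose $x$ maximal in absolute value among exponents with $\Jac_{\rho,x}(\tau)\ne0$, and show that (iii) cannot contribute. Indeed, otherwise $\tau$ would embed into $\rho|\cdot|^x\rtimes\tau'$ where $\tau'$ involves $\Jac_{\rho,x}\pi(\phi,\varepsilon)$. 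Comparing the cuspidal support of $\tau$ with $\mE\cup$ the support of $\pi(\phi,\varepsilon)$ shows this is impossible unless the $\rho|\cdot|^x$ comes from $\mE\cup-\mE$. This step is delicate; I will try to avoid it by a cleaner route. I first apply Lemma \ref{lemma: reduced decomposition of representation} with $X=\{\rho|\cdot|^y: |y|<\frac{a-1}{2}\}$, $a=a_{\rho,\phi,\varepsilon}$, to write $\tau\xhookrightarrow{}\pi\rtimes\tau_1$ with $\tau_1$ $X$-reduced. Then I show $\tau_1=\pi(\phi,\varepsilon)$ by a Jacquet-module support count, using that $\pi(\phi,\varepsilon)$ is itself $X$-reduced by the Jacquet module property.

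\emph{Ordering the multi-set.} With $\tau\xhookrightarrow{}\pi\rtimes\pi(\phi,\varepsilon)$, where $\pi$ is an irreducible representation of $\GL$ with $\rho$-support contained in $\mE\cup-\mE$ (with matching multiplicities up to signs), I embed $\pi$ into a product of characters $\times_{x\in\mE'}\rho|\cdot|^x$ in some order. This is a standard fact for irreducible $\GL$-representations supported on a line, via the Zelevinsky classification. The remaining issue is that the multiset $\mE'$ obtained this way must satisfy $\mE'\cup-\mE'=\mE\cup-\mE$; that is, the cuspidal support of $\pi$ equals $\mE$ up to sign changes. I obtain this from the cuspidal support of $\tau$: $\tau$ has support $\mE\cup\mE^\vee$-classes plus the support of $\pi(\phi,\varepsilon)$, and the MVW relation (Proposition \ref{prop: consequence of MVW-involution}) identifies $\rho|\cdot|^y$ with $\rho|\cdot|^{-y}$ in the $\widetilde{G}$-cuspidal support.

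\emph{Main obstacle.} The hard step is proving that the $X$-reduced part $\tau_1$ is exactly $\pi(\phi,\varepsilon)$ and not some other constituent. Here I use the non-unitary irreducibility property, which lets me commute $\rho|\cdot|^y$ with $\pi(\phi,\varepsilon)$ for $\frac{1}{2}\le|y|<\frac{a-1}{2}$. When $2|y|-1\in\Jord_\rho(\phi)$, we have $|y|\le\frac{b_{\rho,\phi,\varepsilon}-1}{2}$, so non-unitary irreducibility applies and $\rho|\cdot|^y\rtimes\pi(\phi,\varepsilon)\cong\rho|\cdot|^{-y}\rtimes\pi(\phi,\varepsilon)$. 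For $y=0$ I use the unitary reducibility property, which says every constituent of $\rho^r\rtimes\pi(\phi,\varepsilon)$ embeds into $\rho^r\rtimes\pi(\phi,\varepsilon)$. Together these show that every constituent of the full induced representation appears in some reordered, sign-changed induced representation $\times_{x\in\mE'}\rho|\cdot|^x\rtimes\pi(\phi,\varepsilon)$ as a subrepresentation. I will formalize this by induction, peeling off an extreme exponent at each step.
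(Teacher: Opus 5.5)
\textbf{There is a genuine gap: your proposal has no working argument for the one hard case, and the tool you propose for it is unavailable.}

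The easy half of your induction is fine. If $\Jac_{\rho,x_0}(\tau)\neq0$ for some $x_0\in\mE\cup-\mE$, then $|x_0|<\frac{a-1}{2}$ (with $a=a_{\rho,\phi,\varepsilon}$) forces $\Jac_{\rho,x_0}\pi(\phi,\varepsilon)=0$. So Lemma~\ref{lemma: computation of partial Jacquet module} lets you peel off $\rho|\cdot|^{x_0}$ and induct on $|\mE|$. Likewise, once you have $\tau\hookrightarrow\pi\rtimes\pi(\phi,\varepsilon)$ with $\pi$ supported in $X$, the ordering step is routine.

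Everything then rests on the remaining case, $\Jac_{\rho,x}(\tau)=0$ for all $x\in\mE\cup-\mE$.
\begin{itemize}
\item In that case $\tau$ is itself $X$-reduced, and every $x$ with $\Jac_{\rho,x}(\tau)\neq0$ comes from term (iii), so $x\ge\frac{a-1}{2}$.
\item Your first attempt (take $|x|$ maximal and exclude (iii)) therefore fails exactly here.
\item Your ``cleaner route'' only restates the problem. The $X$-reduced decomposition is forced to be $\pi=1$, $\tau_1=\tau$. So proving $\tau_1=\pi(\phi,\varepsilon)$ means proving this case occurs only when $\mE=\emptyset$, which is the whole content of the proposition.
\end{itemize}

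The tool you invoke for this is irreducibility of $\rho|\cdot|^{y}\rtimes\pi(\phi,\varepsilon)$ for all $\frac{1}{2}\le|y|<\frac{a-1}{2}$, and that is false.
\begin{itemize}
\item From $2|y|-1\in\Jord_\rho(\phi)$ and $2|y|-1<a$ you only get $2|y|-1\le b$ (with $b=b_{\rho,\phi,\varepsilon}$), i.e.\ $|y|\le\frac{b+1}{2}$, not $|y|\le\frac{b-1}{2}$.
\item At $|y|=\frac{b+1}{2}$, which is in range as soon as $a>b+2$, the representation $\rho|\cdot|^{y}\rtimes\pi(\phi,\varepsilon)$ is reducible. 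By Proposition~\ref{prop: cuspidal support of discrete series}, its socle is the discrete series whose parameter replaces $\rho\otimes r(b)$ by $\rho\otimes r(b+2)$. For $b=0$ this is the point $|y|=\frac{1}{2}$, where one adds $\rho\otimes r(2)$; note that Proposition~\ref{prop: non-unitary irreducibility for discrete series} also excludes it via $2|y|-1=0$.
\item These reducibility points are exactly where the hard constituents come from. In the paper one of them is the discrete series attached to $\phi\oplus\rho\otimes r(b+2)\oplus\rho\otimes r(b+4)$.
\item Separately, irreducibility of each single product $\rho|\cdot|^{y}\rtimes\pi(\phi,\varepsilon)$ would not by itself control constituents of the full product. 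Reordering $\GL$ factors is only an identity in the Grothendieck group, and it says nothing about $\rho|\cdot|^{y}\rtimes\tau'$ for intermediate constituents $\tau'$.
\end{itemize}

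The missing idea is a way to change the discrete series. The paper runs a second, descending induction on $a_{\rho,\phi,\varepsilon}$:
\begin{enumerate}
\item It embeds $\pi(\phi,\varepsilon)$ into $[\frac{a-1}{2},\frac{b+3}{2}]_\rho\rtimes\pi(\phi_1,\varepsilon_1)$ or into $[\frac{a-1}{2},-\frac{b-1}{2}]_\rho\rtimes\pi(\phi_1,\varepsilon_1)$, with $a_{\rho,\phi_1,\varepsilon_1}>a$.
\item It applies the induction hypothesis to get $\tau\hookrightarrow\sigma\rtimes\pi(\phi_1,\varepsilon_1)$.
\item It uses the Zelevinsky classification and the vanishing of $\Jac_{\rho,x}(\tau)$ on $\mE\cup-\mE$ to force $\sigma=[\frac{a-1}{2},t]_\rho$.
\item It uses non-unitary irreducibility for $\pi(\phi_1,\varepsilon_1)$, where it does apply, to restrict $t$.
\item It analyses the surviving configuration by hand.
\end{enumerate}
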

	\begin{proof}
		We will prove this theorem by induction on $|\mE|$ and $a_{\rho,\phi,\varepsilon}$. When $|\mE|=0$, the proposition is obvious. When $a_{\rho,\phi,\varepsilon}=\infty$, then $\pi(\phi,\varepsilon)$ is $\rho$-cuspidal. Consider the cuspidal support of $\tau$, there exists some $x_0\in\RR$ such that $\Jac_{\rho,x_0}(\tau)\neq0$. However, the $\rho$-cuspidal condition implies that $x_0\in\mE\cup-\mE$. Thus, there exists an irreducible subquotient $\tau'$ of $\times_{x\in\mE-\left\{x_0\text{ or }-x_0\right\}}\rho|\cdot|\rtimes\pi(\phi,\varepsilon)$ such that $\tau$ is a subrepresentation of $\rho|\cdot|^{x_0}\rtimes\tau'$, which completes the proof in the $\rho$-cuspidal case.
		
		\par Now, we assume the proposition holds for all triples $(\mE_0,\phi_0,\varepsilon_0)$ such that either $a_{\rho,\phi_0,\varepsilon_0}>a_{\rho,\phi,\varepsilon}$ or $a_{\rho,\phi_0,\varepsilon_0}=a_{\rho,\phi,\varepsilon}$ with $|\mE_0|<|\mE|$.
		
		\par For $x_0\in\mE$, we have $|x_0|<\frac{a_{\rho,\phi,\varepsilon}-1}{2}$. Therefore, by Lemma \ref{lemma: computation of partial Jacquet module}, $\Jac_{\rho,x_0}(\times_{x\in\mE}\rho|\cdot|^x\rtimes\pi(\phi,\varepsilon))$ is a multiple of $\times_{x\in\mE-\left\{x_0\right\}}\rho|\cdot|^x\rtimes\pi(\phi,\varepsilon)$. Thus, if there exist some $x_0\in\mE\cup-\mE$ such that $\Jac_{\rho,x_0}(\tau)\neq 0$, then there exists an irreducible subquotient $\tau'$ of $\times_{x\in\mE-\left\{x_0\text{ or }-x_0\right\}}\rho|\cdot|^x\rtimes\pi(\phi,\varepsilon)$ such that $\tau\xhookrightarrow{}\rho|\cdot|^{x_0}\rtimes\tau'$. But we know that the proposition is true for $\tau'$ by the induction hypothesis, which completes the proof. Hence we may assume that $\Jac_{\rho,x}(\tau)=0$ for all $x\in\mE\cup-\mE$.
		
		\par Write $a=a_{\rho,\phi,\varepsilon}$, $b=b_{\rho,\phi,\varepsilon}$. We first consider the case when $\varepsilon(\rho,a)\neq\varepsilon(\rho,b)$. From the definitions of $a$, $b$, we must have $a>b+2$ in this case. Let $\phi_1=\phi\oplus\rho\otimes r(b+2)\ominus\rho\otimes r(a)$ and let $\varepsilon_1$ be defined by $\varepsilon_1(\rho,b+2)=\varepsilon(\rho,a)$. Then, we have $b_{\rho,\phi_1,\varepsilon_1}=b+2$ and $a_{\rho,\phi_1,\varepsilon_1}>a$. Let $\mE_1=\mE\cup\left\{\frac{a-1}{2},\frac{a-3}{2},\dots,\frac{b+3}{2}\right\}$, then $\tau$ is an irreducible subquotient of $\times_{x\in\mE_1}\rho|\cdot|^x\rtimes\pi(\phi_1,\varepsilon_1)$. Hence there exists a totally ordered multi-set of real numbers $\mE_1'$ such that $\mE_1\cup-\mE_1=\mE_1'\cup-\mE_1'$ and $\tau\xhookrightarrow{}\times_{x\in\mE_1'}\rho|\cdot|^x\rtimes\pi(\phi_1,\varepsilon_1)$. Furthermore, there exists an irreducible subquotient $\sigma$ of $\times_{x\in\mE_1'}\rho|\cdot|^x$ such that $\tau$ is a subrepresentation of $\sigma\rtimes\pi(\phi_1,\varepsilon_1)$.
		
		\par Since $\Jac_{\rho,x}(\tau)=0$ for all $x\in\mE\cup-\mE$, we have $\Jac_{\rho,x}(\sigma)=0$ for all $x\in\mE\cup-\mE$. By Zelevinsky's classification for $\GL$, it can be verified that the only possibility is $\sigma=[\delta\frac{a-1}{2},\delta t]_\rho$ for some $\delta\in\left\{\pm 1\right\}$ and $t<\frac{a-1}{2}$. Since $\Jac_{\rho,-\frac{a-1}{2}}(\tau)=0$, we must have $\delta=1$, hence $\sigma=[\frac{a-1}{2},t]_\rho$. If $t\neq 0$ and $|t|\neq\frac{b+3}{2}$, then $t\in\mE\cup-\mE$, and the non-unitary irreducibility implies that $\rho|\cdot|^t\rtimes\pi(\phi_1,\varepsilon_1)$ is reducible. Thus $[\frac{a-1}{2},t]_\rho\rtimes\pi(\phi_1,\varepsilon_1)$ can be embedded into $[\frac{a-1}{2},t+1]_\rho\times(\rho|\cdot|^t\rtimes\pi(\phi_1,\varepsilon_1))=[\frac{a-1}{2},t+1]_\rho\times(\rho|\cdot|^{-f}\rtimes\pi(\phi_1,\varepsilon_1))=\rho|\cdot|^{-t}\times[\frac{a-1}{2},t+1]_\rho\rtimes\pi(\phi_1,\varepsilon_1)$, which contradicts the fact that $\Jac_{\rho,-t}(\tau)=0$. In conclusion, $|t|=0$ or $|t|=\frac{b+3}{2}$.
		
		\par When $t=\frac{b+3}{2}$, the proposition trivially holds because $\mE=\emptyset$ in this case. When $t=-\frac{b+3}{2}$, then $\left\{|x|:x\in\mE\right\}=\left\{|x|:x\in[\frac{b+3}{2},-\frac{b+1}{2}]\right\}$. Thus $\tau$ is a subquotient of $\rho|\cdot|^{\frac{b+3}{2}}\times\cdots\times\rho|\cdot|^{-\frac{b+1}{2}}\rtimes\pi(\phi,\varepsilon)$. Let $T$ be a subquotient of $\rho|\cdot|^{\frac{b+3}{2}}\times\cdots\times\rho|\cdot|^{-\frac{b+1}{2}}$, such that $\tau$ is a subquotient of $T\rtimes\pi(\phi,\varepsilon)$. Note that $[\frac{a-1}{2},-\frac{b+3}{2}]\otimes\pi(\phi_1,\varepsilon_1)\le\mu^*(\tau)$. By Proposition \ref{prop: Tadic formula}, there exist $T_1\otimes T_2\le m^*(T)$, $T_3\otimes T_4\le m^*(T)$, $\lambda\otimes\sigma\le\mu^*(\pi(\phi,\varepsilon))$, such that $[\frac{a-1}{2},-\frac{b+3}{2}]\le T_2^\vee\times T_3\times\lambda$ and $\pi(\phi_1,\varepsilon_1)\le T_4\rtimes\sigma$. Note that $\Jac_{\rho,x}(\pi(\phi_1,\varepsilon_1))=0$ for every $\rho|\cdot|^x\in\Supp(T)$, we must have $T_4=1$. Hence we have $\sigma=\pi(\phi_1,\varepsilon_1)$ and $T_3=T_1$. On the other hand, since $\pi(\phi,\varepsilon)=\soc([\frac{a-1}{2},\frac{b+3}{2}]_\rho\rtimes\pi(\phi_1,\varepsilon_1))$ and $m^*([\frac{a-1}{2},\frac{b+3}{2}]_\rho)=\sum_i[\frac{a-1}{2},i]_\rho\otimes[i-1,\frac{b+3}{2}]_\rho$, we must have $\lambda=[\frac{a-1}{2},\frac{b+3}{2}]_\rho$ by considering the cuspidal support. Now, by Zelevinsky's classification, there exist only two possibilities:
		\begin{enumerate}
			\item[(1)] $T_1=[\frac{b+1}{2},-\frac{b+1}{2}]_\rho$, $T_2=\rho|\cdot|^{\frac{b+3}{2}}$.
			\item[(2)] $T_1=1$, $T_2=[\frac{b+3}{2},-\frac{b+1}{2}]_\rho$.
		\end{enumerate}
		
		\par In the first case, we have $T=\soc(T_1\times T_2)$. By non-unitary irreducibility, $T_2\rtimes\pi(\phi,\varepsilon)$ is irreducible. Note that $\cos(T_1\times T_2^\vee)=\soc(T_2\times T_1^\vee)$, we have the following exact sequence
		$$\xymatrix{
			0\ar[r]&[\frac{b+1}{2},-\frac{b+3}{2}]\rtimes\pi(\phi,\varepsilon)\ar[r]&T_1\times T_2\rtimes\pi(\phi,\varepsilon)\ar[r]&\soc(T_2^\vee\times T_1)\rtimes\pi(\phi,\varepsilon)\ar[r]&0.
		}$$
		
		By computation of Jacquet module, it can be proved that $T\rtimes\pi(\phi,\varepsilon)$ is cosocle irreducible and $\soc(T_2^\vee\times T_1)\rtimes\pi(\phi,\varepsilon)$ is SI with $\cos(T\rtimes\pi(\phi,\varepsilon))=\soc(\soc(T_2^\vee\times T_1)\rtimes\pi(\phi,\varepsilon))$. Thus, unless $\tau=\soc(\soc(T_2^\vee\times T_1)\rtimes\pi(\phi,\varepsilon))$, in which case the proposition is true for $\tau$, $\tau$ will be a subquotient of $[\frac{b+1}{2},-\frac{b+3}{2}]\rtimes\pi(\phi,\varepsilon)$, and we reduced to the case $(2)$ above.
		
		\par In case $(2)$, $\tau$ is a subquotient of $[\frac{b+1}{2},-\frac{b+3}{2}]_\rho\rtimes\pi(\phi,\varepsilon)$ and $\tau$ is a subrepresentation of $[\frac{a-1}{2},-\frac{b+3}{2}]\rtimes\pi(\phi_1,\varepsilon_1)$. Note that $\frac{b+3}{2}\in\mE$ in this case. Hence we have $\frac{b+3}{2}<\frac{a-1}{2}\Rightarrow a>b+4$.
		
		\par Consider $D=\Jac_{\rho,-\frac{b+3}{2}}\circ\cdots\circ\Jac_{\rho,\frac{a-1}{2}}$, then $D([\frac{b+1}{2},-\frac{b+3}{2}]_\rho\rtimes\pi(\phi,\varepsilon))=\pi(\phi_1,\varepsilon_1)$. Hence $\tau$ is the unique subquotient of $[\frac{b+1}{2},-\frac{b+3}{2}]_\rho\rtimes\pi(\phi,\varepsilon)$ such that $D(\tau)=\pi(\phi_1,\varepsilon_1)$.
		
		\par Let $\phi_+=\phi\oplus\rho\otimes r(b+2)\oplus\rho\otimes r(b+4)$, and let $\varepsilon_+$ be defined by $\varepsilon_+(\rho,b+2)=\varepsilon_+(\rho,b+4)=\varepsilon(\rho,a)$. Then Proposition \ref{prop: cuspidal support of discrete series} implies that $\pi(\phi_+,\varepsilon_+)$ is both subrepresentation of $[\frac{b+3}{2},-\frac{b+1}{2}]_\rho\rtimes\pi(\phi,\varepsilon)$ and $[\frac{a-1}{2},-\frac{b+3}{2}]\rtimes\pi(\phi_1,\varepsilon_1)$. Thus, we have $\tau=\pi(\phi_+,\varepsilon_+)$ and $\tau$ is a subrepresentation of $[\frac{b+3}{2},-\frac{b+1}{2}]_\rho\rtimes\pi(\phi,\varepsilon)$.
		
		\par Now, we consider the case when $\varepsilon(\rho,a)=\varepsilon(\rho,b)$. Let $\phi_1=\phi\ominus\rho\otimes r(b)\ominus\rho\otimes r(a)$, and let $\varepsilon_1$ be the restriction of $\varepsilon$ on $\Jord(\phi_1)$. Then Proposition \ref{prop: cuspidal support of discrete series} implies that $\pi(\phi,\varepsilon)$ is a subrepresentation of $[\frac{a-1}{2},-\frac{b-1}{2}]_\rho\rtimes\pi(\phi_1,\varepsilon_1)$. Let $\mE_1=\mE\cup\left\{\frac{a-1}{2},\dots,-\frac{b-1}{2}\right\}$. Then $\tau$ is a subquotient of $\times_{x\in\mE_1}\rho|\cdot|^x\rtimes\pi(\phi_1,\varepsilon_1)$. By induction hypothesis, there exists a totally ordered multi-set $\mE_1'$ of real numbers, such that $\mE_1\cup-\mE_1=\mE_1'\cup-\mE_1'$ and $\tau\xhookrightarrow{}\times_{x\in\mE_1'}\rho|\cdot|^x\rtimes\pi(\phi_1,\varepsilon_1)$.
		
		\par Let $\sigma'$ be a subquotient of $\times_{x\in\mE_1'}\rho|\cdot|^x$ such that $\tau$ is a subrepresentation of $\sigma'\rtimes\pi(\phi_1,\varepsilon_1)$. By Zelevinsky's classification, $\sigma'=\soc([d_1,f_1]_\rho\times\cdots\times[d_l,f_l]_\rho)$ with $d_i\ge f_i$ and $d_1\le d_2\le\cdots\le d_l$. Note that $\Jac_{\rho,d_1}(\tau)\neq 0$, then the Jacquet module property implies that $d_1=\frac{a-1}{2}$. Since $\frac{a-1}{2}$ is the unique maximal element in $\left\{|x|:x\in\mE_1\right\}$, we have $l=1$, $\sigma'=[\frac{a-1}{2},f]_\rho$, $f\le-\frac{b-1}{2}$. If $f=-\frac{b-1}{2}$, then $\mE=\emptyset$ and the proposition holds naturally. When $f<-\frac{b-1}{2}$, then $b<2|f|+1<a$. 
		
		\par Since $b_{\rho,\phi_1,\varepsilon_1}=b_{\rho,\phi,\varepsilon}-2$, the non-unitary irreducibility implies that $\rho|\cdot|^f\rtimes\pi(\phi_1,\varepsilon_1)$ is irreducible. Therefore $\tau$ can be embedded into $[\frac{a-1}{2},f+1]_\rho\times\rho|\cdot|^f\rtimes\pi(\phi_1,\varepsilon_1)=\rho|\cdot|^{-f}\times[\frac{a-1}{2},f+1]_\rho\rtimes\pi(\phi_1,\varepsilon_1)$, which contradict the assumption that $\Jac_{\rho,x}(\tau)=0$ for all $x\in\mE\cup-\mE$.
	\end{proof}
	\begin{remark}
		The above proof is nearly an English translation of \cite[\S 3]{Mœglin2006_Aubert} with a few simplifications and modifications. Readers can consult \cite[\S 3]{Mœglin2006_Aubert} for more details.
	\end{remark}
	
	\section{Non-negative DDR}\label{section: non-negative DDR}
	
	\par For $\psi\in\Psi(\widetilde{G}_{2n})$ and $(\rho,a,b)\in\Jord(\psi)$, we write $A=\frac{a+b}{2}-1$, $B=\frac{a-b}{2}$. Then $(\rho,a,b)$ is completely determined by the triple $(\rho,A,B)$. Thus, we can replace $(\rho,a,b)$ by $(\rho,A,B)$.
	
	\par Let $\psi\in\Psi_\gp(\widetilde{G}_{2n})$, $\rho\in\Pi_{\unit,\cusp}(\GL(d_\rho))$. If there exists a total order $\prec_\rho$ on $\Jord_\rho(\psi)$, such that 
	$$\begin{array}{cc}
		\Jord_\rho(\psi)=\left\{(\rho,A_1,B_1)\prec_\rho(\rho,A_2,B_2)\prec_\rho\cdots\prec_\rho(\rho,A_m,B_m)\right\},\\
		0\le B_1\le A_1<B_2\le A_2<\cdots<B_m\le A_m.
	\end{array}$$
	Then we say $\psi$ is a non-negative $\rho$-DDR (DDR means ``discrete diagonal restriction"). Furthermore, if $\psi$ is a non-negative $\rho$-DDR for every $\rho$, we will call $\psi$ a non-negative DDR.
	\index{DDR}
	\index{DDR!$\rho$-DDR}
	\index{DDR!non-negative}
	
	\par In this section, we fix a non-negative DDR $\psi\in\Psi_\gp(\widetilde{G}_{2n})$, and we construct $\Pi_\psi$ via extended multi-segments. This work can be viewed as a metaplectic version of \cite{Mœeglin2009_paquets}, but we will reformulate M\oe glin's result by using the language of extended multi-segments.
	
	\par The following proposition is crucial for constructing $\Pi_\psi$:
	
	\begin{proposition}\label{prop: Mp version of proposition 5.9 in Xu_moeglin}
		We fix an $(\rho,A,B)\in\Jord(\psi)$ with $A>B$. For $s\in\mS_\psi$, we have 
		$$\begin{aligned}
			T_{\psi,s}=&\bigoplus_{C\in\left(B,A\right]}(-1)^{A-C}[B,-C]_\rho\rtimes\Jac_{\rho,C}\circ\Jac_{\rho,C-1}\circ\cdots\circ\Jac_{\rho,B+2}T_{\psi_1,s_1}\\
			&\oplus(-1)^{\left[\frac{A-B+1}{2}\right]}T_{\psi_2,s_2},
		\end{aligned}$$ where 
		$$\begin{aligned}
		    \Jord(\psi_1)&=\Jord(\psi)\cup\left\{(\rho,A,B+2)\right\}-\left\{(\rho,A,B)\right\}\\ \Jord(\psi_2)&=\Jord(\psi)\cup\left\{(\rho,A,B+1),(\rho,B,B)\right\}-\left\{(\rho,A,B)\right\},
		\end{aligned}$$ and $s(\rho,A,B)=s_1(\rho,A,B+2)=s_2(\rho,A,B+1)=s_2(\rho,B,B)$. In particular, if $A=B+1$, $\psi_1$ is defined by $\Jord(\psi_1)=\Jord(\psi)-\left\{(\rho,A,B)\right\}$.
	\end{proposition}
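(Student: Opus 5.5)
The plan is to deduce the identity on the endoscopic side from Xu's formula \cite[Proposition 5.9]{Xu2017_Moeglin} for split odd orthogonal groups. Then transport it to $\widetilde{G}_{2n}$ through $\mT_{\textbf{G}^!,\widetilde{G}_{2n}}^\vee$, keeping track of the root numbers $\epsilon(\psi^{s=-1})$ and the signs $\omega_\rho(-1)$ that occur in the definition (\ref{equation: distribution T_psi s}) and in Lemma \ref{lemma: commutation of spectral transfer and partial Jacquet module}. Fix $s\in S_{\psi,2}$ and let $(\textbf{G}^!,\psi^!)\leftrightarrow(\psi,s)$ be as in (\ref{equation: basic bijection}), with $G^!=\SO(2n'+1)\times\SO(2n''+1)$. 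Put $\eta=s(\rho,A,B)$. The block $\rho\otimes r(a)\otimes r(b)$ then lies in the factor of $\psi^!$ indexed by $\eta$, and $S\Theta^{G^!}_{\psi^!}$ is an exterior tensor product of stable distributions on the two $\SO$ factors. Xu's formula, applied in the $\eta$-factor, writes the stable distribution attached to the $\eta$-part of $\psi^!$ as an alternating sum. Its terms are $[B,-C]_\rho\rtimes\Jac_{\rho,C}\circ\cdots\circ\Jac_{\rho,B+2}$ of the stable distribution for $\psi_1^{s_1=\eta}$, together with $(-1)^{[\frac{A-B+1}{2}]}$ times the stable distribution for $\psi_2^{s_2=\eta}$. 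Since $s_1,s_2$ are defined to take the value $\eta$ on the new blocks, the pairs $(\psi_i,s_i)$ correspond to endoscopic data with the same $\SO$ factor in the $\eta$ position.

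Next I apply $\mT^\vee_{\textbf{G}^!,\widetilde{G}_{2n}}$ to each term. For the $\psi_2$-term nothing is needed beyond the definition: the endoscopic datum only changes in the $\eta$-factor. The sign $\epsilon(\psi^{s=-1})$ changes to $\epsilon(\psi_2^{s_2=-1})$, which is trivial if $\eta=1$. If $\eta=-1$, it changes by the ratio $\epsilon(\rho\otimes r(a+1)\otimes r(b-1))\epsilon(\rho\otimes r(a-1)\otimes r(1))\cdot\epsilon(\rho\otimes r(a)\otimes r(b))^{-1}$ restricted to $W_F\times\SL(2)$. For the $\psi_1$-terms I use two facts. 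Spectral transfer commutes with parabolic induction by a $\GL$-representation placed in the $\eta$-factor, as in the proof of Proposition \ref{prop: proposition 4.5.3 in ArthurMp}, where the metaplectic induction is twisted by $\omega_\rho(-1)$ when it sits in the second factor. Second, Lemma \ref{lemma: commutation of spectral transfer and partial Jacquet module} pulls each $\Jac_{\rho,x}$ across $\mT^\vee$. The $\Jac^{-\eta}$ component vanishes, because the other factor does not contain the relevant $\rho|\cdot|^x$ in the required position, by the DDR non-negativity and the Jacquet computations of \cite[Lemma 7.3]{Xu2017_cuspidal}. That leaves only the $\eta$-component, with the factor $\omega_\rho(-1)$ when $\eta=-1$.

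The main obstacle is the sign bookkeeping when $\eta=-1$. One has to check that the product of the $\omega_\rho(-1)$ factors from each $\Jac$ and from the induction of $[B,-C]_\rho$ cancels against the root-number ratio $\epsilon(\psi_1^{s_1=-1})/\epsilon(\psi^{s=-1})$. In the same way, $\epsilon(\psi_2^{s_2=-1})/\epsilon(\psi^{s=-1})$ has to equal $1$. For this I would use the explicit formulas from \cite[\S 4.1]{Li2024_arthurpacketsmetaplecticgroups}, recalled in the proof of Lemma \ref{lemma: partial Jacquet module of T phi s}. I would pay special attention to unramified quadratic $\rho$ and to small $a$ or $b$, where the $(\triv,2)$ anomaly appeared before. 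There are $C-B-1$ Jacquet functors and one induction, and the root-number ratio for passing from $(A,B)$ to $(A,B+2)$ should contribute $\omega_\rho(-1)^{C-B}$ in total. If a residual sign such as $(-\rho(\Frob))$ survives, it must be shown to be even in number because $\rho$ is self-dual of the fixed parity type. Finally, $T_{\psi,s}$ depends only on $\underline{s}$ by \cite[Lemma 4.3.3]{Li2024_arthurpacketsmetaplecticgroups}, and this gives the statement for $s\in\mS_\psi$. The degenerate case $A=B+1$ is the same argument with the $\psi_1$-block removed.
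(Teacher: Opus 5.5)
Your strategy is the same as the paper's. You apply \cite[Proposition 5.9]{Xu2017_Moeglin} in the factor of $G^!$ carrying $(\rho,A,B)$, push the identity through $\mT^\vee_{\textbf{G}^!,\widetilde{G}_{2n}}$ using compatibility with induction and Lemma~\ref{lemma: commutation of spectral transfer and partial Jacquet module}, and compare root numbers. Your observation that the $\Jac^{-\eta}$-components drop out is correct and is only implicit in the paper: for $x\in[B+2,C]\subset(B,A]$, no other block of the non-negative DDR $\psi$ has $B'=x$. You should also record, as the paper does, that the character $\varepsilon^{\MW/\W}_\psi$ of \cite[Definition 5.5]{Xu2017_Moeglin} is trivial for non-negative DDR parameters. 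Otherwise Xu's identity does not directly apply to the Whittaker-normalized distributions.

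However, the sign bookkeeping for $\eta=-1$, which is the actual content of the proof, fails as you set it up. You take the twist for inducing from the second $\SO$-factor to be a single $\omega_\rho(-1)$. The $C$-th term then acquires $\omega_\rho(-1)^{(C-B-1)+1}=\omega_\rho(-1)^{C-B}$, and you expect the root numbers to supply the same sign. But $\epsilon(\psi_1^{s_1=-1})/\epsilon(\psi^{s=-1})$ does not depend on the summation index $C$. It equals $\epsilon(\rho\otimes r(a+2))^{b-2}/\epsilon(\rho\otimes r(a))^{b}=\omega_\rho(-1)^{b-a}=\omega_\rho(-1)^{2B}$. With your accounting the $C$-th term would carry $\omega_\rho(-1)^{B+C}$. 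This alternates in $C$ when $\rho$ is orthogonal with $\omega_\rho(-1)=-1$, so that $B,C\in\frac12+\ZZ$, which is exactly the case where these signs are nontrivial. So for $A\ge B+2$ you would not obtain the stated formula.

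The missing point concerns the compatibility of transfer with induction from the second factor (\cite[\S 3.8]{Li2024_stabilizationtraceformulametaplectic}, via the twist $[z_s]$). It multiplies by the central character at $-1$ of the whole inducing $\GL$-representation, i.e.\ one factor $\omega_\rho(-1)$ per cuspidal constituent. This is consistent, by adjunction, with Lemma~\ref{lemma: commutation of spectral transfer and partial Jacquet module}. For $[B,-C]_\rho$ this gives $\omega_\rho(-1)^{B+C+1}$. Together with the $C-B-1$ Jacquet functors, the $C$-th term gets $\omega_\rho(-1)^{2C}$. Then $\omega_\rho(-1)^{2B}\omega_\rho(-1)^{2C}=1$ because $B+C\in\ZZ$. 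The case $A=B+1$ works the same way, with ratio $\epsilon(\rho\otimes r(2B+2))^{-2}=\omega_\rho(-1)^{2B}$.

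There is also a slip in your $\psi_2$ ratio. The new block $(\rho,B,B)$ is $\rho\otimes r(a-b+1)\otimes r(1)$, not $\rho\otimes r(a-1)\otimes r(1)$. With the correct block, $\epsilon(\rho\otimes r(a+1))^{b-1}\epsilon(\rho\otimes r(a-b+1))/\epsilon(\rho\otimes r(a))^{b}=1$, as you expected, and no unramified correction survives.
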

	\begin{proof}
		Choose $(\textbf{G}^!,\psi^!)\leftrightarrow(\psi,s)$ and $(\textbf{G}_i^!,\psi_i^!)\leftrightarrow(\psi_i,s_i)$ as in (\ref{equation: basic bijection}). Note that the character $\varepsilon_\psi^{\MW/\W}$ in \cite[Definition 5.5]{Xu2017_Moeglin} is trivial when $\psi$ is a non-negative DDR. Then \cite[Proposition 5.9]{Xu2017_Moeglin} implies that $$\begin{aligned}
			S\Theta_{\psi^!}^{G^!}=&\bigoplus_{C\in\left(B,A\right]}(-1)^{A-C}[B,-C]_\rho\rtimes\Jac_{\rho,C}\circ\Jac_{\rho,C-1}\circ\cdots\circ\Jac_{\rho,B+2}S\Theta_{\psi_1^!}^{G_1^!}\\
			&\oplus(-1)^{\left[\frac{A-B+1}{2}\right]}S\Theta_{\psi_2^!}^{G_2^!}.
		\end{aligned}$$
		By \cite[\S 3.8]{Li2024_stabilizationtraceformulametaplectic}, we have $$\mT_{\textbf{G}^!,\widetilde{G}}^\vee\circ i_{\widetilde{M}_s^!}^{G^!}\circ[z_s]=i_{\widetilde{M}}^{\widetilde{G}}\circ\mT_{\textbf{M}_s^!,\widetilde{M}}^\vee$$ for a Levi subgroup $M$ of $G$ and $s\in\mE(M,\textbf{G}^!)$ (for definitions of $\textbf{M}_s$, $\mE(M,\textbf{G}^!)$ and $[z_s]$, see \cite[\S 3]{Chen2024_commutationtransferaubertzelevinskiinvolution}). Then we have
		$$\mT_{\textbf{G}^!,\widetilde{G}}([B,-C]_\rho\rtimes(\cdots))=\begin{cases}
			[B,-C]_\rho\rtimes\mT_{\textbf{G}_-^!,\widetilde{G}_-}(\cdots)&\text{if } s(\rho,A,B)=1\\
			\omega_\rho(-1)^{B+C+1}[B,-C]_\rho\rtimes\mT_{\textbf{G}_-^!,\widetilde{G}_-}(\cdots)&\text{if }s(\rho,A,B)=-1.\\
		\end{cases}$$
		
		Combining this with Lemma \ref{lemma: commutation of spectral transfer and partial Jacquet module}, we have
		$$\frac{\mT_{\textbf{G}^!,\widetilde{G}}([B,-C]_\rho\rtimes\Jac_{\rho,C}\cdots\circ\Jac_{\rho,B+2}S\Theta_{\phi_1^!}^{G_1^!})}{[B,-C]_\rho\rtimes\Jac_{\rho,C}\cdots\circ\Jac_{\rho,B+2}\mT_{\textbf{G}_1^!,\widetilde{G}_1}(S\Theta_{\phi_1^!}^{G_1^!})}=\begin{cases}
			1&\text{if }s(\rho,A,B)=1\\
			\omega_\rho(-1)^{2C}&\text{if }s(\rho,A,B)=-1.\\
		\end{cases}$$
		Thus, when $s(\rho,A,B)=1$, we have $$\begin{aligned}
			T_{\psi,s}=&\epsilon(\psi^{s=-1})\mT_{\textbf{G}^!,\widetilde{G}_{2n}}(S\Theta_{\psi^!}^{G^!})\\
			=&\bigoplus_{C\in\left(B,A\right]}\frac{\epsilon(\psi^{s=-1})}{\epsilon(\psi_1^{s_1=-1})}(-1)^{A-C}[B,-C]_\rho\rtimes\Jac_{\rho,C}\circ\Jac_{\rho,C-1}\circ\cdots\circ\Jac_{\rho,B+2}T_{\psi_1,s_1}\\
			&\oplus\frac{\epsilon(\psi^{s=-1})}{\epsilon(\psi_2^{s_2=-1})}(-1)^{\left[\frac{A-B+1}{2}\right]}T_{\psi_2,s_2}.\\
			=&\bigoplus_{C\in\left(B,A\right]}(-1)^{A-C}[B,-C]_\rho\rtimes\Jac_{\rho,C}\circ\Jac_{\rho,C-1}\circ\cdots\circ\Jac_{\rho,B+2}T_{\psi_1,s_1}\\
			&\oplus(-1)^{\left[\frac{A-B+1}{2}\right]}T_{\psi_2,s_2}.
		\end{aligned}$$ This completes the proof in the case where $s(\rho,A,B)=1$. When $s(\rho,A,B)=-1$, we need to consider the effect of local root number $\epsilon(\psi^{s=-1})$. We first compute that $$\frac{\epsilon(\psi_2^{s_2=-1})}{\epsilon(\psi^{s=-1})}=\frac{\epsilon(\rho\otimes r(a+1))^{b-1}\epsilon(\rho\otimes r(a-b+1))}{\epsilon(\rho\otimes r(a))^b}=1.$$ When $A>B+1$, we have
		$$\frac{\epsilon(\psi_1^{s_1=-1})}{\epsilon(\psi^{s=-1})}=\frac{\epsilon(\rho\otimes r(a+2))^{b-2}}{\epsilon(\rho\otimes r(a))^b}=\omega_\rho(-1)^{b-a}=\omega_\rho(-1)^{2B}.$$
		When $A=B+1$, then $a=2B+2$, $b=2$. We have 
		$$\frac{\epsilon(\psi_1^{s_1=-1})}{\epsilon(\psi^{s=-1})}=\frac{1}{\epsilon(\rho\otimes r(2B+2))^2}=\omega_\rho(-1)^{2B}.$$
		Since $B+C\in\ZZ$, we have $\omega_\rho(-1)^{2B}\omega_\rho(-1)^{2C}=1$, which completes the proof when $s(\rho,A,B)=-1$.
	\end{proof}
	
	\par In the following, we use the symbol $\pi(\psi,\varepsilon,(\rho_1,A_1,B_1;\eta_1),\dots,(\rho_l,A_l,B_l;\eta_l))$ to denote $\pi(\psi_+,\varepsilon_+)$ for simplicity, where $\Jord(\psi_+)=\Jord(\psi)\cup\left\{(\rho_1,A_1,B_1),\dots,(\rho_l,A_l,B_l)\right\}$, $\varepsilon_+(\rho_i,A_i,B_i)=\eta_i$, and $\varepsilon_+|_{\Jord(\psi)}=\varepsilon$.
	
	\begin{theorem}\label{theorem: Mp version of Theorem 7.5 in Xu_moeglin}
		For $\varepsilon\in\mS_\psi^\vee$, set $\eta_0=\varepsilon(\rho,A,B)$. Let $\psi'$ be obtained from $\psi$ by removing $(\rho,A,B)$ and let $\varepsilon'$ be the restriction of $\varepsilon$ on $\Jord(\psi')$. Then we have 
		$$\begin{aligned}
			\pi(\psi,&\varepsilon)\\
			=&\bigoplus_{C\in\left(B,A\right]}(-1)^{A-C}[B,-C]\rtimes\Jac_{\rho,C}\circ\Jac_{\rho,C-1}\circ\cdots\circ\Jac_{\rho,B+2}\pi(\psi',\varepsilon',(\rho,A,B+2;\eta_0))\\
			&\bigoplus_{\eta=\pm1}(-1)^{[\frac{A-B+1}{2}]}\eta^{A-B+1}\eta_0^{A-B}\pi(\psi',\varepsilon',(\rho,A,B+1;\eta),(\rho,B,B;\eta\eta_0)).
		\end{aligned}$$
		In particular, when $A=B+1$ and $\eta_0=-1$ (resp. $\eta_0=1$), we replace the term $\pi(\psi',\varepsilon',(\rho,A,B+2;\eta_0))$ above by $0$ (resp. $\pi(\psi',\varepsilon')$).
	\end{theorem}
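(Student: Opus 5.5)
The plan is to derive the theorem from Proposition \ref{prop: Mp version of proposition 5.9 in Xu_moeglin} by Fourier inversion over $\mS_\psi$, using the definition
\[
\pi(\psi,\varepsilon)=|\mS_\psi|^{-1}\sum_{\underline{s}\in\mS_\psi}\varepsilon(s_\psi\underline{s})T_{\psi,\underline{s}}.
\]
We fix $(\rho,A,B)$ and insert the expansion of $T_{\psi,s}$ into this sum, obtaining two groups of terms. The first group is the sum over $C\in(B,A]$ of the terms involving $T_{\psi_1,s_1}$. The second group is the term $(-1)^{[\frac{A-B+1}{2}]}T_{\psi_2,s_2}$. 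The map $s\mapsto s_1$ is a bijection $\mS_\psi\to\mS_{\psi_1}$, and $s\mapsto s_2$ is an injection $\mS_\psi\hookrightarrow\mS_{\psi_2}$ whose image consists of the $s_2$ with $s_2(\rho,A,B+1)=s_2(\rho,B,B)$.

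For the first group, everything except $T_{\psi_1,s_1}$ is independent of $s$: the parabolic induction by $[B,-C]_\rho$ and the composite $\Jac_{\rho,C}\circ\cdots\circ\Jac_{\rho,B+2}$ are linear. So the averaged sum becomes
\[
[B,-C]_\rho\rtimes\Jac_{\rho,C}\circ\cdots\circ\Jac_{\rho,B+2}\Big(|\mS_{\psi_1}|^{-1}\sum\varepsilon_1(s_{\psi_1}s_1)T_{\psi_1,s_1}\Big),
\]
provided $\varepsilon(s_\psi s)=\varepsilon_1(s_{\psi_1}s_1)$, where $\varepsilon_1$ extends $\varepsilon'$ by $\eta_0$. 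This identity holds because $b$ and $b-2$ have the same parity, so $s_\psi$ and $s_{\psi_1}$ agree at the corresponding blocks. The inner average is then $\pi(\psi',\varepsilon',(\rho,A,B+2;\eta_0))$.

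The degenerate case $A=B+1$ has to be handled separately, since $\psi_1$ then drops the block and $\mS_{\psi_1}$ is smaller. Here the character $s\mapsto\varepsilon(s_\psi s)$ must be split according to the value $s(\rho,A,B)$. Summing over the extra sign gives $(1+\eta_0)/2\cdot 2$ or $0$. Keeping the normalizations consistent, this produces $\pi(\psi',\varepsilon')$ when $\eta_0=1$ and $0$ when $\eta_0=-1$, which is exactly the stated convention.

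For the second group, we write $\pi(\psi_2,\varepsilon_2)$ for $\varepsilon_2$ ranging over the characters $(\eta,\eta\eta_0)$ on the two new blocks. We then sum $\varepsilon_2(s_{\psi_2}s_2)$ over the two characters of $\mS_{\psi_2}$ whose restriction along $s\mapsto s_2$ recovers the pulled-back character. Since $|\mS_{\psi_2}|=2|\mS_\psi|$, the average over $\mS_\psi$ of $\chi(s)T_{\psi_2,s_2}$ equals the sum over the two extensions of $\chi$ to $\mS_{\psi_2}$ of the corresponding $\pi(\psi_2,\cdot)$. This is the same subgroup-averaging argument as in Corollary \ref{coro: strengthening of proposition 4.5.3 in ArthurMp - tempered case}.

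The main obstacle is the bookkeeping of the sign $\eta^{A-B+1}\eta_0^{A-B}$. It comes from the discrepancy between $s_\psi$ and $s_{\psi_2}$. On $(\rho,A,B)$ the value of $s_\psi$ is $(-1)^{b-1}$. On the new blocks the values are $(-1)^{b-2}$ on $(\rho,A,B+1)$ and $1$ on $(\rho,B,B)$, because that block has $b=1$. Writing $\varepsilon(s_\psi s)=\varepsilon(s_\psi)\varepsilon(s)$ and comparing with $\varepsilon_2(s_{\psi_2})\varepsilon_2(s_2)$, one has to determine which extension $\varepsilon_2$ restricts to $\varepsilon$ on the image of $s\mapsto s_2$. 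The condition is $\eta\cdot\eta\eta_0$ being compatible with $\eta_0$, which holds automatically. The correction factor is then $\varepsilon(s_\psi)/\varepsilon_2(s_{\psi_2})$, computed with $b=A-B+1$. Evaluating this ratio in the four cases $b$ even or odd and $\eta_0=\pm1$ should produce exactly $\eta^{A-B+1}\eta_0^{A-B}$. Once that parity check is done, combining the two groups with the signs $(-1)^{A-C}$ and $(-1)^{[\frac{A-B+1}{2}]}$ from Proposition \ref{prop: Mp version of proposition 5.9 in Xu_moeglin} gives the theorem.
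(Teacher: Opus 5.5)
Your proposal is correct and follows essentially the same route as the paper: Fourier inversion of Proposition~\ref{prop: Mp version of proposition 5.9 in Xu_moeglin} over $\mS_\psi$. It uses the bijection $s\mapsto s_1$ for the $\psi_1$-terms, averages over the two extensions $\varepsilon_\eta$ along the index-two embedding $s\mapsto s_2$ for the $\psi_2$-term, and treats $A=B+1$ separately. The parity check you left open does work out: $(\rho,A,B)$ contributes $\eta_0^{A-B}$ to $\varepsilon(s_\psi)$, while $(\rho,A,B+1)$ and $(\rho,B,B)$ contribute $\eta^{A-B+1}$ and $1$ to $\varepsilon_\eta(s_{\psi_2})$, giving exactly the ratio $\eta^{A-B+1}\eta_0^{A-B}$ (the paper takes this from the computation in \cite[Lemma 7.6]{Xu2017_Moeglin}).
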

	\begin{proof}
		Write $\pi(\psi_1,\varepsilon_1)=\pi(\psi',\varepsilon',(\rho,A,B+2;\eta_0))$ and $\pi(\psi_2,\varepsilon_\eta)=\pi(\psi',\varepsilon',(\rho,A,B+1;\eta),(\rho,B,B;\eta\eta_0)))$. From the computations in the proof of \cite[Lemma 7.6]{Xu2017_Moeglin}, we have $$\frac{\varepsilon_\eta(s_{\psi_2})}{\varepsilon(s_\psi)}=\eta^{A-B+1}\eta_0^{A-B}.$$ Note that $\varepsilon_\eta(\rho,A,B+1)\varepsilon_\eta(\rho,B,B)=\eta_0=\varepsilon(\rho,A,B)$. Thus $\varepsilon(s)=\varepsilon_\eta(s_2)$ holds for all $\eta\in\left\{\pm1\right\}$ and $s\in\mS_{\psi}$ (the $s_2$ here is defined by $s_2(\rho,A,B+1)=s_2(\rho,B,B)=s(\rho,A,B)$). Conversely, for $s\in\mS_{\psi_2}$, if $s(\rho,A,B+1)\neq s(\rho,B,B)$, that is, $s$ is not in the image of $\mS_{\psi}\rightarrow\mS_{\psi_2}$, $s\mapsto s_2$, then we have $\varepsilon_+(s)+\varepsilon_-(s)=0$. Therefore, we have  
		$$\begin{aligned}
			|\mS_\psi|^{-1}\sum_{s\in\mS_\psi}\varepsilon(ss_\psi)T_{\psi_2,s_2}&=|\mS_{\psi}|^{-1}\sum_{s\in\mS_\psi}\varepsilon(s_\psi)\frac{\sum_{\eta=\pm1}\varepsilon_\eta(s_2)}{2}T_{\psi_2,s_2}\\
			&=|\mS_{\psi_2}|^{-1}\sum_{\eta=\pm1}\eta^{A-B+1}\eta_0^{A-B}\sum_{s\in\mS_{\psi_2}}\varepsilon_\eta(ss_{\psi_2})T_{\psi_2,s}\\
			&=\sum_{\eta=\pm1}\eta^{A-B+1}\eta_0^{A-B}\pi(\psi,\varepsilon_\eta).
		\end{aligned}$$
		When $A>B+1$, it is easy to see that $$|\mS_\psi|^{-1}\sum_{s\in\mS_\psi}\varepsilon(ss_\psi)T_{\psi_1,s_1}=\pi(\psi_1,\varepsilon_1).$$
		When $A=B+1$, similar to the proof of Proposition \ref{prop: partial Jacquet module of discrete series}, we have $$|\mS_\psi|^{-1}\sum_{s\in\mS_\psi}\varepsilon(ss_\psi)T_{\psi_1,s_1}=\begin{cases}
			\pi(\psi_1,\varepsilon_1)&\text{if }\eta_0=1\\
			0&\text{if }\eta_0=-1.\\
		\end{cases}$$
		In conclusion, we can deduce the theorem from Proposition \ref{prop: Mp version of proposition 5.9 in Xu_moeglin}.
	\end{proof}
	
	\par The main result of this section is the following theorem:
	
	\begin{theorem}\label{theorem: construction of A-packet non-negative DDR case}
		In the set up of Theorem \ref{theorem: Mp version of Theorem 7.5 in Xu_moeglin}, we have 
		$$\begin{aligned}
			\pi(\psi,\varepsilon)=&\soc([B,-A]_\rho\rtimes\pi(\psi',\varepsilon',(\rho,A-1,B+1;\eta_0)))\\
			&\bigoplus_{\eta=\pm1\atop\eta_0=\eta^{A-B+1}(-1)^{\frac{(A-B+1)(A-B)}{2}}}\pi(\psi',\varepsilon',\cup_{B\le C\le A}(\rho,C,C;(-1)^{C-B}\eta)).
		\end{aligned}$$
		In particular, when $A=B+1$ and $\eta_0=-1$, we replace the term $\pi(\psi',\varepsilon',(\rho,A-1,B+1;\eta_0))$ by $0$.
	\end{theorem}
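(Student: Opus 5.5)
We argue by induction on $A-B$ (and, for fixed $A-B$, on the total size $\sum_{(\rho',A',B')\in\Jord(\psi)}(A'-B')$), following M\oe glin's argument in \cite{Mœeglin2009_paquets}. The starting point is the alternating-sum formula of Theorem \ref{theorem: Mp version of Theorem 7.5 in Xu_moeglin}. Every parameter appearing on its right-hand side is again a non-negative DDR with strictly smaller $A-B$ at the block $\rho$: $\psi_1$ has block $(\rho,A,B+2)$ and $\psi_2$ has blocks $(\rho,A,B+1),(\rho,B,B)$. Hence the induction hypothesis describes $\pi(\psi',\varepsilon',(\rho,A,B+2;\eta_0))$ and $\pi(\psi',\varepsilon',(\rho,A,B+1;\eta),(\rho,B,B;\eta\eta_0))$ explicitly, as socles plus sums of representations with all blocks of the form $(\rho,C,C)$. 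The base case $A=B$ is a tautology. The case where all blocks have $A'=B'$ is the tempered/discrete situation, where Theorem \ref{theorem: ECR for tempered representation} applies.

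First I will identify the second summand of the claimed formula. Fix $\eta$ with $\eta_0=\eta^{A-B+1}(-1)^{(A-B+1)(A-B)/2}$. Iterating the second term of Theorem \ref{theorem: Mp version of Theorem 7.5 in Xu_moeglin} fully splits $(\rho,A,B)$ into $\cup_{B\le C\le A}(\rho,C,C)$. Tracking the signs $(-1)^{[\frac{A-B+1}{2}]}\eta^{A-B+1}\eta_0^{A-B}$ at each step, and summing over intermediate choices, should show that $\pi(\psi',\varepsilon',\cup_C(\rho,C,C;(-1)^{C-B}\eta))$ occurs with coefficient $+1$ exactly for the stated $\eta$ and cancels otherwise. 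This is a bookkeeping computation with the signs already produced in the proofs of Proposition \ref{prop: Mp version of proposition 5.9 in Xu_moeglin} and Theorem \ref{theorem: Mp version of Theorem 7.5 in Xu_moeglin}.

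Second, I will show that all remaining terms assemble into $\soc([B,-A]_\rho\rtimes\pi(\psi',\varepsilon',(\rho,A-1,B+1;\eta_0)))$. The plan is to apply $\Jac_{\rho,B}$ repeatedly, and more generally the derivatives $D^{(k)}_{\rho,x}$ and $D^{(k)}_{[0,\zeta]_\rho}$ of \S\ref{section: derivative and socles}, to both sides, using Lemma \ref{lemma: computation of partial Jacquet module}. Arguing as in Proposition \ref{prop: partial Jacquet module of discrete series}, Lemma \ref{lemma: commutation of spectral transfer and partial Jacquet module} gives the needed Jacquet-module identity for $T_{\psi,s}$. One computes the partial Jacquet module $\Jac_{\rho,B}\circ\cdots$ of $\pi(\psi,\varepsilon)$ and compares it with the induction hypothesis applied to $\pi(\psi',\varepsilon',(\rho,A-1,B+1;\eta_0))$, via the telescoping identity $[B,-C]_\rho\rtimes\cdots$. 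This should show that every irreducible constituent of $\pi(\psi,\varepsilon)$ not of the fully split type embeds into $[B,-A]_\rho\rtimes\pi(\psi',\varepsilon',(\rho,A-1,B+1;\eta_0))$, with the embedding given by Lemma \ref{lemma: partial Jacquet module and embedding}. Conversely, the socle of this induced representation (SI, or at least multiplicity free, by the results of \S\ref{section: derivative and socles}) occurs in $\pi(\psi,\varepsilon)$ with multiplicity one. The latter follows by comparing the Jacquet-module coefficient $[B,-A]_\rho\otimes\pi(\ldots)$ on both sides of Theorem \ref{theorem: Mp version of Theorem 7.5 in Xu_moeglin}.

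The main obstacle is the control of the socle. One must show that every irreducible subquotient of the terms $[B,-C]_\rho\rtimes\Jac\cdots\pi(\psi_1,\varepsilon_1)$ with $C<A$ cancels, except those lying in the socle. This requires a metaplectic analogue of M\oe glin's irreducibility and embedding statements for inductions $\rho|\cdot|^x\rtimes\pi$ with $\pi$ in a DDR packet, extending Propositions \ref{prop: non-unitary irreducibility for discrete series} and \ref{proposition: key proposition} from discrete series to non-negative DDR. I would first try to obtain these by the theta-correspondence trick of Proposition \ref{prop: non-unitary irreducibility for discrete series}, using Lemma \ref{lemma: compatibility of theta lifts and socle} and Howe duality. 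If that fails, I would reprove them along the lines of \cite{Mœeglin2009_paquets}, using the key proposition as the base case. The trivial-character block $(\triv,2)$ sign twist must also be checked throughout.
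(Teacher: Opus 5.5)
Your treatment of the main term is correct and is what the paper does (Lemma \ref{lemma: lemma on the main term} and the remark after it). Applying $\Jac_{\rho,-A}\circ\cdots\circ\Jac_{\rho,B}$ to Theorem \ref{theorem: Mp version of Theorem 7.5 in Xu_moeglin}, with the induction hypothesis in the form of Lemma \ref{lemma: Proposition 4.4 of Moeglin}, yields $\pi(\psi',\varepsilon',(\rho,A-1,B+1;\eta_0))$. Hence the constituents on which this Jacquet module is nonzero are exactly accounted for by $\soc([B,-A]_\rho\rtimes\pi(\psi',\varepsilon',(\rho,A-1,B+1;\eta_0)))$.

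\textbf{The gap.} This computation cannot show that every constituent not of fully split type embeds into $[B,-A]_\rho\rtimes\cdots$. A constituent $\pi$ with $\Jac_{\rho,-A}\circ\cdots\circ\Jac_{\rho,B}\pi=0$ contributes nothing to it, and Lemma \ref{lemma: partial Jacquet module and embedding} says nothing about such $\pi$. Proving that every such $\pi$ is one of the $\pi(\psi',\varepsilon',\cup_{B\le C\le A}(\rho,C,C;(-1)^{C-B}\eta))$ is the real content of the theorem, and your plan has no mechanism for it. Irreducibility of $\rho|\cdot|^x\rtimes\pi$ for $\pi$ in a DDR packet is only an ingredient; the paper does obtain it by theta, as you suggest (Lemma \ref{lemma: irreducibility of rho||x rtimes pi in non-negative DDR case}).

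The paper argues constituent by constituent, using that $\pi(\psi,\varepsilon)$ is an honest representation:
\begin{enumerate}
\item[(a)] Lemmas \ref{lemma: first construction} and \ref{lemma: second construction}, via the key proposition (Proposition \ref{proposition: key proposition}) for discrete series, embed $\pi$ into $\times_{x\in\mathcal{E}}\rho|\cdot|^x\rtimes\pi(\psi_{D,\lambda},\varepsilon_{D,\lambda})$, with $(D,\lambda)$ determined by $\pi$.
\item[(b)] A maximal-order/tableau analysis (Lemmas \ref{lemma: maximal order}, \ref{lemma: non-vanishing of certain Jacquet module}) forces $-A\in\mathcal{E}$ or $\mathcal{E}=\mathcal{E}_D$.
\item[(c)] An explicit computation of the multiplicity of $\pi_{D,\lambda}$ in $\circ_{x\in\mathcal{E}_D}\Jac_{\rho,x}$ of every term of Theorem \ref{theorem: Mp version of Theorem 7.5 in Xu_moeglin} (Lemmas \ref{lemma: D<B-1 case}, \ref{lemma: D ge B-1 case}) settles the case $b_{\rho,\psi',\varepsilon'}=2B-1$.
\item[(d)] This is extended to $a_{\rho,\psi',\varepsilon'}\ge 2B+1$. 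The remaining cases (several blocks with $A>B$; $a>b+2$; $a=b+2$; $a=3$, $b=1$) are reduced to it via Lemmas \ref{lemma: inversion of scole} and \ref{lemma: subquotient of parabolic induction}.
\end{enumerate}

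\textbf{The first step is not sign bookkeeping.} The second-type terms of Theorem \ref{theorem: Mp version of Theorem 7.5 in Xu_moeglin} carry the sign $(-1)^{[\frac{A-B+1}{2}]}\eta^{A-B+1}\eta_0^{A-B}$, which can be $-1$. By induction they also contain non-fully-split socles $\soc([B+1,-A]_\rho\rtimes\cdots)$. These have nonzero $\Jac_{\rho,B+1}$, so by Lemma \ref{lemma: highest derivative of representations in a A packet} they cannot lie in $\Pi_\psi$ and must be cancelled by constituents of the induced first-type terms $[B,-C]_\rho\rtimes\cdots$. You therefore need to know those constituents.

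Already for $A=B+1$, $\eta_0=1$, the two fully split discrete series occur with coefficient $-1$. They cancel the cosocle of $[B,-(B+1)]_\rho\rtimes\pi(\psi',\varepsilon')$, and showing that only the socle survives uses the key proposition. Conversely, to get coefficient exactly $1$ for the complementary terms, you must know they do not occur among the constituents of the first-type terms. The paper proves this in Lemma \ref{lemma: complementary terms} using the invariants $t_1$ and $(D,\lambda)$ of the two constructions---again representation theory rather than sign tracking.
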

	
	\par Essentially, the proof of the above theorem is no different from that of the \cite[Theorem 4.1]{Mœeglin2009_paquets}. Since the proof is lengthy and technical, we defer it to \S \ref{section: proof of the theorem} below.
	
	\subsection{Extended multi-segments}\label{subsection: extended multi-segments}
	
	\par In this subsection, we will introduce the concept of extended multi-segments following \cite{Atobe2022_A-packet}. As a consequence of Theorem \ref{theorem: construction of A-packet non-negative DDR case}, we can construct $\Pi_\psi$ for non-negative DDR $\psi$ via extended multi-segments.
	
	\begin{definition}\label{def: extended segment}
		An extended segment is a triple $([A,B]_\rho,l,\eta)$, where:
		\begin{enumerate}
			\item[$\bullet$] $\rho$ is an irreducible unitary cuspidal representation of $\GL(d_\rho)$.
			\item[$\bullet$] $A, B\in\frac{1}{2}\ZZ$ and $A-B\in\ZZ_{\ge0}$.
			\item[$\bullet$] $l\in\ZZ$ with $0\le l\le \frac{b}{2}$, where $b=A-B+1$.
			\item[$\bullet$] $\eta\in\left\{\pm1\right\}$.
		\end{enumerate}
	\end{definition}
	\index{extended segment}
	\index{AB extended segment@$([A,B]_\rho,l,\eta)$}
	
	\begin{definition}\label{def: extended multi-segment}
		An extended multi-segment of $\widetilde{G}_{2n}$ is a multi-set of extended segments $$\mE=\cup_\rho\left\{([A_i,B_i]_\rho,l_i,\eta_i)\right\}_{i\in(I_\rho,\succ_\rho)}$$ such that:
		\begin{enumerate}
			\item[$\bullet$] $(I_\rho,\succ_\rho)$ is a totally ordered finite set and $\succ_\rho$ is an admissible order, i.e., if $A_i>A_j$, $B_i>B_j$ for some $i,j\in I_\rho$, then $i\succ_\rho j$. Furthermore, if $B_i<0$ for some $i\in I_\rho$, we assume that $(I_\rho,\succ_\rho)$ satisfies the property that, if $B_i>B_j$, then $i\succ_\rho j$. 
			\item[$\bullet$] $A_i+B_i\ge 0$ for all $\rho$ and $i\in I_\rho$.
			\item[$\bullet$] let $a_i=A_i+B_i+1$, $b_i=A_i-B_i+1$, then 
			\begin{enumerate}
				\item[(1)] if $\rho$ is not self-dual, then $I_\rho=\emptyset$;
				\item[(2)] if $\rho$ is of symplectic type, then $a_i+b_i\equiv 0\mod 2$;
				\item[(3)] if $\rho$ is of orthogonal type, then $a_i+b_i\equiv 1\mod 2$.
			\end{enumerate}
			\item[$\bullet$] $\sum_\rho\sum_{i\in I_\rho}d_{\rho_i}a_ib_i=2n$.
		\end{enumerate}
	\end{definition}
	\index{admissible order}
	\index{extended multi-segment}
	\index{extended multi-segment@$\mE$}
	
	\par Our definition of extended multi-segments is parallel to the definition in \cite{Atobe2022_A-packet}. The only difference is that we do not need the sign condition $\prod(-1)^{[\frac{b_i}{2}]+l_i}\eta_i^{b_i}=\epsilon_G$.
	
	\begin{definition}\label{def: associated parameter of extended multi-segment}
		Let $\mE=\cup_\rho\left\{([A_i,B_i]_\rho,l_i,\eta_i)\right\}_{i\in(I_\rho,\succ_\rho)}$ be an extended multi-segment for $\widetilde{G}_{2n}$. We define the associated enhanced A-parameter $(\psi_\mE,\varepsilon_\mE)$ (with respect to Atobe's normalization, see \S \ref{section:Atobe's normalization} below) by:
		\begin{enumerate}
			\item[$\bullet$] $\psi_\mE=\bigoplus\limits_\rho\bigoplus\limits_{i\in I_\rho}\rho\otimes r(a_i)\otimes r(b_i)$.
			\item[$\bullet$] $\varepsilon_\mE(\rho,a_i,b_i)=\prod\limits_{j\in I_\rho\atop[A_i,B_i]_\rho=[A_j,B_j]_\rho}(-1)^{\left[\frac{b_j}{2}\right]+l_j}\eta_j^{b_j}$.
		\end{enumerate}
		then $\psi_\mE\in\Psi_\gp(\widetilde{G}_n)$ and $\varepsilon_\mE\in\mS_{\psi_\mE}^\vee$.
	\end{definition}
	\index{psi enhanced A-paratemter@$\psi_\mE$}
	\index{eps enhanced A-paratemter@$\varepsilon_\mE$}
	
	\begin{lemma}\label{lemma: segments in extended multi-segments are not linked}
		For two segments $[B,-A]_\rho$ and $[B',-A']_\rho'$, if $A\ge B>A'\ge B'\ge 0$, and if $l<\frac{A-B+1}{2}$, $l'<\frac{A'-B'+1}{2}$, then the segments $[B+l,-A+l]_\rho$ and $[B'+l',-A'+l']_\rho$ are not linked.
	\end{lemma}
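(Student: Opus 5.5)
The argument is a direct comparison of endpoints using Zelevinsky's notion of linkedness. Write $x_1=B+l$, $y_1=-A+l$, $x_2=B'+l'$, $y_2=-A'+l'$. The two segments are $[x_1,y_1]_\rho$ and $[x_2,y_2]_{\rho'}$, both decreasing.

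If $\rho\not\cong\rho'$, the cuspidal supports lie on different lines and the segments are never linked. So I assume $\rho=\rho'$.

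Recall the definition: two decreasing segments are linked exactly when their union is again a segment and neither contains the other. Equivalently, after possibly swapping, $x_1>x_2\ge y_1-1$ and $y_1>y_2$ (with the appropriate integrality of $x_1-x_2$). So it is enough to show that one segment contains the other. I will show $[x_2,y_2]_\rho\subseteq[x_1,y_1]_\rho$, i.e. $x_2\le x_1$ and $y_2\ge y_1$.

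The hypotheses supply the two inequalities.

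\textbf{Top endpoints.} Since $l\ge0$, $l'<\frac{A'-B'+1}{2}$ and $A'<B$, we get
\[
x_2=B'+l'<B'+\tfrac{A'-B'+1}{2}=\tfrac{A'+B'+1}{2}\le A'+\tfrac12 .
\]
Here the last step uses $B'\le A'$. Since $B>A'$ and $B-A'$ is a positive integer when the segments lie on the same line (if $x_1-x_2\notin\ZZ$ they cannot be linked anyway), we have $B\ge A'+1$. Hence $x_2<A'+\tfrac12<B\le B+l=x_1$.

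\textbf{Bottom endpoints.} We need $-A'+l'\ge -A+l$, i.e. $A-A'\ge l-l'$. Since $l<\frac{A-B+1}{2}$ and $l'\ge0$, it suffices to show $A-A'\ge \frac{A-B+1}{2}$. This is equivalent to $A+B-1\ge 2A'$, which follows from $A\ge B\ge A'+1$ since then $A+B-1\ge 2A'+1$. So in fact $y_2>y_1$.

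Thus $[x_2,y_2]_\rho\subseteq[x_1,y_1]_\rho$, and containment rules out linkedness. The segments are nonempty: $l<\frac{A-B+1}{2}$ gives $B+l\ge -A+l$, and similarly for the primed segment.

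The only delicate point is the top endpoint. There I used the integrality $B-A'\in\ZZ_{\ge1}$ and the strict bound $l'<\frac{A'-B'+1}{2}$, where the strictness is what keeps $x_2$ strictly below $x_1$. I also need the non-integral case: if $B-A'\notin\ZZ$ then $x_1-x_2\notin\ZZ$, and non-linkedness is automatic because the supports do not lie in a common $\rho|\cdot|^{x_1+\ZZ}$.
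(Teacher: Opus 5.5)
Your argument is correct and is essentially the paper's proof. The paper supposes the segments are linked in one of the two possible orders and derives $2A'>A+B-1$ (from $-A+l>-A'+l'$) or $A'+B'+1>2B$ (from $B'+l'>B+l$); these are exactly the negations of the two endpoint inequalities you prove directly. Your containment formulation has the mild advantage of excluding every form of linkedness at once, including the juxtaposed case that the paper's strict chains of inequalities do not list explicitly.
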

	\begin{proof}
		Suppose $[B+l,-A+l]_\rho$ and $[B'+l',-A'+l']_\rho$ are linked. Then we have either $B+l>B'+l'>-A+l>-A'+l'$ or $B'+l'>B+l>-A'+l'>-A+l$.
		\par In the case where $B+l>B'+l'>-A+l>-A'+l'$. We have $\frac{A-B+1}{2}> l-l'> A-A'\Rightarrow 2A'>A+B-1$. Note that $A\ge B\ge A'+1$, this gives a contradiction. Similarly, $B'+l'>B+l>-A'+l'>-A+l$ implies $A'+B'+1>2B$, which is also impossible.
	\end{proof}
	
	\begin{definition}\label{def: associated representation of NDDR extended multi-segment}
		Let $\mE=\cup_\rho\left\{([A_i,B_i]_\rho,l_i,\eta_i)\right\}_{i\in(I_\rho,\succ_\rho)}$ be an extended multi-segment for $\widetilde{G}_{2n}$, assume that $\psi_\mE$ is a non-negative DDR, or equivalently
		\begin{enumerate}
			\item[$\bullet$] for $i,j\in I_\rho$, if $i\succ_\rho j$, then $B_i>A_j$;
			\item[$\bullet$] $B_i\ge 0$ for all $i\in I_\rho$.
		\end{enumerate}
		we define \begin{equation}\label{equation: associated representation for non-negative DDR}
			\pi(\mE)=\soc\left(\bigtimes\limits_{\rho}\bigtimes\limits_{i\in I_\rho}\begin{bmatrix}B_i&\dots&B_i+l_i-1\\\vdots&&\vdots\\-A_i&\dots&-A_i+l_i-1\end{bmatrix}_\rho\rtimes\pi(\phi,\varepsilon)\right)
		\end{equation} with $$\phi=\bigoplus\limits_\rho\bigoplus\limits_{i\in I_\rho}\rho\otimes r(2(B_i+l_i)+1)\oplus\rho\otimes r(2(B_i+l_i+1)+1)\oplus\cdots\oplus\rho\otimes r(2(A_i-l_i)+1)$$ and $\varepsilon(\rho,2(B_i+l_i+k)+1)=(-1)^k\eta_i$ for $0\le k\le b_i-2l_i-1$.
	\end{definition}
	\begin{remark}
		By Lemma \ref{lemma: segments in extended multi-segments are not linked}, the segments that occur in the column of different generalized segments in the right-hand side of (\ref{equation: associated representation for non-negative DDR}) are not linked. Hence, the parabolic induction is isomorphic to a subrepresentation of a certain standard module, which implies that $\pi(\mE)$ is irreducible. 	
	\end{remark}
	\index{pi representation associated to an extended multi-segment@$\pi(\mE)$}
	
	\par When dealing with extended multi-segments and their associated representations, the following two lemmas are useful:
	
	\begin{lemma}\label{lemma: reduction step in non-negative DDR case}
		Let $\mE$ be an extended multi-segment for $\widetilde{G}_{2n}$. Suppose that $\psi_\mE$ is a non-negative DDR. We fix an $([A_i,B_i]_\rho,l_i,\eta_i)\in\mE$ with $l_i>0$. Let $$\mE^-=\mE\cup\left\{([A_i-1,B_i+1]_\rho,l_i-1,\eta_i)\right\}-\left\{([A_i,B_i]_\rho,l_i,\eta_i)\right\}.$$ Then we have $$\pi(\mE)=\soc([B_i,-A_i]_\rho\rtimes\pi(\mE^-)).$$
	\end{lemma}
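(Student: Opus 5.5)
The plan is to compare the two socle descriptions directly from Definition \ref{def: associated representation of NDDR extended multi-segment}. First I check that $\mE^-$ is again an extended multi-segment whose parameter is a non-negative DDR. We have $A_i-1\ge B_i+1$ because $l_i\le\frac{A_i-B_i+1}{2}$ and $l_i>0$. The order conditions and the value $B_i+1\ge0$ are inherited. The discrete parameter $(\phi,\varepsilon)$ attached to $\mE^-$ is the same as the one attached to $\mE$. Indeed, $B_i+l_i$ and $A_i-l_i$ are unchanged, and so is the sign $\varepsilon(\rho,2(B_i+l_i+k)+1)=(-1)^k\eta_i$.

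Next I compare the generalized segments. Write $\Delta_i$ for the generalized segment of $\mE$ at $i$, with rows $[B_i,B_i+l_i-1],\dots,[-A_i,-A_i+l_i-1]$. By the Mœglin--Waldspurger remark in \S\ref{subsection: generalized segments}, $\Delta_i$ is the unique irreducible subrepresentation of the product of its columns:
$$[B_i,-A_i]_\rho\times[B_i+1,-A_i+1]_\rho\times\cdots\times[B_i+l_i-1,-A_i+l_i-1]_\rho .$$
The last $l_i-1$ columns are exactly the columns of the generalized segment $\Delta_i^-$ attached to $([A_i-1,B_i+1]_\rho,l_i-1,\eta_i)$. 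Hence $\Delta_i\hookrightarrow[B_i,-A_i]_\rho\times\Delta_i^-$.

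Then I move $[B_i,-A_i]_\rho$ to the front of the product over all $j$ and $\rho$. By Lemma \ref{lemma: segments in extended multi-segments are not linked}, the segment $[B_i,-A_i]_\rho$ is not linked with any column of $\Delta_j$ for $j\neq i$. Segments for different $\rho$ are never linked. So $[B_i,-A_i]_\rho\times\Delta_j\cong\Delta_j\times[B_i,-A_i]_\rho$, and we get an embedding
$$\pi(\mE)\hookrightarrow\Big(\bigtimes_{j}\Delta_j\Big)\rtimes\pi(\phi,\varepsilon)\hookrightarrow[B_i,-A_i]_\rho\times\Big(\bigtimes_{j\neq i}\Delta_j\times\Delta_i^-\Big)\rtimes\pi(\phi,\varepsilon).$$
By Frobenius reciprocity and \cite[Lemma 3.2]{Mœglin2002_construction}, some irreducible subquotient $\sigma$ of the inner induced representation satisfies $\pi(\mE)\hookrightarrow[B_i,-A_i]_\rho\rtimes\sigma$.

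The main obstacle is showing $\sigma=\pi(\mE^-)$, together with the SI property that makes the socle statement meaningful. I would argue as follows.

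First I show the inner induction $(\bigtimes_{j\ne i}\Delta_j\times\Delta_i^-)\rtimes\pi(\phi,\varepsilon)$ is SI with socle $\pi(\mE^-)$. The Remark after Definition \ref{def: associated representation of NDDR extended multi-segment} says this induction embeds into a standard module, so its socle is the irreducible Langlands subrepresentation occurring with multiplicity one. Hence $\sigma$ is a subquotient of it.

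Second I show $[B_i,-A_i]_\rho\rtimes\pi(\mE^-)$ is SI. I would use Lemma \ref{lemma: criterion for SI}. By Proposition \ref{prop: Tadic formula}, the multiplicity of $[B_i,-A_i]_\rho\otimes\pi(\mE^-)$ in $\mu^*([B_i,-A_i]_\rho\rtimes\pi(\mE^-))$ is controlled. Contributions from $M^*([B_i,-A_i]_\rho)$ other than the trivial term would need pieces $[x,-A_i]$ or duals $[A_i,\dots]$ to be absorbed by $\mu^*(\pi(\mE^-))$. I would exclude these by a cuspidal-support and Jacquet-module argument. The point is that $-A_i$ is minimal in the support attached to $\rho$ with exponent appearing as a left end, since $A_i$ is maximal among blocks at or below $i$ and $B_j>A_i$ for larger blocks. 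I expect the extra care to be needed exactly here, especially when $B_i=0$, where $[B_i,-A_i]$ meets its dual.

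Third, I identify $\sigma$. Suppose $\sigma\neq\pi(\mE^-)$. Then $\sigma$ is a non-socle subquotient of the standard-module-type induction, so its Langlands data are strictly smaller. Embedding $\pi(\mE)$ into $[B_i,-A_i]_\rho\rtimes\sigma$ would then give Langlands data for $\pi(\mE)$ different from those read off from $\mE$, which is a contradiction. Hence $\pi(\mE)\hookrightarrow[B_i,-A_i]_\rho\rtimes\pi(\mE^-)$. By the SI property, $\pi(\mE)=\soc([B_i,-A_i]_\rho\rtimes\pi(\mE^-))$.
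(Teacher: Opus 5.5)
\textbf{Verdict: right conclusion, but the decisive step is not proved.} Your first steps are sound. $\Delta_i\hookrightarrow[B_i,-A_i]_\rho\times\Delta_i^-$ holds because $\Delta_i$ is the unique irreducible subrepresentation of the product of its columns. Commuting $[B_i,-A_i]_\rho$ past the other $\Delta_j$ is also correct, given the standard argument: both orders embed in the same standard module, so they are SI with the same socle, and that forces irreducibility. The gap is the step you yourself call the main obstacle.

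\textbf{Where it fails.} In your third step you pass from $\pi(\mE)\hookrightarrow[B_i,-A_i]_\rho\rtimes\sigma$, with $\sigma$ a non-socle constituent, to ``the Langlands data of $\pi(\mE)$ would differ''. That does not follow from anything you have. $[B_i,-A_i]_\rho\rtimes\sigma$ is not a standard module. To compare Langlands data you would have to pass to the standard module of $\sigma$, whose segments are no longer columns of an extended multi-segment. Lemma \ref{lemma: segments in extended multi-segments are not linked} then says nothing about moving $[B_i,-A_i]_\rho$ past them. You would need a monotonicity statement for Langlands data under $[B_i,-A_i]_\rho\rtimes(\cdot)$, which you neither state nor prove. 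Likewise, the SI property of $[B_i,-A_i]_\rho\rtimes\pi(\mE^-)$ via Lemma \ref{lemma: criterion for SI} is only announced. The Jacquet-module control of $\pi(\mE^-)$ it needs is not available here.

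\textbf{The fix: run the argument in the opposite direction.} This is what the paper's one-line proof (non-linkedness plus the definition of $\pi(\mE)$) amounts to.

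First, every column $[B_j+k,-A_j+k]_\rho$ with $0\le k\le l_j-1$ has exponent
$$\frac{B_j-A_j}{2}+k\le\frac{B_j-A_j}{2}+\frac{A_j-B_j-1}{2}=-\frac12<0 .$$

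Second, columns from different blocks are unlinked by Lemma \ref{lemma: segments in extended multi-segments are not linked}.

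Third, the discrete series $\pi(\phi,\varepsilon)$ is the same for $\mE$ and $\mE^-$.

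Embed $\pi(\mE^-)$ into the product of the columns of $\mE^-$, followed by $\rtimes\pi(\phi,\varepsilon)$. This gives
$$[B_i,-A_i]_\rho\rtimes\pi(\mE^-)\hookrightarrow[B_i,-A_i]_\rho\times(\text{columns of }\mE^-)\rtimes\pi(\phi,\varepsilon)\cong M .$$
Here $M$ is the standard module formed by all columns of $\mE$ in non-decreasing order of exponent, followed by $\pi(\phi,\varepsilon)$. The isomorphism holds because only columns of other blocks must be moved past $[B_i,-A_i]_\rho$, and those are unlinked with it. Within block $i$, the column $[B_i,-A_i]_\rho$ already has the smallest exponent, so no further moves are needed there.

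The module $M$ has a unique irreducible subrepresentation, of multiplicity one in $\JH(M)$, and it is $\pi(\mE)$, since $\pi(\mE)\hookrightarrow M$ as well. Therefore every irreducible subrepresentation of $[B_i,-A_i]_\rho\rtimes\pi(\mE^-)$ is $\pi(\mE)$, occurring once. This gives the SI property and the socle identification at once. You no longer need $\sigma$, Tadi\'c computations, or special care at $B_i=0$.

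\textbf{Minor point.} The condition $l_i>0$ only gives $A_i\ge B_i+1$, not $A_i-1\ge B_i+1$. When $A_i=B_i+1$, the block of $\mE^-$ is empty and must be deleted; this does not affect the argument.
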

	\begin{proof}
		This follows directly from Lemma \ref{lemma: segments in extended multi-segments are not linked} and the definition of $\pi(\mE)$.
	\end{proof}
	\index{extended multi-segment shift1@$\mE^-$}
	
	\begin{lemma}\label{lemma: another reduction step in non-negative DDR case}
		Let $\mE$ be an extended multi-segment for $\widetilde{G}_{2n}$. Suppose that $\psi_\mE$ is a non-negative DDR, and we fix an $([A_i,B_i]_\rho,l_i,\eta_i)\in\mE$ with $B_i>0$. Let $$\mE^+=\mE\cup\left\{([A_i-1,B_i-1]_\rho,l_i,\eta_i)\right\}-\left\{([A_i,B_i]_\rho,l_i,\eta_i)\right\}.$$ If $\psi_{\mE^+}$ is also a non-negative DDR (i.e., $A_{i-1}<B_i-1$). Then we have $$\pi(\mE)=\soc([B_i,A_i]_\rho\rtimes\pi(\mE^+)).$$
	\end{lemma}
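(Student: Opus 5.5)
We need to show $\pi(\mE)=\soc([B_i,A_i]_\rho\rtimes\pi(\mE^+))$. The plan is to unwind the definition (\ref{equation: associated representation for non-negative DDR}) on both sides and compare the two inductions. The generalized segment attached to $([A_i,B_i]_\rho,l_i,\eta_i)$ has rows $[B_i+k,-A_i+k]_\rho$ for $0\le k\le l_i-1$. Shifting $A_i,B_i$ down by one gives rows $[B_i-1+k,-A_i+1+k]_\rho$. The discrete parameter $\phi$ for $\mE$ has block $\rho\otimes r(2(B_i+l_i+k)+1)$, $0\le k\le b_i-2l_i-1$, and the parameter $\phi^+$ for $\mE^+$ has the same blocks with $B_i$ replaced by $B_i-1$, with the same signs. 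Thus $\phi=\phi^+\oplus\rho\otimes r(2(A_i-l_i)+1)\ominus\rho\otimes r(2(B_i+l_i-1)+1)$.

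First, I treat the discrete part. Since $\psi_{\mE^+}$ is still a non-negative DDR, the block of $\phi^+$ for index $i$ starts at $2(B_i+l_i-1)+1$, and no other block of $\phi^+$ lies between it and $2(A_i-l_i)+1$, because the neighbouring segments satisfy $A_{i-1}<B_i-1$ and $B_{i+1}>A_i$. Applying Proposition \ref{prop: cuspidal support of discrete series}(1) repeatedly, via successive $\rho|\cdot|^x$-Jacquet modules computed by Proposition \ref{prop: partial Jacquet module of discrete series}(2), I get
\[
\pi(\phi,\varepsilon)=\soc\bigl([A_i-l_i,B_i+l_i]_\rho\rtimes\pi(\phi',\varepsilon')\bigr),
\]
where $\phi'$ is $\phi^+$ with its top $i$-block $2(A_i-l_i)+1$ lowered to $2(B_i+l_i-1)+1$ one step at a time. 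This is essentially shifting the whole $i$-chain of $\phi^+$ up by one. I will use the equivalent form in which the $i$-chain is shifted step by step, each step being a segment of length one. It may be cleaner to note directly that $\pi(\phi,\varepsilon)=\soc(\rho|\cdot|^{B_i+l_i}\times\cdots\times\rho|\cdot|^{A_i-l_i}\rtimes\pi(\phi^+,\varepsilon^+))$ arranged in the right order, by lifting each block by one starting from the top. Lemma \ref{lemma: reduced decomposition of representation} then gives SI-ness, since these exponents are positive and avoid their duals.

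Second, I combine this with the generalized segments. The claim is that the induced representation $\bigl[\text{gen. seg. for }\mE\bigr]\times(\text{other segments})\rtimes\pi(\phi,\varepsilon)$ embeds, with the same socle, into
\[
[B_i,A_i]_\rho\times\bigl[\text{gen. seg. for }\mE^+\bigr]\times(\text{others})\rtimes\pi(\phi^+,\varepsilon^+).
\]
The point is that the generalized segment with rows $[B_i+k,-A_i+k]$, together with the column $[B_i+l_i,A_i-l_i]$ coming from the discrete part, reassembles via the Mœglin--Waldspurger column description (\S\ref{subsection: generalized segments}) into $[B_i,A_i]_\rho$ times the shifted generalized segment. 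Concretely, both representations are subrepresentations of the same product of segments $\rho|\cdot|^{x}$, taken after reordering the unlinked columns using Lemma \ref{lemma: segments in extended multi-segments are not linked}. The segments of other indices $j$ commute with $[B_i,A_i]_\rho$ because they are not linked to it, thanks to the DDR gaps. So $\pi(\mE)$ is a subrepresentation of $[B_i,A_i]_\rho\rtimes\pi(\mE^+)$.

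Third, I conclude via SI. I show $[B_i,A_i]_\rho\rtimes\pi(\mE^+)$ is SI by Lemma \ref{lemma: criterion for SI}: its Jacquet module contains $[B_i,A_i]_\rho\otimes\pi(\mE^+)$ with multiplicity one. This follows from the Tadić formula (Proposition \ref{prop: Tadic formula}) once I know that $\pi(\mE^+)$ is $\rho|\cdot|^{A_i}$-reduced, which holds since $A_i$ exceeds all exponents attached to index $i$ in $\mE^+$, and that the dual terms $\rho|\cdot|^{-x}$ with $x>0$ cannot contribute. The main obstacle is the second step: carefully justifying that the reassembled product has the correct socle, i.e. that the cuspidal exponent bookkeeping in the generalized segment matches exactly. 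If a direct argument is messy, I would instead induct on $l_i$, using Lemma \ref{lemma: reduction step in non-negative DDR case} on both $\mE$ and $\mE^+$ and commuting $[B_i,-A_i]_\rho$ past $[B_i,A_i]_\rho$. That reduces everything to the case $l_i=0$, which is just the first step.
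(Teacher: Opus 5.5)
Your second step breaks down, and that step carries the whole content of the lemma.

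\textbf{The supports do not match.} The proposed reassembly of the generalized segment of $\mE$ together with $[B_i+l_i,A_i-l_i]_\rho$ into $[B_i,A_i]_\rho$ times the generalized segment of $\mE^+$ is not an identity of $\GL$-representations, because the cuspidal supports differ. The columns $[B_i+k,-A_i+k]_\rho$ ($0\le k\le l_i-1$) contain the exponents $-A_i,\dots,-A_i+l_i-1$. On the other side these are replaced by $A_i-l_i+1,\dots,A_i$, which come from $[B_i,A_i]_\rho$. So the two inductions can only be compared after exchanging $\rho|\cdot|^{-x}$ with $\rho|\cdot|^{x}$ on the metaplectic side.

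The smallest case shows this. Take $\rho$ of symplectic type and $\mE=\{([2,1]_\rho,1,\eta)\}$, so $\mE^+=\{([1,0]_\rho,1,\eta)\}$. Both discrete parameters are empty on the $\rho$-line, so your first step is vacuous. Writing $\pi_0$ for the common discrete part, the lemma asserts $\soc([1,-2]_\rho\rtimes\pi_0)=\soc([1,2]_\rho\rtimes\soc([0,-1]_\rho\rtimes\pi_0))$. The $\GL$-supports here are $\{1,0,-1,-2\}$ and $\{1,2,0,-1\}$. No reordering of unlinked columns produces the exchange $\rho|\cdot|^{-2}\leftrightarrow\rho|\cdot|^{2}$. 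You would need statements of the type $\rho|\cdot|^{x}\rtimes\sigma\cong\rho|\cdot|^{-x}\rtimes\sigma$ for non-tempered $\sigma$, plus control of which constituent the socle lands in. This is the technical core of M\oe glin's classical argument. In this paper even non-unitary irreducibility for discrete series (Proposition \ref{prop: non-unitary irreducibility for discrete series}) is obtained only through theta correspondence.

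\textbf{The fallback has the same problem.} Applying the statement to $\mE^-$ produces the segment $[B_i+1,A_i-1]_\rho$, not $[B_i,A_i]_\rho$. With $\sigma=\pi((\mE^-)^+)=\pi((\mE^+)^-)$ you would have to prove
\[
\soc\bigl([B_i,-A_i]_\rho\rtimes\soc([B_i+1,A_i-1]_\rho\rtimes\sigma)\bigr)=\soc\bigl([B_i,A_i]_\rho\rtimes\soc([B_i-1,-A_i+1]_\rho\rtimes\sigma)\bigr).
\]
The two sides again differ by $\rho|\cdot|^{-A_i}\leftrightarrow\rho|\cdot|^{A_i}$, so this is not a matter of commuting $[B_i,-A_i]_\rho$ past $[B_i,A_i]_\rho$.

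\textbf{Two smaller points.} First, your initial displayed formula drags the top block of $\phi$ down through the chain. That is wrong once the chain has two or more blocks, since then $\Jac_{\rho,A_i-l_i}\pi(\phi,\varepsilon)=0$ by Proposition \ref{prop: partial Jacquet module of discrete series}(3). Your corrected version $\pi(\phi,\varepsilon)=\soc([B_i+l_i,A_i-l_i]_\rho\rtimes\pi(\phi^+,\varepsilon^+))$, which lowers the bottom block first, is fine.

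Second, the multiplicity-one count in your SI step must rule out contributions $[y,A_i]_\rho\otimes\sigma'\le\mu^*(\pi(\mE^+))$ for every $y\in[B_i,A_i]$. So you need $\Jac_{\rho,y}\pi(\mE^+)=0$ on that whole range, not just $\rho|\cdot|^{A_i}$-reducedness. For $l_i\ge1$, the exponent $y=A_i-l_i$ receives a nonzero contribution in the induced representation defining $\pi(\mE^+)$, namely the contragredient of the bottom-right entry of its generalized segment. Hence this vanishing has to come from Lemma \ref{lemma: highest derivative of representations in a A packet}, which requires knowing that $\pi(\mE^+)\in\Pi_{\psi_{\mE^+}}$.

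\textbf{How the paper avoids this.} It never performs the sign exchange by hand. By the theory of derivatives it suffices that $D_{\rho,A_i}\circ\cdots\circ D_{\rho,B_i}$ sends $\pi(\mE)$ to $\pi(\mE^+)$. This is the instance of Proposition \ref{prop: well-definedness of associated representation of extended multi-segment} where $\textbf{t}$ shifts only the $i$-th segment by one. That proposition is proved by lifting to $\SO(V_{2m+1})$ for $\alpha\gg0$ (Proposition \ref{prop: theta lifts of extended multi-segment} and Lemma \ref{lemma: compatibility of theta lifts and highest derivative}), invoking Atobe's classical result there, and returning by Howe duality.
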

	\begin{proof}
		Let $D=D_{\rho,A_i}\circ D_{\rho,A_i-1}\circ\cdots\circ D_{\rho,B_i}$. By the theory of derivatives, we only need to prove that $D(\pi(\mE^+))=\pi(\mE)$, which is a special case of Proposition \ref{prop: well-definedness of associated representation of extended multi-segment} below.
	\end{proof}
	\index{extended multi-segment shift2@$\mE^+$}
	
	\begin{definition}\label{def: equivalence of extended multi-segments}
		\par $\quad$
		\begin{enumerate}
			\item[(1)] Two extended segments $([A,B]_\rho,l,\eta)$ and $([A',B']_{\rho'},l',\eta')$ are equivalent if the following holds:
			\begin{enumerate}
				\item[$\bullet$] $[A,B]_\rho=[A',B']_{\rho'}$.
				\item[$\bullet$] $l=l'$.
				\item[$\bullet$] $\eta=\eta'$ whenever $l=l'<\frac{b}{2}$.
			\end{enumerate}
			\item[(2)] Similarly, two extended multi-segments $\mE=\cup_\rho\left\{([A_i,B_i]_\rho,l_i,\eta_i)\right\}_{i\in(I_\rho,\succ_\rho)}$ and $\mE'=\cup_\rho\left\{([A_j',B_j']_\rho,l_j',\eta_j')\right\}_{j\in(J_\rho,\succ'_\rho)}$ are equivalent if there exists an order-preserving bijection $I_\rho\leftrightarrow J_\rho$ for every $\rho$, such that for all corresponding pairs $i\leftrightarrow j$, the extended segments $([A_i,B_i]_\rho,l_i,\eta_i)$ and $([A_j',B_j']_\rho,l_j',\eta_j')$ are equivalent.
		\end{enumerate}
	\end{definition}
	
	\par In particular, when $\psi_\mE$ and $\psi_{\mE'}$ are non-negative DDR, it is clear from the definition that, if $\mE$ and $\mE'$ are equivalent, then $\pi(\mE)\cong\pi(\mE')$.
	
	\par The following is a consequence of Theorem \ref{theorem: construction of A-packet non-negative DDR case}.
	
	\begin{theorem}\label{theorem: A-packets can be constructed via extended multi-segments - NDDR case}
		For $\varepsilon\in\mS_\psi^\vee$, we have $\pi(\psi,\varepsilon)=\bigoplus\pi(\mE)$, where $\mE$ runs over all equivalence classes of extended multi-segments with $(\psi,\varepsilon)=(\psi_\mE,\varepsilon_\mE)$.
	\end{theorem}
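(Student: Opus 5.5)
The proof goes by induction, with Theorem \ref{theorem: construction of A-packet non-negative DDR case} as the only real input. The induction is on the total size $\sum_{\rho}\sum_{(\rho,A,B)\in\Jord(\psi)}(A-B)$, that is, on how far $\psi$ is from being a discrete L-parameter.

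\emph{Base case.} Suppose $A=B$ for every Jordan block. Then $\psi=\phi$ is a discrete bounded L-parameter. Every extended multi-segment $\mE$ with $\psi_\mE=\phi$ has $b_i=1$, hence $l_i=0$. By Definition \ref{def: associated representation of NDDR extended multi-segment}, $\pi(\mE)=\pi(\phi,\varepsilon_\mE)$ with $\varepsilon_\mE(\rho,2B_i+1)=\eta_i$. The equivalence class of $\mE$ is determined by the signs $\eta_i$, so there is exactly one class with $\varepsilon_\mE=\varepsilon$, and the statement is tautological.

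\emph{Inductive step.} Fix a block $(\rho,A,B)$ with $A>B$ and apply Theorem \ref{theorem: construction of A-packet non-negative DDR case}. Both parameters appearing on its right-hand side are non-negative DDR and strictly smaller.
\begin{itemize}
\item The first is $\psi_1=\psi'\oplus(\rho,A-1,B+1)$. When $A=B+1$ and $\eta_0=1$ the new block disappears entirely.
\item The second is $\psi'\oplus\bigoplus_{B\le C\le A}(\rho,C,C)$.
\end{itemize}
By induction, $\pi(\psi_1,\varepsilon_1)=\bigoplus\pi(\mE_1)$ and $\pi(\psi'\oplus\cdots,\cdot)=\bigoplus\pi(\mE_2)$.

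\emph{First term.} For each $\mE_1$, replace the segment $([A-1,B+1]_\rho,l,\eta)$ by $([A,B]_\rho,l+1,\eta)$; when the block is absent, insert $([B+1,B]_\rho,1,\eta)$. The resulting $\mE$ satisfies $\mE^-=\mE_1$, so Lemma \ref{lemma: reduction step in non-negative DDR case} gives $\pi(\mE)=\soc([B,-A]_\rho\rtimes\pi(\mE_1))$. Then:
\begin{itemize}
\item Summing over $\mE_1$ should yield $\soc([B,-A]_\rho\rtimes\pi(\psi_1,\varepsilon_1))$. This requires that the socle of the induction from a direct sum is the direct sum of socles, which holds by semisimplicity of $\pi(\psi_1,\varepsilon_1)$.
\item One must also check $\varepsilon_\mE=\varepsilon$. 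From $\varepsilon_\mE(\rho,A,B)=(-1)^{[b/2]+l}\eta^b$, replacing $(b,l)$ by $(b+2,l+1)$ leaves the sign unchanged. So the bookkeeping is consistent with $\eta_0=\varepsilon_1(\rho,A-1,B+1)$, including the degenerate case $A=B+1$.
\end{itemize}

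\emph{Second term.} The discrete summand has $\varepsilon$ alternating $(-1)^{C-B}\eta$ on the blocks $(\rho,C,C)$. It equals $\pi(\mE)$ for $\mE$ containing $([A,B]_\rho,0,\eta)$. Indeed, by Definition \ref{def: associated representation of NDDR extended multi-segment} with $l=0$, the generalized segment is empty and $\phi$ contains $\rho\otimes r(2C+1)$ with exactly these signs. The constraint $\eta_0=\eta^{A-B+1}(-1)^{(A-B+1)(A-B)/2}$ matches $(-1)^{[b/2]}\eta^b$ after a parity check on $b=A-B+1$.

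\emph{Bijection.} It remains to check that the map from pairs $(\mE_1\text{ or }\mE_2)$ to classes of $\mE$ with $\psi_\mE=\psi$ and $\varepsilon_\mE=\varepsilon$ is a bijection.
\begin{itemize}
\item Classes with $l\ge1$ at the chosen block correspond to classes of $\mE_1$ via $\mE\mapsto\mE^-$.
\item Classes with $l=0$ correspond to the sign $\eta$. Here $\eta$ is genuinely determined unless $l=b/2$, which is impossible when $l=0$ and $b\ge2$.
\end{itemize}
I also need the equivalence relation to be respected: when $l=b/2$ the sign $\eta$ is irrelevant, and correspondingly $\mE^-$ has $l-1=(b-2)/2$, so the classes match.

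\emph{Main obstacle.} I expect the hardest step to be justifying that $\soc([B,-A]_\rho\rtimes\bigoplus\pi(\mE_1))$ is multiplicity-free and equal to $\bigoplus\pi(\mE)$. This needs each $[B,-A]_\rho\rtimes\pi(\mE_1)$ to be SI, with the socles pairwise distinct. I would deduce SI from Lemma \ref{lemma: reduction step in non-negative DDR case} together with Lemma \ref{lemma: segments in extended multi-segments are not linked}. I would deduce distinctness from the fact that the derivative $D$ recovers $\pi(\mE_1)$ from $\pi(\mE)$.
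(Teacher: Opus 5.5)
Your proposal is correct and is the paper's argument: the paper's proof is just ``apply Theorem \ref{theorem: construction of A-packet non-negative DDR case} repeatedly,'' and you have written out the bookkeeping behind that. This bookkeeping consists of the correspondence $\mE\mapsto\mE^-$ on classes with $l\ge1$ at the chosen block (via Lemma \ref{lemma: reduction step in non-negative DDR case}), the merging of the blocks $(\rho,C,C)$ into $([A,B]_\rho,0,\eta)$ for $l=0$, and the parity identity $(-1)^{[b/2]}=(-1)^{b(b-1)/2}$.

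Your ``main obstacle'' is not actually needed. The statement is an equality of semisimple representations summed over equivalence classes, so it suffices that each $\soc([B,-A]_\rho\rtimes\pi(\mE_1))$ equals the irreducible $\pi(\mE)$, which Lemma \ref{lemma: reduction step in non-negative DDR case} already gives. Pairwise distinctness of the $\pi(\mE)$ is never used. The paper only proves that distinctness later, via theta correspondence (Proposition \ref{prop: associated representation of extended multi-segment is isomorphic iff equivalent}).
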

	\begin{proof}
		By applying Theorem \ref{theorem: construction of A-packet non-negative DDR case} repeatedly.
	\end{proof}

	\section{Construction of Arthur packets}\label{section: construction of Arthur packets}
	
	\par In this section, we will construct $\Pi_\psi$ for general $\psi$.
	
	\subsection{Atobe's normalization}\label{section:Atobe's normalization}
	
	\par In the first two subsections, we fix a $\psi\in\Psi_\gp(\widetilde{G}_{2n})$. Let $\prec_\rho$ be an admissible order on $\Jord_\rho(\psi)$. That is, $\prec_\rho$ satisfies the following property $(\mP)$, and if $B<0$ for some $(\rho,A,B)\in\Jord(\psi)$, $\prec_\rho$ satisfies $(\mP')$.
	\index{admissible order}
	
	\begin{enumerate}
		\item[$(\mP)$] If $A>A'$ and $B>B'$, then $(\rho,A,B)\succ_\rho(\rho,A',B')$.
		\item[$(\mP')$] If $B>B'$, then $(\rho,A,B)\succ_\rho(\rho,A',B')$.
	\end{enumerate}
	
	\par Following \cite[Definition 3.5]{Atobe2022_A-packet}, we make the following definitions
	
	\begin{definition}\label{def: Z_Ato/W}
		Let $\mZ_{\Ato/\W}(\psi)$ be the set of unordered pairs $\left\{(\rho,a,b),(\rho',a',b')\right\}$ such that $(\rho,a,b),(\rho',a',b')\in\Jord(\psi)$, $\rho=\rho'$, $b\not\equiv b'\mod 2$ and 
		\begin{enumerate}
			\item[$\bullet$] $(\rho,a,b)\succ_\rho(\rho,a',b')\Longrightarrow a'>a$.
			\item[$\bullet$] $(\rho,a,b)\prec_\rho(\rho,a',b')\Longrightarrow a>a'$.
			\item[$\bullet$] $b\in2\ZZ\Longrightarrow b>b'$.
		\end{enumerate}
		For $(\rho,a,b)\in\Jord(\psi)$, let $\mZ_{\Ato/\W}(\psi)_{(\rho,a,b)}:=\left\{(\rho',a',b'):\left\{(\rho,a,b),(\rho',a',b')\right\}\in\mZ_{\Ato/\W}(\psi)\right\}$ and let $\varepsilon_\psi^{\Ato/\W}(\rho,a,b):=(-1)^{|\mZ_{\Ato/\W}(\psi)_{(\rho,a,b)}|}$.
	\end{definition}
	\index{Z Atobe's normalization@$\mZ_{\Ato/\W}(\psi)$, $\mZ_{\Ato/\W}(\psi)_{(\rho,a,b)}$}
	\index{eps Atobe's normalization@$\varepsilon_\psi^{\Ato/\W}$}

	\begin{proposition}\label{prop: properties of Atobe's normalization - classical group case}
		Suppose $\psi\in\Psi_\gp(\SO(2n+1))$. Then we have:
		\begin{enumerate}
			\item[(1)] $\varepsilon_\psi^{\Ato/\W}\in\mS_\psi^\vee$ and $\varepsilon_\psi^{\Ato/\W}(s_\psi)=(-1)^{|\mZ_{\Ato/\W}(\psi)|}$ (when we view $\psi$ as a parameter for $\SO(2n+1)$, the definition of $\mS_\psi$ is different from the definition in \S \ref{subsection: Arthur parameters}. To be more precise, we have $\mS_\psi=\pi_0(S_\psi/Z(\widetilde{G}_{2n}^\vee))$. For more details, see \cite[\S 2]{Xu2017_Moeglin}).
			\item[(2)] If we write $\pi_\Ato(\psi,\varepsilon)=\pi(\psi,\varepsilon\varepsilon_\psi^{\Ato/\W})$ for $\varepsilon\in\mS_\psi^\vee$, then $$S\Theta_{\Ato,\psi}^{\SO(2n+1)}:=\sum_{\varepsilon\in\mS_\psi^\vee}\varepsilon(s_\psi)\pi_\Ato(\psi,\varepsilon)$$ is a stable distribution on $\SO(2n+1)$. In particular, we have $S\Theta_{\Ato,\psi}^{\SO(2n+1)}=(-1)^{|\mZ_{\Ato/\W}(\psi)|}S\Theta_{\psi}^{\SO(2n+1)}$. 
		\end{enumerate} 
	\end{proposition}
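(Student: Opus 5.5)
The plan is to reduce both parts to Atobe's work on classical groups, \cite[\S 3]{Atobe2022_A-packet}, combined with the M\oe glin--Xu description of the normalization characters. Throughout, $\mS_\psi$ denotes the component group for $\SO(2n+1)$, that is, the quotient of $\bm\mu_2^{I^+}$ by the image $z_\psi=(-1,\dots,-1)$ of the center of $\Sp(2n,\CC)$. Elements of $\mS_\psi^\vee$ are therefore the characters $\varepsilon$ of $\bm\mu_2^{\Jord(\psi)}$ with $\prod_{(\rho,a,b)}\varepsilon(\rho,a,b)=1$, the product taken over Jordan blocks with multiplicity.

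For (1), I would first check that $\varepsilon_\psi^{\Ato/\W}$ is well defined on $\bm\mu_2^{I^+}$. When a Jordan block occurs with multiplicity, equal blocks have the same $\mZ_{\Ato/\W}$-neighbours, because the pair condition involves only $a,b$, the parity of $b$, and the relative $\prec_\rho$-position; blocks with the same $(A,B)$ are never separated in an admissible order in a way that changes these conditions. Triviality on $z_\psi$ then follows from a double count. The product of $\varepsilon_\psi^{\Ato/\W}(\rho,a,b)$ over all blocks equals $(-1)^{\sum_{(\rho,a,b)}|\mZ_{\Ato/\W}(\psi)_{(\rho,a,b)}|}=(-1)^{2|\mZ_{\Ato/\W}(\psi)|}=1$, since each unordered pair is counted twice. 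For the value at $s_\psi$, recall that $s_\psi(\rho,a,b)=-1$ exactly when $b$ is even. Hence $\varepsilon_\psi^{\Ato/\W}(s_\psi)=(-1)^N$, where $N$ counts the incidences between pairs in $\mZ_{\Ato/\W}(\psi)$ and their members with even $b$. Every pair in $\mZ_{\Ato/\W}(\psi)$ has $b\not\equiv b'\pmod 2$, so it has exactly one member with even $b$. Therefore $N=|\mZ_{\Ato/\W}(\psi)|$.

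For (2), the key identity to establish is $\varepsilon_\psi^{\Ato/\W}=\varepsilon_\psi^{\M/\W}$ as characters of $\mS_\psi$. Here $\varepsilon_\psi^{\M/\W}$ is the character of \cite[Definition 5.5, Theorem 6.18]{Xu2017_Moeglin} comparing M\oe glin's normalization with Arthur's Whittaker normalization; I would extract this identity from Atobe, who proves it in \cite[Theorem 3.6]{Atobe2022_A-packet}. Granting it, I would use Arthur's endoscopic character identities for $\SO(2n+1)$ at $s=1$. These say that $S\Theta_\psi^{\SO(2n+1)}=\sum_{\varepsilon}\varepsilon(s_\psi)\pi(\psi,\varepsilon)$ is stable. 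Substituting $\varepsilon\mapsto\varepsilon\varepsilon_\psi^{\Ato/\W}$ in this sum gives
\[
\sum_{\varepsilon}\varepsilon(s_\psi)\pi_\Ato(\psi,\varepsilon)=\varepsilon_\psi^{\Ato/\W}(s_\psi)\sum_{\varepsilon'}\varepsilon'(s_\psi)\pi(\psi,\varepsilon')=(-1)^{|\mZ_{\Ato/\W}(\psi)|}S\Theta_\psi^{\SO(2n+1)}.
\]
The last equality uses (1). The right-hand side is a scalar multiple of a stable distribution, hence stable. This proves (2), including the explicit formula.

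The main obstacle is the identity of characters, specifically its compatibility with the chosen admissible order $\prec_\rho$ when $B<0$. Atobe's definition is tailored to orders satisfying $(\mP')$. If the citation does not cover the exact form needed, I would fall back on an independence argument. Changing $\prec_\rho$ by swapping two adjacent blocks changes $\mZ_{\Ato/\W}$ only in pairs involving those two blocks, and I would check by cases that this does not change the character. I would then compare with $\varepsilon_\psi^{\M/\W}$ for a single convenient order in which both definitions can be computed explicitly.
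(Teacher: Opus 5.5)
Your core argument is the paper's. It has three parts. The first is the double count $\prod_{\Jord(\psi)}\varepsilon_\psi^{\Ato/\W}=(-1)^{2|\mZ_{\Ato/\W}(\psi)|}=1$. The second is the evaluation at $s_\psi$; your observation that each pair in $\mZ_{\Ato/\W}(\psi)$ has exactly one member with even $b$ is what the paper's one-line equality silently uses. The third is the substitution $\varepsilon\mapsto\varepsilon\varepsilon_\psi^{\Ato/\W}$ in $S\Theta_\psi^{\SO(2n+1)}=\sum_\varepsilon\varepsilon(s_\psi)\pi(\psi,\varepsilon)$. Two of your auxiliary claims are wrong, however, and should be dropped or repaired.

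\textbf{The identity with M\oe glin's character is not needed.} Nothing requires $\varepsilon_\psi^{\Ato/\W}=\varepsilon_\psi^{\M/\W}$. Your displayed computation uses only two facts from (1): that $\varepsilon_\psi^{\Ato/\W}\in\mS_\psi^\vee$, so the substitution permutes $\mS_\psi^\vee$, and the value $\varepsilon_\psi^{\Ato/\W}(s_\psi)$. Stability is then inherited from $S\Theta_\psi^{\SO(2n+1)}$, so there is no ``main obstacle''. The identity is moreover false in general, so your fallback plan would not succeed. For the elementary parameter $\phi_\alpha$ of \S\ref{section: Adams conjecture}, one checks directly that $\mZ_{\Ato/\W}(\phi_\alpha)=\emptyset$. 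By contrast, Proposition~\ref{prop: adams conjecture - discrete case} and Corollary~\ref{coro: adams conjecture - discrete case - Whittaker normalized} show that M\oe glin's and Arthur's parametrizations of $\Pi_{\phi_\alpha}$ differ on the $\triv$-blocks of $\phi$.

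\textbf{The multiplicity argument is incorrect.} Your justification of well-definedness when blocks have multiplicity fails, because an admissible order can separate equal blocks. Take $\rho$ of symplectic type, $X=(\rho,5,1)$ (so $[A,B]=[2,2]$) with multiplicity two, and $Y=(\rho,4,4)$ (so $[A,B]=[3,0]$). Since $A_Y>A_X$ but $B_Y<B_X$, and all $B\ge0$, the order $X_1\prec_\rho Y\prec_\rho X_2$ is admissible. Then $\{X_1,Y\}\in\mZ_{\Ato/\W}(\psi)$ but $\{X_2,Y\}\notin\mZ_{\Ato/\W}(\psi)$, so the two copies of $X$ receive different values.

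The situation is worse than ambiguity. We get $\varepsilon_\psi^{\Ato/\W}(Y)=-1$. The center of $\Sp(2n,\CC)$ maps to $-1$ only in the $Y$-coordinate of $\bm\mu_2^{I^+}$: in general its image is $((-1)^{m_i})_i$, not $(-1,\dots,-1)$ as you wrote. So for this order $\varepsilon_\psi^{\Ato/\W}$ is not a character of $\mS_\psi$ at all.

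The paper does not address this point; it tacitly treats $\varepsilon_\psi^{\Ato/\W}$ as a function on the multiset $\Jord(\psi)$. Your argument, and the paper's, becomes complete once equal blocks are required to be adjacent in $\succ_\rho$, since adjacent copies have the same partners. You should impose that condition explicitly rather than claim it holds for every admissible order.
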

	\begin{proof}
		Since $S\Theta_{\psi}^{\SO(2n+1)}=\sum_{\varepsilon\in\mS_\psi^\vee}\varepsilon(s_\psi)\pi(\psi,\varepsilon)$, $(2)$ is a direct consequence of $(1)$. For $(1)$, we have $\prod_{(\rho,a,b)\in\Jord(\psi)}\varepsilon_\psi^{\Ato/\W}(\rho,a,b)=\sum\limits_{(\rho,a,b)\in\Jord(\psi)}(-1)^{|\mZ_{\Ato/\W}(\psi)_{(\rho,a,b)}|}=(-1)^{2|\mZ_{\Ato/\W}(\psi)|}=1$. This implies that $\varepsilon_\psi^{\Ato/\W}\in\mS_\psi^\vee$. On the other hand, we have $$\varepsilon_\psi^{\Ato/\W}(s_\psi)=\prod_{(\rho,a,b)\in\Jord(\psi)\atop b\text{ is even}}(-1)^{|\mZ_{\Ato/\W}(\psi)_{(\rho,a,b)}|}=(-1)^{|\mZ_{\Ato/\W}(\psi)|}.$$
	\end{proof}
	
	\par After Proposition \ref{prop: properties of Atobe's normalization - classical group case}, the following definitions are reasonable:
	
	\begin{definition}\label{def: T psi varepsilon in Atobe's normalization}
		For $\psi\in\Psi_\gp(\widetilde{G}_{2n})$, $s\in S_{\psi,2}$, choose $(\psi,s)\leftrightarrow(\textbf{G}^!,\psi^!)$ as in (\ref{equation: basic bijection}). We define:
		\begin{enumerate}
			\item[(1)] $T_{\psi,s}^\Ato:=\epsilon(\psi^{s=-1})\cdot\mT_{\textbf{G}^!,\widetilde{G}_{2n}}(S\Theta_{\Ato,\psi^!}^{G^!})$. By Proposition \ref{prop: properties of Atobe's normalization - classical group case}, we know that $T_{\psi,s}^\Ato=(-1)^{|\mZ_{\Ato/\W}(\psi^!)|}T_{\psi,s}$, where $$\mZ_{\Ato/\W}(\psi^!)=\left\{\left\{(\rho,a,b),(\rho',a',b')\right\}\in\mZ_{\Ato/\W}(\psi):\underline{s}(\rho,a,b)\underline{s}(\rho',a',b')=1\right\}.$$ Thus $T_{\psi,s}^\Ato$ depends only on the image $\underline{s}$ of $s$ in $\mS_\psi$.
			\item[(2)] $\pi_\Ato(\psi,\varepsilon):=|\mS_\psi|^{-1}\sum_{\underline{s}\in\mS_\psi}\varepsilon(\underline{s}s_\psi)T_{\psi,\underline{s}}^\Ato.$
		\end{enumerate}
	\end{definition}
	\index{pi Atobe's normalization@$\pi_\Ato(\psi,\varepsilon)$}
	\index{T Atobe's normalization@$T_{\psi,s}^\Ato$}
	
	\begin{proposition}\label{prop: properties of Atobe's normalization - Mp case}
		For $\psi\in\Psi_\gp(\widetilde{G}_{2n})$, the following is true:
		\begin{enumerate}
			\item[(1)] $\pi_\Ato(\psi,\varepsilon)=\pi(\psi,\varepsilon\varepsilon_\psi^{\Ato/\W})$.
			\item[(2)] If $\psi$ is a non-negative DDR, then $\pi_\Ato(\psi,\varepsilon)=\pi(\psi,\varepsilon)$.
		\end{enumerate}
	\end{proposition}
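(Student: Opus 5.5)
For part (1), I will unwind the definitions and compare coefficients. By Definition \ref{def: T psi varepsilon in Atobe's normalization}(1), $T^\Ato_{\psi,\underline{s}}=(-1)^{|\mZ_{\Ato/\W}(\psi^!)|}T_{\psi,\underline{s}}$. Here $\mZ_{\Ato/\W}(\psi^!)$ consists of the pairs in $\mZ_{\Ato/\W}(\psi)$ on which $\underline{s}$ takes equal values. The main step is to express this sign through the character $\varepsilon_\psi^{\Ato/\W}$ evaluated at $\underline{s}$ and $s_\psi$, modulo a constant independent of $\underline{s}$.

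Write $N=|\mZ_{\Ato/\W}(\psi)|$. Let $N_-(\underline{s})$ be the number of pairs in which exactly one member has $\underline{s}=-1$, so $|\mZ_{\Ato/\W}(\psi^!)|=N-N_-(\underline{s})$. The number of pairs containing at least one $\underline{s}=-1$ member, counted with multiplicity, is $\sum_{\underline{s}(x)=-1}|\mZ_{\Ato/\W}(\psi)_x|$. This sum is congruent to $N_-(\underline{s})$ mod $2$, since a pair with both members at $-1$ is counted twice. Hence
$$(-1)^{N_-(\underline{s})}=\prod_{\underline{s}(x)=-1}\varepsilon_\psi^{\Ato/\W}(x)=\varepsilon_\psi^{\Ato/\W}(\underline{s}),$$
using the pairing convention of \S\ref{subsection: Arthur parameters}. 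So $T^\Ato_{\psi,\underline{s}}=(-1)^{N}\varepsilon_\psi^{\Ato/\W}(\underline{s})T_{\psi,\underline{s}}$.

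Next I apply the same argument to $s_\psi$, which is $-1$ exactly on the blocks with $b$ even. Every pair in $\mZ_{\Ato/\W}(\psi)$ has $b\not\equiv b'$, so it has exactly one even-$b$ member. Therefore $\varepsilon_\psi^{\Ato/\W}(s_\psi)=(-1)^N$, as in the proof of Proposition \ref{prop: properties of Atobe's normalization - classical group case}(1). Since $\mS_\psi$ is an elementary abelian $2$-group and the pairing is multiplicative, $\varepsilon(\underline{s}s_\psi)\varepsilon_\psi^{\Ato/\W}(\underline{s})\varepsilon_\psi^{\Ato/\W}(s_\psi)=(\varepsilon\varepsilon_\psi^{\Ato/\W})(\underline{s}s_\psi)$. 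Substituting into the definition of $\pi_\Ato(\psi,\varepsilon)$ then gives $\pi(\psi,\varepsilon\varepsilon^{\Ato/\W}_\psi)$.

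One point needs checking first: $\varepsilon_\psi^{\Ato/\W}$ should define an element of $\mS_\psi^\vee$ for the metaplectic $\mS_\psi\cong\bm\mu_2^{I^+}$, which has no quotient by the center. I will read it as the character $\underline{s}\mapsto\prod_{\underline{s}(x)=-1}\varepsilon_\psi^{\Ato/\W}(x)$; no product-one condition is needed here. If $\psi$ has multiplicities, $\varepsilon_\psi^{\Ato/\W}$ must be constant on identical blocks, which is clear from the definition of $\mZ_{\Ato/\W}$. I must also confirm that the count of $\mZ_{\Ato/\W}(\psi^!)$ is the one that arises when the admissible order on $\Jord(\psi^!)$ is restricted from $\psi$. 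I expect this bookkeeping of multiplicities to be the only delicate point in part (1).

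For part (2), it suffices by (1) to show $\mZ_{\Ato/\W}(\psi)=\emptyset$ when $\psi$ is a non-negative DDR. In that case the admissible order on each $\Jord_\rho(\psi)$ is the DDR order, with $B_1\le A_1<B_2\le\cdots$. If $(\rho,a,b)\succ(\rho,a',b')$, then $B>A'\ge B'$ and $A\ge B>A'$, so $a=A+B+1>a'$. This contradicts the first condition in Definition \ref{def: Z_Ato/W}, which requires $a'>a$. Hence $\varepsilon_\psi^{\Ato/\W}$ is trivial.
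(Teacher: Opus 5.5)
Your proof is the paper's proof. Your congruence $\sum_{\underline{s}(x)=-1}|\mZ_{\Ato/\W}(\psi)_x|\equiv N_-(\underline{s}) \pmod 2$ is its $|S_1|+2|S_2|$ count, combined in the same way with $\varepsilon_\psi^{\Ato/\W}(s_\psi)=(-1)^{|\mZ_{\Ato/\W}(\psi)|}$; for (2) you supply the check $\mZ_{\Ato/\W}(\psi)=\emptyset$, which the paper only asserts.

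One correction to your aside on multiplicities. Constancy of $\varepsilon_\psi^{\Ato/\W}$ on identical blocks is not automatic, because an admissible order may place a block whose segment contains $[A,B]_\rho$ between two copies of $(\rho,A,B)$. For example, the order $(\triv,6,1)\prec(\triv,5,2)\prec(\triv,6,1)$ gives the two copies the values $-1$ and $1$. Your computation never uses this constancy, and the paper's proof rests on the same unexamined identification of $\mS_\psi$ with sign functions on the multiset $\Jord(\psi)$.
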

	\begin{proof}
		Since $\varepsilon_\psi^{\Ato/\W}$ is trivial when $\psi$ is a non-negative DDR, we only need to prove $(1)$. For $(1)$, we have $$\begin{aligned}
			\varepsilon_\psi^{\Ato/\W}(s)&=\prod_{s(\rho,a,b)=-1}(-1)^{|\mZ_{\Ato/\W}(\psi)_{(\rho,a,b)}|}\\
			&=(-1)^{\sum_{s(\rho,a,b)=-1}|\mZ_{\Ato/\W}(\psi)_{(\rho,a,b)}|}.
		\end{aligned}$$
		To compute $\sum_{s(\rho,a,b)=-1}|\mZ_{\Ato/\W}(\psi)_{(\rho,a,b)}|$, we consider $$\begin{aligned}
			S_1&=\left\{\left\{(\rho,a,b),(\rho,a',b')\right\}\in\mZ_{\Ato/\W}(\psi):s(\rho,a,b)s(\rho',a',b')=-1\right\}\\
			S_2&=\left\{\left\{(\rho,a,b),(\rho,a',b')\right\}\in\mZ_{\Ato/\W}(\psi):s(\rho,a,b)=s(\rho',a',b')=-1\right\}.
		\end{aligned}$$ Each unordered pair in $S_1$ occurs exactly once in the sum and each unordered pair $\left\{(\rho,a,b),(\rho,a',b')\right\}$ in $S_2$ occurs both in $\mZ_{\Ato/\W}(\psi)_{(\rho,a,b)}$ and $\mZ_{\Ato/\W}(\psi)_{(\rho',a',b')}$. Thus, we have $\sum_{s(\rho,a,b)=-1}|\mZ_{\Ato/\W}(\psi)_{(\rho,a,b)}|=|S_1|+2|S_2|$. Note that $\mZ_{\Ato/\W}(\psi)=S_1\sqcup\mZ_{\Ato/\W}(\psi^!)$, we conclude that
		$$\varepsilon_\psi^{\Ato/\W}(s)=(-1)^{|\mZ_{\Ato/\W}(\psi)|-|\mZ_{\Ato/\W}(\psi^!)|}.$$
		By Proposition \ref{prop: properties of Atobe's normalization - classical group case}, we have $\varepsilon_\psi^{\Ato/\W}(s_\psi)=(-1)^{|\mZ_{\Ato/\W}(\psi)|}$. Thus $$\varepsilon_\psi^{\Ato/\W}(ss_\psi)=(-1)^{|\mZ_{\Ato/\W}(\psi)|}(-1)^{|\mZ_{\Ato/\W}(\psi)|-|\mZ_{\Ato/\W}(\psi^!)|}=(-1)^{|\mZ_{\Ato/\W}(\psi^!)|},$$ which completes the proof.
	\end{proof}
	
	\subsection{Good parity case}\label{section: good parity case}

	\par For $\rho\in\Pi_{\unit,\cusp}(\GL(d_\rho))$, write $\Jord_\rho(\psi)$ as $$\Jord_\rho(\psi)=\left\{(\rho,A_1,B_1),(\rho,A_2,B_2),\dots,(\rho,A_m,B_m)\right\}$$ with $(\rho,A_1,B_1)\prec_\rho(\rho,A_2,B_2)\prec_\rho\cdots\prec_\rho(\rho,A_m,B_m)$. Take a sequence of non-negative integers $\textbf{t}_\rho=(t_{\rho,1},t_{\rho,2},\dots,t_{\rho,m})$ such that $$0\le B_1+t_{\rho,1}\le A_1+t_{\rho,1}<B_2+t_{\rho,2}\le A_2+t_{\rho,2}<\cdots<B_m+t_{\rho,m}\le A_m+t_{\rho,m}.$$
	
	\par After taking $\textbf{t}_\rho$ for every $\rho$ such that $\Jord_\rho(\psi)\neq\emptyset$, we set $\textbf{t}=(\textbf{t}_\rho)$ and define $\psi_{\textbf{t}}$ by $\Jord(\psi_{\textbf{t}})=\left\{(\rho,A_i+t_{\rho,i},B_i+t_{\rho,i}):(\rho,A,B)\in\Jord(\psi)\right\}$. It is not hard to see that $\psi_\textbf{t}$ is a non-negative DDR. Thus $\Pi_{\psi_{\textbf{t}}}$ is already known by Theorem \ref{theorem: A-packets can be constructed via extended multi-segments - NDDR case}.
	\index{psi shifted to non-negative DDR@$\psi_{\textbf{t}}$}
	
	\par To relate the representations in $\Pi_{\psi_{\textbf{t}}}$ with those in $\Pi_\psi$, we introduce the following auxiliary symbols:
	
	\begin{enumerate}
		\item[(1)] Define $\psi_i\in\Psi_{\gp}(\widetilde{G}_{2n})$ inductively for $1\le i\le m+1$ by $\psi_i=\psi_{i+1}\oplus\rho\otimes r(a_i+2t_{\rho,i})\otimes r(b_i)\ominus \rho\otimes r(a_i)\otimes r(b_i)$ and $\psi_{m+1}=\psi$.
		\item[(2)] Define $D_{\rho,i,t}=D_{\rho,A_i+t}^{(1)}\circ D_{\rho,A_i+t-1}^{(1)}\circ\cdots\circ D_{\rho,B_i+t}^{(1)}$ (if $B_i+t\le0$, we replace $D_{\rho,1}^{(1)}\circ D_{\rho}^{(1)}$ by $D_{[0,1]_\rho}^{(1)}$) and define $D_{\rho,i}=D_{\rho,i,1}\circ D_{\rho,i,2}\circ\cdots\circ D_{\rho,i,t_{\rho,i}}$.
		\item[(3)] For $s\in\mS_{\psi_i}$, define $s_-\in\mS_{\psi_{i+1}}$ by
		$$s_-(\rho',a',b')=\begin{cases}
			s(\rho',a',b')&\text{if }(\rho',a',b')\neq(\rho,a_i,b_i)\\
			s(\rho,a_i+2t_{\rho,i},b_i)&\text{if }(\rho',a',b')=(\rho,a_i,b_i)\\
			&\text{ and }(\rho,a_i,b_i)\notin\Jord(\psi_i)\\
			s(\rho,a_i+2t_{\rho,i},b_i)s(\rho,a_i,b_i)&\text{if }(\rho',a',b')=(\rho,a_i,b_i)\\
			&\text{ and }(\rho,a_i,b_i)\in\Jord(\psi_i).\\
		\end{cases}$$
		Then $s\mapsto s_-$ defines a projection $\mS_{\psi_i}\twoheadrightarrow\mS_{\psi_{i+1}}$ and an embedding $\mS_{\psi_{i+1}}^\vee\xhookrightarrow{}\mS_{\psi_i}^\vee$. We will view $\mS_{\psi_{i+1}}^\vee$ as a subgroup of $\mS_{\psi_i}^\vee$ via this embedding. 
	\end{enumerate}
	
	\begin{lemma}\label{lemma: one step in reduction to non-negative DDR}
		For $\varepsilon\in\mS_{\psi_i}^\vee$, we have
		$$D_{\rho,i}(\pi_\Ato(\psi_i,\varepsilon))=\begin{cases}
			\pi_\Ato(\psi_{i+1},\varepsilon)&\text{if }\varepsilon\in\mS_{\psi_{i+1}}^\vee\\
			0&\text{otherwise}.
		\end{cases}$$
	\end{lemma}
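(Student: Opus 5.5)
We work at the level of the distributions $T^{\Ato}_{\psi_i,\underline{s}}$ and only pass to the characters $\varepsilon$ at the end, as in Proposition \ref{prop: partial Jacquet module of discrete series}. The derivative $D_{\rho,i}$ is a composite of partial Jacquet modules $\Jac_{\rho,x}$, together with $\Jac_{[0,1]_\rho}$ when $B_i+t\le 0$. The latter is itself a composite of $\Jac_{\rho,1}$ and $\Jac_{\rho,0}$ up to terms that are killed. Hence Lemma \ref{lemma: commutation of spectral transfer and partial Jacquet module} lets us commute $D_{\rho,i}$ past $\mT^\vee_{\textbf{G}^!,\widetilde{G}_{2n}}$. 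Each $\Jac_{\rho,x}$ splits into $\Jac^{1}_{\rho,x}$ on the first $\SO$ factor and $\omega_\rho(-1)\Jac^{-1}_{\rho,x}$ on the second.

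On the classical side, for $\psi^!=\psi^{s=1}\times\psi^{s=-1}$ we would invoke Atobe's computation of the analogous derivative on stable distributions for $\SO(2n'+1)\times\SO(2n''+1)$. This is the classical counterpart of the present lemma, proved in \cite{Atobe2022_A-packet} via Xu's formulas, and it is why we use the Atobe-normalized $S\Theta_{\Ato,\psi^!}$. Applied to the block $(\rho,A_i+t_{\rho,i},B_i+t_{\rho,i})$, it says the derivative lowers this block to $(\rho,A_i,B_i)$ inside whichever factor contains it. So $D_{\rho,i}$ applied to $S\Theta_{\Ato,\psi_i^!}$ equals $S\Theta_{\Ato,\psi_{i+1}^!}$ in that factor, and the contribution from the other factor vanishes.

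We then need to check that all signs cancel. There are three sources: the factors $\omega_\rho(-1)$ from Lemma \ref{lemma: commutation of spectral transfer and partial Jacquet module}, one for each of the $t_{\rho,i}(A_i-B_i+1)$ Jacquet steps when $s(\rho,a_i+2t,b_i)=-1$; the ratio of root numbers $\epsilon(\psi_{i+1}^{s_-=-1})/\epsilon(\psi_i^{s=-1})$; and the change in the normalizations $(-1)^{|\mZ_{\Ato/\W}(\psi^!)|}$. Following the computation in Proposition \ref{prop: Mp version of proposition 5.9 in Xu_moeglin}, the root-number ratio is a power of $\omega_\rho(-1)$ that pairs off against the Jacquet factors, because $B+C\in\ZZ$. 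The exceptional case is $\rho=\triv$ with $x=\frac12$, where the extra $-\rho(\Frob)$ factor of Lemma \ref{lemma: partial Jacquet module of T phi s} appears. We expect it to be absorbed by the Atobe sign change, since the admissible order in $\mZ_{\Ato/\W}$ is the same before and after shifting the block.

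Granting the cancellation, $D_{\rho,i}(T^{\Ato}_{\psi_i,\underline{s}})=T^{\Ato}_{\psi_{i+1},\underline{s}_-}$. Summing with $\varepsilon(\underline{s}s_{\psi_i})$ splits into two cases. If $(\rho,a_i,b_i)\notin\Jord(\psi_i)$, then $s\mapsto s_-$ is a bijection and we directly get $\pi_\Ato(\psi_{i+1},\varepsilon)$; here $\varepsilon(s_{\psi_i})=\varepsilon(s_{\psi_{i+1}})$ since $b_i$ is unchanged. Otherwise $s\mapsto s_-$ has kernel $\langle s_0\rangle$, where $s_0$ is $-1$ exactly at the two blocks, and the same averaging as in Proposition \ref{prop: partial Jacquet module of discrete series}(3) gives $\pi_\Ato(\psi_{i+1},\varepsilon)$ if $\varepsilon(s_0)=1$, i.e. $\varepsilon\in\mS_{\psi_{i+1}}^\vee$, and $0$ otherwise.

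We expect the main obstacle to be the sign bookkeeping: the $\omega_\rho(-1)$ factors, the root-number ratios, and especially the case $\rho=\triv$, $B_i+t\le\frac12$, which involves $D_{[0,1]_\rho}$. There is also the need to know that the classical derivative statement holds in this generality, i.e. with possibly negative $B_i$ and an arbitrary admissible order. If the signs fail to cancel, the fallback is to handle this case separately by reducing to a non-negative DDR and comparing with Theorem \ref{theorem: A-packets can be constructed via extended multi-segments - NDDR case}.
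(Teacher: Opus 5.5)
Your argument follows the paper's proof. You commute $D_{\rho,i}$ past the transfer using Lemma \ref{lemma: commutation of spectral transfer and partial Jacquet module}. You apply Atobe's classical identity $D_{\rho,i}S\Theta_{\Ato,\psi_i^!}^{G_i^!}=S\Theta_{\Ato,\psi_{i+1}^!}^{G_{i+1}^!}$ in the factor containing the shifted block. You get $D_{\rho,i}(T^{\Ato}_{\psi_i,\underline{s}})=T^{\Ato}_{\psi_{i+1},\underline{s}_-}$, and then average as in Proposition \ref{prop: partial Jacquet module of discrete series}.

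The one step you leave open (``granting the cancellation'') is immediate, but you misdiagnose it in two places.

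\textbf{The root numbers.} They enter only through the two normalizing constants $\epsilon(\psi_i^{s=-1})$ and $\epsilon(\psi_{i+1}^{s_-=-1})$, not step by step along the derivative. So when $s(\rho,a_i+2t_{\rho,i},b_i)=-1$ the only factor is
\[
\frac{\epsilon(\rho\otimes r(a_i+2t_{\rho,i}))^{b_i}}{\epsilon(\rho\otimes r(a_i))^{b_i}}=\omega_\rho(-1)^{t_{\rho,i}b_i}.
\]
This cancels exactly the $t_{\rho,i}b_i$ factors $\omega_\rho(-1)$ produced by Lemma \ref{lemma: commutation of spectral transfer and partial Jacquet module}. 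There is no exceptional case $\rho=\triv$, $x=\frac12$. In Lemma \ref{lemma: partial Jacquet module of T phi s} the factor $-\rho(\Frob)$ appears only because a block $(\triv,2)$ is deleted there, so $\epsilon(\triv\otimes r(2))$ is compared with $\epsilon(\triv\otimes r(0))=1$. Here $a_i=A_i+B_i+1\ge 1$, so the block is shifted, never removed, and $\epsilon(\rho\otimes r(a+2))/\epsilon(\rho\otimes r(a))=\omega_\rho(-1)$ for every $a\ge 1$.

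\textbf{The Atobe normalization.} There is no separate sign of the form $(-1)^{|\mZ_{\Ato/\W}(\psi^!)|}$ to track. You apply the classical identity to the Atobe-normalized $S\Theta_{\Ato}$ on both sides, so the normalization is already built in. Your reason for this sign being harmless is also false. $\mZ_{\Ato/\W}$ depends on the $a$'s, and replacing $a_i+2t_{\rho,i}$ by $a_i$ can genuinely change it. This is exactly why the lemma is stated for $\pi_\Ato$ rather than for the Whittaker-normalized $\pi$. In any case, an extra $-1$ could not be ``absorbed'' there.

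With these corrections the sign check closes, and the fallback you mention is not needed.
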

	\begin{proof}
		For $s\in S_{\psi_i,2}$, choose $(\textbf{G}_i^!,\psi_i^!)\leftrightarrow(\psi_i,s)$ as in (\ref{equation: basic bijection}). By Lemma \ref{lemma: commutation of spectral transfer and partial Jacquet module} and \cite[\S 6]{Xu2017_cuspidal}, we have
		$$D_{\rho,i}(T_{\psi_i,\underline{s}}^{\Ato})=\begin{cases}
			\mT_{\textbf{G}_{i,(tb_id_\rho,0)}^!,\widetilde{G}_{2n_i-2tb_id_\rho}}(D_{\rho,i}^{1}S\Theta_{\Ato,\psi_i^!}^{G_i^!})&\text{if }s(\rho,a_i+2t_{\rho,i},b_i)=1\\
			\omega_\rho(-1)^{tb_i}\mT_{\textbf{G}_{i,(0,tb_id_\rho)}^!,\widetilde{G}_{2n_i-2tb_id_\rho}}(D_{\rho,i}^{-1}S\Theta_{\Ato,\psi_i^!}^{G_i^!})&\text{if }s(\rho,a_i+2t_{\rho,i},b_i)=-1.\\
		\end{cases}$$ Here $D_{\rho,i}^{1}$ and $D_{\rho,i}^{-1}$ mean taking the derivative in the first and second $\SO$ factor respectively. Take $(\textbf{G}^!_{i+1},\psi_{i+1}^!)\leftrightarrow(\psi_{i+1},s_-)$ as in (\ref{equation: basic bijection}). Then Atobe's construction in \cite{Atobe2022_A-packet} implies that $D_{\rho,i}S\Theta_{\Ato,\psi_i^!}^{G_i^!}=S\Theta_{\Ato,\psi_{i+1}^!}^{G_{i+1}^!}$. Thus, we have
		$$D_{\rho,i}(T_{\psi_i,\underline{s}}^{\Ato})=\begin{cases}
			T_{\psi_{i+1},\underline{s}_-}&\text{if }s(\rho,a_i+2t_{\rho,i},b_i)=1\\
			\omega_\rho(-1)^{tb_i}\frac{\epsilon(\rho\otimes r(a_i+2t_i))^{b_i}}{\epsilon(\rho\otimes r(a_i))^{b_i}}T_{\psi_{i+1},\underline{s}_-}&\text{if }s(\rho,a_i+2t_{\rho,i},b_i)=-1.
		\end{cases}$$
		Note that $\frac{\epsilon(\rho\otimes r(a_i+2t_i))^{b_i}}{\epsilon(\rho\otimes r(a_i))^{b_i}}=\omega_\rho(-1)^{tb_i}$. Thus $D_{\rho,i}(T_{\psi_i,\underline{s}}^{\Ato})=T_{\psi_{i+1},\underline{s}_-}^{\Ato}$ for all $s\in S_{\psi_i,2}$. We can deduce the lemma from this fact by the same method as in the proof of Proposition \ref{prop: partial Jacquet module of discrete series}.
	\end{proof}
	
	\par Now, we define the functor $D_{\textbf{t}}$ by $D_{\textbf{t}_\rho}=D_{\rho,m}\circ D_{\rho,m-1}\circ\dots D_{\rho,1}$ and $D_{\textbf{t}}=\circ_\rho D_{\textbf{t}_\rho}$. Then, we have the following proposition:
	\index{D derivative with respect to t@$D_{\textbf{t}}$}
	
	\begin{proposition}\label{prop: reduction to non-negative DDR}
		For $\varepsilon\in\mS_{\psi_\textbf{t}}^\vee$, we have
		$$D_{\textbf{t}}(\pi_\Ato(\psi_{\textbf{t}},\varepsilon))=\begin{cases}
			\pi_\Ato(\psi,\varepsilon)&\text{if }\varepsilon\in\mS_\psi^\vee\\
			0&\text{otherwise}.\\
		\end{cases}$$
		Furthermore, for $\pi\in\Pi_{\psi_{\textbf{t}}}$, either $D_{\textbf{t}}(\pi)=0$ or $D_{\textbf{t}}(\pi)$ is irreducible. If $D_{\textbf{t}}(\pi)\cong D_{\textbf{t}}(\pi')\neq0$, then $\pi\cong\pi'$.
	\end{proposition}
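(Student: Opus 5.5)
The first assertion follows by iterating Lemma \ref{lemma: one step in reduction to non-negative DDR} over the blocks. For a fixed $\rho$, the parameters $\psi_1,\dots,\psi_{m+1}$ interpolate between $\psi_{\textbf{t}_\rho}$ (shifted in the $\rho$-part) and $\psi$. The functor $D_{\textbf{t}_\rho}=D_{\rho,m}\circ\cdots\circ D_{\rho,1}$ applies the one-step lemma in sequence. By that lemma, $D_{\rho,i}$ sends $\pi_\Ato(\psi_i,\varepsilon)$ to $\pi_\Ato(\psi_{i+1},\varepsilon)$ if $\varepsilon$ lies in the subgroup $\mS_{\psi_{i+1}}^\vee\subset\mS_{\psi_i}^\vee$, and to $0$ otherwise. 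Composing, and using the chain of embeddings $\mS_\psi^\vee=\mS_{\psi_{m+1}}^\vee\subset\cdots\subset\mS_{\psi_1}^\vee$, gives the claimed formula for one $\rho$. Derivatives for different $\rho$ involve disjoint cuspidal lines, so they commute and act independently on the $\rho$-parts of the parameter. Hence the full $D_{\textbf{t}}$ follows by composing over $\rho$, each time viewing the already-shifted parameter as the new starting point. I should also check that the ordering of blocks (shifting $A_m$ block last, etc.) matches the indexing in the definition of $\psi_i$, so that at each stage the relevant blocks stay separated and the lemma's hypotheses hold.

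For the irreducibility and injectivity statements, I would use the structure of $\Pi_{\psi_{\textbf{t}}}$ given by Theorem \ref{theorem: A-packets can be constructed via extended multi-segments - NDDR case}. Every $\pi\in\Pi_{\psi_{\textbf{t}}}$ is some $\pi(\mE)$, realized as a socle of an induced representation. The key input is that each elementary step of $D_{\rho,i}$ is either a $\rho|\cdot|^x$-derivative with $x\neq0$ (non-self-dual) or a $[0,1]_\rho$-derivative. So I would show that at each step the derivative applied is the \emph{highest} one, using Proposition \ref{prop: highest rho-derivative is irreducible in non-self-dual case} and Proposition \ref{prop: highest [0,zeta]_rho-derivative is irreducible}. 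These give that a highest derivative of an irreducible representation is irreducible and determines the original via $S^{(k)}$ (Definitions \ref{def: socle non-self-dual case}, \ref{def: socle self-dual case}), which yields both irreducibility and injectivity.

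The main obstacle is verifying highestness, i.e.\ that $D^{(2)}_{\rho,x}$ vanishes at each intermediate stage, together with the condition $\Jac_{\rho,1}=0$ needed for the $[0,1]_\rho$ case. Here I would use the separation $A_i+t_{\rho,i}<B_{i+1}+t_{\rho,i+1}$ and the fact that the blocks are shifted in increasing order of $i$. The argument would compute Jacquet modules of the intermediate $\pi_\Ato(\psi_i,\varepsilon)$ through the endoscopic formula for $T^\Ato$ and Lemma \ref{lemma: commutation of spectral transfer and partial Jacquet module}. The corresponding statement for $\SO(2n+1)$ (Atobe's results) shows that the relevant higher Jacquet modules vanish on all the stable distributions, hence on the packet. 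If this approach fails, the fallback is to note that $\pi_\Ato(\psi_{\textbf{t}},\varepsilon)$ is multiplicity free with distinct constituents $\pi(\mE)$. Then the image of the sum under the exact functor $D_{\textbf{t}}$ is $\pi_\Ato(\psi,\varepsilon)$, and one would argue constituent by constituent, using the explicit socle description to show that $D_{\textbf{t}}(\pi(\mE))$ is $0$ or a single irreducible.
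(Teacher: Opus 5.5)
\textbf{Where your route matches the paper.} Your derivation of the displayed formula, iterating Lemma \ref{lemma: one step in reduction to non-negative DDR} along $\psi_1,\dots,\psi_{m+1}$ and then over $\rho$, is the paper's argument. For the second part you also follow the paper: show that every elementary factor of $D_{\textbf{t}}$ is a highest derivative, then invoke Propositions \ref{prop: highest rho-derivative is irreducible in non-self-dual case}, \ref{prop: highest [0,zeta]_rho-derivative is irreducible} and the socle functors.

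\textbf{The gap.} The step you single out as the main obstacle does not work in general. The separation $A_i+t_{\rho,i}<B_{i+1}+t_{\rho,i+1}$ only controls the blocks that are still shifted. While $D_{\rho,i}$ restores block $i$, the blocks $j\prec_\rho i$ already sit at $[A_j,B_j]$. Admissibility allows $j\prec_\rho i$ with $B_j>B_i$ (when $A_j\le A_i$). At the stage $t=B_j-B_i$ the current parameter then has two blocks with $B=B_j$, and $D^{(1)}_{\rho,B_j}$ need not be highest.

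\textbf{A concrete instance.} Take $\rho=\triv$, $\Jord_\triv(\psi)=\{(\triv,\tfrac32,\tfrac32)\prec_\triv(\triv,\tfrac52,\tfrac12)\}$ and $\textbf{t}=(0,2)$. Let $\pi\in\Pi_{\psi_{\textbf{t}}}$ be the discrete series with $\Jord_\triv=\{4,6,8,10\}$ and signs $(\eta,\eta,-\eta,\eta)$; this is $\pi(\mE_{\textbf{t}})$ for $\mE=\{([\tfrac32,\tfrac32]_\triv,0,\eta),([\tfrac52,\tfrac12]_\triv,0,\eta)\}$.
\begin{enumerate}
\item[(a)] By Proposition \ref{prop: partial Jacquet module of discrete series}, Lemma \ref{lemma: computation of partial Jacquet module} and Corollary \ref{coro: strengthening of proposition 4.5.3 in ArthurMp - tempered case}, $\sigma:=D_{\triv,2,2}(\pi)$ is one of the two summands of $[\tfrac32,-\tfrac32]_\triv\rtimes\pi_3$, where $\pi_3=\pi(\triv\otimes(r(6)\oplus r(8)),(-\eta,\eta))$.
\item[(b)] Lemma \ref{lemma: computation of partial Jacquet module} gives $D^{(2)}_{\triv,3/2}([\tfrac32,-\tfrac32]_\triv\rtimes\pi_3)=[\tfrac12,-\tfrac12]_\triv\rtimes\pi_3$. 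This is a sum of two non-isomorphic irreducibles, and $D^{(3)}_{\triv,3/2}=0$.
\item[(c)] A nonzero highest derivative is irreducible. Hence both summands, in particular $\sigma$, have highest $\triv|\cdot|^{3/2}$-derivative of order $2$.
\end{enumerate}
So the first factor $D^{(1)}_{\triv,3/2}$ of $D_{\triv,2,1}$ is not highest. Its output contains $\soc(\triv|\cdot|^{3/2}\rtimes D^{(2)}_{\triv,3/2}(\sigma))$ with multiplicity two. Proposition \ref{prop: highest rho-derivative is irreducible in non-self-dual case} does not apply, and $\sigma$ is not $S^{(1)}_{\triv,3/2}$ of that output. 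By positivity, the second derivative also fails to vanish on the corresponding stable distributions, which contradicts the vanishing you planned to import from $\SO(2n+1)$. This case is not vacuous: for this $\mE$, the criterion of Theorem \ref{theorem: non-vanishing criterion} holds for both admissible orders, so $\pi(\mE)=D_{\textbf{t}}(\pi)\neq0$.

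\textbf{What is needed instead.} In the example, the doubled constituent (the discrete series with $\Jord_\triv=\{2,4,6,8\}$ and signs $(\eta,\eta,-\eta,\eta)$) is annihilated by the next factor $D^{(1)}_{\triv,5/2}$. So irreducibility and injectivity are properties of the whole block $D_{\rho,i,t}$ and must be proved at that level. One option is to treat $D_{\rho,i,t}$ as a single Jacquet functor along $\Z([B_i+t,A_i+t]_\rho)$ and establish the corresponding irreducibility and SI statements. Another is to import the classical result of M\oe glin, Xu and Atobe by a route that does not use stepwise highestness. Your fallback (arguing constituent by constituent) only restates the goal. Note that the paper's proof justifies the same stepwise highestness by citation only, so this configuration is not handled there either.
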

	\begin{proof}
		The first part of the proposition follows from Lemma \ref{lemma: one step in reduction to non-negative DDR}. By Lemma \ref{lemma: commutation of spectral transfer and partial Jacquet module} and \cite[\S 6]{Xu2017_cuspidal}, if $D_{\textbf{t}}(\pi)\neq0$, then $D_{\textbf{t}}(\pi)$ is the highest derivative. Thus, Proposition \ref{prop: highest rho-derivative is irreducible in non-self-dual case} and Proposition \ref{prop: highest [0,zeta]_rho-derivative is irreducible} imply that $D_{\textbf{t}}(\pi)$ is irreducible. Furthermore, there exists a socle functor $S_{\textbf{t}}$, such that $S_{\textbf{t}}D_{\textbf{t}}(\pi)\cong\pi$ whenever $D_{\textbf{t}}(\pi)\neq0$. This completes the proof.
	\end{proof}
	
	\begin{definition}\label{def: associated representation of extended multi-segments}
		Let $\mE=\cup_\rho\left\{([A_i,B_i]_\rho,l_i,\eta_i)\right\}_{i\in(I_\rho,\succ_\rho)}$ be an extended multi-segment such that $\psi_\mE=\psi$. We set $\mE_{\textbf{t}}=\cup_\rho\left\{([A_i+t_{\rho,i},B_i+t_{\rho,i}]_\rho,l_i,\eta_i)\right\}_{i\in(I_\rho,\succ_\rho)}$ and define $\pi(\mE):=D_{\textbf{t}}(\pi(\mE_{\textbf{t}}))$ to be the associated representation of $\mE$. By Theorem \ref{theorem: A-packets can be constructed via extended multi-segments - NDDR case}, we know that $\pi(\mE_{\textbf{t}})\in\Pi_{\psi_{\textbf{t}}}$. Thus, Proposition \ref{prop: reduction to non-negative DDR} implies that $\pi(\mE)$ is either zero or irreducible. Further, Proposition \ref{prop: reduction to non-negative DDR} also implies that, if $\pi(\mE)\neq0$, we have $\pi(\mE)\in\Pi_\psi$. We will prove in \S \ref{section: Consequences of the Adams conjecture} (see Proposition \ref{prop: well-definedness of associated representation of extended multi-segment}) that $\pi(\mE)$ is independent of the choice of $\textbf{t}$.
	\end{definition}
	\index{pi representation associated to an extended multi-segment@$\pi(\mE)$}
	
	\par The following theorem is the main result of this subsection:
	
	\begin{theorem}\label{theorem: A-packets can be constructed via extended multi-segments}
		For $\varepsilon\in\mS_\psi^\vee$, we have $\pi_\Ato(\psi,\varepsilon)=\bigoplus\pi(\mE)$, where $\mE$ runs over all equivalence classes of extended multi-segments with $(\psi,\varepsilon)=(\psi_\mE,\varepsilon_\mE)$.
	\end{theorem}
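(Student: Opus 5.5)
We reduce to the non-negative DDR case through the shift $\textbf{t}$ and the derivative functor $D_{\textbf{t}}$. Fix $\varepsilon\in\mS_\psi^\vee$, regarded as an element of $\mS_{\psi_{\textbf{t}}}^\vee$ via the successive embeddings $\mS_{\psi_{i+1}}^\vee\hookrightarrow\mS_{\psi_i}^\vee$. Proposition \ref{prop: reduction to non-negative DDR} then gives
$$\pi_\Ato(\psi,\varepsilon)=D_{\textbf{t}}(\pi_\Ato(\psi_{\textbf{t}},\varepsilon)).$$
By Proposition \ref{prop: properties of Atobe's normalization - Mp case}(2), $\pi_\Ato(\psi_{\textbf{t}},\varepsilon)=\pi(\psi_{\textbf{t}},\varepsilon)$. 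By Theorem \ref{theorem: A-packets can be constructed via extended multi-segments - NDDR case}, this equals $\bigoplus\pi(\mE')$, where $\mE'$ runs over the equivalence classes of extended multi-segments with $(\psi_{\mE'},\varepsilon_{\mE'})=(\psi_{\textbf{t}},\varepsilon)$. Since $D_{\textbf{t}}$ is additive, we get
$$\pi_\Ato(\psi,\varepsilon)=\bigoplus_{\mE'}D_{\textbf{t}}(\pi(\mE')).$$

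Next we match the index sets. The map $\mE\mapsto\mE_{\textbf{t}}$, which shifts each $[A_i,B_i]_\rho$ to $[A_i+t_{\rho,i},B_i+t_{\rho,i}]_\rho$ and keeps $l_i,\eta_i$ and the order, is a bijection. It goes from extended multi-segments with $\psi_\mE=\psi$ to those with $\psi_{\mE'}=\psi_{\textbf{t}}$ carrying the shifted order. The conditions $0\le l_i\le b_i/2$ and the parity conditions are unchanged because $b_i$ and $a_i+b_i\bmod 2$ are unchanged. The map also respects equivalence, since $l_i$ and $\eta_i$ are unchanged.

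Two compatibilities must be checked. The first is $\varepsilon_{\mE_{\textbf{t}}}=\varepsilon_{\mE}$ under the embedding $\mS_\psi^\vee\hookrightarrow\mS_{\psi_{\textbf{t}}}^\vee$. Distinct segments of $\psi$ with equal $[A,B]$ become distinct after shifting, so the product formula in Definition \ref{def: associated parameter of extended multi-segment} must be compared with the definition of $s\mapsto s_-$, which multiplies the values on coinciding blocks. The second is that an $\mE'$ with $\varepsilon_{\mE'}=\varepsilon$ lying in the image of $\mS_\psi^\vee$ comes from some $\mE$; this holds by surjectivity of the shift on segments. Extended multi-segments $\mE'$ of $\psi_{\textbf{t}}$ whose character lies outside $\mS_\psi^\vee$ contribute only to other $\varepsilon$ and do not appear in the sum above. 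Granting these, the definition $\pi(\mE)=D_{\textbf{t}}(\pi(\mE_{\textbf{t}}))$ in Definition \ref{def: associated representation of extended multi-segments} yields
$$\pi_\Ato(\psi,\varepsilon)=\bigoplus_{\mE}\pi(\mE)$$
over equivalence classes with $(\psi_\mE,\varepsilon_\mE)=(\psi,\varepsilon)$. Zero terms are allowed, and Proposition \ref{prop: reduction to non-negative DDR} shows the nonzero ones are irreducible and pairwise distinct.

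The main obstacle is the sign bookkeeping in the identification $\varepsilon_{\mE_{\textbf{t}}}=\varepsilon_\mE$ when several Jordan blocks of $\psi$ coincide. The character $\varepsilon$ is a function on $\Jord(\psi)$ with multiplicities, and $\varepsilon_\mE$ is defined as a product over equal segments. We would first treat the multiplicity-free case, where both sides visibly agree block by block. For repeated blocks $(\rho,A,B)$, we would check that the recursive rule $s_-(\rho,a_i,b_i)=s(\rho,a_i+2t_{\rho,i},b_i)s(\rho,a_i,b_i)$ pulls back a character $\varepsilon$ to the character on $\psi_{\textbf{t}}$ whose value on each shifted copy is the common value $\varepsilon(\rho,A,B)$. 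This is consistent with assigning each copy the sign $(-1)^{[b_j/2]+l_j}\eta_j^{b_j}$ precisely when the product of those signs equals $\varepsilon_\mE(\rho,A,B)$, which is the required constraint. A subtle point is that $\pi(\mE)$ might depend on the chosen $\textbf{t}$; the statement does not need independence, since any fixed $\textbf{t}$ gives the decomposition.
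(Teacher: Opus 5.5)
Your route is the paper's own: its proof is just ``combine Theorem~\ref{theorem: A-packets can be constructed via extended multi-segments - NDDR case} with Proposition~\ref{prop: reduction to non-negative DDR}''. When $\psi$ has no repeated Jordan blocks, your write-up is a complete version of that argument. The gap is at the step you yourself flag as the main obstacle, and it is not just bookkeeping.

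\textbf{Why the index-set matching fails.} You correctly note that pulling $\varepsilon$ back along $s\mapsto s_-$ gives the character of $\mS_{\psi_{\textbf{t}}}$ whose value on \emph{every} shifted copy of a block $(\rho,A,B)$ of multiplicity $m$ is $\varepsilon(\rho,A,B)$. So $\varepsilon_{\mE_{\textbf{t}}}$ equals this pullback exactly when each copy $j$ satisfies $(-1)^{[b_j/2]+l_j}\eta_j^{b_j}=\varepsilon(\rho,A,B)$. That condition is not equivalent to the product of these signs being $\varepsilon(\rho,A,B)$. If $m$ is even and all signs are $-1$, the product is $+1$. Conversely, mixed signs can have the right product while failing the pullback condition. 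So $\mE\mapsto\mE_{\textbf{t}}$ does not carry $\{\mE:\varepsilon_\mE=\varepsilon\}$, with the product formula of Definition~\ref{def: associated parameter of extended multi-segment}, onto $\{\mE':\varepsilon_{\mE'}=\varepsilon\}$.

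\textbf{A counterexample to the statement as written.} Take $\psi=2(\triv\otimes r(2)\otimes r(1))$ for $\widetilde{\Sp}(4)$ and $\textbf{t}=(0,1)$. Then $\psi_{\textbf{t}}=\triv\otimes r(2)\oplus\triv\otimes r(4)$ and $D_{\textbf{t}}=\Jac_{\triv,3/2}$. All $b$ are odd, so $\mZ_{\Ato/\W}(\psi)=\emptyset$ and $\pi_\Ato(\psi,\cdot)=\pi(\psi,\cdot)$.
\begin{itemize}
\item For $\mE=\{([\frac{1}{2},\frac{1}{2}]_\triv,0,-1),([\frac{1}{2},\frac{1}{2}]_\triv,0,-1)\}$, Proposition~\ref{prop: partial Jacquet module of discrete series}(3) gives $\pi(\mE)=\Jac_{\triv,3/2}\,\pi(\psi_{\textbf{t}},(-1,-1))=\pi(\psi,-1)$. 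This is nonzero, since it is a member of a tempered L-packet. Yet the product formula assigns this $\mE$ the character $\varepsilon_\mE=+1$.
\item The two $\mE$ with product $-1$, namely signs $(1,-1)$ and $(-1,1)$, both have $\pi(\mE)=0$.
\end{itemize}
So with the product definition the claimed identity fails for both characters, and no sign reconciliation can close your step.

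\textbf{What your argument does prove.} Your argument, and the paper's one-line proof, establish the statement with $\varepsilon_\mE(\rho,a_i,b_i)$ read as the common value $(-1)^{[b_i/2]+l_i}\eta_i^{b_i}$. If the signs on the copies of a block differ, then $\varepsilon_{\mE_{\textbf{t}}}\notin\mS_\psi^\vee$, and $\pi(\mE)=0$ by Proposition~\ref{prop: reduction to non-negative DDR} together with exactness of $D_{\textbf{t}}$. Otherwise your bijection $\mE\mapsto\mE_{\textbf{t}}$ matches the index sets exactly as you describe, and the proof closes.

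\textbf{A minor overclaim.} ``Pairwise distinct'' does not follow from Proposition~\ref{prop: reduction to non-negative DDR} alone. It also needs the $\pi(\mE_{\textbf{t}})$ for inequivalent $\mE_{\textbf{t}}$ to be distinct, which the paper only obtains later via theta (Proposition~\ref{prop: associated representation of extended multi-segment is isomorphic iff equivalent}). The theorem does not need this claim, so you can drop it.
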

	\begin{proof}
		By combining Theorem \ref{theorem: A-packets can be constructed via extended multi-segments - NDDR case} and Proposition \ref{prop: reduction to non-negative DDR}.
	\end{proof}
	
	\subsection{General case}\label{section: general case}
	
	\par In this section, we fix a $\psi\in\Psi_{\gp}(\widetilde{G}_{2n})$ and decompose $\psi$ into $\psi=\psi_{\np}^\vee\oplus\psi_{\gp}\oplus\psi_{\np}$ as in (\ref{equation: reduction to good parity}). The main purpose of this subsection is to prove the following proposition:
	
	\begin{proposition}\label{prop: reduction to good parity}
		For $\pi\in\Pi_{\psi_{\gp}}$, $\tau_{\psi_{\np}}\rtimes\pi$ is irreducible ($\tau_{\psi_{\np}}$ is defined as in Proposition \ref{prop: proposition 4.5.3 in ArthurMp}). In particular, by Proposition \ref{prop: proposition 4.5.3 in ArthurMp}, we have $\Pi_{\psi}=\left\{\tau_{\psi_{\np}}\rtimes\pi:\pi\in\Pi_{\psi_{\gp}}\right\}$.
	\end{proposition}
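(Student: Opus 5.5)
We follow the introduction's indication and adapt \cite[\S 6]{Mœglin2006_arthur_packets}. The argument is purely representation-theoretic, and irreducibility can be checked one block of $\psi_\np$ at a time. We induct on the number of irreducible constituents of $\psi_\np$. Write $\psi_\np=\rho\otimes r(a)\otimes r(b)\oplus\psi_\np'$ and set $\tau=\tau_{\rho\otimes r(a)\otimes r(b)}$. Since generalized segments attached to different blocks of a parameter commute up to irreducible products on the $\GL$ side, we have $\tau_{\psi_\np}\cong\tau\times\tau_{\psi_\np'}$. We may therefore assume inductively that $\pi'=\tau_{\psi_\np'}\rtimes\pi$ is irreducible, and it then suffices to show that $\tau\rtimes\pi'$ is irreducible. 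Here $\pi'$ lies in the packet of $\psi'=\psi_\np'^\vee\oplus\psi_\gp\oplus\psi_\np'$ by Proposition \ref{prop: proposition 4.5.3 in ArthurMp}.

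The basic irreducibility criterion we aim for is the following. The representation $\tau\rtimes\pi'$ is irreducible as soon as
\begin{enumerate}
\item[(i)] $\tau\rtimes\pi'$ is SI, so that it has a unique irreducible subrepresentation occurring with multiplicity one, and
\item[(ii)] $\tau\rtimes\pi'\cong\tau^\vee\rtimes\pi'$ up to semisimplification, and the socle of $\tau^\vee\rtimes\pi'$ equals the socle of $\tau\rtimes\pi'$.
\end{enumerate}
Indeed, by Proposition \ref{prop: consequence of MVW-involution}(2), the irreducible quotients of $\tau\rtimes\pi'$ are exactly the irreducible subrepresentations of $\tau^\vee\rtimes\pi'$. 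Hence the unique subrepresentation of $\tau\rtimes\pi'$ is also its unique quotient. Since it occurs with multiplicity one, it must be all of $\tau\rtimes\pi'$.

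To establish (i), we use the non-good-parity hypothesis on $\rho\otimes r(a)\otimes r(b)$. The cuspidal support of $\tau$ is contained in the lines $\rho|\cdot|^x$ with $x$ ranging in $\frac{a-b}{2}+\ZZ$. There are two cases.
\begin{enumerate}
\item[(a)] If $\rho$ is not self-dual, then $\rho^\vee\not\cong\rho$.
\item[(b)] If $\rho$ is self-dual, the block is of the wrong parity. The exponents then have the opposite integrality to those of every $\rho$-block in $\psi_\gp$, and also to the exponents that can appear in the cuspidal support of $\pi$.
\end{enumerate}
In either case, the $\rho$-part of the supercuspidal support of $\pi'$ that interacts with $\tau$ comes only from $\tau_{\psi_\np'}$. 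The $\mu^*$-computation via the Tadi\'c formula (Proposition \ref{prop: Tadic formula}) then shows that $\tau\otimes\pi'$ appears in $\mu^*(\tau\rtimes\pi')$ with multiplicity one. The other possible terms have the shape $(\text{part of }\tau)^\vee\times(\text{part of }\tau)\otimes(\cdots)$, and they cannot produce $\tau$ again, by Zelevinsky's classification applied to the generalized segment $\tau$. Lemma \ref{lemma: criterion for SI} then gives (i). Case (b) is exactly where the parity condition enters: it forbids cancellation against the discrete part, which is the usual source of reducibility.

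For (ii), we transport the problem to classical groups through theta correspondence, as is done in the proof of Proposition \ref{prop: non-unitary irreducibility for discrete series}. Take $\alpha\gg0$. By Lemma \ref{lemma: compatibility of theta lifts and socle}, the lift $\theta_{-\alpha}$ sends the socle of $\tau\rtimes\pi'$ and the socle of $\tau^\vee\rtimes\pi'$ to the socles of $\tau\rtimes\theta_{-\alpha}(\pi')$ and of $\tau^\vee\rtimes\theta_{-\alpha}(\pi')$ respectively. To apply the lemma to a segment rather than a single character, we iterate it along the columns of $\tau$, or use part (2) after checking that $\Jac_\tau$ is irreducible, which follows from the multiplicity-one computation in (i). The Adams conjecture for $\alpha\gg0$, proved later in the paper in the discrete and general good-parity settings, puts $\theta_{-\alpha}(\pi')$ in an A-packet of $\SO$ whose parameter again has $\rho\otimes r(a)\otimes r(b)$ as a non-good-parity block. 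M\oe glin's result \cite[\S 6]{Mœglin2006_arthur_packets} then says that $\tau\rtimes\theta_{-\alpha}(\pi')$ is irreducible. Consequently the two socles have the same theta lift, and Howe duality (Theorem \ref{theorem: Howe's duality}) gives (ii). The last sentence of the proposition then follows by combining the irreducibility with Proposition \ref{prop: proposition 4.5.3 in ArthurMp}.

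The main obstacle is the multiplicity-one computation in (i) in the self-dual wrong-parity case. This requires tracking the whole $M^*(\tau)$ for a generalized segment, not just a single segment. If that computation resists, the fallback is to prove irreducibility entirely through the theta correspondence: show that every irreducible subquotient of $\tau\rtimes\pi'$ lifts into $\tau\rtimes\theta_{-\alpha}(\pi')$ for $\alpha\gg0$ by Kudla's filtration, and then compare lengths using Howe duality.
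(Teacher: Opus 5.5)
\textbf{There is a genuine gap at step (i).} You flagged (i) as the main obstacle, but the multiplicity-one claim there is false, and no finer computation of $M^*(\tau)$ will rescue it.

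\emph{Self-dual case (b).} If $\rho$ is self-dual, then $\tau$ is a Speh representation with $\tau^\vee\cong\tau$. In that case $M^*(\tau)$ contains $\tau\otimes 1$ twice: once from $\tau\otimes 1\le m^*(\tau)$, and once, as $\tau^\vee\otimes 1$, from $1\otimes\tau\le m^*(\tau)$. Hence $\tau\otimes\pi'$ occurs at least twice in $\mu^*(\tau\rtimes\pi')$. The simplest instance is $\psi_\np=\triv\otimes r(1)\otimes r(1)$, which is of orthogonal type and so not of good parity. There $\tau$ is the trivial character of $\GL(1)$, and $\mu^*(\tau\rtimes\pi)\ge 2\,\tau\otimes\pi$.

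\emph{Non-self-dual case (a).} The same doubling occurs as soon as the block has multiplicity at least $2$ in $\psi_\np$, because $\tau_{\psi_\np'}$ then contains another copy of $\tau$.

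\emph{Consequences.} Lemma \ref{lemma: criterion for SI} is unavailable. The irreducibility of $\Jac_\tau$ that you need for part (2) of Lemma \ref{lemma: compatibility of theta lifts and socle} in (ii) fails for the same reason. Irreducibility of a unitary induction such as $\triv\rtimes\pi$ is an $R$-group phenomenon that no Jacquet-module count can detect. The paper obtains it from endoscopy, via Corollary \ref{coro: strengthening of proposition 4.5.3 in ArthurMp - tempered case} and Proposition \ref{prop: proposition 4.5.3 in ArthurMp}, in exactly the case where all blocks have $A=B$. Your fallback does not close the gap either, since Lemma \ref{lemma: compatibility of theta lifts and socle} controls embeddings, not arbitrary subquotients.

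\textbf{Two further problems with step (ii).}
\begin{itemize}
\item It is superfluous. Both $\tau$ and $\pi'$ are unitary, so $\tau\rtimes\pi'$ is semisimple and SI already implies irreducible. This is how the paper concludes.
\item Applying the Adams conjecture to $\pi'$ is circular once $\psi_\np'\neq0$. Then $\psi'$ is not of good parity, and the paper deduces Theorem \ref{theorem: adams conjecture} for such parameters from the very proposition you are proving.
\end{itemize}

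\textbf{How the paper proceeds.}
\begin{enumerate}
\item It reduces, using $D_{\textbf{t}}$ (Proposition \ref{prop: reduction to non-negative DDR}) and Lemma \ref{lemma: reduction step in non-negative DDR case}, to $\psi_\gp$ a non-negative DDR and $\pi$ a discrete series.
\item It shifts the blocks of $\psi_\np$ by derivatives to the configuration $0\le B_1\le A_1<B_2\le\cdots$.
\item If all $A_i=B_i$, the tempered corollary finishes the proof.
\item Otherwise it picks a block with $A>B$. Using theta correspondence, with the Adams conjecture needed only for discrete series, it shows $[A,-B]_\rho\rtimes\pi\cong[B,-A]_{\rho^\vee}\rtimes\pi$ (Lemma \ref{lemma: reduction to good parity - lemma}).
\item It then embeds $\tau_{\psi_\np}\rtimes\pi$ into $[B,-A]_{\rho^\vee}\times[B,-A]_\rho\rtimes\varpi$, where $\varpi=\sigma'\times S(\rho,A-1,B+1)\rtimes\pi$ is irreducible by induction on $\sum(A_i-B_i)$.
\item Since $A>B$, the segment $[B,-A]_\rho$ is not self-dual, so the iterated derivative recovers $\varpi$ with multiplicity one. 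This gives SI, and unitarity gives irreducibility.
\end{enumerate}
The condition $A>B$ is precisely what removes the doubling that breaks your step (i).
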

	
	\par By Theorem \ref{theorem: A-packets can be constructed via extended multi-segments}, we write $\pi=\pi(\mE)$ with $\mE$ an extended multi-segment of $\widetilde{G}_{2n}$. Choose $\textbf{t}$ as in \S \ref{section: good parity case} so that $\pi(\mE)=D_{\textbf{t}}(\pi(\mE_{\textbf{t}}))$. Since $\Supp(\tau_{\psi_{\np}})$ is disjoint from the cuspidal lines that support $\Jord(\psi_\gp)$, we have $D_{\textbf{t}}(\tau_{\psi_{\np}}\rtimes\pi(\mE_{\textbf{t}}))=\tau_{\psi_{\np}}\rtimes D_{\textbf{t}}(\pi(\mE_{\textbf{t}}))=\tau_{\psi_{\np}}\rtimes\pi(\mE)$. Thus, by the theory of derivatives (i.e. Propositions \ref{prop: highest rho-derivative is irreducible in non-self-dual case} and \ref{prop: highest [0,zeta]_rho-derivative is irreducible}), it is sufficient to prove that $\tau_{\psi_{\np}}\rtimes\pi(\mE_{\textbf{t}})$ is irreducible. Therefore, we may assume that $\psi_{\gp}$ is a non-negative DDR.
	
	\par Write $\mE=\cup_\rho\left\{([A_i,B_i],l_i,\eta_i)\right\}_{i\in(I_\rho,\succ_\rho)}$. We make induction on $\sum_\rho\sum_{i\in I_\rho}l_i$. Suppose that $\sum_\rho\sum_{i\in I_\rho}l_i>0$. Then Lemma \ref{lemma: reduction step in non-negative DDR case} implies that $\pi(\mE)=\soc([B,-A]_\rho\rtimes\pi(\mE^-))$. Thus $\tau_{\psi_{\np}}\rtimes\pi(\mE)$ can be embedded into $[B,-A]_\rho\rtimes(\tau_{\psi_{\np}}\rtimes\pi(\mE^-))$. By induction hypothesis, $\tau_{\psi_{\np}}\rtimes\pi(\mE^-)$ is irreducible. Consider $D=\Jac_{\rho,-A}\circ\Jac_{\rho,-A+1}\circ\dots\circ\Jac_{\rho,B}$. Then $D([B,-A]_\rho\rtimes(\tau_{\psi_{\np}}\rtimes\pi(\mE^-)))=\tau_{\psi_{\np}}\rtimes\pi(\mE^-))$ is irreducible, which implies that $[B,-A]_\rho\rtimes(\tau_{\psi_{\np}}\rtimes\pi(\mE^-))$ is SI. In particular, $\tau_{\psi_{\np}}\rtimes\pi(\mE)$ is irreducible. In conclusion, we only need to consider the case when $\sum_\rho\sum_{i\in I_\rho}l_i=0$. That is, we may assume that $\pi$ is discrete.
	
	\par For $\rho\in\Pi_{\unit,\cusp}(\GL(d_\rho))$, $A,B\in\frac{1}{2}\ZZ$, $t\in\ZZ_{\ge0}$ with $A-B\in\ZZ_{\ge0}$, we write
	$$S(\rho,A,B)=\begin{bmatrix}
		B&\dots&A\\
		\vdots&&\vdots\\
		-A&\dots&-B\\
	\end{bmatrix}_\rho.$$
	Then we have $\tau_{\psi_{\np}}=\bigtimes_{(\rho,A,B)\in\Jord(\psi_{\np})}S(\rho,A,B)$.
	
	\par We fix a total order $\succ$ on $\Jord(\psi_{\np})$ satisfying the condition that $B_i>B_j\Rightarrow(\rho_i,A_i,B_i)\succ(\rho_j,A_j,B_j)$. Furthermore, we choose a sequence $(t_i)$ of non-negative integers such that $B_i+t_i\ge0$ and $B_i+t_i>A_j+t_j$ holds for all $i\succ j$. Now, we define:
	$$\begin{aligned}
		\sigma_{\ge i}&=(\bigtimes_{j\ge i}S(\rho_j,A_j+t_j,B_j+t_j))\times(\bigtimes_{j<i}S(\rho_j,A_j,B_j)),\\
		\sigma_{> i}&=(\bigtimes_{j> i}S(\rho_j,A_j+t_j,B_j+t_j))\times(\bigtimes_{j\le i}S(\rho_j,A_j,B_j)),\\
	\end{aligned}$$
	and for $(\rho_i,A_i,B_i)\in\Jord(\psi_{\np})$, we define a functor $D_i$ by:
	$$\begin{aligned}
		D_{i,k}&=\begin{cases}
			D_{\rho,A_i+k}^{(2)}\circ D_{\rho,A_i-1+k}^{(2)}\circ\cdots\circ D_{\rho,B_i+k}^{(2)}&\text{if }\rho\text{ is self-dual}\\
			D_{\rho,A_i+k}\circ D_{\rho,A_i-1+k}\circ\dots\circ D_{\rho,B_i+k}&\\
			\circ D_{\rho^\vee,A_i+k}\circ D_{\rho^\vee,A_i-1+k}\circ\cdots D_{\rho^\vee,B_i+k}&\text{if }\rho\text{ is not self-dual},\\
		\end{cases}\\
		D_i&=D_{i,1}\circ D_{i,2}\circ\cdots\circ D_{i,t_m}.\\
	\end{aligned}$$
	In particular, if $B_i+k\le 0$, we replace the term $D_{\rho,1}^{(m)}\circ D_{\rho,0}^{(m)}$ from the definition of $D_{i,k}$ by $D_{[0,1]_\rho}^{(m)}$.
	
	\par By direct computation, we have $D_i(\sigma_{\ge i}\rtimes\pi)=\sigma_{>i}\rtimes\pi$. Note that $D_i(\sigma_{\ge i})$ is the highest derivative of $\sigma_{>i}$. We only need to prove that $(\bigtimes_i S(\rho_i,A_i+t_i,B_i+t_i))\rtimes\pi$ is irreducible. Thus, we may assume that $\Jord(\psi_{\np})=\left\{(\rho_1,A_1,B_1),(\rho_2,A_2,B_2),\cdots,(\rho_m,A_m,B_m)\right\}$ with $$0\le B_1\le A_1<B_2\le A_2<\cdots<B_m\le A_m.$$
	
	\begin{lemma}\label{lemma: reduction to good parity - lemma}
		In the above setting, for $(\rho,A,B)\in\Jord(\psi_{\np})$, $[A,-B]_\rho\rtimes\pi$ is irreducible.
	\end{lemma}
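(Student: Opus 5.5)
We reduce to $\pi$ discrete; by the reductions preceding the lemma we may assume this, so $\pi=\pi(\phi,\varepsilon)$ with $\phi\in\Phi_{\bdd,2}$. The segment $[A,-B]_\rho$ has $\rho$ and $\phi$ of opposite parity type (since $(\rho,A,B)\in\Jord(\psi_\np)$ and $\phi$ is of good parity), or $\rho$ is not self-dual. In either case every exponent $x$ in $[A,-B]_\rho$ satisfies $2|x|+1\notin\Jord_\rho(\phi)$, and also $2|x|-1\notin\Jord_\rho(\phi)$, because the parity of $2x+1$ is the wrong one for $\rho$ relative to $\phi$ (or $\Jord_\rho(\phi)=\emptyset$).

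The plan is to use the theta correspondence exactly as in the proof of Proposition \ref{prop: non-unitary irreducibility for discrete series}. Write $\sigma_+=\soc([A,-B]_\rho\rtimes\pi)$ and $\sigma_-=\soc([B,-A]_{\rho^\vee}\rtimes\pi)$; by Proposition \ref{prop: consequence of MVW-involution} the cosocle of $[A,-B]_\rho\rtimes\pi$ is $\sigma_-$ (up to taking contragredients in the $\GL$ part). The first step is to show $[A,-B]_\rho\rtimes\pi$ is SI. For this I would compute $\Jac_{\rho,A}\circ\cdots\circ\Jac_{\rho,-B}$ of it via Lemma \ref{lemma: computation of partial Jacquet module}. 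Since $\pi$ is $\rho|\cdot|^x$-reduced for all relevant $x$ by Proposition \ref{prop: partial Jacquet module of discrete series}(1), the only contributions come from the segment itself. Then $[A,-B]_\rho\otimes\pi$ occurs with multiplicity one in $\mu^*$, and Lemma \ref{lemma: criterion for SI} gives SI. The same argument applies to $[B,-A]_{\rho^\vee}\rtimes\pi$. The mixing term $\Jac^\op_{\rho^\vee,-x}$ could create extra copies when $A,B$ overlap with the dual segment. I would check this by comparing supports, noting that for $\rho$ self-dual with $B\ge0$ the segment $[A,-B]$ is not self-dual as a multiset unless $A=B$.

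Irreducibility then follows once $\sigma_+=\sigma_-$, since an SI module whose socle equals its cosocle, with that constituent of multiplicity one, is irreducible. To prove $\sigma_+=\sigma_-$, take $\alpha\gg0$. By Lemma \ref{lemma: compatibility of theta lifts and socle}(2), applicable because the relevant Jacquet piece is irreducible by the computation above, we get $\theta_{-\alpha}(\sigma_\pm)=\soc(\text{segment}\rtimes\theta_{-\alpha}(\pi))$. Proposition \ref{prop: adams conjecture - discrete case} identifies $\theta_{-\alpha}(\pi)$ with a representation of an $\SO$ or $\Or$ group in an elementary packet. The analogous statement for classical groups, namely M\oe glin's irreducibility of $\tau_{\psi_\np}\rtimes\pi$ from \cite[\S 6]{Mœglin2006_arthur_packets}, shows that the induced representation is irreducible there. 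This forces $\theta_{-\alpha}(\sigma_+)=\theta_{-\alpha}(\sigma_-)$, and Howe duality (Theorem \ref{theorem: Howe's duality}) gives $\sigma_+=\sigma_-$.

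The main obstacle is the last transfer. The added block $\triv\otimes r(1)\otimes r(\alpha)$ in $\phi_\alpha$ changes good parity only for $\rho=\triv$. One must check that $(\rho,A,B)$ remains of bad parity for $\phi_\alpha$ on the orthogonal side, which holds since bad parity is determined by $\rho$ and $a+b$ and the symplectic/orthogonal type switches consistently. One must also check that the condition in Lemma \ref{lemma: compatibility of theta lifts and socle}(2), that the $\Jac$ be irreducible, genuinely holds. If that condition fails, I would fall back on a direct Jacquet-module argument: every constituent $\tau$ of $[A,-B]_\rho\rtimes\pi$ has $\mu^*$ containing $[A,-B]_\rho\otimes\pi$ or its MVW dual, hence embeds into the SI module, and multiplicity one forces length one.
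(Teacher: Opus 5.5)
Your overall architecture matches the paper's: an SI statement from a Jacquet-module computation, MVW, the theta lift for $\alpha\gg0$ via Proposition \ref{prop: adams conjecture - discrete case}, M\oe glin's irreducibility on the orthogonal side, and Howe duality. The gap is that the computation you hang everything on is false in the main case, $\rho$ self-dual (the blocks in $I^-$).

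\textbf{The multiplicity-one claim fails.} $[A,-B]_\rho\otimes\pi$ does \emph{not} occur once in $\mu^*([A,-B]_\rho\rtimes\pi)$. The summand $[A,B+1]_\rho\otimes[B,-B]_\rho$ of $m^*([A,-B]_\rho)$ contributes to $M^*([A,-B]_\rho)$ the term $[B,-B]_\rho^\vee\times[A,B+1]_\rho\otimes 1=[B,-B]_\rho\times[A,B+1]_\rho\otimes 1$. Since $[B+1,A]$ and $[-B,B]$ are juxtaposed, this product contains $[A,-B]_\rho$. Equivalently, iterate Lemma \ref{lemma: computation of partial Jacquet module} (the composite should be written in this order). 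At the step $x=B$ the term $\Jac^\op_{\rho,-B}([B,-B]_\rho)=[B,-B+1]_\rho$ enters, and one gets
\[\Jac_{\rho,-B}\circ\cdots\circ\Jac_{\rho,A}([A,-B]_\rho\rtimes\pi)=2\pi.\]
Your support check misses this because the extra copy comes from the self-dual middle piece $[B,-B]_\rho$, not from the whole segment being self-dual.

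This has two consequences. First, Lemma \ref{lemma: criterion for SI} cannot be applied to $[A,-B]_\rho\rtimes\pi$. Second, the hypothesis of Lemma \ref{lemma: compatibility of theta lifts and socle}(2) for $\sigma_+\hookrightarrow[A,-B]_\rho\rtimes\pi$ is not merely unverified but false. If the lemma is true, then $\sigma_+=[A,-B]_\rho\rtimes\pi$ and $\Jac_{[A,-B]_\rho}(\sigma_+)=2\pi$. (For $A=B$ the same doubling occurs. There irreducibility is just the tempered statement, and the lemma is only used later for $A>B$.)

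\textbf{How the paper handles SI.} The paper runs the derivative only on $[B,-A]_{\rho^\vee}\rtimes\pi$. Because $A>B$, no $\Jac^\op$-term ever contributes, so $\Jac_{\rho^\vee,-A}\circ\cdots\circ\Jac_{\rho^\vee,B}([B,-A]_{\rho^\vee}\rtimes\pi)=\pi$. Hence $\sigma_-$ is irreducible of multiplicity one, and by Proposition \ref{prop: consequence of MVW-involution} it is the cosocle of $[A,-B]_\rho\rtimes\pi$. It then suffices to show that every irreducible subrepresentation of $[A,-B]_\rho\rtimes\pi$ is $\sigma_-$; SI of $[A,-B]_\rho\rtimes\pi$ is never needed. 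The same computation validly gives Lemma \ref{lemma: compatibility of theta lifts and socle}(2) for $\sigma_-\hookrightarrow[B,-A]_{\rho^\vee}\rtimes\pi$.

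\textbf{The subrepresentation side still needs an argument.} You need a separate reason why $\theta_{-\alpha}(\sigma_+)$ embeds into $[A,-B]_\rho\rtimes\theta_{-\alpha}(\pi)$. It does not follow from your computation, and the paper also just cites the lemma at this point.

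Your fallback does not close this. A term $[A,-B]_\rho\otimes\pi$ in the semisimplified $\mu^*(\tau)$ yields no embedding, since Frobenius reciprocity needs it as a quotient of the actual Jacquet module. Nothing forces every constituent to carry such a term rather than only terms of the form $[B,y]_\rho\times[A,1-y]_\rho\otimes\pi$. If such a soft argument worked, the theta correspondence would be unnecessary.
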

	\begin{proof}
		Write $\sigma_s=\soc([A,-B]_\rho\rtimes\pi)$ and $\sigma_q=\cos([A,-B]_\rho\rtimes\pi)=\soc([B,-A]_{\rho^\vee}\rtimes\pi)$. Consider $D=\Jac_{\rho^\vee,-A}\circ\Jac_{\rho^\vee,-A+1}\circ\cdots\circ\Jac_{\rho^\vee,B}$. Then $D([B,-A]_{\rho^\vee}\rtimes\pi)=\pi$ is irreducible, which implies that $[B,-A]_{\rho^\vee}\rtimes\pi$ is SI (because every irreducible subrepresentation $\tau$ of $[B,-A]_{\rho^\vee}\rtimes\pi$ must satisfy $D(\tau)=\pi$). Thus, we only need to prove that $\sigma_s=\sigma_q$. By Howe's duality (Theorem \ref{theorem: Howe's duality}), we only need to prove that $\theta_{-\alpha}(\sigma_s)=\theta_{-\alpha}(\sigma_q)$ for $\alpha\gg0$. Lemma \ref{lemma: compatibility of theta lifts and socle} implies that $\sigma_s\xhookrightarrow{}[A,-B]_\rho\rtimes\theta_{-\alpha}(\pi)$ and $\sigma_q\twoheadrightarrow{}[A,-B]_\rho\rtimes\theta_{-\alpha}(\pi)$. By Proposition \ref{prop: adams conjecture - discrete case} below, we know that $\theta_{-\alpha}(\pi)$ is elementary. Thus \cite[\S 6]{Mœglin2006_arthur_packets} implies that $[A,-B]_\rho\rtimes\theta_{-\alpha}(\pi)$ is irreducible. This completes the proof.
	\end{proof}
	
	\par Suppose that $A=B$ holds for all $(\rho,A,B)\in\Jord(\psi_\np)$. Then $S(\rho,A,B)=[A,-A]_\rho$. The proposition follows from Corollary \ref{coro: strengthening of proposition 4.5.3 in ArthurMp - tempered case} in this case. Thus, we may assume that $A>B$ for some $(\rho,A,B)\in\Jord(\psi_\np)$.
	
	\par Now, we fix a $(\rho,A,B)\in\Jord(\psi_\np)$ with $A>B$ and let $\sigma'=\bigtimes_{(\rho',A',B')\neq(\rho,A,B)}S(\rho,A,B)$. Suppose that $\tau$ is an irreducible subrepresentation of $\tau_{\psi_{\np}}\rtimes\pi$. Then $\tau$ can be embedded into $$\sigma'\times\soc([B,-A]_\rho\times S(\rho,A-1,B+1))\times [A,-B]_\rho\rtimes\pi.$$
	By Lemma \ref{lemma: reduction to good parity - lemma}, we have $[A,-B]_\rho\rtimes\pi=[B,-A]_{\rho^\vee}\rtimes\pi$. Note that $[B,-A]_{\rho^\vee}$ commutes with $\soc([B,-A]_\rho\times S(\rho,A-1,B+1))$. Thus $\tau$ can be embedded into
	$$[B,-A]_{\rho^\vee}\times[B,-A]_{\rho}\rtimes(\sigma'\times S(\rho,A-1,B+1)\rtimes\pi).$$
	
	\par By induction on $\sum_i(A_i-B_i)$, we may assume that $\pi'=\sigma'\times S(\rho,A-1,B+1)\rtimes\pi$ is irreducible. Consider 
	$$D=\begin{cases}
		D_{\rho,-A}^{(2)}\circ D_{\rho,-A+1}^{(2)}\circ\cdots\circ D_{\rho,B}^{(2)}&\text{if }\rho\text{ is self-dual}\\
		D_{\rho,-A}\circ D_{\rho,-A+1}\circ\cdots\circ D_{\rho,B}&\\
		\circ D_{\rho^\vee,-A}\circ D_{\rho^\vee,-A+1}\circ\cdots\circ D_{\rho^\vee,B}&\text{if }\rho\text{ is not self-dual}.\\
	\end{cases}$$ In particular, if $D_{\rho,-1}^{(m)}\circ D_{\rho,0}^{(m)}$ occurs in the above formula, we replace this term by $D_{[0,-1]_\rho}^{(m)}$ from the definition of $D$. We compute that $D([B,-A]_{\rho^\vee}\times[B,-A]_\rho\rtimes\pi')=\pi'$ is irreducible. Thus $[B,-A]_{\rho^\vee}\times[B,-A]_\rho\rtimes\pi'$ is SI. Therefore, $\tau_{\psi_{\np}}\rtimes\pi$ is also SI. Recall that $\tau_{\psi_{\np}}\rtimes\pi$ is unitary (see Theorem \ref{theorem: pi(psi,varepsilon) is a finite length representation}). Thus, we have completed the proof of Proposition \ref{prop: reduction to good parity}.
	
	\section{Adams conjecture}\label{section: Adams conjecture}
	
	\par In this section, we will prove the Adams conjecture for $(\widetilde{G}_{2n},H_{2m+1})$ with $\alpha\gg0$ (recall that $\alpha=2m-2n$ and $\theta_{-\alpha}=\theta_{V_{2m+1},W_{2n}}$).
	
	\subsection{Discrete case}
	
	\par In this subsection, we fix a $\phi\in\Phi_{\bdd,2}(\widetilde{G}_{2n})$ and a $\varepsilon\in\mS_\phi^\vee$. Let $s_0\in\mS_\phi$ be defined by $s_0(\rho,a)=-1$ for all $(\rho,a)\in\Jord(\phi)$.
	
	\begin{proposition}\label{prop: first occurrence of triv-cuspidal discrete series}
		Suppose that $\pi(\phi,\varepsilon)$ is $\triv$-cuspidal, i.e. $\Jac_{\triv,x}\pi(\psi,\varepsilon)=0$ for all $x\in\RR$. Let $\theta_{-\alpha}(\pi(\phi,\varepsilon))$ be the first occurrence of $\pi(\phi,\varepsilon)$ in the Witt tower $\left\{V_{2m+1}\right\}_{m\ge0}$. Then, the following holds:
		\begin{enumerate}
			\item[(1)] $\alpha=\begin{cases}-b_{\triv,\phi,\varepsilon}&\text{if }\varepsilon(s_0)=1\\
				b_{\triv,\phi,\varepsilon}+2&\text{if }\varepsilon(s_0)=-1.
			\end{cases}$ 
			\par See \S \ref{section: the key proposition} for the definition of $b_{\triv,\phi,\varepsilon}$.
			\item[(2)] Let $\phi_0=\begin{cases}
				\phi\ominus\triv\otimes r(b_{\triv,\phi,\varepsilon})&\text{if }\varepsilon(s_0)=1\\
				\phi\oplus\triv\otimes r(b_{\triv,\phi,\varepsilon}+2)&\text{if }\varepsilon(s_0)=-1
			\end{cases}$ 
			\par be a L-parameter for $\SO(V_{2m+1})$, let $\varepsilon_0$ be the unique element in $\mS_{\phi_0}^\vee$ such that $\varepsilon_0(\rho,a)=\varepsilon(\rho,a)$ for all $\rho\neq\triv$ and $\pi(\phi_0,\varepsilon_0)$ is $\triv$-cuspidal. Then we have $\theta_{-\alpha}(\pi(\phi,\varepsilon))|_{\SO(V_{2m+1})}=\pi(\phi_0,\varepsilon_0)$.
			\item[(3)] $\theta_{-\alpha}(\pi(\phi,\varepsilon))(-I_{2m+1})=\varepsilon(s_0)\epsilon(\phi)$ (the $\epsilon(\phi)$ here is the local root number of $\phi$, see \cite[\S 4.1]{Li2024_arthurpacketsmetaplecticgroups} for the definition of local root number). 
		\end{enumerate}
	\end{proposition}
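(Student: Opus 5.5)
Since $\pi(\phi,\varepsilon)$ is a discrete series, I will not reprove the first occurrence from scratch. The plan is to read it off from Atobe--Gan's description of theta lifts of tempered representations of $\Mp(W_{2n})$ to $\Or(V_{2m+1})$ in \cite{Atobe2017_local}. That paper gives, for $\pi=\pi(\phi,\varepsilon)$, the first occurrence index and the L-parameter of the lift. The lift goes down when $\varepsilon(s_0)=1$, where $s_0$ plays the role of the central element giving the tower-determining sign, and goes up otherwise. The lifted parameter is obtained by adding or removing one block $\triv\otimes r(\cdot)$, and its character is given by an explicit recipe. So the work consists of translating their hypotheses and conclusions into the language of $b_{\triv,\phi,\varepsilon}$ and $\triv$-cuspidality, using Theorem \ref{theorem: parameter of cuspidal representation} and Definition \ref{def: a_rho phi epsilon}.

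The steps in order are as follows.

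First, rewrite the hypothesis. By Definition \ref{def: a_rho phi epsilon}, $\triv$-cuspidality means $a_{\triv,\phi,\varepsilon}=\infty$. Then $\Jord_\triv(\phi)$ is a chain $\{2,4,\dots,b\}$ with $b=b_{\triv,\phi,\varepsilon}$, since $\triv$ is orthogonal and the blocks are even. Along this chain the signs alternate, with $\varepsilon(\triv,2)=1$. If $\Jord_\triv(\phi)$ is empty, then $b=0$.

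Second, identify this chain with condition $(\mT)$ of \cite[Theorem 1.2]{Atobe2017_local}, as remarked after Theorem \ref{theorem: parameter of cuspidal representation}. Their theorem states that going down, which happens exactly when $\varepsilon(s_0)=1$, the first occurrence is at $\alpha=-b$ with parameter $\phi\ominus\triv\otimes r(b)$. Going up, it is at $\alpha=b+2$ with parameter $\phi\oplus\triv\otimes r(b+2)$. Part (1) and the parameter in part (2) then follow.

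Third, for the character in part (2), I will check that Atobe--Gan's explicit $\varepsilon_0$ agrees with $\varepsilon$ away from $\triv$, because the lift only modifies the $\triv$-blocks. On the $\triv$-blocks, their $\varepsilon_0$ is again alternating starting with the correct sign at $\triv\otimes r(1)$. This is the $\triv$-cuspidal character for $\SO(2m+1)$, where the classical analogue of Theorem \ref{theorem: parameter of cuspidal representation} has no metaplectic twist at block $2$, now with odd blocks $1,3,\dots$. Uniqueness of such an $\varepsilon_0$ is immediate: once the values off $\triv$ are fixed, the alternating condition on the chain together with the product condition in $\mS_{\phi_0}^\vee$ for $\SO$ determines all remaining signs. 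Alternatively, $\triv$-cuspidality of the lift can be checked directly via Kudla's filtration. A nonzero $\Jac_{\triv,x}$ of the first lift would force $\pi$ to have a nonzero $\Jac_{\triv,x}$, or force a lower occurrence, and both are contradictions.

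Fourth, part (3): the central sign of $\theta(\pi)$ on $-I_{2m+1}$ is given in \cite{Atobe2017_local} (following Gan--Savin) as the product of $\varepsilon(s_0)$-type data with $\epsilon(\phi)$. Equivalently, $\Or(V)=\SO(V)\times\{\pm I\}$, and the determinant twist of the lift is dictated by the dichotomy, which is expressed through $\varepsilon(s_0)\epsilon(\phi)$ in the $\uppsi$-normalization.

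The main obstacle is normalization bookkeeping. Atobe--Gan use a $\bm\mu_2$-cover and Gan--Savin's LLC normalized with respect to $\uppsi$. I must confirm that the $\bm\mu_8$ conventions and the $\pi(\phi,\varepsilon)$ of Theorem \ref{theorem: ECR for tempered representation} via \cite{Luo2020_ECR} match theirs, including which sign of $\varepsilon(s_0)$ corresponds to going down. I would check this on the smallest example, $\phi=\triv\otimes r(2)$ for $\Mp(2)$, where the even Weil representation lifts to the trivial representation of $\Or(1)$.
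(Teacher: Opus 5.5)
Your route is the paper's. Its proof is the single sentence that the statement is a special case of \cite[Theorem 1.2, 1.3, 1.4]{Atobe2017_local}. Your translations are also right: $\triv$-cuspidality becomes the chain $\Jord_\triv(\phi)=\{2,4,\dots,b\}$ with alternating signs and $\varepsilon(\triv,2)=1$, and $\varepsilon(s_0)=1$ becomes ``the split tower is the going-down tower''. However, two of your bookkeeping claims are wrong, and both would derail exactly the normalization check you single out as the main obstacle.

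\emph{No odd blocks on the orthogonal side.} In your third step there are no odd $\triv$-blocks. The parameter $\phi_0$ of $\SO(V_{2m+1})$ takes values in $\Sp(2m,\CC)$ and differs from $\phi$ by the even block $\triv\otimes r(b)$ or $\triv\otimes r(b+2)$. So $\Jord_\triv(\phi_0)$ is $\{2,4,\dots,b-2\}$ or $\{2,4,\dots,b+2\}$. A block $\triv\otimes r(1)$ is orthogonal and cannot occur in a discrete symplectic parameter. The classical cuspidality criterion asks for alternation along this even chain with $\varepsilon_0(\triv,2)=-1$ (no metaplectic twist). Hence $\varepsilon_0=-\varepsilon$ on the common $\triv$-blocks, and it is against this that Atobe--Gan's character must be matched. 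Your Kudla-filtration alternative is fine, and combined with this criterion it pins down $\varepsilon_0$ without computing Atobe--Gan's $\triv$-part.

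The cuspidality conditions alone determine $\varepsilon_0$. Triviality on the centre of $\Sp(2m,\CC)$ is a consistency check, not an input, and it is where the dichotomy enters. Put $k=b/2$. The $\triv$-part of $\varepsilon$ has product $(-1)^{k(k-1)/2}$. The $\triv$-part of $\varepsilon_0$ has product $(-1)^{k(k-1)/2}$ when going down, and $(-1)^{(k+1)(k+2)/2}=-(-1)^{k(k-1)/2}$ when going up. So $\prod\varepsilon_0$ equals $\varepsilon(s_0)$ in the first case and $-\varepsilon(s_0)$ in the second, which is $1$ in both.

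\emph{Wrong test case.} The even Weil representation of $\Mp(2)$ is not tempered: it is a Langlands quotient of a genuine principal series and lies in the A-packet of $\triv\otimes r(1)\otimes r(2)$. So it is not in $\Pi_{\triv\otimes r(2)}$, and it is not $\triv$-cuspidal. Calibrating with it, you would see $\theta(\pi)(-I_1)=+1$ against the predicted $\varepsilon(s_0)\epsilon(\phi)=-1$, and you would ``fix'' a normalization that is in fact correct.

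The right test is the odd Weil representation. It is supercuspidal and equals $\pi(\triv\otimes r(2),\varepsilon)$ with $\varepsilon(\triv,2)=1$, as Theorem \ref{theorem: parameter of cuspidal representation} requires. It first occurs on $\Or(V_1)$, so $\alpha=-2=-b$ and $\phi_0=0$, and there it is the character $\det$. Since $\epsilon(\triv\otimes r(2))=-1$, one gets $\det(-I_1)=-1=\varepsilon(s_0)\epsilon(\phi)$, as part (3) predicts.
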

	\begin{proof}
		This is a special case of \cite[Theorem 1.2, 1.3, 1.4]{Atobe2017_local}.
	\end{proof}
	
	\begin{proposition}\label{prop: adams conjecture - discrete case}
		When $\alpha\gg0$, the following holds:
		\begin{enumerate}
			\item[(1)] $\theta_{-\alpha}(\pi(\phi,\varepsilon))|_{\SO(V_{2m+1})}=\pi_M(\phi_\alpha,\varepsilon_\alpha^\M)$ with $\phi_\alpha=\phi\oplus\triv\otimes r(1)\otimes r(\alpha)$, $$\frac{\varepsilon_\alpha^\M(\rho,a)}{\varepsilon(\rho,a)}=\begin{cases}
				1&\text{if }\rho\neq\triv\\
				-1&\text{if }\rho=\triv
			\end{cases}$$ and $\varepsilon_\alpha^\M(\triv,1,\alpha)=-\varepsilon(s_0)\epsilon(\phi)$. The subscript ``$\M$" here means M\oe glin's normalization; see \cite[\S 6]{Xu2017_Moeglin} for more details.
			\item[(2)] $\theta_{-\alpha}(\pi(\phi,\varepsilon))(-I_{2m+1})=\varepsilon(s_0)\epsilon(\phi)$
		\end{enumerate}
	\end{proposition}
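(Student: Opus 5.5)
We would prove Proposition \ref{prop: adams conjecture - discrete case} by induction on the $\triv$-part of $\pi(\phi,\varepsilon)$, using Proposition \ref{prop: first occurrence of triv-cuspidal discrete series} as the base case and Kudla's filtration (Lemma \ref{lemma: compatibility of theta lifts and socle}) to propagate the result to larger $\alpha$ and to non-$\triv$-cuspidal discrete series.

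\textbf{Step 1: the $\triv$-cuspidal case.} Let $\alpha_0$ be the first occurrence index given by Proposition \ref{prop: first occurrence of triv-cuspidal discrete series}. That proposition gives $\theta_{-\alpha_0}(\pi(\phi,\varepsilon))|_{\SO}=\pi(\phi_0,\varepsilon_0)$, where $\pi(\phi_0,\varepsilon_0)$ is $\triv$-cuspidal. It also gives the value on $-I$. For $\alpha>\alpha_0$ with $\alpha\equiv\alpha_0\pmod 2$, Kudla's filtration (Lemma \ref{lemma: compatibility of theta lifts and socle}, or the standard argument on going up the tower) shows that $\theta_{-\alpha}(\pi)$ embeds into
\[[\tfrac{\alpha-1}{2},\tfrac{\alpha_0+1}{2}]_{\triv}\rtimes\theta_{-\alpha_0}(\pi).\]
It is the unique irreducible subrepresentation (socle) there. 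We then compare this socle with M\oe glin's description of elementary packets for $\SO(2m+1)$. Proposition \ref{prop: Moeglin's construction in discrete case}, in its classical-group version \cite{Mœglin2006_Aubert, Xu2017_Moeglin}, expresses $\pi_\M(\phi_\alpha,\varepsilon^\M_\alpha)$, with $\phi_\alpha\ni(\triv,1,\alpha)$, as the socle of exactly this induced representation from a $\triv$-cuspidal member. Matching the two descriptions identifies $\theta_{-\alpha}(\pi)|_{\SO}$. The sign $\varepsilon^\M_\alpha(\triv,1,\alpha)=-\varepsilon(s_0)\epsilon(\phi)$ and the flip of $\varepsilon$ on $\triv$-blocks come from translating $\varepsilon_0$ (defined by $\triv$-cuspidality) into M\oe glin's normalization. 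Part (2) follows since the central character on $-I$ is constant along the tower.

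\textbf{Step 2: reduction for general discrete $\pi(\phi,\varepsilon)$.} Proceed by induction on $\sum_{a\in\Jord_\triv(\phi)}a$ beyond the $\triv$-cuspidal configuration. If $a_{\triv,\phi,\varepsilon}<\infty$, Proposition \ref{prop: Moeglin's construction in discrete case} writes $\pi(\phi,\varepsilon)$ in one of two ways:
\begin{enumerate}
\item[(i)] as $\soc(|\cdot|^x\rtimes\pi(\phi',\varepsilon'))$, where the relevant Jacquet module is irreducible by Proposition \ref{prop: partial Jacquet module of discrete series};
\item[(ii)] as the unique subrepresentation of $[x,-y]\rtimes\pi(\phi',\varepsilon')$ with a vanishing Jacquet condition.
\end{enumerate}
In case (i), Lemma \ref{lemma: compatibility of theta lifts and socle}(2), applied with $\alpha\gg0$ depending only on these finitely many segments, gives $\theta_{-\alpha}(\pi(\phi,\varepsilon))\hookrightarrow|\cdot|^x\rtimes\theta_{-\alpha}(\pi(\phi',\varepsilon'))$. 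By induction, $\theta_{-\alpha}(\pi(\phi',\varepsilon'))=\pi_\M(\phi'_\alpha,\varepsilon'^\M_\alpha)$. The same construction holds in the elementary packet for $\SO$, because $(\triv,1,\alpha)$ with $\alpha$ huge does not interact with blocks of size $\le 2x+1$. Hence the socles agree. Case (ii) is handled similarly. The Jacquet-vanishing characterization transfers because theta lifts commute with Jacquet functors on the small exponents when $\alpha\gg0$, so the unique subrepresentation is preserved. Case (3) of Proposition \ref{prop: Moeglin's construction in discrete case} uses the $[\sigma_q]/[\sigma_s]$ distinction, which again transfers through Lemma \ref{lemma: compatibility of theta lifts and socle}(1). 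The sign $\varepsilon(s_0)\epsilon(\phi)$ is unchanged under these steps, because $\varepsilon(s_0)$ and $\epsilon(\phi)$ change compatibly, so it can be checked by the root number computations in Lemma \ref{lemma: partial Jacquet module of T phi s}.

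\textbf{Main obstacle.} The hard part is Step 1's bookkeeping of characters. One must match the $\triv$-cuspidal $\varepsilon_0$ of \cite{Atobe2017_local} with M\oe glin's normalization $\varepsilon^\M$ on the elementary parameter $\phi\oplus\triv\otimes r(1)\otimes r(\alpha)$. This includes the metaplectic sign twist on $\triv$-blocks (cf. Theorem \ref{theorem: parameter of cuspidal representation}(3)) and the value on $(\triv,1,\alpha)$. I would first settle the case $\Jord_\triv(\phi)=\emptyset$ directly, then track how each additional $\triv$-block alters both sides.
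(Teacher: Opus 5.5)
Your architecture is the paper's: the $\triv$-cuspidal case from Proposition \ref{prop: first occurrence of triv-cuspidal discrete series}, Kudla's filtration and M\oe glin's elementary packets, then induction through cases (1) and (2) of Proposition \ref{prop: Moeglin's construction in discrete case} using Lemma \ref{lemma: compatibility of theta lifts and socle}. The gap is in Step~1.

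First, the induced representation is wrong as written. In this paper's notation the decreasing segment $[\frac{\alpha-1}{2},\frac{\alpha_0+1}{2}]_\triv$ is the essentially square-integrable representation with positive exponents, and the lift does not embed there. For $n=0$ the lift is the trivial representation of $\SO(2m+1)$, whose Siegel Jacquet module is $|\det|^{-m/2}$. So it embeds in $[-\frac{2m-1}{2},-\frac12]_\triv\rtimes 1$ but not in $[\frac{2m-1}{2},\frac12]_\triv\rtimes 1$. The segment must be $[-\frac{\alpha-1}{2},-\frac{\alpha_0+1}{2}]_\triv$.

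Second, and more seriously, suppose $\varepsilon(s_0)=1$ and $b:=b_{\triv,\phi,\varepsilon}>0$. Then the first occurrence $\alpha_0=-b$ lies below the equal-rank point, and even the corrected segment $[-\frac{\alpha-1}{2},\frac{b-1}{2}]_\triv$ crosses $0$. Nothing you cite produces such an embedding:
\begin{itemize}
\item Lemma \ref{lemma: compatibility of theta lifts and socle} transports embeddings on the $\widetilde{G}_{2n}$ side, for $\alpha\gg0$ only, and says nothing about moving along the tower.
\item The intermediate lifts for $\alpha_0<\alpha'\le 0$ are tempered.
\item M\oe glin's construction does not directly present $\pi_\M(\phi_\alpha,\varepsilon^{\M}_{\alpha})$ as a socle over the $\triv$-cuspidal $\pi(\phi_0,\varepsilon_0)$ in this case.
\end{itemize}

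The paper's base point here is $\alpha=0$. By \cite[Theorem 1.3]{Atobe2017_local}, $\theta_0(\pi(\phi,\varepsilon))=\pi(\phi,\varepsilon)$. Kudla's filtration then gives $\theta_{-\alpha}(\pi(\phi,\varepsilon))\hookrightarrow[-\frac{\alpha-1}{2},-\frac12]_\triv\rtimes\pi(\phi,\varepsilon)$. M\oe glin's construction gives $\pi_\M(\phi_\alpha,\varepsilon^{\M}_{\alpha})=\soc([-\frac{\alpha-1}{2},-\frac12]_\triv\rtimes\pi_\M(\phi,\varepsilon))$. Note this base is \emph{not} $\triv$-cuspidal on $\SO(2n+1)$, since there $\varepsilon(\triv,2)=1$ is the non-cuspidal sign. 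Only when $\varepsilon(s_0)=-1$ (or $b=0$) does your base point agree with the paper's, after the sign correction.

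The identifications $\pi(\phi,\varepsilon)=\pi_\M(\phi,\varepsilon)$ and $\pi(\phi_0,\varepsilon_0)=\pi_\M(\phi_0,\varepsilon_0)$ come from \cite[Corollary 6.19]{Xu2017_Moeglin}. Together with M\oe glin's formulas, this is what settles the character bookkeeping you call the main obstacle. Your block-by-block tracking is left unspecified.

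The rest is sound, with two local differences from the paper.
\begin{itemize}
\item In case (ii) you transfer the vanishing of $\Jac_{\triv,\frac{b-1}{2}}$ through $\theta_{-\alpha}$. That is legitimate: it is the argument in the proof of Lemma \ref{lemma: compatibility of theta lifts and highest derivative} (reverse lift, Lemma \ref{lemma: compatibility of theta lifts and socle}, MVW), valid since $\frac{b-1}{2}\neq 0$. The paper instead uses the partner $\pi(\phi,\widetilde{\varepsilon})$, which has $\Jac_{\triv,\frac{b-1}{2}}\neq0$. Its lift is therefore the unique subrepresentation of $[\frac{a-1}{2},-\frac{b-1}{2}]_\triv\rtimes\pi_\M(\phi'_\alpha,\varepsilon'^{\M}_{\alpha})$ with nonzero Jacquet module, and Howe duality forces $\theta_{-\alpha}(\pi(\phi,\varepsilon))$ to be the other one.
\item For part (2) the paper simply quotes \cite[Theorems 1.3, 1.4]{Atobe2017_local}. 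Your propagation from Proposition \ref{prop: first occurrence of triv-cuspidal discrete series}(3) also works. The $-I$-value is constant along the Witt tower, and $\triv$-segments have central character trivial at $-1$. Also $\varepsilon(s_0)\epsilon(\phi)$ is unchanged in cases (i) and (ii), because $\epsilon(\triv\otimes r(a))=-1$ for every even $a$ and $\varepsilon(\triv,a)\varepsilon(\triv,b)=1$ in case (ii).
\end{itemize}
Finally, case (3) of Proposition \ref{prop: Moeglin's construction in discrete case} never occurs for $\rho=\triv$, because $\Jord_\triv(\phi)\subset 2\ZZ$.
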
\index{phi alpha@$\phi_\alpha$}\index{eps aloha@$\varepsilon_\alpha$}
	\begin{proof}
		The $(2)$ of this proposition is a part of \cite[Theorem 1.3, Theorem 1.4]{Atobe2017_local}. Therefore, we only need to prove the $(1)$ of the proposition. Write $b=b_{\triv,\phi,\varepsilon}$ and $a=a_{\triv,\phi,\varepsilon}$. We prove this proposition by induction on $a$. Suppose first that $a=\infty$, i.e., $\pi(\phi,\varepsilon)$ is $\triv$-cuspidal. Then, Proposition \ref{prop: first occurrence of triv-cuspidal discrete series} and Kudla's filtration imply that:
		\begin{enumerate}
			\item[(1)] If $\varepsilon(s_0)=1$, then $\theta_{-\alpha}(\pi(\phi,\varepsilon))\xhookrightarrow{}[-\frac{\alpha-1}{2},-\frac{1}{2}]_{\triv}\rtimes\theta_0(\pi(\phi,\varepsilon))$.
			\item[(2)] If $\varepsilon(s_0)=-1$, then $\theta_{-\alpha}(\pi(\phi,\varepsilon))\xhookrightarrow{}[-\frac{\alpha-1}{2},-\frac{b-1}{2}]_{\triv}\rtimes\pi(\phi_0,\varepsilon_0)$ (the pair $(\phi_0,\varepsilon_0)$ here is the pair in Proposition \ref{prop: first occurrence of triv-cuspidal discrete series}(2)).
		\end{enumerate}
		By M\oe glin's construction of elementary A-packet in \cite{Mœglin2006_Aubert}, we have:
		\begin{enumerate}
			\item[(1)] When $\varepsilon(s_0)=1$, $\pi_\M(\phi_\alpha,\varepsilon_\alpha^\M)=\soc([-\frac{\alpha-1}{2},-\frac{1}{2}]_\triv\rtimes\pi_\M(\phi,\varepsilon))$ (here we view $\phi$ as a L-parameter for $\SO(V_{2n+1})$).
			\item[(2)] When $\varepsilon(s_0)=-1$, $\pi_\M(\phi_\alpha,\varepsilon_\alpha^\M)=\soc([-\frac{\alpha-1}{2},-\frac{b+3}{2}]_{\triv}\rtimes\pi_\M(\phi_0,\varepsilon_0))$.
		\end{enumerate}
		 By \cite[Theorem 1.3]{Atobe2017_local}, we know that $\theta_0(\pi(\phi,\varepsilon))=\pi(\phi,\varepsilon)$ when $\varepsilon(s_0)=1$. Also, by \cite[Corollary 6.19]{Xu2017_Moeglin}, we have $\pi(\phi,\varepsilon)=\pi_\M(\phi,\varepsilon)$ and $\pi(\phi_0,\varepsilon_0)=\pi_\M(\phi_0,\varepsilon_0)$. This completes the proof for the case $a=\infty$.
		
		\par Now, we consider the case when $a<\infty$. If $a>b+2$, then Proposition \ref{prop: Moeglin's construction in discrete case} implies that $\pi(\phi,\varepsilon)=\soc(|\cdot|^{\frac{a-1}{2}}\rtimes\pi(\phi',\varepsilon'))$ with $a_{\triv,\phi',\varepsilon'}>a_{\triv,\phi,\varepsilon}$. By Lemma \ref{lemma: compatibility of theta lifts and socle}, we have $\theta_{-\alpha}(\pi(\phi,\varepsilon))\xhookrightarrow{}|\cdot|^{\frac{a-1}{2}}\rtimes\pi_\M(\phi'_\alpha,\varepsilon'^{\M}_{\alpha})$. Note that $\soc(|\cdot|^{\frac{a-1}{2}}\rtimes\pi_\M(\phi'_\alpha,\varepsilon'^\M_\alpha))=\pi_\M(\phi_\alpha,\varepsilon^\M_\alpha)$. Thus we have $\theta_{-\alpha}(\pi(\phi,\varepsilon))=\pi_\M(\phi_\alpha,\varepsilon^\M_\alpha)$.

		\par If $a=b+2$, then Proposition \ref{prop: Moeglin's construction in discrete case} implies that $\pi(\phi,\varepsilon)$ is a subrepresentation of $[\frac{a-1}{2},-\frac{b-1}{2}]_\triv\rtimes\pi(\phi',\varepsilon')$. Note that $\pi_\M(\phi_\alpha,\varepsilon^\M_\alpha)$ is the unique subrepresentation of $$[\frac{a-1}{2},-\frac{b-1}{2}]_\triv\rtimes\pi_\M(\phi'_\alpha,\varepsilon'^\M_\alpha)$$ satisfying $\Jac_{\triv,\frac{b-1}{2}}(\pi_\M(\phi_\alpha,\varepsilon^\M_\alpha))=0$. By Lemma \ref{lemma: compatibility of theta lifts and socle}, we only need to prove that $\Jac_{\triv,\frac{b-1}{2}}(\theta_{-\alpha}(\pi(\phi,\varepsilon))=0$. Let $\widetilde{\varepsilon}\in\mS_\phi^\vee$ be defined by $$\frac{\widetilde{\varepsilon}(\rho,x)}{\varepsilon(\rho,x)}=\begin{cases}
		    1&\text{if }(\rho,x)\neq(\triv,b),(\triv,a)\\
		    -1&\text{if }(\rho,x)=(\triv,b)\text{ or }(\triv,a).\\
		\end{cases}$$
		Then $\Jac_{\triv,\frac{b-1}{2}}(\pi(\phi,\widetilde{\varepsilon}))\neq0$ and $\pi(\phi,\widetilde{\varepsilon})$ is a subrepresentation of $[\frac{a-1}{2},-\frac{b-1}{2}]_{\triv}\rtimes\pi(\phi',\varepsilon')$. Now, Lemma \ref{lemma: compatibility of theta lifts and socle} implies that $\theta_{-\alpha}(\pi(\phi,\widetilde{\varepsilon}))$ is the only irreducible subrepresentation of $[\frac{a-1}{2},-\frac{b-1}{2}]_{\triv}\rtimes\pi_\M(\phi'_\alpha,\varepsilon'^\M_\alpha)$ such that $\Jac_{\triv,\frac{b-1}{2}}(\theta_{-\alpha}(\pi(\phi,\widetilde{\varepsilon})))\neq0$. By Howe's duality (Theorem \ref{theorem: Howe's duality}), we have $\theta_{-\alpha}(\pi(\phi,\varepsilon))\neq\theta_{-\alpha}(\pi(\phi,\widetilde{\varepsilon}))$, which completes the proof.
	\end{proof}
	
	\begin{corollary}\label{coro: adams conjecture - discrete case - Whittaker normalized}
		When $\alpha\gg0$, the following holds:
		\begin{enumerate}
			\item[(1)] $\theta_{-\alpha}(\pi(\phi,\varepsilon))|_{\SO(V_{2m+1})}=\pi(\phi_\alpha,\varepsilon_\alpha)$ with:
			\begin{itemize}
				\item[$\bullet$] $\phi_\alpha=\phi\oplus\triv\otimes r(1)\otimes r(\alpha)$.
				\item[$\bullet$] $\varepsilon_\alpha(\rho,a)=\varepsilon(\rho,a)$.
				\item[$\bullet$] $\varepsilon_\alpha(\triv,1,\alpha)=\varepsilon(s_0)$.
			\end{itemize}
			\item[(2)] $\theta_{-\alpha}(\pi(\phi,\varepsilon))(-I_{2m+1})=\varepsilon(s_0)\epsilon(\phi)$
		\end{enumerate}
	\end{corollary}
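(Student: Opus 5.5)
The corollary should follow from Proposition \ref{prop: adams conjecture - discrete case} by converting M\oe glin's normalization into the Whittaker normalization. Part (2) is literally Proposition \ref{prop: adams conjecture - discrete case}(2), so nothing more is needed there. For (1), Proposition \ref{prop: adams conjecture - discrete case}(1) already gives $\theta_{-\alpha}(\pi(\phi,\varepsilon))|_{\SO(V_{2m+1})}=\pi_\M(\phi_\alpha,\varepsilon^\M_\alpha)$. What remains is to compute the character $\varepsilon_\psi^{\M/\W}$ of \cite[\S 6]{Xu2017_Moeglin} for $\psi=\phi_\alpha$, where $\pi_\M(\psi,\varepsilon)=\pi(\psi,\varepsilon\varepsilon_\psi^{\M/\W})$, and to check that $\varepsilon^\M_\alpha\varepsilon_{\phi_\alpha}^{\M/\W}=\varepsilon_\alpha$. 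All of this is for the group $\SO(2m+1)$, so characters are taken modulo the central element.

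To compute $\varepsilon_{\phi_\alpha}^{\M/\W}$, I would use the recipe in \cite[Definition 6.?]{Xu2017_Moeglin}. For an elementary parameter, $\varepsilon^{\M/\W}$ is a product of local factors built from pairs of Jordan blocks with the same $\rho$ and with $b$ of different parity. In $\phi_\alpha$ the only block with $b\neq1$ is $(\triv,1,\alpha)$, and $\alpha$ is even because $\alpha=2m-2n$. So the only contributing pairs are $\{(\triv,a,1),(\triv,1,\alpha)\}$ with $(\triv,a)\in\Jord(\phi)$. Since $\alpha\gg0$, the inequalities in the definition are decided uniformly. I expect the result to be $\varepsilon^{\M/\W}(\triv,a,1)=-1$ for every $(\triv,a)\in\Jord(\phi)$, $\varepsilon^{\M/\W}(\rho,a,1)=1$ for $\rho\neq\triv$, and $\varepsilon^{\M/\W}(\triv,1,\alpha)=(-1)^{|\Jord_\triv(\phi)|}$.

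Multiplying gives $\varepsilon^\M_\alpha\varepsilon^{\M/\W}(\rho,a)=\varepsilon(\rho,a)$, since the sign $-1$ on $\triv$-blocks from Proposition \ref{prop: adams conjecture - discrete case}(1) cancels. The new block gets $-\varepsilon(s_0)\epsilon(\phi)(-1)^{|\Jord_\triv(\phi)|}$, and this must equal $\varepsilon(s_0)$. Proving that equality is the identity $\epsilon(\phi)=-(-1)^{|\Jord_\triv(\phi)|}$, which is where I expect the real difficulty. The naive identity cannot hold for arbitrary $\phi$, because $\epsilon(\phi)$ depends on the non-trivial blocks. So I would instead use that characters of $\SO(2m+1)$ are defined modulo the central element $z=(-1,\dots,-1)$, i.e.\ we may multiply $\varepsilon_\alpha$ by $\varepsilon_z$. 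The correct check is therefore that the two characters agree after twisting by this central character, using $\varepsilon(z)$ evaluated on $\phi_\alpha$.

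Concretely, I would compare $\varepsilon(s_0)$ with the product of $\varepsilon$ over all blocks, and use the root-number formulas from \cite[\S 4.1]{Li2024_arthurpacketsmetaplecticgroups}, already used in Lemma \ref{lemma: partial Jacquet module of T phi s}, to evaluate $\epsilon(\phi)=\prod\epsilon(\rho\otimes r(a))$ on the good-parity blocks. If a residual sign survives, I would take it to reflect the precise normalization in the definition of $\varepsilon^{\M/\W}$ and fix it by checking one example, namely the cuspidal case of Proposition \ref{prop: first occurrence of triv-cuspidal discrete series}. The main obstacle is this sign bookkeeping; the rest is formal.
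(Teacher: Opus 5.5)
Your route is the paper's: part (2) is Proposition~\ref{prop: adams conjecture - discrete case}(2), and part (1) is Proposition~\ref{prop: adams conjecture - discrete case}(1) followed by a change from M\oe glin's normalization to the Whittaker one. The gap is that you never carry out that change. The paper does it by citing \cite[Corollary 6.19]{Xu2017_Moeglin}, which gives the explicit comparison between $\pi_\M$ and the Whittaker-normalized $\pi$. You instead guess the comparison character from a definition you do not state (``Definition 6.?''), modelled on pair-counting recipes like $\mZ_{\Ato/\W}$. Your guess then forces $\epsilon(\phi)=-(-1)^{|\Jord_\triv(\phi)|}$, which you yourself note is false in general. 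That signals that your guessed value on $(\triv,1,\alpha)$ is unjustified; it is not a defect to be patched afterwards.

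The proposed patch is not valid. In $\mS_{\phi_\alpha}=\bm\mu_2^{\Jord(\phi_\alpha)}/\langle z\rangle$ it is the group elements that are taken modulo $z$. Consequently a character must satisfy $\varepsilon_\alpha(z)=1$, and you are not free to multiply it by another character: different characters of $\mS_{\phi_\alpha}$ index different, disjoint parts of the packet, so the right-hand side of (1) would change. ``Fixing the sign by one example'' is not an argument either.

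Used correctly, $z$ removes your obstacle altogether. Since $(\triv,1,\alpha)\notin\Jord(\phi)$, $\phi_\alpha$ is multiplicity free. So $\varepsilon_\alpha(z)=\prod_{\Jord(\phi_\alpha)}\varepsilon_\alpha=1$ forces
\[
\varepsilon_\alpha(\triv,1,\alpha)=\prod_{(\rho,a)\in\Jord(\phi)}\varepsilon(\rho,a)=\varepsilon(s_0).
\]
The third bullet is therefore automatic, and no root-number identity is needed. The real content of (1) is that the comparison character for $\phi_\alpha$, restricted to $\Jord(\phi)$, exactly cancels the twist in Proposition~\ref{prop: adams conjecture - discrete case}(1). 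Concretely, it must be $-1$ on the blocks $(\triv,a,1)$ and $1$ on the other blocks of $\phi$. You only ``expect'' this, and it is precisely what \cite[Corollary 6.19]{Xu2017_Moeglin} supplies.
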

	\begin{proof}
		Follows from Proposition \ref{prop: adams conjecture - discrete case} and \cite[Corollary 6.19]{Xu2017_Moeglin}.
	\end{proof}
	
	\subsection{Good parity case}
	
	\par In this subsection, we fix an extended multi-segment $\mE=\cup_\rho\left\{([A_i,B_i]_\rho,l_i,\eta_i)\right\}_{i\in(I_\rho,\succ_\rho)}$ for $\widetilde{G}_{2n}$. Let $s_0\in\mS_{\psi_\mE}$ be defined as $s_0(\rho,a,b)=(-1)^{m(\rho,a,b)}$ for all $(\rho,a,b)\in\Jord(\psi_\mE)$, where $m(\rho,a,b)$ is the multiplicity of $(\rho,a,b)$ in $\Jord(\psi_\mE)$.
	
	\begin{lemma}\label{lemma: compatibility of theta lifts and highest derivative}
		Let $\pi\in\Pi_-(\widetilde{G}_{2n})$, $\rho\in\Pi_{\unit,\cusp}(\GL(d_\rho))$ and $x\in\RR$. For $\alpha\gg0$, the following is true:
		\begin{enumerate}
			\item[(1)] When $\rho|\cdot|^x$ is not self-dual, suppose that $D_{\rho,x}^{(k)}(\pi)$ is the highest derivatives of $\pi$, then $D_{\rho,x}^{(k)}(\theta_{-\alpha}(\pi))$ is the highest derivatives of $\theta_{-\alpha}(\pi)$, and we have $\theta_{-\alpha}(D_{\rho,x}^{(k)}(\pi))=D_{\rho,x}^{(k)}(\theta_{-\alpha}(\pi))$.
			\item[(2)] When $\rho$ is self-dual and $\Jac_{\rho,\zeta}(\pi)=0$, $\Jac_{\rho,\zeta}(\theta_{-\alpha}(\pi))=0$ hold for some $\zeta\in\left\{\pm1\right\}$. Then the statement in $(1)$ still holds if we replace $D_{\rho,x}$ by $D_{[0,\zeta]_\rho}$.
		\end{enumerate}
	\end{lemma}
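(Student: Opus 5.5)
We will deduce the statement from Lemma \ref{lemma: compatibility of theta lifts and socle}, the socle description of highest derivatives (Propositions \ref{prop: highest rho-derivative is irreducible in non-self-dual case} and \ref{prop: highest [0,zeta]_rho-derivative is irreducible}), and Howe's duality. Consider case (1). Set $\pi_0=D_{\rho,x}^{(k)}(\pi)$ and $\sigma=(\rho|\cdot|^x)^k$. By Proposition \ref{prop: highest rho-derivative is irreducible in non-self-dual case}, $\pi_0$ is irreducible, $\pi\xhookrightarrow{}\sigma\rtimes\pi_0$, and $\Jac_{\sigma}(\pi)=\pi_0$ is irreducible. Because $\sigma$ is fixed by $\rho,x,k$, Lemma \ref{lemma: compatibility of theta lifts and socle}(2) applies for $\alpha\gg0$ and gives
\[
\theta_{-\alpha}(\pi)\xhookrightarrow{}(\rho|\cdot|^x)^k\rtimes\theta_{-\alpha}(\pi_0).
\]
Here we must know that $\theta_{-\alpha}(\pi_0)\neq0$. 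For $\alpha$ large this holds by the stable range, since the first occurrence index is bounded.

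The next step is to show that $\theta_{-\alpha}(\pi_0)$ is $\rho|\cdot|^x$-reduced. Granting this, Lemma \ref{lemma: reduced decomposition of representation} applies on the $\Or(V)$ side with $X=\{\rho|\cdot|^x\}$; it holds there verbatim, and $X\cap X^\vee=\emptyset$. It shows that the pair $((\rho|\cdot|^x)^k,\theta_{-\alpha}(\pi_0))$ is the unique reduced decomposition of $\theta_{-\alpha}(\pi)$. Hence $D^{(k)}_{\rho,x}(\theta_{-\alpha}(\pi))=\theta_{-\alpha}(\pi_0)$, and this derivative is the highest one.

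To see that $\Jac_{\rho,x}(\theta_{-\alpha}(\pi_0))=0$, we argue by contradiction. If the Jacquet module were nonzero, Lemma \ref{lemma: partial Jacquet module and embedding} would give $\theta_{-\alpha}(\pi_0)\xhookrightarrow{}\rho|\cdot|^x\rtimes\tau$. We then transfer this back to $\widetilde{G}$ by the reverse Kudla filtration argument, i.e.\ Lemma \ref{lemma: compatibility of theta lifts and socle} applied to the dual pair in the other direction. This works as long as $\rho|\cdot|^x$ is not the specific character $|\cdot|^{\pm\frac{\alpha-1}{2}}$ appearing in the boundary term of the filtration, which holds for $\alpha\gg0$ relative to $x$. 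We would obtain $\pi_0\xhookrightarrow{}\rho|\cdot|^x\rtimes\theta'(\tau)$, contradicting the maximality of $k$.

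I expect this reverse-direction step to be the main obstacle. Lemma \ref{lemma: compatibility of theta lifts and socle} is stated only for $\widetilde{G}\to\Or$, and $\theta(\tau)$ on the reverse side might vanish or fail to lie in the stable range. My first attempt would be a direct computation: compute $\Jac_{\rho,x}$ of the big theta lift through Kudla's filtration of the Jacquet module of the Weil representation. For $\alpha$ large, the only extra exponents are $\frac{\alpha-1}{2},\frac{\alpha-3}{2},\dots$ for $\rho=\triv$, and these are far from $x$. This yields $\Jac_{\rho,x}\Theta_{-\alpha}(\pi_0)$ as a quotient of $\Theta_{-\alpha}(\Jac_{\rho,x}\pi_0)=0$, and this suffices because $\theta$ is a quotient of $\Theta$.

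Case (2) follows the same steps, with $(\rho|\cdot|^x)^k$ replaced by $[0,\zeta]_\rho^k$ and Proposition \ref{prop: highest [0,zeta]_rho-derivative is irreducible} used in place of Proposition \ref{prop: highest rho-derivative is irreducible in non-self-dual case}. The hypothesis $\Jac_{\rho,\zeta}(\theta_{-\alpha}(\pi))=0$ is needed for two things on the orthogonal side. First, it makes the orthogonal analogue of Lemma \ref{lemma: computation of partial Jacquet module of [0,zeta]_rho rtimes sigma} applicable. Second, it replaces the disjointness $X\cap X^\vee=\emptyset$ in the uniqueness argument. The vanishing $\Jac_{[0,\zeta]_\rho}(\theta_{-\alpha}(\pi_0))=0$ then follows by the same Kudla-filtration computation as in case (1). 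Together with Lemma \ref{lemma: computation of partial Jacquet module of [0,zeta]_rho rtimes sigma}, this identifies $D^{(k)}_{[0,\zeta]_\rho}(\theta_{-\alpha}(\pi))$ with $\theta_{-\alpha}(\pi_0)$ and shows it is the highest derivative.
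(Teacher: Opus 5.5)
Your reduction is the paper's. By Lemma \ref{lemma: compatibility of theta lifts and socle}(2) and the derivative/socle theory of \S\ref{section: derivative and socles}, everything comes down to showing that, for $\alpha\gg0$, $\theta_{-\alpha}$ preserves $\rho|\cdot|^x$-reducedness (and, for (2), $\rho|\cdot|^{\zeta}$- and $[0,\zeta]_\rho$-reducedness). The difference lies in how this key step is proved.

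\textbf{The reverse-direction sketch does not close as written.} Apply Lemma \ref{lemma: compatibility of theta lifts and socle}(1) from $\Or(V_{2m+1})$ back to $\widetilde{G}_{2n}$ to $\theta_{-\alpha}(\pi_0)\hookrightarrow\rho|\cdot|^x\rtimes\tau$. This only gives $\Theta_\alpha(\theta_{-\alpha}(\pi_0))\hookrightarrow\rho|\cdot|^x\rtimes\Theta_\alpha(\tau)$. But $\pi_0$ is a \emph{quotient}, not a subrepresentation, of $\Theta_\alpha(\theta_{-\alpha}(\pi_0))$, so no embedding of $\pi_0$ follows. The $\theta$-version would need $\Jac_{\rho,x}(\theta_{-\alpha}(\pi_0))$ to be irreducible, which you do not know for a single step.

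The paper repairs exactly this, in five steps:
\begin{enumerate}
\item[(i)] It takes the highest derivative $\pi'=D^{(k)}_{\rho,x}(\theta_{-\alpha}(\pi))$.
\item[(ii)] Via MVW, it turns $\theta_{-\alpha}(\pi)\hookrightarrow(\rho|\cdot|^x)^k\rtimes\pi'$ into a surjection $(\rho^\vee|\cdot|^{-x})^k\rtimes\pi'\twoheadrightarrow\theta_{-\alpha}(\pi)$.
\item[(iii)] It uses the reverse Kudla argument in the form $(\rho^\vee|\cdot|^{-x})^k\rtimes\Theta_\alpha(\pi')\twoheadrightarrow\Theta_\alpha(\theta_{-\alpha}(\pi))\twoheadrightarrow\pi$.
\item[(iv)] It extracts an irreducible subquotient $\pi''$ of $\Theta_\alpha(\pi')$ with $(\rho^\vee|\cdot|^{-x})^k\rtimes\pi''\twoheadrightarrow\pi$.
\item[(v)] It applies MVW again to get $\pi\hookrightarrow(\rho|\cdot|^x)^k\rtimes\pi''$, which forces $k=0$.
\end{enumerate}
Working with quotients and big theta lifts also removes your worry that $\theta(\tau)$ might vanish.

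\textbf{Your fallback is a sound alternative.} Computing $\Jac_{\rho,x}\Theta_{-\alpha}(\pi_0)$ directly works as follows. Since $\Theta_{-\alpha}(\pi_0)\cong(\omega\otimes\pi_0^\vee)_{\widetilde{G}_{2n}}$, and the orthogonal-side Jacquet functor commutes with these coinvariants, right exactness plus Kudla's filtration show that the $\rho|\cdot|^x$-part is a quotient of the corresponding part of the piece built from $\mathcal{S}(\GL(d_\rho))$. The boundary piece, on which $\GL$ acts by a character with exponent about $-\frac{\alpha-1}{2}$, drops out for $\alpha\gg0$. By second adjointness, the remaining part is controlled by $\Jac_{\rho,x}(\pi_0)=0$.

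Compared with the paper, this avoids any reverse-direction or quotient variant of Lemma \ref{lemma: compatibility of theta lifts and socle}, which the paper invokes without separate justification. It also proves the stronger vanishing for the whole big theta lift. The price is redoing the filtration computation instead of citing the lemma as a black box.

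\textbf{Precision needed in (2).} The orthogonal analogue of Lemma \ref{lemma: computation of partial Jacquet module of [0,zeta]_rho rtimes sigma} needs $\Jac_{\rho,\zeta}=\Jac_{[0,\zeta]_\rho}=0$ on $\theta_{-\alpha}(\pi_0)$, not on $\theta_{-\alpha}(\pi)$. Both follow from your computation applied to $\pi_0$, using Proposition \ref{prop: highest [0,zeta]_rho-derivative is irreducible} for $\Jac_{\rho,\zeta}(\pi_0)=0$, provided you do two further things. First, kill the intermediate filtration pieces: they contain a character block with exponent near $-\frac{\alpha-1}{2}$. Second, observe that the whole $\{\rho,\rho|\cdot|^\zeta\}$-component of the Jacquet module of $\pi_0$ vanishes, since its only possible constituents $[0,\zeta]_\rho$ and $[\zeta,0]_\rho$ are both excluded. (The hypothesis $\Jac_{\rho,\zeta}(\theta_{-\alpha}(\pi))=0$ in (2) is itself a consequence of (1) with $k=0$.)
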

	\begin{proof}
		By Lemma \ref{lemma: compatibility of theta lifts and socle} and the theory of derivatives (see \S \ref{section: derivative and socles}), it is sufficient to prove that, when $\rho|\cdot|^x$ is not self-dual, if $D_{\rho,x}(\pi)=0$, then $D_{\rho,x}(\theta_{-\alpha}(x))=0$. Suppose that $D_{\rho,x}^{(k)}(\theta_{-\alpha}(\pi))$ is the highest derivative of $\theta_{-\alpha}(\pi)$. Then Proposition \ref{prop: highest rho-derivative is irreducible in non-self-dual case} implies that we have $(\rho^\vee|\cdot|^{-x})^k\rtimes\pi'\twoheadrightarrow\theta_{-\alpha}(\pi)$ with $\pi'$ irreducible. Applying Lemma \ref{lemma: compatibility of theta lifts and socle} with $\Theta_{\alpha}=\Theta_{W_{2n},V_{2m+1}}$, we have $(\rho^\vee|\cdot|^{-x})^k\rtimes\Theta_{\alpha}(\pi')\twoheadrightarrow\Theta_{\alpha}(\theta_{-\alpha}(\pi))\twoheadrightarrow\pi$. Thus, there exist an irreducible subquotient $\pi''$ of $\Theta_{\alpha}(\pi')$, such that $(\rho^\vee|\cdot|^{-x})^k\rtimes\pi''\twoheadrightarrow\pi$. That is, we have $\pi\xhookrightarrow{}(\rho|\cdot|^x)^k\rtimes\pi''$, which implies that $k=0$ because $\Jac_{\rho,x}(\pi)=0$.
	\end{proof}
	
	\begin{lemma}\label{lemma: theta lifts of extended multi-segment - lemma}
		For $\phi\in\Phi_{\bdd,2}(\SO(V_{2m+1}))$ with $\phi=\bigoplus_\rho\bigoplus_{i\in I_\rho}\rho\otimes r(a_i)$, we set $\phi_\alpha=\phi\oplus\rho\otimes r(1)\otimes r(\alpha)$. Suppose that $\alpha\gg0$. For $\varepsilon\in\mS_{\phi_\alpha}^\vee$, we set 
		$$\mE_{\varepsilon}=\begin{cases}
			(\cup_\rho\left\{([\frac{a_i-1}{2},\frac{a_i-1}{2}]_\rho,0,\varepsilon(\rho,a_i))\right\}_{i\in I_\rho})\cup\left\{([\frac{\alpha-1}{2},-\frac{\alpha-1}{2}]_{\triv},\frac{\alpha}{2},1)\right\}&\text{if }\varepsilon(\rho,1,\alpha)=1\\
			(\cup_\rho\left\{([\frac{a_i-1}{2},\frac{a_i-1}{2}]_\rho,0,\varepsilon(\rho,a_i))\right\}_{i\in I_\rho})\cup\left\{([\frac{\alpha-1}{2},-\frac{\alpha-1}{2}]_{\triv},\frac{\alpha}{2}-1,1)\right\}&\text{if }\varepsilon(\rho,1,\alpha)=-1
		\end{cases}$$
		with $([\frac{\alpha-1}{2},-\frac{\alpha-1}{2}]_{\triv},l_\alpha,\eta_\alpha)\prec_{\triv}([\frac{a_i-1}{2},\frac{a_i-1}{2}]_{\triv},0,\varepsilon(\rho,a_i))$ for all $i\in I_{\triv}$. Then $\pi_\Ato(\phi_\alpha,\varepsilon)=\pi(\mE_{\varepsilon})$.
	\end{lemma}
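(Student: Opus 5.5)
The plan is to work entirely on the classical side, for $\SO(V_{2m+1})$, and to use Atobe's description of A-packets from \cite{Atobe2022_A-packet} together with his non-vanishing criterion for $\pi(\mE)$. By Atobe's main theorem, $\pi_\Ato(\phi_\alpha,\varepsilon)=\bigoplus\pi(\mE)$, where $\mE$ runs over the equivalence classes of extended multi-segments with $(\psi_\mE,\varepsilon_\mE)=(\phi_\alpha,\varepsilon)$. So it suffices to show that $\mE_\varepsilon$ is, up to equivalence, the only such $\mE$ with $\pi(\mE)\neq0$, and that $\pi(\mE_\varepsilon)\neq 0$.

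\emph{Step 1: enumerate the candidates.} For a Jordan block $(\rho,a_i,1)$ we have $b_i=1$. This forces $A_i=B_i=\frac{a_i-1}{2}$ and $l_i=0$, and $\eta_i=\varepsilon(\rho,a_i)$ is then determined by $\varepsilon_\mE=\varepsilon$. Multiplicities do not occur, since $\phi$ is discrete and $\alpha\gg0$ keeps $(\triv,1,\alpha)$ distinct from the other blocks.

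For the block $(\triv,1,\alpha)$ we have $A=\frac{\alpha-1}{2}$ and $B=-\frac{\alpha-1}{2}<0$; since $\triv$ is orthogonal, $\alpha$ is even. Condition $(\mP')$ then forces this segment to be $\prec_\triv$-minimal, which is exactly the order prescribed in $\mE_\varepsilon$. The only remaining freedom is $0\le l\le\frac{\alpha}{2}$ and $\eta$. The sign condition reads $(-1)^{\frac{\alpha}{2}+l}\eta^{\alpha}=(-1)^{\frac{\alpha}{2}+l}=\varepsilon(\triv,1,\alpha)$, so only the parity of $l$ is fixed. In addition, $\eta$ is irrelevant up to equivalence only when $l=\frac{\alpha}{2}$.

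\emph{Step 2: cut down by non-vanishing.} Next apply Atobe's non-vanishing criterion to the minimal segment, whose $B$ is negative. The relevant condition is a lower bound on $B+l$, together with a compatibility condition between $\eta$ and the neighbouring segments when $l<\frac{b}{2}$. I expect this to leave exactly two possibilities: $l=\frac{\alpha}{2}$, or $l=\frac{\alpha}{2}-1$ with $\eta=1$. The hypothesis $\alpha\gg0$ enters here: the segments $[\frac{a_i-1}{2},\frac{a_i-1}{2}]_\rho$ then lie strictly inside $[B+l,A-l]$, so they do not interact with the long segment and the criterion reduces to a condition on that segment alone. Combined with the parity constraint from Step 1, exactly one of the two options survives for each value of $\varepsilon(\triv,1,\alpha)$, namely the one defining $\mE_\varepsilon$.

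\emph{Step 3: non-vanishing of $\pi(\mE_\varepsilon)$.} Finally, $\pi(\mE_\varepsilon)\neq0$ follows either from the same criterion or by computing it directly. For the direct computation, shift by $\textbf{t}$ to a non-negative DDR, take the socle as in (\ref{equation: associated representation for non-negative DDR}), and then apply $D_{\textbf{t}}$. This should recover the description $\soc([-\frac{\alpha-1}{2},-\frac12]_\triv\rtimes\pi(\phi,\varepsilon'))$ (respectively its analogue with $\phi_0$) that appeared in the proof of Proposition \ref{prop: adams conjecture - discrete case}.

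The main obstacle is Step 2: extracting precisely the right inequality from Atobe's criterion for a segment with $B<0$ at the minimal position, and checking that the $\eta$-dependence when $l=\frac{\alpha}{2}-1$ selects $\eta=1$. If quoting the criterion is too delicate, the fallback is to compute $\pi(\mE)$ for each candidate $l$ via the derivative functor $D_{\textbf{t}}$, showing that it vanishes unless $l\ge\frac{\alpha}{2}-1$, by tracking $[0,\pm1]_\triv$-derivatives through Lemma \ref{lemma: computation of partial Jacquet module}.
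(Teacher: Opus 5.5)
Your overall route is the paper's. The paper's proof just cites Atobe's Theorems 3.4, 3.6 and 3.7. That means: write $\pi_\Ato(\phi_\alpha,\varepsilon)$ as the sum of $\pi(\mE)$ over equivalence classes with $(\psi_\mE,\varepsilon_\mE)=(\phi_\alpha,\varepsilon)$, then eliminate every class except $\mE_\varepsilon$ using the non-vanishing condition $(\star)$ of Theorem 3.7. Your Step 1 is correct: the order is forced by $(\mP')$, the short blocks have $l_i=0$ and $\eta_i=\varepsilon(\rho,a_i)$, and the parity of $l$ is fixed by $(-1)^{\frac{\alpha}{2}+l}=\varepsilon(\triv,1,\alpha)$.

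The justification you give in Step 2 is wrong, although its conclusion survives.

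\begin{itemize}
\item For the surviving values $l\in\{\frac{\alpha}{2}-1,\frac{\alpha}{2}\}$, the interval $[B+l,A-l]$ is either $[-\frac12,\frac12]$ or empty. So the short segments $[\frac{a_i-1}{2},\frac{a_i-1}{2}]_\rho$ do not lie inside it.
\item The $\eta$-condition is not a compatibility with neighbouring segments. That kind of adjacent-segment condition is Atobe's Proposition 4.1, which only concerns non-negative multi-segments.
\end{itemize}

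What actually happens is that $(\star)$ is imposed segment by segment. It requires $B_i+l_i\ge 0$, $\frac12$ or $-\frac12$, according to whether $B_i\in\ZZ$ and whether $\eta_i=(-1)^{\alpha_i}$, where $\alpha_i=\sum_{j\prec_\rho i}a_j$.

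\begin{itemize}
\item The short segments have $l_i=0$ and $B_i\ge 0$, with $B_i\ge\frac12$ whenever $B_i\notin\ZZ$. So they satisfy $(\star)$ automatically.
\item The long segment has $B=-\frac{\alpha-1}{2}\notin\ZZ$, and $\alpha_i=0$ because it is $\prec_\triv$-minimal. So $(\star)$ reads $l\ge\frac{\alpha}{2}$ if $\eta=-1$, and $l\ge\frac{\alpha}{2}-1$ if $\eta=1$.
\end{itemize}

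Together with the parity constraint, this leaves exactly $\mE_\varepsilon$. The hypothesis $\alpha\gg0$ plays no role here beyond $\alpha\ge 2$ being even.

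Only the ``only if'' direction of Theorem 3.7 is used, so the condition $\pi(\mE_t)\neq0$ never needs checking. Your Step 3 is also unnecessary. Once every other class has $\pi(\mE)=0$, the identity $\pi_\Ato(\phi_\alpha,\varepsilon)=\pi(\mE_\varepsilon)$ holds whether or not this representation is zero.
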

	\begin{proof}
		This follows from \cite[Theorem 3.4, 3.6, 3.7]{Atobe2022_A-packet}.
	\end{proof}
	
	\begin{proposition}\label{prop: theta lifts of extended multi-segment}
		When $\alpha\gg0$, the following is true:
		\begin{enumerate}
			\item[(1)] Let $\mE_\alpha=\cup_{\rho}\left\{([A_i,B_i]_\rho,l_i,\eta_i)\right\}_{i\in(I_\rho,\succ_\rho)}\cup\left\{\left(\left[\frac{\alpha-1}{2},-\frac{\alpha-1}{2}\right]_{\triv},l_\alpha,\eta_\alpha\right)\right\}$ with 
			$$(l_\alpha,\eta_\alpha)=\begin{cases}
				(\frac{\alpha}{2},1)&\text{if }\varepsilon_\mE(s_0)=1\\
				(\frac{\alpha}{2}-1,1)&\text{if }\varepsilon_\mE(s_0)=-1
			\end{cases}$$ and $([\frac{\alpha-1}{2},-\frac{\alpha-1}{2}]_{\triv},l_\alpha,\eta_\alpha)\prec_{\triv}([A_i,B_i]_\triv,l_i,\eta_i)$ for all $i\in I_{\triv}$. Then we have $\theta_{-\alpha}(\pi(\mE))|_{\SO(V_{2m+1})}=\pi(\mE_{\alpha})$.
			\item[(2)] $\theta_{-\alpha}(\pi(\mE))(-I_{2m+1})=\varepsilon_\mE(s_0)\epsilon(\psi_\mE)$.
		\end{enumerate}
	\end{proposition}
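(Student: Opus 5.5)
We will reduce to the discrete case, where the statement is Corollary \ref{coro: adams conjecture - discrete case - Whittaker normalized} combined with Lemma \ref{lemma: theta lifts of extended multi-segment - lemma}. The reduction has two layers: first from a general good parity $\mE$ to the non-negative DDR extended multi-segment $\mE_{\textbf{t}}$, and then from $\mE_{\textbf{t}}$ to a discrete one by induction on $\sum_{\rho}\sum_{i\in I_\rho}l_i$. Before starting, we fix $\alpha$ large enough, depending only on $\mE$ and on the finitely many cuspidal supports and derivative chains that occur. Then Lemma \ref{lemma: compatibility of theta lifts and socle} and Lemma \ref{lemma: compatibility of theta lifts and highest derivative} apply simultaneously at every step.

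\textbf{Non-negative DDR case.} Assume $\psi_\mE$ is a non-negative DDR. If all $l_i=0$, then $\pi(\mE)=\pi(\phi,\varepsilon)$ is a discrete series. Unwinding Definition \ref{def: associated parameter of extended multi-segment}, $\varepsilon_\mE(s_0)$ equals the $\varepsilon(s_0)$ of \S 6.1. Corollary \ref{coro: adams conjecture - discrete case - Whittaker normalized} then identifies $\theta_{-\alpha}(\pi(\mE))|_{\SO}$ as $\pi(\phi_\alpha,\varepsilon_\alpha)$ with $\varepsilon_\alpha(\triv,1,\alpha)=\varepsilon_\mE(s_0)$. Lemma \ref{lemma: theta lifts of extended multi-segment - lemma} identifies this with $\pi(\mE_\alpha)$. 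The Atobe normalization causes no discrepancy here: for $\phi_\alpha$ we have $\pi_\Ato=\pi$, because the only pairs in $\mZ_{\Ato/\W}$ involve $(\triv,1,\alpha)$, and the placement of that block as $\prec_\triv$-minimal should kill them. This sign bookkeeping must be checked.

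If some $l_i>0$, Lemma \ref{lemma: reduction step in non-negative DDR case} gives $\pi(\mE)=\soc([B_i,-A_i]_\rho\rtimes\pi(\mE^-))$. By induction, $\theta_{-\alpha}(\pi(\mE^-))=\pi(\mE^-_\alpha)$. Lemma \ref{lemma: compatibility of theta lifts and socle}(1) embeds $\theta_{-\alpha}(\pi(\mE))$ into $[B_i,-A_i]_\rho\rtimes\pi(\mE^-_\alpha)$; passing from $\Theta$ to $\theta$ needs part (2), via the irreducibility of the relevant partial Jacquet module. The induced representation $[B_i,-A_i]_\rho\rtimes\pi(\mE^-_\alpha)$ is SI: applying $D=\Jac_{\rho,-A_i}\circ\cdots\circ\Jac_{\rho,B_i}$ yields an irreducible result, exactly as in \S\ref{section: general case}, since the large $\triv$-block lies far from $[B_i,-A_i]$. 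Its socle is $\pi(\mE_\alpha)$ by Atobe's analogue of Lemma \ref{lemma: reduction step in non-negative DDR case} for $\SO(2m+1)$. This requires $\mE_\alpha$ to still be treatable by derivatives, even though the added block $[\frac{\alpha-1}{2},-\frac{\alpha-1}{2}]_\triv$ destroys the DDR property. I would invoke Atobe's general definition and his compatibility of $\pi(\mE)$ with the operation $\mE\mapsto\mE^-$ whenever the segment is not linked to the others.

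\textbf{General good parity case.} We have $\pi(\mE)=D_{\textbf{t}}(\pi(\mE_{\textbf{t}}))$, and $D_{\textbf{t}}$ is a composite of highest derivatives. Lemma \ref{lemma: compatibility of theta lifts and highest derivative} lets $\theta_{-\alpha}$ commute with each factor. For the $D_{[0,1]_\rho}$ factors we need the vanishing $\Jac_{\rho,1}(\theta_{-\alpha}(\cdot))=0$, which I expect to follow from the explicit form $\pi(\mE_{\textbf{t},\alpha})$ and Atobe's Jacquet module computations. Hence $\theta_{-\alpha}(\pi(\mE))=D_{\textbf{t}}(\pi((\mE_{\textbf{t}})_\alpha))$. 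By Atobe's construction, with $\textbf{t}$ extended by $0$ on the $\triv$-block, the right side is $\pi(\mE_\alpha)$. One must check that shifting does not interact with the minimal $\triv$-block, and that $\varepsilon_{\mE_{\textbf{t}}}(s_0)=\varepsilon_\mE(s_0)$, which holds since the multiplicities do not change under shifting.

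\textbf{Statement (2).} This follows by the same induction. Central characters are preserved under socles, embeddings and derivatives, because $-I_{2m+1}$ acts on $\sigma\rtimes\pi$ through $\omega_\sigma(-1)$ times its action on $\pi$. The root number changes $\epsilon(\psi)/\epsilon(\psi')$ should match these factors, with the base case given by Corollary \ref{coro: adams conjecture - discrete case - Whittaker normalized}(2).

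The main obstacle is the $\SO$ side: identifying $\soc([B_i,-A_i]_\rho\rtimes\pi(\mE^-_\alpha))$, and the derivatives of $\pi((\mE_{\textbf{t}})_\alpha)$, with $\pi(\mE_\alpha)$, where $\mE_\alpha$ is not a DDR. The consistency of the sign $\varepsilon_\mE(s_0)$ through each step is a secondary obstacle.
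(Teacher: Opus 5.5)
Your proposal is correct and follows the paper's proof step for step. In the non-negative DDR case you induct on $\sum_\rho\sum_i l_i$: the base case comes from Corollary~\ref{coro: adams conjecture - discrete case - Whittaker normalized} and Lemma~\ref{lemma: theta lifts of extended multi-segment - lemma}, and the inductive step from Lemmas~\ref{lemma: reduction step in non-negative DDR case} and~\ref{lemma: compatibility of theta lifts and socle}. The general case then follows by commuting $\theta_{-\alpha}$ with $D_{\textbf{t}}$ via Lemma~\ref{lemma: compatibility of theta lifts and highest derivative}. The paper handles what you call the main obstacle by noting that $\psi_{\mE_\alpha^-}$ still has discrete diagonal restriction in M\oe glin's sense, since the added block contributes only $r(\alpha)$ to the diagonal. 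So $\pi(\mE_\alpha)=\soc([B_i,-A_i]_\rho\rtimes\pi(\mE_\alpha^-))$ can be read off from M\oe glin's construction of DDR packets \cite[Proposition 3.3(i)]{Moeglin2011_adams_conjecture}.

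One justification in your proposal is wrong. Shifting by $\textbf{t}$ does change multiplicities, because coincident segments of $\mE$ become distinct in $\mE_{\textbf{t}}$. Hence $\varepsilon_{\mE_{\textbf{t}}}(s_0)$ is the product of $(-1)^{[b_i/2]+l_i}\eta_i^{b_i}$ over all $i$, while $\varepsilon_\mE(s_0)$ only sees segments of odd multiplicity. The identity $\varepsilon_{\mE_{\textbf{t}}}(s_0)=\varepsilon_\mE(s_0)$, which you need to identify $D_{\textbf{t}}(\pi((\mE_{\textbf{t}})_\alpha))$ with $\pi(\mE_\alpha)$, therefore requires a separate argument. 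The paper passes over this point as well.
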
\index{E alpha@$\mE_\alpha$}
	\begin{proof}
		\par We first consider the case when $\psi_\mE$ is a non-negative DDR, and we make induction on $\sum_\rho\sum_{i\in I_\rho} l_i$. Note that $\prod_{0\le k\le b_i-1}\epsilon(\rho\otimes r(2(B_i+k)+1))=\epsilon(\rho\otimes r(a_i)\otimes r(b_i))$. By Corollary \ref{coro: adams conjecture - discrete case - Whittaker normalized} and Lemma \ref{lemma: theta lifts of extended multi-segment - lemma}, the proposition is true when $\sum_\rho\sum_{i\in I_\rho} l_i=0$.
		
		\par Suppose $\sum_\rho\sum_{i\in I_\rho} l_i>0$. By Lemma \ref{lemma: reduction step in non-negative DDR case}, there exists an embedding $\pi(\mE)\xhookrightarrow{}[B_i,-A_i]_\rho\rtimes\pi(\mE^-)$. By Lemma \ref{lemma: compatibility of theta lifts and socle}, we have $\theta_{-\alpha}(\pi(\mE))\xhookrightarrow{}[B_i,-A_i]_\rho\rtimes\theta_{-\alpha}(\pi(\mE^-))$. By the induction hypothesis, we have $\theta_{-\alpha}(\pi(\mE^-))=\pi(\mE_{\alpha}^-)$. Note that $\psi_{\mE_{\alpha}^-}$ is a DDR. By M\oe glin's construction for representations in DDR packets, we have $\pi(\mE_\alpha)=\soc([B_i,-A_i]_\rho\rtimes\pi(\mE_\alpha^-))$ (see, e.g., \cite[Proposition 3.3(i)]{Moeglin2011_adams_conjecture}), which implies that $\theta_{-\alpha}(\pi(\mE))=\pi(\mE_\alpha)$.
		
		\par For general $\mE$, choose $\textbf{t}$ as in \S \ref{section: good parity case} so that $\psi_{\mE_{\textbf{t}}}$ is a non-negative DDR. Then $\pi(\mE)$ is defined to be $D_{\textbf{t}}(\pi(\mE_{\textbf{t}}))$. By Lemma \ref{lemma: compatibility of theta lifts and highest derivative}, we have $\theta_{-\alpha}(D_{\textbf{t}}(\pi(\mE_{\textbf{t}})))=D_{\textbf{t}}(\theta_{-\alpha}(\pi(\mE_{\textbf{t}})))=D_{\textbf{t}}(\pi((\mE_{\textbf{t}})_\alpha))=\pi(\mE_\alpha)$. This completes the proof.
	\end{proof}
	
	\subsection{General case}
	
	\par Now, we can prove the Adams conjecture for $\alpha\gg0$.
	
	\begin{theorem}\label{theorem: adams conjecture}
		For $\psi\in\Psi(\widetilde{G}_{2n})$, write $\psi_\alpha=\psi\oplus\triv\otimes r(1)\otimes r(\alpha)$. Suppose that $\alpha\gg0$. Then, for $\pi\in\Pi_\psi$, we have $\theta_{-\alpha}(\pi)\in\Pi_{\psi_\alpha}$.
	\end{theorem}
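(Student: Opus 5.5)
The plan is to reduce to the good parity case, where Proposition \ref{prop: theta lifts of extended multi-segment} already computes the lift. Write $\psi=\psi_\np^\vee\oplus\psi_\gp\oplus\psi_\np$ as in (\ref{equation: reduction to good parity}). By Proposition \ref{prop: reduction to good parity}, every $\pi\in\Pi_\psi$ has the form $\pi=\tau_{\psi_\np}\rtimes\pi_0$ with $\pi_0\in\Pi_{\psi_\gp}$, and this induced representation is irreducible. By Theorem \ref{theorem: A-packets can be constructed via extended multi-segments}, $\pi_0=\pi(\mE)$ for some extended multi-segment $\mE$ with $\psi_\mE=\psi_\gp$.

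First I would handle $\pi_0$ itself. Proposition \ref{prop: theta lifts of extended multi-segment}(1) gives $\theta_{-\alpha}(\pi_0)|_{\SO(V_{2m_0+1})}=\pi(\mE_\alpha)$. Here $\psi_{\mE_\alpha}=\psi_\gp\oplus\triv\otimes r(1)\otimes r(\alpha)=(\psi_\gp)_\alpha$. Atobe's theorem for $\SO(2m+1)$ then shows $\pi(\mE_\alpha)\in\Pi_{(\psi_\gp)_\alpha}$. Part (2) of that proposition fixes the value of the central element $-I$, so $\theta_{-\alpha}(\pi_0)$ is pinned down as a representation of $\Or(V)$. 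Its restriction lies in the $\SO$-packet, which is what membership in $\Pi_{\psi_\alpha}$ means for $\Or(V)=\SO(V)\times\{\pm1\}$ in odd dimension.

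Next I would pass to the non-good-parity part. Lemma \ref{lemma: compatibility of theta lifts and socle}(1) with $\sigma=\tau_{\psi_\np}$, for $\alpha$ large enough depending on $\tau_{\psi_\np}$, gives an embedding of $\Theta_{-\alpha}(\pi)$ into $\tau_{\psi_\np}\rtimes\Theta_{-\alpha}(\pi_0)$. The irreducible quotient $\theta_{-\alpha}(\pi)$ therefore embeds into $\tau_{\psi_\np}\rtimes\theta_{-\alpha}(\pi_0)$. Passing from $\Theta$ to $\theta$ is the delicate point; I would argue via part (2) of the lemma, or by a Jacquet-module multiplicity argument as in the proof of Proposition \ref{prop: adams conjecture - discrete case}. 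On the orthogonal side, M\oe glin's result \cite[\S 6]{Mœglin2006_arthur_packets} says $\tau_{\psi_\np}\rtimes\pi(\mE_\alpha)$ is irreducible. It lies in $\Pi_{\psi_\alpha}$, because $(\psi_\alpha)_\gp=(\psi_\gp)_\alpha$ and $(\psi_\alpha)_\np=\psi_\np$. Hence $\theta_{-\alpha}(\pi)$ equals that induced representation, extended to $\Or(V)$ with the same sign on $-I$. This gives $\theta_{-\alpha}(\pi)\in\Pi_{\psi_\alpha}$.

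I expect the main obstacle to be the passage from $\Theta$ to $\theta$. The clean version of Lemma \ref{lemma: compatibility of theta lifts and socle}(2) requires $\Jac_{\tau_{\psi_\np}}(\pi)$ to be irreducible. To avoid checking this, I would use a different route. Since $\Theta_{-\alpha}(\pi)$ embeds in the irreducible module $\tau_{\psi_\np}\rtimes\theta_{-\alpha}(\pi_0)$, one has $\Theta_{-\alpha}(\pi)$ equal to this module, hence irreducible, hence equal to $\theta_{-\alpha}(\pi)$. The one needed check is that $\Theta_{-\alpha}(\pi_0)=\theta_{-\alpha}(\pi_0)$ for $\alpha\gg0$. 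For this I would use irreducibility of the big theta lift in the stable range, or else accept a subquotient and argue with Howe duality.
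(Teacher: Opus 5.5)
Your outline is the paper's. The good parity case is Proposition~\ref{prop: theta lifts of extended multi-segment} together with Atobe's description of the orthogonal packet. The general case comes from writing $\pi=\tau_{\psi_\np}\rtimes\pi_0$ (Proposition~\ref{prop: reduction to good parity}), pushing this through Lemma~\ref{lemma: compatibility of theta lifts and socle}, and using M\oe glin's irreducibility of $\tau_{\psi_\np}\rtimes\theta_{-\alpha}(\pi_0)$. However, the step you flag is a genuine gap as written. Part (1) of the lemma only gives $\Theta_{-\alpha}(\pi)\hookrightarrow\tau_{\psi_\np}\rtimes\Theta_{-\alpha}(\pi_0)$, and ``therefore $\theta_{-\alpha}(\pi)$ embeds into $\tau_{\psi_\np}\rtimes\theta_{-\alpha}(\pi_0)$'' does not follow. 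A quotient of a submodule of $\tau_{\psi_\np}\rtimes\Theta_{-\alpha}(\pi_0)$ is only a subquotient of $\tau_{\psi_\np}\rtimes\sigma'$ for some constituent $\sigma'$ of $\Theta_{-\alpha}(\pi_0)$, and nothing you have forces $\sigma'=\theta_{-\alpha}(\pi_0)$. Your fallback needs $\Theta_{-\alpha}(\pi_0)=\theta_{-\alpha}(\pi_0)$ for $\alpha\gg0$. This is not proved in the paper and does not follow from Proposition~\ref{prop: theta lifts of extended multi-segment}, which only identifies the small theta lift. Appealing to ``irreducibility of the big theta lift in the stable range'' imports an unverified external result. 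The option ``accept a subquotient and argue with Howe duality'' runs into exactly the problem just described.

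The paper's one-line proof simply cites Lemma~\ref{lemma: compatibility of theta lifts and socle}. The $\theta$-level statement needed is part (2), applied to $\pi\hookrightarrow\tau_{\psi_\np}\rtimes\pi_0$, so what you must verify is its hypothesis. By Proposition~\ref{prop: Tadic formula} and the disjointness of $\Supp(\tau_{\psi_\np})$ from the cuspidal support of $\pi_0$ on the relevant lines, $\Jac_{\tau_{\psi_\np}}(\pi)$ is $\pi_0$-isotypic. It need not be irreducible, though. Take $\psi_\np=\triv\otimes r(1)\otimes r(1)$, coming from an $I^-$ block of multiplicity two in $\psi$. Then $M^*(\triv)=2\,\triv\otimes1+1\otimes\triv$, and since $\pi_0$ lives at half-integral exponents on the $\triv$-line, $\Jac_{\triv}(\triv\rtimes\pi_0)=2\pi_0$. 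To finish, you need one of two things. Either prove a version of Lemma~\ref{lemma: compatibility of theta lifts and socle}(2) that only assumes $\Jac_\sigma(\pi)$ is $\pi_0$-isotypic (check whether the Kudla-filtration argument behind it uses anything more). Or give a separate argument for such blocks. The paper does not spell this out either, so this is where your write-up needs actual work rather than a citation.
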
\index{psi alpha@$\psi_\alpha$}
	\begin{proof}
		By Proposition \ref{prop: theta lifts of extended multi-segment}, the theorem is true when $\psi$ is of good parity. The general case follows from Lemma \ref{lemma: compatibility of theta lifts and socle} and Proposition \ref{prop: reduction to good parity}.
	\end{proof}
	
	\section{Consequences of the Adams conjecture}\label{section: Consequences of the Adams conjecture}
	
	\par In this section, we will derive some consequences of Proposition \ref{prop: theta lifts of extended multi-segment}.
	
	\subsection{Theorem of multiplicity one}
	
	\par In this subsection, we prove that $\Pi_\psi$ is multiplicity-free for $\psi\in\Psi(\widetilde{G}_{2n})$.
	
	\begin{proposition}\label{prop: well-definedness of associated representation of extended multi-segment}
		For an extended multi-segment $\mE$, the associated representation $\pi(\mE)$ is independent of the choice of $\textbf{t}$.
	\end{proposition}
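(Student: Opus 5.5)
The idea is to use the theta correspondence with $\alpha\gg0$ to move the question to $\SO(V_{2m+1})$. There the independence of $\textbf{t}$ is already known by Atobe's theory for classical groups. We then pull the conclusion back with Howe's duality (Theorem \ref{theorem: Howe's duality}).

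Fix two admissible choices $\textbf{t}$ and $\textbf{t}'$ for $\mE$. Put $\pi_{\textbf{t}}=D_{\textbf{t}}(\pi(\mE_{\textbf{t}}))$ and $\pi_{\textbf{t}'}=D_{\textbf{t}'}(\pi(\mE_{\textbf{t}'}))$. By Proposition \ref{prop: reduction to non-negative DDR}, each is either zero or irreducible.

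First I treat the case where both are nonzero. Choose $\alpha$ large enough for all the finitely many derivatives and socles involved; this is possible since Lemma \ref{lemma: compatibility of theta lifts and socle} requires largeness depending only on the GL-data. The argument of Proposition \ref{prop: theta lifts of extended multi-segment}, applied separately for each choice, gives
\[
\theta_{-\alpha}(\pi_{\textbf{t}})|_{\SO(V_{2m+1})}=D_{\textbf{t}}(\pi((\mE_{\textbf{t}})_\alpha)).
\]
Here the non-negative DDR step uses Lemma \ref{lemma: reduction step in non-negative DDR case} together with M\oe glin's socle description. That lemma does not depend on the proposition being proved, whereas Lemma \ref{lemma: another reduction step in non-negative DDR case} does, so I avoid the latter. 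The transfer through derivatives uses Lemma \ref{lemma: compatibility of theta lifts and highest derivative}. Note that $(\mE_{\textbf{t}})_\alpha=(\mE_\alpha)_{\textbf{t}_\alpha}$, where $\textbf{t}_\alpha$ extends $\textbf{t}$ by $0$ on the new block $[\frac{\alpha-1}{2},-\frac{\alpha-1}{2}]_\triv$. This block is $\prec_\triv$-minimal, so the extension remains admissible, and $D_{\textbf{t}}=D_{\textbf{t}_\alpha}$. Atobe's theorem for $\SO(2m+1)$, which says that $\pi(\mE_\alpha)$ is independent of the shift (\cite{Atobe2022_A-packet}), then gives
\[
\theta_{-\alpha}(\pi_{\textbf{t}})|_{\SO}=\pi(\mE_\alpha)=\theta_{-\alpha}(\pi_{\textbf{t}'})|_{\SO}.
\]
Part (2) of Proposition \ref{prop: theta lifts of extended multi-segment} gives the central sign $\varepsilon_\mE(s_0)\epsilon(\psi_\mE)$, which does not depend on $\textbf{t}$. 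The two $\Or(V)$-representations therefore coincide, and Howe's duality gives $\pi_{\textbf{t}}\cong\pi_{\textbf{t}'}$.

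It remains to show that $\pi_{\textbf{t}}=0$ forces $\pi_{\textbf{t}'}=0$. Suppose $\pi_{\textbf{t}'}\ne0$ and $\pi_{\textbf{t}}=0$. Then $D_{\textbf{t}'}$ is a nonzero highest derivative of $\pi(\mE_{\textbf{t}'})$, but $D_{\textbf{t}}(\pi(\mE_{\textbf{t}}))=0$. Lemma \ref{lemma: compatibility of theta lifts and highest derivative}, applied step by step, should show that vanishing of a derivative upstairs is equivalent to vanishing downstairs. Granting this, $D_{\textbf{t}}(\pi((\mE_{\textbf{t}})_\alpha))=0$ while $\pi(\mE_\alpha)\neq0$. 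This contradicts Atobe's classical result, which states that $\pi(\mE_\alpha)$ is computed by any admissible shift, the vanishing locus included.

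The main obstacle is this vanishing compatibility in the intermediate steps. Lemma \ref{lemma: compatibility of theta lifts and highest derivative} is stated for highest derivatives; in the self-dual case it also needs the hypothesis $\Jac_{\rho,\zeta}(\theta_{-\alpha}(\pi))=0$. I would check that this hypothesis holds at each stage of $D_{\textbf{t}}$. I would first try to extract it from the Jacquet-module vanishing that Atobe's computation gives on the $\SO$ side for $\pi((\mE_{\textbf{t}})_\alpha)$ at the relevant intermediate stages.
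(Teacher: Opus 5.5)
Your proposal is the paper's argument: the paper observes that $\pi(\mE_\alpha)=D_{\textbf{t}}(\pi((\mE_{\textbf{t}})_\alpha))=D_{\textbf{t}'}(\pi((\mE_{\textbf{t}'})_\alpha))$ by Atobe's classical theory, deduces $\theta_{-\alpha}(D_{\textbf{t}}(\pi(\mE_{\textbf{t}})))=\theta_{-\alpha}(D_{\textbf{t}'}(\pi(\mE_{\textbf{t}'})))$ via Proposition \ref{prop: theta lifts of extended multi-segment}, and concludes by Howe duality. The paper leaves implicit the extra points you raise: matching the action of $-I_{2m+1}$ so the $\Or(V)$-lifts agree, the zero case, and the self-dual hypothesis in Lemma \ref{lemma: compatibility of theta lifts and highest derivative} (which your nonzero case needs just as much, since it enters the proof of Proposition \ref{prop: theta lifts of extended multi-segment}); so your version refines its proof rather than departing from it.
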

	\begin{proof}
		For another choice $\textbf{t}'$, we have $\pi(\mE_\alpha)=D_{\textbf{t}}(\pi((\mE_{\textbf{t}})_\alpha))=D_{\textbf{t'}}(\pi((\mE_{\textbf{t'}})_\alpha))$. Thus we have $\theta_{-\alpha}(D_{\textbf{t}}(\pi(\mE_{\textbf{t}})))=\theta_{-\alpha}(D_{\textbf{t'}}(\pi(\mE_{\textbf{t'}})))$. Now, the proposition follows from Howe's duality (Theorem \ref{theorem: Howe's duality}).
	\end{proof}
	
	\begin{proposition}\label{prop: associated representation of extended multi-segment is isomorphic iff equivalent}
		Let $\mE$ and $\mE'$ be extended multi-segments. Suppose that $\psi_{\mE}=\psi_{\mE'}$ and $\pi(\mE),\pi(\mE')\neq0$. Then $\pi(\mE)\cong\pi(\mE')$ if and only if $\mE$ is equivalent to $\mE'$.
	\end{proposition}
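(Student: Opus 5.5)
The plan is to reduce the statement to Atobe's classical result via the theta correspondence, as in Proposition \ref{prop: well-definedness of associated representation of extended multi-segment}. Atobe \cite{Atobe2022_A-packet} proves the analogous fact for $\SO(2m+1)$: for extended multi-segments $\mathcal{F},\mathcal{F}'$ with $\psi_{\mathcal{F}}=\psi_{\mathcal{F}'}$ and $\pi(\mathcal{F}),\pi(\mathcal{F}')\neq0$, we have $\pi(\mathcal{F})\cong\pi(\mathcal{F}')$ if and only if $\mathcal{F}$ and $\mathcal{F}'$ are equivalent.

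The ``if'' direction needs no theta correspondence. If $\mE$ is equivalent to $\mE'$, then choose the same $\textbf{t}$ for both; this is allowed because the equivalence is an order-preserving bijection preserving $[A_i,B_i]_\rho$. Then $\mE_{\textbf{t}}$ and $\mE'_{\textbf{t}}$ are equivalent non-negative DDR extended multi-segments, and the remark after Definition \ref{def: equivalence of extended multi-segments} gives $\pi(\mE_{\textbf{t}})\cong\pi(\mE'_{\textbf{t}})$. Applying $D_{\textbf{t}}$ gives $\pi(\mE)\cong\pi(\mE')$.

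For the ``only if'' direction, suppose $\pi(\mE)\cong\pi(\mE')\neq0$ and take $\alpha\gg0$, large enough for both $\mE$ and $\mE'$. By Proposition \ref{prop: theta lifts of extended multi-segment}, we have $\theta_{-\alpha}(\pi(\mE))|_{\SO(V_{2m+1})}=\pi(\mE_\alpha)$ and $\theta_{-\alpha}(\pi(\mE'))|_{\SO(V_{2m+1})}=\pi(\mE'_\alpha)$, so $\pi(\mE_\alpha)\cong\pi(\mE'_\alpha)$. Both are nonzero, and $\psi_{\mE_\alpha}=\psi_\mE\oplus\triv\otimes r(1)\otimes r(\alpha)=\psi_{\mE'_\alpha}$. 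Before invoking Atobe's theorem I must check that $\mE_\alpha$ is a genuine extended multi-segment for $\SO(2m+1)$, including Atobe's sign condition $\prod(-1)^{[\frac{b_i}{2}]+l_i}\eta_i^{b_i}=\epsilon_G$. I expect this to follow from the choice of $(l_\alpha,\eta_\alpha)$ in terms of $\varepsilon_\mE(s_0)$, since $\pi(\mE_\alpha)\neq0$ lies in the packet. Atobe's theorem then makes $\mE_\alpha$ equivalent to $\mE'_\alpha$.

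It remains to strip off the added segment. The added segment $([\frac{\alpha-1}{2},-\frac{\alpha-1}{2}]_\triv,l_\alpha,\eta_\alpha)$ is the $\prec_\triv$-minimal element in both $\mE_\alpha$ and $\mE'_\alpha$. It is also the unique segment with $A=\frac{\alpha-1}{2}$, since $\alpha\gg0$. An order-preserving bijection realizing the equivalence must therefore match the added segments with each other. Restricting it to the remaining segments gives an equivalence between $\mE$ and $\mE'$.

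The main obstacle is compatibility of the orders. $\mE$ and $\mE'$ may carry different admissible orders $\succ_\rho$ on the same $\Jord_\rho$, and Atobe's theorem, as stated for equivalence, presumes the multi-segments come with the orders used. If the two orders differ, I would first apply Atobe's order-changing operations on the classical side, which preserve $\pi(\cdot)$. These would be transported back to the metaplectic side via Proposition \ref{prop: well-definedness of associated representation of extended multi-segment} and Howe duality. If the orders agree, the argument above applies directly.
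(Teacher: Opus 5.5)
Your argument is the paper's own. You lift by $\theta_{-\alpha}$ with $\alpha\gg0$ to get $\pi(\mE_\alpha)\cong\pi(\mE'_\alpha)$ via Proposition \ref{prop: theta lifts of extended multi-segment}, invoke Atobe's classical result to conclude $\mE_\alpha$ and $\mE'_\alpha$ are equivalent, and then strip off the added $\prec_\triv$-minimal segment. For the converse, your same-$\textbf{t}$ argument just spells out what the paper calls trivial.

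The one thing to correct is your final paragraph. The proposition is implicitly stated for a fixed admissible order on each $\Jord_\rho(\psi)$, as fixed in \S\ref{section: good parity case}. When $\mE$ and $\mE'$ carry different orders, the ``only if'' direction is false. By Theorem \ref{theorem: row exchange does not change pi(E)} we have $\pi(R_k(\mE))\cong\pi(\mE)$. Yet when $[A_{k-1},B_{k-1}]_\rho\subsetneq[A_k,B_k]_\rho$, the unique order-preserving bijection pairs different segments, so $R_k(\mE)$ is not equivalent to $\mE$. No transport of Atobe's order-changing operators can rescue that case, so drop it. Instead, simply note that $\mE$ and $\mE'$ carry the same order, hence so do $\mE_\alpha$ and $\mE'_\alpha$. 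That is exactly the setting in which Atobe's distinctness result applies.
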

	\begin{proof}
		Suppose that $\pi(\mE)\cong\pi(\mE')$. Then we have $\pi(\mE_\alpha)\cong\pi(\mE_\alpha')$ for $\alpha\gg0$. Thus $\mE_\alpha$ and $\mE_\alpha'$ are equivalent, which implies that $\mE$ and $\mE'$ are equivalent. The other direction is trivial.
	\end{proof}
	
	\begin{corollary}\label{coro: A-packet is multiplicity free}
		For $\psi\in\Psi(\widetilde{G}_{2n})$, $\Pi_\psi$ is multiplicity free.
	\end{corollary}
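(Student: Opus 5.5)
We first reduce to the good parity case. By Proposition \ref{prop: proposition 4.5.3 in ArthurMp}, $\pi(\psi,\varepsilon)=\tau_{\psi_\np}\rtimes\pi(\psi_\gp,\varepsilon)$ for every $\varepsilon\in\mS_\psi^\vee\cong\mS_{\psi_\gp}^\vee$. By Proposition \ref{prop: reduction to good parity}, $\tau_{\psi_\np}\rtimes\pi$ is irreducible for each $\pi\in\Pi_{\psi_\gp}$. We also need the map $\pi\mapsto\tau_{\psi_\np}\rtimes\pi$ to be injective on $\Pi_{\psi_\gp}$. For this, apply the composite derivative that strips off $\tau_{\psi_\np}$, as in the proof of Proposition \ref{prop: reduction to good parity}. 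It recovers $\pi$, because $\Supp(\tau_{\psi_\np})$ lies off the cuspidal lines relevant to $\psi_\gp$, or is handled by the highest-derivative computation done there. Alternatively, since $\theta_{-\alpha}$ commutes with these inductions for $\alpha\gg0$ by Lemma \ref{lemma: compatibility of theta lifts and socle}, one can use Howe duality. Either way, multiplicities in $\Pi_\psi$ equal multiplicities in $\Pi_{\psi_\gp}$, so we may assume $\psi$ has good parity.

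For $\psi$ of good parity, note first that twisting $\varepsilon\mapsto\varepsilon\varepsilon_\psi^{\Ato/\W}$ is a bijection of $\mS_\psi^\vee$. By Proposition \ref{prop: properties of Atobe's normalization - Mp case}(1), this gives $\bigoplus_\varepsilon\pi(\psi,\varepsilon)=\bigoplus_\varepsilon\pi_\Ato(\psi,\varepsilon)$. Theorem \ref{theorem: A-packets can be constructed via extended multi-segments} then writes this as
\[
\bigoplus_{\varepsilon\in\mS_\psi^\vee}\ \bigoplus_{[\mE]:\,(\psi_\mE,\varepsilon_\mE)=(\psi,\varepsilon)}\pi(\mE)=\bigoplus_{[\mE]:\,\psi_\mE=\psi}\pi(\mE),
\]
where each equivalence class of extended multi-segments with $\psi_\mE=\psi$ appears exactly once, since $\varepsilon_\mE$ is determined by $\mE$. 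Each nonzero $\pi(\mE)$ is irreducible by Definition \ref{def: associated representation of extended multi-segments}, and Proposition \ref{prop: well-definedness of associated representation of extended multi-segment} shows it is independent of $\textbf{t}$.

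It remains to show that distinct equivalence classes give non-isomorphic representations. This is exactly Proposition \ref{prop: associated representation of extended multi-segment is isomorphic iff equivalent}: if $\pi(\mE)\cong\pi(\mE')\neq0$ with $\psi_\mE=\psi_{\mE'}$, then $\mE\sim\mE'$. Hence every irreducible constituent of $\bigoplus_\varepsilon\pi(\psi,\varepsilon)$ occurs once, and $\Pi_\psi$ is multiplicity free.

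The main subtlety is the reduction step: we must check that $\tau_{\psi_\np}\rtimes\pi\cong\tau_{\psi_\np}\rtimes\pi'$ forces $\pi\cong\pi'$. I would handle this with the derivative $D$ used in the proof of Proposition \ref{prop: reduction to good parity}, since $D(\tau_{\psi_\np}\rtimes\pi)=\pi$ there. If that is delicate, the fallback is theta lifting to $\Or(V)$ with $\alpha\gg0$, Theorem \ref{theorem: adams conjecture}, and the known injectivity for classical groups combined with Howe duality.
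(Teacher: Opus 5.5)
Your proposal is correct and follows the paper's proof. You reduce to good parity via Propositions \ref{prop: proposition 4.5.3 in ArthurMp} and \ref{prop: reduction to good parity}, then combine Theorem \ref{theorem: A-packets can be constructed via extended multi-segments} (with its fixed admissible order) and Proposition \ref{prop: associated representation of extended multi-segment is isomorphic iff equivalent}. The paper leaves implicit that $\pi\mapsto\tau_{\psi_\np}\rtimes\pi$ is injective on $\Pi_{\psi_\gp}$; you make this explicit, and your disjoint-support Jacquet module argument suffices for it.
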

	\begin{proof}
		When $\psi$ is of good parity, the corollary follows from Theorem \ref{theorem: A-packets can be constructed via extended multi-segments} and Proposition \ref{prop: associated representation of extended multi-segment is isomorphic iff equivalent}. The general case can be deduced from the good parity case and Proposition \ref{prop: reduction to good parity}.
	\end{proof}
	
	\subsection{A non-vanishing criterion}
	
	\par In \cite{Xu2021_combinatorial}, Xu gives an algorithm to determine when M\oe glin's  constructions are non-zero. Later, Atobe applies Xu's algorithm to $\pi(\mE)$ and gives a non-vanishing criterion for $\pi(\mE)$ (see \cite[\S 4]{Atobe2022_A-packet}). We will generalize Atobe's criterion (\cite[Theorem 4.4]{Atobe2022_A-packet}) to the metaplectic case.
	
	\par We fix an extended multi-segment $\mE=\cup_\rho\left\{([A_i,B_i]_\rho,l_i,\eta_i)\right\}_{i\in (I_\rho,\succ_\rho)}$ for $\widetilde{G}_{2n}$. Take a non-negative integer $t$ such that $t+B_i\ge 0$ for all $\rho$ and $i\in I_\rho$. Define $\mE_t:=\cup_\rho\left\{([A_i+t,B_i+t]_\rho,l_i,\eta_i)\right\}_{i\in (I_\rho,\succ_\rho)}$. Then, we have:
	
	\begin{lemma}\label{lemma: pi(Et alpha) neq 0 if and only if pi((E alpha)t) neq 0}
		Let $t'$ be a non-negative integer such that $t'>\alpha$. Then, the representation $\pi((\mE_t)_\alpha)$ is non-zero if and only if $\pi((\mE_\alpha)_{t+t'})$ is non-zero (recall from Proposition \ref{prop: theta lifts of extended multi-segment} that $(\mE_t)_\alpha$ and $(\mE_\alpha)_{t+t'}$ are extended multi-segments for $\SO(V_{2m+1})$).
	\end{lemma}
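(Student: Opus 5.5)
Write the two extended multi-segments explicitly. By Proposition \ref{prop: theta lifts of extended multi-segment}, $(\mE_t)_\alpha$ is $\mE_t$ together with the extended segment $([\frac{\alpha-1}{2},-\frac{\alpha-1}{2}]_\triv,l_\alpha,\eta_\alpha)$, placed at the bottom of $\prec_\triv$. Note that $\varepsilon_{\mE_t}(s_0)=\varepsilon_\mE(s_0)$, so $(l_\alpha,\eta_\alpha)$ is the same for $\mE$ and $\mE_t$. On the other hand, $(\mE_\alpha)_{t+t'}$ shifts every segment of $\mE_\alpha$ by $t+t'$. Thus both multi-segments contain the same shifted copies $([A_i+t,B_i+t]_\rho,l_i,\eta_i)$ when $t'$ is absorbed, up to an extra shift $t'$. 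More precisely, $(\mE_\alpha)_{t+t'}=((\mE_t)_\alpha)_{t'}$ as extended multi-segments for $\SO(V_{2m+1})$, provided the added triv-segment stays first in $\prec_\triv$, which holds by construction. So the lemma reduces to showing that, for the extended multi-segment $\mF:=(\mE_t)_\alpha$ of $\SO(V_{2m+1})$, we have $\pi(\mF)\neq0$ iff $\pi(\mF_{t'})\neq0$.

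For this I use Atobe's theory for $\SO(2n+1)$ in \cite{Atobe2022_A-packet}. Only the segment $[\frac{\alpha-1}{2},-\frac{\alpha-1}{2}]_\triv$ of $\mF$ has negative $B$; all others have $B_i+t\ge0$. Atobe's definition gives $\pi(\mF)=D(\pi(\mF_{\textbf{s}}))$ for any admissible shift $\textbf{s}$ making $\psi_{\mF_{\textbf{s}}}$ a non-negative DDR, independently of $\textbf{s}$. Equivalently, $\pi(\mF_{\textbf{s}})$ is obtained from $\pi(\mF)$ by the socle functors. The key input is Atobe's statement \cite[Theorem 4.4 and its proof]{Atobe2022_A-packet}, or more directly \cite[Proposition 3.5 / Lemma 3.6]{Atobe2022_A-packet}. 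It says that for a uniform shift by $t'$ with all $B_i\ge0$ after shifting, $\pi(\mF)\neq0$ iff $\pi(\mF_{t'})\neq0$. Indeed, the non-vanishing criterion of Atobe is formulated by shifting all segments until they are non-negative and checking the combinatorial conditions there. The conditions are invariant under further uniform shifts, since they only involve differences of $A_i,B_i$ and $l_i$, and $B_i\ge0$.

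So the steps are as follows.
\begin{enumerate}
\item[(i)] Verify the identity $(\mE_\alpha)_{t+t'}=((\mE_t)_\alpha)_{t'}$, including the order and the pair $(l_\alpha,\eta_\alpha)$.
\item[(ii)] Observe that $t'>\alpha$ ensures $-\frac{\alpha-1}{2}+t'\ge0$, so $\mF_{t'}$ has all $B\ge0$.
\item[(iii)] Apply Atobe's criterion: $\pi(\mF)\neq0$ iff $\pi(\mF_{t'})\neq0$. This is equivalent to $\pi(\mF)=D_{\textbf{t}'}(\pi(\mF_{t'}))$ being nonzero whenever $\pi(\mF_{t'})\neq 0$. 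The converse holds since $\pi(\mF_{t'})=S_{\textbf{t}'}(\pi(\mF))$ whenever $\pi(\mF)\ne0$.
\end{enumerate}

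The main obstacle is step (iii) in the direction $\pi(\mF_{t'})\neq0\Rightarrow\pi(\mF)\neq0$. The derivative $D_{\textbf{t}'}$ could kill a nonzero representation. Here I would first try to invoke Atobe's explicit criterion \cite[Theorem 3.6]{Atobe2022_A-packet}: the uniform shift $\mF\mapsto\mF_1$ preserves nonvanishing as soon as the condition $B_i+l_i\ge0$ type inequalities hold. If needed, I would proceed one unit of shift at a time and use his lemma that, for the segment with negative $B$, the condition for nonvanishing is exactly his condition (analogous to \cite[Proposition 4.1]{Atobe2022_A-packet}). That condition holds automatically here because $l_\alpha\in\{\frac\alpha2,\frac\alpha2-1\}$ is maximal, so the triv-segment is "fully paired".
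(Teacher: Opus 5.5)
Your step (i) is false, and with it half of the argument disappears. By definition,
$$(\mE_t)_\alpha=\mE_t\cup\{([\tfrac{\alpha-1}{2},-\tfrac{\alpha-1}{2}]_{\triv},l_\alpha,\eta_\alpha)\},$$
and the added $\triv$-segment is \emph{not} shifted by $t$. So $((\mE_t)_\alpha)_{t'}$ contains $([\frac{\alpha-1}{2}+t',-\frac{\alpha-1}{2}+t']_{\triv},l_\alpha,\eta_\alpha)$. By contrast, $(\mE_\alpha)_{t+t'}$ contains $([\frac{\alpha-1}{2}+t+t',-\frac{\alpha-1}{2}+t+t']_{\triv},l_\alpha,\eta_\alpha)$. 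All the other extended segments agree.

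The two multi-segments therefore coincide only when $t=0$. In general they differ by a non-uniform shift of the single large $\triv$-segment. What your steps (ii)--(iii) actually prove is
$$\pi((\mE_t)_\alpha)\neq0\iff\pi(((\mE_t)_\alpha)_{t'})\neq0.$$
This is only the first half of the paper's proof, which it obtains from \cite[Theorem 3.7]{Atobe2022_A-packet}. That theorem requires its condition $(\star)$ to be checked for $(\mE_t)_\alpha$. For the added segment, what makes it hold is $B_\alpha+l_\alpha=\pm\frac12$ together with the sign $\eta_\alpha=1$. It is not maximality of $l_\alpha$: when $l_\alpha=\frac{\alpha}{2}-1$, $l_\alpha$ is not maximal and $B_\alpha+l_\alpha=-\frac12$.

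The missing step is a comparison of the two non-negative extended multi-segments $((\mE_t)_\alpha)_{t'}$ and $(\mE_\alpha)_{t+t'}$. Your remark that the criteria are invariant under uniform shifts does not cover this. The relative position of the large $\triv$-segment and the other $\triv$-segments changes between the two. In general, non-vanishing of a non-negative $\pi(\cdot)$ does depend on such relative positions, for example through case (1) of \cite[Proposition 4.1]{Atobe2022_A-packet}.

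The paper closes this gap with the non-vanishing criterion \cite[Theorem 4.4]{Atobe2022_A-packet}:
\begin{itemize}
\item For $\alpha\gg0$, the large $\triv$-segment contains every other $\triv$-segment in both configurations, so row exchanges can move it to any position.
\item Every adjacent pair involving it then falls into the containment cases (2)/(3).
\item Both those conditions and the row-exchange formulas involve only $b_i$, $l_i$, $\eta_i$ and the parities of $A_i-B_i$. These are identical for the two multi-segments.
\item The pairs not involving the large segment sit at the same positions in both.
\end{itemize}
Hence the two criteria coincide. Without an argument of this kind, your proof establishes a different equivalence from the one stated in the lemma.
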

	\begin{proof}
		By \cite[Theorem 3.7]{Atobe2022_A-packet}, we know that $\pi((\mE_t)_\alpha)\neq 0\Leftrightarrow\pi(((\mE_t)_\alpha)_{t'})\neq 0$. By \cite[Theorem 4.4]{Atobe2022_A-packet} we have $\pi(((\mE_t)_\alpha)_{t'})\neq 0\Leftrightarrow\pi((\mE_\alpha)_{t+t'})\neq 0$. This completes the proof.
	\end{proof}
	
	\par Now, we can generalize \cite[Theorem 3.7]{Atobe2022_A-packet} to the metaplectic case:
	
	\begin{proposition}\label{prop: reduction to the non-negative case}
		The representation $\pi(\mE)$ is non-zero if and only if $\pi(\mE_t)\neq 0$ and the following condition holds for all $\rho$ and $i\in I_\rho$:
		\begin{equation}\tag{$\star$}
			B_i+l_i\ge\begin{cases}
				0&\text{if }B_i\in\ZZ\\
				\frac{1}{2}&\text{if }B_i\notin\ZZ\text{ and }\eta_i\neq(-1)^{\alpha_i}\\
				-\frac{1}{2}&\text{if }B_i\notin\ZZ\text{ and }\eta_i=(-1)^{\alpha_i},\\
			\end{cases}
		\end{equation}
		where we set:
		$$\alpha_i=\begin{cases}
			\sum_{j\in I_\rho,j\prec_\rho i}(A_j+B_j+1)&\text{if }\rho\neq\triv\\
			1+\sum_{j\in I_\rho,j\prec_\rho i}(A_j+B_j+1).&\text{if }\rho=\triv\\
		\end{cases}$$
	\end{proposition}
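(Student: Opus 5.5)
The plan is to transfer the statement to the orthogonal side via the Adams conjecture for $\alpha\gg0$, apply Atobe's criterion \cite[Theorem 3.7]{Atobe2022_A-packet} for $\SO(V_{2m+1})$ to the lifted extended multi-segment, and translate the resulting conditions back.

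First I will reduce non-vanishing of $\pi(\mE)$ to non-vanishing of $\pi(\mE_\alpha)$. By Proposition \ref{prop: theta lifts of extended multi-segment}, for $\alpha\gg0$ we have $\theta_{-\alpha}(\pi(\mE))|_{\SO(V_{2m+1})}=\pi(\mE_\alpha)$. The argument in the proof of that proposition, via Lemma \ref{lemma: compatibility of theta lifts and highest derivative} and $\pi(\mE)=D_{\textbf{t}}(\pi(\mE_{\textbf{t}}))$, gives $D_{\textbf{t}}(\pi((\mE_{\textbf{t}})_\alpha))=\theta_{-\alpha}(D_{\textbf{t}}(\pi(\mE_{\textbf{t}})))$. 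Here $\pi(\mE_{\textbf{t}})\neq0$ in the non-negative DDR case, and $\theta_{-\alpha}$ of a nonzero irreducible representation is nonzero for $\alpha\gg0$ (stable range). Hence $\pi(\mE)\neq0\iff\pi(\mE_\alpha)\neq0$, and likewise $\pi(\mE_t)\neq0\iff\pi((\mE_t)_\alpha)\neq0$. One point needs care here: the derivative $D_{\textbf{t}}$ applied to $\pi((\mE_{\textbf{t}})_\alpha)$ involves the extra block $[\frac{\alpha-1}{2},-\frac{\alpha-1}{2}]_{\triv}$. Since that block is $\prec$-minimal and unshifted, I expect it to be compatible with Atobe's definition of $\pi(\mE_\alpha)$ using a shift that leaves the $\alpha$-block essentially untouched, as in Lemma \ref{lemma: pi(Et alpha) neq 0 if and only if pi((E alpha)t) neq 0}.

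Next I will apply \cite[Theorem 3.7]{Atobe2022_A-packet} to $\mE_\alpha$. This says $\pi(\mE_\alpha)\neq0$ if and only if $\pi((\mE_\alpha)_{t''})\neq0$ for a suitable shift and, for every segment of $\mE_\alpha$, Atobe's condition holds:
\[
B_i+l_i\ge\begin{cases}0&B_i\in\ZZ,\\ \tfrac12 & B_i\notin\ZZ,\ \eta_i\neq(-1)^{\alpha_i^{\SO}},\\ -\tfrac12 & B_i\notin\ZZ,\ \eta_i=(-1)^{\alpha_i^{\SO}},\end{cases}
\qquad \alpha_i^{\SO}=\sum_{j\prec i}(A_j+B_j+1),
\]
with the sum taken in $\mE_\alpha$. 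For the new segment $([\frac{\alpha-1}{2},-\frac{\alpha-1}{2}]_{\triv},l_\alpha,1)$ we have $B+l\ge\frac12-1$ or $\ge \frac12$ (with $\alpha$ odd), and it is $\prec$-minimal, so its sum is empty. I need to check that its condition is automatic: if $l_\alpha=\frac{\alpha}{2}-1$ then $B+l=-\frac12$ and $\eta=1=(-1)^0$, which is fine. For $\rho\neq\triv$ nothing changes. For $\rho=\triv$, each old segment acquires the extra summand $A+B+1=\frac{\alpha-1}{2}-\frac{\alpha-1}{2}+1=1$ from the $\alpha$-block preceding it, which is exactly the ``$1+$'' in the metaplectic definition of $\alpha_i$. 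So the conditions on $\mE_\alpha$ are precisely $(\star)$ for $\mE$. The shift-condition part is handled by Lemma \ref{lemma: pi(Et alpha) neq 0 if and only if pi((E alpha)t) neq 0}: $\pi((\mE_\alpha)_{t+t'})\neq0\iff\pi((\mE_t)_\alpha)\neq0\iff\pi(\mE_t)\neq0$. Combining these gives the proposition.

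The main obstacle is the first step, specifically justifying $\pi(\mE)\neq0\Rightarrow\theta_{-\alpha}(\pi(\mE))\neq0$ uniformly. The dependence of ``$\alpha\gg0$'' on $\mE$ is harmless because $\mE$ is fixed. A second delicate point is matching the shifts: Atobe's theorem shifts all segments of $\mE_\alpha$, including the $\alpha$-block, which is why $t'>\alpha$ appears in Lemma \ref{lemma: pi(Et alpha) neq 0 if and only if pi((E alpha)t) neq 0}.
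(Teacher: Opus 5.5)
Your proposal matches the paper's proof. Both pass to $\mE_\alpha$ via Proposition~\ref{prop: theta lifts of extended multi-segment} and apply \cite[Theorem 3.7]{Atobe2022_A-packet} to $\mE_\alpha$. Both handle the shifted term with Lemma~\ref{lemma: pi(Et alpha) neq 0 if and only if pi((E alpha)t) neq 0}, and both check that the condition for the $\alpha$-block is automatic while the block contributes the extra $1$ to $\alpha_i$ when $\rho=\triv$. One small slip: $\alpha=2m-2n$ is even, not odd, and that is exactly why $B+l_\alpha=\pm\frac12$ in your check.
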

	\begin{proof}
		By \cite[Theorem 3.7]{Atobe2022_A-packet}, we know that $\pi(\mE)$ is non-zero if and only if $\pi((\mE_\alpha)_{t+t'})\neq 0$ and the condition $(\star)$ in \cite[Theorem 3.7]{Atobe2022_A-packet} holds for $\mE_\alpha$. By Lemma \ref{lemma: pi(Et alpha) neq 0 if and only if pi((E alpha)t) neq 0}, we know that $\pi((\mE_\alpha)_{t+t'})\neq 0$ if and only if $\pi(\mE_t)\neq 0$. By Proposition \ref{prop: theta lifts of extended multi-segment}, it is straightforward to verify that the condition $(\star)$ above for $\mE$ is equivalent to the condition $(\star)$ for $\mE_\alpha$ in \cite[Theorem 3.7]{Atobe2022_A-packet}. Thus, the proposition follows from \cite[Theorem 3.7]{Atobe2022_A-packet}.
	\end{proof}
	
	\par By Proposition \ref{prop: reduction to the non-negative case}, in the rest of this subsection, we assume that $\mE$ is non-negative (i.e., $B_i\ge 0$ for all $\rho$ and $i\in I_\rho$).
	
	\begin{proposition}\label{prop: necessary conditions for pi(E) neq 0}
		Suppose that $\pi(\mE)\neq 0$. Let $k\succ_\rho k-1$ be two adjacent elements in $I_\rho$. Then, we have:
		\begin{itemize}
			\item[(1)] If $A_k\ge A_{k-1}$ and $B_k\ge B_{k-1}$, then
			\begin{itemize}
				\item[$\bullet$] $\eta_k=(-1)^{A_{k-1}-B_{k-1}}\eta_{k-1}\Longrightarrow A_k-l_k\ge A_{k-1}-l_{k-1}$ and $B_k+l_k\ge B_{k-1}+l_{k-1}$.
				\item[$\bullet$] $\eta_k\neq (-1)^{A_{k-1}-B_{k-1}}\eta_{k-1}\Longrightarrow B_k+l_k>A_{k-1}-l_{k-1}$.
			\end{itemize}
			In particular, if $[A_k,B_k]_\rho=[A_{k-1},B_{k-1}]_\rho$, then $\eta_k=(-1)^{A_{k-1}-B_{k-1}}\eta_{k-1}$ and $l_k=l_{k-1}$.
			\item[(2)] If $[A_{k-1},B_{k-1}]_\rho\subset[A_k,B_k]_\rho$, then
			\begin{itemize}
				\item[$\bullet$] $\eta_k=(-1)^{A_{k-1}-B_{k-1}}\eta_{k-1}\Longrightarrow0\le l_k-l_{k-1}\le b_k-b_{k-1}$.
				\item[$\bullet$] $\eta_k\neq(-1)^{A_{k-1}-B_{k-1}}\eta_{k-1}\Longrightarrow l_k+l_{k-1}\ge b_{k-1}$.
			\end{itemize}
			\item[(3)] If $[A_{k-1},B_{k-1}]_\rho\supset[A_k,B_k]_\rho$, then
			\begin{itemize}
				\item[$\bullet$] $\eta_k=(-1)^{A_{k-1}-B_{k-1}}\eta_{k-1}\Longrightarrow0\le l_{k-1}-l_k\le b_{k-1}-b_k$.
				\item[$\bullet$] $\eta_k\neq(-1)^{A_{k-1}-B_{k-1}}\eta_{k-1}\Longrightarrow l_k+l_{k-1}\ge b_k$.
			\end{itemize}
		\end{itemize}
	\end{proposition}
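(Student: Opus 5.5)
The plan is to transfer \cite[Proposition 4.1]{Atobe2022_A-packet} (the classical-group necessary conditions, themselves coming from Xu's algorithm) to the metaplectic case via the theta lift with $\alpha\gg0$, as in the proof of Proposition \ref{prop: reduction to the non-negative case}. By Proposition \ref{prop: theta lifts of extended multi-segment}(1), for $\alpha\gg0$ we have $\theta_{-\alpha}(\pi(\mE))|_{\SO(V_{2m+1})}=\pi(\mE_\alpha)$. Since the theta lift of a nonzero irreducible genuine representation is nonzero for $\alpha\gg0$ (stable range), $\pi(\mE)\neq0$ forces $\pi(\mE_\alpha)\neq0$. Here $\mE_\alpha$ is an extended multi-segment for $\SO(V_{2m+1})$, obtained by adjoining the single extended segment $([\frac{\alpha-1}{2},-\frac{\alpha-1}{2}]_{\triv},l_\alpha,\eta_\alpha)$ at the bottom of the $\triv$-order. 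All other segments are unchanged, including their $l_i,\eta_i$ and their relative order.

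Fix $\rho$ and two adjacent elements $k\succ_\rho k-1$ of $I_\rho$. In $\mE_\alpha$ these two indices remain adjacent: the new segment is strictly smaller in the order than every element of $I_{\triv}$, so it cannot be inserted between $k-1$ and $k$. Apply Atobe's necessary conditions \cite[Proposition 4.1]{Atobe2022_A-packet} to the adjacent pair $(k-1,k)$ in $\mE_\alpha$. Those conditions depend only on the data $([A_k,B_k]_\rho,l_k,\eta_k)$ and $([A_{k-1},B_{k-1}]_\rho,l_{k-1},\eta_{k-1})$. They are exactly (1), (2), (3) as stated, because Atobe's sign comparison $\eta_k=(-1)^{A_{k-1}-B_{k-1}}\eta_{k-1}$ involves only these two segments. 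This yields the proposition.

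The main point to check is the hypothesis under which Atobe's result holds. The statement in \cite{Atobe2022_A-packet} assumes the multi-segment is non-negative, or at least that the relevant pair satisfies $B\ge0$ with an admissible order. In $\mE_\alpha$ the added segment has $B=-\frac{\alpha-1}{2}<0$, so $\mE_\alpha$ is not non-negative.

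\begin{itemize}
\item My first attempt would be a shift. Pass to $(\mE_\alpha)_{t'}$ with $t'>\alpha$, using Lemma \ref{lemma: pi(Et alpha) neq 0 if and only if pi((E alpha)t) neq 0}, which gives $\pi(\mE_\alpha)\neq0\Leftrightarrow\pi((\mE_\alpha)_{t'})\neq0$ since $\mE$ is non-negative. Then apply Atobe's Proposition 4.1 to the non-negative multi-segment $(\mE_\alpha)_{t'}$. The pair $(k-1,k)$ is shifted uniformly by $t'$, and conditions (1)–(3) are invariant under such a uniform shift of $A$ and $B$.
\item Alternatively, Atobe's necessary conditions are proved locally for adjacent pairs, using derivatives that only touch segments at or above $k-1$. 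The bottom segment then plays no role, and the argument can be repeated directly.
\end{itemize}

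I expect the bookkeeping of this shift to be the only delicate step.
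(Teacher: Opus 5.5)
Your proposal is correct and follows the same route as the paper. The paper's one-line proof combines Proposition \ref{prop: theta lifts of extended multi-segment}, which turns $\pi(\mE)\neq0$ into $\pi(\mE_\alpha)\neq0$ with $k-1\prec_\rho k$ still adjacent, with \cite[Proposition 4.1]{Atobe2022_A-packet}. Your additional step is also sound: you pass to the non-negative $(\mE_\alpha)_{t'}$ using the forward direction of \cite[Theorem 3.7]{Atobe2022_A-packet}, or equivalently Lemma \ref{lemma: pi(Et alpha) neq 0 if and only if pi((E alpha)t) neq 0} with $t=0$, and observe that conditions (1)--(3) are invariant under a uniform shift. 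This makes explicit a non-negativity point that the paper leaves implicit.
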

	\begin{proof}
		This follows directly from Proposition \ref{prop: theta lifts of extended multi-segment} and \cite[Proposition 4.1]{Atobe2022_A-packet}.
	\end{proof}
	
	\par To state the non-vanishing criterion, we first introduce the row exchange operator $R_k$ on an extended multi-segment:
	
	\begin{definition}\label{def: row exchange operators}
		Let $\mE=\cup_\rho\left\{([A_i,B_i]_\rho,l_i,\eta_i)\right\}_{i\in (I_\rho,\succ_\rho)}$ be a non-negative extended multi-segment for $\widetilde{G}_{2n}$ (i.e., $B_i\ge 0$ for all $\rho$ and $i\in I_\rho$). Let $k\succ_\rho k-1$ be two adjacent elements in $I_\rho$. Let $\succ_\rho'$ be the order on $I_\rho$ defined by $k-1\succ_\rho' k$ and $i\succ_\rho j\Longleftrightarrow i\succ_\rho' j$ for $(i,j)\neq (k,k-1)$. If $\succ_\rho'$ is also an admissible order on $I_\rho$ (recall from Definition \ref{def: extended multi-segment} that admissible means $A_i>A_j,B_i>B_j\Rightarrow i\succ'j$; thus, this condition is equivalent to $[A_{k-1},B_{k-1}]_\rho\subset[A_k,B_k]_\rho$ or $[A_{k-1},B_{k-1}]_\rho\supset[A_k,B_k]_\rho$), we define 
		$$R_{k-1}(\mE):=(\cup_{\rho'\neq \rho}\left\{([A_i,B_i]_{\rho'},l_i,\eta_i)\right\}_{i\in (I_{\rho'},\succ_{\rho'})})\cup\left\{([A_i,B_i]_\rho,l_i',\eta_i')\right\}_{(i\in I_\rho,\succ_\rho')}$$
		by:
		\begin{itemize}
			\item[(a)] When $[A_{k-1},B_{k-1}]_\rho\subset[A_k,B_k]_\rho$, we set $l_i',\eta_i'$ as follows:
			\begin{itemize}
				\item[$\bullet$] $(l_i',\eta_i')=(l_i,\eta_i)$ for $i\notin\left\{k-1,k\right\}$.
				\item[$\bullet$] $(l_{k-1}',\eta_{k-1}')=(l_{k-1},(-1)^{A_k-B_k}\eta_{k-1})$.
				\item[$\bullet$] $l_k'=l_k+(-1)^{A_{k-1}-B_{k-1}}\eta_{k-1}\eta_k(b_{k-1}-2l_{k-1})$ in $(\ZZ/b_k\ZZ)/\left\{\pm1\right\}$. Here, we identify $\left\{l\in\ZZ:0\le l\le\frac{b}{2}\right\}$ with $(\ZZ/b\ZZ)/\left\{\pm1\right\}$.
				\item[$\bullet$] $\eta_k'=\begin{cases}
					(-1)^{A_{k-1}-B_{k-1}}\eta_k&\text{if }\eta_k=(-1)^{A_{k-1}-B_{k-1}}\eta_{k-1}\text{, }b_k-2l_k<2(b_{k-1}-2l_{k-1})\\
					(-1)^{A_{k-1}-B_{k-1}-1}\eta_k&\text{otherwise}.
				\end{cases}$
			\end{itemize}
			\item[(b)] When $[A_{k-1},B_{k-1}]_\rho\supset[A_k,B_k]_\rho$, we set $l_i',\eta_i'$ as follows:
			\begin{itemize}
			\item[$\bullet$] $(l_i',\eta_i')=(l_i,\eta_i)$ for $i\notin\left\{k-1,k\right\}$.
			\item[$\bullet$] $(l_k',\eta_k')=(l_k,(-1)^{A_{k-1}-B_{k-1}}\eta_k)$.
			\item[$\bullet$] $l_{k-1}'=l_{k-1}+(-1)^{A_{k-1}-B_{k-1}}\eta_{k-1}\eta_k(b_k-2l_k)$ in $(\ZZ/b_{k-1}\ZZ)/\left\{\pm1\right\}$.
			\item[$\bullet$] $\eta_{k-1}'=\begin{cases}
				(-1)^{A_k-B_k}\eta_{k-1}&\text{if }\eta_k=(-1)^{A_{k-1}-B_{k-1}}\eta_{k-1}\\
				&\text{ and }b_{k-1}-2l_{k-1}<2(b_k-2l_k)\\
				(-1)^{A_k-B_k-1}\eta_{k-1}&\text{otherwise}.
			\end{cases}$
			\end{itemize}
		\end{itemize}
		If $\succ_\rho'$ is not admissible, we define $R_{k-1}(\mE):=\mE$.
	\end{definition}\index{R row exchange operator@$R_k(\mE)$}
	
	\begin{theorem}\label{theorem: row exchange does not change pi(E)}
		Let $\mE=\cup_\rho\left\{([A_i,B_i]_\rho,l_i,\eta_i)\right\}_{i\in (I_\rho,\succ_\rho)}$ be a non-negative extended multi-segment for $\widetilde{G}_{2n}$. Then $\pi(\mE)\cong\pi(R_k(\mE))$ for all $\rho$ and $k\in I_\rho$.
	\end{theorem}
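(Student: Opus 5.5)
The plan is to transfer the statement from the classical case through the theta correspondence, as the paper does in \S \ref{section: Consequences of the Adams conjecture}. Fix $\rho$ and $k\in I_\rho$. If $\succ_\rho'$ is not admissible then $R_k(\mE)=\mE$ and there is nothing to prove, so assume it is admissible. By Howe's duality (Theorem \ref{theorem: Howe's duality}), it suffices to show that for $\alpha\gg0$
$$\theta_{-\alpha}(\pi(\mE))\cong\theta_{-\alpha}(\pi(R_k(\mE))).$$
If $\pi(\mE)=0$ we must also check that $\pi(R_k(\mE))=0$. We handle this with the same comparison: by Proposition \ref{prop: theta lifts of extended multi-segment}, nonvanishing of $\pi(\mE)$ should be equivalent to nonvanishing of $\pi(\mE_\alpha)$, via the construction $\pi(\mE)=D_{\textbf{t}}(\pi(\mE_{\textbf{t}}))$ and Lemma \ref{lemma: compatibility of theta lifts and highest derivative}.

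By Proposition \ref{prop: theta lifts of extended multi-segment}(1), the restriction of $\theta_{-\alpha}(\pi(\mE))$ to $\SO(V_{2m+1})$ is $\pi(\mE_\alpha)$, and similarly $\theta_{-\alpha}(\pi(R_k(\mE)))|_{\SO(V_{2m+1})}=\pi(R_k(\mE)_\alpha)$. The extra block $([\frac{\alpha-1}{2},-\frac{\alpha-1}{2}]_\triv,l_\alpha,\eta_\alpha)$ is placed at the bottom of the order $\succ_\triv$. The row exchange $R_k$ only involves the two adjacent indices $k-1,k$, and neither of them is the new block. So the key combinatorial step is to verify that
$$R_k(\mE)_\alpha=R_k(\mE_\alpha),$$
where the right-hand side is Atobe's row exchange operator for $\SO(2m+1)$ in \cite{Atobe2022_A-packet}. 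Two things need checking here.
\begin{enumerate}
\item[(a)] The formulas in Definition \ref{def: row exchange operators} coincide with Atobe's. I expect them to, since the only difference in the theory is the absent sign condition.
\item[(b)] The datum $(l_\alpha,\eta_\alpha)$ is unchanged. It depends only on $\varepsilon_\mE(s_0)$, so we must show $\varepsilon_{R_k(\mE)}(s_0)=\varepsilon_\mE(s_0)$.
\end{enumerate}
For (b), Atobe shows that row exchange preserves the associated character $\varepsilon_\mE$, and Theorem \ref{theorem: A-packets can be constructed via extended multi-segments} needs the same fact in our setting. The formula for $\varepsilon_\mE$ in Definition \ref{def: associated parameter of extended multi-segment} is identical to the classical one, so this transfers verbatim.

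Once this identity holds, Atobe's theorem for classical groups (\cite[Theorem 3.6]{Atobe2022_A-packet}) gives $\pi(\mE_\alpha)\cong\pi(R_k(\mE_\alpha))$. Hence the two theta lifts agree on $\SO(V_{2m+1})$. It remains to match the action of $-I_{2m+1}$, since $\Or(V_{2m+1})=\SO(V_{2m+1})\times\{\pm I\}$. By Proposition \ref{prop: theta lifts of extended multi-segment}(2), this action is $\varepsilon_\mE(s_0)\epsilon(\psi_\mE)$. Both factors are unchanged under $R_k$: the first by (b), and the second because $\psi_{R_k(\mE)}=\psi_\mE$. Therefore the lifts are isomorphic as $\Or(V_{2m+1})$-representations, and Howe's duality gives $\pi(\mE)\cong\pi(R_k(\mE))$.

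One point needs care: $\mE_\alpha$ contains a block with negative $B=-\frac{\alpha-1}{2}$. Atobe's $R_k$ and his theorem apply to general admissible orders satisfying $(\mP')$, so this should cause no trouble, provided the exchanged pair stays admissible in $\mE_\alpha$. It does, because the new block is minimal and not adjacent-swapped.

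The main obstacle is the verification $R_k(\mE)_\alpha=R_k(\mE_\alpha)$, especially for $\rho=\triv$. There, $\varepsilon_\mE$ and the shifted parities interact with the extra $\triv$ block (compare the $1+\sum$ in $\alpha_i$ in Proposition \ref{prop: reduction to the non-negative case}). One must check that the sign conventions in (a) and (b) of Definition \ref{def: row exchange operators} are exactly Atobe's, with no hidden twist from the $\triv$ block. My first step would be to compare the two formulas directly case by case, using that the new block lies below both $k-1$ and $k$.
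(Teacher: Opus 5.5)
Your route is the paper's own. Its proof is the single line ``Proposition \ref{prop: theta lifts of extended multi-segment} and Atobe's row-exchange theorem'', with Howe duality implicit; the paper cites \cite[Theorem 4.3]{Atobe2022_A-packet} for the latter, not Theorem 3.6. Your identity $R_{k-1}(\mE)_\alpha=R_{k-1}(\mE_\alpha)$ and the comparison of central signs are exactly what that line leaves unsaid.

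\textbf{Step (b) is justified incorrectly.} Row exchange does \emph{not} preserve $\varepsilon_\mE$. Take case (a) of Definition \ref{def: row exchange operators} with $b_k$ even and $b_{k-1}$ odd. Then $\eta'_{k-1}=(-1)^{A_k-B_k}\eta_{k-1}=-\eta_{k-1}$, so the factor $(-1)^{[b_{k-1}/2]+l_{k-1}}\eta_{k-1}^{b_{k-1}}$ changes sign. Since $l'_k\equiv l_k+b_{k-1}\pmod 2$, the factor at $k$ changes sign too. This is forced: $\varepsilon_\mE$ is read in the normalization $\varepsilon_\psi^{\Ato/\W}$, which depends on $\succ_\rho$ through $\mZ_{\Ato/\W}(\psi)$, and such an exchange toggles this pair's membership in $\mZ_{\Ato/\W}(\psi)$. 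Theorem \ref{theorem: A-packets can be constructed via extended multi-segments} is for one fixed order and needs no such invariance. What you actually need is only $\varepsilon_{R_{k-1}(\mE)}(s_0)=\varepsilon_\mE(s_0)$, i.e.\ that the factors at $k-1$ and $k$ always change together. This must be checked case by case from the formulas, and it uses Proposition \ref{prop: necessary conditions for pi(E) neq 0}. For $b_k$ odd and $\eta_k\neq(-1)^{A_{k-1}-B_{k-1}}\eta_{k-1}$, reducing $l_k-(b_{k-1}-2l_{k-1})$ in $(\ZZ/b_k\ZZ)/\{\pm1\}$ can flip the parity of $l'_k$ alone when $l_k+l_{k-1}<b_{k-1}$. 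Part (2) of that proposition excludes this once $\pi(\mE)\neq0$.

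\textbf{The case $\pi(\mE)=0$ fails and cannot be repaired.} Take $\rho$ orthogonal, $k-1\colon([\frac32,\frac12]_\rho,0,\eta)$ and $k\colon([\frac52,\frac12]_\rho,0,\eta)$. Then $\pi(\mE)=0$ by Proposition \ref{prop: necessary conditions for pi(E) neq 0}(1), because $B_k+l_k=\frac12<\frac32=A_{k-1}-l_{k-1}$. But $R_{k-1}(\mE)$ has $(l'_{k-1},\eta'_{k-1})=(0,\eta)$ and $(l'_k,\eta'_k)=(1,\eta)$, so $\varepsilon(s_0)$ flips. Moreover, $R_{k-1}(\mE)$ and its own row exchange both satisfy the conditions of Proposition \ref{prop: necessary conditions for pi(E) neq 0}; that row exchange is $\mE$ with $\eta_k$ replaced by $-\eta$, not $\mE$. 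So Theorem \ref{theorem: non-vanishing criterion} gives $\pi(R_{k-1}(\mE))\neq0$. Without the reduction modulo $b_k$ one would get $l'_k=-2<0$, which is how vanishing is detected. Hence, with the convention of Definition \ref{def: row exchange operators}, the statement can only be proved for $\pi(\mE)\neq0$. The paper's one-line proof tacitly needs the same restriction. For such $\mE$ your argument works once (b) is corrected.

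\textbf{Two smaller points.}
\begin{enumerate}
\item[(i)] Your worry about the $\triv$ block in (a) is unfounded: the formulas are Atobe's verbatim, and the added block is never touched.
\item[(ii)] Your claim that admissibility in $\mE_\alpha$ is automatic overlooks $(\mP')$. For $\rho=\triv$ the added block has $B<0$, so one of the two orders violates $(\mP')$ whenever $B_k\neq B_{k-1}$. Also, $\mE_\alpha$ is not non-negative, so $R_{k-1}(\mE_\alpha)$ and Atobe's theorem must be applied after a uniform shift, as in Lemma \ref{lemma: pi(Et alpha) neq 0 if and only if pi((E alpha)t) neq 0}.
\end{enumerate}
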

	\begin{proof}
		This follows from Proposition \ref{prop: theta lifts of extended multi-segment} and \cite[Theorem 4.3]{Atobe2022_A-packet}.
	\end{proof}
	
	\par For $\rho\in\Pi_{\unit,\cusp}(\GL(d_\rho))$, we denote by $I_\rho^{2,\mathrm{adj}}$ the set of triples $(i,j,\succ_\rho')$, where $\succ_\rho'$ is an admissible order on $I_\rho$, and $i\succ_\rho'j$ are adjacent elements in $I_\rho$ with respect to $\succ_\rho'$. For $(i,j,\succ_\rho')\in I_\rho^{2,\mathrm{adj}}$, we can apply a sequence of operators $R_k$ with $k\in I_\rho$ to change the admissible order from $\succ_\rho$ to $\succ_\rho'$. We denote by $\mE_{\succ_\rho'}$ the resulting extended multi-segment after applying these operators $R_k$ to $\mE$. Now, we can state and prove the non-vanishing criterion.
	\index{I rho 2 adj@$I_\rho^{2,\mathrm{adj}}$}
	\index{E another admissible order@$\mE_{\succ_\rho'}$}
	
	\begin{theorem}\label{theorem: non-vanishing criterion}
		The representation $\pi(\mE)$ is nonzero if and only if for every $(i,j,\succ_\rho')\in I_\rho^{2,\mathrm{adj}}$, the three necessary conditions in Proposition \ref{prop: necessary conditions for pi(E) neq 0} are satisfied for $\mE_{\succ_\rho'}$ with respect to $i\succ_\rho'j$.
	\end{theorem}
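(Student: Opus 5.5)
The plan is to deduce Theorem \ref{theorem: non-vanishing criterion} from Atobe's criterion \cite[Theorem 4.4]{Atobe2022_A-packet} for $\SO(V_{2m+1})$, transported through the theta lift. This is the same strategy used for Propositions \ref{prop: necessary conditions for pi(E) neq 0} and \ref{prop: reduction to the non-negative case} and Theorem \ref{theorem: row exchange does not change pi(E)}.

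\emph{Step 1: reduce to $\mE_\alpha$.} Fix $\alpha\gg0$. By Proposition \ref{prop: theta lifts of extended multi-segment} and Howe's duality, $\theta_{-\alpha}(\pi(\mE))|_{\SO(V_{2m+1})}=\pi(\mE_\alpha)$. I will first check that $\theta_{-\alpha}$ of a nonzero representation is nonzero for $\alpha\gg0$; this is stable range nonvanishing. Conversely, if $\pi(\mE)=0$, then the proof of Proposition \ref{prop: theta lifts of extended multi-segment} (via derivatives $D_{\textbf{t}}$ and Lemma \ref{lemma: compatibility of theta lifts and highest derivative}) shows $\pi(\mE_\alpha)=D_{\textbf{t}}(\pi((\mE_\textbf{t})_\alpha))=0$. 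Hence $\pi(\mE)\neq0$ if and only if $\pi(\mE_\alpha)\neq0$.

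\emph{Step 2: apply Atobe's criterion and split the conditions.} Since $\mE$ is non-negative, $\mE_\alpha$ is a non-negative extended multi-segment for $\SO(V_{2m+1})$, apart from the single block $[\frac{\alpha-1}{2},-\frac{\alpha-1}{2}]_{\triv}$, which is minimal in $\succ_{\triv}$. I will handle that block via Proposition \ref{prop: reduction to the non-negative case} and \cite[Theorem 3.7]{Atobe2022_A-packet}, shifting by $t'>\alpha$ as in Lemma \ref{lemma: pi(Et alpha) neq 0 if and only if pi((E alpha)t) neq 0}. Atobe's Theorem 4.4 then states that $\pi(\mE_\alpha)\neq0$ if and only if the three conditions of \cite[Proposition 4.1]{Atobe2022_A-packet} hold for every adjacent pair in every admissible order. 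I split the adjacent triples of $\mE_\alpha$ into two kinds: pairs not involving the $\alpha$-block, and pairs involving it.

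\emph{Step 3: pairs not involving the $\alpha$-block.} Row exchanges among the original blocks commute with adding the $\alpha$-block, so $(\mE_\alpha)_{\succ'}=(\mE_{\succ'})_\alpha$. This is essentially the content of the proof of Theorem \ref{theorem: row exchange does not change pi(E)}. I will verify it directly from Definition \ref{def: row exchange operators}, noting that the formulas involve only $k-1,k$. For such pairs, the conditions for $(\mE_{\succ'})_\alpha$ coincide with those of Proposition \ref{prop: necessary conditions for pi(E) neq 0} for $\mE_{\succ'}$.

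\emph{Step 4: pairs involving the $\alpha$-block (main obstacle).} These are adjacent pairs in which the huge block $[\frac{\alpha-1}{2},-\frac{\alpha-1}{2}]_\triv$ has been moved next to an original $\triv$-block via row exchanges. I must show these conditions hold automatically for $\alpha\gg0$, given $(l_\alpha,\eta_\alpha)$ as in Proposition \ref{prop: theta lifts of extended multi-segment}. Since that segment contains every other segment, only case (2)/(3) of \cite[Proposition 4.1]{Atobe2022_A-packet} applies. The values $l_\alpha\in\{\frac{\alpha}{2},\frac{\alpha}{2}-1\}$ give $b_\alpha-2l_\alpha\in\{0,2\}$, so the inequalities $0\le l_\alpha-l\le b_\alpha-b$ or $l_\alpha+l\ge b$ hold trivially for large $\alpha$. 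The delicate part is tracking how $(l_\alpha,\eta_\alpha)$ change under repeated $R_k$ past the original blocks, since $l'=l+\cdots$ is taken modulo $b_\alpha$. I expect the small defect $b_\alpha-2l_\alpha\le2$ to persist, which would make every condition hold; if it does not, I would fall back on Proposition \ref{prop: necessary conditions for pi(E) neq 0}-style reasoning applied to the nonzero image. Combining Steps 1–4 gives the equivalence.
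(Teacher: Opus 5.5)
Your route is the same as the paper's. The paper's proof transports the question to $\mE_\alpha$ by Proposition \ref{prop: theta lifts of extended multi-segment}, invokes \cite[Theorem 4.4]{Atobe2022_A-packet}, and observes that the conditions involving the block $[\frac{\alpha-1}{2},-\frac{\alpha-1}{2}]_{\triv}$ hold automatically for $\alpha\gg0$. Your Steps 1--2 (vanishing compatibility through $D_{\textbf{t}}$, the shift by $t'$, and the check of $(\star)$ for the negative block) spell out what the paper leaves implicit.

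Step 4 needs a correction. Your expectation that the defect $b_\alpha-2l_\alpha\le 2$ persists is false. Exchanging the $\alpha$-block with a block of defect $c=b_k-2l_k$ replaces $l_\alpha$ by the class of $l_\alpha\pm c$ in $(\ZZ/\alpha\ZZ)/\{\pm1\}$, so the new defect is about $2c$. No fallback is needed, though:
\begin{itemize}
\item each such exchange changes $\frac{\alpha}{2}-l_\alpha$ by at most $b_k$;
\item exchanges with the $\alpha$-block never change the $l_k$ of the original blocks;
\item any admissible order can be reached by first reordering the original blocks and then moving the $\alpha$-block up, past each original block at most once.
\end{itemize}
Hence $l_\alpha\ge\frac{\alpha}{2}-1-\sum_{i\in I_{\triv}}b_i$ throughout. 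This bound is independent of $\alpha$, so for $\alpha\gg0$ it gives both $0\le l_\alpha-l_i\le\alpha-b_i$ and $l_\alpha+l_i\ge b_i$. Cases (2) and (3) of the criterion then hold whichever sign alternative occurs. The paper records these inequalities only for the initial $l_\alpha$, so your concern was legitimate, and this is how to settle it.

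Step 3 has a smaller omission: you should also treat orders in which the $\alpha$-block is not minimal. Passing the $\alpha$-block leaves $l_k$ unchanged and multiplies $\eta_k$ by $(-1)^{\alpha-1}=-1$, since $\alpha=2m-2n$ is even. The operators $R_k$ commute with a simultaneous sign change of the $\eta$'s. So an adjacent pair of original blocks lying above the $\alpha$-block imposes exactly the conditions of Proposition \ref{prop: necessary conditions for pi(E) neq 0} for the same pair in $\mE_{\succ_\rho''}$, where $\succ_\rho''$ is the induced order on $I_{\triv}$.
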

	\begin{proof}
		Note that for $\alpha\gg0$, we have that $[\frac{\alpha-1}{2},-\frac{\alpha-1}{2}]_{\triv}\supset[A_i,B_i]_{\triv}$, $0\le l_\alpha-l_i\le\alpha-b_i$, and $l_i+l_\alpha\ge b_i$ for all $i\in I_{\triv}$. Thus, the theorem follows from Proposition \ref{prop: theta lifts of extended multi-segment} and \cite[Theorem 4.4]{Atobe2022_A-packet}.
	\end{proof}
	
	\section{Proof of Theorem \ref{theorem: construction of A-packet non-negative DDR case}}\label{section: proof of the theorem}
	
	\par This section is devoted to proving Theorem \ref{theorem: construction of A-packet non-negative DDR case}. The content of this section is parallel to \cite[\S 5]{Mœeglin2009_paquets}. Thus, readers familiar with \cite{Mœeglin2009_paquets} may skim this section.
	
	\par In this section, we fix $\psi\in\Psi_\gp(\widetilde{G}_{2n})$ to be a non-negative DDR, and fix an $\varepsilon\in\mS_\psi^\vee$. We fix a $(\rho,A_i,B_i)\in\Jord(\psi)$ with $A_i>B_i$ and set $\eta_0=\varepsilon(\rho,A_i,B_i)$. Let $\psi'$ be obtained from $\psi$ by removing $(\rho,A_i,B_i)$ and let $\varepsilon'$ be the restriction of $\varepsilon$ on $\Jord(\psi')$.
	
	\par We prove the theorem by induction on $\sum_{(\rho,A_i,B_i)\in\Jord(\psi)}(A_i-B_i)$. That is, we assume that the theorem is true for all parameters $\psi''$ with $\sum_{(\rho,A_i'',B_i'')\in\Jord(\psi'')}(A_i''-B_i'')<\sum_{(\rho,A_i,B_i)\in\Jord(\psi)}(A_i-B_i)$. In particular, Theorem \ref{theorem: A-packets can be constructed via extended multi-segments - NDDR case} is true for all such parameters.
	
	\subsection{Preparations}
	
	\par In this subsection, we develop some technical lemmas that will be used in the proof of Theorem \ref{theorem: construction of A-packet non-negative DDR case}.
	
	\begin{lemma}\label{lemma: highest derivative of representations in a A packet}
		Let $\psi\in\Psi_{\gp}(\widetilde{G}_{2n})$, $\pi\in\Pi_\psi$, $\rho\in\Pi_{\unit,\cusp}(\GL(d_\rho))$ and $x\in\RR$. Suppose that $D_{\rho,x}^{(k)}(\pi)$ is the highest derivative. Then $$k\le |\left\{(\rho,A,B)\in\Jord(\psi):B=x\right\}|.$$
	\end{lemma}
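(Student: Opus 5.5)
The plan is to reduce the statement about an individual $\pi\in\Pi_\psi$ to a statement about the distributions $T_{\psi,\underline{s}}$. Then I would compute derivatives on the endoscopic side via Lemma \ref{lemma: commutation of spectral transfer and partial Jacquet module}, where the classical bound is known from \cite{Xu2017_cuspidal} and \cite[\S 6]{Xu2017_Moeglin}.

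\emph{Step 1.} Since the coefficients in $\pi(\psi,\varepsilon)$ are non-negative (Theorem \ref{theorem: pi(psi,varepsilon) is a finite length representation}), every $\pi\in\Pi_\psi$ is a constituent of $\pi(\psi,\varepsilon)$ for some $\varepsilon\in\mS_\psi^\vee$. Moreover $D_{\rho,x}^{(k+1)}$ is a positive functor on $\mR_-$. Hence it suffices to show that $D^{(k+1)}_{\rho,x}\pi(\psi,\varepsilon)=0$ for every $\varepsilon$, where $k_0:=|\{(\rho,A,B)\in\Jord(\psi):B=x\}|$ and $k=k_0$. By the definition $\pi(\psi,\varepsilon)=|\mS_\psi|^{-1}\sum_{\underline{s}}\varepsilon(s_\psi\underline{s})T_{\psi,\underline{s}}$, it is enough to prove
\[\Jac_{\rho,x}^{k_0+1}(T_{\psi,\underline{s}})=0\qquad\text{for all } \underline{s}\in\mS_\psi.\]

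\emph{Step 2.} Apply Lemma \ref{lemma: commutation of spectral transfer and partial Jacquet module} iteratively $k_0+1$ times. The result is a signed sum, over the ways of distributing the $k_0+1$ copies of $\rho|\cdot|^x$ between the two $\SO$ factors, of transfers of
\[\Jac^{1}_{\rho,x}{}^{\,k_1}\,\Jac^{-1}_{\rho,x}{}^{\,k_2}\bigl(S\Theta^{G^!}_{\psi^!}\bigr),\qquad k_1+k_2=k_0+1.\]
Here $\psi^!=\psi^{s=1}\times\psi^{s=-1}$, and the blocks with $B=x$ split as $k_0=k_0'+k_0''$ between the two factors. Since $k_1+k_2=k_0+1$, every term has either $k_1>k_0'$ or $k_2>k_0''$. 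So it suffices to know the classical fact for $\SO(2N+1)$: for $\psi'$ of good parity, $\Jac_{\rho,x}^{j}S\Theta_{\psi'}=0$ once $j$ exceeds the number of blocks of $\psi'$ with $B=x$.

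\emph{Step 3.} This classical fact follows from the corresponding statement for each representation in the packet of $\SO(2N+1)$ together with positivity. Apply the classical statement (\cite[\S 6]{Xu2017_cuspidal} and Atobe's/Mœglin's construction, as already used in Lemma \ref{lemma: one step in reduction to non-negative DDR}) to each constituent of $S\Theta_{\psi'}$, which is a signed combination of packet elements. The point to check is that the Jacquet-module vanishing for each irreducible constituent gives vanishing of the signed sum; this is immediate since the vanishing holds termwise.

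The main obstacle is Step 3, if the classical bound is not available in exactly this form. If not, I would prove it directly by induction via Theorem \ref{theorem: Mp version of Theorem 7.5 in Xu_moeglin} and its classical counterpart \cite[Theorem 7.5]{Xu2017_Moeglin}. In that identity, the terms $[B,-C]_\rho\rtimes(\cdots)$ are handled by Lemma \ref{lemma: computation of partial Jacquet module}, which counts how many $\rho|\cdot|^x$ can be removed. The base case of discrete parameters follows from Proposition \ref{prop: partial Jacquet module of discrete series}, since there $\Jac_{\rho,x}$ is nonzero only if $2x+1\in\Jord_\rho$, which occurs at most once.
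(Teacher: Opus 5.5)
Your proposal is correct and is essentially the paper's proof: reduce to showing $D^{(k)}_{\rho,x}(T_{\psi,s})=0$, split the derivative between the two $\SO$ factors via Lemma \ref{lemma: commutation of spectral transfer and partial Jacquet module}, and bound each $k_{\pm1}$ by the number of blocks with $B=x$ and $s=\pm1$. The classical input you flag as the obstacle in Step 3 is exactly \cite[Proposition 8.3]{Xu2017_Moeglin}, which the paper cites, so your fallback is not needed. That fallback would in any case not work term by term: a single term $[B,-C]_\rho\rtimes(\cdots)$ can have nonzero $\Jac_{\rho,C}$, and such contributions only cancel across the signed sum.
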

	\begin{proof}
		We prove that $D_{\rho,x}^{(k)}(T_{\psi,s})=0$ whenever $k>|\left\{(\rho,A,B)\in\Jord(\psi):B=x\right\}|$. Let $(\textbf{G}^!,\psi^!)\leftrightarrow(\psi,s)$ be as in (\ref{equation: basic bijection}). By Lemma \ref{lemma: commutation of spectral transfer and partial Jacquet module}, we have $$D_{\rho,x}^{(k)}\mT_{\textbf{G}^!,\widetilde{G}_{2n}}^\vee=\sum_{k_1+k_{-1}=k}\mT_{\textbf{G}_{(k_1d_\rho,k_{-1}d_\rho)}^!,\widetilde{G}_{2n-2kd_\rho}}\circ D_{\rho,x}^{(k_1),1}\circ D_{\rho,x}^{(k_{-1}),-1}.$$ Here $D_{\rho,x}^{(k_1),1}$ and $D_{\rho,x}^{(k_{-1}),-1}$ mean taking the $\rho$-derivative in the first and second $\SO$ factor respectively. By \cite[Proposition 8.3]{Xu2017_Moeglin}, if $D_{\rho,x}^{(k)}(T_{\psi,s})\neq0$, we have $$k_{\zeta}\le|\left\{(\rho,A,B)\in\Jord(\psi):B=x\text{ and }s(\rho,A,B)=\zeta\right\}|$$ for $\zeta\in\left\{\pm1\right\}$. Thus $$k=k_1+k_{-1}\le|\left\{(\rho,A,B)\in\Jord(\psi):B=x\right\}|.$$ This completes the proof.
	\end{proof}
	
	\begin{lemma}\label{lemma: Proposition 4.4 of Moeglin}
		Let $\varepsilon\in\mS_\psi^\vee$. Suppose that $B_i>0$, $B_i>A_{i-1}+1$, and Theorem \ref{theorem: A-packets can be constructed via extended multi-segments - NDDR case} is true for $\psi$. Then we have 
		$$\pi(\psi,\varepsilon)=\soc([B_i,A_i]_\rho\rtimes\pi(\psi',\varepsilon',(\rho,A_i-1,B_i-1;\eta_0))).$$
		In particular, by the induction hypothesis, for $B+1<C\le A$, we have
		$$\begin{aligned}
			\Jac_{\rho,C}\circ\Jac_{\rho,C+1}\circ\cdots&\circ\Jac_{\rho,B+2}(\pi(\psi',\varepsilon',(\rho,A,B+2;\eta_0)))\\
			&=\soc([C+1,A]_\rho\rtimes\pi(\psi',\varepsilon',(\rho,A-1,B+1;\eta_0))).
		\end{aligned}$$
	\end{lemma}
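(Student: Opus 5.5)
We want $\pi(\psi,\varepsilon)=\soc([B_i,A_i]_\rho\rtimes\pi(\psi',\varepsilon',(\rho,A_i-1,B_i-1;\eta_0)))$. Write $\psi^+$ for the parameter obtained from $\psi$ by replacing $(\rho,A_i,B_i)$ with $(\rho,A_i-1,B_i-1)$. The condition $B_i>A_{i-1}+1$ guarantees that $\psi^+$ is still a non-negative DDR. It also guarantees that $B_i-1$ is not the $B$ of any other block, so the only block of $\psi^+$ affected by derivatives at $B_i,\dots,A_i$ is the shifted one. The plan is to work at the level of the distributions $\pi(\cdot,\cdot)$ (sums of representations) rather than individual constituents. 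Set
$$D=D^{(1)}_{\rho,A_i}\circ D^{(1)}_{\rho,A_i-1}\circ\cdots\circ D^{(1)}_{\rho,B_i}.$$
The first claim is
$$D(\pi(\psi,\varepsilon))=\pi(\psi^+,\varepsilon^+),$$
where $\varepsilon^+$ equals $\varepsilon$ transported, with value $\eta_0$ on the shifted block.

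\textbf{Step 1 (the derivative identity).} We prove this exactly as in Lemma \ref{lemma: one step in reduction to non-negative DDR}, taking $t=1$ and working with the inverse shift. Since $\psi$ and $\psi^+$ are both non-negative DDR, Proposition \ref{prop: properties of Atobe's normalization - Mp case}(2) lets us drop the Atobe normalization. For each $s$, Lemma \ref{lemma: commutation of spectral transfer and partial Jacquet module} applied iteratively gives
$$D(T_{\psi,s})=\mT^\vee\bigl(D^{\pm1}S\Theta_{\psi^!}\bigr),$$
with the derivative taken in the $\SO$-factor indexed by $s(\rho,A_i,B_i)$. The classical result (Atobe/Xu, \cite[\S 6]{Xu2017_cuspidal}) identifies $D\,S\Theta_{\psi^!}$ with $S\Theta_{\psi^{+,!}}$. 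The root number ratio is $\epsilon(\rho\otimes r(a))^b/\epsilon(\rho\otimes r(a-2))^b=\omega_\rho(-1)^b$, and it cancels the factor $\omega_\rho(-1)^{b}$ coming from the second factor. Here $s\mapsto s^+$ is a bijection $\mS_\psi\cong\mS_{\psi^+}$, since no block is merged: the DDR condition keeps $(\rho,A_i-1,B_i-1)$ distinct from the other blocks. Hence averaging against $\varepsilon$ gives the claim. We also need $s_\psi\mapsto s_{\psi^+}$, which holds because $b$ is unchanged.

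\textbf{Step 2 (highest derivative and bijection).} By Lemma \ref{lemma: highest derivative of representations in a A packet} applied to $\psi$ at $x=B_i$, together with Step 1, each constituent $\pi$ of $\pi(\psi,\varepsilon)$ has $D^{(1)}_{\rho,B_i}$ as its highest derivative. Inductively, each successive derivative is highest, since after applying $D_{\rho,B_i},\dots,D_{\rho,x-1}$ at most one block has $B=x$. As $x>0$, $\rho|\cdot|^x$ is non-self-dual, so Proposition \ref{prop: highest rho-derivative is irreducible in non-self-dual case} shows that $D(\pi)$ is irreducible and $\pi=\soc(\rho|\cdot|^{B_i}\rtimes\cdots)$. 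Composing the socle functors gives
$$\pi=\soc\bigl(\rho|\cdot|^{B_i}\times\cdots\times\rho|\cdot|^{A_i}\rtimes D(\pi)\bigr)$$
in the appropriate order, and since $\pi$ embeds in $[B_i,A_i]_\rho\rtimes D(\pi)$ this equals $\soc([B_i,A_i]_\rho\rtimes D(\pi))$. That last socle is irreducible because $[B_i,A_i]_\rho\rtimes\sigma$ is SI: derivatives at $A_i,\dots,B_i$ applied to it return $\sigma$ with multiplicity one, by Lemma \ref{lemma: computation of partial Jacquet module} and the nonlinkedness of the support with $D(\pi)$. Thus $\pi\mapsto D(\pi)$ is injective on constituents. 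Since $D$ is additive and the map $\sigma\mapsto\soc([B_i,A_i]_\rho\rtimes\sigma)$ inverts it, we obtain
$$\pi(\psi,\varepsilon)=\bigoplus_{\sigma}\soc([B_i,A_i]_\rho\rtimes\sigma),$$
where $\sigma$ runs over the constituents of $\pi(\psi^+,\varepsilon^+)$. This requires every $\sigma$ with $D(\pi)=\sigma$ to be hit; that is automatic from the equality of the sums in Step 1, with multiplicities matching.

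\textbf{Step 3 (the displayed formula).} Writing $\pi(\psi^+,\varepsilon^+)$ as $\pi(\psi',\varepsilon',(\rho,A_i-1,B_i-1;\eta_0))$ gives the first statement, interpreting $\soc$ of a direct sum termwise. For the ``in particular'' statement, apply the first part with $(A,B+2)$ iterated, using the induction hypothesis: Theorem \ref{theorem: A-packets can be constructed via extended multi-segments - NDDR case} holds for the smaller parameters. Then take partial Jacquet modules $\Jac_{\rho,B+2},\dots,\Jac_{\rho,C}$. These are derivatives of the form in Step 1 applied one block at a time, each shifting $(\rho,A,B+2)$ toward $(\rho,A-1,B+1)$ while peeling off the segment. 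Step 2's socle description then converts the result into $\soc([C+1,A]_\rho\rtimes\pi(\psi',\varepsilon',(\rho,A-1,B+1;\eta_0)))$.

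The main obstacle is Step 1's sign bookkeeping, namely the $\omega_\rho(-1)$ factors and root numbers, together with the need for highestness. The latter is exactly where $B_i>A_{i-1}+1$ and $B_i>0$ enter; I would check it first via Lemma \ref{lemma: highest derivative of representations in a A packet}.
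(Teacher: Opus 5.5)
Your Step~1 is sound. It is Lemma \ref{lemma: one step in reduction to non-negative DDR}, equivalently Proposition \ref{prop: reduction to non-negative DDR}, applied with $\psi=(\psi^+)_{\textbf{t}}$, where $\textbf{t}$ is $1$ on the shifted block and $0$ elsewhere.

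The gap is in Step~2. You claim that every irreducible constituent $\pi$ of $\pi(\psi,\varepsilon)$ has $D^{(1)}_{\rho,B_i}$ as its highest derivative, and likewise at each later stage of $D$. But Lemma \ref{lemma: highest derivative of representations in a A packet} only gives the upper bound $k\le 1$; it does not give $D^{(1)}_{\rho,B_i}(\pi)\neq 0$. The identity $D(\pi(\psi,\varepsilon))=\pi(\psi^+,\varepsilon^+)$ is an equality of sums, so it cannot see constituents that $D$ annihilates. Your remark that every $\sigma$ is ``hit'' concerns surjectivity, not this. What Steps~1--2 actually yield is
\[
\pi(\psi,\varepsilon)=\bigoplus_{\sigma}\soc([B_i,A_i]_\rho\rtimes\sigma)\oplus\pi_0,
\]
where $\pi_0$ collects the constituents with $D(\pi)=0$, and nothing in your argument forces $\pi_0=0$. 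This is not a formality. The same sum-level identity holds for every $\textbf{t}$ in Proposition \ref{prop: reduction to non-negative DDR}, and there derivatives genuinely kill members of $\Pi_{\psi_{\textbf{t}}}$. This happens even when all the derivatives are at positive exponents, e.g.\ two equal segments with incompatible signs. That is why $D_{\textbf{t}}(\pi)$ is only ``zero or irreducible'' and why $\pi(\mE)$ can vanish.

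The missing input is the lemma's hypothesis, which you never used: Theorem \ref{theorem: A-packets can be constructed via extended multi-segments - NDDR case} holds for $\psi$. The paper's proof rests on it. Every constituent of $\pi(\psi,\varepsilon)$ is some $\pi(\mE)$ with $(\psi_\mE,\varepsilon_\mE)=(\psi,\varepsilon)$. Lemma \ref{lemma: another reduction step in non-negative DDR case} then gives $\pi(\mE)=\soc([B_i,A_i]_\rho\rtimes\pi(\mE^+))$, so in particular $D(\pi(\mE))=\pi(\mE^+)\neq 0$. That lemma in turn comes from Proposition \ref{prop: well-definedness of associated representation of extended multi-segment}, i.e.\ Atobe's classical result transported by the theta correspondence. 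With this, no constituent is lost, and your socle argument finishes the first claim.

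For the ``in particular'' part, the Jacquet modules $\Jac_{\rho,B+2},\dots,\Jac_{\rho,C}$ do not shift a block as in Step~1. Set $\sigma=\pi(\psi',\varepsilon',(\rho,A-1,B+1;\eta_0))$. Applying the first claim to the block $(\rho,A,B+2)$ gives $\soc([B+2,A]_\rho\rtimes\sigma)$. These Jacquet modules then simply strip $[B+2,C]$ off the segment, leaving $\soc([C+1,A]_\rho\rtimes\sigma)$.
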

	\begin{proof}
		It follows from Lemma \ref{lemma: another reduction step in non-negative DDR case} that $\pi(\psi,\varepsilon)=\soc([B_i,A_i]_\rho\rtimes\pi(\psi',\varepsilon',(\rho,A_i-1,B_i-1;\eta_0)))$. Thus, we have $\pi(\psi',\varepsilon',(\rho,A,B+2;\eta_0))=\soc([B+2,A]_\rho\rtimes\pi(\psi',\varepsilon',(\rho,A-1,B+2;\eta_0)))$. By taking partial Jacquet module on both sides of the equation, we have $$\begin{aligned}
			\Jac_{\rho,C}\circ\Jac_{\rho,C+1}\circ\cdots\circ&\Jac_{\rho,B+2}(\pi(\psi',\varepsilon',(\rho,A,B+2;\eta_0)))\\
			&=\soc([C+1,A]_\rho\rtimes\pi(\psi',\varepsilon',(\rho,A-1,B+1;\eta_0))).
		\end{aligned}$$
	\end{proof}
	
	\begin{lemma}\label{lemma: lemma on the main term}
		Let $\varepsilon\in\mS_\psi^\vee$. Then we have $$\Jac_{\rho,-A}\circ\Jac_{\rho,-A+1}\circ\cdots\circ\Jac_{\rho,B}\pi(\psi,\varepsilon)=\pi(\psi',\varepsilon',(\rho,A-1,B+1;\eta_0)).$$
		In particular, by the theory of derivatives, for an irreducible constituent $\pi$ of $\pi(\psi,\varepsilon)$, if $\Jac_{\rho,-A}\circ\Jac_{\rho,-A+1}\circ\cdots\circ\Jac_{\rho,B}\pi\neq0$, then $\pi\xhookrightarrow{}\soc([B_i,-A_i]\rtimes\pi(\psi',\varepsilon',(\rho,A-1,B+1;\eta_0)))$.
	\end{lemma}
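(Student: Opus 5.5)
The plan is to apply the operator $D:=\Jac_{\rho,-A}\circ\Jac_{\rho,-A+1}\circ\cdots\circ\Jac_{\rho,B}$ directly to the inductive formula of Theorem \ref{theorem: Mp version of Theorem 7.5 in Xu_moeglin}. Every term on the right-hand side is either an induced representation $[B,-C]_\rho\rtimes(\cdots)$ with $B<C\le A$, or a representation in a packet $\pi(\psi_2,\varepsilon_\eta)$ with $\Jord(\psi_2)=\Jord(\psi')\cup\{(\rho,A,B+1),(\rho,B,B)\}$. The goal is to show that only the $C=A$ term survives and that it contributes exactly $\pi(\psi',\varepsilon',(\rho,A-1,B+1;\eta_0))$.

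\textbf{Step 1: the packet terms $\pi(\psi_2,\varepsilon_\eta)$ are killed.} It suffices to show that $D(T_{\psi_2,s})=0$ for every $s$. By Lemma \ref{lemma: commutation of spectral transfer and partial Jacquet module}, $D$ splits into sums of transfers of Jacquet modules taken factorwise on the two $\SO$ factors. On the $\SO$ side, the stable distributions $S\Theta$ of a non-negative DDR parameter have no constituent whose Jacquet module contains the decreasing segment $[B,-A]_\rho$. The reason is that the exponents $\rho|\cdot|^{x}$ with $x<0$ can only appear after exhausting a block with $B'\le 0$, and $\psi_2$ has no block $(\rho,A',B')$ with $A'\ge A$ reaching down to $-A$. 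This is the analogue of \cite[Proposition 8.3]{Xu2017_Moeglin} and \cite[\S 6]{Xu2017_cuspidal} already used in Lemma \ref{lemma: highest derivative of representations in a A packet}. I would check this through the cuspidal support, or more simply by noting that $\rho|\cdot|^{-A}$ cannot occur in the Jacquet module of a representation in $\Pi_{\psi_2^!}$ after the previous steps.

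\textbf{Step 2: the induced terms.} For each $C$, compute $D([B,-C]_\rho\rtimes X_C)$ using Lemma \ref{lemma: computation of partial Jacquet module} iterated, equivalently the Tadi\'c formula (Proposition \ref{prop: Tadic formula}), where $X_C=\Jac_{\rho,C}\circ\cdots\circ\Jac_{\rho,B+2}\pi(\psi',\varepsilon',(\rho,A,B+2;\eta_0))$. Since $X_C$ lies in a packet whose $\rho$-exponents are all $\ge B+1>0$ except for blocks below $B$, the terms $\Jac_{\rho,y}X_C$ with $y\le 0$ vanish. Moreover, the contragredient pieces $[C,-B]_{\rho^\vee}$ cannot supply $\rho|\cdot|^{-A}$ when $C<A$. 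Hence $D([B,-C]_\rho\rtimes X_C)=0$ for $C<A$, since after removing $[B,-C]$ the exponent $-A$ remains and cannot be absorbed. For $C=A$ we get $D([B,-A]_\rho\rtimes X_A)=X_A$. Lemma \ref{lemma: Proposition 4.4 of Moeglin} in its second form, together with the induction hypothesis, identifies $X_A$, i.e.\ $\Jac_{\rho,A}\circ\cdots\circ\Jac_{\rho,B+2}$ of $\pi(\psi',\varepsilon',(\rho,A,B+2;\eta_0))$, with $\pi(\psi',\varepsilon',(\rho,A-1,B+1;\eta_0))$, where the socle of the empty segment is trivial. The degenerate case $A=B+1$ follows from the convention in Theorem \ref{theorem: Mp version of Theorem 7.5 in Xu_moeglin}. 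The sign $(-1)^{A-C}$ equals $1$ at $C=A$.

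\textbf{Step 3: the consequence.} For an irreducible constituent $\pi$ with $D(\pi)\neq0$, Lemma \ref{lemma: partial Jacquet module and embedding} gives an embedding $\pi\hookrightarrow\rho|\cdot|^{B}\times\cdots\times\rho|\cdot|^{-A}\rtimes\sigma'$ with $\sigma'$ a constituent of the irreducible $\pi(\psi',\varepsilon',(\rho,A-1,B+1;\eta_0))$. The standard \cite[Lemma 3.2]{Mœglin2002_construction} argument then upgrades this to $[B,-A]_\rho\rtimes\sigma'$, since the other subquotients produce nonzero $\Jac_{\rho,x}$ that are excluded. We conclude using SI-ness, which follows from the multiplicity-one computation of $D$ via Lemma \ref{lemma: criterion for SI}.

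The main obstacle is Step 1. I would first try to establish it via the endoscopic reduction to Xu's results on $\SO(2n+1)$, tracking the root number twists exactly as in Lemma \ref{lemma: one step in reduction to non-negative DDR}.
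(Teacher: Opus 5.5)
Your outline follows the paper's route. You expand $\pi(\psi,\varepsilon)$ by Theorem~\ref{theorem: Mp version of Theorem 7.5 in Xu_moeglin} and rewrite the $C$-terms via Lemma~\ref{lemma: Proposition 4.4 of Moeglin} as $[B,-C]_\rho\rtimes X_C$, with $X_C=\soc([C+1,A]_\rho\rtimes\pi')$ and $\pi'=\pi(\psi',\varepsilon',(\rho,A-1,B+1;\eta_0))$. You then show that $D$ kills everything except the $C=A$ term. The paper takes the vanishing on the $\psi_2$-terms from the induction hypothesis; your endoscopic plan is a legitimate alternative.

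However, the reasons you give for both vanishing claims are wrong. In Step~2, $X_C$ is not a packet member, and the induced representation containing it does carry negative exponents: for $C=A-1$, Lemma~\ref{lemma: computation of partial Jacquet module} gives $\Jac_{\rho,-A}(\rho|\cdot|^{A}\rtimes\pi')=\pi'\neq0$. What actually kills $\Jac_{\rho,y}X_C$ for $y<0$ is that $X_C$ is an iterated Jacquet module. Indeed $\Jac_{\rho,y}$ commutes with $\Jac_{\rho,C},\dots,\Jac_{\rho,B+2}$, so Lemma~\ref{lemma: highest derivative of representations in a A packet} applies to $\pi(\psi',\varepsilon',(\rho,A,B+2;\eta_0))$. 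For $y=0$ your claim need not hold when $\psi'$ has a block $(\rho,A'',0)$ with $A''>0$. In Step~1, the heuristic that negative exponents only appear after exhausting a block with $B'\le 0$ is also false: any $\pi(\mE)$ with some $l_i>0$ embeds into $[B_i,-A_i]_\rho\rtimes\cdots$ with $B_i>0$.

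More importantly, single-exponent vanishing is not what is needed. Iterate Lemma~\ref{lemma: computation of partial Jacquet module} on $[B,-C]_\rho\rtimes X_C$. The contragredient term never contributes. As soon as the $\widetilde{G}$-factor absorbs an exponent $y$, the leftover segment $[y,-C]_\rho$ can never be used again, so the whole tail $y,y-1,\dots,-A$ must be absorbed by the $\widetilde{G}$-factor.

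After commuting, you therefore need $\Jac_{\rho,-A}\circ\cdots\circ\Jac_{\rho,y}$ to annihilate $\pi(\psi',\varepsilon',(\rho,A,B+2;\eta_0))$ for every $y\le B$. You need the same for $\pi'$, to get exactly $X_A$ in the $C=A$ term. This includes $0\le y<B$, where Lemma~\ref{lemma: highest derivative of representations in a A packet} says nothing.

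This is the same statement your Step~1 needs: a non-negative DDR packet with no block $(\rho,A'',B'')$ satisfying $B''\le B$ and $A''\ge A$ is killed by $\Jac_{\rho,-A}\circ\cdots\circ\Jac_{\rho,y}$ for all $y\le B$. Isolate it and prove it once, for instance from the induction hypothesis:
\begin{itemize}
\item each constituent is some $\pi(\mE)$, which embeds into the product of the rows of its generalized segments $\rtimes\pi(\phi,\varepsilon)$;
\item a Tadi\'c computation shows that $\rho|\cdot|^{-A}$ could only be supplied, below an exponent $\le B$, by a row of a block with $B_i\le B$ and $A_i\ge A$;
\item the discrete-series factor is excluded by Casselman's criterion after the same commutation trick.
\end{itemize}

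With this lemma your Steps~1--2 go through as in the paper, and Step~3 is fine. One small correction: $\pi'$ need not be irreducible, but this is harmless since you work with a constituent $\sigma'$.
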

	\begin{proof}
		By Lemma \ref{lemma: Proposition 4.4 of Moeglin} and the induction hypothesis, we have
		$$\begin{aligned}
			\Jac_{\rho,-A}\circ\Jac_{\rho,-A+1}\circ\cdots\circ&\Jac_{\rho,B}\pi(\psi,\varepsilon)\\
			&=\sum_{C\in(B,A]}(-1)^{A-C}\Jac_{\rho,-A}\circ\Jac_{\rho,-A+1}\circ\cdots\circ\Jac_{\rho,-(C+1)}Y_C,
		\end{aligned}$$ where $Y_C=\soc([C+1,A]_\rho\rtimes\pi(\psi',\varepsilon',(\rho,A-1,B+1;\eta_0)))$. It is clear that $\Jac_{\rho,-A}\circ\Jac_{\rho,-A+1}\circ\cdots\circ\Jac_{\rho,-(C+1)}Y_C=0$ unless $A=C$, in which case we have $\Jac_{\rho,-A}\circ\Jac_{\rho,-A+1}\circ\cdots\circ\Jac_{\rho,-(C+1)}Y_C=\pi(\psi',\varepsilon',(\rho,A-1,B+1;\eta_0))$. This completes the proof.
	\end{proof}
	\begin{remark}
		It can be deduced directly from Theorem \ref{theorem: Mp version of Theorem 7.5 in Xu_moeglin} and the above proof that $$\soc([B,-A]_\rho\rtimes\pi(\psi',\varepsilon',(\rho,A-1,B+1;\eta_0)))\le\pi(\psi,\varepsilon).$$ Thus, in order to prove Theorem \ref{theorem: construction of A-packet non-negative DDR case}, it is sufficient to prove the following things:
		\begin{enumerate}
			\item[(1)] For an irreducible constituent $\pi$ of $\pi(\psi,\varepsilon)$, if $\Jac_{\rho,-A}\circ\Jac_{\rho,-A+1}\circ\cdots\circ\Jac_{\rho,B}\pi=0$, then $\pi$ is isomorphic to $\pi(\psi',\varepsilon',\cup_{B\le C\le A}(\rho,C,C;(-1)^{C-B}\eta))$ for some suitable $\eta$.
			\item[(2)] For suitable $\eta$, the representation $\pi(\psi',\varepsilon',\cup_{B\le C\le A}(\rho,C,C;(-1)^{C-B}\eta))$ occurs with multiplicity one in $\pi(\psi,\varepsilon)$.
		\end{enumerate}
	\end{remark}
	
	\par The next lemmas provide some general facts about parabolic induction.
	
	\begin{lemma}\label{lemma: inversion of scole}
		Let $[x,y]_\rho$, $[x',y']_\rho$ be segments with $\rho\in\Pi_{\unit,\cusp}(\GL(d_\rho))$, and let $\pi\in\Pi_-(\widetilde{G}_{2n})$. Suppose the following conditions hold:
		\begin{enumerate}
			\item[(1)] $-y\notin[x,y]$, $-y'\notin[x',y']$.
			\item[(2)] $[x,y]\subset[x',y']$ and $x',y'\notin[x,y]$.
			\item[(3)] For every $z\in[x,y]$ and $z'\in[z',y']$, we have $\Jac_{\rho,y}\circ\Jac_{\rho,y+\zeta}\circ\dots\circ\Jac_{\rho,z}(\pi)=0$ and $\Jac_{\rho,y'}\circ\Jac_{\rho,y'-\zeta'}\circ\dots\circ\Jac_{\rho,z'}(\pi)=0$, where $\zeta=\Sgn(y-z)$ and $\zeta'=\Sgn(y'-z')$.
		\end{enumerate}
		Then we have $$\soc([x,y]_\rho\rtimes\soc([x',y']_\rho\rtimes\pi))=\soc([x',y']_\rho\rtimes\soc([x,y]_\rho\rtimes\pi)).$$
	\end{lemma}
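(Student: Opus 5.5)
The plan is a "derivative/socle" argument in the style of Lemma \ref{lemma: reduction to good parity - lemma} and of \S\ref{section: derivative and socles}. Write $\sigma=\soc([x',y']_\rho\rtimes\soc([x,y]_\rho\rtimes\pi))$. First we need to know what the inner socles are. Condition (1) says that neither segment contains the negative of its endpoint $y$ (resp.\ $y'$), so the reversed segment is never self-dual-linked. Condition (3) says that $\pi$ is reduced with respect to every right end-piece of $[x,y]_\rho$ and of $[x',y']_\rho$. Together these should give, exactly as in the proof of Lemma \ref{lemma: reduced decomposition of representation}, that $[x,y]_\rho\rtimes\pi$ and $[x',y']_\rho\rtimes\pi$ are SI.

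The SI property comes from a Tadi\'c computation (Proposition \ref{prop: Tadic formula}). Apply the composite Jacquet functor $D_{[x,y]}=\Jac_{\rho,y}\circ\cdots\circ\Jac_{\rho,x}$. The only surviving term of $D_{[x,y]}([x,y]_\rho\rtimes\pi)$ is $\pi$ itself. Terms coming from $\pi$ vanish by (3). Terms coming from the contragredient part $[-y,-x]_{\rho^\vee}$ cannot contribute, since $-y\notin[x,y]$. Lemma \ref{lemma: criterion for SI} then gives SI. Call the resulting irreducible socles $\pi_1=\soc([x,y]_\rho\rtimes\pi)$ and $\pi_2=\soc([x',y']_\rho\rtimes\pi)$.

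Next I would show that both sides of the identity are irreducible and are characterized by a common Jacquet module. Consider
$$\Pi=[x',y']_\rho\times[x,y]_\rho\rtimes\pi.$$
Condition (2) says $[x,y]\subset[x',y']$ with both endpoints of $[x',y']$ outside $[x,y]$. Hence the two segments are \emph{not linked}, so $[x',y']_\rho\times[x,y]_\rho\cong[x,y]_\rho\times[x',y']_\rho$ is irreducible. Both sides of the desired equality embed into $\Pi$:
\begin{itemize}
\item the left side via $\pi_2\hookrightarrow[x',y']_\rho\rtimes\pi$;
\item the right side via $\pi_1\hookrightarrow[x,y]_\rho\rtimes\pi$.
\end{itemize}
After the segment swap, both become subrepresentations of the same induced representation $\Pi$. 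It therefore suffices to prove that $\Pi$ is SI.

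To show $\Pi$ is SI, apply $D=D_{[x,y]}\circ D_{[x',y']}$, or the combined Jacquet functor $\Jac_{[x',y']_\rho\times[x,y]_\rho}$, and compute $\mu^*(\Pi)$ with the Tadi\'c formula. The claim to verify is that $([x',y']_\rho\times[x,y]_\rho)\otimes\pi$ occurs in $\mu^*(\Pi)$ with multiplicity one. Lemma \ref{lemma: criterion for SI} then applies, taking $\pi$ as an SI representation and $[x',y']_\rho\times[x,y]_\rho$ as an irreducible, hence SI, representation.

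The main obstacle is this multiplicity-one count. One must rule out every mixed term in $M^*([x',y']_\rho\times[x,y]_\rho)\otimes\mu^*(\pi)$ that could reproduce the cuspidal support of $[x',y']\cup[x,y]$ on the left.
\begin{itemize}
\item Pieces taken from $\mu^*(\pi)$ are excluded by (3). This is where the hypothesis with $z'$ ranging over end-pieces of $[x',y']$ is used; its quantifier $z'\in[z',y']$ should be read as $z'\in[x',y']$.
\item Contragredient pieces $[-y,\cdot]_{\rho^\vee}$ and $[-y',\cdot]_{\rho^\vee}$ are excluded by (1).
\item Redistributions between the two segments are excluded by (2), using Zelevinsky's classification: a product of end-pieces of $[x,y]$ and $[x',y']$ equals $[x',y']\times[x,y]$ only trivially.
\end{itemize}
Once this count is done, $\Pi$ is SI. Both sides of the identity are nonzero subrepresentations of $\Pi$, so they coincide.
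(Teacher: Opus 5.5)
Your architecture is the paper's: commute the two segments, embed both sides in $\Pi=[x',y']_\rho\times[x,y]_\rho\rtimes\pi$, and show $\Pi$ is SI via Lemma~\ref{lemma: criterion for SI}. Your single-segment computation for $[x,y]_\rho\rtimes\pi$ is also fine. The problem is the one-shot count of $([x',y']_\rho\times[x,y]_\rho)\otimes\pi$ in $\mu^*(\Pi)$, which is exactly the step you flag, and the reasons you give for it do not work.

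Since $[x,y]\subset[x',y']$, the target has repeated exponents, so cuspidal-support bookkeeping no longer separates the pieces.
\begin{enumerate}
\item[(a)] Hypothesis (1) only says $-y\notin[x,y]$; it does not prevent $-y\in[x',y']$. Take $\rho$ self-dual, $[x',y']=[3,-4]$, $[x,y]=[1,2]$. This is the shape of the paper's own application $[x,y]=[C'+1,A']$, $[x',y']=[B,-A]$, where $-A'\in[B,-A]$. Here the Tadi\'c expansion has terms with left factor $[3,-1]_\rho\times\rho|\cdot|^{-2}\times\rho|\cdot|^{1}\times\lambda$ and $\Supp(\lambda)=\{2,-3,-4\}$. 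Their support is exactly that of the target.
\item[(b)] Hypothesis (2) does not rule out redistribution. For $[x',y']=[1,5]$, $[x,y]=[2,4]$, the product $[1,1]_\rho\times[2,4]_\rho\times[2,5]_\rho$ does contain $[1,5]_\rho\times[2,4]_\rho$.
\end{enumerate}
Such terms die only by (3), so your first bullet carries all the weight, and it is not immediate. Condition (3) controls the Jacquet module along a final piece of \emph{one} segment, in the prescribed order, whereas here $\lambda$ may carry an interleaving of both tails. You would need an $r_{\min}$/shuffle argument plus the commutation $\Jac_{\rho,a}\circ\Jac_{\rho,b}=\Jac_{\rho,b}\circ\Jac_{\rho,a}$ for $|a-b|\neq1$. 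In the first example, you push $\Jac_{\rho,2}$ past $\Jac_{\rho,-3},\Jac_{\rho,-4}$ to reach $\Jac_{\rho,-4}\circ\Jac_{\rho,-3}(\pi)\neq0$. Your other two bullets become valid only once $\lambda=1$, when a rank count forces the middle pieces to vanish.

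The paper avoids this by counting in two stages, and that is the fix. First show $\Jac_{[x',y']_\rho}(\Pi)=[x,y]_\rho\rtimes\pi$. The left target is now a single segment whose $r_{\min}$ is the one chain $(x',\dots,y')$ with distinct exponents, so you only need to ask who supplies $y'$:
\begin{enumerate}
\item[(i)] not a piece of $[x,y]$, by (2);
\item[(ii)] not the contragredient of a piece of $[x,y]$, since that would give $-y'\in[x,y]\subset[x',y']$, contradicting (1);
\item[(iii)] not the contragredient of a piece of $[x',y']$, again by (1).
\end{enumerate}
So either the initial piece of $[x',y']$ is the whole segment, which is the single expected term, or $\lambda$ supplies $y'$. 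In the latter case, the maximal terminal run $[b,y']$ of $\lambda$'s chain is preceded only by exponents at distance $\geq2$ from it. Commuting then gives $\Jac_{\rho,y'}\circ\cdots\circ\Jac_{\rho,b}(\pi)\neq0$, against (3).

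Combine this with $\Jac_{[x,y]_\rho}([x,y]_\rho\rtimes\pi)=\pi$. Lemma~\ref{lemma: criterion for SI}, applied to $[x',y']_\rho$ and the SI representation $[x,y]_\rho\rtimes\pi$, then shows $\Pi$ is SI. By transitivity of Jacquet modules, these two computations also give your combined multiplicity-one statement. So that statement is true; it just is not justified by the reasons you list.
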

	\begin{proof}
		Note that $[x,y]_\rho$ commutes with $[x',y']_\rho$. It is sufficient to prove that $[x,y]_\rho\times[x',y']_\rho\rtimes\pi$ is SI. By the conditions of the lemma, using the same argument as in the proof of Lemma \ref{lemma: computation of partial Jacquet module of [0,zeta]_rho rtimes sigma}, we have $\Jac_{[x',y']_\rho}([x',y']_\rho\times[x,y]_\rho\rtimes\pi)$ and $\Jac_{[x,y]_\rho}([x,y]_\rho\rtimes\pi)=\pi$. This completes the proof.
	\end{proof}
	
	\begin{lemma}\label{lemma: subquotient of parabolic induction - lemma}
		Let $[x,y]_\rho$, $[x',y']_\rho$ be segments with $\rho\in\Pi_{\unit,\cusp}(\GL(d_\rho))$, and let $\pi'\in\Pi_-(\widetilde{G}_{2n})$.  Suppose that the following conditions hold:
		\begin{enumerate}
			\item[(1)] $\pi$ is an irreducible subrepresentation of $[x',y']_\rho\rtimes\pi'$.
			\item[(2)] The parabolic inductions $[x,y]_\rho\times[x',y']_\rho$ and $[x,y]_\rho\times[x',y']_\rho^\vee$ are irreducible.
			\item[(3)] For all $z'\in[x',y']$, $\Jac_{\rho,y'}\circ\Jac_{\rho,y'-\zeta'}\circ\cdots\circ\Jac_{\rho,z'}(\pi')=0$, where $\zeta'=\Sgn(y'-z')$.
			\item[(4)] $\left\{x,-y\right\}\cap[x',y']=\emptyset$ or $\left\{y',-y'\right\}\cap[x,y]=\emptyset$.
			\item[(5)] $-y'\notin[x',y']$.
		\end{enumerate}
		Then $\Jac_{\rho,y'}\circ\Jac_{\rho,y'-\zeta'}\circ\cdots\circ\Jac_{\rho,z'}([x,y]_\rho\rtimes\pi')=0$ for all $z'\in[x',y']$ and $\Jac_{\rho,y'}\circ\Jac_{\rho,y'-\zeta'}\circ\cdots\circ\Jac_{\rho,x'}([x,y]_\rho\times[x',y']_\rho\rtimes\pi')=[x,y]_\rho\rtimes\pi'$. In particular, we have $\pi=\soc([x',y']_\rho\rtimes\pi')$ (set $[x,y]=\emptyset$).
	\end{lemma}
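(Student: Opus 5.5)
The plan is to compute the iterated partial Jacquet module with the Tadić formula (Proposition \ref{prop: Tadic formula}), in the form of Lemma \ref{lemma: computation of partial Jacquet module}. Write $D_{z'}=\Jac_{\rho,y'}\circ\Jac_{\rho,y'-\zeta'}\circ\cdots\circ\Jac_{\rho,z'}$. This operator extracts the $\GL$-factor $\rho|\cdot|^{z'}\otimes\cdots\otimes\rho|\cdot|^{y'}$, which in the Grothendieck group is the same as extracting the segment $[z',y']_\rho$ through $\mu^*$, up to reordering. So we expand $\mu^*([x,y]_\rho\rtimes\pi')=M^*([x,y]_\rho)\rtimes\mu^*(\pi')$. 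The terms of $M^*([x,y]_\rho)$ have the form $[x,w]_\rho^{\vee}$-type pieces, i.e. $[-y,-u]_{\rho^\vee}\times[x,v]_\rho\otimes[\ldots]_\rho\rtimes(\cdot)$, with $u,v$ running over cut points of $[x,y]$. A nonzero contribution to $D_{z'}$ must have its $\GL$-part supported exactly on $\{\rho|\cdot|^{z'},\dots,\rho|\cdot|^{y'}\}$, and must contain $\rho|\cdot|^{y'}$ in the correct final position.

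\textbf{Killing the first term.} Suppose first that $\{y',-y'\}\cap[x,y]=\emptyset$. Then $\rho|\cdot|^{y'}$ cannot come from the $[x,y]_\rho$-part, so it must come from $\mu^*(\pi')$. By transitivity of Jacquet modules, the whole contribution then factors as a term of $D_{w}(\pi')$ for some $w\in[x',y']$, possibly after some letters are absorbed from $[x,y]$ first. Irreducibility of $[x,y]_\rho\times[x',y']_\rho$ and of $[x,y]_\rho\times[x',y']_\rho^\vee$ means the segments are unlinked. This forces any letters taken from $[x,y]$, or from its dual, to form an initial or final piece that cannot interlace with $[z',y']$. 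Hence only the pure $\pi'$-part contributes, and that part vanishes by hypothesis (3).

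In the other case, $\{x,-y\}\cap[x',y']=\emptyset$, so the beginnings $\rho|\cdot|^x$ and $\rho|\cdot|^{-y}$ of the pieces taken from $[x,y]_\rho$ and its dual lie outside $[x',y']$. The pieces must be of the form $[x,v]$ or $[-y,-u]$, and they cannot start inside $[z',y']$, so again only the trivial pieces survive. Hypothesis (5), $-y'\notin[x',y']$, rules out dual contributions from $[x',y']$ itself in the next step.

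\textbf{The second term.} Now compute $D_{x'}([x,y]_\rho\times[x',y']_\rho\rtimes\pi')$ in the same way. Terms using part of $[x',y']_\rho$ together with part of $\pi'$ are killed as above by (3). Terms using the dual $[-y',-x']_{\rho^\vee}$ are killed by (5). The unlinkedness in (2) lets us commute $[x,y]_\rho$ past $[x',y']_\rho$, so the only surviving term is the one where the whole $[x',y']_\rho$ is extracted. This term equals $[x,y]_\rho\rtimes\pi'$ with multiplicity one.

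\textbf{The consequence.} Set $[x,y]=\emptyset$. Then $D_{x'}([x',y']_\rho\rtimes\pi')=\pi'$ is irreducible. Every irreducible subrepresentation $\tau$ of $[x',y']_\rho\rtimes\pi'$ satisfies $D_{x'}(\tau)\ne0$ by Frobenius reciprocity, so $\tau$ is unique and has multiplicity one, exactly as in Lemma \ref{lemma: criterion for SI}. Therefore $\pi=\soc([x',y']_\rho\rtimes\pi')$.

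The main obstacle is the bookkeeping in the first term. One has to show that a mixed term, with $GL$-letters partly from $[x,y]$ (or its dual) and partly from $\pi'$, cannot produce the ordered string $z',\dots,y'$. This is where the disjunction in (4) and the unlinkedness in (2) have to be used carefully.
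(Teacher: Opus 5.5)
Your route is the paper's own: its proof is just ``iterate Lemma \ref{lemma: computation of partial Jacquet module}''. Your treatment of the alternative $\{x,-y\}\cap[x',y']=\emptyset$ and of the final socle statement is fine.

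The gap is in the other alternative, $\{y',-y'\}\cap[x,y]=\emptyset$. There you claim that unlinkedness (2) forces the letters absorbed by $[x,y]_\rho$ (or its dual) to form an initial or final piece that cannot interlace with $[z',y']$, so only the pure $\pi'$-part survives. This is false. Take $[x',y']=[1,5]_\rho$, $[x,y]=[3,3]_\rho$ and $z'=1$. Then (2), (5) and the second alternative of (4) all hold. Yet iterating Lemma \ref{lemma: computation of partial Jacquet module} on $\rho|\cdot|^3\rtimes\pi'$ produces the term $\Jac_{\rho,5}\circ\Jac_{\rho,4}\circ\Jac_{\rho,2}\circ\Jac_{\rho,1}(\pi')$, in which $\rho|\cdot|^3$ is swallowed by the $\GL$-factor in the middle of the string. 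Such mixed terms genuinely occur and (2) does nothing to exclude them. Hypothesis (3) does not kill them directly either, since the string applied to $\pi'$ is not a run ending at $y'$.

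The missing ingredient is the commutation $\Jac_{\rho,a}\circ\Jac_{\rho,b}=\Jac_{\rho,b}\circ\Jac_{\rho,a}$ whenever $|a-b|\neq 1$. This holds because $\rho|\cdot|^a\times\rho|\cdot|^b$ is then irreducible, and the paper uses it in Lemma \ref{lemma: D<B-1 case}. The argument then runs as follows:
\begin{enumerate}
\item[(i)] In this alternative the last letter $y'$ must go to $\pi'$. The $\GL$-factor is always a subsegment of $[x,y]$, and dual absorption of $y'$ would need $-y'\in[x,y]$.
\item[(ii)] Let $[s,y']$ be the maximal final run of letters going to $\pi'$. If $s\neq z'$, the letter $s-\zeta'$ went to the $\GL$-factor, so every earlier letter going to $\pi'$ differs from every element of $[s,y']$ by at least $2$.
\item[(iii)] Commuting, the $\pi'$-part becomes $(\cdots)\circ\Jac_{\rho,y'}\circ\cdots\circ\Jac_{\rho,s}(\pi')$, which is $0$ by (3). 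This step uses neither (2) nor (5).
\end{enumerate}

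For the second identity, argue the same way. The letters taken from the left of $[x',y']_\rho$ form an initial run $[x',e]$, and (5) excludes absorption by $[x',y']_\rho^\vee$. What remains is $\Jac_{\rho,y'}\circ\cdots\circ\Jac_{\rho,e+\zeta'}([x,y]_\rho\rtimes\pi')$, which vanishes by the first claim unless $e=y'$. No commutation of $[x,y]_\rho$ past $[x',y']_\rho$ via (2) is needed, since $\mR(\GL)$ is commutative. Hypothesis (2) plays no role in these Grothendieck-group computations; it matters only for the intertwining operators in Lemma \ref{lemma: subquotient of parabolic induction}.
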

	\begin{proof}
		This is a direct consequence of Lemma \ref{lemma: computation of partial Jacquet module}.
	\end{proof}
	
	\begin{lemma}\label{lemma: subquotient of parabolic induction}
		In the context of Lemma \ref{lemma: subquotient of parabolic induction - lemma}, we set $D=\Jac_{\rho,y'}\circ\Jac_{\rho,y'-\zeta'}\circ\cdots\circ\Jac_{\rho,x'}$. Then, $\sigma\mapsto D(\sigma)$ gives a bijection between the irreducible subquotients of $[x,y]_\rho\rtimes\pi$ and those of $[x,y]_\rho\rtimes\pi'$.
	\end{lemma}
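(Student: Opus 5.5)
We will show that $D$ and the socle construction $\sigma'\mapsto\soc([x',y']_\rho\rtimes\sigma')$ are mutually inverse bijections. Here $\pi$ is the irreducible subrepresentation of $[x',y']_\rho\rtimes\pi'$ from Lemma \ref{lemma: subquotient of parabolic induction - lemma}.

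\textbf{Step 1: behaviour of $D$ on the Grothendieck group.} By Lemma \ref{lemma: subquotient of parabolic induction - lemma}, $\pi=\soc([x',y']_\rho\rtimes\pi')$ and $D([x',y']_\rho\rtimes\pi')=\pi'$, so $D(\pi)=\pi'$. Applying Lemma \ref{lemma: computation of partial Jacquet module} repeatedly, together with conditions (3)--(5), we get
$$D\big([x,y]_\rho\rtimes\pi\big)\le D\big([x,y]_\rho\times[x',y']_\rho\rtimes\pi'\big)=[x,y]_\rho\rtimes\pi'.$$
The point is that $D$ cannot consume letters from $[x,y]_\rho$, by (4), and cannot consume letters from $\pi'$, by (3).

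Next we need the reverse inequality: $D([x,y]_\rho\rtimes\pi)$ is all of $[x,y]_\rho\rtimes\pi'$. Since $[x,y]_\rho\rtimes\pi$ embeds into $[x,y]_\rho\times[x',y']_\rho\rtimes\pi'$, we compare the two. By irreducibility of $[x,y]_\rho\times[x',y']_\rho$ (condition (2)), the latter equals $[x',y']_\rho\times[x,y]_\rho\rtimes\pi'$.

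\textbf{Step 2: identifying the constituents on which $D$ is nonzero.} The constituents $\sigma$ of $[x',y']_\rho\rtimes([x,y]_\rho\rtimes\pi')$ with $D(\sigma)\ne0$ are exactly those lying in $[x,y]_\rho\rtimes\pi$. To prove this:
\begin{itemize}
\item[$\bullet$] Each irreducible $\tau\le[x,y]_\rho\rtimes\pi'$ gives an irreducible $\soc([x',y']_\rho\rtimes\tau)$. Irreducibility follows as in Lemma \ref{lemma: subquotient of parabolic induction - lemma}, since $D([x',y']_\rho\rtimes\tau)=\tau$ is irreducible.
\item[$\bullet$] Exactness of $D$ then shows that every $\sigma$ with $D(\sigma)\neq0$ satisfies $\sigma\hookrightarrow[x',y']_\rho\rtimes D(\sigma)$ and $\sigma=\soc([x',y']_\rho\rtimes D(\sigma))$. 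This is the derivative-theory argument of Lemma \ref{lemma: partial Jacquet module and embedding} with segments in place of single letters.
\end{itemize}
This already gives injectivity of $\sigma\mapsto D(\sigma)$ on constituents with $D(\sigma)\ne0$, and shows that $D(\sigma)$ is irreducible.

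\textbf{Step 3: $D(\sigma)\neq0$ for every constituent $\sigma$ of $[x,y]_\rho\rtimes\pi$.} This is the main obstacle. The approach is to use condition (2) again, now with $[x',y']_\rho^\vee$. By Proposition \ref{prop: consequence of MVW-involution},
$$[x,y]_\rho\rtimes\pi\le[x,y]_\rho\times[x',y']_\rho\rtimes\pi'=[x,y]_\rho^\vee\times[x',y']_\rho\rtimes\pi'$$
in the Grothendieck group. Any constituent $\sigma$ of $[x,y]_\rho\rtimes\pi$ has a Jacquet module containing a term of the form $\lambda\otimes\pi$, where $\lambda$ is a constituent of $[x,y]_\rho$ or its dual.

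We then use that $\mu^*(\pi)\ge[x',y']_\rho\otimes\pi'$, and commute $[x',y']_\rho$ past $\lambda$. This commutation uses the irreducibility of $[x,y]_\rho\times[x',y']_\rho^{\pm}$ from (2). The resulting term $[x',y']_\rho\otimes(\cdots)$ in $\mu^*(\sigma)$ forces $D(\sigma)\ne0$.

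Finally, comparing total multiplicities via the equality in Step 1 shows that $D$ induces a bijection between the constituents of $[x,y]_\rho\rtimes\pi$ and those of $[x,y]_\rho\rtimes\pi'$. The inverse map is $\tau\mapsto\soc([x',y']_\rho\rtimes\tau)$.
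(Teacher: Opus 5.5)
Steps 1 and 2 can be made to work. For the reverse inequality in Step 1, which you announce but never prove: every term in Lemma \ref{lemma: computation of partial Jacquet module} is effective, so $D([x,y]_\rho\rtimes\pi)\ge[x,y]_\rho\rtimes D(\pi)$. Also $D(\pi)\ge\pi'$ by Frobenius reciprocity applied to $\pi\hookrightarrow[x',y']_\rho\rtimes\pi'$.

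The genuine gap is Step 3, which is the whole content of the lemma: you must show that no irreducible constituent of $[x,y]_\rho\rtimes\pi$ is killed by $D$. Your argument rests on the claim that every constituent $\sigma$ satisfies $\lambda\otimes\pi\le\mu^*(\sigma)$ with $\lambda\in\{[x,y]_\rho,[x,y]_\rho^\vee\}$.
\begin{itemize}
\item This holds for irreducible subrepresentations, with $\lambda=[x,y]_\rho$ by Frobenius reciprocity.
\item It holds for irreducible quotients, with $\lambda=[x,y]_\rho^\vee$ by Proposition \ref{prop: consequence of MVW-involution}(2).
\item There is no reason for it to hold for a constituent that is neither, and those are exactly the constituents Step 3 must handle.
\end{itemize}
The relation $[x,y]_\rho\rtimes\pi\le[x,y]_\rho^\vee\times[x',y']_\rho\rtimes\pi'$ is only an identity of semisimplifications. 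It says nothing about how the terms of $\mu^*([x,y]_\rho\rtimes\pi)$ are distributed among the constituents. By the Tadi\'c formula, that Jacquet module also contains mixed terms such as $[w+\zeta,y]_\rho^\vee\times[x,w]_\rho\otimes\pi$, as well as terms whose metaplectic part is not $\pi$, and a middle constituent may see only those.

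Even granting $\lambda\otimes\pi\le\mu^*(\sigma)$, transitivity of Jacquet modules only gives some $\kappa\otimes\pi'\le\mu^*(\sigma)$ with $\lambda\otimes[x',y']_\rho\le m^*(\kappa)$. To obtain a term $[x',y']_\rho\otimes(\cdots)$ you would still have to identify $\kappa$ with $\lambda\times[x',y']_\rho$, and you do not argue this.

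The paper never looks at Jacquet modules of individual constituents. Its mechanism is as follows.
\begin{itemize}
\item Since $\pi=\soc([x',y']_\rho\rtimes\pi')$ is also the cosocle of $[x',y']_\rho^\vee\rtimes\pi'$, there is an intertwining operator $M':[x',y']_\rho^\vee\rtimes\pi'\to[x',y']_\rho\rtimes\pi'$ with image $\pi$.
\item Induce with $[x,y]_\rho$, and use condition (2) to commute $[x,y]_\rho$ past $[x',y']_\rho^{\pm}$; this is where irreducibility of $[x,y]_\rho\times[x',y']_\rho^\vee$ really enters.
\item This produces $M:[x',y']_\rho^\vee\rtimes W\to[x',y']_\rho\rtimes W$, with $W=[x,y]_\rho\rtimes\pi'$, whose image is $[x,y]_\rho\rtimes\pi$ and which preserves $[x',y']_\rho^{\pm}\rtimes V_i$ for a composition series $(V_i)$ of $W$.
\item For each irreducible $\sigma\le W$, the induced map $[x',y']_\rho^\vee\rtimes\sigma\to[x',y']_\rho\rtimes\sigma$ has image $0$ or $\widetilde\sigma=\soc([x',y']_\rho\rtimes\sigma)$.
\item The $D$-count from Step 1 then forces $\widetilde\sigma$ every time, giving $[x,y]_\rho\rtimes\pi=\sum_\sigma\widetilde\sigma$.
\end{itemize}
Realizing $[x,y]_\rho\rtimes\pi$ as the image of a map that is functorial in $[x,y]_\rho\rtimes\pi'$ is the idea your Step 3 is missing.

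If you follow this route, one more check is needed. The class of $\mathrm{Im}(M)$ is in general only bounded below by the sum of the images of the graded maps, because images are not additive along filtrations. The defect is the image of the snake-lemma connecting maps, which could only add $D$-null constituents. So you still have to verify that those connecting maps contribute nothing.
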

	\begin{proof}
		By Lemma \ref{lemma: subquotient of parabolic induction - lemma}, we have $\pi=\soc([x',y']_\rho\rtimes\pi')=\cos([x',y']_\rho^\vee\rtimes\pi')$. Thus, there exists a canonical intertwining operator $M':[x',y']_\rho^\vee\rtimes\pi'\rightarrow [x',y']_\rho\rtimes\pi'$ whose image is $\pi$. Let $M$ be the intertwining operator defined by $$\begin{aligned}
			[x',y']_\rho^\vee\times[x,y]_\rho\rtimes\pi'\stackrel{\sim}{\longrightarrow}&[x,y]_\rho\times[x',y']_\rho^\vee\rtimes\pi'\\
			&\stackrel{[x,y]_\rho\rtimes M'}{\longrightarrow}[x,y]_\rho\times[x',y']_\rho\rtimes\pi'\stackrel{\sim}{\longrightarrow}[x',y']_\rho\times[x,y]_\rho\rtimes\pi'.
		\end{aligned}$$
		Then, the image of $M$ is isomorphic to $[x,y]_\rho\rtimes\pi$. We filter $[x,y]_\rho\rtimes\pi'$ by subrepresentations $V_i$ such that $\left\{V_i/V_{i+1}\right\}$ is the set of irreducible subquotients of $[x,y]_\rho\rtimes\pi'$. Note that $M$ is the canonical intertwining operator. Thus $M$ can be restricted to maps $[x',y']_\rho^\vee\rtimes V_i\rightarrow[x',y']_\rho\rtimes V_i$. Therefore, for every irreducible subquotient $\sigma$ of $[x,y]_\rho\rtimes\pi'$, $M$ induces a map $[x',y']_\rho^\vee\rtimes\sigma\rightarrow[x',y']_\rho\rtimes\sigma$. By Lemma \ref{lemma: subquotient of parabolic induction - lemma}, the parabolic induction $[x',y']_\rho\rtimes\sigma$ is SI, and $\soc([x',y']_\rho\rtimes\sigma)=\cos([x',y']_\rho^\vee\rtimes\sigma)$. Write $\widetilde{\sigma}=\soc([x',y']_\rho\rtimes\sigma)$. Then $M([x',y']_\rho^\vee\rtimes\sigma)=0$ or $\widetilde{\sigma}$. By Lemma \ref{lemma: subquotient of parabolic induction - lemma}, we have $D([x,y]_\rho\rtimes\pi)=[x,y]_\rho\rtimes\pi'$. Since we have proved at the beginning that the image of $M$ is $[x,y]_\rho\rtimes\pi$, we must have $M([x,y]^\vee\rtimes\sigma)=\widetilde{\sigma}$ for all $\sigma$ and $[x,y]_\rho\rtimes\pi=\sum_{\sigma\in\JH([x,y]_\rho\rtimes\pi')}\widetilde{\sigma}$ in $\mR_-(\widetilde{G})$. This completes the proof.
	\end{proof}
	
	\par The last lemma in this subsection is about the irreducibility of parabolic induction.
	
	\begin{lemma}\label{lemma: irreducibility of rho||x rtimes pi in non-negative DDR case}
		Let $x\in\frac{1}{2}\ZZ$ with $x\ge0$ and $\rho\in\Pi_{\unit,\cusp}(\GL(d_\rho))$. Suppose that $2x-1\notin[B,A]$ for all $(\rho,A,B)\in\Jord_\rho(\psi)$, and Theorem \ref{theorem: A-packets can be constructed via extended multi-segments - NDDR case} is true for $\psi$. Then, for every $\pi\in\Pi_\psi$, $\rho|\cdot|^x\rtimes\pi$ is irreducible. 
	\end{lemma}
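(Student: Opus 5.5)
We reduce to the discrete case via the construction of $\Pi_\psi$ by extended multi-segments. The statement assumes Theorem \ref{theorem: A-packets can be constructed via extended multi-segments - NDDR case} for $\psi$. Hence every $\pi\in\Pi_\psi$ has the form $\pi(\mE)$ for an extended multi-segment $\mE$ with $\psi_\mE=\psi$. We argue by induction on $\sum_{i}l_i$.

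\textbf{Base case $\sum_i l_i=0$.} Here $\pi(\mE)=\pi(\phi,\varepsilon)$ is a discrete series with
$$\Jord_\rho(\phi)=\bigcup_{(\rho,A,B)\in\Jord_\rho(\psi)}\{2C+1:B\le C\le A\}.$$
The hypothesis $2x-1\notin[B,A]$ for all blocks says exactly that $2(x-1)+1\notin\Jord_\rho(\phi)$, i.e. $2x-1\notin\Jord_\rho(\phi)$ in the normalization of Proposition \ref{prop: non-unitary irreducibility for discrete series}. More precisely, we translate the condition "$x-1\notin[B,A]$", which is what the hypothesis should mean after matching the indexing $a=2C+1$.
\begin{itemize}
\item If $x\ge\frac12$, Proposition \ref{prop: non-unitary irreducibility for discrete series} gives irreducibility directly. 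The case $2x-1=0$, i.e. $x=\frac12$, is allowed there since $0$ is excluded from the condition.
\item If $x=0$ and $\rho$ is self-dual of symplectic type, we use the unitary reducibility property: $\rho\rtimes\pi(\phi,\varepsilon)$ is irreducible when $1\in\Jord_\rho(\phi)$. We must check that the hypothesis forces this. Otherwise we note that the case $x=0$ is covered by Corollary \ref{coro: strengthening of proposition 4.5.3 in ArthurMp - tempered case} together with a direct verification.
\item If $\rho|\cdot|^x$ is not of good parity, irreducibility is again given by Corollary \ref{coro: strengthening of proposition 4.5.3 in ArthurMp - tempered case}.
\end{itemize}

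\textbf{Inductive step.} Suppose some $l_i>0$. By Lemma \ref{lemma: reduction step in non-negative DDR case}, $\pi(\mE)=\soc([B_i,-A_i]_\rho\rtimes\pi(\mE^-))$. Note that $\psi_{\mE^-}$ replaces $[A_i,B_i]$ by $[A_i-1,B_i+1]$, so the hypothesis on $x$ is preserved. By induction, $\rho|\cdot|^x\rtimes\pi(\mE^-)$ is irreducible. We then want to show that
$$\rho|\cdot|^x\rtimes\pi(\mE)\hookrightarrow[B_i,-A_i]_\rho\rtimes\rho|\cdot|^x\rtimes\pi(\mE^-)$$
is SI, with socle equal to both $\soc(\rho|\cdot|^x\rtimes\pi(\mE))$ and, via MVW (Proposition \ref{prop: consequence of MVW-involution}), its cosocle. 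Applying $D=\Jac_{\rho,-A_i}\circ\cdots\circ\Jac_{\rho,B_i}$ and computing with Lemma \ref{lemma: computation of partial Jacquet module}, the result is the irreducible $\rho|\cdot|^x\rtimes\pi(\mE^-)$. Here the condition $x-1\notin[B_i,A_i]$, together with $x\ne$ endpoints, ensures that $\rho|\cdot|^{\pm x}$ does not interfere with the segment. It follows that $\rho|\cdot|^x\rtimes\pi(\mE)$ has irreducible socle occurring with multiplicity one. Applying the same argument to $\rho|\cdot|^{-x}$ shows $\soc(\rho|\cdot|^x\rtimes\pi)=\soc(\rho|\cdot|^{-x}\rtimes\pi)$, which gives irreducibility as in the proof of Proposition \ref{prop: non-unitary irreducibility for discrete series}.

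\textbf{Alternative route.} If this Jacquet computation becomes messy, we can argue as in Lemma \ref{lemma: reduction to good parity - lemma}. By Proposition \ref{prop: theta lifts of extended multi-segment}, for $\alpha\gg0$ we have $\theta_{-\alpha}(\pi)=\pi(\mE_\alpha)$, which lies in the DDR-type packet of $\psi_\alpha$. The added block $[\frac{\alpha-1}{2},-\frac{\alpha-1}{2}]_{\triv}$ has to be handled; for $\rho\neq\triv$ it is irrelevant. M\oe glin's irreducibility result then gives that $\rho|\cdot|^x\rtimes\pi(\mE_\alpha)$ is irreducible. We then combine Lemma \ref{lemma: compatibility of theta lifts and socle} with Howe duality (Theorem \ref{theorem: Howe's duality}) to conclude that $\soc(\rho|\cdot|^{x}\rtimes\pi)=\soc(\rho|\cdot|^{-x}\rtimes\pi)$.

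\textbf{Main obstacle.} The hardest case is $\rho=\triv$, where the extra $\alpha$-block contains $x-1$. There the theta argument fails, and the direct Jacquet-module argument above must be used.
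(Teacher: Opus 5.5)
Your ``alternative route'' is the paper's proof, and it already covers $\rho=\triv$. Your primary route is a genuinely different argument with a sound skeleton, but its key Jacquet-module step is justified incorrectly. Since you send $\rho=\triv$ exclusively through that step, that case is not established as written.

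\textbf{The theta route.} For $\alpha\gg0$, Proposition \ref{prop: theta lifts of extended multi-segment} identifies $\theta_{-\alpha}(\pi)$ with $\pi(\mE_\alpha)$, and M\oe glin's irreducibility result for DDR packets of classical groups applies to $\rho|\cdot|^{x}\rtimes\pi(\mE_\alpha)$. Lemma \ref{lemma: compatibility of theta lifts and socle} and Howe duality (Theorem \ref{theorem: Howe's duality}) then give $\soc(\rho|\cdot|^{x}\rtimes\pi)=\soc(\rho|\cdot|^{-x}\rtimes\pi)$, and one concludes as in Proposition \ref{prop: non-unitary irreducibility for discrete series}. The obstacle you name for $\rho=\triv$ does not exist. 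You are reading the added block in Atobe's coordinates, $[\frac{\alpha-1}{2},-\frac{\alpha-1}{2}]_{\triv}$. M\oe glin's DDR packets and her irreducibility condition use $B=|a-b|/2$ with a sign $\zeta$, and only in these coordinates is $\psi_{\mE_\alpha}$ DDR, which is what the proof of Proposition \ref{prop: theta lifts of extended multi-segment} uses. There $\triv\otimes r(1)\otimes r(\alpha)$ is the block $(\triv,\frac{\alpha-1}{2},\frac{\alpha-1}{2},\zeta=-1)$, whose diagonal restriction is the single summand $\triv\otimes r(\alpha)$. It imposes only $x-1\neq\frac{\alpha-1}{2}$, which is automatic for $\alpha\gg0$. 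So the theta argument is complete and uniform in $\rho$.

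\textbf{Your primary route.} Induction on $\sum_i l_i$ is sound in outline. Swapping $\rho|\cdot|^{\pm x}$ past $[B_i,-A_i]_\rho$ is legitimate, because the hypothesis excludes $x=B_i+1$ and $x=A_i+1$. Both socles $\soc(\rho|\cdot|^{\pm x}\rtimes\pi(\mE))$ then embed in the same $Y=[B_i,-A_i]_\rho\rtimes(\rho|\cdot|^{x}\rtimes\pi(\mE^-))$, since the irreducible $\rho|\cdot|^{\pm x}\rtimes\pi(\mE^-)$ are isomorphic. So SI of $Y$ plus MVW finishes, and theta stays out of the inductive step (the base case still rests on Proposition \ref{prop: non-unitary irreducibility for discrete series}, which is proved by theta).

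\textbf{The gap.} You claim $\Jac_{\rho,-A_i}\circ\cdots\circ\Jac_{\rho,B_i}(Y)=\rho|\cdot|^{x}\rtimes\pi(\mE^-)$ because $\rho|\cdot|^{\pm x}$ stays away from the segment. That is false: the hypothesis allows $0<x\le B_i$, including $x=B_i$, and then $\pm x\in[-A_i,B_i]$. What you need is that no constituent $\lambda\otimes\sigma'$ of $\mu^*(\rho|\cdot|^{x}\rtimes\pi(\mE^-))$ with $\lambda=[y,-A_i]_\rho$, $y\le B_i$, comes from the terms $\rho|\cdot|^{\pm x}\otimes 1$ of $M^*(\rho|\cdot|^{x})$. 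By the geometric lemma, such a term forces one of two things.
\begin{itemize}
\item Either $\pm x=-A_i$, i.e.\ $x=A_i$. This is excluded because $A_i>B_i$ puts $A_i$ in $[B_i+1,A_i+1]$.
\item Or $\Jac_{\rho,\pm x-1}\pi(\mE^-)\neq0$. When $\pm x$ is interior, this uses that the unlinked product $[y,\pm x+1]_\rho\times[\pm x-1,-A_i]_\rho$ also has a Jacquet term beginning with $\rho|\cdot|^{\pm x-1}$. Lemma \ref{lemma: highest derivative of representations in a A packet} rules this out, since $-x-1<0$ and $x-1$ is not the $B$ of any block of $\psi_{\mE^-}$.
\end{itemize}
The remaining terms only involve $\Jac_{[y,-A_i]_\rho}\pi(\mE^-)$, the same input that makes $[B_i,-A_i]_\rho\rtimes\pi(\mE^-)$ SI. With this argument the step closes.

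\textbf{The base case.} You misread Proposition \ref{prop: non-unitary irreducibility for discrete series}: its condition $2x-1\notin\Jord_\rho(\phi)\cup\{0\}$ fails at $x=\frac12$, so that case is not covered. Corollary \ref{coro: strengthening of proposition 4.5.3 in ArthurMp - tempered case} treats only unitary induction, so it says nothing about non-unitary $\rho|\cdot|^{x}$. In fact the statement is false at $x\in\{0,\frac12\}$:
\begin{itemize}
\item At $x=0$, $\rho\rtimes\pi(\phi,\varepsilon)$ has length two when $\rho$ is symplectic and $(\rho,1)\notin\Jord(\phi)$.
\item At $x=\frac12$, Proposition \ref{prop: cuspidal support of discrete series}(3) shows that $\rho|\cdot|^{1/2}\rtimes\pi(\phi_-,\varepsilon_-)$ has a discrete-series subrepresentation, hence is reducible, whenever $\rho$ is orthogonal and $(\rho,2)\notin\Jord(\phi_-)$.
\end{itemize}
So the lemma must be read for $x>\frac12$, with hypothesis $x-1\notin[B,A]$ as you say; its only use is at $x=A$. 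In that range the theta route works, and so does your primary route once repaired as above.
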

	\begin{proof}
		M\oe glin has already proved the lemma for the classical group (see \cite[Proposition 6.3.1]{Mœeglin2009_paquets}). Thus, as in the proof of Proposition \ref{prop: non-unitary irreducibility for discrete series}, we can deduce the lemma by theta correspondence and Proposition \ref{prop: theta lifts of extended multi-segment} above.
	\end{proof}
	
	\subsection{Start of the proof}\label{subsection: start of the proof}
	
	\par In this subsection, we assume that the $(\rho,A,B)$ we fixed is the only triple of $\Jord(\psi)$ satisfying $A>B$. That is, $\psi'$ is discrete. Further, we assume that $a_{\rho,\psi',\varepsilon'}\ge 2B+1$ (see \S \ref{section: the key proposition} for the definition of $a_{\rho,\psi',\varepsilon'}$). Since $\psi$ is a non-negative DDR, this implies that $a_{\rho,\psi',\varepsilon'}>2A+1$.
	
	\subsubsection{The $A=B+1$ case}\label{subsubsection: the A=B+1 case}

	\par When $A=B+1$, Theorem \ref{theorem: construction of A-packet non-negative DDR case} can be reformulated as follows:
	
	\begin{enumerate}
		\item[(1)] If $\eta_0=1$, then $\pi(\psi,\varepsilon)=\soc([B,-A]_\rho\rtimes\pi(\psi',\varepsilon'))$.
		\item[(2)] If $\eta_0=-1$, then $\pi(\psi,\varepsilon)=\oplus_{\eta=\pm1}\pi(\psi',\varepsilon'(\rho,B+1,B+1;\eta),(\rho,B,B;-\eta))$.
	\end{enumerate}
	
	\par Note that $(2)$ is just a particular case of Theorem \ref{theorem: Mp version of Theorem 7.5 in Xu_moeglin}. Thus, we only need to consider the case when $\eta_0=1$. In this case, by Theorem \ref{theorem: Mp version of Theorem 7.5 in Xu_moeglin}, we have $$\pi(\psi,\varepsilon)=[B,-(B+1)]_\rho\rtimes\pi(\psi',\varepsilon')\ominus_{\eta=\pm1}\pi(\psi',\varepsilon',(\rho,B+1,B+1;\eta),(\rho,B,B;\eta)).$$
	By Proposition \ref{prop: cuspidal support of discrete series}, we have 
	$$\begin{aligned}
		\oplus_{\eta=\pm1}\pi(\psi',\varepsilon',(\rho,B+1,B+1;\eta),(\rho,B,B;\eta))&=\soc([B+1,-B]_\rho\rtimes\pi(\psi',\varepsilon'))\\
		&=\cos([B,-(B+1)]_\rho\rtimes\pi(\psi',\varepsilon')).
	\end{aligned}$$
	Thus, it is sufficient to analyze the irreducible subquotients of $[B,-(B+1)]_\rho\rtimes\pi(\psi',\varepsilon')$. Suppose that $\tau$ is such a subquotient. Then the key proposition (Proposition \ref{proposition: key proposition}) implies that there exists a totally ordered multi-set $\mE$ of real numbers, such that $\left\{|x|:x\in\mE\right\}=\left\{|x|:x\in[B+1,-B]\right\}$ and $\tau$ is a subrepresentation of $\sigma\rtimes\pi(\phi,\varepsilon)$, where $\sigma$ is an irreducible subrepresentation of $\times_{x\in\mE}\rho|\cdot|^x$.
	
	\par Suppose that $\tau\le\pi(\psi,\varepsilon)$. Then, Lemma \ref{lemma: highest derivative of representations in a A packet} and the condition that $a_{\rho,\psi',\varepsilon'}\ge 2B+1$ imply that $\Jac_{\rho,x}(\sigma)=0$ unless $x=B$. By Zelevinsky's classification, we can write $\sigma$ as $\sigma=\soc([x_1,y_1]_\rho\times[x_2,y_2]_\rho\times\cdots\times[x_r,y_r]_\rho)$ with $x_1\le x_2\le\cdots\le x_r$ and $y_i\le x_i$. Note that $\Jac_{\rho,x_1}(\sigma)=0$. Thus, we must have $x_1=B$. Then, there are only two possibilities:
	\begin{enumerate}
		\item[(1)] $\sigma=(\rho|\cdot|^B\times[B+1,-(B-1)]_\rho)$.
		\item[(2)] $\sigma=([B,-B]_\rho\times\rho|\cdot|^{B+1})$.
	\end{enumerate}
	Since $\rho|\cdot|^B\times[B+1,-(B-1)]_\rho$ is irreducible, the first case is excluded. In conclusion, $\tau$ is a subrepresentation $[B,-B]_\rho\rtimes\rho|\cdot|^{B+1}\rtimes\pi(\psi',\varepsilon')$. By the non-unitary irreducibility of $\pi(\psi',\varepsilon')$ (see Proposition \ref{prop: non-unitary irreducibility for discrete series}), $\rho|\cdot|^{B+1}\rtimes\pi(\psi',\varepsilon')$ is irreducible. Thus $\tau$ is a subrepresentation of $[B,-B]_\rho\rtimes\rho|\cdot|^{-(B+1)}\rtimes\pi(\psi',\varepsilon')$.
	Note that $[B,-B]_\rho\rtimes\rho|\cdot|^{-(B+1)}=[B,-(B+1)]_\rho+\cos([B,-B]_\rho\rtimes\rho|\cdot|^{-(B+1)})$ with $\Jac_{\rho,-(B+1)}(\cos([B,-B]_\rho\rtimes\rho|\cdot|^{-(B+1)}))\neq0$ in $\mR(\GL)$. Therefore, $\tau$ is a subrepresentation of $[B,-(B+1)]_\rho\rtimes\pi(\psi',\varepsilon')$. Consider $D=\Jac_{\rho,-(B+1)}\circ\Jac_{\rho,-B}\circ\cdots\circ\Jac_{\rho,B}$, then $D([B,-(B+1)]_\rho\rtimes\pi(\psi',\varepsilon'))=\pi(\psi',\varepsilon')$ is irreducible. This implies that $[B,-(B+1)]_\rho\rtimes\pi(\psi',\varepsilon')$ is SI, which completes the proof in the $A=B+1$ case.
	
	\subsubsection{First construction}\label{subsubsection: first construction}
	
	\par From now on, we assume that $A>B+1$. The aim of this subsection is to prove the following lemma:
	
	\begin{lemma}\label{lemma: first construction}
		Suppose that $\pi$ is an irreducible constituent of $\pi(\psi,\varepsilon)$. Then, there exist an integer $t_1\in[0,\frac{A-B}{2}]$ and a sign $\lambda\in\left\{\pm1\right\}$ such that $\pi$ is a subquotient of an induced representation of the form:
		$$\bigtimes_{j\in[0,t_1-1]}(\rho|\cdot|^{A-2j}\times\cdots\times\rho|\cdot|^{-(A-2j-1)})\rtimes\pi(\psi',\varepsilon',\cup_{C\in[B,A-2t_1]}(\rho,C,C;(-1)^{[C]}\lambda)),$$ where $[0,t_1-1]$ is empty if $t_1=0$ and $[B,A-2t_1-2]$ is empty if $t_1=\frac{A-B}{2}$. Furthermore, $\lambda$ and $t_1$ necessarily satisfy $\eta_0=\prod_{C\in[B,A-2t_1]}(-1)^{[C]}\lambda$.
	\end{lemma}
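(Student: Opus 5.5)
We argue by iterating the expansion of Theorem \ref{theorem: Mp version of Theorem 7.5 in Xu_moeglin}, by induction on $A-B$ (the case $A=B+1$ having been handled in \S\ref{subsubsection: the A=B+1 case}). By that theorem, every irreducible constituent $\pi$ of $\pi(\psi,\varepsilon)$ is a constituent either of some term $[B,-C]_\rho\rtimes\Jac_{\rho,C}\circ\cdots\circ\Jac_{\rho,B+2}\pi(\psi',\varepsilon',(\rho,A,B+2;\eta_0))$ with $C\in(B,A]$, or of one of the two representations $\pi(\psi',\varepsilon',(\rho,A,B+1;\eta),(\rho,B,B;\eta\eta_0))$. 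In the second case the parameter has smaller $\sum(A_i-B_i)$ and $(\rho,A,B+1)$ is again the only non-discrete block. By induction, a constituent is a subquotient of $\bigtimes_{j\in[0,t_1-1]}(\rho|\cdot|^{A-2j}\times\cdots\times\rho|\cdot|^{-(A-2j-1)})\rtimes\pi(\psi',\varepsilon',(\rho,B,B;\eta\eta_0),\cup_{C\in[B+1,A-2t_1]}(\rho,C,C;(-1)^{[C]}\lambda'))$. Moreover, the sign condition for $(\rho,A,B+1)$ gives $\eta=\prod_{C\in[B+1,A-2t_1]}(-1)^{[C]}\lambda'$. Since $(B+1)$-indexed signs alternate, we put $\lambda=(-1)^{[B]}\eta\eta_0$. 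The resulting sign must agree with $(-1)^{[C]}\lambda'$ up to the global alternation. This forces $\eta\eta_0=(-1)^{[B]}\lambda$ with $\lambda=-\lambda'$ (alternation from $C=B+1$ to $C=B$), and the product condition becomes $\eta_0=\prod_{C\in[B,A-2t_1]}(-1)^{[C]}\lambda$. When the signs fail to alternate, we use Proposition \ref{prop: cuspidal support of discrete series}(2) to rewrite the discrete series as a subrepresentation of $[B+1,-B]_\rho\rtimes(\cdots)$. This produces a new pair of the required form at $A-2t_1$ with $t_1$ unchanged; we will need to check this bookkeeping carefully.

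For the first family, we use Lemma \ref{lemma: Proposition 4.4 of Moeglin} (valid by the induction hypothesis, since $\sum(A_i-B_i)$ drops by $2$). It identifies $\Jac_{\rho,C}\circ\cdots\circ\Jac_{\rho,B+2}\pi(\psi',\varepsilon',(\rho,A,B+2;\eta_0))$ with $\soc([C+1,A]_\rho\rtimes\pi(\psi',\varepsilon',(\rho,A-1,B+1;\eta_0)))$. Hence $\pi$ is a subquotient of $[B,-C]_\rho\times[C+1,A]_\rho\rtimes\pi''$, where $\pi''$ runs over constituents of $\pi(\psi',\varepsilon',(\rho,A-1,B+1;\eta_0))$. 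Applying the induction hypothesis to this last packet (with the block $(\rho,A-1,B+1)$) gives $\pi''$ as a subquotient of $\bigtimes_{j\in[0,t-1]}(\rho|\cdot|^{A-1-2j}\times\cdots\times\rho|\cdot|^{-(A-2j-2)})\rtimes\pi(\psi',\varepsilon',\cup_{C'\in[B+1,A-1-2t]}(\rho,C',C';\cdots))$. Combining the cuspidal supports, $[B,-C]_\rho\times[C+1,A]_\rho$ has the same support as $\rho|\cdot|^A\times\cdots\times\rho|\cdot|^{-(A-1)}$ after adding and removing a string. More precisely, in $\mR(\GL)$ the combination of this first string with the chain of the inner packet reorganises into the $j=0$ factor together with a shifted family. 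We then use Proposition \ref{prop: consequence of MVW-involution}(1), which lets us replace any $\rho|\cdot|^x$ by $\rho|\cdot|^{-x}$ in the Grothendieck group, to match exponents. Remaining stray exponents between $B$ and $A-2t-2$ are absorbed into the discrete series via Proposition \ref{prop: cuspidal support of discrete series}(1), which converts $(\rho,C',C')$-blocks plus a line back into a single discrete series. This yields $t_1=t+1$ and fixes $\lambda$ from the inner $\lambda$.

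The main obstacle is the exponent bookkeeping in this last step. We must show that the multiset $\{B,\dots,-C\}\cup\{C+1,\dots,A\}\cup\{\text{inner strings}\}\cup\{B+1,\dots,A-1-2t\}$ can be rewritten, up to sign changes allowed by MVW, exactly as the union of the strings $[A-2j,-(A-2j-1)]$ for $j<t_1$ together with the Jordan blocks $[B,A-2t_1]$. All of this is only needed in $\mR_-(\widetilde{G})$, since "subquotient" is a Grothendieck-group statement. We must also track the sign $\eta_0$ through the constraint $\varepsilon$ restricted to $(\rho,A-1,B+1)$ equal to $\eta_0$. We expect this to reduce to the identity $\prod_{C\in[B,A-2t_1]}(-1)^{[C]}\lambda=\prod_{C\in[B+1,A-1-2t]}(-1)^{[C]}\lambda''$, with $\lambda=-\lambda''$ after the shift by one.
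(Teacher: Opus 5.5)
There is a genuine gap in the non-alternating subcase of the second family, that is, when $(-1)^{[B+1]}\lambda'=\eta\eta_0$ and $B+1<A-2t'$. You claim that Proposition \ref{prop: cuspidal support of discrete series}(2) produces the required form ``with $t_1$ unchanged''. It does not.

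Part (2) only gives an embedding into $[B+1,-B]_\rho\rtimes\pi(\psi',\varepsilon',\cup_{C\in[B+2,A-2t']}(\rho,C,C;(-1)^{[C]}\lambda'))$. Here the remaining blocks start at $B+2$, and $[B+1,-B]_\rho$ is not one of the allowed factors. Keeping $t_1=t'$ would require the discrete series with blocks $[B,A-2t']$ and alternating signs, which is not the one you have.

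The missing step, which is what the paper does, is to apply part (1) twice more. This shifts $[B+2,A-2t']$ down to $[B+1,A-2t'-1]$ and then to $[B,A-2t'-2]$, producing the strings $\rho|\cdot|^{B+2}\times\cdots\times\rho|\cdot|^{A-2t'}$ and $\rho|\cdot|^{B+1}\times\cdots\times\rho|\cdot|^{A-2t'-1}$. Together with $[B+1,-B]_\rho$, and after MVW sign changes, these form exactly the new factor $\rho|\cdot|^{A-2t'}\times\cdots\times\rho|\cdot|^{-(A-2t'-1)}$. Consequently:
\begin{itemize}
\item $t_1=t'+1$;
\item $\lambda=\lambda'$, since a shift by two does not change $(-1)^{[C]}$;
\item the sign condition holds, because $\prod_{C\in[B,A-2t'-2]}(-1)^{[C]}\lambda'=\prod_{C\in[B+2,A-2t']}(-1)^{[C]}\lambda'=\eta\,((-1)^{[B+1]}\lambda')^{-1}=\eta/(\eta\eta_0)=\eta_0$.
\end{itemize}
If $B+1=A-2t'$, the same step gives an empty block range with $t_1=\frac{A-B+1}{2}$, which forces $\eta_0=1$.

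There are two smaller points. First, in the alternating subcase you need $\lambda=\lambda'$, not $\lambda=-\lambda'$. The factor $(-1)^{[C]}$ already encodes the alternation, and with $\lambda=\lambda'$ the product is $\eta\cdot\eta\eta_0=\eta_0$, as you want.

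Second, your first family is correct but proceeds differently from the paper. The paper applies Lemma \ref{lemma: Proposition 4.4 of Moeglin} once more to replace $\pi(\psi',\varepsilon',(\rho,A-1,B+1;\eta_0))$ by $\soc([B+1,A-1]_\rho\rtimes\pi(\psi',\varepsilon',(\rho,A-2,B;\eta_0)))$. The $j=0$ string $\rho|\cdot|^{A}\times\cdots\times\rho|\cdot|^{-(A-1)}$ then appears immediately up to MVW, and the induction hypothesis for $(\rho,A-2,B)$ supplies the factors $j\ge1$ with the same $\lambda$. Your route (induct on $(\rho,A-1,B+1)$, then shift the blocks $[B+1,A-1-2t]$ down by one, giving $\lambda=-\lambda''$) also works. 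In $\mR_-(\widetilde{G})$ only the multiset of absolute values of exponents matters, and one checks that the two multisets coincide. That check is exactly the bookkeeping you left open, and the paper's ordering avoids it.
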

	\begin{proof}
		By Theorem \ref{theorem: Mp version of Theorem 7.5 in Xu_moeglin} and Lemma \ref{lemma: Proposition 4.4 of Moeglin}, either there exists a $\eta=\pm1$ such that $\pi$ is a subquotient of $\pi(\psi',\varepsilon',(\rho,A,B+1;\eta),(\rho,B,B;\eta\eta_0))$, or there exists $j\in[1,A-B]$ such that $\pi$ is a subquotient of the induced representation
		$$[B,-(B+j)]_\rho\times[B+j+1,A]_\rho\rtimes\pi(\psi',\varepsilon',(\rho,A-1,B+1;\eta_0)).$$
		In the second case, $\pi$ is actually a subquotient of the induced representation
		$$\rho|\cdot|^A\times\rho|\cdot|^{A-1}\times\cdots\times\rho|\cdot|^{-B}\rtimes\pi(\psi',\varepsilon',(\rho,A-1,B+1;\eta_0)).$$
		By applying Lemma \ref{lemma: Proposition 4.4 of Moeglin} again, we have $$\pi(\psi',\varepsilon',(\rho,A-1,B+1;\eta_0)=\soc([B+1,A-1]\rtimes\pi(\psi',\varepsilon',(\rho,A-2,B;\eta_0))).$$ Thus $\pi$ is an irreducible subquotient of the induced representation $$\rho|\cdot|^A\times\rho|\cdot|^{A-1}\times\cdots\times\rho|\cdot|^{-(A-1)}\rtimes\pi(\psi',\varepsilon',(\rho,A-2,B;\eta_0)).$$ By induction on $\sum_{(\rho,A_i,B_i)\in\Jord(\psi)}(A_i-B_i)$, we may assume that the lemma is true for $\pi(\psi',\varepsilon',(\rho,A-2,B;\eta_0))$. This directly implies that the lemma is true for $\pi$.
		
		\par Now, we assume that $\pi$ is a subquotient of $\pi(\psi',\varepsilon',(\rho,A,B+1;\eta),(\rho,B,B;\eta\eta_0))$. By the induction hypothesis, there exist an integer $t'\in[0,\frac{A-(B+1)}{2}]$ and a sign $\lambda\in\left\{\pm1\right\}$ such that $\pi$ is a subquotient of an induced representation of the form $$\bigtimes_{j\in[0,2t'-1]}(\cdots)\rtimes\pi(\psi',\varepsilon',\cup_{C\in(B,A-2t_1]}(\rho,C,C;(-1)^{[C]}\lambda),(\rho,B,B;\eta\eta_0)),$$ with $(\cdots)=\rho|\cdot|^{A-2j}\times\cdots\times\rho|\cdot|^{-(A-2j-1)}$ and $\eta=\prod_{C\in(B,A-2t']}(-1)^{[C]}\lambda$.
		
		\par If $(-1)^{B+1}\lambda=-\eta\eta_0$ or $B+1=A-2t'$, the lemma is deduced by setting $t_1=t'$. Thus, we may assume that $(-1)^{B+1}\lambda=\eta\eta_0$ and $B+1<A-2t'$. By Proposition \ref{prop: cuspidal support of discrete series}, we know that $\pi(\psi',\varepsilon',\cup_{C\in(B,A-2t_1]}(\rho,C,C;(-1)^{[C]}\lambda),(\rho,B,B;\eta\eta_0))$ can be embedded into $$[B+1,-B]_\rho\rtimes\pi(\psi',\varepsilon',\cup_{C\in[B+2,A-2t_1]}(\rho,C,C;(-1)^{[C]}\lambda)).$$ Further, by applying Proposition \ref{prop: cuspidal support of discrete series} again, $\pi(\psi',\varepsilon',\cup_{C\in[B+2,A-2t_1]}(\rho,C,C;(-1)^{[C]}\lambda))$ can be embedded into $$\rho|\cdot|^{B+2}\times\rho|\cdot|^{B+3}\times\cdots\times\rho|\cdot|^{A-2t'}\rtimes\pi(\psi',\varepsilon',\cup_{C\in[B+1,A-2t_1-1]}(\rho,C,C;(-1)^{[C]}\lambda))$$ and $\pi(\psi',\varepsilon',\cup_{C\in[B+1,A-2t_1-1]}(\rho,C,C;(-1)^{[C]}\lambda))$ can be embedded into $$\rho|\cdot|^{B+1}\times\rho|\cdot|^{B+2}\times\cdots\times\rho|\cdot|^{A-2t'-1}\rtimes\pi(\psi',\varepsilon',\cup_{C\in[B,A-2t_1-2]}(\rho,C,C;(-1)^{[C]}\lambda)).$$ Note that $\prod_{C\in[B,A-2t'-2]}(-1)^{[C]}\lambda=\prod_{C\in[B+2,A-2t']}(-1)^{[C]}=\frac{\eta}{\eta\eta_0}=\eta_0$. Thus we obtain the lemma by setting $t_1=t'+1$.
	\end{proof}
	
	\subsubsection{Second construction}
	
	\par We denote by $\widetilde{\psi}$ the parameter deduced from $\psi$ by removing $(\rho,A,B)$ and all triples $(\rho,C,C)$ for $C<B$. Let $\widetilde{\varepsilon}$ be the restriction of $\varepsilon$ on $\Jord(\widetilde{\psi})$. For $D\in\frac{1}{2}\ZZ$ with $A-D\in\ZZ_{\ge0}$ and $\lambda\in\left\{\pm1\right\}$, we let $(\psi_{D,\lambda},\varepsilon_{D,\lambda})$ be obtained from $(\widetilde{\psi},\widetilde{\varepsilon})$ by adding triples $(\rho,C,C)$ for all $D-C\in\ZZ_{\ge0}$ and set $\varepsilon_{\lambda,D}(\rho,C,C)=(-1)^{[C]}\lambda$. In particular, we write $(\psi_{-\frac{1}{2},\lambda},\varepsilon_{-\frac{1}{2},\lambda})=(\widetilde{\psi},\widetilde{\varepsilon})$ if $A\notin\ZZ$ and $(\psi_{-1,\lambda},\varepsilon_{-1,\lambda})=(\widetilde{\psi},\widetilde{\varepsilon})$ if $A\in\ZZ$.
	
	\begin{lemma}\label{lemma: second construction}
		Let $\pi$ be an irreducible subquotient of $\pi(\psi,\varepsilon)$. Then, there exist a totally ordered multi-set $\mE$ of real numbers, a $D\in\frac{1}{2}\ZZ$ and $\lambda\in\left\{\pm1\right\}$ such that the following holds:
		\begin{enumerate}
			\item[(1)] $(\psi_{D,\lambda},\varepsilon_{D,\lambda})$ is well-defined.
			\item[(2)] $\prod_{C\le D}(-1)^{[C]}\lambda=\eta_0\prod_{E<B;(\rho,E,E)\in\Jord(\psi)}\varepsilon(\rho,E,E)$ if $\rho\neq\triv$ (this is a metaplectic feature, but it does not affect M\oe glin's arguments).
			\item[(3)] $\pi\xhookrightarrow{}\times_{x\in\mE}\rho|\cdot|^x\rtimes\pi(\psi_{D,\lambda},\varepsilon_{D,\lambda})$.
			\item[(4)] $\mE\cup-\mE=(\bigcup_{E\in[B,A]}[-E,E])\cup(\bigcup_{E<B;(\rho,E,E)\in\Jord(\psi)}[-E,E])-(\bigcup_{C\le D}[-C,C])$.
		\end{enumerate}
		In particular, we may choose $\lambda$ such that $b_{\rho,\psi_{D,\lambda},\varepsilon_{D,\lambda}}=2D+1$, in which case we will write $(\psi_t,\varepsilon_t)=(\psi_{D,\lambda},\varepsilon_{D,\lambda})$ with $t=\frac{A-D}{2}$.
	\end{lemma}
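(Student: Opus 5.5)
The plan is to start from Lemma \ref{lemma: first construction} and peel off the remaining discrete part. That lemma gives an integer $t_1\in[0,\frac{A-B}{2}]$ and a sign $\lambda$ such that $\pi$ is a subquotient of
$$\bigtimes_{j\in[0,t_1-1]}(\rho|\cdot|^{A-2j}\times\cdots\times\rho|\cdot|^{-(A-2j-1)})\rtimes\pi_1,\qquad \pi_1=\pi(\psi',\varepsilon',\cup_{C\in[B,A-2t_1]}(\rho,C,C;(-1)^{[C]}\lambda)).$$
Here $\pi_1$ is a discrete series, since $\psi'$ is discrete by the standing assumption of \S\ref{subsection: start of the proof}. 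The next step is to relate $\pi_1$ to $\pi(\psi_{D,\lambda},\varepsilon_{D,\lambda})$. The triples $(\rho,C,C)$ with $C<B$ belong to $\psi'$. Depending on their signs relative to the alternating chain $(-1)^{[C]}\lambda$, I would use Proposition \ref{prop: cuspidal support of discrete series} repeatedly:
\begin{enumerate}
\item[(a)] by part (1), shift a Jordan block down;
\item[(b)] by part (2), remove a pair of adjacent blocks with equal sign product;
\item[(c)] by part (3), remove the bottom even block.
\end{enumerate}
Each step embeds $\pi_1$ into a product of $\rho|\cdot|^x$'s induced from a smaller discrete series. Iterating, the small blocks and the top of the chain $[B,A-2t_1]$ are absorbed until only a maximal alternating chain $\{(\rho,C,C):C\le D\}$ remains on top of $\widetilde{\psi}$. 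This yields an embedding
$$\pi_1\hookrightarrow\times_{x\in\mE_1}\rho|\cdot|^x\rtimes\pi(\psi_{D,\lambda'},\varepsilon_{D,\lambda'}),$$
possibly with a modified sign $\lambda'$. Choosing the chain maximal gives $b_{\rho,\psi_{D,\lambda},\varepsilon_{D,\lambda}}=2D+1$.

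Combining the two steps, $\pi$ becomes a subquotient of $\times_{x\in\mE_2}\rho|\cdot|^x\rtimes\pi(\psi_{D,\lambda},\varepsilon_{D,\lambda})$. The multiset $\mE_2$ collects the exponents from Lemma \ref{lemma: first construction} and those produced above. Counting absolute values, $\mE_2\cup-\mE_2$ equals the right-hand side of (4). The segments $\rho|\cdot|^{A-2j}\times\cdots\times\rho|\cdot|^{-(A-2j-1)}$ contribute $[-(A-2j),A-2j]\cup[-(A-2j-1),A-2j-1]$ after symmetrisation. The remaining chain and the small blocks contribute their $[-E,E]$, and the retained chain $C\le D$ is subtracted.

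To turn "subquotient" into "subrepresentation", I would apply the key proposition (Proposition \ref{proposition: key proposition}) to $\pi(\psi_{D,\lambda},\varepsilon_{D,\lambda})$. Its hypothesis is $|x|<\frac{a_{\rho,\psi_{D,\lambda},\varepsilon_{D,\lambda}}-1}{2}$ for all $x\in\mE_2$. I expect this to hold because $a_{\rho,\psi',\varepsilon'}>2A+1$ and all exponents are bounded by $A$. Once it applies, it gives a totally ordered $\mE$ with $\mE\cup-\mE=\mE_2\cup-\mE_2$ and the embedding (3). Condition (1), well-definedness, follows from the parity of $D$ matching $A$ (equivalently $B$) and from $D$ lying above the removed small blocks.

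Condition (2) is the sign bookkeeping. The central character identity $\varepsilon(s_\psi)$-type product must be preserved along all the embeddings above. Lemma \ref{lemma: first construction} already gives $\eta_0=\prod_{C\in[B,A-2t_1]}(-1)^{[C]}\lambda$, and each application of Proposition \ref{prop: cuspidal support of discrete series} keeps the product of $\varepsilon$ over removed $\rho$-blocks consistent. For $\rho\neq\triv$ this yields the stated identity. The $\triv$ case differs because of the sign convention in Proposition \ref{prop: cuspidal support of discrete series}(3), which is why (2) is restricted to $\rho\ne\triv$.

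The main obstacle is the reduction of $\pi_1$ to $\pi(\psi_{D,\lambda},\varepsilon_{D,\lambda})$: tracking how the small blocks of $\psi'$ interleave with the chain, and checking that the resulting sign is consistent. I would handle this by induction on the number of blocks $(\rho,E,E)$ with $E<B$, following M\oe glin's argument.
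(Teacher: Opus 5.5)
Your proposal is correct and is the paper's argument. Lemma~\ref{lemma: first construction} together with repeated use of Proposition~\ref{prop: cuspidal support of discrete series} realizes $\pi$ as a subquotient of $\times_{x}\rho|\cdot|^x\rtimes\pi(\psi_{D,\lambda},\varepsilon_{D,\lambda})$ with (1), and Proposition~\ref{proposition: key proposition} then upgrades this to the embedding (3) with (4); its hypothesis holds because the next $\rho$-block above the chain is at least $a_{\rho,\psi',\varepsilon'}>2A+1$, while all exponents satisfy $|x|\le A$.

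Your bookkeeping for (2) and (4) fills in details the paper leaves implicit. One cosmetic point: a shift via part (1) of Proposition~\ref{prop: cuspidal support of discrete series} can increase the number of blocks below $B$, so termination is better measured by, e.g., the sum of the $C$'s over the $\rho$-blocks of level at most $A$.
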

	\begin{proof}
		By Proposition \ref{prop: cuspidal support of discrete series} and Lemma \ref{lemma: first construction}, there exist $\mE$, $D$, and $\lambda$ satisfying $(1)$, $(3)$ and for which $\pi$ is a subquotient of $\times_{x\in\mE}\rho|\cdot|^x\rtimes\pi(\psi_{D,\lambda},\varepsilon_{D,\lambda})$. Now the lemma follows from the key proposition (Proposition \ref{proposition: key proposition}).
	\end{proof}
	
	\par Furthermore, under the condition that $b_{\rho,\psi_{D,\lambda},\varepsilon_{D,\lambda}}=2D+1$, the $D, \lambda$ in Lemma \ref{lemma: second construction} are uniquely determined by $\pi$:
	
	\begin{lemma}\label{lemma: second construction - converse}
		Let $\pi$ be an irreducible subquotient of $\pi(\psi,\varepsilon)$. Suppose that there exist a set $\mE'$ of real numbers, a $D'\in\frac{1}{2}\ZZ$ and $\lambda'\in\left\{\pm1\right\}$ such that $\pi$ is a subquotient of $\times_{x\in\mE'}\rho|\cdot|^x\rtimes\pi(\psi_{D',\lambda'},\varepsilon_{D',\lambda'})$ and $b_{\rho,\psi_{D',\lambda'},\varepsilon_{D',\lambda'}}=2D'+1$. Then we have:
		\begin{enumerate}
			\item[(1)] $\mE'\cup-\mE'=(\bigcup_{E\in[B,A]}[-E,E])\cup(\bigcup_{E<B;(\rho,E,E)\in\Jord(\psi)}[-E,E])-(\bigcup_{C\le D}[-C,C])$.
			\item[(2)] $D'$ and $\lambda'$ are uniquely determined by $\pi$.
		\end{enumerate}
	\end{lemma}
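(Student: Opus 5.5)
We first prove (1) by comparing cuspidal supports. Since $\pi$ is a subquotient of $\times_{x\in\mE'}\rho|\cdot|^x\rtimes\pi(\psi_{D',\lambda'},\varepsilon_{D',\lambda'})$, its cuspidal support on the $\rho$-line is
\[\{\rho|\cdot|^{\pm x}:x\in\mE'\}\cup\Supp_\rho\bigl(\pi(\psi_{D',\lambda'},\varepsilon_{D',\lambda'})\bigr),\]
taken up to the sign ambiguity of the metaplectic cuspidal support. By Lemma \ref{lemma: second construction}, $\pi$ is also a subquotient of $\times_{x\in\mE}\rho|\cdot|^x\rtimes\pi(\psi_{D,\lambda},\varepsilon_{D,\lambda})$, with $\mE\cup-\mE$ given explicitly. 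The discrete series part is controlled by Proposition \ref{prop: cuspidal support of discrete series}. Iterating that proposition, the $\rho$-part of the cuspidal support of $\pi(\psi_{D,\lambda},\varepsilon_{D,\lambda})$ equals the contribution of $\widetilde{\psi}$ together with $\bigcup_{C\le D}[-C,C]$ (in the form $|\cdot|$-exponents up to sign), plus a $\rho$-cuspidal representation $\sigma_{cusp}$. This $\sigma_{cusp}$ depends only on the part of the parameter above $b_{\rho,\cdot,\cdot}$ and on $\widetilde\varepsilon$.

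Matching the two cuspidal supports then gives a multiset identity. The triples $(\rho,C,C)$ with $C\le D'$ form an initial chain in which $b=2D'+1$ is exactly the top of the alternating chain, so that part is fully absorbed into the $\rho$-cuspidal piece. Therefore the multiset $\mE'\cup-\mE'$ is forced to be the total $\rho$-support of $\pi$ minus the support coming from $\widetilde{\psi}$ and from $\bigcup_{C\le D'}[-C,C]$. Using the uniqueness of the partial cuspidal support, we deduce that $\mE'\cup-\mE'$ equals the expression in (1), with $D'$ in place of $D$.

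The real content is (2), i.e. $D'=D$ and $\lambda'=\lambda$. The cuspidal representation $\sigma_{cusp}$ in the support of $\pi(\psi_{D',\lambda'},\varepsilon_{D',\lambda'})$ is determined by $\pi$, because the cuspidal support of an irreducible representation is unique. By Theorem \ref{theorem: parameter of cuspidal representation}, $\sigma_{cusp}$ has a parameter whose $\rho$-part is an alternating chain $2,4,\dots$ or $1,3,\dots$. When $b=2D'+1$, the maximal triv/$\rho$ chain of this cuspidal parameter is $[\,\cdot\,,b_{\rho,\psi_{D',\lambda'},\varepsilon_{D',\lambda'}}]$, and its signs are $(-1)^{[C]}\lambda'$ up to the $\widetilde{\varepsilon}$-part. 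We will therefore recover $D'$ as $(\max \text{ of the cuspidal }\rho\text{-chain}-1)/2$, adjusted by the fixed contribution of $\widetilde\psi$. We will recover $\lambda'$ from the sign at the bottom of the chain via Theorem \ref{theorem: parameter of cuspidal representation}(2),(3), where the trivial-block convention of condition (3) must be tracked.

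The main obstacle is justifying that the condition $b_{\rho,\psi_{D',\lambda'},\varepsilon_{D',\lambda'}}=2D'+1$ really makes the $(\rho,C,C)$, $C\le D'$, part coincide with the cuspidal chain. The worry is that $\widetilde{\psi}$ could extend the chain, since $\widetilde\psi$ has elements only $\ge$ roughly $a_{\rho,\psi',\varepsilon'}>2A+1$. We will handle this using the standing assumption $a_{\rho,\psi',\varepsilon'}\ge 2B+1$ and Definition \ref{def: a_rho phi epsilon}. These ensure that $2D'+3\notin\Jord_\rho$, or that the sign condition breaks there, so that the chain stops exactly at $2D'+1$. Once this is established, applying the identical argument to $(D,\lambda)$ and comparing gives $D=D'$ and $\lambda=\lambda'$.
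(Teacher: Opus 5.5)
Your comparison of symmetrized cuspidal supports in (1) works, but it only gives the formula with $D'$ in place of $D$. So (1) as stated also rests on (2), and (2) is where the argument breaks.

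\textbf{The false claim.} You assert that, because $b_{\rho,\psi_{D',\lambda'},\varepsilon_{D',\lambda'}}=2D'+1$, the $\rho$-chain of the cuspidal piece $\sigma_{cusp}$ ends at $2D'+1$ (up to a fixed shift coming from $\widetilde\psi$) and has bottom sign $\lambda'$. The hypothesis $b=2D'+1$ says nothing of the kind. It only says where the initial alternating chain of $\Jord_\rho(\psi_{D',\lambda'})$ stops, and your appeal to $a_{\rho,\psi',\varepsilon'}\ge 2B+1$ merely re-proves that hypothesis.

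\textbf{Why it fails.} The $\rho$-blocks of $\widetilde\psi$ (all $>2A+1$) are still present, so $\pi(\psi_{D',\lambda'},\varepsilon_{D',\lambda'})$ is not $\rho$-cuspidal. To reach $\sigma_{cusp}$ you must reduce these blocks with Proposition~\ref{prop: cuspidal support of discrete series}. Depending on signs, each block either lengthens the chain (case (1)) or cancels against its top (case (2)). Hence the shift depends on $\lambda'$ and on the parity of $D'$. The chain can also be cancelled completely, after which its bottom sign is no longer $\lambda'$. Your sentence that $\sigma_{cusp}$ depends only on the part of the parameter above $b$ would, if true, make $\sigma_{cusp}$ independent of $(D',\lambda')$ and useless for (2).

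\textbf{Counterexamples.} Take $\rho\neq\triv$ symplectic and $(A,B)=(2,1)$.
\begin{enumerate}
\item[(a)] Let $\Jord_\rho(\widetilde\psi)=\{11\}$ with sign $e$. Both $(D,\lambda)=(0,-e)$ and $(2,e)$ satisfy $b=2D+1$, and both give the cuspidal chain $\{1,3\}$. So $D$ is not the top of the chain corrected by a fixed amount.
\item[(b)] Let $\Jord_\rho(\widetilde\psi)=\{11,15\}$ with signs $e,-e$. The pair $(0,e)$ gives the chain $\{1\}$ with sign $-e\neq\lambda$. So $\lambda$ cannot be read off the bottom sign either.
\end{enumerate}
The map $(D,\lambda)\mapsto\sigma_{cusp}$ is in fact injective, because the reduction acts as right multiplication by a fixed word in $\ZZ/2*\ZZ/2$, suitably modified in the half-integral case. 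But that is a different argument, and you do not give it.

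\textbf{How the paper avoids this.} It never passes to cuspidal supports.
\begin{enumerate}
\item[(i)] By Proposition~\ref{proposition: key proposition}, reorder $\mE'$ so that $\pi\hookrightarrow\times_{x\in\mE'}\rho|\cdot|^x\rtimes\pi(\psi_{D',\lambda'},\varepsilon_{D',\lambda'})$. Then $\circ_{x\in\mE'}\Jac_{\rho,x}\pi$ is a nonzero multiple of $\pi(\psi_{D',\lambda'},\varepsilon_{D',\lambda'})$.
\item[(ii)] Compute the same Jacquet module of $\times_{x\in\mE}\rho|\cdot|^x\rtimes\pi(\psi_{D,\lambda},\varepsilon_{D,\lambda})$ via Lemma~\ref{lemma: computation of partial Jacquet module}. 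Since $b=2D+1$, the discrete series has no $\rho|\cdot|^x$-Jacquet module at the relevant exponents. Hence the result is nonzero only if $\mE\cup-\mE=\mE'\cup-\mE'$, and it is then a multiple of $\pi(\psi_{D,\lambda},\varepsilon_{D,\lambda})$.
\item[(iii)] This gives (1) directly, together with $\pi(\psi_{D',\lambda'},\varepsilon_{D',\lambda'})\cong\pi(\psi_{D,\lambda},\varepsilon_{D,\lambda})$. The local Langlands correspondence then forces $(D',\lambda')=(D,\lambda)$.
\end{enumerate}
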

	\begin{proof}
		Take $\mE$, $D$ and $\lambda$ satisfying the four conditions of Lemma \ref{lemma: second construction}. By the key proposition (Proposition \ref{proposition: key proposition}), we can order $\mE'$ such that $\pi$ is a subrepresentation $\times_{x\in\mE'}\rho|\cdot|^x\rtimes\pi(\psi_{D',\lambda'},\varepsilon_{D',\lambda'})$. Then, we have $\circ_{x\in\mE'}\Jac_{\rho,x}\pi\neq 0$. This implies that $\circ_{x\in\mE'}\Jac_{\rho,x}(\times_{x\in\mE}\rho|\cdot|^x\rtimes\pi(\psi_{D,\lambda},\varepsilon_{D,\lambda}))\neq0$, which is possible only when $\mE\cup-\mE=\mE'\cup-\mE'$. Now, we have $m'\pi(\psi_{D',\lambda'},\varepsilon_{D',\lambda'})=\circ_{x\in\mE'}\Jac_{\rho,x}\pi=\circ_{x\in\mE'}\Jac_{\rho,x}(\times_{x\in\mE}\rho|\cdot|^x\rtimes\pi(\psi_{D,\lambda},\varepsilon_{D,\lambda}))=m\pi(\psi_{D,\lambda},\varepsilon_{D,\lambda})$ with $m,m'\in\ZZ_{\ge 1}$, which directly implies $D'=D$ and $\lambda'=\lambda$.
	\end{proof}
	
	\par We end this subsection with a technical lemma:
	
	\begin{lemma}\label{lemma: first step - lemma 1}
		Suppose that $\mE$, $D$ and $\lambda$ satisfy the four conditions in Lemma \ref{lemma: second construction} with $b_{\rho,\psi_{D,\lambda},\varepsilon_{D,\lambda}}=2D+1$. If $-A\in\mE$, then we have $\Jac_{\rho,-A}\circ\Jac_{\rho,-A+1}\circ\cdots\circ\Jac_{\rho,B}\pi\neq0$.
	\end{lemma}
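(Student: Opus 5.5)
We aim to show that if $-A\in\mE$, then the chain of Jacquet modules $\Jac_{\rho,-A}\circ\cdots\circ\Jac_{\rho,B}$ is nonzero on $\pi$, using the embedding in condition (3) of Lemma \ref{lemma: second construction},
$$\pi\xhookrightarrow{}\times_{x\in\mE}\rho|\cdot|^x\rtimes\pi(\psi_{D,\lambda},\varepsilon_{D,\lambda}).$$
By the key proposition and the freedom in choosing the order on $\mE$, we may reorder $\mE$ so that the embedding passes through an irreducible subquotient $\sigma$ of $\times_{x\in\mE}\rho|\cdot|^x$. Write $\sigma=\soc([x_1,y_1]_\rho\times\cdots\times[x_r,y_r]_\rho)$ in Zelevinsky form with $x_1\le\cdots\le x_r$. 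Then $\pi\hookrightarrow\sigma\rtimes\pi(\psi_{D,\lambda},\varepsilon_{D,\lambda})$. By Frobenius reciprocity, $\sigma\otimes\pi(\psi_{D,\lambda},\varepsilon_{D,\lambda})\le\mu^*(\pi)$, and in particular $\Jac_{\rho,x_1}(\pi)\neq0$.

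The first step is to pin down the segment beginning at $x_1$. Lemma \ref{lemma: highest derivative of representations in a A packet} implies that $\Jac_{\rho,x}\pi=0$ unless $x=B_j$ for some Jordan block $(\rho,A_j,B_j)$ of $\psi$. The hypothesis $a_{\rho,\psi',\varepsilon'}\ge 2B+1$, together with $b_{\rho,\psi_{D,\lambda},\varepsilon_{D,\lambda}}=2D+1$, restricts this further. Using condition (4), which describes $\mE\cup-\mE$ as a union of symmetric intervals, we argue that the only admissible starting point compatible with the cuspidal support is $x_1=B$.

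Next we use $-A\in\mE$ to show that some segment of $\sigma$ reaches $-A$. Since $A$ is the largest absolute value occurring in $\mE\cup-\mE$, and since $A$ occurs with multiplicity one there (the blocks $(\rho,E,E)$ with $E<B$ contribute only smaller values), the element $-A$ must be the end of a segment. We then show this segment is exactly $[B,-A]_\rho$, so that $\sigma$ has the form $\soc([B,-A]_\rho\times\sigma'')$ up to reordering. The segments starting below $B$ cannot carry $-A$, because $\Jac_{\rho,x}\pi=0$ for $x\neq B$ in the relevant range. Once $[B,-A]_\rho$ is the first segment, we have $[B,-A]_\rho\otimes(\cdots)\le m^*(\sigma)$, hence $\Jac_{\rho,-A}\circ\cdots\circ\Jac_{\rho,B}(\sigma\rtimes\pi(\psi_{D,\lambda},\varepsilon_{D,\lambda}))$ receives a nonzero contribution. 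The same contribution appears in $\mu^*(\pi)$, giving the claim.

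The main obstacle is the middle step: excluding the configuration where $-A$ sits in a later segment of $\sigma$ while the segment starting at $B$ stops early. In that case $\Jac_{\rho,x}$ of $\pi$ would be forced to be nonzero for some $x$ in $(-A,B)$ outside the allowed set. To rule this out, we would first try the non-unitary irreducibility of $\rho|\cdot|^{x}\rtimes\pi(\psi_{D,\lambda},\varepsilon_{D,\lambda})$ (Proposition \ref{prop: non-unitary irreducibility for discrete series}) to flip the sign of the offending exponent. This is the argument already used inside the proof of Proposition \ref{proposition: key proposition}, and it would produce a forbidden nonzero $\Jac_{\rho,-x}(\pi)$, contradicting Lemma \ref{lemma: highest derivative of representations in a A packet}.
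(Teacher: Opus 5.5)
Your outline has the right overall shape: embed $\pi$ into $\sigma\rtimes\pi(\psi_{D,\lambda},\varepsilon_{D,\lambda})$ with $\sigma$ in Langlands form, locate the segment ending at $-A$, identify its starting point with Lemma \ref{lemma: highest derivative of representations in a A packet}, and conclude by Frobenius reciprocity. However, the step you yourself call the main obstacle is left open, and the repair you sketch does not work.

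Frobenius reciprocity only gives $\Jac_{\rho,x}\pi\neq 0$ for the start of a segment that can be placed first in the product. Knowing that $-A=y_i$ for some $i$ tells you nothing about $\Jac_{\rho,x_i}\pi$ until $[x_i,-A]_\rho$ has been moved to the front. Both of your attempts at this are unsupported:
\begin{itemize}
\item The claim $x_1=B$ ``by condition (4) and cuspidal support'' comes with no argument, and it is not needed.
\item The proposed sign flip via Proposition \ref{prop: non-unitary irreducibility for discrete series} neither specifies which exponent is flipped nor verifies the hypotheses of that proposition for $\pi(\psi_{D,\lambda},\varepsilon_{D,\lambda})$. Nothing in the configuration you describe forces a forbidden nonzero $\Jac_{\rho,x}\pi$.
\end{itemize}

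The missing idea is a one-line linkage observation. Since $-A$ is the minimal element of $\mE$, every $j<i$ has $x_j\le x_i$ and $y_j\ge -A$, so $[x_j,y_j]_\rho\subset[x_i,-A]_\rho$. Such segments are not linked, so $[x_i,-A]_\rho$ commutes past all earlier factors and
$\sigma\hookrightarrow[x_i,-A]_\rho\times\bigl(\times_{j\neq i}[x_j,y_j]_\rho\bigr)$.
Hence $\pi\hookrightarrow[x_i,-A]_\rho\rtimes(\cdots)$, and Frobenius reciprocity gives $\Jac_{\rho,-A}\circ\cdots\circ\Jac_{\rho,x_i}\pi\neq0$. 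In particular $\Jac_{\rho,x_i}\pi\neq0$, and Lemma \ref{lemma: highest derivative of representations in a A packet} then identifies $x_i$ with $B$, which is already the statement.

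This is exactly the paper's proof. There is no need to determine $x_1$, to use the multiplicity of $A$ in $\mE\cup-\mE$, or to invoke non-unitary irreducibility.

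Also phrase your final step through this embedding and Frobenius reciprocity. The fact that some term of the Jacquet module of the induced representation $\sigma\rtimes\pi(\psi_{D,\lambda},\varepsilon_{D,\lambda})$ is nonzero does not by itself transfer to its subrepresentation $\pi$.
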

	\begin{proof}
		By Lemma \ref{lemma: second construction}, there exists an irreducible representation $\sigma$ of $\GL(d_\sigma)$ such that $\Supp(\sigma)=\mE$ and $\pi\xhookrightarrow{}\sigma\rtimes\pi(\psi_{D,\lambda},\varepsilon_{D,\lambda})$. By Zelevinsky's classification, we may write $\sigma=\soc([x_1,y_1]_\rho\times\cdots\times[x_r,y_r]_\rho)$ with $x_1\le\cdots\le x_r$ and $x_i\ge y_i$. Note that $-A$ is the minimal element of $\mE$, thus $y_i=-A$ for some $i$ and $\sigma=[x_i,-A]\times\soc([x_1,y_1]_\rho\times\cdots\times[x_{i-1},y_{i-1}]_\rho\times[x_{i+1},y_{i+1}]_\rho\times\cdots\times[x_r,y_r]_\rho)$. In particular, we have $\Jac_{\rho,-A}\circ\Jac_{\rho,-A+1}\circ\cdots\circ\Jac_{\rho,x_i}\pi\neq0$. By Lemma \ref{lemma: highest derivative of representations in a A packet}, the only possibility is $x_i=B$. This completes the proof.
	\end{proof}
	
	\subsubsection{Maximal Order}
	
	\par In this subsection, we fix $\pi$ to be an irreducible constituent of $\pi(\psi,\varepsilon)$ and we take $\mE$, $D$ and $\lambda$ satisfying the four conditions of Lemma \ref{lemma: second construction} and we assume that $b_{\rho,\psi_{D,\lambda},\varepsilon_{D,\lambda}}=2D+1$. We denote by $>_\mE$ the total order on $\mE$, reserving $>$ for the usual order on real numbers.
	
	\par We say that $>_\mE$ is a maximal order if the following conditions hold:
	\begin{enumerate}
		\item[(1)] $\mE=\cup_{i=1}^r[x_i,y_i]$ with $x_i<_\mE x_i+1<_\mE\cdots<_\mE y_i$ and $y_i<_\mE x_{i+1}$ for all $1\le i\le r$.
		\item[(2)] $x_1\ge x_2\ge\cdots\ge x_r$ and $x_i\le y_i$. For $i<j$ with $x_i=x_j$, we have $y_i\ge y_j$.
		\item[(3)] $\pi$ can be embedded into $\soc([x_1,y_1]_\rho\times\cdots\times[x_r,y_r]_\rho)\rtimes\pi(\psi_{D,\lambda},\varepsilon_{D,\lambda})$.
	\end{enumerate}
	In particular, when the condition $(1)$ is satisfied, we write $\sigma_{>_\mE}=\soc([x_1,y_1]_\rho\times\cdots\times[x_r,y_r]_\rho)$ (by Zelevinsky's classification, if the condition $(2)$ holds, we know that $\sigma_{>_\mE}$ is irreducible). Thus the condition $(3)$ can be reformulated as $\pi\xhookrightarrow{}\sigma_{>_\mE}\rtimes\pi(\psi_{D,\lambda},\varepsilon_{D,\lambda})$. Now, we will prove that the maximal order on $\mE$ is uniquely determined by $\mE$:
	
	\begin{lemma}\label{lemma: maximal order}
		In the above setting, there exists a unique maximal order on $\mE$. Precisely, we have $x_1=B$, $x_{i+1}=x_i-1$ and $y_1>y_2>\cdots>y_r$. It is not hard to see that these conditions uniquely determine the maximal order $>_\mE$ on $\mE$.
	\end{lemma}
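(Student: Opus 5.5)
The plan is to prove existence by a Zelevinsky-type argument and then pin down the shape using derivative vanishing. Uniqueness is then a purely combinatorial consequence of the shape.

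\emph{Existence.} By Lemma \ref{lemma: second construction}, $\pi\xhookrightarrow{}\times_{x\in\mE}\rho|\cdot|^x\rtimes\pi(\psi_{D,\lambda},\varepsilon_{D,\lambda})$. By \cite[Lemma 3.2]{Mœglin2002_construction} there is an irreducible subquotient $\sigma$ of $\times_{x\in\mE}\rho|\cdot|^x$ with $\pi\xhookrightarrow{}\sigma\rtimes\pi(\psi_{D,\lambda},\varepsilon_{D,\lambda})$. Zelevinsky's classification writes
\[
\sigma=\soc([x_1,y_1]_\rho\times\cdots\times[x_r,y_r]_\rho),
\]
with increasing segments, $x_1\ge\cdots\ge x_r$, and $y_i\ge y_j$ whenever $i<j$ and $x_i=x_j$. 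Reading off the segments in this order gives an order $>_\mE$ satisfying (1)--(3). So a maximal order exists, and it remains to show that every maximal order has the stated shape.

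\emph{Constraints.} The basic tool is that $\Jac_{\rho,x_1}\circ\cdots$ applied along the first segment is nonzero on $\pi$, and more generally $\Jac_{\rho,x}(\pi)\neq0$ only for very few $x$.
\begin{itemize}
\item[$\bullet$] Lemma \ref{lemma: highest derivative of representations in a A packet} shows that $\Jac_{\rho,x}\pi\ne0$ forces $x$ to be the $B$-value of some Jordan block $(\rho,A',B')$ of $\psi$. The only candidates are $B$ and the $E<B$ with $(\rho,E,E)\in\Jord(\psi)$.
\item[$\bullet$] The hypothesis $a_{\rho,\psi',\varepsilon'}\ge 2B+1$, combined with the Jacquet module property for $\pi(\psi_{D,\lambda},\varepsilon_{D,\lambda})$ and $b_{\rho,\psi_{D,\lambda},\varepsilon_{D,\lambda}}=2D+1$, rules out the small values $E<B$ as $x_1$.
\item[$\bullet$] Since $x_1$ is the largest starting point, these two facts give $x_1=B$.
\end{itemize}

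Next I prove $x_{i+1}=x_i-1$ by induction on $i$.
\begin{itemize}
\item[$\bullet$] Suppose $x_{i+1}<x_i-1$. Then $[x_{i+1},y_{i+1}]_\rho$ is not linked to the earlier segments, or at least $\rho|\cdot|^{x_{i+1}}$ can be commuted to the front. Using non-unitary irreducibility (Proposition \ref{prop: non-unitary irreducibility for discrete series} and Lemma \ref{lemma: irreducibility of rho||x rtimes pi in non-negative DDR case}), this produces a nonzero $\Jac_{\rho,x_{i+1}}$ or $\Jac_{\rho,-x_{i+1}}$ of $\pi$. That contradicts the previous paragraph.
\item[$\bullet$] Alternatively, the gap contradicts the description of $\mE\cup-\mE$ in Lemma \ref{lemma: second construction}(4), which is a union of full symmetric intervals.
\end{itemize}

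Then I prove $y_i>y_{i+1}$.
\begin{itemize}
\item[$\bullet$] Suppose instead $y_{i+1}\ge y_i$ with $x_{i+1}=x_i-1$. Then $[x_{i+1},y_{i+1}]$ strictly contains the shift of $[x_i,y_i]$, so the two segments are not linked in the Zelevinsky sense.
\item[$\bullet$] Hence $[x_i,y_i]_\rho\times[x_{i+1},y_{i+1}]_\rho$ is irreducible, and the two factors can be swapped.
\item[$\bullet$] The swap produces an embedding of $\pi$ starting with $\rho|\cdot|^{x_i-1}$ at a position that forces a nonzero partial Jacquet module at a value other than $B$. It also clashes with the condition $x_1\ge x_2$ once reordered.
\item[$\bullet$] Lemma \ref{lemma: first step - lemma 1} is used to handle the case where $-A$ occurs, which pins the segment ending at $-A$ to start at $B$.
\end{itemize}

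\emph{Uniqueness.} Given the multi-set $\mE$, the data $x_1=B$, $x_{i+1}=x_i-1$ and $y_1>\cdots>y_r$ determine the segments greedily. The element $y_1$ is the largest value such that $[B,y_1]\subset\mE$ is compatible with the remaining multiplicities, and so on inductively. Hence the order is unique.

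The main obstacle is the strict decrease $y_i>y_{i+1}$ (together with ruling out gaps). This is where the derivative bounds from Lemma \ref{lemma: highest derivative of representations in a A packet} must be combined carefully with commutation of unlinked segments, and the SI property of the inductions must be checked so that the swapped embedding really yields a nonzero forbidden Jacquet module. I would first try to deduce it purely from $\Jac_{\rho,x}\pi=0$ for $x\ne B$, falling back on the theta-correspondence transfer of M\oe glin's classical argument if needed.
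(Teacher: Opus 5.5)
Your overall route matches the paper's. You get existence from Zelevinsky's classification, then show that any deviation from the claimed shape produces a nonzero $\Jac_{\rho,x}\pi$ forbidden by Lemma \ref{lemma: highest derivative of representations in a A packet}. Your argument for $y_i>y_{i+1}$ is fine: nested segments commute, and $\rho|\cdot|^{x_{i+1}}$ then passes the earlier $[x_j,y_j]_\rho$ because $x_{i+1}<x_j-1$. Your sub-case $x_{i+1}<x_i-1$ is also fine.

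The gap is that the maximal-order conditions only give $x_{i+1}\le x_i$, and you never treat $x_{i+1}=x_i$. This is exactly where commuting $\rho|\cdot|^{x_{i+1}}$ to the front fails.
\begin{itemize}
\item For $i=1$ this is $x_2=x_1=B$. Then $\sigma_{>_\mE}\hookrightarrow\rho|\cdot|^{B}\times\rho|\cdot|^{B}\times\cdots$, so $D^{(2)}_{\rho,B}(\pi)\neq0$. The contradiction needs the multiplicity bound $k\le 1$ of Lemma \ref{lemma: highest derivative of representations in a A packet}, which holds because only $(\rho,A,B)$ has $B$-value $B$ in the non-negative DDR $\psi$. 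You use that lemma only to restrict which $x$ can occur, not how often.
\item For $i\ge 2$, $\rho|\cdot|^{x_{i+1}}=\rho|\cdot|^{x_{i-1}-1}$ is linked with $[x_{i-1},y_{i-1}]_\rho$, so it cannot be moved to the front. The paper's fix uses $y_{i+1}\le y_i$: the segments $[x_i,y_i]_\rho$ and $[x_{i+1}+1,y_{i+1}]_\rho$ are then unlinked. So removing $x_{i+1}$ leaves a multisegment in standard form with irreducible socle $\sigma_{>_{\mE_i}}$, and $\sigma_{>_\mE}\hookrightarrow\rho|\cdot|^{x_{i+1}}\times\sigma_{>_{\mE_i}}$. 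The reason is that two segments begin at $x_i$ but only $[x_{i-1},y_{i-1}]_\rho$ begins at $x_i+1$, so the $\rho|\cdot|^{x_i}$-derivative of $\sigma_{>_\mE}$ survives. This gives $\Jac_{\rho,x_{i+1}}\pi\neq 0$ with $x_{i+1}<B$, the required contradiction.
\end{itemize}

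Several auxiliary claims should be dropped or corrected.
\begin{itemize}
\item Your fallback, that a gap between starting points contradicts Lemma \ref{lemma: second construction}(4), is not valid. That condition only describes the multiset $\mE\cup-\mE$, not how $\mE$ is cut into segments.
\item Non-unitary irreducibility, Lemma \ref{lemma: first step - lemma 1}, and the ``clash with $x_1\ge x_2$'' play no role. Once $\rho|\cdot|^{x}$ sits at the front of the $\GL$-factor, Frobenius reciprocity gives $\Jac_{\rho,x}\pi\neq 0$ directly.
\item Ruling out $x_1=E<B$ is a statement about $\pi$, i.e.\ about the packet $\pi(\psi,\varepsilon)$. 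It rests on the blocks $(\rho,E,E)$ with $E<B$ forming an alternating chain, which is what $a_{\rho,\psi',\varepsilon'}\ge 2B+1$ guarantees. It does not come from the Jacquet module property of $\pi(\psi_{D,\lambda},\varepsilon_{D,\lambda})$, since the nonzero $\Jac_{\rho,x_1}$ is produced by the $\GL$-factor $\sigma_{>_\mE}$.
\end{itemize}
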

	\begin{proof}
		The existence of maximal order is provided by Zelevinsky's classification. We only need to prove that the maximal order is uniquely determined by $\pi$. Fix a maximal order $>_\mE$ on $\mE$. We first prove that $x_{i+1}=x_i-1$ for all $i$. If $x_{i+1}\neq x_i-1$, we denote $\mE_i$ the set obtained from $\mE$ by removing $x_{i+1}$. We assume that $i$ is minimal with the property $x_{i+1}\neq x_i-1$. If $x_i\neq x_{i+1}$, then the restriction of $>_\mE$ on $\mE_i$ still satisfies the conditions $(1)$, $(2)$ above. If $x_i=x_{i+1}$, then $y_i\ge y_{i+1}$ by the condition $(1)$ above. Therefore $[x_i,y_i]_\rho$ and $[x_{i+1}+1,y_{i+1}]$ is not linked, and the representation $\sigma_{>_{\mE_i}}$ is still irreducible. In all cases, we have $\sigma_{>_\mE}\xhookrightarrow{}\rho|\cdot|^{x_{i+1}}\times\sigma_{>_{\mE_i}}$. This implies that $\Jac_{\rho,x_1}\Jac_{\rho,x_{i+1}}(\pi)\neq0$. In particular, we have $\Jac_{\rho,x_1}(\pi),\Jac_{\rho,x_{i+1}}(\pi)\neq0$. By Lemma \ref{lemma: highest derivative of representations in a A packet}, we have $x_1=x_{i+1}=B$. However, Lemma \ref{lemma: highest derivative of representations in a A packet} also implies that $\Jac_{\rho,B}\Jac_{\rho,B}(\pi)=0$. Thus we have $x_1=B$ and $x_{i+1}=x_i-1$ for all $i$.
		
		\par Now, we show that $y_i>y_{i+1}$ for all $i$. Otherwise, we fix a minimal $i$ such that $y_i\le y_{i+1}$. Then $[x_i,y_i]_\rho$ and $[x_{i+1},y_{i+1}]_\rho$ are not linked. Thus $\sigma_{>_\mE}$ can be embedded into $$\rho|\cdot|^{x_{i+1}}\times\soc(\cdots\times[x_{i-1},y_{i-1}]_\rho\times[x_{i+1}+1,y_{i+1}]_\rho\times[x_i,y_i]_\rho\times[x_{i+2},y_{i+2}]_\rho\times\cdots).$$ From this we deduce that $\Jac_{\rho,x_{i+1}}(\pi)\neq 0$ and thus we have $x_{i+1}=B$, which is impossible.
	\end{proof}
	
	\subsection{An important particular case}\label{subsection: an important particular case}
	
	\par In this subsection, we keep the hypotheses of \S \ref{subsubsection: first construction} and assume further that $b_{\rho,\psi',\varepsilon'}=2B-1$. We write $$C_{\min}=\begin{cases}
		0&\text{if } A, B\in\ZZ\\
		-\frac{1}{2}&\text{if } A,B\notin\ZZ\\
	\end{cases}.$$
	Further, if $A,B\notin\ZZ$, we set $$\varepsilon(\rho,-\frac{1}{2},-\frac{1}{2})=\begin{cases}
		1&\text{if }\rho\neq\triv\\
		-1&\text{if }\rho=\triv\\
	\end{cases}.$$
	
	\subsubsection{Complementary terms}\label{subsubsection: complementary terms}
	
	\par We fix an irreducible constituent $\pi$ of $\pi(\psi,\varepsilon)$. By Lemma \ref{lemma: second construction - converse}, under the condition that $b_{\rho,\psi_{D,\lambda},\varepsilon_{D,\lambda}}=2D+1$, the $\lambda$ and $D$ in Lemma \ref{lemma: second construction} are uniquely determined by $\pi$. In this subsection, we will determine the $\lambda$ and $D$ for the complementary terms of $\pi(\psi,\varepsilon)$, i.e., the terms that occur in $$\bigoplus_{\eta=\pm1\atop\eta_0=\eta^{A-B+1}(-1)^{\frac{(A-B+1)(A-B)}{2}}}\pi(\psi',\varepsilon',\cup_{B\le C\le A}(\rho,C,C;(-1)^{C-B}\eta)).$$
	
	\par We take the first construction in Lemma \ref{lemma: first construction} for $\pi$, from which we get $t_1$ and $\lambda_1$. Then, we do the second construction in Lemma \ref{lemma: second construction}, from which we get $\mE, D$ and $\lambda$. Write $t=\frac{A-D}{2}$. By the proof of Lemma \ref{lemma: second construction}, we can choose $D$ satisfying the following conditions:
	\begin{enumerate}
		\item[(1)] If $\varepsilon(\rho,B-1,B-1)(-1)^{[B]}\lambda=-1$, then $t=t_1$.
		\item[(2)] If $\varepsilon(\rho,B-1,B-1)(-1)^{[B]}\lambda=1$, then $t=t_1+\inf(B,A-2t_1-B+1)$.
	\end{enumerate}
	
	\par Suppose that $B>0$ and $D=A$, which implies $t_1=0$ and $\varepsilon(\rho,B-1,B-1)(-1)^{[B]}\lambda=-1$. It follows from the definition of $t_1$ that $\pi$ is in the complementary term $$\pi(\psi',\varepsilon',\cup_{B\le C\le A}(\rho,C,C;(-1)^{B-C}\eta))$$ with $\eta=-\varepsilon(\rho,B-1,B-1)$ (note that $\prod_{B\le C\le A}(-1)^{B-C}\eta=\eta^{A-B+1}(-1)^{\frac{(A-B+1)(A-B)}{2}}$, the sign condition in Theorem \ref{theorem: construction of A-packet non-negative DDR case} does hold for $\pi$).
	
	\par Suppose that $B>0$ and $D=A-2\inf(B,A-B+1)$. We further assume that $\lambda=-\varepsilon(\rho,C_{\min},C_{\min})$ if $B<A-B+1$. In this case, we have $t=\inf(B,A-B+1)$. Note that we have $t_1\le[\frac{A-B}{2}]\le\frac{A-B+1}{2}$. Thus, if $A-B+1\le B$, we have $t=A-B+1>t_1$, hence $t=t_1+A-2t_1-B+1=A-B-t_1+1$, hence $t_1=0$. If $B<A-B+1$, we have $t=B$ and either $t_1=0$ or $A-2t_1-B+1<B$. If $A,B\in\ZZ$, by the choice of $\lambda$, we must have $B\le A-2t_1-B+1$. If $A,B\notin\ZZ$, then $t_1=t-\inf(A-2t_1-B+1)\notin\ZZ$, which gives a contradiction. In conclusion, we have $t_1=0$. Thus $\pi$ is in the complementary term $\pi(\psi',\varepsilon',\cup_{B\le C\le A}(\rho,C,C;(-1)^{B-C}\eta))$ with $\eta=\varepsilon(\rho,B-1,B-1)$.
	
	\par In conclusion, we have:
	\begin{enumerate}
		\item[(1)] When $B>0$ and $\eta=\varepsilon(\rho,B-1,B-1)$, then $\pi=\pi(\psi',\varepsilon',\cup_{B\le C\le A}(\rho,C,C;(-1)^{B-C}\eta))$ if and only if $D=A-2\inf (B,A-B+1)$ and $$\lambda=\begin{cases}
			\varepsilon(\rho,C_{\min},C_{\min})&\text{if }A, B\in\ZZ\text{ and } A-B+1\le B\\
			-\varepsilon(\rho,C_{\min},C_{\min})&\text{if }A, B\notin\ZZ\text{ or } A-B+1>B\\
		\end{cases}.$$
		\item[(2)] When $B>0$ and $\eta=-\varepsilon(\rho,B-1,B-1)$, then $\pi=\pi(\psi',\varepsilon',\cup_{B\le C\le A}(\rho,C,C;(-1)^{B-C}\eta))$ if and only if $D=A$ and $\lambda=(-1)^{[B-1]}\varepsilon(\rho,B-1,B-1)$.
		\item[(3)] When $B=0$, then $\pi=\pi(\psi',\varepsilon',\cup_{B\le C\le A}(\rho,C,C;(-1)^{B-C}\eta))$ if and only if $D=A$.
	\end{enumerate}
	In particular, the arguments above imply that if $\pi$ is one of the complementary terms, we must have $t_1=0$.
	
	\begin{lemma}\label{lemma: complementary terms}
		Suppose that $\prod_{C\in[B,A]}(-1)^{[C]}\lambda=\eta_0$. Then the representation $\pi_\lambda:=\pi(\psi',\varepsilon',\cup_{C\in[B,A]}(\rho,C,C,(-1)^{[C]}\lambda))$ occurs with multiplicity exactly $1$ in $\pi(\psi,\varepsilon)$.
	\end{lemma}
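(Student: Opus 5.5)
The plan is to compute the multiplicity of $\pi_\lambda$ in each term of the expansion of $\pi(\psi,\varepsilon)$ in Theorem \ref{theorem: Mp version of Theorem 7.5 in Xu_moeglin}, and then add up the signed contributions. The expansion has two kinds of terms. The first kind consists of the terms
$$(-1)^{A-C}[B,-C]_\rho\rtimes\Jac_{\rho,C}\circ\cdots\circ\Jac_{\rho,B+2}\pi(\psi',\varepsilon',(\rho,A,B+2;\eta_0)),\qquad C\in(B,A].$$
The second kind consists of the two terms $\pi(\psi',\varepsilon',(\rho,A,B+1;\eta),(\rho,B,B;\eta\eta_0))$. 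By the induction hypothesis of this section, Theorem \ref{theorem: construction of A-packet non-negative DDR case} already holds for the parameters occurring in all of these terms, so each of them is known explicitly.

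\textbf{Second-kind terms.} By the induction hypothesis, applied to $(\rho,A,B+1)$, the second-kind term decomposes as a socle part plus complementary discrete terms. Those complementary terms have the shape $\pi(\psi',\varepsilon',\cup_{C\in[B+1,A]}(\rho,C,C;\pm),(\rho,B,B;\eta\eta_0))$. There is exactly one choice of $\eta$ for which this equals $\pi_\lambda$. For that choice, the sign condition in the theorem for $(\rho,A,B+1)$ reproduces the hypothesis $\prod(-1)^{[C]}\lambda=\eta_0$. The coefficient $(-1)^{[\frac{A-B+1}{2}]}\eta^{A-B+1}\eta_0^{A-B}$ then has to be checked to be the correct sign. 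The socle part of the second-kind term is $\soc([B+1,-A]_\rho\rtimes\cdots)$. It cannot be $\pi_\lambda$, because $\pi_\lambda$ is tempered whereas by Lemma \ref{lemma: lemma on the main term} this socle has a non-vanishing Jacquet module along $[B+1,-A]$. Alternatively, one can distinguish them by the invariants $(D,\lambda)$ of Lemma \ref{lemma: second construction - converse}: by the classification (1)–(3) just obtained, $\pi_\lambda$ has $t_1=0$ and a specific $D$.

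\textbf{First-kind terms.} For these I will use Lemma \ref{lemma: Proposition 4.4 of Moeglin}, which rewrites them as $[B,-C]_\rho\rtimes\soc([C+1,A]_\rho\rtimes\pi(\psi',\varepsilon',(\rho,A-1,B+1;\eta_0)))$. The task is to compute the multiplicity of $\pi_\lambda$ in each of these. I will compare Jacquet modules. Apply the functor $\Jac_{\rho,-C}\circ\cdots\circ\Jac_{\rho,B}$ and check whether $\pi_\lambda$ survives it, using Proposition \ref{prop: partial Jacquet module of discrete series} together with Lemma \ref{lemma: highest derivative of representations in a A packet}. Since $\pi_\lambda$ is discrete with $b_{\rho}$ computable from $\lambda$, its Jacquet modules are explicit. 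Any occurrence of $\pi_\lambda$ in a term of the first kind forces a specific $D$ in the second construction, and by the uniqueness in Lemma \ref{lemma: second construction - converse} this rules out all but at most one value of $C$. For that value I expect the multiplicity to be $1$, and the sign $(-1)^{A-C}$ should then cancel against, or combine with, the contribution from the second kind so that the total is $1$.

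\textbf{Main obstacle.} I expect the hardest step to be this bookkeeping of multiplicities in the first-kind terms, since $[B,-C]_\rho\rtimes Y_C$ is far from irreducible. To handle it, I will use Lemma \ref{lemma: subquotient of parabolic induction}. This gives a bijection on irreducible subquotients under the derivative $D$, and it reduces the count to multiplicities in smaller inductions $[B,-C]_\rho\rtimes\pi(\psi',\varepsilon',(\rho,A-1,B+1;\eta_0))$ for the shorter parameter. There induction applies again. Finally, I will treat the cases $B>0$ and $B=0$ separately, according to the case split (1)–(3) above, and check in each case that the signed sum equals $1$.
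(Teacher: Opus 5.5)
Your treatment of the second-kind terms is essentially the paper's. By the induction hypothesis, $\pi_\lambda$ occurs exactly once, in the single term with $\eta=(-1)^{[B]}\lambda\eta_0$, as the complementary term $\pi(\psi',\varepsilon',\cup_{C\in[B+1,A]}(\rho,C,C;(-1)^{[C]}\lambda),(\rho,B,B;\eta\eta_0))$. The coefficient $(-1)^{[\frac{A-B+1}{2}]}\eta^{A-B+1}\eta_0^{A-B}$ equals $+1$ for both parities of $A-B+1$. But then the first-kind terms must contribute \emph{nothing}: if, as you expect, one $X_C=[B,-C]_\rho\rtimes Y_C$ contained $\pi_\lambda$ once, the signed total would be $1\pm1\in\{0,2\}$. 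So the real content of the lemma is that $\pi_\lambda$ is not a subquotient of any $X_C$, and none of your proposed tools proves this.
\begin{enumerate}
\item The functor $\Jac_{\rho,-C}\circ\cdots\circ\Jac_{\rho,B}$ kills $\pi_\lambda$ for trivial reasons. $\pi_\lambda$ is a discrete series and the chain passes through $\Jac_{\rho,0}$ or $\Jac_{\rho,-\frac12}$ (Proposition \ref{prop: partial Jacquet module of discrete series}). Vanishing there only excludes $\pi_\lambda$ as a subrepresentation of $X_C$, not as a subquotient.
\item The uniqueness of $(D,\lambda)$ in Lemma \ref{lemma: second construction - converse} is a property of $\pi$, not of $C$, so it cannot single out a value of $C$.
\item Lemma \ref{lemma: subquotient of parabolic induction} does not apply with $[x,y]=[B,-C]_\rho$ and $[x',y']=[C+1,A]_\rho$. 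Hypothesis (2) of Lemma \ref{lemma: subquotient of parabolic induction - lemma} fails, since $[B,-C]_\rho\times[C+1,A]_\rho^\vee=[B,-C]_\rho\times[-A,-C-1]_\rho$ is reducible (the supports are juxtaposed). Moreover, for $C=A$ there is nothing to reduce, and you would still have to decide whether $\pi_\lambda\le[B,-A]_\rho\rtimes\pi(\psi',\varepsilon',(\rho,A-1,B+1;\eta_0))$.
\end{enumerate}

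The missing idea is the first-construction invariant $t_1$, which you mention but use only for the socle part. By the proof of Lemma \ref{lemma: first construction} (Lemma \ref{lemma: Proposition 4.4 of Moeglin} applied twice), every irreducible subquotient of every $X_C$, $C\in(B,A]$, is a subquotient of $\rho|\cdot|^A\times\cdots\times\rho|\cdot|^{-(A-1)}\rtimes\pi(\psi',\varepsilon',(\rho,A-2,B;\eta_0))$. Hence it admits a first construction with $t_1\ge1$. On the other hand, \S\ref{subsubsection: complementary terms} determines the values (1)--(3) of $(D,\lambda)$ for complementary terms. Combined with the uniqueness of $(D,\lambda)$ and the relation $t=t_1$ or $t=t_1+\inf(B,A-2t_1-B+1)$, this shows any first construction of $\pi_\lambda$ has $t_1=0$. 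Hence $\pi_\lambda$ occurs in no $X_C$. Its multiplicity in $\pi(\psi,\varepsilon)$ is therefore exactly the $+1$ coming from the second-kind term.
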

	\begin{proof}
		By the proof of Lemma \ref{lemma: first construction}, if $\pi$ is an irreducible subquotient of $$\bigoplus_{C\in\left(B,A\right]}(-1)^{A-C}[B,-C]\rtimes\Jac_{\rho,C}\circ\Jac_{\rho,C-1}\circ\cdots\circ\Jac_{\rho,B+2}\pi(\psi',\varepsilon',(\rho,A,B+2;\eta_0)),$$ we have $t\ge t_1>0$. By the arguments above, $\pi$ can not be isomorphic to a representation of the form $\pi_\lambda$. Thus, it is sufficient to prove that $\pi_\lambda$ occurs with multiplicity $1$ in $$\bigoplus_{\eta=\pm1}(-1)^{[\frac{A-B+1}{2}]}\eta^{A-B+1}\eta_0^{A-B}\pi(\psi',\varepsilon',(\rho,A,B+1;\eta),(\rho,B,B;\eta\eta_0)).$$
		
		\par By induction hypothesis, for $\eta$ and $\lambda'$ such that $\prod_{B+1\le C\le A}(-1)^{[C]}\lambda'=\eta$, the representation $$\pi_{\eta,\lambda'}:=\pi(\psi',\varepsilon',\cup_{C\in[B+1,A]}(\rho,C,C;(-1)^{[C]}\lambda'),(\rho,B,B;\eta\eta_0))$$ occurs with multiplicity $1$ in $\pi(\psi',\varepsilon',(\rho,A,B+1;\eta),(\rho,B,B;\eta\eta_0))$. The $\pi_\lambda$ can only be one of these $\pi_{\eta,\lambda'}$ with $\lambda(-1)^{[B]}=\eta\eta_0$ and $(-1)^{[B+1]}\lambda'=-\eta\eta_0$. This implies that $\lambda=\lambda'$.
		
		\par Suppose that $A-B+1$ is even. We then have $\prod_{C\in[B,A]}(-1)^{[C]}=\eta_0$ and for each $\lambda$ there exists a unique $\eta$ such that $\pi_\lambda=\pi_{\eta,\lambda}$. Note that $\prod_{C\in[B,A]}(-1)^{[C]}=(-1)^{\frac{([A]+[B])(A-B+1)}{2}}=(-1)^{\frac{([A]-[B])(A-B+1)}{2}}=(-1)^{\frac{A-B+1}{2}}$ in this case. We have $(-1)^{[\frac{A-B+1}{2}]}\eta^{A-B+1}\eta_0^{A-B}=1$, which completes the proof in this case.
		
		\par Suppose that $A-B+1$ is odd. Then $\lambda=\eta_0(-1)^{\frac{([A]+[B])(A-B+1)}{2}}$. Set $\eta=(-1)^{[B]}\lambda\eta_0$. Then $\prod_{B+1\le C\le A}(-1)^{[C]}\lambda=(-1)^{[B]}\lambda\eta_0=\eta$. Thus $\pi_{\eta,\lambda}$ is well-defined and $\pi_\lambda=\pi_{\eta,\lambda}$. Now, we have $$(-1)^{[\frac{A-B+1}{2}]}\eta^{A-B+1}\eta_0^{A-B}=(-1)^{\frac{A-B}{2}}(-1)^{[B]}\lambda\eta_0=(-1)^{\frac{[A]+[B]}{2}}(-1)^{\frac{([A]+[B])(A-B+1)}{2}}=1.$$ This completes the proof. 
	\end{proof}
	
	\subsubsection{Non-vanishing of certain Jacquet module}
	
	\par For an irreducible constituent $\pi$ of $\pi(\psi,\varepsilon)$, we choose $\mE, D$ and $\lambda$ satisfying the four conditions of Lemma \ref{lemma: second construction} with $b_{\rho,\psi_{D,\lambda},\varepsilon_{D,\lambda}}=2D+1$ and let $>_\mE$ be the maximal order on $\mE$. By Lemma \ref{lemma: maximal order}, we can write $\mE$ in the form of a tableau
	$$\begin{ytableau}
		\scriptstyle x_1=B&\cdots&\cdots&\cdots&\cdots&\cdots&y_1\\
		x_2&\cdots&\cdots&\cdots&y_2&\none&\none\\
		\vdots&\vdots&\vdots&\vdots&\none&\none&\none\\
		x_r&\cdots&y_r&\none&\none&\none&\none
	\end{ytableau}$$ such that each row and column of the tableau are segments and $x_1<_\mE x_1+1<_\mE\cdots<_\mE y_1<_\mE x_2<_\mE x_2+1<_\mE\cdots<_\mE y_2<_\mE\cdots<_\mE y_r$.
	We denote by $s$ the number of columns and $z_1,\dots,z_s$ the last elements of each column. Note that for $x\ge z$ with $x\ge 0$, we have $$[x,z]\cup[-z,-x]=\begin{cases}
		[x,-x]\cup[z,-z]&\text{if }z\le 0\\
		[x,-x]-[z-1,-(z-1)]&\text{if }z>0\\
	\end{cases}.$$ Thus, we have:
	$$\mE\cup-\mE=\bigcup_{x\in[B,y_1]}[x,-x]\cup\bigcup_{z_s\le0}[z_s,-z_s]-\bigcup_{z_s>0}[-(z_s-1),z_s-1].$$
	By the conditions of Lemma \ref{lemma: second construction}, we have also $\mE\cup-\mE=\bigcup_{D+1\le z\le A}[-z,z]$. By comparing the two expressions, we deduce that
	$$[B,y_1]\cup\bigcup_{z_s\le0}\left\{-z_s\right\}=[D+1,A]\cup\bigcup_{z_s>0}\left\{z_s-1\right\}.$$
	
	\begin{lemma}\label{lemma: non-vanishing of certain Jacquet module}
		Write $t=\frac{A-D}{2}$. Suppose that $t>0$. Then, we can choose $\mE$ satisfying one of the following properties:
		\begin{enumerate}
			\item[(1)] $-A\in\mE$.
			\item[(2)] $t=B$ and $\mE=\begin{ytableau}
				\scriptstyle x_1=B&\cdots&A\\
				\vdots&&\vdots\\
				\scriptstyle -B+1&\cdots&\scriptstyle D+1\\
			\end{ytableau}$.
			\item[(3)] $t=A-B+1$ and $\mE=\begin{ytableau}
			\scriptstyle x_1=B&\cdots&A\\
			\vdots&&\vdots\\
			\scriptstyle-B+1&\cdots&\scriptstyle-(D+1)\\
			\end{ytableau}$.
			\item[(4)] $\mE=\mE_D:=\begin{ytableau}
			\scriptstyle x_1=B&\cdots&\cdots&\cdots&A\\
			\vdots&&\vdots&&\vdots\\
			\scriptstyle B-2t+1&\cdots&\scriptstyle B-t&\cdots&\scriptstyle D+1\\
			\vdots&&\vdots&\none&\none\\
			\scriptstyle-B+1&\cdots&\scriptstyle-(B-t)&\none&\none
			\end{ytableau}$.
		\end{enumerate}
		In particular, if $\mE$ has the property of having a minimal number of strictly positive elements and $\Jac_{\rho,-A}\circ\Jac_{\rho,-A+1}\circ\cdots\circ\Jac_{\rho,B}\pi=0$, then we have $t\le B$.
	\end{lemma}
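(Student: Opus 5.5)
We argue from the tableau description of $\mE$ under the maximal order (Lemma \ref{lemma: maximal order}). Recall that the first column is $x_1=B, x_2=B-1,\dots,x_r=B-r+1$, the row ends satisfy $y_1>y_2>\cdots>y_r$, and the column ends $z_1,\dots,z_s$ satisfy
$$[B,y_1]\cup\bigcup_{z_j\le0}\left\{-z_j\right\}=[D+1,A]\cup\bigcup_{z_j>0}\left\{z_j-1\right\}.$$
We assume $-A\notin\mE$, since otherwise property (1) already holds. The aim is then to show that, after possibly modifying $\mE$, it has one of the shapes (2), (3), (4).

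\textbf{Step 1: the top row.} Since $-A\notin\mE$ and $\mE\cup-\mE=\bigcup_{D+1\le z\le A}[-z,z]$, the value $A$ must appear in $\mE$ with positive sign. The largest positive entry of the tableau is $y_1$, so $y_1=A$. The first row is therefore $[B,A]$, and the displayed identity becomes
$$\bigcup_{z_j\le0}\left\{-z_j\right\}=[D+1,B-1]\cup\bigcup_{z_j>0}\left\{z_j-1\right\},$$
where the interval $[D+1,B-1]$ is interpreted appropriately when $D\ge B$. This pins down how many column ends are nonpositive and which ones are positive.

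\textbf{Step 2: column shapes.} The rows are segments starting at $B,B-1,\dots$ with strictly decreasing ends, so the tableau is a Young-type shape, and the columns are segments $[B+j-1, z_j]$. Matching multiplicities in Step 1, we expect a dichotomy:
\begin{enumerate}
\item[(a)] all columns have the same length $r$. Then the tableau is a rectangle and $z_j=B-r+j$. Comparing with $t=\frac{A-D}{2}$ gives either $r=2B$, so that the last row starts at $-B+1$, with all $z_j>0$ ending at $D+1$, yielding (2) and $t=B$; or the last row ends at $-(D+1)$, yielding (3) and $t=A-B+1$.
\item[(b)] the shape has exactly two column lengths. The identity then forces the long columns to have $t$ entries below the first row, i.e.\ to end at $-(B-t)$, and the short columns to end at $D+1$. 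This is $\mE_D$, property (4).
\end{enumerate}
Any other configuration should leave a value unmatched in the identity of Step 1, or else produce a second row start $x_i$ with $\Jac_{\rho,x_i}(\pi)\neq0$ and $x_i\neq B$, contradicting Lemma \ref{lemma: highest derivative of representations in a A packet}. The freedom to ``choose $\mE$'' enters here. When the identity admits several shapes, we will use that unlinked consecutive segments commute in $\GL$ (Zelevinsky). This lets us rewrite $\sigma_{>_\mE}\rtimes\pi(\psi_{D,\lambda},\varepsilon_{D,\lambda})$ by replacing a pair of entries $\pm x$ by its opposite sign, using the non-unitary irreducibility (Proposition \ref{prop: non-unitary irreducibility for discrete series} applied to $\pi(\psi_{D,\lambda},\varepsilon_{D,\lambda})$, for which $b=2D+1$). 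In this way we move to one of the normal forms.

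\textbf{Step 3: the final claim.} Suppose $\Jac_{\rho,-A}\circ\cdots\circ\Jac_{\rho,B}\pi=0$. Then by Lemma \ref{lemma: first step - lemma 1} we have $-A\notin\mE$, so we are in case (2), (3) or (4). Case (2) gives $t=B$ directly. In case (4) the shape requires $B-2t+1\ge -B+1$, i.e.\ $t\le B$. In case (3) the tableau has $2B$ rows and $A-B+1$ columns. Minimality of the number of strictly positive elements should then rule out $t=A-B+1>B$: the sign flips of Step 2 should produce an $\mE_D$-type choice with fewer positive entries.

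\textbf{Expected obstacle.} The delicate point is the case analysis of Step 2. One must show that the combinatorial identity, together with the Jacquet-module constraints, allows only these shapes, and must justify each sign flip by an irreducibility statement. This requires the hypothesis $b_{\rho,\psi',\varepsilon'}=2B-1$ and the half-integral case $C_{\min}=-\frac12$, including the special convention for $\varepsilon(\rho,-\frac12,-\frac12)$ when $\rho=\triv$. We would follow M\oe glin's corresponding argument in \cite[\S 5]{Mœeglin2009_paquets} line by line, checking that the metaplectic sign changes enter only through $\lambda$ and do not affect the shapes.
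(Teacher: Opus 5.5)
Your Step 2 is where the whole content of the lemma lies, and as written it is an expectation rather than an argument. Worse, the mechanism you suggest for it, ``matching multiplicities'' in the column-end identity, cannot work. The identity, together with $x_{i+1}=x_i-1$, $y_1>y_2>\cdots$ and $y_1=A$, does not determine the shape. Take $B=2$, $A=3$, $D=1$ (so $t=1$), and the tableau with rows $[2,3],[1,2],[0,1]$:
\begin{itemize}
\item the column ends are $z_1=0$ and $z_2=1$;
\item both sides of the identity equal $\{0,2,3\}$;
\item $-A\notin\mE$;
\item yet the tableau is none of (2), (3), (4).
\end{itemize}
In particular, your claim in (a), that a rectangle must have $2B$ rows or end at $-(D+1)$, is false. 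What removes this configuration is flipping the corner $y_3=1$ to $-1$. This is legitimate because $\rho|\cdot|^{1}\rtimes\pi(\psi_{1,\lambda},\varepsilon_{1,\lambda})$ is irreducible, and the flip produces exactly $\mE_D$. Your proposal never says which entries may be flipped, why the flips are legitimate, or why they end in $\mE_D$; ``unlinked segments commute'' is not the relevant input.

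The missing idea is the following rigidity statement, which is how the paper argues. Choose $\mE$ from the outset with the minimal number of strictly positive entries, and assume $-A\notin\mE$. Then for every row $i$ with $y_i>0$, either $y_{i+1}=y_i-1$ or $y_i=D+1$.

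Here is why. Suppose $y_{i+1}\neq y_i-1$ (or row $i$ is the last row). Then $y_i$ is a removable corner, and $\pi\hookrightarrow\sigma_{>_{\mE_i}}\times\rho|\cdot|^{y_i}\rtimes\pi(\psi_{D,\lambda},\varepsilon_{D,\lambda})$ with $\mE_i=\mE-\{y_i\}$. If $y_i\neq D+1$, Proposition \ref{prop: non-unitary irreducibility for discrete series} makes $\rho|\cdot|^{y_i}\rtimes\pi(\psi_{D,\lambda},\varepsilon_{D,\lambda})$ irreducible:
\begin{itemize}
\item for $y_i\le D$, because $b_{\rho,\psi_{D,\lambda},\varepsilon_{D,\lambda}}=2D+1$;
\item for $D+1<y_i\le A$, because $2y_i-1\notin\Jord_\rho(\psi_{D,\lambda})$.
\end{itemize}
So it equals $\rho|\cdot|^{-y_i}\rtimes\pi(\psi_{D,\lambda},\varepsilon_{D,\lambda})$, and replacing $y_i$ by $-y_i$ contradicts minimality.

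From this claim the shape follows:
\begin{itemize}
\item The first $2t$ rows end at $A,A-1,\dots,D+1$ and have full length.
\item The set $\{z_s-1:z_s>0\}$ is a segment $[z,D]$, so the identity becomes $[B,A]\cup\{-z_s:z_s\le 0\}=[z,A]$.
\item Hence the nonpositive column ends are $-B+1,\dots,-z$; these columns have length $2B$, and the remaining columns have length $2t$.
\item The first short column ends at $z+1$ in row $2t$ and is adjacent to the last long column, which ends at $-z$ in row $2B$. This gives $z=B-t$, i.e.\ $\mE=\mE_D$.
\end{itemize}

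Finally, (2) and (3) are just the cases $t=B$ and $t=A-B+1$ of (4). So the separate minimality argument you envisage for case (3) in Step 3 is unnecessary: $t\le B$ follows directly from $z=B-t\ge 0$. The $C_{\min}$ conventions you mention play no role in this lemma.
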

	\begin{proof}
		Note that $(2)$ and $(3)$ are just special cases of $(4)$. Thus, by Lemma \ref{lemma: first step - lemma 1} above, it is sufficient to prove that, if $\mE$ has a minimal number of strictly positive elements and $-A\notin\mE$, then $\mE$ satisfies the condition $(4)$ above. Since $A\in\mE$, by \ref{lemma: maximal order}, we must have $y_1=A$.
		
		\par We first prove that, for $1\le i\le r$, if $y_i>0$, then either $y_{i+1}=y_i-1$ or $y_i=D+1$. Suppose that $y_{i+1}\neq y_i-1$. Denote by $\mE_i$ the set obtained from $\mE$ by removing $y_i$. Then, we have $\pi\xhookrightarrow{}\sigma_{>_{\mE_i}}\times\rho|\cdot|^{y_i}\rtimes\pi(\psi_{D,\lambda},\varepsilon_{D,\lambda})$. If $y_i\neq D+1$, then Proposition \ref{prop: non-unitary irreducibility for discrete series} implies that $\rho|\cdot|^{y_i}\rtimes\pi(\psi_{D,\lambda},\varepsilon_{D,\lambda})$ is irreducible. Thus we can replace $\mE$ by $\mE'=\mE\cup\left\{-y_i\right\}-\left\{y_i\right\}$, which contradicts the condition that $\mE$ has a minimal number of strictly positive elements.
		
		\par By the discussion above, we have $$[B,A]\cup\bigcup_{z_s\le 0}\left\{-z_s\right\}=[D+1,A]\cup\bigcup_{z_s>0}\left\{z_s-1\right\}.$$ Note that the number of columns of $\mE$ is $A-B+1=b$ and $z_1<z_2<\cdots<z_b$. If $z_b>0$ with $z_b\neq D+1$, then the previous paragraph implies that $z_{i+1}=z_i+1$ for all $1\le i\le b$. Thus we have $$\cup_{z_s\le 0}\left\{-z_s\right\}=\begin{cases}
		[D+1,A]\cup[z_1-1,z_b-1]-[B,A]&\text{if }z_1>0\\
		[D+1,A]\cup[0,z_b-1]-[B,A]&\text{if }z_1\le 0,\\
		\end{cases}$$ which leads to a contradiction. Thus, we have $z_b=D+1$ or $z_b\le 0$. We can further deduce from the previous paragraph that the set $\left\{z_s-1:z_s>0\right\}$ is a segment of the form $[z,D]$, where $0\le z\le D+1$ (we set $[D+1,D]=\emptyset$ here). Thus, we have 		$$[B,A]\cup\bigcup_{z_s\le 0}\left\{-z_s\right\}=[z,A].$$
		Hence $z\le B$ and $\cup_{z_s\le 0}\left\{-z_s\right\}=[z,B-1]$. Therefore, $\mE$ has the form
		$$\begin{ytableau}
			\scriptstyle x_1=B&\cdots&\cdots&\cdots&A\\
			\vdots&&\vdots&&\vdots\\
			\scriptstyle B-2t+1&\cdots&z&\cdots&\scriptstyle D+1\\
			\vdots&&\vdots&\none&\none\\
			\scriptstyle -B+1&\cdots&-z&\none&\none
		\end{ytableau}$$
		In particular, we have $z-(B-2t+1)=-z-(-B+1)\Rightarrow z=B-t$. This completes the proof.
	\end{proof}
	
	\begin{corollary}\label{coro: B<1 case}
		If $B<1$, then Theorem \ref{theorem: construction of A-packet non-negative DDR case} holds for $\pi(\psi,\varepsilon)$. 
	\end{corollary}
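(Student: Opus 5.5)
By the Remark following Lemma \ref{lemma: lemma on the main term}, it suffices to show two things. First, every irreducible constituent $\pi$ of $\pi(\psi,\varepsilon)$ with $\Jac_{\rho,-A}\circ\cdots\circ\Jac_{\rho,B}\pi=0$ is one of the complementary terms $\pi(\psi',\varepsilon',\cup_{B\le C\le A}(\rho,C,C;(-1)^{C-B}\eta))$. Second, these complementary terms occur with multiplicity one. The multiplicity statement is exactly Lemma \ref{lemma: complementary terms}, so only the first claim needs work.

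The case $B<1$ means $B=0$ or $B=\frac12$. In this case $\Jord_\rho(\psi')$ contains no $(\rho,C,C)$ with $0\le C<B$ other than possibly the fictitious $C_{\min}$ block. So $\widetilde\psi=\psi'$, and $b_{\rho,\psi',\varepsilon'}=2B-1$ holds automatically under our convention, placing us in the setting of \S\ref{subsection: an important particular case}.

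Fix a constituent $\pi$ with vanishing main Jacquet functor. Choose $\mE,D,\lambda$ as in Lemma \ref{lemma: second construction} with $b_{\rho,\psi_{D,\lambda},\varepsilon_{D,\lambda}}=2D+1$, taking $\mE$ with a minimal number of strictly positive elements. Set $t=\frac{A-D}{2}$. By Lemma \ref{lemma: first step - lemma 1}, the vanishing hypothesis gives $-A\notin\mE$. The last assertion of Lemma \ref{lemma: non-vanishing of certain Jacquet module} then gives $t\le B<1$.

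Suppose $t>0$. Since $A-D\in\ZZ$, we have $t\in\frac12\ZZ_{>0}$, so $t\ge\frac12$, and with $t<1$ this leaves only $t=\frac12$. This forces $D=A-1$, which is incompatible with $D$ being obtained from $A$ by removing pairs of indices (compare the first construction, where $D=A-2t_1$ plus shifts by $\inf(B,\dots)$). Equivalently, when $B=\frac12$ the tableau in Lemma \ref{lemma: non-vanishing of certain Jacquet module}(2),(4) would need $B-2t+1=0$ as a row start of the form $-B+1=\frac12$, a contradiction. I would check this parity bookkeeping carefully, since it is the only genuinely delicate point. The conclusion is $t=0$, i.e.\ $D=A$ and $\mE=\emptyset$.

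With $D=A$ and $\mE=\emptyset$, condition (3) of Lemma \ref{lemma: second construction} makes $\pi$ a subrepresentation of the discrete series $\pi(\psi_{A,\lambda},\varepsilon_{A,\lambda})$, so $\pi=\pi(\psi',\varepsilon',\cup_{B\le C\le A}(\rho,C,C;(-1)^{[C]}\lambda))$. The case analysis in \S\ref{subsubsection: complementary terms} identifies this as a complementary term: item (3) when $B=0$, and for $B=\frac12$ the analogous item using $\varepsilon(\rho,-\frac12,-\frac12)$. The sign constraint $\prod_{C\in[B,A]}(-1)^{[C]}\lambda=\eta_0$ from Lemma \ref{lemma: second construction}(2), combined with Lemma \ref{lemma: first construction}, matches the condition $\eta_0=\eta^{A-B+1}(-1)^{\frac{(A-B+1)(A-B)}{2}}$ in Theorem \ref{theorem: construction of A-packet non-negative DDR case}.

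Finally, combine this with the main-term inclusion in the Remark, the multiplicity one of Lemma \ref{lemma: complementary terms}, and the fact that $\soc([B,-A]_\rho\rtimes\pi(\psi',\varepsilon',(\rho,A-1,B+1;\eta_0)))$ is irreducible (Lemma \ref{lemma: lemma on the main term}). This yields the decomposition claimed in Theorem \ref{theorem: construction of A-packet non-negative DDR case} for $\pi(\psi,\varepsilon)$. The $A=B+1$ subcase was already handled in \S\ref{subsubsection: the A=B+1 case}.
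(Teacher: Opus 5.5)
Your overall structure is the paper's: the Remark after Lemma~\ref{lemma: lemma on the main term}, Lemma~\ref{lemma: complementary terms} for multiplicity one, and Lemma~\ref{lemma: first step - lemma 1} with Lemma~\ref{lemma: non-vanishing of certain Jacquet module} to get $t\le B$. There is, however, a genuine gap: your exclusion of $t=\frac12$ when $B=\frac12$ is wrong, and the case you discard actually occurs.

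\textbf{Why the exclusion fails.} The row-start computation is off. With $B=t=\frac12$ one has $B-2t+1=\frac12=-B+1$, so $\mE_D$ is simply the one-row tableau $[\frac12,A]$ with $D+1=A$. There is no contradiction. The parity heuristic also fails: in \S\ref{subsubsection: complementary terms} the shift is $2\inf(B,A-B+1)=2B=1$, which is odd.

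\textbf{The case really occurs.} Take the complementary term with $\eta=\varepsilon(\rho,-\frac12,-\frac12)$. For it, $\varepsilon(\rho,2)$ violates the cuspidality condition. Proposition~\ref{prop: partial Jacquet module of discrete series} (4) or (5), followed by repeated use of (2), gives
$\Jac_{\rho,A}\circ\cdots\circ\Jac_{\rho,\frac12}\pi=\pi(\psi_{A-1,\lambda},\varepsilon_{A-1,\lambda})\neq0$ with $\lambda=-\varepsilon(\rho,-\frac12,-\frac12)$. So $D=A-1$, $t=\frac12$ and $\mE=[\frac12,A]$, exactly as in item (1) of \S\ref{subsubsection: complementary terms}. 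Your argument would therefore show that this constituent never appears in $\pi(\psi,\varepsilon)$. That contradicts Lemma~\ref{lemma: complementary terms} whenever the sign condition admits this $\eta$, e.g.\ when $A-B+1$ is even and $\eta_0=(-1)^{\frac{(A-B+1)(A-B)}{2}}$, where both values of $\eta$ occur.

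\textbf{How to close the gap.} Keep the case $t=\frac12$ and treat it directly.
\begin{enumerate}
\item Take $\mE$ with a minimal number of positive elements and $-A\notin\mE$. Lemma~\ref{lemma: non-vanishing of certain Jacquet module} forces $\mE=\mE_D=[\frac12,A]$ and $D=A-1\ge B-1$.
\item By Lemma~\ref{lemma: parabolic induction is SI when D ge B-1}, $[\frac12,A]_\rho\rtimes\pi(\psi_{A-1,\lambda},\varepsilon_{A-1,\lambda})$ is SI. Hence $\pi$ is its socle.
\item The requirement $b_{\rho,\psi_{D,\lambda},\varepsilon_{D,\lambda}}=2D+1$ forces $\lambda=-\varepsilon(\rho,-\frac12,-\frac12)$.
\item By the Jacquet-module computation above and Lemma~\ref{lemma: partial Jacquet module and embedding}, the complementary term with $\eta=\varepsilon(\rho,-\frac12,-\frac12)$ embeds in the same SI induced representation. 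So $\pi$ equals that term.
\end{enumerate}
This is what the paper's one-line appeal to Lemma~\ref{lemma: non-vanishing of certain Jacquet module}, combined with the identification in \S\ref{subsubsection: complementary terms}, covers. Your $t=0$ analysis only reaches the other complementary terms: item (3) for $B=0$ and item (2) for $B=\frac12$.
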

	\begin{proof}
		It is sufficient to prove the $(1)$, $(2)$ in the remark after Lemma \ref{lemma: lemma on the main term}. When $B<1$, the $(1)$ is a consequence of Lemma \ref{lemma: non-vanishing of certain Jacquet module}, while the $(2)$ is just Lemma \ref{lemma: complementary terms}.
	\end{proof}
	
	\par By the advantage of Corollary \ref{coro: B<1 case}, we will assume $B\ge1$ in the following subsections.	
	
	\subsubsection{The $D<B-1$ case}
	
	\par We keep the notation of \S \ref{subsubsection: complementary terms}. We further assume that $D<B-1$ in this subsection.
	
	\begin{lemma}\label{lemma: D<B-1 case}
		Suppose that there exists a $C\in(B,A]$ such that $\pi$ is an irreducible subquotient of $$X_C=[B,-C]\rtimes\Jac_{\rho,C}\circ\Jac_{\rho,C-1}\circ\cdots\circ\Jac_{\rho,B+2}\pi(\psi',\varepsilon',(\rho,A,B+2;\eta_0)).$$
		Then we have $\circ_{x\in\mE_D}\Jac_{\rho,x}\pi=0$ (for definition of $\mE_D$, see Lemma \ref{lemma: non-vanishing of certain Jacquet module}). Here $\circ_{x\in\mE_D}\Jac_{\rho,x}$ means taking Jacquet module with respect to elements of the tableau $\mE_D$ row by row from left to right. Note that $\Jac_{\rho,x}\Jac_{\rho,y}=\Jac_{\rho,y}\Jac_{\rho,x}$ whenever $|x-y|\neq 1$. Thus, $\circ_{x\in\mE_D}\Jac_{\rho,x}$ can also be computed by taking Jacquet module with respect to elements of the tableau $\mE_D$ column by column from top to down.
	\end{lemma}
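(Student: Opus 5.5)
Since $\circ_{x\in\mE_D}\Jac_{\rho,x}$ is exact and $\pi$ is a subquotient of $X_C$, it suffices to show that $\circ_{x\in\mE_D}\Jac_{\rho,x}X_C=0$. I will compute this Jacquet module column by column from top to bottom, which the statement explicitly allows.

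First I rewrite $X_C$ using Lemma \ref{lemma: Proposition 4.4 of Moeglin}. This gives
$$\Jac_{\rho,C}\circ\cdots\circ\Jac_{\rho,B+2}\pi(\psi',\varepsilon',(\rho,A,B+2;\eta_0))=\soc([C+1,A]_\rho\rtimes\pi_1),$$
where $\pi_1=\pi(\psi',\varepsilon',(\rho,A-1,B+1;\eta_0))$. Hence $X_C$ is a subquotient of $[B,-C]_\rho\times[C+1,A]_\rho\rtimes\pi_1$. By exactness again, it is enough to show that
$$\circ_{x\in\mE_D}\Jac_{\rho,x}\bigl([B,-C]_\rho\times[C+1,A]_\rho\rtimes\pi_1\bigr)=0.$$
I then expand this with the Tadić formula, iterating Lemma \ref{lemma: computation of partial Jacquet module}. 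Each $\Jac_{\rho,x}$ is absorbed in one of three ways: by the left end of $[B,-C]_\rho$ or $[C+1,A]_\rho$, by the contragredient right end of such a segment (producing $\rho|\cdot|^{-x}$), or by $\pi_1$.

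The first column of $\mE_D$ is $B,B-1,\dots,-B+1$, and it has length $2B$. The segment $[B,-C]_\rho$ can only absorb $B,B-1,\dots$ from its left end. The reversed segments contribute starting values $C$ or $-A$ and so cannot supply the first entries. The representation $\pi_1$ lies in a non-negative DDR packet whose $\rho$-blocks have $B$-values at least $B+1$, apart from the discrete blocks of $\psi'$ of size at most $2B-1$ (since $b_{\rho,\psi',\varepsilon'}=2B-1$). By Lemma \ref{lemma: highest derivative of representations in a A packet} together with Proposition \ref{prop: partial Jacquet module of discrete series}, $\pi_1$ can absorb $\rho|\cdot|^x$ only for values $x\le B-1$ coming from the discrete part, and only in descending chains tied to the $(\rho,E,E)$ with $E<B$. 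I will track how many times each value can be absorbed by bookkeeping multiplicities. Each column of $\mE_D$ forces the next entry of $[B,-C]_\rho$ to be used. Since $C>B$ and $D<B-1$, the tableau $\mE_D$ has $t=(A-D)/2>(A-B+1)/2$ columns. Its columns have lengths $2B,\dots$, down to the short columns ending at $D+1$, so the total demand on the values near $-B+1,\dots,-(B-t)$ exceeds what $[B,-C]_\rho$ (length $B+C+1$) and $\pi_1$ can supply. In particular, the negative entries $-(B-t)$ in the column of index $t$ must come from the tail of $[B,-C]_\rho$. These entries have already been consumed, because $C>B>B-t$ forces the segment to be read past $-C$ before column $t$ is reached. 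This yields a contradiction.

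The main obstacle is precisely this counting step: showing that no distribution of the tableau entries among the three sources survives. I would first settle the extreme cases $C=A$ and $C=B+1$, and then argue by induction on the number of columns. In the inductive step, I would strip off the first column with a single application of Lemma \ref{lemma: computation of partial Jacquet module}, reducing to the same statement for $\pi_1$ with $(A,B)$ replaced by $(A-1,B+1)$. For that smaller statement the induction hypothesis on $\sum(A_i-B_i)$ is available, and I can use the parity of $t$ to handle the short columns.
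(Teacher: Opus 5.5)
Your opening reduction is fine: by exactness it suffices to show $\circ_{x\in\mE_D}\Jac_{\rho,x}X_C=0$. You are also right that the first column $B,\dots,-B+1$ is absorbed by the left end of $[B,-C]_\rho$. After that, however, the argument that should produce the vanishing is missing, and the sketch you give in its place does not match what actually happens.

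Several specifics are wrong:
\begin{enumerate}
\item[(1)] $\mE_D$ has $A-B+1$ columns and $2B$ rows, of which the first $2t=A-D$ have full length; it does not have $t$ columns.
\item[(2)] Once the first column is gone, the residual segment $[-B,-C]_\rho$ can contribute only through its right end, i.e.\ the positive exponents $C,C-1,\dots,B$. So the negative entries $-(B-t)$ play no role, and they cannot come from ``the tail of $[B,-C]_\rho$''.
\item[(3)] $\pi_1=\pi(\psi',\varepsilon',(\rho,A-1,B+1;\eta_0))$ certainly can absorb $\rho|\cdot|^{B+1}$, since it carries the block $(\rho,A-1,B+1)$. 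Your claim that it only absorbs $x\le B-1$ contradicts your own previous sentence.
\item[(4)] The proposed induction does not close. Stripping the first column leaves $[-B,-C]_\rho\times[C+1,A]_\rho\rtimes\pi_1$ tested against a tableau that is not an instance $\mE_{D'}$ of the same statement, with $(A,B)$ replaced by $(A-1,B+1)$ or anything else.
\end{enumerate}

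The missing idea is a row count inside the sub-tableau $\mA_l$: the first column together with the first $l$ rows.
\begin{enumerate}
\item[(a)] In row $i$ the residual segment must supply exactly the entry $C-i+1$, which lies in the column of $\mE_D$ headed by $C$.
\item[(b)] Every term in which the other factor takes that entry instead vanishes. In the relevant range $\pi(\psi',\varepsilon',(\rho,A,B+2;\eta_0))$ has only the derivative $\rho|\cdot|^{B+2}$ available (Lemma \ref{lemma: highest derivative of representations in a A packet}). But any irreducible $\GL$-representation carrying the staircase support forced on it, with nonzero Jacquet module along it, has another derivative.
\item[(c)] Hence $\circ_{x\in\mA_l}\Jac_{\rho,x}X_C$ equals $[-B,-C+l]_\rho\rtimes(\cdots)$ for $l\le C-B+1$, and equals $0$ for $l>C-B+1$, because the segment is exhausted.
\item[(d)] $D<B-1$ gives $A-D>A-B+1\ge C-B+1$, and $\circ_{x\in\mE_D}\Jac_{\rho,x}$ factors through $\circ_{x\in\mA_{A-D}}\Jac_{\rho,x}$. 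So the relevant comparison is between the $A-D$ full rows and the length $C-B+1$ of the residual segment, not anything about $t$ columns.
\end{enumerate}

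Note also that this count uses the fact that $Y_C$ is a Jacquet module of an A-packet member whose derivatives are controlled. By enlarging $X_C$ to $[B,-C]_\rho\times[C+1,A]_\rho\rtimes\pi_1$, you replace $Y_C=\soc([C+1,A]_\rho\rtimes\pi_1)$ by all constituents of $[C+1,A]_\rho\rtimes\pi_1$. That is a strictly stronger vanishing statement. Proving it would require redoing the bookkeeping with $[C+1,A]_\rho$ and $\pi_1$ as independent sources, and you give no argument that it holds.
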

	\begin{proof}
		Let $l\in\ZZ_{\ge1}$ and set 
		$$\mA_l:=\begin{ytableau}
			B&\scriptstyle B+1&\cdots&A\\
			\vdots&&&\vdots\\
			\scriptstyle B-l+1&\scriptstyle B-l+2&\cdots&\scriptstyle A-l+1\\
			\scriptstyle B-l&\none&\none&\none\\
			\vdots&\none&\none&\none\\
			\scriptstyle -B+1&\none&\none&\none\\
		\end{ytableau}.$$
		If $l\le C-B+1$, we set 
		$$\mA_{C,l}:=\begin{ytableau}
			\scriptstyle B+2&\cdots&&\cdots\cdots&A\\
			\vdots&&&&\vdots\\
			\scriptstyle B+3-l&\cdots&\cdots&\cdots&\scriptstyle A-l+1\\
			\scriptstyle B+2-l&\cdots&\scriptstyle C-l&\none&\none
		\end{ytableau}.$$
		We first prove the following facts:
		\begin{enumerate}
			\item[(a)] If $l\le C-B+1$, we have $$\circ_{x\in\mA_l}\Jac_{\rho,x}X_C=[-B,-C+l]\rtimes\circ_{x\in\mA_{C,l}}\Jac_{\rho,x}\pi(\psi',\varepsilon',(\rho,A,B+2;\eta_0)),$$ where the first factor of induced representation does not appear if $l=C-B+1$.
			\item[(b)] If $l>C-B+1$, we have $\circ_{x\in\mA_l}\Jac_{\rho,x}X_C=0$.
		\end{enumerate}
		 We first compute that 
		$$\Jac_{\rho,-B+1}\circ\Jac_{\rho,-B+1}\circ\cdots\circ\Jac_{\rho,B}X_C=[-B,-C]_\rho\rtimes Y_C,$$ where $Y_C=\Jac_{\rho,C}\circ\Jac_{\rho,C-1}\circ\cdots\circ\Jac_{\rho,B+2}\pi(\psi',\varepsilon',(\rho,A,B+2;\eta_0))$. Let $\mB_l$ be obtained from $\mA_l$ by removing the first column. Then the above computation implies that $\circ_{x\in\mA_l}\Jac_{\rho,x}X_C=\circ_{x\in\mB_l}\Jac_{\rho,x}([-B,-C]\rtimes Y_C)$.
		
		\par For $l'\le C-B+1$, we set
		$$\mC_{C,l'}:=\begin{ytableau}
			\scriptstyle B+2&\dots&C&\none&\none&\none\\
			\scriptstyle B+1&\dots&\scriptstyle C-1&\scriptstyle C+1&\dots&A\\
			\vdots&&\vdots&\vdots&&\vdots\\
			\scriptstyle B+2-l'&\dots&\scriptstyle C-l'&\scriptstyle C+2-l'&\dots&\scriptstyle A+1-l'\\
		\end{ytableau}.$$ Further, for $l''\le l'$, we set:
		$$\mC_{C,l',l''}:=\begin{ytableau}
			\scriptstyle B+2&\dots&C&\none&\none&\none&\none\\
			\scriptstyle B+1&\dots&\scriptstyle C-1&\none&\scriptstyle C+1&\dots&A\\
			\vdots&&\vdots&\none&\vdots&&\vdots\\
			\scriptstyle B-l''+2&\dots&\scriptstyle C-l''+1&\none&\scriptstyle C-l''+3&\dots&\scriptstyle A-l''+2\\
			\scriptstyle B-l''+1&\dots&\scriptstyle C-l''&\scriptstyle C-l''+1&\scriptstyle C-l''+2&\dots&\scriptstyle A-l''+1\\
			\vdots&&\vdots&\vdots&\vdots&&\vdots\\
			\scriptstyle B-l'+2&\dots&\scriptstyle C-l'&\scriptstyle C-l'+1&\scriptstyle C-l'+2&\dots&\scriptstyle A-l'+1\\
		\end{ytableau}.$$
		Here, the gap in the tableau means we skip the missing terms when computing the partial Jacquet module. Since $\Jac_{\rho,x}\Jac_{\rho,y}=\Jac_{\rho,y}\Jac_{\rho,x}$ when $|x-y|\neq 1$, we have 
		$$\begin{aligned}
			\circ_{x\in\mB_{l'}}\Jac_{\rho,x}&([-B,-C]\rtimes Y_C)\\
			&=\sum_{l''\le l'}[-B,-C+l'']_\rho\rtimes\circ_{x\in\mC_{C,l',l''}}\Jac_{\rho,x}\pi(\psi',\varepsilon',(\rho,A,B+2;\eta_0)).
		\end{aligned}$$
		Consider $$\mC'':=\begin{ytableau}
			\scriptstyle B+2&\dots&C&\none\\
			\scriptstyle B+1&\dots&C-1&\none\\
			\vdots&&\vdots&\none\\
			\scriptstyle B-l''+1&\dots&\scriptstyle C-l''&\scriptstyle C-l''+1\\
		\end{ytableau}.$$ For $\sigma\in\Pi(\GL(d))$ with $\Supp(\pi)=\mC''$. By Zelevinsky's classification, if $\circ_{x\in\mC''}\Jac_{\rho,x}(\sigma)\neq0$, then there exists an $x\neq B+2$ such that $\Jac_{\rho,x}(\sigma)\neq0$. Thus we have $$\circ_{x\in\mC''}\Jac_{\rho,x}\pi(\psi',\varepsilon',(\rho,A,B+2;\eta_0))=0.$$ Note that $\circ_{x\in\mC_{C,l',l''}}$ factors through $\circ_{x\in\mC''}$ when $l''>0$. We conclude that
		$$\begin{aligned}
			\circ_{x\in\mB_{l'}}\Jac_{\rho,x}&([-B,-C]\rtimes Y_C)\\
			&=[-B,-C+l']_\rho\rtimes\circ_{x\in\mC_{C,l'}}\Jac_{\rho,x}\pi(\psi',\varepsilon',(\rho,A,B+2;\eta_0)).
		\end{aligned}$$ Note that $\circ_{x\in\mC_{C,l'}}\Jac_{\rho,x}=\circ_{x\in\mA_{C,l'}}\Jac_{\rho,x}$ if we compute the Jacquet module columnwise. We obtain the $(a)$ above. Suppose that $l>C-B+1$. Note that $\circ_{x\in\mB_l}\Jac_{\rho,x}$ factors through $\circ_{x\in\mB_{C-B+2}}\Jac_{\rho,x}$ in this case. It is sufficient to assume that $l=C-B+2$. Consider $$\mA':=\begin{ytableau}
			\scriptstyle B+2&\dots&\scriptstyle C&\none&\none&\none&\none\\
			\scriptstyle B+1&\dots&\scriptstyle C-1&\none&\scriptstyle C+1&\dots&\scriptstyle A\\
			\vdots&&\vdots&\none&\vdots&&\vdots\\
			\scriptstyle 2B-C+1&\dots&\scriptstyle B-1&\none&\scriptstyle B+1&\dots&\scriptstyle A+B-C\\
			\scriptstyle 2B-C&\dots&\scriptstyle B-2&\scriptstyle B-1&B&\dots&\underset{-C-1}{\scriptstyle A+B}\\
		\end{ytableau}.$$ Then the computation about $\circ_{x\in\mC_{l',l''}}\Jac_{\rho,x}$ implies that $$\circ_{x\in\mA_l}\Jac_{\rho,x}X_C=\circ_{x\in\mA'}\Jac_{\rho,x}\pi(\psi',\varepsilon',(\rho,A,B+2;\eta_0))=0.$$ This completes the proof of $(b)$.
		
		\par Now, set $l=A-D$. Then $l>A-B+1\ge C-B+1$. Note that $\circ_{x\in\mE_D}\Jac_{\rho,x}$ factors through $\circ_{x\in\mA_l}\Jac_{\rho,x}$. Thus we have $\circ_{x\in\mE_D}\Jac_{\rho,x}\pi=0$ by the $(b)$ above.
	\end{proof}
	
	\begin{corollary}\label{coro: D<B-1 case}
		If $D<B-1$, then either $\pi$ satisfies $\Jac_{\rho,-A}\circ\Jac_{\rho,-A+1}\circ\cdots\circ\Jac_{\rho,B}\pi\neq0$ or is one of the complementary terms. 
	\end{corollary}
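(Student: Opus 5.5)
We plan to combine the decomposition of $\pi(\psi,\varepsilon)$ in Theorem \ref{theorem: Mp version of Theorem 7.5 in Xu_moeglin} with Lemma \ref{lemma: D<B-1 case} and Lemma \ref{lemma: non-vanishing of certain Jacquet module}. Fix an irreducible constituent $\pi$ of $\pi(\psi,\varepsilon)$ with $D<B-1$, and suppose $\Jac_{\rho,-A}\circ\cdots\circ\Jac_{\rho,B}\pi=0$. We must show that $\pi$ is one of the complementary terms.

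First, since $D<B-1$ we have $t=\frac{A-D}{2}>0$. Choose $\mE$ (with $D,\lambda$ as in Lemma \ref{lemma: second construction}, $b_{\rho,\psi_{D,\lambda},\varepsilon_{D,\lambda}}=2D+1$) with a minimal number of strictly positive elements. By Lemma \ref{lemma: first step - lemma 1}, $-A\notin\mE$, so Lemma \ref{lemma: non-vanishing of certain Jacquet module} forces $\mE=\mE_D$; in cases (2), (3) this is also of the form $\mE_D$. Hence $\pi\xhookrightarrow{}\sigma_{>_{\mE_D}}\rtimes\pi(\psi_{D,\lambda},\varepsilon_{D,\lambda})$, and by Frobenius reciprocity $\circ_{x\in\mE_D}\Jac_{\rho,x}\pi\neq0$, computed row by row in the maximal order.

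Second, by Theorem \ref{theorem: Mp version of Theorem 7.5 in Xu_moeglin}, every constituent of $\pi(\psi,\varepsilon)$ occurs either in some $X_C$, $C\in(B,A]$, or in one of $\pi(\psi',\varepsilon',(\rho,A,B+1;\eta),(\rho,B,B;\eta\eta_0))$. The alternating signs are a concern, but positivity of $\pi(\psi,\varepsilon)$ together with the nonvanishing above suffices. Indeed, apply the exact functor $\circ_{x\in\mE_D}\Jac_{\rho,x}$ to the whole identity. Lemma \ref{lemma: D<B-1 case} kills every $X_C$ contribution, so the positive contribution of $\pi$ must be accounted for by the second group of terms. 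Thus $\pi$ is a constituent of some $\pi(\psi',\varepsilon',(\rho,A,B+1;\eta),(\rho,B,B;\eta\eta_0))$.

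Third, we apply the induction hypothesis (Theorem \ref{theorem: construction of A-packet non-negative DDR case} for the parameter with $\sum(A_i-B_i)$ smaller by one) to that packet. The constituent $\pi$ is then either $\soc([B+1,-A]_\rho\rtimes\pi(\ldots,(\rho,A-1,B+2;\eta),(\rho,B,B;\eta\eta_0)))$ or a discrete term $\pi_{\eta,\lambda'}$. In the first case $\pi$ embeds in $[B+1,-A]_\rho\rtimes(\cdots)$, which yields a nonzero Jacquet module along a segment ending at $-A$. Combined with $\Jac_{\rho,B}$ via Lemma \ref{lemma: highest derivative of representations in a A packet}, or arguing as in Lemma \ref{lemma: first step - lemma 1} with $-A\in\mE$, this contradicts $\Jac_{\rho,-A}\circ\cdots\circ\Jac_{\rho,B}\pi=0$ or the minimality of $\mE$. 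In the second case $\pi$ is discrete, with $D$ determined as in \S\ref{subsubsection: complementary terms}. Using the classification there (cases (1)–(3)) and Lemma \ref{lemma: second construction - converse}, $\pi$ is identified with $\pi(\psi',\varepsilon',\cup_{B\le C\le A}(\rho,C,C;(-1)^{C-B}\eta))$, i.e. a complementary term.

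The main obstacle is the first alternative of the third step: ruling out the non-discrete socle term rigorously. The first attempt will be to compute $\circ_{x\in\mE_D}\Jac_{\rho,x}$ of that socle via a tableau computation analogous to Lemma \ref{lemma: D<B-1 case}(b), showing it vanishes when $D<B-1$.
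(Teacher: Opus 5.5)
Your first two steps match the paper's: it also combines Lemma \ref{lemma: first step - lemma 1}, Lemma \ref{lemma: non-vanishing of certain Jacquet module} and Lemma \ref{lemma: D<B-1 case} to place $\pi$ in some $X_\eta=\pi(\psi',\varepsilon',(\rho,A,B+1;\eta),(\rho,B,B;\eta\eta_0))$. Phrase this constituent-wise. Every irreducible subquotient of $X_C$ is killed by $\circ_{x\in\mE_D}\Jac_{\rho,x}$, so $\pi$ lies in no $X_C$. Comparing multiplicities of $\pi$ in Theorem \ref{theorem: Mp version of Theorem 7.5 in Xu_moeglin} then puts $\pi$ in some $X_\eta$.

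The gap is your third step, which carries all the content. First, you leave the socle constituent $\soc([B+1,-A]_\rho\rtimes\pi(\psi',\varepsilon',(\rho,A-1,B+2;\eta),(\rho,B,B;\eta\eta_0)))$ of $X_\eta$ unresolved. The contradictions you sketch do not follow: that embedding gives a nonzero Jacquet module along a segment \emph{starting at $B+1$}. This says nothing directly about $\Jac_{\rho,-A}\circ\cdots\circ\Jac_{\rho,B}\pi$, nor about the minimality of $\mE$. Second, your discrete branch is incomplete. The discrete constituents of $X_\eta$ from the induction hypothesis are $\pi(\psi',\varepsilon',(\rho,B,B;\eta\eta_0),\cup_{C\in[B+1,A]}(\rho,C,C;(-1)^{C-B-1}\kappa))$. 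Those with $\kappa=\eta\eta_0$ (no sign alternation between $B$ and $B+1$) are not complementary terms of $\pi(\psi,\varepsilon)$. The $(D,\lambda)$-description in \S\ref{subsubsection: complementary terms} characterizes complementary terms but does not rule these out.

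The missing observation handles both cases at once.
\begin{itemize}
\item Since $\psi$ is a non-negative DDR, every $(\rho,C,C)\in\Jord(\psi')$ has $C<B$ or $C>A\ge B+1$. So no element of $\Jord_\rho(\psi)$ has $B$-value $B+1$, and Lemma \ref{lemma: highest derivative of representations in a A packet} gives $\Jac_{\rho,B+1}\pi=0$ for every constituent $\pi$ of $\pi(\psi,\varepsilon)$.
\item The socle term has $\Jac_{\rho,B+1}\neq0$ by Frobenius reciprocity.
\item The non-alternating discrete terms have $\Jac_{\rho,B+1}\neq0$ by Proposition \ref{prop: partial Jacquet module of discrete series}(3) with $x=B+1$.
\end{itemize}
What remains is $\pi(\psi',\varepsilon',\cup_{C\in[B,A]}(\rho,C,C;(-1)^{C-B}\eta'))$ with $\eta'=\eta\eta_0$. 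Its sign condition is automatic, since $\prod_{C\in[B,A]}(-1)^{C-B}\eta'=\eta'\eta=\eta_0$. No tableau computation is needed.

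Repaired this way, your route genuinely differs from the paper's. The paper does not decompose $X_\eta$ along $(\rho,A,B+1)$. Instead it:
\begin{itemize}
\item peels off $[B,-B+1]_\rho$, which forces $\eta\eta_0=\varepsilon(\rho,B-1,B-1)$;
\item transfers $(D,\lambda,\mE)$ to a constituent $\pi'$ of $\pi(\psi'',\varepsilon'',(\rho,A,B+1;\eta))$ via Lemma \ref{lemma: second construction - converse};
\item applies the induction hypothesis to $\pi'$;
\item rebuilds $\pi$ using Proposition \ref{prop: cuspidal support of discrete series} and the vanishing of $\Jac_{\rho,B\pm1}\pi$.
\end{itemize}
Your route is shorter and avoids transporting $\mE$. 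The paper's route also pins down which complementary term occurs, which the corollary does not require.
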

	\begin{proof}
		By Lemma \ref{lemma: first step - lemma 1}, Lemma \ref{lemma: non-vanishing of certain Jacquet module} and Lemma \ref{lemma: D<B-1 case} above, if we have $\Jac_{\rho,-A}\circ\Jac_{\rho,-A+1}\circ\cdots\circ\Jac_{\rho,B}\pi=0$, then $\pi$ is a subquotient of the representation $$X_\eta:=\pi(\psi',\varepsilon',(\rho,A,B+1;\eta),(\rho,B,B;\eta\eta_0))$$ for suitable $\eta\in\left\{\pm1\right\}$.
		
		\par Denote by $(\psi'',\varepsilon'')$ the pair obtained from $(\psi',\varepsilon')$ by removing $(\rho,B-1,B-1;\varepsilon(\rho,B-1,B-1))$. Using the same argument as in the proof of Proposition \ref{prop: cuspidal support of discrete series}, we have:
		$$\begin{aligned}
			\Jac_{\rho,-B+1}\circ\Jac_{\rho,-B+2}\circ\cdots&\circ\Jac_{\rho,B}X_\eta\\
			&=\begin{cases}
				\pi(\psi'',\varepsilon'',(\rho,A,B+1;\eta))&\text{if }\eta\eta_0=\varepsilon(\rho,B-1,B-1)\\
				0&\text{if }\eta\eta_0\neq\varepsilon(\rho,B-1,B-1).\\
			\end{cases}
		\end{aligned}$$ Thus we have $\eta\eta_0=\varepsilon(\rho,B-1,B-1)$, and there exists an irreducible subquotient $\pi'$ of $\pi(\psi'',\varepsilon'',(\rho,A,B+1;\eta))$ such that $\pi\xhookrightarrow{}[B,-B+1]_\rho\rtimes\pi'$. We associate to $\pi'$ data $D'$, $\lambda'$, $\mE'$ as in \S \ref{subsubsection: complementary terms}. Then, Lemma \ref{lemma: second construction - converse} implies that $D=D'$, $\lambda=\lambda'$ and $\mE'=\mE\cup[B,-B+1]$. In particular, we have $-A\notin\mE'$. Thus, by the induction hypothesis, $\pi'$ is one of the complementary terms of $\pi(\psi'',\varepsilon'',(\rho,A,B+1;\eta))$. That is, there exists an $\kappa\in\left\{\pm1\right\}$ such that $\pi'=\pi(\psi'',\varepsilon'',\cup_{C\in[B+1,A]}(\rho,A,B+1;(-1)^{C-B-1}\kappa))$. By Proposition \ref{prop: cuspidal support of discrete series}, we have
		$$\begin{aligned}
			&[B,-B+1]_\rho\rtimes\pi'\\
			&=\bigoplus_{\kappa'=\pm1}\pi(\psi'',\varepsilon'',(\rho,B-1,B-1,\kappa'),(\rho,B,B,\kappa'),\underset{C\in[B+1,A]}{\cup}(\rho,A,B+1;(-1)^{C-B-1}\kappa)).
		\end{aligned}$$
		Since $\Jac_{\rho,B-1}\pi=0$ and $\Jac_{\rho,B+1}\pi=0$, there exists $\kappa'$ such that $\kappa'\varepsilon(\rho,B-2,B-2)=-1$ and $\kappa'=-\kappa$. Thus, we have $\pi=\pi(\psi',\varepsilon',\cup_{B\le C\le A}(\rho,C,C;(-1)^{C-B}\kappa'))$. This completes the proof.
	\end{proof}
	
	\subsubsection{The $D\ge B-1$ case}
	
	\par We keep the notation of \S \ref{subsubsection: complementary terms}. Let $\mT=\mE_D$, and we denote by $\sigma_\mT$ the irreducible representation associated to $\mT$, i.e., $\sigma_\mT=\soc([B,A]_\rho\times\cdots\times[B-2t+1,D+1]_\rho\times[B-2t,B-t-1]_\rho\times\cdots\times[-B+1,-(B-t)]_\rho)$.
	
	\begin{lemma}\label{lemma: parabolic induction is SI when D ge B-1}
		When $D\ge B-1$, the parabolic induction $\sigma_\mT\rtimes\pi(\psi_{D,\lambda},\varepsilon_{D,\lambda})$ is SI.
	\end{lemma}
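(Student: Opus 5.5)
The plan is the standard one used throughout the paper, following Lemma \ref{lemma: criterion for SI} and Lemma \ref{lemma: reduced decomposition of representation}. We find a composite derivative $D_\mT$ such that $D_\mT\bigl(\sigma_\mT\rtimes\pi(\psi_{D,\lambda},\varepsilon_{D,\lambda})\bigr)$ is a single irreducible representation with multiplicity one. Concretely, $D_\mT$ is the composition $\circ_{x\in\mT}\Jac_{\rho,x}$, taking Jacquet modules along the tableau $\mT$ row by row from left to right. We aim to show
$$\circ_{x\in\mT}\Jac_{\rho,x}\bigl(\sigma_\mT\rtimes\pi(\psi_{D,\lambda},\varepsilon_{D,\lambda})\bigr)=\pi(\psi_{D,\lambda},\varepsilon_{D,\lambda}).$$
Every irreducible subrepresentation $\tau$ of the induced representation satisfies $\circ_{x\in\mT}\Jac_{\rho,x}\tau\neq0$ by Frobenius reciprocity, because $\sigma_\mT$ embeds into $\rho|\cdot|^{x_1}\times\cdots\rtimes$ (in the order of $\mT$). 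This display therefore forces the socle to be irreducible and to occur with multiplicity one, which is exactly SI.

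To compute the left-hand side I will use the Tadi\'{c} formula (Proposition \ref{prop: Tadic formula}) in the iterated form of Lemma \ref{lemma: computation of partial Jacquet module}. Each term of $\circ_{x\in\mT}\Jac_{\rho,x}$ applied to $\sigma_\mT\rtimes\pi$ is obtained by distributing every exponent $x$ among three options:
\begin{enumerate}
\item[(i)] $\Jac_{\rho,x}$ of the $\GL$-part;
\item[(ii)] $\Jac^{\op}_{\rho,-x}$ of the $\GL$-part, which is the contragredient contribution;
\item[(iii)] $\Jac_{\rho,x}$ of $\pi(\psi_{D,\lambda},\varepsilon_{D,\lambda})$.
\end{enumerate}
We then need to rule out options (ii) and (iii) as follows.

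For option (iii), Proposition \ref{prop: partial Jacquet module of discrete series} (equivalently the Jacquet module property) says that $\Jac_{\rho,x}\pi(\psi_{D,\lambda},\varepsilon_{D,\lambda})\neq0$ forces $b_{\rho,\psi_{D,\lambda},\varepsilon_{D,\lambda}}=2D+1<2x+1\in\Jord_\rho(\psi_{D,\lambda})$. The Jordan blocks above $D$ are those of $\widetilde\psi$, whose $\rho$-part starts at $a_{\rho,\psi',\varepsilon'}>2A+1$. So we need $x>A$, but every entry of $\mT$ satisfies $|x|\le A$. Hence no exponent goes to $\pi$. The same observation applies after partial derivatives have been taken, since at every stage the remaining $\GL$ support lies in $[-A,A]$.

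For option (ii), we must show that no entry $x$ of $\mT$ can be absorbed as $-x$ from the right end of a segment of $\sigma_\mT$. Here the hypothesis $D\ge B-1$ is used. The columns of $\mT$ then run from a top entry in $[B,A]$ down to an entry $\ge -B+1$. The positive part $[B-2t+1,D+1]$ of the top block and the negative part $[-B+1,-(B-t)]$ of the lower block have disjoint absolute values, except for the controlled overlap forced by $z=B-t$ in Lemma \ref{lemma: non-vanishing of certain Jacquet module}. Taking the Jacquet module row by row from left to right, the first exponent $B$ can only come from the left end of the first row. This is because $-B$ is not in $\mT$, since the smallest element is $-B+1$. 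Inductively, after removing a row, the next row's leftmost entry is again the unique available left endpoint.

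I expect option (ii) to be the main obstacle. One must check carefully, using Zelevinsky's $m^*$ for segments ($m^*([x,y])=\sum[x,z]\otimes[z-1,y]$) and the genericity of the ladder $\sigma_\mT$, that a contragredient piece $[-y,-z]$ never has the required leftmost exponent. The delicate case is when $D+1$ and $-(B-t)$ are adjacent, i.e.\ near $D=B-1$. If the direct bookkeeping becomes messy, I would first compute the column-wise version, whose order commutes with the row version by the remark in Lemma \ref{lemma: D<B-1 case}. Once only the identity term survives with coefficient one, the display follows, and the SI property is concluded as above.
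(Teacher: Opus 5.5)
Your overall strategy is the paper's: show that the composite $\circ_{x\in\mT}\Jac_{\rho,x}$, read row by row, sends $\sigma_\mT\rtimes\pi(\psi_{D,\lambda},\varepsilon_{D,\lambda})$ to $\pi(\psi_{D,\lambda},\varepsilon_{D,\lambda})$ with multiplicity one. Your exclusion of option (iii) via the Jacquet module property is correct.

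The gap is option (ii), which is the whole content of the lemma, and the heuristic you give for it is false. If you process letters one at a time, contragredient absorptions \emph{do} occur at intermediate stages. Take $B=2$, $A=4$, $t=1$, $D=2$, so $D\ge B-1$ and $\mT$ has rows $[2,4],[1,3],[0,0],[-1,-1]$. After the first row, the current $\GL$-part is $\sigma_{\ge2}=\soc([1,3]_\rho\times\rho\times\rho|\cdot|^{-1})$. At the next letter $x=1$ you have $\Jac^{\op}_{\rho,-1}(\sigma_{\ge 2})\neq0$, because $-1=-(B-t)$ ends the last row. This branch only dies at the following letter $x=2$. So you cannot show that no entry is ever absorbed from the right. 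You must show every such branch eventually vanishes, and the proposal contains no argument for this. The remark about disjoint absolute values is also not where $D\ge B-1$ is actually used.

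The fix, which is what the paper does, is to treat a whole row at once. Put $\pi=\pi(\psi_{D,\lambda},\varepsilon_{D,\lambda})$ and $\sigma_{\ge i}=\soc(\times_{j\ge i}[x_j,y_j]_\rho)$, and prove by induction that $\Jac_{\rho,y_i}\circ\cdots\circ\Jac_{\rho,x_i}(\sigma_{\ge i}\rtimes\pi)=\sigma_{\ge i+1}\rtimes\pi$.

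\textbf{Upper rows, $i\le 2t$.} Use $\sigma_{\ge i}\hookrightarrow[x_i,y_i]_\rho\times\sigma_{\ge i+1}$ and look at the largest letter $y_i=A-i+1$. Since $y_i>A-i$ and $y_i\ge D+1\ge B$, it lies outside $\mT_{\ge i+1}\cup-\mT_{\ge i+1}$, all of whose absolute values are at most $\max(A-i,B-1)$. It is also outside $[-y_i,-x_i]$, since $x_i+y_i\ge 1$, and it is not an exponent of $\pi$. Hence the entire row must come from the left end of $[x_i,y_i]_\rho$. This is exactly where $D\ge B-1$ enters.

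\textbf{Lower rows, $i>2t$.} Here $\sigma_{\ge i}$ is a Speh representation whose only right-removable exponent is $-(B-t)$. A contragredient term would therefore need the letter $B-t$, but every letter of row $i$ is at most $B-i+t<B-t$.

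Organizing the computation this way also yields the multiplicity-one statement, which your proposal only asserts.

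Two minor points. For the increasing rows of $\mT$ one has $m^*([x,y]_\rho)=\sum_z[x,z]_\rho\otimes[z+1,y]_\rho$. Also, $\sigma_\mT$ is a Zelevinsky-type ladder rather than a generic one, so genericity is not the property to invoke.
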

	\begin{proof}
		For $1\le i\le 2B$, denote by $[x_i,y_i]$ the i-th row of $\mT$. Let $\sigma_{\ge i}=\soc(\times_{j\ge i}[x_j,y_j]_\rho)$. Then, it is sufficient to prove that $$\Jac_{\rho,y_i}\circ\Jac_{\rho,y_i-1}\circ\cdots\circ\Jac_{\rho,x_i}\sigma_{\ge i}\rtimes\pi(\psi_{D,\lambda},\varepsilon_{D,\lambda})=\sigma_{\ge i+1}\rtimes\pi(\psi_{D,\lambda},\varepsilon_{D,\lambda}).$$
		
		\par If the above formula does not hold, there must exist some $z\in[x_i,y_i]$, such that $\Jac_{\rho,y_i}\circ\Jac_{\rho,y_i-1}\circ\cdots\circ\Jac_{\rho,z}(\sigma_{\ge i+1}\rtimes\pi(\psi_{D,\lambda},\varepsilon_{D,\lambda}))\neq0$. When $i\le 2t$, then $y_i=D+1-2t-i\ge B$. In particular, $y_i\notin \mT_{\ge i+1}\cup-\mT_{\ge i+1}$. This leads to a contradiction. When $i>2t$, $\sigma_{\ge i}$ is a Speh representation. In particular, we have $\Jac_{\rho,x}^\op(\sigma_{\ge i})\neq 0$ if and only if $x=-(B-t)$. Now, the lemma follows from the fact that $B-t$ is the unique maximal element in $\mT_{\ge i}\cup-\mT_{\ge i}$.
	\end{proof}
	
	\par Let $D$, $\lambda$ and $\mE$ satisfying the $(1)$, $(2)$, $(4)$ of Lemma \ref{lemma: second construction} with $b_{\rho,\psi_{D',\lambda'},\varepsilon_{D',\lambda'}}=2D'+1$. When $D\ge B-1$, the above lemma implies that $\sigma_\mT\rtimes\pi(\psi_{D,\lambda},\varepsilon_{D,\lambda})$ is SI. In this case, we write $\pi_{D,\lambda}=\pi(\psi_{D,\lambda},\varepsilon_{D,\lambda})$ and $\pi_{\mT,D,\lambda}=\soc(\sigma_\mT\rtimes\pi_{D,\lambda})$.
	
	\par For $B+1\le C\le A$, we set $X_C=[B,-C]_\rho\rtimes Y_C$ with $$\begin{aligned}
		Y_C&=\Jac_{\rho,C}\circ\cdots\circ\Jac_{\rho,B+2}\pi(\psi',\varepsilon',(\rho,A,B+2;\eta_0))\\
		&=\soc([C+1,A]_\rho\rtimes\pi(\psi',\varepsilon',(\rho,A-1,B+1;\eta_0))).
	\end{aligned}$$ For $\eta\in\left\{\pm1\right\}$, we set $X_\eta=\pi(\psi',\varepsilon',(\rho,A,B+1;\eta),(\rho,B,B;\eta\eta_0))$.
	
	\begin{lemma}\label{lemma: D ge B-1 case}
		Suppose that $D\ge B-1$, $B+1\le C\le A$, $B>0$,$\eta\in\left\{\pm1\right\}$, $t=\frac{A-D}{2}\le B$. Then the following is true:
		\begin{enumerate}
			\item[(1)] $\pi_{D,\lambda}$ is not a subquotient of $\circ_{x\in\mT_D}\Jac_{\rho,x}X_C$ unless $C-B+1=2t$.
			\item[(2)] Suppose that $C-B+1=2t$. Denote by $\mT_D'$ the following tableau:$$\begin{ytableau}
				\scriptstyle B+1&\cdots&\cdots&\cdots&\cdots&\cdots&\scriptstyle A-1\\
				\vdots&&\vdots&&\vdots&&\vdots\\
				\scriptstyle B-2t+3&\cdots&\scriptstyle B-t+1&\cdots&B&\cdots&\scriptstyle D+1\\
				\scriptstyle B-2t+2&\cdots&\scriptstyle B-t&\cdots&\scriptstyle B-1&\none&\none\\
				\vdots&&\vdots&\none&\none&\none&\none\\
				\scriptstyle-B+2&\cdots&\scriptstyle-B+t&\none&\none&\none&\none
			\end{ytableau}.$$ Then $\circ_{x\in\mT_D}\Jac_{\rho,x}X_C=\circ_{x\in\mT_D'}\Jac_{\rho,x}\pi(\psi',\varepsilon',(\rho,A-1,B+1;\eta_0))$.
			\item[(3)] Suppose that $C-B+1=2t$. Let $\mT_{A-2,B,D}$ be the analogue of $\mT_D$ by replacing $(A,B,t)$ with $(A-2,B,t-1)$. Then, the multiplicity of $\pi_{D,\lambda}$ in $\circ_{x\in\mT_D}\Jac_{\rho,x}X_C$ equals the multiplicity of $\pi_{D,\lambda}$ in $\circ_{x\in\mT_{A-2,B,D}}\Jac_{\rho,x}\pi(\psi',\varepsilon',(\rho,A-2,B;\eta_0))$.
			\item[(4)] The multiplicity of $\pi_{D,\lambda}$ in $\circ_{x\in\mT_D}\Jac_{\rho,x}X_\eta$ is $0$ if $\eta\eta_0\neq\varepsilon(\rho,B-1,B-1)$ and is equal to the multiplicity of $\pi_{D,\lambda}$ in $\circ_{x\in\mT_{A-2,B-1,D}}\Jac_{\rho,x}\pi(\psi'',\varepsilon'',(\rho,A-2,B-1;\eta))$, where $(\psi'',\varepsilon'')$ is obtained from $(\psi',\varepsilon')$ by removing $(\rho,B-1,B-1)$.
			\item[(5)] The multiplicity of $\pi_{D,\lambda}$ in $\circ_{x\in\mT_D}\Jac_{\rho,x}\pi(\psi,\varepsilon)$ is $0$ unless $t=B$ and $\lambda=-\varepsilon(\rho,C_{\min},C_{\min})$, in which case $\pi_{\mT_D,D,\lambda}$ is isomorphic to a complementary term.
		\end{enumerate}
	\end{lemma}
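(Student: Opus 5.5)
The plan is to compute the Jacquet module $\circ_{x\in\mT_D}\Jac_{\rho,x}$ of each term appearing in the expansion of $\pi(\psi,\varepsilon)$ given by Theorem \ref{theorem: Mp version of Theorem 7.5 in Xu_moeglin}. Parts (1)--(4) treat the individual terms $X_C$ and $X_\eta$; part (5) is then obtained by summing these contributions with their signs and using the induction hypothesis on $\sum(A_i-B_i)$.

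\textbf{Parts (1) and (2).} Write $X_C=[B,-C]_\rho\rtimes Y_C$ with $Y_C=\soc([C+1,A]_\rho\rtimes\pi(\psi',\varepsilon',(\rho,A-1,B+1;\eta_0)))$, as given by Lemma \ref{lemma: Proposition 4.4 of Moeglin}. I would compute the Jacquet module column by column, using the Tadi\'c formula (Lemma \ref{lemma: computation of partial Jacquet module}). The argument should follow the proof of Lemma \ref{lemma: D<B-1 case}.

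\begin{enumerate}
\item[$\bullet$] The first column of $\mT_D$ is the segment $[B,-B+1]$. Since $D\ge B-1$, the only way to extract it is through $[B,-C]_\rho$, which leaves $[-B,-C]_\rho\rtimes Y_C$.
\item[$\bullet$] The tableaux $\mC''$ in that proof show that every term in which a later column eats into $[-B,-C]_\rho$ vanishes. The exception is when $[-B,-C]_\rho$ exactly fills a column, which requires its length $C-B+1$ to equal $2t$, the height of the left block.
\item[$\bullet$] What remains is the $\rho$-support of $\pi_{D,\lambda}$ together with cuspidal support that cannot be removed. This should give (1).
\item[$\bullet$] When $C-B+1=2t$, the remaining columns of $\mT_D$ act on $[C+1,A]_\rho\rtimes\pi(\cdots)$ and remove $[C+1,A]$ exactly. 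This yields the tableau $\mT_D'$ and proves (2).
\end{enumerate}

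\textbf{Part (3).} Apply Lemma \ref{lemma: Proposition 4.4 of Moeglin} once more to get
\[
\pi(\psi',\varepsilon',(\rho,A-1,B+1;\eta_0))=\soc([B+1,A-1]_\rho\rtimes\pi(\psi',\varepsilon',(\rho,A-2,B;\eta_0))).
\]
Taking the first row $[B+1,A-1]$ of $\mT_D'$ peels this segment off. I would use Lemmas \ref{lemma: subquotient of parabolic induction - lemma} and \ref{lemma: subquotient of parabolic induction} to match subquotients, so that multiplicities are preserved. What is left is precisely the shape $\mT_{A-2,B,D}$, with $t$ replaced by $t-1$.

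\textbf{Part (4).} First take $\Jac_{\rho,-B+1}\circ\cdots\circ\Jac_{\rho,B}$ of $X_\eta$, using the computation in the proof of Corollary \ref{coro: D<B-1 case}. This gives $0$ unless $\eta\eta_0=\varepsilon(\rho,B-1,B-1)$, in which case it gives $\pi(\psi'',\varepsilon'',(\rho,A,B+1;\eta))$. I would then apply Lemma \ref{lemma: Proposition 4.4 of Moeglin} to shift this to $(\rho,A-2,B-1;\eta)$, consuming the remaining top part of $\mT_D$. The residual tableau should be identified with $\mT_{A-2,B-1,D}$.

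\textbf{Part (5).} Combine the signed sum from Theorem \ref{theorem: Mp version of Theorem 7.5 in Xu_moeglin} with parts (1)--(4). Only $C=B+2t-1$ and $\eta=\varepsilon(\rho,B-1,B-1)\eta_0$ contribute. By the induction hypothesis (Theorem \ref{theorem: construction of A-packet non-negative DDR case} for the smaller parameters), each contributing multiplicity equals the multiplicity of $\pi_{D,\lambda}$ in a packet of smaller total length. These reduce to statements about complementary terms, which are controlled by the case analysis in \S\ref{subsubsection: complementary terms}. The signs $(-1)^{A-C}$ and $(-1)^{[\frac{A-B+1}{2}]}\eta^{A-B+1}\eta_0^{A-B}$ should cancel the two contributions unless $t=B$ and $\lambda=-\varepsilon(\rho,C_{\min},C_{\min})$. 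In that surviving case, the criterion (1) of \S\ref{subsubsection: complementary terms} identifies $\pi_{\mT_D,D,\lambda}$ with a complementary term.

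I expect the sign bookkeeping in (5) to be the main obstacle. It requires matching the induction hypothesis for $(A-2,B)$ against $(A-2,B-1)$, and tracking how $\lambda$ relates to $\varepsilon(\rho,C_{\min},C_{\min})$ for integral versus half-integral $B$. I would first check the small case $t=1$ by hand to fix the signs, and then induct on $t$.
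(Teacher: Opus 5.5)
\textbf{Where it agrees with the paper.} For (2) and (4), and for the column-by-column Jacquet computation behind (1), you follow the paper's route: reuse the computation from Lemma~\ref{lemma: D<B-1 case}. In (4), once you strip the first column and then the rows $[B+1,A]$ and $[B,A-1]$, the residue really is $\mT_{A-2,B-1,D}$.

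\textbf{The gap in (3).} After you peel the first row $[B+1,A-1]$ of $\mT_D'$, the tableau still acting on $\pi(\psi',\varepsilon',(\rho,A-2,B;\eta_0))$ is \emph{not} $\mT_{A-2,B,D}$.
\begin{itemize}
\item The leftover contains the long row $[B-2t+2,B-1]$, i.e.\ the extra exponents $B-t+1,\dots,B-1$.
\item It lacks the last row $[-B+1,-B+t-1]$ of $\mT_{A-2,B,D}$.
\end{itemize}
For example, take $B=t=2$ and $A=C=5$ (so $D=1$). You are left with the rows $[2,3],[1,2],[0,1]$, while $\mT_{3,2,1}$ has the rows $[2,3],[1,2],[0,0],[-1,-1]$.

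So the two Jacquet functors genuinely differ. This is exactly why (3) only asserts an equality of multiplicities of $\pi_{D,\lambda}$. The subquotient-matching lemmas you cite do not address this. Your planned check at $t=1$ would not reveal it either, since both sets of exponents are empty there.

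The missing idea is a sign flip.
\begin{itemize}
\item Arrange that $\Jac_{\rho,B-t+1},\dots,\Jac_{\rho,B-1}$ are the last functors applied.
\item Use that $\rho|\cdot|^x\rtimes\pi_{D,\lambda}$ is irreducible for $0<x\le B-1$. This is Proposition~\ref{prop: non-unitary irreducibility for discrete series}, applicable because $b_{\rho,\psi_{D,\lambda},\varepsilon_{D,\lambda}}=2D+1\ge 2B-1$. This is where the hypothesis $D\ge B-1$ is really used.
\item Hence $\rho|\cdot|^{B-t+1}\times\cdots\times\rho|\cdot|^{B-1}\rtimes\pi_{D,\lambda}\cong\rho|\cdot|^{-B+1}\times\cdots\times\rho|\cdot|^{-B+t-1}\rtimes\pi_{D,\lambda}$.
\item Therefore the multiplicity of $\pi_{D,\lambda}$ is unchanged when those last functors are replaced by $\Jac_{\rho,-B+1},\dots,\Jac_{\rho,-B+t-1}$. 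Only after this replacement do you land on $\mT_{A-2,B,D}$.
\end{itemize}

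\textbf{A smaller omission in (5).} Saying that everything "reduces to statements about complementary terms" needs an argument that the non-complementary (socle) constituents of $\pi(\psi',\varepsilon',(\rho,A-2,B;\eta_0))$ and $\pi(\psi'',\varepsilon'',(\rho,A-2,B-1;\eta))$ contribute nothing to the multiplicity of $\pi_{D,\lambda}$.

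The paper supplies this as follows. It first proves that any irreducible $\pi$ with $\circ_{x\in\mT_D}\Jac_{\rho,x}\pi=\pi_{D,\lambda}$ satisfies $\Jac_{\rho,-A}\circ\cdots\circ\Jac_{\rho,B}\pi=0$. To see this, embed $\pi\hookrightarrow\sigma'\rtimes\soc([A,D+1]_\rho\rtimes\pi_{D,\lambda})$; the second factor is a discrete series by Proposition~\ref{prop: cuspidal support of discrete series}. Applied at the smaller level, together with the induction hypothesis, this excludes the socle terms. Only then do the case analysis of \S\ref{subsubsection: complementary terms} and the identity $(-1)^{[\frac{A-B}{2}]}(-1)^{[\frac{A-B+1}{2}]}+(-1)^{A-B-1}=0$ produce the cancellation.

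\textbf{A remark on (1).} A bare cuspidal-support count is not enough, because constituents of an A-packet need not share a cuspidal support. The paper instead attaches the data $(\mE',D',\lambda')$ of Lemma~\ref{lemma: second construction} to a constituent of $\pi(\psi',\varepsilon',(\rho,A-2,B;\eta_0))$. It then compares $[-B,-C+2t]_\rho\rtimes\pi_{D',\lambda'}$ with $\pi_{D,\lambda}$ to force $C-B+1=2t$.
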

	\begin{proof}
		Set $l=2t$. By the computations in the proof of Lemma \ref{lemma: D<B-1 case}, we have:
		\begin{enumerate}
			\item[(a)] If $l\le C-B+1$, we have $$\circ_{x\in\mA_l}\Jac_{\rho,x}X_C=[-B,-C+l]\rtimes\circ_{x\in\mA_{C,l}}\Jac_{\rho,x}\pi(\psi',\varepsilon',(\rho,A,B+2;\eta_0)),$$ where the first factor of induced representation does not appear if $l=C-B+1$.
			\item[(b)] If $l>C-B+1$, we have $\circ_{x\in\mA_l}\Jac_{\rho,x}X_C=0$.
		\end{enumerate}
		Note that $\circ_{x\in\mT_D}\Jac_{\rho,x}$ factors through $\circ_{x\in\mA_l}\Jac_{\rho,x}$. Thus, when $l>C-B+1$, we have $\circ_{x\in\mT_D}\Jac_{\rho,x}X_C=0$. If $l\le C-B+1$, set
		$$\mT_{D,l}':=\begin{ytableau}
			B&\cdots&\cdots&\cdots&\cdots&\cdots&\scriptstyle A-2\\
			\vdots&&\vdots&&\vdots&&\vdots\\
			\scriptstyle B-2t+3&\cdots&\scriptstyle B-t+1&\cdots&\scriptstyle C-l+1&\cdots&\scriptstyle D+1\\
			\scriptstyle B-2t+2&\cdots&\scriptstyle B-t&\cdots&\scriptstyle C-l&\none&\none\\
			\vdots&&\vdots&\none&\none&\none&\none\\
			\scriptstyle -B+2&\cdots&\scriptstyle -B+t&\none&\none&\none&\none
		\end{ytableau}.$$
		Then we have $$\circ_{x\in\mT_D}\Jac_{\rho,x}X_C=[-B,-C+l]\rtimes\circ_{x\in\mT_{D,l}'}\Jac_{\rho,x}\pi(\psi',\varepsilon',(\rho,A-2,B;\eta_0)).$$
		Suppose that $\pi_{D,\lambda}$ is a subquotient of $\circ_{x\in\mT_D}\Jac_{\rho,x}X_C$. Then, there exists a subquotient $\pi'$ of $\pi(\psi',\varepsilon',(\rho,A-2,B;\eta_0))$ such that $\pi_{D,\lambda}$ is a subquotient of $[-B,-C+l]\rtimes\circ_{x\in\mT_{D,l}'}\Jac_{\rho,x}(\pi')$. Choose $\mE'$, $D'$ and $\lambda'$ for $\pi'$. Then, since $\circ_{x\in\mT_{D,l}'}\Jac_{\rho,x}(\pi')\neq 0$, we have $\mE'\cup-\mE'=\mT_{D,l'}\cup-\mT_{D,l'}$ and $\pi_{D,\lambda}$ is a subquotient of $[-B,-C+l]\rtimes\pi_{D',\lambda'}$. Note that $\mT_{D,l'}\cup-\mT_{D,l'}=\mT_{A-2,B,D}\cup-\mT_{A-2,B,D}\cup([B,C-l]\cup[-B,-C+l])$. Thus we conclude that $l=C-B+1$, $D=D'$ and $\lambda=\lambda'$. This completes the proof of $(1)$. And we can deduce $(2)$ easily from the $(a)$ above.
		
		\par For $(3)$, we denote by $\mT''$ the tableau obtained from $\mT_{D,l}'$ by removing $B-t+1,B-t+2,\dots,B-1$ from the row that starts with $B-2t+2$. We observe that $\circ_{x\in\mT_{D,l}'}\Jac_{\rho,x}=\Jac_{\rho,C-l}\circ\cdots\circ\Jac_{\rho,B-t+1}\circ(\circ_{x\in\mT''}\Jac_{\rho,x})$. By the key proposition (Proposition \ref{section: the key proposition}), we know that $\rho|\cdot|^x\rtimes\pi_{D,\lambda}$ is irreducible for all $x\le B-1$. In particular, we have $\rho|\cdot|^{B-t+1}\times\cdots\times\rho|\cdot|^{B-1}\rtimes\pi_{D,\lambda}\cong\rho|\cdot|^{-B+1}\times\cdots\times\rho|\cdot|^{-B+t-1}\rtimes\pi_{D,\lambda}$. Thus, the multiplicity of $\pi_{D,\lambda}$ in $\Jac_{\rho,B-1}\circ\cdots\circ\Jac_{\rho,B-t+1}\circ(\circ_{x\in\mT''}\Jac_{\rho,x})$ equals the multiplicity of $\pi_{D,\lambda}$ in $\Jac_{\rho,-B+t-1}\circ\cdots\circ\Jac_{\rho,-B+1}\circ(\circ_{x\in\mT''}\Jac_{\rho,x})=\circ_{x\in\mT_{A-2,B,D}}\Jac_{\rho,x}\pi(\psi',\varepsilon',(\rho,A-2,B;\eta_0))$. This completes the proof of $(3)$.
		
		\par For $(4)$, by the induction hypothesis and Proposition \ref{prop: cuspidal support of discrete series}, we have:
		$$\Jac_{\rho,-B+1}\circ\cdots\circ\Jac_{\rho,B}X_\eta=\begin{cases}
			\pi(\psi'',\varepsilon'',(\rho,A,B+1;\eta))&\text{if }\eta\eta_0=\varepsilon(\rho,B-1,B-1)\\
			0&\text{if }\eta\eta_0\neq\varepsilon(\rho,B-1,B-1).\\
		\end{cases}$$ Now, $(4)$ follows from the fact that $(\circ_{B\le x\le  A-1}\Jac_{\rho,x})\circ(\circ_{B+1\le x\le A}\Jac_{\rho,x})\pi(\psi'',\varepsilon'',(\rho,A,B+1;\eta))=\pi(\psi'',\varepsilon'',(\rho,A-2,B-1;\eta))$.
		
		\par For $(5)$, we first prove that, for an irreducible constituent $\pi$ of $\pi(\psi,\varepsilon)$, if $\circ_{x\in\mT_D}\Jac_{\rho,x}\pi=\pi_{D,\lambda}$, then we have $\Jac_{\rho,-A}\circ\Jac_{\rho,-A+1}\circ\cdots\circ\Jac_{\rho,B}(\pi)=0$. Note that $\circ_{x\in\mT_D}\Jac_{\rho,x}\pi=\circ_{x\in\mT_{A-1,B,D-1}}\Jac_{\rho,x}\circ(\Jac_{\rho,D+1}\circ\cdots\circ\Jac_{\rho,A})\pi$. Thus, there exists an irreducible representation $\sigma'$ of $\GL$ with $\Supp(\sigma')=\mT_{A-1,B,D-1}$ such that $\pi\xhookrightarrow{}\sigma'\rtimes\soc([A,D+1]_\rho\rtimes\pi_{D,\lambda})$. By Proposition \ref{prop: cuspidal support of discrete series}, $\soc([A,D+1]_\rho\rtimes\pi_{D,\lambda})$ is discrete. Thus, a direct computation implies that $\Jac_{\rho,-A}\circ\Jac_{\rho,-A+1}\circ\cdots\circ\Jac_{\rho,B}(\pi)=0$.
		
		\par Now, we start the computation of the multiplicity of $\pi_{D,\lambda}$ in $\circ_{x\in\mT_D}\Jac_{\rho,x}\pi(\psi,\varepsilon)$. By $(3)$ and $(4)$, we know it equals the multiplicity of $\pi_{D,\lambda}$ in 
		$$\begin{aligned}
			&(-1)^{D-B+1}\circ_{x\in\mT_{A-2,B,D}}\Jac_{\rho,x}\pi(\psi',\varepsilon',(\rho,A-2,B;\eta_0))\\&+(-1)^{[\frac{A-B+1}{2}]}\eta^{A-B+1}\eta_0^{A-B}\circ_{x\in\mT_{A-2,B-1,D}}\Jac_{\rho,x}\pi(\psi'',\varepsilon'',(\rho,A-2,B-1;\eta)),
		\end{aligned}$$ where $\eta\eta_0=\varepsilon(\rho,B-1,B-1)$.
		By the argument above and the induction hypothesis, the multiplicity of $\pi_{D,\lambda}$ in $\circ_{x\in\mT_{A-2,B,D}}\Jac_{\rho,x}\pi(\psi',\varepsilon',(\rho,A-2,B;\eta_0))$ equals the multiplicity of $\pi_{D,\lambda}$ in $\oplus_{\eta'=\pm1;\eta_0=\eta'^{A-B-1}(-1)^{\frac{(A-B-1)(A-B-2)}{2}}}\circ_{x\in\mT_{A-2,B,D}}\Jac_{\rho,x}\pi_{\eta'}$, where $\pi_{\eta'}=\pi(\psi',\varepsilon',\cup_{B\le C\le A-2}(\rho,C,C;(-1)^{C-B}\eta'))$.
		
		\par When $\eta'\neq\varepsilon(\rho,B-1,B-1)$, the $D',\lambda'$ associated to  $\pi_{\eta'}$ are $D'=A-2$ and $\lambda'=(-1)^{[B-1]}\varepsilon(\rho,B-1,B-1)$. It is clear that $\pi_{D,\lambda}$ occurs in $\circ_{x\in\mT_{A-2,B,D}}\pi_{\eta'}$ if and only if $D=D'=A-2$ and $\lambda=\lambda'$, in which case the multiplicity of $\pi_{D,\lambda}$ is $1$.
		
		\par When $\eta'=\varepsilon(\rho,B-1,B-1)$, we have $D'=A-2-2\inf(B,A-1-B)$. If $\pi_{D,\lambda}$ occurs in $\circ_{x\in\mT_{A-2,B,D}}\pi_{\eta'}$, we must have $D=D'$. Thus $t=1+\inf(B,A-1-B)$. Since we have $t\le B$, we must have $A-B\le B$. Hence $D=A-2-2(A-B-1)=B-(A-B)<B-1$. This gives a contradiction. Thus $\pi_{D,\lambda}$ will not occur in $\circ_{x\in\mT_{A-2,B,D}}\pi_{\eta'}$.
		
		\par Also, the multiplicity of $\pi_{D,\lambda}$ in $\circ_{x\in\mT_{A-2,B-1,D}}\Jac_{\rho,x}\pi(\psi'',\varepsilon'',(\rho,A-2,B-1;\eta))$ equals the multiplicity of $\pi_{D,\lambda}$ in $\oplus_{\eta''=\pm1;\eta=\eta''^{A-B}(-1)^{\frac{(A-B)(A-B-1)}{2}}}\circ_{x\in\mT_{A-2,B-1,D}}\Jac_{\rho,x}\pi_{\eta''}$, where $\pi_{\eta''}=\pi(\psi'',\varepsilon'',\cup_{B-1\le C\le A-2}(\rho,C,C;(-1)^{C-B+1}\eta'))$.
		
		\par We first consider the case when $B>1$. For $\eta''\neq\varepsilon(\rho,B-2,B-2)$, $\pi_{D,\lambda}$ occurs in $\circ_{x\in\mT_{A-2,B,D}}\Jac_{\rho,x}\pi_{\eta''}$ with multiplicity $1$ if $D=D''=A-2$ and $\lambda=\lambda''$, while $\pi_{D,\lambda}$ does not occur in $\circ_{x\in\mT_{A-2,B,D}}\Jac_{\rho,x}\pi_{\eta''}$ otherwise. Note that $\pi_{\eta''}$ appears if and only if $$(-\varepsilon(\rho,B-2,B-2))^{A-B}(-1)^{\frac{(A-B)(A-B-1)}{2}}=\eta=\eta_0\varepsilon(\rho,B-1,B-1),$$
		if and only if 
		$$\begin{aligned}
			\varepsilon(\rho,B-1,B-1)^{A-B-1}&(-1)^{\frac{(A-B)(A-B-1)}{2}}\\
			&=(-\varepsilon(\rho,B-1,B-1))^{A-B-1}(-1)^{\frac{(A-B-2)(A-B-1)}{2}}=\eta_0.
		\end{aligned}$$
		When the above equation holds, we have $\eta^{A-B+1}\eta_0^{A-B}=(-1)^{\frac{(A-B-2)(A-B-1)}{2}}=(-1)^{[\frac{A-B}{2}]}$. It is easy to verify that $(-1)^{[\frac{A-B}{2}]}(-1)^{[\frac{A-B+1}{2}]}+(-1)^{A-B-1}=0$. In conclusion, the multiplicity of $\pi_{D,\lambda}$ in $(-1)^{D-B+1}\circ_{x\in\mT_{A-2,B,D}}\Jac_{\rho,x}\pi_{\eta'}+(-1)^{[\frac{A-B+1}{2}]}\eta^{A-B+1}\eta_0^{A-B}\circ_{x\in\mT_{A-2,B-1,D}}\Jac_{\rho,x}\pi_{\eta''}$ is $0$ for $\eta'=-\varepsilon(\rho,B-1,B-1)$ and $\eta''=-\varepsilon(\rho,B-2,B-2)$.
		
		\par For $\eta''=\varepsilon(\rho,B-2,B-2)$, we have $D''=A-2-2\inf(B-1,A-B)$. If $\pi_{D,\lambda}$ occurs in $\circ_{x\in\mT_{A-2,B-1,D}}\Jac_{\rho,x}\pi_{\eta''}$, then we have $D=D''=A-2-2\inf(B-1,A-B)$. Note that $A-B+1<B$ implies that $D<B-1$. Thus we have $B\le A-B+1$ with $t=B$, in which case $\pi_{D,\lambda}$ occurs with multiplicity $1$ in $(-1)^{[\frac{A-B+1}{2}]}\eta^{A-B+1}\eta_0^{A-B}\circ_{x\in\mT_{A-2,B-1,D}}\Jac_{\rho,x}\pi_{\eta''}$. Thus, we have concluded the proof of $(5)$ in the case when $B>1$. When $B=1$, for any $\eta''\in\left\{\pm1\right\}$, we have $D''=A-2$. By a similar argument as above, we can also deduce $(5)$.
	\end{proof}
	
	\subsubsection{End of the proof in this particular case}
	
	\begin{proposition}\label{prop: end of the proof in particular case}
		Theorem \ref{theorem: construction of A-packet non-negative DDR case} is true under the hypotheses of this section. That is, Theorem \ref{theorem: construction of A-packet non-negative DDR case} is true if $(\rho,A,B)$ is the only triple of $\Jord(\psi)$ satisfying $A>B$ and $b_{\rho,\psi',\varepsilon'}=2B-1$.
	\end{proposition}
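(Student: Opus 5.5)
By the remark after Lemma \ref{lemma: lemma on the main term}, it suffices to verify two things. First, every irreducible constituent $\pi$ of $\pi(\psi,\varepsilon)$ with $\Jac_{\rho,-A}\circ\cdots\circ\Jac_{\rho,B}\pi=0$ is one of the complementary terms $\pi(\psi',\varepsilon',\cup_{B\le C\le A}(\rho,C,C;(-1)^{C-B}\eta))$. Second, each complementary term satisfying the sign condition occurs with multiplicity one. The multiplicity statement is exactly Lemma \ref{lemma: complementary terms}, and the case $B<1$ is Corollary \ref{coro: B<1 case}. So we assume $B\ge1$, fix such a $\pi$, and attach to it the data $(\mE,D,\lambda)$ from Lemma \ref{lemma: second construction} with $b_{\rho,\psi_{D,\lambda},\varepsilon_{D,\lambda}}=2D+1$. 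By Lemma \ref{lemma: second construction - converse}, $D$ and $\lambda$ are determined by $\pi$. We put $t=\frac{A-D}{2}$.

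We split on $t$. If $t=0$, then $D=A$, so Lemma \ref{lemma: first construction} gives $t_1=0$, and the analysis of \S\ref{subsubsection: complementary terms} shows that $\pi$ is a complementary term. Now suppose $t>0$. We choose $\mE$ with a minimal number of strictly positive elements. Lemma \ref{lemma: first step - lemma 1} rules out $-A\in\mE$, so Lemma \ref{lemma: non-vanishing of certain Jacquet module} gives $t\le B$ and $\mE=\mE_D$. If $D<B-1$, Corollary \ref{coro: D<B-1 case} finishes the case directly.

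The remaining case is $D\ge B-1$ with $0<t\le B$. By Lemma \ref{lemma: parabolic induction is SI when D ge B-1}, $\sigma_\mT\rtimes\pi_{D,\lambda}$ is SI. Since $\pi\hookrightarrow\sigma_\mT\rtimes\pi_{D,\lambda}$, we get $\pi=\pi_{\mT,D,\lambda}$. Frobenius reciprocity then gives that $\pi_{D,\lambda}$ appears in $\circ_{x\in\mT_D}\Jac_{\rho,x}\pi$, hence with positive multiplicity in $\circ_{x\in\mT_D}\Jac_{\rho,x}\pi(\psi,\varepsilon)$. Lemma \ref{lemma: D ge B-1 case}(5) forces this multiplicity to vanish unless $t=B$ and $\lambda=-\varepsilon(\rho,C_{\min},C_{\min})$. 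In that exceptional case the same lemma identifies $\pi_{\mT_D,D,\lambda}$ with a complementary term, matching item (1) of the list in \S\ref{subsubsection: complementary terms}.

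The main obstacle is making this multiplicity argument airtight. The constituents of $\pi(\psi,\varepsilon)$ are a priori given only virtually, through the signed expansion in Theorem \ref{theorem: Mp version of Theorem 7.5 in Xu_moeglin}, so we must make sure that a positive coefficient for the honest representation $\pi$ cannot be cancelled by other terms. For this we use the identity $\circ_{x\in\mT_D}\Jac_{\rho,x}\pi=\pi_{D,\lambda}$ established in the proof of part (5). It shows that every constituent contributing $\pi_{D,\lambda}$ lies outside the main term, so all contributions have the same sign and no cancellation can occur.

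Combining the cases $t=0$, $D<B-1$, and $D\ge B-1$ proves the first statement. The second is Lemma \ref{lemma: complementary terms}. Together these give the proposition.
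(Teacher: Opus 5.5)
Your proposal is correct and follows the paper's proof essentially step for step. The reduction goes through the remark after Lemma \ref{lemma: lemma on the main term} and Lemma \ref{lemma: complementary terms}, then Corollary \ref{coro: D<B-1 case} handles $D<B-1$, and for $D\ge B-1$ you reduce to $\mE=\mE_D$ and conclude with Lemma \ref{lemma: parabolic induction is SI when D ge B-1} and part (5) of Lemma \ref{lemma: D ge B-1 case}. Your separate treatment of $t=0$ via \S\ref{subsubsection: complementary terms} is a welcome addition, since Lemma \ref{lemma: non-vanishing of certain Jacquet module} and part (5) tacitly require $t>0$.

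The cancellation worry in your last paragraph is moot. $\pi(\psi,\varepsilon)$ is an honest representation by Theorem \ref{theorem: pi(psi,varepsilon) is a finite length representation}, and Jacquet functors are exact, so $\pi\le\pi(\psi,\varepsilon)$ already gives positive multiplicity. Also, part (5) does not establish $\circ_{x\in\mT_D}\Jac_{\rho,x}\pi=\pi_{D,\lambda}$; it takes this as a hypothesis, and the identity itself comes from the SI computation.
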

	\begin{proof}
		By the remark after Lemma \ref{lemma: lemma on the main term} and Lemma \ref{lemma: complementary terms}, it is sufficient to prove that, for an irreducible constituent $\pi$ of $\pi(\psi,\varepsilon)$, if $\Jac_{\rho,-A}\circ\Jac_{\rho,-A+1}\circ\cdots\circ\Jac_{\rho,B}(\pi)=0$, then $\pi$ is isomorphic to one of the complementary terms. By Lemma \ref{lemma: second construction}, there exists a triple $(\mE,\lambda,D)$ satisfying the four conditions of Lemma \ref{lemma: second construction} with an additional condition that $b_{\rho,\psi_{D,\lambda},\varepsilon_{D,\lambda}}=2D+1$. If $D<B-1$, Corollary \ref{coro: D<B-1 case} gives the desired result. When $D\ge B-1$, by Lemma \ref{lemma: first step - lemma 1} and Lemma \ref{lemma: non-vanishing of certain Jacquet module}, we may further assume that $\mE=\mE_D$. Now, Lemma \ref{lemma: parabolic induction is SI when D ge B-1} implies that $\pi=\soc(\sigma_{\mE_D}\rtimes\pi_{D,\lambda})$. Thus $\pi_{D,\lambda}$ is a subquotient $\circ_{x\in\mE_D}\Jac_{\rho,x}\pi(\psi,\varepsilon)$. Now, the $(5)$ of Lemma \ref{lemma: D ge B-1 case} implies that $\pi=\pi_{\mE_D,D,\lambda}$ is isomorphic to a complementary term. This completes the proof.
	\end{proof}
	
	\subsection{Extension}
	
	\par In this subsection, we extend Proposition \ref{prop: end of the proof in particular case} above to a more general setting. We still assume that $\psi'$ is discrete, i.e., $(\rho,A,B)$ is the only triple of $\Jord(\psi)$ satisfying $A>B$. In Proposition \ref{prop: end of the proof in particular case}, we assume that $b_{\rho,\psi',\varepsilon'}=2B-1$. In this subsection, we only assume a weaker condition that $a_{\rho,\psi',\varepsilon'}\ge2B+1$, which implies that $a_{\rho,\psi',\varepsilon'}>2A+1$ since $\psi$ is a non-negative DDR. 
	
	\begin{proposition}\label{prop: extension}
		Theorem \ref{theorem: construction of A-packet non-negative DDR case} is true under the hypotheses of this section.
	\end{proposition}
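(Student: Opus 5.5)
We reduce to Proposition \ref{prop: end of the proof in particular case} by induction on the gap between $b_{\rho,\psi',\varepsilon'}$ and $2B-1$. Set $b'=b_{\rho,\psi',\varepsilon'}$. The hypothesis $a_{\rho,\psi',\varepsilon'}\ge 2B+1$ forces every Jordan block $(\rho,C,C)$ of $\psi'$ with $C<B$ to lie in the ``alternating chain'' below $b'$. The only way we can fail to be in the particular case is that $b'<2B-1$. This happens exactly when there is a first block $(\rho,E,E)$ with $2E+1\le 2B-1$ such that either $(\rho,E-1,E-1)\notin\Jord(\psi')$ (for $E-1\ge0$), or the sign condition of Definition \ref{def: a_rho phi epsilon} fails at $E$. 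In either case $2E+1=a_{\rho,\psi',\varepsilon'}$ would be $<2B+1$, contradicting the hypothesis. Hence the only remaining possibility is that the blocks of $\psi'$ on the $\rho$-line below $B$ already satisfy conditions (1)--(3) of Definition \ref{def: a_rho phi epsilon} up to the largest one below $B$, but that largest one is $(\rho,E,E)$ with $2E+1<2B-1$.

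We then modify $\psi'$ to fill the gap. Following M\oe glin's reduction in \cite[\S 5]{Mœeglin2009_paquets}, let $\psi'_1$ be obtained by replacing the top block $(\rho,E,E)$ below $B$ with $(\rho,B-1,B-1)$, keeping the same sign. Let $\psi_1$ be the corresponding modification of $\psi$. For $\psi_1$ we have $b_{\rho,\psi_1',\varepsilon_1'}=2B-1$, so Proposition \ref{prop: end of the proof in particular case} applies to $\pi(\psi_1,\varepsilon_1)$.

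Next we transport the result back using derivatives. Let $D=\Jac_{\rho,E+1}\circ\cdots\circ\Jac_{\rho,B-1}$, which removes the segment $[B-1,E+1]_\rho$. We check the following two things.

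\begin{enumerate}
\item[(a)] $D\pi(\psi_1,\varepsilon_1)=\pi(\psi,\varepsilon)$. We prove this at the level of the distributions $T_{\psi_1,s}$, using Lemma \ref{lemma: commutation of spectral transfer and partial Jacquet module} together with \cite[\S 6]{Xu2017_cuspidal}, exactly as in Lemma \ref{lemma: one step in reduction to non-negative DDR}. The derivatives involved are highest derivatives, so Proposition \ref{prop: highest rho-derivative is irreducible in non-self-dual case} shows that $D$ is injective on irreducible constituents.
\item[(b)] $D$ sends each term on the right-hand side of Theorem \ref{theorem: construction of A-packet non-negative DDR case} for $\psi_1$ to the corresponding term for $\psi$. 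For the main term $\soc([B,-A]_\rho\rtimes\pi(\psi_1',\varepsilon_1',(\rho,A-1,B+1;\eta_0)))$ we use Lemma \ref{lemma: inversion of scole}. For the complementary terms, which are discrete series, we use Proposition \ref{prop: partial Jacquet module of discrete series}(2).
\end{enumerate}

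Step (b) is the main obstacle. We must verify the hypotheses of Lemma \ref{lemma: inversion of scole}: that $[B,-A]_\rho$ and $[B-1,E+1]_\rho$ are in the required position, and that the relevant Jacquet modules vanish. We would establish these vanishing conditions from the Jacquet module property for discrete series and from Lemma \ref{lemma: highest derivative of representations in a A packet}.

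If (b) turns out to be too delicate, we would instead argue through the theta correspondence. By Proposition \ref{prop: theta lifts of extended multi-segment} and Lemma \ref{lemma: compatibility of theta lifts and highest derivative}, $\theta_{-\alpha}$ commutes with $D$, and the classical analogue \cite[Theorem 4.1]{Mœeglin2009_paquets} already gives the decomposition. Howe duality (Theorem \ref{theorem: Howe's duality}) then transfers the identity back to the metaplectic side.
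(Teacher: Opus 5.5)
There is a genuine gap: the parameter $\psi_1$ you build is not in the particular case. By condition (1) in the definition of $b_{\rho,\phi,\varepsilon}$ (\S\ref{section: the key proposition}), $b_{\rho,\psi_1',\varepsilon_1'}=2B-1$ requires every one of $2B-3,2B-5,\dots$ down to $1$ or $2$ to lie in $\Jord_\rho(\psi_1')$. In other words, $\psi_1'$ would need a full alternating chain of $\rho$-blocks up to $2B-1$.

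Replacing only the top block $(\rho,E,E)$ by $(\rho,B-1,B-1)$, with $E\le B-2$, leaves $(\rho,B-2,B-2)\notin\Jord(\psi_1')$. So the chain is broken: $b_{\rho,\psi_1',\varepsilon_1'}\le 2E-1$ and $a_{\rho,\psi_1',\varepsilon_1'}=2B-1<2B+1$. Hence $\psi_1$ fails even the hypothesis of this subsection. It sits in the regime $a>b+2$ of the paper's second reduction, which handles such a parameter by sliding that block back down one step at a time, i.e.\ by reducing it to $\psi$ itself. Your reduction therefore runs in a circle.

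Two further problems compound this. First, when $\psi'$ has no $\rho$-block below $B$ (so $b\in\{0,-1\}$) there is no $E$ to move at all. Second, no other choice of $\psi_1$ helps. The particular case needs more $\rho$-blocks below $B$ than $\psi'$ has, and by Proposition \ref{prop: partial Jacquet module of discrete series}(3)--(5) the relevant Jacquet modules of an alternating chain vanish. So derivatives cannot shorten such a chain to the one in $\psi$.

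Your theta fallback is also circular. Nothing available relates $\theta_{-\alpha}$ of the constituents of the endoscopically defined $\pi(\psi,\varepsilon)$ to $\Pi_{\psi_\alpha}$. Proposition \ref{prop: theta lifts of extended multi-segment} computes $\theta_{-\alpha}(\pi(\mE))$, but identifying the $\pi(\mE)$ with the constituents of $\pi(\psi,\varepsilon)$ is exactly what Theorem \ref{theorem: construction of A-packet non-negative DDR case} asserts.

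The missing idea is to move the non-discrete block down rather than the chain up. Set $b=b_{\rho,\psi',\varepsilon'}$ and let $\mT$ be the tableau with rows $[B,A],[B-1,A-1],\dots,[\frac{b+3}{2},A-B+\frac{b+3}{2}]$. Then Proposition \ref{prop: reduction to non-negative DDR} gives $\circ_{x\in\mT}\Jac_{\rho,x}\pi(\psi,\varepsilon)=\pi(\psi',\varepsilon',(\rho,A-B+\frac{b+1}{2},\frac{b+1}{2};\eta_0))$. This is in the particular case, since its new $B$ is $\frac{b+1}{2}$; this covers $b\in\{0,-1\}$ as well.

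Because this derivative goes from the unknown packet to the known one, your step (b) has to be replaced by a lifting argument in three parts:
\begin{enumerate}
\item[(i)] As in Lemma \ref{lemma: non-vanishing of certain Jacquet module}, a constituent $\pi$ with $\Jac_{\rho,-A}\circ\cdots\circ\Jac_{\rho,B}\pi=0$ satisfies $\circ_{x\in\mT}\Jac_{\rho,x}\pi\neq0$.
\item[(ii)] Each complementary term $X_\eta$ equals $\soc(\sigma_\mT\rtimes X'_\eta)$, where $X'_\eta$ is a complementary term of the smaller packet (Proposition \ref{prop: cuspidal support of discrete series}). This gives multiplicity exactly one.
\item[(iii)] Suppose $\pi'=\circ_{x\in\mT}\Jac_{\rho,x}\pi$ were not complementary. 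Then it lies in the main term of the smaller packet. Commuting the Jacquet functors, and using irreducibility of $\rho|\cdot|^x\rtimes\pi''$ for $A-s+1\le x\le A$ with $s=B-\frac{b+1}{2}$ to trade $\Jac_{\rho,x}$ for $\Jac_{\rho,-x}$, yields $\Jac_{\rho,-A}\circ\cdots\circ\Jac_{\rho,B}\pi\neq0$, a contradiction.
\end{enumerate}
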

	\begin{proof}
		Write $b=b_{\rho,\psi',\varepsilon'}$. Consider 
		$$\mT:=\begin{ytableau}
		    B&\cdots&A\\
		    \vdots&&\vdots\\
		    \frac{b+3}{2}&\cdots&\underset{+\frac{b+3}{2}}{\scriptstyle A-B}\\
		\end{ytableau}.$$ Then, by Proposition \ref{prop: reduction to non-negative DDR}, we have $$\circ_{x\in\mT}\Jac_{\rho,x}\pi(\psi,\varepsilon)=\pi(\psi',\varepsilon',(\rho,\frac{b+1}{2}+A-B,\frac{b+1}{2};\eta_0)).$$
		By Proposition \ref{prop: end of the proof in particular case}, we know that Theorem \ref{theorem: construction of A-packet non-negative DDR case} is true for $\pi(\psi',\varepsilon',(\rho,\frac{b+1}{2},\frac{b+1}{2};\eta_0))$. On the other hand, using the same argument as in the proof of Lemma \ref{lemma: non-vanishing of certain Jacquet module}, we can prove that: for an irreducible constituent $\pi$ of $\pi(\psi,\varepsilon)$, if $\Jac_{\rho,-A}\circ\Jac_{\rho,-A+1}\circ\cdots\circ\Jac_{\rho,B}(\pi)=0$, then $\circ_{x\in\mT}\Jac_{\rho,x}\pi\neq0$.
		
		\par For $\eta\in\left\{\pm1\right\}$ satisfying $\eta_0=\eta^{A-B+1}(-1)^{\frac{(A-B+1)(A-B)}{2}}$, let $X_\eta$ be the corresponding complementary term of $\pi(\psi,\varepsilon)$. Then, a direct computation shows that $\circ_{x\in\mT}X_\eta$ is a complementary term of $\pi(\psi',\varepsilon',(\rho,\frac{b+1}{2},\frac{b+1}{2};\eta_0))$. Thus, the multiplicity of $X_\eta$ is at most one. Conversely, there exists an irreducible constituent $\pi$ of $\pi(\psi,\varepsilon)$ such that $\circ_{x\in\mT}\pi=X'_\eta$ is a complementary term of $\pi(\psi',\varepsilon',(\rho,\frac{b+1}{2},\frac{b+1}{2};\eta_0))$. By the theory of derivatives and socles, we have $\pi=\soc(\sigma_{\mT}\rtimes X_\eta')$. By Proposition \ref{prop: cuspidal support of discrete series}, we have $\soc(\sigma_{\mT}\rtimes X_\eta')=X_\eta$. Thus, the multiplicity of $X_\eta$ in $\pi(\psi,\varepsilon)$ is exactly one.
		
		\par For an irreducible constituent $\pi$ of $\pi(\psi,\varepsilon)$ satisfying $\Jac_{\rho,-A}\circ\Jac_{\rho,-A+1}\circ\cdots\circ\Jac_{\rho,B}(\pi)=0$, write $\pi'=\circ_{x\in\mT}\Jac_{\rho,x}\pi$. By the theory of derivatives, we know that $\pi'$ is irreducible. By the remark after Lemma \ref{lemma: lemma on the main term}, it is sufficient to prove that $\pi'$ is a complementary term of $\pi(\psi',\varepsilon',(\rho,\frac{b+1}{2},\frac{b+1}{2};\eta_0))$.
		
		\par Set $s:=B-\frac{b+1}{2}$. If $\pi''$ is not a complementary term, then $\pi''\le\soc([B-s,-A+s]\rtimes\pi(\psi',\varepsilon',(\rho,A-s-1,B-s+1;\eta_0)))$. Thus, we have $\circ_{-A+s\le x\le B-s}\Jac_{\rho,x}\pi''\neq0$. Denote by $\mT'$ the tableau obtained from $\mT$ by removing the first and last column. Since $\Jac_{\rho,x}\Jac_{\rho,y}=\Jac_{\rho,y}\Jac_{\rho,x}$ whenever $|x-y|\neq1$, we have $(\circ_{-A+s\le x\le B-s}\Jac_{\rho,x})\circ(\circ_{x\in\mT}\Jac_{\rho,x})\pi=(\circ_{A-s+1\le x\le A}\Jac_{\rho,x})\circ(\circ_{x\in\mT'}\Jac_{\rho,x})\circ(\circ_{-A+s\le x\le B}\Jac_{\rho,x})\pi=\pi'$.
		
		\par By the key proposition (Proposition \ref{proposition: key proposition}), we know that $\rho|\cdot|^x\rtimes\pi''$ are irreducible for all $A-s+1\le x\le A$. For an irreducible representation $\sigma$, we have $\Jac_{\rho,x}\sigma=\pi''$ if and only if $\Jac_{\rho,-x}\sigma=\pi''$. Thus, we have $(\circ_{A-s+1\le x\le A}\Jac_{\rho,x})\circ(\circ_{x\in\mT'}\Jac_{\rho,x})\circ(\circ_{-A+s\le x\le B}\Jac_{\rho,x})\pi=(\circ_{-A\le x\le -A+s-1}\Jac_{\rho,x})\circ(\circ_{x\in\mT'}\Jac_{\rho,x})\circ(\circ_{-A+s\le x\le B}\Jac_{\rho,x})\pi=(\circ_{x\in\mT'}\Jac_{\rho,x})\circ(\circ_{-A\le x\le B}\Jac_{\rho,x})\pi=\pi'$. In particular, we have $\circ_{-A\le x\le B}\Jac_{\rho,x}\pi\neq 0$, which gives a contradiction.
	\end{proof}	
	
	\subsection{Reduction}
	
	\par In this subsection, we will finish the proof of Theorem \ref{theorem: construction of A-packet non-negative DDR case} in the general case.
	
	\subsubsection{First reduction}
	
	\par In this subsection, we assume that $\Jord(\psi)$ contains at least two triples $(\rho,A,B)$, $(\rho',A',B')$ such that $A>B$ and $A'>B'$. We set $\eta_0=\varepsilon(\rho,A,B)$, $\eta_0'=\varepsilon(\rho,A',B')$ and denote by $(\psi'',\varepsilon'')$ the pair obtained from $(\psi,\varepsilon)$ by removing $(\rho,A,B)$ and $(\rho',A',B')$. Without loss of generality, we assume that $A>A'$, which implies that $A\ge B>A'\ge B'\ge0$ since $\psi$ is a non-negative DDR. In particular, we have $[B',-A']\subset[B,-B]$.
	
	\par Write $\lambda_{\eta'}:=(-1)^{[\frac{A'-B'+1}{2}]}\eta'^{A'-B'+1}\eta_0'^{A'-B'}$, $\delta_{C'}:=[B',-C']_{\rho'}$, $\delta_{C'}':=[C'+1,A']_{\rho'}$, and $\delta:=[B,-A]_\rho$. By Theorem \ref{theorem: Mp version of Theorem 7.5 in Xu_moeglin}, we have
	$$\begin{aligned}
		\pi(\psi,\varepsilon)=&\bigoplus_{C'\in\left(B',A'\right]}(-1)^{A-C}\delta_{C'}\rtimes\soc(\delta_{C'}'\rtimes\pi(\psi'',\varepsilon'',(\rho,A,B;\eta_0),(\rho',A'-1,B'+1;\eta_0)))\\
		&\bigoplus_{\eta'=\pm1}\lambda_{\eta'}\pi(\psi'',\varepsilon'',(\rho,A,B;\eta_0),(\rho,A',B'+1;\eta'),(\rho,B',B';\eta'\eta_0')).
	\end{aligned}$$
	Furthermore, applying the induction hypothesis to $\pi(\psi,\varepsilon)$, we can write $\pi(\psi,\varepsilon)$ as a sum of the following terms:
	\begin{align}
		&\begin{aligned}
			\oplus_{C'}&(-1)^{A-C}\delta_{C'}\rtimes\soc(\delta_{C'}'\rtimes\soc(\delta\rtimes\pi_1)))\\
			&\text{with }\pi_1=\pi(\psi'',\varepsilon'',(\rho,A-1,B+1;\eta_0),(\rho',A'-1,B'+1;\eta_0').
		\end{aligned}\label{equation: 9.1}\\
		&\begin{aligned}
			\oplus_{C',\eta}&(-1)^{A-C}\delta_{C'}\rtimes\soc(\delta_{C'}'\rtimes\pi_2))\\
			&\text{with }\pi_2=\pi(\psi'',\varepsilon'',\cup_{C\in[B,A]}(\rho,C,C;(-1)^{C-B}\eta),(\rho',A'-1,B'+1;\eta_0').
		\end{aligned}\label{equation: 9.2}\\
		&\begin{aligned}
			\oplus_{\eta'}&\lambda_{\eta'}\soc(\delta\rtimes\pi_3))\\
			&\text{with }\pi_3=\pi(\psi'',\varepsilon'',(\rho,A-1,B+1;\eta_0),(\rho,A',B'+1;\eta'),(\rho,B',B';\eta'\eta_0').
		\end{aligned}\label{equation: 9.3}\\
		&\begin{aligned}
			\oplus_{\eta',\eta}&\lambda_{\eta'}\soc(\delta\rtimes))\\
			&\text{ with }\pi_4=\pi(\psi'',\varepsilon'',\cup_{C\in[B,A]}(\rho,C,C;(-1)^{C-B}\eta),(\rho,A',B'+1;\eta'),(\rho,B',B';\eta'\eta_0').\\
		\end{aligned}\label{equation: 9.4}
	\end{align}
	By Theorem \ref{theorem: Mp version of Theorem 7.5 in Xu_moeglin}, the sum of (\ref{equation: 9.2}) and (\ref{equation: 9.4}) is $$\oplus_\eta\pi(\psi'',\varepsilon'',\cup_{C\in[B,A]}(\rho,C,C;(-1)^{C-B}\eta),(\rho,A',B';\eta_0')),$$ which are exactly the complementary terms of $\pi(\psi,\varepsilon)$.
	On the other hand, Lemma \ref{lemma: inversion of scole} implies that 
	$$\begin{aligned}
		&\soc(\delta_{C'}'\rtimes\soc(\delta\rtimes\pi(\psi'',\varepsilon'',(\rho,A-1,B+1;\eta_0),(\rho',A'-1,B'+1;\eta_0'))))\\
		&=\soc(\delta\rtimes\soc(\delta_{C'}'\rtimes\pi(\psi'',\varepsilon'',(\rho,A-1,B+1;\eta_0),(\rho',A'-1,B'+1;\eta_0'))))
	\end{aligned}$$
	Further, by Lemma \ref{lemma: subquotient of parabolic induction}, $D=\Jac_{\rho,-A}\circ\cdots\circ\Jac_{\rho,B}$ gives a bijection between the irreducible subquotients of $\delta_C\rtimes\soc(\delta\rtimes\soc(\delta_{C'}'\rtimes\pi(\psi'',\varepsilon'',(\rho,A-1,B+1;\eta_0),(\rho',A'-1,B'+1;\eta_0'))))$ and those of $\delta_C\rtimes\soc(\delta_{C'}'\rtimes\pi(\psi'',\varepsilon'',(\rho,A-1,B+1;\eta_0),(\rho',A'-1,B'+1;\eta_0')))$. In particular, for an irreducible constituent $\pi$ of $\pi(\psi,\varepsilon)$, that occurs in (\ref{equation: 9.1}) or (\ref{equation: 9.3}), we have $D(\pi)\neq0$. Thus, by the remark after Lemma \ref{lemma: lemma on the main term}, we know that Theorem \ref{theorem: construction of A-packet non-negative DDR case} is true for $\pi(\psi,\varepsilon)$.
	
	\subsubsection{Second reduction}
	
	\par By the previous subsection, it is sufficient to consider the case that $(\rho,A,B)$ is the only triple satisfying $A>B$, i.e., $\psi'$ is discrete. Write $a=a_{\rho,\psi',\varepsilon'}$, $b=b_{\rho,\psi',\varepsilon'}$. If $a\ge 2B+1$, Theorem \ref{theorem: construction of A-packet non-negative DDR case} follows from Proposition \ref{prop: extension}. Thus, we may assume that $a\le 2B-1$.
	
	\par In this subsection, we first consider the case when $a>b+2$ or $a=2$. We denote by $(\psi'',\varepsilon'')$ the pair obtained from $(\psi,\varepsilon)$ by removing $(\rho,A,B)$ and $(\rho,\frac{a-1}{2},\frac{a-1}{2})$. Then, for $C\in(B,A]$, write $\pi_x=\pi(\psi'',\varepsilon'',(\rho,A-1,B+1;\eta_0),(\rho,\frac{x-1}{2},\frac{x-1}{2};\eta_a)$, we have $$\begin{aligned}
		[B,-C]_\rho\rtimes\soc([C+1,A]_\rho\rtimes\pi_a))&=[B,-C]_\rho\rtimes\soc([C+1,A]_\rho\rtimes\soc(\rho|\cdot|^{\frac{a-1}{2}}\rtimes\pi_{a-2})))\\
		&=[B,-C]_\rho\rtimes\soc(\rho|\cdot|^{\frac{a-1}{2}}\rtimes\soc([C+1,A]_\rho\rtimes\pi_{a-2})))
	\end{aligned}$$
	By the proof of Lemma \ref{lemma: subquotient of parabolic induction}, we have
	$$\begin{aligned}
		[B,-C]_\rho\rtimes&\soc(\rho|\cdot|^{\frac{a-1}{2}}\rtimes\soc([C+1,A]_\rho\rtimes\pi_{a-2})))\\
		&=\soc(\rho|\cdot|^{\frac{a-1}{2}}\times[B,-C]_\rho\rtimes\soc([C+1,A]_\rho\rtimes\pi_{a-2})))
	\end{aligned}$$
	Applying a similar argument to $\pi(\psi',\varepsilon',\cup_{B\le C\le A}(\rho,C,C;(-1)^{C-B}\eta))$, we conclude that
	$$\pi(\psi,\varepsilon)=\soc(\rho|\cdot|^{\frac{a-1}{2}}\rtimes\pi(\psi'',\varepsilon'',(\rho,A,B;\eta_0),(\rho,\frac{a-3}{2},\frac{a-3}{2};\eta_a))).$$
	By induction on $a-b$, we may assume that Theorem \ref{theorem: construction of A-packet non-negative DDR case} is true for $$\pi(\psi'',\varepsilon'',(\rho,A,B;\eta_0),(\rho,\frac{a-3}{2},\frac{a-3}{2};\eta_a)).$$
	Then, by Proposition \ref{prop: cuspidal support of discrete series} and Lemma \ref{lemma: inversion of scole}, it is not hard to see that Theorem \ref{theorem: construction of A-packet non-negative DDR case} is also true for $\pi(\psi,\varepsilon)$.
	
	\subsubsection{Third reduction}
	
	\par Now, we consider the case when $a=b+2$ with $b>1$. We denote by $\varepsilon_-$ the character obtained by changing $\varepsilon$ on the two blocks $(\rho,\frac{a-1}{2},\frac{a-1}{2})$ and $(\rho,\frac{b-1}{2},\frac{b-1}{2})$ to its opposite and we denote by $(\widetilde{\psi},\widetilde{\varepsilon})$ the pair obtained from $(\psi,\varepsilon)$ by removing $(\rho,\frac{a-1}{2},\frac{a-1}{2})$ and $(\rho,\frac{b-1}{2},\frac{b-1}{2})$.
	
	\par Using a similar argument as in the previous subsection, we can deduce that $$\pi(\psi,\varepsilon)\oplus\pi(\psi,\varepsilon_-)=\soc([\frac{a-1}{2},-\frac{b-1}{2}]_\rho\rtimes\pi(\widetilde{\psi},\widetilde{\varepsilon})).$$
	By induction on the rank of $\widetilde{G}_{2n}$, we may assume that Theorem \ref{theorem: construction of A-packet non-negative DDR case} is true for $\pi(\widetilde{\psi},\widetilde{\varepsilon})$. Hence, setting $\delta=[B,-A]_\rho$, we have $$\begin{aligned}
		\pi(\widetilde{\psi},\widetilde{\varepsilon})=&\soc(\delta\rtimes\pi(\widetilde{\psi}',\widetilde{\varepsilon}',(\rho,A-1,B+1;\eta_0)))\\
		&\oplus_\eta\pi(\widetilde{\psi}',\widetilde{\varepsilon}',\cup_{C\in[B,A]}(\rho,C,C;(-1)^{C-B}\eta)).
	\end{aligned}$$
	By Proposition \ref{prop: cuspidal support of discrete series} and Lemma \ref{lemma: inversion of scole}, we have:
	$$\begin{aligned}
		&\soc([\frac{a-1}{2},-\frac{b-1}{2}]_\rho\rtimes\soc(\delta\rtimes\pi(\widetilde{\psi}',\widetilde{\varepsilon}',(\rho,A-1,B+1;\eta_0))))\\
		&=\soc(\delta\rtimes\pi(\psi',\varepsilon',(\rho,A-1,B+1;\eta_0)))\oplus\soc(\delta\rtimes\pi(\psi',\varepsilon_-',(\rho,A-1,B+1;\eta_0))).
	\end{aligned}$$
	and 
	$$\begin{aligned}
		&\soc([\frac{a-1}{2},-\frac{b-1}{2}]_\rho\rtimes\pi(\widetilde{\psi}',\widetilde{\varepsilon}',\cup_{C\in[B,A]}(\rho,C,C;(-1)^{C-B}\eta)))\\
		&=\pi(\psi',\varepsilon',\cup_{C\in[B,A]}(\rho,C,C;(-1)^{C-B}\eta))\oplus\pi(\psi',\varepsilon_-',\cup_{C\in[B,A]}(\rho,C,C;(-1)^{C-B}\eta)).
	\end{aligned}$$
	By induction on $b$, we may assume that Theorem \ref{theorem: construction of A-packet non-negative DDR case} is true for $\pi(\psi,\varepsilon_-)$, from which we can deduce that Theorem \ref{theorem: construction of A-packet non-negative DDR case} is true for $\pi(\psi,\varepsilon)$.
	
	\subsubsection{Final case}
	
	\par Now, there remains only one case that $a=3$ and $b=1$. In this subsection, we will handle this case and thus complete the proof of Theorem \ref{theorem: construction of A-packet non-negative DDR case}. In this case, we still have 
	$$\pi(\psi,\varepsilon)\oplus\pi(\psi,\varepsilon_-)=\soc([1,0]_\rho\rtimes\pi(\widetilde{\psi},\widetilde{\varepsilon})).$$ Thus, it is sufficient to prove that, for an irreducible constituent $\pi$ of $\pi(\psi,\varepsilon)$, if $\Jac_{\rho,-A}\circ\Jac_{\rho,-A+1}\circ\cdots\circ\Jac_{\rho,B}\pi=0$, then $\pi$ is isomorphic to one of the complementary terms.
	
	\begin{lemma}\label{lemma: final case}
		For an irreducible constituent $\pi$ of $\pi(\psi,\varepsilon)$, the following is true:
		\begin{enumerate}
			\item[(1)] There exist $D\in\ZZ_{\ge0}$, $\lambda\in\left\{\pm1\right\}$ and a totally ordered set $\mE$ of integers, such that $\mE\cup-\mE=[0,0]\cup[-1,1]\cup(\bigcup_{E\in[B,A]}[-E,E])-(\bigcup_{C\le D}[-C,-C])=\bigcup_{z\in[D+1,A]}[-z,z]-\bigcup_{z\in(1,B)}[-z,z]$, $A-D=2t_1+B$ and $\pi\xhookrightarrow{}\times_{x\in\mE}\rho|\cdot|^x\rtimes\pi_{D,\lambda}$. The $t_1$ here is taken from Lemma \ref{lemma: first construction}.
			\item[(2)] If $-A\in\mE$, then $\Jac_{\rho,-A}\circ\Jac_{\rho,-A+1}\circ\cdots\circ\Jac_{\rho,B}\pi\neq0$.
		\end{enumerate}
	\end{lemma}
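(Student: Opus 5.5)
The plan is to redo, in this final case $a=3$, $b=1$, the first and second constructions (Lemmas \ref{lemma: first construction} and \ref{lemma: second construction}), keeping track of the two small Jordan blocks $(\rho,0,0)$ and $(\rho,1,1)$ of $\psi'$. Here $b_{\rho,\psi',\varepsilon'}=1$ and $a_{\rho,\psi',\varepsilon'}=3$, so $\varepsilon(\rho,0,0)=\varepsilon(\rho,1,1)$. Since $\psi$ is a non-negative DDR, all other $\rho$-blocks $(\rho,C,C)$ of $\psi'$ with $C<B$ lie in $(1,B)$ and continue the alternating chain.

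For (1), start from Lemma \ref{lemma: first construction}. This gives $t_1$, $\lambda_1$, and an embedding of $\pi$ as a subquotient of
\[
\bigtimes_{j\in[0,t_1-1]}(\rho|\cdot|^{A-2j}\times\cdots\times\rho|\cdot|^{-(A-2j-1)})\rtimes\pi(\psi',\varepsilon',\cup_{C\in[B,A-2t_1]}(\rho,C,C;(-1)^{[C]}\lambda_1)).
\]
Apply Proposition \ref{prop: cuspidal support of discrete series}, parts (1) and (2), to the discrete series on the right. This peels off the chain of blocks $(\rho,C,C)$ with $1<C\le A-2t_1$, which includes the original blocks in $(1,B)$, down to the pair $\{0,1\}$.

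The point of the final case is the following. Because $\varepsilon(\rho,0,0)=\varepsilon(\rho,1,1)$, Proposition \ref{prop: cuspidal support of discrete series}(2) lets us remove the pair $(\rho,1,1),(\rho,0,0)$ together. This contributes the segment $[1,0]$, whose symmetrization is $[0,0]\cup[-1,1]$. The remaining blocks then form a discrete parameter $\psi_{D,\lambda}$ with $b_{\rho,\psi_{D,\lambda},\varepsilon_{D,\lambda}}=2D+1$.

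Counting the blocks removed, the cuspidal support of the $\GL$-part is, up to sign,
\[
[0,0]\cup[-1,1]\cup\bigcup_{E\in[B,A]}[-E,E]-\bigcup_{C\le D}[-C,C].
\]
The blocks in $(1,B)$ that are present in both $\psi$ and $\psi_{D,\lambda}$ cancel, which gives the second expression in (1). The relation $A-D=2t_1+B$ comes from the length count: $t_1$ pairs of columns are consumed in the first construction, and the remaining chain descends from $A-2t_1$ through $B$ to $0$. Finally, invoke the key proposition (Proposition \ref{proposition: key proposition}) to turn "subquotient of $\times_{x\in\mE}\rho|\cdot|^x\rtimes\pi_{D,\lambda}$" into an embedding with a suitable total order on $\mE$. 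Its hypothesis $|x|<\frac{a_{\rho,\psi_{D,\lambda},\varepsilon_{D,\lambda}}-1}{2}$ should hold because $\psi_{D,\lambda}$ contains no $\rho$-block in $[D+1,A]$.

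For (2), copy the proof of Lemma \ref{lemma: first step - lemma 1}. Write $\pi\hookrightarrow\sigma\rtimes\pi_{D,\lambda}$ with $\Supp(\sigma)=\mE$, and use the Zelevinsky form $\sigma=\soc([x_1,y_1]_\rho\times\cdots\times[x_r,y_r]_\rho)$. Since $-A$ is the minimum of $\mE$, some segment has $y_i=-A$ and can be moved to the front, which gives $\Jac_{\rho,-A}\circ\cdots\circ\Jac_{\rho,x_i}\pi\neq0$. In particular $\Jac_{\rho,x_i}\pi\neq0$. Lemma \ref{lemma: highest derivative of representations in a A packet} restricts $x_i$ to the values $B'$ of blocks $(\rho,A',B')$ in $\Jord(\psi)$, so $x_i\in\{B\}\cup\{C:(\rho,C,C)\in\Jord(\psi')\}$. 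We must exclude $x_i=C<B$; this means ruling out $x_i\in\{0,1\}$ and the other blocks below $B$.

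I expect this exclusion to be the main obstacle, because $\pi(\psi,\varepsilon)$ genuinely has $\Jac_{\rho,1}$-derivatives coming from the block $(\rho,1,1)$. I would try three arguments. First, a segment $[x_i,-A]$ with $x_i<B$ would force the cuspidal support to contain $-A,\dots,x_i$ with the wrong multiplicity compared with (1). Second, applying $\Jac_{\rho,-A}\circ\cdots\circ\Jac_{\rho,x_i}$ to the expansion of $\pi(\psi,\varepsilon)$ in Theorem \ref{theorem: Mp version of Theorem 7.5 in Xu_moeglin} gives zero unless the segment starts at $B$, as in the proof of Lemma \ref{lemma: lemma on the main term}. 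Third, the endoscopic bound of \cite[Proposition 8.3]{Xu2017_Moeglin}, applied to the composite derivative, should also force $x_i=B$.
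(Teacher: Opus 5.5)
\textbf{There is a genuine gap, in part (2).} Your reduction there is correct as far as it goes. The argument of Lemma~\ref{lemma: first step - lemma 1} together with Lemma~\ref{lemma: highest derivative of representations in a A packet} leaves a row $[x_i,-A]_\rho$ where $x_i=B$ or $x_i$ is the $B$-value of a $\rho$-block below $B$, in particular $x_i\in\{0,1\}$. But excluding $x_i<B$ is the whole content of (2), and you do not supply it.

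Your first candidate (a multiplicity contradiction) cannot work. In the multiset of (1), the elements $0,\pm1$ occur $(A-D)-(B-2)=2t_1+2\ge 2$ times, and every $w$ with $2\le|w|\le A$ occurs at least once. So it contains $[1,-A]\cup[A,-1]=[-A,A]\cup[-1,1]$, and counting gives nothing. Your other two suggestions are not carried out. The expansion of Theorem~\ref{theorem: Mp version of Theorem 7.5 in Xu_moeglin} is also awkward here: every term $X_C$, $X_\eta$ still carries both blocks $(\rho,0,0)$ and $(\rho,1,1)$, so $\Jac_{\rho,1}$ and $\Jac_{\rho,0}$ need not vanish on them.

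The missing idea is to embed $\pi$ itself into $[1,0]_\rho\rtimes\pi(\widetilde\psi,\widetilde\varepsilon)$. This follows from $\pi(\psi,\varepsilon)\oplus\pi(\psi,\varepsilon_-)=\soc([1,0]_\rho\rtimes\pi(\widetilde\psi,\widetilde\varepsilon))$, where $\widetilde\psi$ deletes $(\rho,0,0)$ and $(\rho,1,1)$. No $\rho$-block of $\widetilde\psi$ has $B$-value in $\{-2,-1,0,1\}$, since $B\ge 2$ here. Hence Lemma~\ref{lemma: highest derivative of representations in a A packet} kills those $\Jac_{\rho,x}$ on $\pi(\widetilde\psi,\widetilde\varepsilon)$. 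Lemma~\ref{lemma: computation of partial Jacquet module} then gives $\Jac_{\rho,0}\circ\Jac_{\rho,1}\pi\le 2\pi(\widetilde\psi,\widetilde\varepsilon)$, which $\Jac_{\rho,-1}$ kills, so $x_i=1$ is impossible. The same computation gives $\Jac_{\rho,-1}\circ\Jac_{\rho,0}\pi\le\pi(\widetilde\psi,\widetilde\varepsilon)$, which $\Jac_{\rho,-2}$ kills, so $x_i=0$ is impossible too since $A\ge 2$.

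\textbf{On part (1).} The paper proves (1) with this same embedding: it applies Lemma~\ref{lemma: second construction} to $\pi(\widetilde\psi,\widetilde\varepsilon)$ and prepends $[1,0]_\rho$. Your ordering has two drawbacks. You run Lemma~\ref{lemma: first construction} on $\pi$ and $\psi$, and only afterwards split $[1,0]_\rho$ off the discrete series.
\begin{enumerate}
\item[(i)] Lemmas~\ref{lemma: first construction} and~\ref{lemma: second construction} are proved under the standing hypothesis $a_{\rho,\psi',\varepsilon'}\ge 2B+1$ of \S\ref{subsection: start of the proof}. That hypothesis fails in this final case, where $a=3\le 2B-1$.
\item[(ii)] Your route never produces an embedding of $\pi$ into $[1,0]_\rho\rtimes(\cdot)$, which is exactly what (2) needs.
\end{enumerate}
Two smaller points. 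Your claim that the $\rho$-blocks of $\psi'$ in $(1,B)$ continue the alternating chain is not implied by $a=3$, $b=1$. Also, the second expression for $\mE\cup-\mE$ in (1) is a purely formal rewriting of the first, not a cancellation of such blocks: both equal $\bigcup_{z\in\{0,1\}\cup[B,A]}[-z,z]-\bigcup_{z\in[0,D]}[-z,z]$.
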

	\begin{proof}
		Denote by $(\widetilde{\psi},\widetilde{\varepsilon})$ the pair obtained from $(\psi,\varepsilon)$ by removing $(\rho,0,0)$ and $(\rho,1,1)$. As in the previous subsection, we know that $\pi(\psi,\varepsilon)$ is a subrepresentation of $[1,0]_\rho\rtimes\pi(\widetilde{\psi},\widetilde{\varepsilon})$. Now, we can deduce $(1)$ by applying Lemma \ref{lemma: second construction} to $\pi(\widetilde{\psi},\widetilde{\varepsilon})$.
		
		\par For $(2)$, using a similar argument as in the proof of Lemma \ref{lemma: first step - lemma 1}, if $-A\in\mE$, then there exists a $x\in\mE$, such that $\Jac_{\rho,-A}\circ\Jac_{\rho,-A+1}\circ\cdots\circ\Jac_{\rho,x}\pi\neq0$. In particular, we have $\Jac_{\rho,x}\pi\neq0$. Thus, $x=B$ or $x=1$. Note that $\Jac_{\rho}\Jac_{\rho,1}\pi\le2\pi(\widetilde{\psi},\widetilde{\varepsilon})$ and $\Jac_{\rho,-1}\pi(\widetilde{\psi},\widetilde{\varepsilon})=0$. We conclude that $x=B$, which completes the proof.
	\end{proof}
	
	\par Write $\mE=\cup_{i=1}^r[x_i,y_i]$ with respect to a maximal order on $\mE$, and we assume that $\mE$ satisfies the property of having the minimal number of positive elements and $-A\notin\mE$.
	
	\par Suppose first that $x_1=1$. Then $B\notin\mE$. Thus $x_{i+1}=x_i-1$ holds for all $1\le i\le r$. As in the proof of Lemma \ref{lemma: maximal order}, we deduce from this that $x_{i+1}=x_i-1$ for all $i$ and $y_1>y_2>\cdots>y_r$. Further, similar to the proof of Lemma \ref{lemma: non-vanishing of certain Jacquet module}, we deduce that, for $1\le i\le r$, if $y_i>0$, then either $y_{i+1}=y_i-1$ or $y_i=D+1$.
	
	\par We denote by $z_1,\dots,z_A$ the elements at the end of each column. Then, as in the proof of Lemma \ref{lemma: non-vanishing of certain Jacquet module}, we have $$[1,A]\cup(\bigcup_{z_s\le0}\left\{-z_s\right\})\cup(1,B)=[D+1,A]\cup(\bigcup_{z_s>0}\left\{z_s-1\right\}),$$ where $\bigcup_{z_s>0}\left\{z_s-1\right\}$ is a segment of the form $[z,D]$. In particular, the union on the right-hand side is multiplicity free. Thus $B=2$ and $\bigcup_{z_s\le0}\left\{-z_s\right\}\subset\left\{0\right\}$, from which we deduced that the number of rows in $\mE$ is at most $2$. Recall that the number of rows of $\mE$ is at least $A-D=2t_1+B$. Thus we have $t_1=0$, which implies that $\pi$ is a complementary term.
	
	\par Now, suppose that $x_1=B$ with $B>2$. Then, the first row of $\mE$ is $B,\dots,A$. By Lemma \ref{lemma: another reduction step in non-negative DDR case}, $\Jac_{\rho,A}\circ\cdots\circ\Jac_{\rho,B}\pi$ is an irreducible constituent $\pi'$ of $\pi(\psi',\varepsilon',(\rho,A-1,B-1;\eta_0))$. If $\Jac_{\rho,-A+1}\circ\cdots\circ\Jac_{\rho,B-1}\pi'=\pi''\neq0$, then $\pi''\le\pi(\psi',\varepsilon',(\rho,A-2,B;\eta_0))$. By Lemma \ref{lemma: irreducibility of rho||x rtimes pi in non-negative DDR case}, we know that $\rho|\cdot|^A\rtimes\pi''$ is irreducible. Therefore, we have
	$$\begin{aligned}
		&\Jac_{\rho,-A+1}\circ\cdots\circ\Jac_{\rho,B-1}\pi'\\
		&=(\Jac_{\rho,-A+1}\circ\cdots\circ\Jac_{\rho,B-1})\circ(\Jac_{\rho,A}\circ\cdots\circ\Jac_{\rho,B})\pi'\\
		&=(\Jac_{\rho,A}\circ\cdots\circ\Jac_{\rho,B+1})\circ(\Jac_{\rho,-A+1}\circ\cdots\circ\Jac_{\rho,B})\pi'\\
		&=\Jac_{\rho,-A}\circ(\Jac_{\rho,A-1}\cdots\circ\Jac_{\rho,B+1})\circ(\Jac_{\rho,-A+1}\circ\cdots\circ\Jac_{\rho,B})\pi'\\
		&=(\Jac_{\rho,A-1}\cdots\circ\Jac_{\rho,B+1})\circ(\Jac_{\rho,-A}\circ\cdots\circ\Jac_{\rho,B})\pi'=0.
	\end{aligned}$$ In conclusion, we only need to consider the case when $B=2$.
	
	\par Suppose first that $\mE$ can still be written as a tableau of the form:
	$$\begin{ytableau}
		B&\dots&\dots&\dots&A\\
		\vdots&\vdots&\vdots&\vdots&\vdots\\
		x_{2t_1+2}&\dots&\dots&\dots&\scriptstyle D+1\\
		\vdots&\vdots&\vdots&\vdots&\none\\
		x_r&\cdots&y_r&\none&\none
	\end{ytableau},$$ where $x_{2t_1+2}=B-(2t_1+2)+1=-2t_1+1$. Note that we have $$[2,A]\cup(\bigcup_{z_s\le0}\left\{-z_s\right\})=[D+1,A]\cup(\bigcup_{z_s>0}\left\{z_s-1\right\}),$$ and the right-hand side has no multiplicity. Thus we have $z_s\ge-1$ for all $s$. In particular, we have $-2t_1+1\ge-1$, which means $t_1=0$ or $1$. If $t_1=0$, then $\pi$ is a complementary term. Thus we only need to eliminate the case $t_1=1$. Suppose that $t_1=1$, then we have $x_r=z_1=-1$. Thus, $\mE$ is rectangular with $4$ rows. In particular, we have $\Jac_{\rho,1}(\sigma_{\mE})=\Jac_{\rho,-1}^{\op}(\sigma_{\mE})=0$, which contradicts to the fact that $\Jac_{\rho,1}\pi\neq0$.
	
	\par If $\mE$ can not be written as a tableau as before, we verify that $\mE$ can be written as a union of a tableau $$\mT:=\begin{ytableau}
		B&\dots&\dots&\dots&A\\
		\vdots&\vdots&\vdots&\vdots&\vdots\\
		x_{2t_1+2}&\dots&\dots&\dots&\scriptstyle D+1\\
		\vdots&\vdots&\vdots&\vdots&\none\\
		x_r&\cdots&y_r&\none&\none
	\end{ytableau}$$ with the set $\left\{1,0\right\}$. Thus we have $$\left\{0,1\right\}\cup[2,A]\cup(\bigcup_{z_s\le0}\left\{-z_s\right\})=[D+1,A]\cup(\bigcup_{z_s>0}\left\{z_s-1\right\}),$$ which forces $\bigcup_{z_s\le0}\left\{-z_s\right\}$ to be empty. Thus, we have $x_{2t_1+2}\ge 1\Rightarrow t_1=0$. This completes the proof.
	
	\printindex
	
	\newpage
	\printbibliography
	
	\vspace{1em}
	\begin{flushleft} \small
		Department of Mathematics, School of Mathematical Sciences, Peking University.\\
		E-mail address: \texttt{chenmoving@stu.pku.edu.cn}
	\end{flushleft}

\end{document}